\documentclass[twoside,12pt]{article}
\DeclareMathSizes{20}{30}{16}{12}
\usepackage{float}
\usepackage{amsmath}
\usepackage{mathtools}
\usepackage{mathrsfs}
\usepackage{stmaryrd}
\usepackage{bbm}
\usepackage{yfonts}
\usepackage{amsfonts}
\usepackage{tcolorbox}
\usepackage{tikz-cd}
\usepackage[utf8]{inputenc}

\usepackage{caption}
\usepackage{indentfirst}
\usepackage[Symbol]{upgreek}
\usepackage{enumerate}   
\usepackage{upref}

\usepackage{graphicx}
\usepackage[margin=0.8 in]{geometry}
\usepackage{fancyhdr}
\usepackage{hyperref}
\hypersetup{
    colorlinks,
    citecolor=black,
    filecolor=black,
    linkcolor=black,
    urlcolor=black
}
\newcommand{\N}{\mathbb{N}}
\newcommand{\Z}{\mathbb{Z}}

\newcommand{\ra}{\rightarrow}

\newcommand{\calA}{\mathcal{A}}
\newcommand{\calI}{\mathcal{I}}
\newcommand{\calJ}{\mathcal{J}}
\newcommand{\calE}{\mathcal{E}}
\newcommand{\calK}{\mathcal{K}}
\newcommand{\calC}{\mathcal{C}}
\newcommand{\calT}{\mathcal{T}}
\newcommand{\calD}{\mathcal{D}}

\newcommand{\calR}{\mathcal{R}}
\newcommand{\calB}{\mathcal{B}}

\newcommand{\calM}{\mathcal{M}}

\newcommand{\calP}{\mathcal{P}}
\newcommand{\calQ}{\mathcal{Q}}
\newcommand{\calV}{\mathcal{V}}

\newcommand{\op}{\operatorname}
\newcommand{\w}{\widehat}

\newcommand{\Ox}{\mathcal{O}}
\newcommand{\F}{\mathcal{F}}
\newcommand{\calG}{\mathcal{G}}

\newcommand{\ov}{\overline}

\newcommand{\frakA}{\mathfrak{A}}
\newcommand{\frakB}{\mathfrak{B}}
\newcommand{\frakX}{\mathfrak{X}}
\newcommand{\frakE}{\mathfrak{E}}
\newcommand{\frakF}{\mathfrak{F}}
\newcommand{\frakU}{\mathfrak{U}}

\newcommand{\frakS}{\mathcal{S}}
\newcommand{\frakY}{\mathfrak{Y}}
\newcommand{\frakT}{\mathfrak{T}}
\newcommand{\frakZ}{\mathfrak{Z}}

\newcommand{\str}{\text{strict}}

\long\def\/*#1*/{}

\usepackage{unicode-math}
\usepackage{fontspec}

\usepackage{sectsty}
\sectionfont{\centering}

\usepackage[toc,page]{appendix}
\usepackage{amsthm}

\newtheorem*{proof*}{Proof}

\newtheorem{theorem}[subsection]{Theorem}
\newtheorem{theorem*}{Theorem}
\newtheorem{proposition}[subsection]{Proposition}
\newtheorem{corollaire}[subsection]{Corollary}
\newtheorem{lemma}[subsection]{Lemma}

\theoremstyle{definition}

\newtheorem{definition}[subsection]{Definition}
\newtheorem{remark}[subsection]{Remarks}

\theoremstyle{definition}
\newtheorem{parag}[subsection]{}

\numberwithin{equation}{section} 

\makeatletter
\renewcommand{\theequation}{%
  \ifnum\value{subsubsection}>0
    \thesubsubsection.\arabic{equation}%
  \else
    \thesubsection.\arabic{equation}%
  \fi
}
\makeatother

\title{Indexed Logarithmic Cartier Transform}
\author{Sami Fersi}
\date{}

\AtEndDocument{\bigskip{\footnotesize%
  \textsc{Sami Fersi, Laboratoire Alexander Grothendieck, Institut des Hautes Études Scientifiques, 35 Route de Chartres, 91440 Bures-sur-Yvette, France} \par
  \textit{E-mail address}: \texttt{fersi@ihes.fr} \par
}}
\bibliography{./references/ref}
\begin{document}
\maketitle

\begin{abstract}
This article is the third of a series of three articles whose goal is to generalize the Cartier transform of Ogus and Vologodsky to the logarithmic setting. 
In the second one \cite{SF2}, we generalized a topos-theoretic version of this transform, due to Oyama. Let $W$ be the ring of Witt vectors of a perfect field of positive characteristic $p$ and equip $\op{Spf}W$ with the trivial log structure. Let $\frakS$ be an fs log $p$-adic formal scheme, locally of finite type and log flat over $\op{Spf}W$ and denote its special fiber by $S.$ For a log smooth morphism of logarithmic schemes $X \ra S,$ we constructed crystalline-like ringed topoi $\calE'$ and $\underline{\calE}$ and subcategories of crystals of quasi-coherent modules $\calC'$ and $\underline{\calC},$ equivalent respectively, under some lifting assumption, to modules with Higgs fields and integrable connections, both satisfying certain nilpotence conditions, and a morphism of topoi $\underline{\calE} \ra \calE'.$ We proved that the pullback functor of this morphism of topoi preserves quasi-coherent crystals and induces a fully faithful functor $\calC' \ra \underline{\calC}.$ Since the Frobenius morphism is not, in general, flat in the log smooth setting, it is not clear that this functor is essentially surjective. To address this issue, we refine the topoi and crystals mentioned above by endowing them with an indexed structure, inspired by Lorenzon’s indexed extension of Cartier descent to smooth logarithmic schemes. Using the Azumaya property of the indexed algebra of logarithmic differential operators, we then obtain an equivalence between the corresponding categories of indexed crystals, thereby generalizing the Cartier transform.
\end{abstract}

\tableofcontents

\section{Introduction}

\begin{parag}
This article is the third in a series of three articles whose goal is to generalize the Cartier transform of Ogus and Vologodsky to the logarithmic case. In the first one \cite{SF1}, we generalized a local version of the Cartier transform, due to Shiho \cite{Shiho}, to the log smooth setting. In the second one \cite{SF2}, we generalized a topos-theoretic version of this transform, due to Oyama \cite{Oyama}. More precisely, for a perfect field $k$ of positive characteristic with ring of Witt vectors $W(k),$ let $\frakS$ be an fs log $p$-adic formal scheme, locally of finite type and log flat over $\op{Spf}W(k),$ $S$ its special fiber and $X\ra S$ a log smooth morphism of fs logarithmic schemes. We constructed two ringed topoi $\calE'$ and $\underline{\calE}$ as well as subcategories of crystals of quasi-coherent modules $\calC'$ and $\underline{\calC}$ of these respective topoi. Let $X\ra X'$ be the exact relative Frobenius (\cite{SF1} 3.1). As Oyama did in the classical smooth setting, we proved that, under a lifting assumption of $X$ and $X'$ to log smooth formal schemes over $\frakS,$ the categories $\calC'$ and $\underline{\calC}$ are equivalent respectively to modules equipped with Higgs fields and modules equipped with integrable connections, both satisfying certain nilpotence conditions. We also constructed, without assuming any lifting conditions, a morphism of ringed topoi $\varphi:\underline{\calE} \ra \calE'.$ We proved that the inverse image functor $\varphi^{-1}$ preserves crystals and hence induces a functor $\calC' \ra \underline{\calC}$ extending the usual Cartier transform. We then proved that this functor is fully faithful. In the smooth case, faithfully flat descent is crucial in proving that this functor is essentially surjective. Because of the lack of faithful flatness of the relative Frobenius, using the same argument is not possible in the log smooth case. That is why we introduce \emph{indexed algebras and modules} and shift to an \emph{Azumaya algebra approach}. For a morphism of logarithmic schemes $f:X\ra S,$ Lorenzon defines \emph{indexed algebras} $\calA_X$ and $\calB_{X/S}$ \cite{Lor2000}. These indexed algebras are topos-theoretic versions of graded algebras in the sense that each local section possesses an \emph{index}, adding two local sections is only possible if they have the same index and multiplying them adds the corresponding indices. The indexed algebra $\calA_X$ is naturally equipped with a connection $d_{\calA_X}$ and $\calB_{X/S}$ is taken to be its sheaf of horizontal sections. Lorenzon proves an \emph{indexed Cartier descent theorem}: if $f$ is log smooth, then locally free $\calB_{X/S}$-modules of finite type are equivalent to locally free $\calA_X$-modules of finite type, equipped with an integrable connection, which has vanishing $p$-curvature and is compatible with $d_{\calA_X}.$ Ohkawa and Schepler used these indexed algebras to introduce an \emph{indexed algebra of differential operators} $\widetilde{\calD}_{X/S}$ and prove that it is an Azumaya algebra over its center (\cite{Schepler} and \cite{Ohkawa}). We review their work in more detail and then construct indexed versions of Oyama crystals that represent $\calB_{X/S}$-modules with Higgs fields and $\calA_X$-modules with integrable connections. We finally construct an equivalence of categories between them.
\end{parag}

\begin{parag}
We describe our work in more details.
Let $(X,\Ox)$ be a ringed topos, $\calI$ a sheaf of monoids of $X,$ $p_1,p_2:\calI^2 \ra \calI$ the canonical projections and $\sigma:\calI^2 \ra \calI$ the addition map. We have a localization morphism
$
j_{\calI}:X_{/\calI} \ra X.
$
The morphisms $p_1,p_2$ and $\sigma$ also induce morphisms of ringed topoi, abusively denoted by
$$
p_1,p_2,\sigma: \left (X_{/\calI^2},\Ox_{\calI^2} \right ) \ra \left (X_{/\calI},\Ox_{\calI} \right ),
$$
where $\Ox_{\calI}=j_{\calI}^{-1}\Ox$ and $\Ox_{\calI^2}=j_{\calI^2}^{-1}\Ox.$
An $\calI$-indexed algebra is then an $\Ox_{\calI}$-module equipped with a multiplication
$$
\calA \boxtimes \calA :=p_1^*\calA \otimes_{\Ox_{\calI^2}}p_2^*\calA \ra  \sigma^*\calA,
$$
satisfying certain associativity, commutativity and unity properties \eqref{inddef}. Similarly, given an $\calI$-indexed algebra $\calA,$ an $\calI$-indexed $\calA$-module is an $\Ox_{\calI}$-module $\calE$ equipped with a morphism
$$
\calA \boxtimes \calE \ra \sigma^*\calE,
$$
satisfying certain properties. We then introduce indexed versions of the sheaf of endomorphisms \eqref{inddef2} and the tensor product \eqref{indtensprod}. In other words, given an $\calI$-indexed algebra $\calA$ and $\calI$-indexed $\calA$-modules $\calE$ and $\F,$ we define an $\calI$-indexed algebra
$
\mathscr{End}_{\calA}\left (\calE,\F\right)
$
and an $\calI$-indexed $\calA$-module
$
\calE \circledast_{\calA} \F.
$
\end{parag}

\begin{parag}
For a logarithmic scheme $X,$ set $\upmu=\ov{\calM}_X^{gp}.$ The logarithmic structure on $X$ gives rise to an exact sequence of abelian groups
$$
0 \ra \Ox_X^* \xrightarrow{\lambda} \calM_X^{gp} \ra \upmu \ra 0,
$$
which, in turn, gives an invertible $\Ox_{X,\upmu}$-module
$$
\calA_X=\left (\calM^{gp} \times \Ox_{X,\upmu} \right )_{/\sim},
$$
where $(m,ab) \sim (\lambda(a)+m,b)$ for any local sections $a,$ $b$ and $m$ of $\Ox_{X,\upmu}^*,$ $\Ox_{X,\upmu}$ and $\calM_X^{gp}$ respectively. We equip this module $\calA_X$ with a structure of $\upmu$-indexed algebra. Given a morphism of logarithmic schemes $X\ra S,$ Lorenzon defines a canonical connection
$$
d_{\calA_X}:\calA_X \ra \calA_X\otimes_{\Ox_{X,\upmu}} \omega^1_{X/S,\upmu}
$$
and denotes by $\calB_{X/S}$ its kernel, which inherits a structure of $\upmu$-indexed algebra from $\calA_X$ \eqref{Bgroupe2}. He also proves that, if $X\ra S$ is log smooth, then $\calA_X$ is a locally free $\upmu$-indexed $\calB_{X/S}$-module of rank $p^d,$ where $d$ is the rank of $\omega^1_{X/S}$ \eqref{thmlor}.
\end{parag}

\begin{parag}
For a morphism of logarithmic schemes $X\ra S,$ we set $\upmu=\ov{\calM}_X^{gp}$ and $\widetilde{\calD}_{X/S}=\calA_X \otimes_{\Ox_{X,\upmu}} \calD_{X/S,\upmu}.$ In section 4, we recall that $\widetilde{\calD}_{X/S}$ has a structure of $\upmu$-indexed algebra and we recall a result by Schepler and Ohkawa about the Azumaya property of $\widetilde{\calD}_{X/S}.$ More precisely, we consider a log smooth morphism $f:\frakX \ra \frakS$ of fs logarithmic $p$-adic formal schemes such that $\frakS$ is log flat and locally of finite type over $\op{Spf}W(k).$ Let $X\ra S$ be its special fiber. If we consider $\calD_{X/S}$ as an algebra over $S^{\bullet}\calT_{X'/S}$ via the $p$-curvature map (\cite{SF2} 4), then $\widetilde{\calD}_{X/S}$ is a $\upmu$-indexed Azumaya algebra over its center $\calB_{X/S}\otimes_{\Ox_{X',\upmu}}S^{\bullet}\calT_{X/S,\upmu}.$
If we suppose that $F_1:X \ra X'$ lifts to an $\frakS$-morphism of fs $p$-adic logarithmic formal schemes $F:\frakX \ra \frakX'$ such that $\frakX'$ is log smooth over $\frakS,$ Ohkawa constructs a specific splitting of $\widetilde{\calD}_{X/S}$ over $\w{S}^{\bullet}\calT_{X'/S,\upmu},$ where $\w{S}^{\bullet}\calT_{X'/S}$ is the completion with respect to the ideal $\bigoplus_{n\ge 1}S^{n}\calT_{X'/S}.$ We deduce from that the following proposition:
\begin{proposition}[\ref{propKoko1717}]
Let $f:\frakX \ra \frakS$ be a log smooth morphism of fs $p$-adic logarithmic formal schemes such that $\frakS$ is log flat and locally of finite type over $\op{Spf}W(k).$ We suppose that $F_1:X \ra X'$ lifts to an $\frakS$-morphism of fs $p$-adic logarithmic formal schemes $F:\frakX \ra \frakX'$ such that $\frakX'$ is log smooth over $\frakS.$ Denote by $X\ra S$ the special fiber of $f.$
Let $\widehat{\Gamma}^{\bullet}\calT_{X'/S}$ be the completed PD-algebra of $\calT_{X'/S}$ and $\widehat{\Gamma}=\left (\widehat{\Gamma}^{\bullet}\calT_{X'/S} \right )_{\upmu}.$
Then we have a Morita equivalence:
$$
\Phi_{\upmu}:\begin{Bmatrix}\mathrm{Locally\ PD}\text{-}\mathrm{nilpotent}\\
\upmu\text{-}\mathrm{indexed\ }\left (\calB_{X/S}\otimes_{\Ox_{X',\upmu}}\widehat{\Gamma}\right )\\ \text{-}\mathrm{modules} \end{Bmatrix} \xrightarrow{\sim} \begin{Bmatrix} \mathrm{Locally\ PD}\text{-}\mathrm{nilpotent\ }\upmu\text{-}\mathrm{indexed}\\ \left ((\calB_{X/S}\otimes_{\Ox_{X',\upmu}}\widehat{\Gamma})\circledast_{\calB_{X/S}\otimes_{\Ox_{X',\upmu}}S}\widetilde{\calD}_{X/S} \right ) \\ \text{-}\mathrm{modules}\end{Bmatrix}.
$$
\end{proposition}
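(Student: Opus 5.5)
The plan is to deduce the Morita equivalence formally from Ohkawa's splitting, by the standard observation that an indexed Azumaya algebra which splits over a base change is Morita equivalent to that base. Set $\calZ=\calB_{X/S}\otimes_{\Ox_{X',\upmu}}S^{\bullet}\calT_{X'/S,\upmu}$, the center of $\widetilde{\calD}_{X/S}$ recalled in section 4. Set $\widehat{\calZ}=\calB_{X/S}\otimes_{\Ox_{X',\upmu}}\widehat{\Gamma}$, viewed as a $\calZ$-algebra via $S^{\bullet}\calT_{X'/S}\ra\Gamma^{\bullet}\calT_{X'/S}\ra\widehat{\Gamma}^{\bullet}\calT_{X'/S}$. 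Then $\widehat{\calD}:=\widehat{\calZ}\circledast_{\calZ}\widetilde{\calD}_{X/S}$ is a $\upmu$-indexed $\widehat{\calZ}$-algebra.

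The first step is to upgrade Ohkawa's splitting over $\w{S}^{\bullet}\calT_{X'/S,\upmu}$ to a splitting over $\widehat{\calZ}$. From the lift $F:\frakX\ra\frakX'$, Ohkawa produces an indexed module $\calK$ that is locally free of rank $p^d$ over the completed center. He also produces an isomorphism of indexed algebras
$$\widehat{\calD}\xrightarrow{\sim}\mathscr{End}_{\widehat{\calZ}}(\calK).$$
Concretely, I expect $\calK=\calA_X\otimes\widehat{\Gamma}$ with its $\widetilde{\calD}$-action twisted by $p^{-1}dF$, mirroring Oyama's and Shiho's constructions. The divided powers are exactly what allow the exponential-type formula for the action of $\calT_{X'}$ to be defined. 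Here the rank $p^d$ comes from Lorenzon's theorem (\eqref{thmlor}).

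The second step is the formal Morita statement. I would define
$$\Phi_{\upmu}(\calE)=\calK\circledast_{\widehat{\calZ}}\calE,$$
with $\widehat{\calD}$ acting through $\mathscr{End}(\calK)$. The quasi-inverse is
$$\calF\mapsto\mathscr{Hom}_{\widehat{\calD}}(\calK,\calF)\cong\calK^{\vee}\circledast_{\widehat{\calD}}\calF,$$
where $\calK^\vee=\mathscr{Hom}_{\widehat{\calZ}}(\calK,\widehat{\calZ})$. The unit and counit maps are the natural evaluation and coevaluation morphisms, built from the indexed Hom and tensor constructions \eqref{inddef2}, \eqref{indtensprod}.

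To check that these are isomorphisms, I would work locally on $X$ and on the index sheaf $\upmu$. Locally $\calK$ is free, so $\widehat{\calD}$ is an indexed matrix algebra, and the classical matrix Morita computation applies indexwise. The needed compatibilities are that $\boxtimes$ and $\sigma^*$ commute with $\circledast$ for locally free modules, and that $\mathscr{End}$ of a free indexed module is the indexed matrix algebra. Finally, local PD-nilpotence on both sides is preserved. On either side, the condition is that the action of $\calT_{X'}$ through $\widehat{\Gamma}$ is locally annihilated by a power of the PD-ideal, and $\calK\circledast-$ is compatible with the $\widehat{\calZ}$-structure.

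The main obstacle is the first step. One has to verify that Ohkawa's splitting, stated over $\w{S}^{\bullet}$, extends to the PD-completion $\widehat{\Gamma}$, which means checking that his explicit formulas only require divided powers. One must also check the indexed compatibility: that the twisted action on $\calA_X$ respects the $\upmu$-indexed multiplication $\calA\boxtimes\calA\ra\sigma^*\calA$, since $d_{\calA_X}$ and $F^*$ interact with the log structure. I would first try to construct the action on local generators of $\calA_X$ given by sections of $\calM^{gp}$, using $F^*m=pm'+\log u$-type decompositions, and then check it against the relations defining $\calA_X$.
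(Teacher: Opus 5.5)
Your plan has the same architecture as the paper's proof. You transport Ohkawa's splitting to $\calB_{X/S}\otimes_{\Ox_{X',\upmu}}\widehat{\Gamma}$, which is \eqref{iso17131}. You take the Morita functors $\calE\mapsto(\calA_X\otimes_{\Ox_{X,\upmu}}F_1^*\widehat{\Gamma})\circledast_{\calB_{X/S}\otimes\widehat{\Gamma}}\calE$ and $\F\mapsto\mathscr{Hom}(\calA_X\otimes_{\Ox_{X,\upmu}}F_1^*\widehat{\Gamma},\F)$. You then check that both preserve local PD-nilpotence, using that the splitting module is locally free of finite rank. Where you differ is in where you expect the work to be.

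\textbf{Extending the splitting to $\widehat{\Gamma}$.} The step you call the main obstacle is formal in the paper (Corollary \ref{cor1713}). One base-changes the isomorphism $\psi$ of Theorem \ref{erafPsi} along $\calB_{X/S}\otimes\widehat{S}\ra\calB_{X/S}\otimes\widehat{\Gamma}$. Then \eqref{erafiso} applies: the indexed $\mathscr{End}$ of a locally free module of finite rank commutes with $\circledast$-base change, and local freeness comes from \ref{thmlor} and \ref{propfreetakriz}. Finally \ref{propisoAkaza} identifies the base-changed module with $\calA_X\otimes F_1^*\widehat{\Gamma}$. So you do not need to re-examine Ohkawa's formulas to see that they ``only use divided powers''. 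Nor do you need to re-check that the twisted action is compatible with $\calA_X\boxtimes\calA_X\ra\sigma^*\calA_X$: that it is a $\upmu$-indexed module structure is already part of Theorem \ref{erafPsi}.

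\textbf{The Morita step.} The paper does not redo this via indexed matrix algebras; it cites \cite{Schepler} 2.2 for \eqref{ren6} and \eqref{ren7}. Your local computation would reprove that lemma, which is fine. However, writing the inverse as $\calK^{\vee}\circledast_{\widehat{\mathcal{D}}}\F$ needs a right-module version of $\circledast$ over a noncommutative indexed algebra, which the paper never sets up. The $\mathscr{Hom}$ form needs nothing new.

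\textbf{Preservation of nilpotence.} Your remark really covers only $\Phi_{\upmu}$, where locally $\calK\circledast\calE\cong\bigoplus_i\calE(m_i)$ by \ref{ren5}. For the inverse functor you should make the argument explicit. A section $f$ of $\mathscr{Hom}(\calK,\F)_s$ is $\widehat{\Gamma}$-linear, so it is determined by its values on the finitely many local basis vectors $e_i$. Each $f(e_i)$ is locally killed by some PD-power, so a single PD-power kills $f$. This is exactly where finiteness of the rank of $\calK$ is used.
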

\end{parag}

\begin{parag}
In what follows, we consider the ring of Witt vectors $W(k)$ of a perfect field $k$ of characteristic $p.$ If a gothic letter $\frakX$ denotes a logarithmic $p$-adic formal scheme, the corresponding roman letter $X$ will denote its special fiber. We equip $\op{Spf}W(k)$ with the trivial logarithmic structure and we consider a log flat and locally of finite type fs logarithmic $p$-adic formal scheme $\frakS$ over $\op{Spf}W(k).$ We denote by $S$ its special fiber and consider a log smooth morphism $X\ra S$ of fs logarithmic schemes which is of Cartier type.
In section 5, we recall the Oyama sites $\calE(X/\frakS)$ and $\underline{\calE}(X/\frakS)$ and the subcategories of crystals of the associated ringed topoi (\cite{SF2} 8). We have a continuous and cocontinuous functor $\rho:\underline{\calE}(X/\frakS) \ra \calE(X'/\frakS)$ \eqref{era3rhoShiho} that induces a morphism of ringed topoi
$$
C_{X/\frakS}:\widetilde{\underline{\calE}}(X/\frakS) \ra \widetilde{\calE}(X'/\frakS).
$$
The pullback functor of $C_{X/\frakS}$ preserves crystals.

We also recall that, under a lifting assumption of $X,$ there exist Hopf algebras $\calR'_1$ and $\calQ_1$ on $X'$ and $X$ respectively, such that the subcategories of crystals of $\widetilde{\calE}(X'/\frakS)$ and $\widetilde{\underline{\calE}}(X/\frakS)$ are equivalent to modules on $X$ equipped with stratifications with respect to $\calR'_1$ and $\calQ_1$ respectively \eqref{equivRQ}. 
\end{parag}

\begin{parag}
Next, in section 6, we define indexed versions of these Oyama topoi. The functors
$$
u':\begin{array}[t]{clc}
\calE(X'/\frakS) & \ra & \text{ét}_{/X'} \\
(U,\frakT,u) & \mapsto & U
\end{array},\ 
\underline{u}:\begin{array}[t]{clc}
\underline{\calE}(X/\frakS) & \ra & \text{ét}_{/X} \\
(U,\frakT,u) & \mapsto & U
\end{array}
$$
induce morphisms of topoi
\begin{equation}
u':\widetilde{\calE}(X'/\frakS) \ra X'_{\text{ét}},\ \underline{u}:\widetilde{\underline{\calE}}(X/\frakS) \ra X_{\text{ét}}.
\end{equation}
We set
$
\upmu_X=\ov{\calM}_X^{gp},\ \upmu'=u'^{-1}\upmu_X,\ \underline{\upmu}=\underline{u}^{-1}\upmu_X.
$
The localized topoi
$
\widetilde{\calE}(X'/\frakS)_{/\upmu'}$ and $\widetilde{\underline{\calE}}(X/\frakS)_{/\underline{\upmu}}
$
are naturally equipped with the rings of 
$
\widetilde{\calE}(X'/\frakS)$ and $\widetilde{\underline{\calE}}(X/\frakS)$ localized respectively at $\upmu'$ and $\underline{\upmu}.$
These ringed topoi will play the role of Oyama topoi in the indexed framework. Just as in the nonindexed case, the data of a module $\F$ of $\widetilde{\calE}(X'/\frakS)_{/\upmu'}$ (resp. $\widetilde{\underline{\calE}}(X/\frakS)_{/\underline{\upmu}}$) is equivalent to the data, for every object $(U,\frakT,u),$ of a module $\F_{(U,\frakT,u)}$ of $\left (U_{\text{ét}/\ov{\calM}_U^{gp}},\left (u_*\Ox_T \right )_{\ov{\calM}_U^{gp}} \right ),$ and, for every morphism $(f,g):(U_1,\frakT_1,u_1) \ra (U_2,\frakT_2,u_2),$ a morphism
$
\widetilde{f}_{\upmu}^*\F_{(U_2,\frakT_2,u_2)} \ra \F_{(U_1,\frakT_1,u_1)},
$
satisfying certain conditions \eqref{indlindescentdata2}.

A local section $s\in \Gamma(U,\upmu_X)$ naturally induces morphisms of topoi
\begin{equation}
\alpha_s':\widetilde{\calE}(U'/\frakS)  \ra  \widetilde{\calE}(X'/\frakS)_{/\upmu'},\ 
\underline{\alpha}_s:\widetilde{\underline{\calE}}(U/\frakS) \ra  \widetilde{\underline{\calE}}(X/\frakS)_{/\underline{\upmu}}.
\end{equation}
We then define a crystal of $\widetilde{\calE}(X'/\frakS)_{/\upmu'}$ (resp. $\widetilde{\underline{\calE}}(X/\frakS)_{/\underline{\upmu}}$) to be a module $\F$ of $\widetilde{\calE}(X'/\frakS)_{/\upmu'}$ (resp. $\widetilde{\underline{\calE}}(X/\frakS)_{/\underline{\upmu}}$) such that 
$
\alpha_s'^{-1}\F\ (\text{resp.}\ \underline{\alpha}_s^{-1}\F)
$
is a crystal for all local sections $s$ of $\upmu_X.$
\end{parag}

\begin{parag}
Similarly, we can define subsites $\calE_{\mathrm{strict}}(X'/\frakS)_{/\upmu'}$ and $\underline{\calE}_{\mathrm{strict}}(X/\frakS)_{/\underline{\upmu}}$ of $\calE(X'/\frakS)_{/\upmu'}$ and $\underline{\calE}(X/\frakS)_{/\underline{\upmu}}$ respectively, consisting of objects $(U,\frakT,u,s)$ such that $u$ is strict. We then prove, in \ref{equivShihoind}, that the crystals on these subsites extend naturally to crystals on $\calE_{\mathrm{strict}}(X'/\frakS)_{/\upmu'}$ and $\underline{\calE}_{\mathrm{strict}}(X/\frakS)_{/\underline{\upmu}}$ respectively:
\begin{proposition}[\ref{equivShihoind}]\label{prop19}
Let $\frakS$ be an fs logarithmic $p$-adic formal scheme, log flat and locally of finite type over $\op{Spf}W(k).$ Let $X$ be a log smooth fs logarithmic scheme over $S.$
The restriction functor
$$
\widetilde{\calE}(X'/\frakS)_{/\upmu'} \ra \widetilde{\calE}_{\mathrm{strict}}(X'/\frakS)_{/\upmu'},\quad
\left ( \text{resp.}\ \widetilde{\underline{\calE}}(X/\frakS)_{/\underline{\upmu}} \ra \widetilde{\underline{\calE}}_{\mathrm{strict}}(X/\frakS) \right )_{/\underline{\upmu}}
$$
induces an equivalence between the subcategories of crystals
$$
\begin{Bmatrix}\text{Crystals\ of\ }\widetilde{\calE}(X'/\frakS)_{/\upmu'} \end{Bmatrix} \xrightarrow{\sim} \begin{Bmatrix}\text{Crystals\ of\ }\widetilde{\calE}_{\mathrm{strict}}(X'/\frakS)_{/\upmu'} \end{Bmatrix}
$$
$$
\left (\text{resp.}\ \begin{Bmatrix}\text{Crystals\ of\ }\widetilde{\underline{\calE}}(X/\frakS)_{/\underline{\upmu}} \end{Bmatrix} \xrightarrow{\sim} \begin{Bmatrix}\text{Crystals\ of\ }\widetilde{\underline{\calE}}_{\mathrm{strict}}(X/\frakS)_{/\underline{\upmu}} \end{Bmatrix}
\right )
$$
\end{proposition}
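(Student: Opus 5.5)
The natural strategy is to reduce to the non-indexed statement, proved in \cite{SF2} following Shiho. There, the restriction from $\widetilde{\calE}(X'/\frakS)$ to $\widetilde{\calE}_{\mathrm{strict}}(X'/\frakS)$, and likewise for $\underline{\calE}$, induces an equivalence on crystals. We then glue over the index sheaf $\upmu_X$ using the morphisms $\alpha'_s$ and $\underline{\alpha}_s$. I treat only the $X'$ case; the $\underline{\calE}$ case is identical word for word.

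First I make precise how a module $\F$ on $\widetilde{\calE}(X'/\frakS)_{/\upmu'}$ is described. By the descent description \eqref{indlindescentdata2}, $\F$ amounts to modules $\F_{(U,\frakT,u)}$ on $U_{\text{ét}/\ov{\calM}_U^{gp}}$ together with transition maps. Equivalently, $\F$ is given by the family
\[
\F_s:=\alpha_s'^{-1}\F \quad \text{on } \widetilde{\calE}(U'/\frakS),\qquad s\in\Gamma(U,\upmu_X),\ U \text{ étale over } X,
\]
together with compatibility isomorphisms. These isomorphisms come in two kinds:
\begin{itemize}
\item restriction along étale maps $V\ra U$, which sends $s$ to $s|_V$;
\item for étale covers, the sheaf condition on sections of $\upmu_X$.
\end{itemize}
The same description holds on the strict subsite, via strict analogues $\alpha'_{s,\mathrm{strict}}:\widetilde{\calE}_{\mathrm{strict}}(U'/\frakS)\ra \widetilde{\calE}_{\mathrm{strict}}(X'/\frakS)_{/\upmu'}$. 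Next I check that restriction to the strict site commutes with the functors $\alpha_s'^{-1}$. This holds because the objects $(U,\frakT,u,s)$ of the localized sites are just objects of $\calE(U'/\frakS)$ decorated with the section $s$, and strictness only concerns $u$. So the square formed by the restriction functors and $\alpha'^{-1}_s$, $\alpha'^{-1}_{s,\mathrm{strict}}$ commutes up to canonical isomorphism.

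Full faithfulness and essential surjectivity then follow by gluing.
\begin{itemize}
\item \emph{Full faithfulness.} A morphism of crystals on $\widetilde{\calE}(X'/\frakS)_{/\upmu'}$ is a compatible family of morphisms $\alpha_s'^{-1}\F\ra\alpha_s'^{-1}\calG$ between crystals of $\widetilde{\calE}(U'/\frakS)$. The non-indexed result from \cite{SF2}, applied to each $U$ (which is log smooth over $S$), identifies these bijectively with their strict restrictions. The compatibilities are preserved by naturality of the non-indexed equivalence with respect to étale localization $V\ra U$.
\item \emph{Essential surjectivity.} Let $\calG$ be a crystal on the strict indexed site. For each $s$, the non-indexed equivalence extends $\alpha_{s,\mathrm{strict}}'^{-1}\calG$ to a crystal $\F_s$ on $\widetilde{\calE}(U'/\frakS)$, unique up to unique isomorphism. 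The gluing isomorphisms for $\calG$ lift uniquely to the $\F_s$, again by full faithfulness. The cocycle conditions then hold automatically, and the $\F_s$ glue to a module $\F$ on $\widetilde{\calE}(X'/\frakS)_{/\upmu'}$. By definition $\F$ is a crystal, and it restricts to $\calG$.
\end{itemize}

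The main obstacle is the first step: checking that the indexed topos really decomposes as a gluing of the $\widetilde{\calE}(U'/\frakS)$ along the étale sheaf $\upmu_X$. Concretely, an object of $\calE(X'/\frakS)_{/\upmu'}$ is a pair $((U,\frakT,u),\,s\in\Gamma(U,\upmu_X))$, and every sheaf is determined by its values on these. I would prove this by identifying $\calE(X'/\frakS)_{/\upmu'}$ with the fibred site of pairs $(U\ra X \text{ étale},\, s)$, which is possible because $\upmu'=u'^{-1}\upmu_X$ is pulled back from the étale topos. One must also make sure that the non-indexed Shiho-type equivalence of \cite{SF2} is compatible with étale base change $U\ra X$. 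This should follow from its explicit construction, which extends a crystal from strict objects by locally choosing strict embeddings, so I would verify it directly from that construction.
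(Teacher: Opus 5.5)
Your proposal is correct and follows the paper's proof. The paper likewise observes that restriction commutes with $\alpha_s'^{-1}$, so that $R(\F)_s=R_U(\F_s)$ and restriction preserves crystals. It then deduces the equivalence from the non-indexed result (\cite{SF2} 8.26), leaving implicit the gluing over sections of $\upmu_X$ that you spell out; your ``main obstacle'' is already Propositions \ref{indexeddescentdata} and \ref{iranlem145} together with Remark \ref{remShiho196}. The compatibility with $\rho_g$ is formal and needs no inspection of the explicit construction: restriction commutes strictly with $\rho_g^{-1}$, so the quasi-inverses commute with it up to canonical isomorphism.
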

We can define a crystal $\frakA_X$ of $\widetilde{\underline{\calE}}_{\mathrm{strict}}(X/\frakS)_{/\underline{\upmu}}$ satisfying, for any object $(U,\frakT,u)$ of $\underline{\calE}_{\mathrm{strict}}(X/\frakS),$
$$
\frakA_{X,(U,\frakT,u)}=\calA_T,
$$
where $T$ is the special fiber of $\frakT.$
This crystal then extends, by \ref{prop19}, to a crystal $\frakA_X$ of $\widetilde{\underline{\calE}}(X/\frakS)_{/\underline{\upmu}}.$
Consider the assumption
\begin{itemize}
\item[$(H)$] The morphism $X\ra S$ lifts to a log smooth morphism $f:(\frakX,Q) \ra (\frakS,P)$ of framed fs logarithmic $p$-adic formal schemes and $X'$ lifts to an fs framed logarithmic $p$-adic formal scheme $(\frakX',Q')$ over $(\frakS,P),$ such that $\frakX' \ra \frakS$ is log smooth.
\end{itemize}
We then prove, in \ref{equivRQind} and under the hypothesis $(H),$ that there exists a canonical equivalence of categories between the category of crystals of $\widetilde{\underline{\calE}}(X/\frakS)_{/\underline{\upmu}}$ and the category of $\Ox_{X,\upmu_X}$-modules equipped with a $\calQ_{1,\upmu_X}$-stratification. We also prove that, under $(H),$ there exists a canonical equivalence of categories between the category of crystals of $\widetilde{\calE}(X'/\frakS)_{/\upmu'}$ and the category of $\Ox_{X',\upmu_X}$-modules equipped with an $\calR_{1,\upmu_X}'$-stratification. In addition, these equivalences preserve structures of $\upmu_X$-indexed algebras and $\upmu_X$-indexed modules.
Let $q_1,q_2:Q_{\frakX} \ra \frakX$ and $q_1',q_2':R_{\frakX',1} \ra \frakX'$ be the canonical projections. The projections $q_1$ and $q_2$ are strict and so they induce, by (\cite{Lor2000} I 3.1 page 273), isomorphisms of $\upmu_X$-indexed algebras
$c_i:q_i^*\calA_X \xrightarrow{\sim} \calA_{Q_1}.$
We get a $\calQ_{1,\upmu_X}$-stratification
\begin{equation}\label{intro2}
\epsilon_{\calA_X,Q}:q_2^*\calA_X \xrightarrow{c_2} \calA_{Q_1} \xrightarrow{c_1^{-1}} q_1^*\calA_X.
\end{equation}
Under the hypothesis $(H),$ the crystal $\frakA_X$ previously defined, corresponds to $\left (\calA_X,\epsilon_{\calA_X,Q} \right ).$
Now consider the $\upmu_X$-indexed $\Ox_{X',\upmu_X}$-module $\calB_{X/S}.$ The isomorphism that exchanges factors $q_2'^*\calB_{X/S} \xrightarrow{\sim} q_1'^*\calB_{X/S}$ is an $\calR_{1,\upmu_X}'$-stratification on $\calB_{X/S}$ and so it corresponds to a crystal $\frakB_X$ of $\widetilde{\calE}(X'/\frakS)_{/\upmu'}.$ We check in \ref{propBglue} that the definition of this crystal $\frakB_X$ doesn't depend on the local assumption $(H)$ and can hence be defined globally as a crystal of $\widetilde{\calE}(X'/\frakS)_{/\upmu'}$ without the need of the assumption $(H).$ The crystals $\frakA_X$ and $\frakB_X$ are naturally equipped with structures of indexed algebras \eqref{equivRQind}.
We will be interested in crystals of $\upmu'$-indexed $\frakB_X$-modules and $\underline{\upmu}$-indexed $\frakA_X$-modules. Under the hypothesis $(H),$ these crystals correspond to modules with \emph{admissible} stratifications:
\begin{enumerate}
\item Let $\calE$ be a $\upmu_X$-indexed $\calA_{X}$-module equipped with a $\calQ_{1,\upmu_X}$-stratification
$$\epsilon:\calQ_{1,\upmu_X}\otimes_{\Ox_{X,\upmu_X}}\calE \ra \calE\otimes_{\Ox_{X,\upmu_X}}\calQ_{1,\upmu_X}.$$
The stratification $\epsilon$ is said to be \emph{admissible} if it is compatible with $\epsilon_{\calA_X,Q}$ \eqref{intro2}.
\item A $\upmu_X$-indexed $\calB_{X/S}$-module $\calE'$ equipped with an $\calR_{1,\upmu_X}'$-stratification
$\epsilon':\calR_{1,\upmu_X}'\otimes_{\Ox_{X',\upmu_X}}\calE' \ra \calE'\otimes_{\Ox_{X',\upmu_X}}\calR_{1,\upmu_X}'$
is said to be \emph{admissible} if $\epsilon'$ is $\calB_{X/S}$-linear.
\end{enumerate}
We prove, in \ref{equivRQAB}, that the canonical equivalence of categories between the category of crystals of $\widetilde{\underline{\calE}}(X/\frakS)_{/\underline{\upmu}}$ (resp. $\widetilde{\calE}(X'/\frakS)_{/\upmu'}$) and the category of $\Ox_{X,\upmu_X}$-modules equipped with a $\calQ_{1,\upmu_X}$-stratification (resp. $\Ox_{X',\upmu_X}$-modules equipped with an $\calR_{1,\upmu_X}'$-stratification), induces a canonical equivalence of categories between the full subcategory of crystals of $\upmu$-indexed $\frakA_X$-modules of $\widetilde{\underline{\calE}}(X/\frakS)_{/\underline{\upmu}}$ (resp. crystals of $\upmu'$-indexed $\frakB_X$-modules of $\widetilde{\calE}(X'/\frakS)_{/\upmu'}$) and the full subcategory of $\upmu_X$-indexed $\calA_X$-modules equipped with an admissible $\calQ_{1,\upmu_X}$-stratification (resp. $\upmu_X$-indexed $\calB_{X/S}$-modules equipped with an admissible $\calR_{1,\upmu_X}'$-stratification).
\end{parag}

\begin{parag}
In addition to the hypothesis $(H),$ suppose that the exact relative Frobenius $F_1:X\ra X'$ lifts to an $(\frakS,P)$-morphism of framed logarithmic formal schemes $F:(\frakX,Q) \ra (\frakX',Q'),$ such that $\frakX'\ra \frakS$ is log smooth. The lifting $F$ induces a morphism of Hopf algebras $\calV:\calR_1' \ra \calQ_1$ \eqref{paragIran6}. In \ref{propkraz1934}, we construct a commutative diagram
\begin{equation}
\begin{tikzcd}
\begin{Bmatrix}\text{Crystals\ of\ }\widetilde{\calE}(X'/\frakS)_{/\upmu'} \end{Bmatrix} \ar{r}{C^{-1}_{X/\frakS,\upmu}} \ar[swap, sloped]{d}{\sim} & \begin{Bmatrix}\text{Crystals\ of\ }\widetilde{\underline{\calE}}(X/\frakS)_{/\underline{\upmu}} \end{Bmatrix}  \ar[sloped]{d}{\sim} \\
\begin{Bmatrix}\Ox_{X',\upmu_X}\text{-modules\ with\ an}\\ \calR_{1,\upmu_X}'\text{-stratification} \end{Bmatrix} \ar{r}{\Psi_{\upmu}} & \begin{Bmatrix}\Ox_{X,\upmu_X}\text{-modules\ with\ a}\\ \calQ_{1,\upmu_X}\text{-stratification} \end{Bmatrix},
\end{tikzcd}
\end{equation}
where the vertical equivalences were given above, $C_{X/\frakS,\upmu}$ is induced by the morphism of topoi $C_{X/\frakS}$ and $\Psi_{\upmu}(\calE',\epsilon') = \left (F_1^*\calE',\calV^*\epsilon'\right )$ for an $\Ox_{X',\upmu_X}$-module $\calE'$ equipped with an $\calR_{1,\upmu_X}'$-stratification $\epsilon'.$
Finally, we prove, always under the assumption of the lifting of the Frobenius, the following theorem:
\begin{theorem}[\ref{THMXX}]
Let $(\frakS,P)$ be an fs framed logarithmic $p$-adic formal scheme which is log flat and locally of finite type over $\op{Spf}W(k)$ and denote its special fiber by $S.$ Let $X$ be an fs logarithmic scheme which is log smooth and of Cartier type over $S.$ Suppose that the exact relative Frobenius $F_1:X\ra X'$ lifts to an $(\frakS,P)$-morphism of framed logarithmic formal schemes $F:(\frakX,Q) \ra (\frakX',Q'),$ such that $\frakX\ra \frakS$ and $\frakX'\ra \frakS$ are log smooth. Then, there exists a commutative diagram
\begin{equation}
\begin{tikzcd}
\begin{Bmatrix}\mathrm{Crystals\ of\ }\upmu'\text{-}\mathrm{indexed}\\ \frakB_X\text{-}\mathrm{modules\ of\ }\widetilde{\calE}(X'/\frakS)_{/\upmu'} \end{Bmatrix} \ar{r}{C_{X/\frakS,\upmu}^{-1}} \ar[swap,sloped]{d}{\sim} & \begin{Bmatrix}\mathrm{Crystals\ of\ }\underline{\upmu}\text{-}\mathrm{indexed}\\ \frakA_X\text{-}\mathrm{modules\ of\ }\widetilde{\underline{\calE}}(X/\frakS)_{/\underline{\upmu}} \end{Bmatrix} \ar[sloped]{d}{\sim}\\
\begin{Bmatrix}\upmu_X\text{-}\mathrm{indexed\ }\calB_{X/S}\text{-}\mathrm{modules\ with}\\ \mathrm{an\ admissible}\\ \calR_{1,\upmu_X}'\text{-}\mathrm{stratification} \end{Bmatrix} \ar[swap,sloped]{d}{\sim} \ar{r} & \begin{Bmatrix}\upmu_X\text{-}\mathrm{indexed\ }\calA_X\text{-}\mathrm{modules\ with}\\ \mathrm{an\ admissible}\\Q_{1,\upmu_X}\text{-}\mathrm{stratification} \end{Bmatrix} \ar[sloped]{d}{\sim} \\
\begin{Bmatrix}\mathrm{Locally\ PD}\text{-}\mathrm{nilpotent\ } \upmu_X\text{-}\mathrm{indexed}\\\left (\left (\w{\Gamma}^{\bullet}\calT_{X'/S}\right )_{\upmu_X}\otimes_{\Ox_{X',\upmu_X}}\calB_{X/S} \right )\text{-}\mathrm{modules} \end{Bmatrix} \ar{r}{\Phi_{\upmu}} & \begin{Bmatrix}\mathrm{Locally\ PD}\text{-}\mathrm{nilpotent\ }\upmu_X\text{-}\mathrm{indexed}\\\widetilde{\calD}_{X/S}^{\gamma}\text{-}\mathrm{modules} \end{Bmatrix}.
\end{tikzcd}
\end{equation}
Since $\Phi_{\upmu}$ is an equivalence of categories, the functor $C_{X/\frakS,\upmu}^{-1}$ is also an equivalence of categories.
\end{theorem}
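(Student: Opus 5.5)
The plan is to assemble the diagram from the three rows already available and then check that each square commutes.

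\emph{Top vertical arrows.} These are the equivalences of \ref{equivRQAB}, which hold under $(H)$. Hypothesis $(H)$ is implied by the assumption that $F$ lifts $F_1$ with $\frakX,\frakX'$ log smooth over $\frakS$.

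\emph{Middle horizontal arrow.} This is the indexed version of $\Psi_{\upmu}$ from \ref{propkraz1934}, namely
$$(\calE',\epsilon')\mapsto (\calA_X\circledast_{\calB_{X/S}}\calE',\ \epsilon_{\calA_X,Q}\circledast \calV^*\epsilon').$$
Here $\calA_X\circledast_{\calB_{X/S}}\calE'$ agrees with $F_1^*\calE'$ regarded as an $\calA_X$-module, because $\calB_{X/S}$ is an $\Ox_{X',\upmu_X}$-algebra and $\calA_X$ receives $F_1^{-1}\Ox_{X'}$. I will check two things:
\begin{itemize}
\item admissibility of $\epsilon'$ (that is, $\calB_{X/S}$-linearity) is exactly what makes $\calV^*\epsilon'$ descend to the tensor product over $\calB_{X/S}$;
\item the resulting stratification is compatible with $\epsilon_{\calA_X,Q}$.
\end{itemize}
Commutativity of the top square then follows from the diagram of \ref{propkraz1934}. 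The one extra input is that $C^{-1}_{X/\frakS,\upmu}\frakB_X\to\frakA_X$ corresponds, under the vertical equivalences, to the inclusion $F_1^*\calB_{X/S}\to\calA_X$ with the stratifications matched. I would verify this locally on strict objects using \ref{prop19}: on a strict object both sides are $\calA_T$ and $\calB_T$, and the map is the canonical inclusion.

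\emph{Lower vertical arrows.} These are the indexed analogues of the nonindexed dictionaries of \cite{SF2}, obtained by localizing at $\upmu_X$ and keeping track of the $\calB$- and $\calA$-structures.
\begin{itemize}
\item On the left: an $\calR_1'$-stratification amounts to a locally PD-nilpotent $\w\Gamma^\bullet\calT_{X'/S}$-action, since $\calR_1'$ is dual to the completed PD-algebra, i.e.\ a PD-Higgs field. $\calB_{X/S}$-linearity turns this into a module structure over $(\w\Gamma)_{\upmu_X}\otimes_{\Ox_{X',\upmu_X}}\calB_{X/S}$.
\item On the right: a $\calQ_1$-stratification amounts to a locally PD-nilpotent action of $\calD^{\gamma}_{X/S}$, a connection whose $p$-curvature has nilpotent divided powers. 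Admissibility says this action is compatible with $d_{\calA_X}$, which is exactly an action of $\widetilde{\calD}^{\gamma}_{X/S}=\calA_X\otimes\calD^{\gamma}_{X/S,\upmu}$.
\end{itemize}

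\emph{Bottom square.} Identify $\widetilde{\calD}^{\gamma}_{X/S}$ with $(\calB_{X/S}\otimes\w\Gamma)\circledast_{\calB_{X/S}\otimes S}\widetilde{\calD}_{X/S}$, so that the target of $\Phi_{\upmu}$ in \ref{propKoko1717} matches the bottom-right box. Commutativity then amounts to showing that Ohkawa's splitting module, built from the lift $F$, is the image under the vertical equivalence of $\calA_X$ with the stratification $\epsilon_{\calA_X,Q}\circledast\calV^*(-)$. Concretely, the Morita functor $\Phi_{\upmu}$ is $\calE'\mapsto \calA_X\circledast_{\calB_{X/S}}\calE'$, with $\widetilde{\calD}$ acting through the connection defined by $F$:
$$\nabla(a\otimes e)=d_{\calA_X}a\otimes e+\sum \tfrac{F^*}{p}(\omega_i)\,\theta_i(e).$$
I would compare this with the connection induced by $\calV^*\epsilon'$, working locally with log coordinates. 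The image of $\calV$ on the PD-coordinates of $\calR_1'$ is given by $F^*/p$, which is exactly the formula Ohkawa uses.

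\emph{Conclusion.} Once every square commutes, all vertical arrows are equivalences and $\Phi_{\upmu}$ is an equivalence by \ref{propKoko1717}, so $C^{-1}_{X/\frakS,\upmu}$ is an equivalence.

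The main obstacle is this last comparison: showing that the connection coming from $\calV^*\epsilon'$ agrees with Ohkawa's splitting, including the indexed $\calA_X$-part and the divided-power completions. I would reduce it to a computation on generators $\operatorname{dlog}m$ in a chart, where both sides are explicit.
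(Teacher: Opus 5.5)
Your architecture is the paper's. The upper square is \ref{propkraz1934} followed by a comparison of the two linearizations (Theorem \ref{thmKoko20125}, which evaluates $\frakA_X\circledast_{C^{-1}_{X/\frakS,\upmu}\frakB_X}\frakE$ at $(X,\frakX,i_X)$ and $(X,Q_{\frakX},\lambda_Q)$ via \ref{propKoko1915} and \ref{ommek4}). The lower square is a local comparison of the actions on $a\otimes x'$ (Theorem \ref{THMXXaa}). The gap is in what you call ``the one extra input.''

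First, $\calA_X\circledast_{\calB_{X/S}}\calE'$ is \emph{not} $F_1^*\calE'$. In the log case $F_1^*\calB_{X/S}\ra\calA_X$ is not onto: on $\op{Spec}k[t]$ with the log structure of the origin, its image in index $\ov{nt}$ is $t^r\Ox_X e_{nt}$, with $0\le r<p$ and $r\equiv -n\bmod p$. That is precisely why the linearization $L_{\mathrm{crys}}$ is part of the top arrow.

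Second, the compatibility of this map with the stratifications cannot be read off on strict objects. Working on the strict subsite of $\widetilde{\underline{\calE}}(X/\frakS)_{/\underline{\upmu}}$ is legitimate, but there is no ``$\calB_T$'' there. The value of $C^{-1}_{X/\frakS,\upmu}\frakB_X$ at $(U,\frakT,u)$ is $\frakB_{X,(U',\frakT,\Delta_u)}$, i.e.\ $\calB_{U/S}$ pulled back along $\Delta_u:T\ra U'$, and $\Delta_u$ is not strict (already $\rho(X,\frakX,i_X)=(X',\frakX,F_1)$). Concretely, $(X,\frakX,i_X)$ and $(X,Q_{\frakX},\lambda_Q)$ are both strict objects. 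Compatibility with the transition maps $q_1,q_2$ between them forces $c_1(q_1^*b)=c_2(q_2^*b)$ in $\calA_{Q_1}$ for every local section $b$ of $\calB_{X/S}$, i.e.\ $\epsilon_{\calA_X,Q}(1\otimes b)=b\otimes 1$. Nothing about strictness makes this automatic; it is a statement about horizontal sections on $Q_1$.

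This $\calB_{X/S}$-linearity of $\epsilon_{\calA_X,Q}$ is Proposition \ref{Kokoprop1925}, and it is the missing idea. It is needed in three places:
\begin{itemize}
\item without it the morphism of crystals $C^{-1}_{X/\frakS,\upmu}\frakB_X\ra\frakA_X$ defining $L_{\mathrm{crys}}$ does not exist;
\item $\epsilon_{\calA_X,Q}\circledast\calV^*\epsilon'$ does not descend to $\circledast_{\calB_{X/S}}$, since \ref{propdraft3} needs \emph{both} factors to be $\calB_{X/S}$-linear, not just $\epsilon'$;
\item admissibility uses it again, in \ref{propdraft4}.
\end{itemize}
Its proof is a genuine computation on $\calQ_1$:
\begin{itemize}
\item Write $b=te_s$ with $dt+t\,\op{dlog}s=0$.
\item By \ref{propkraz1932}, $\epsilon_{\calA_X,Q}(1\otimes te_s)=e_s\otimes g^{\#}(p_2^{\#}(t)(1+a))$, where $p_2^{\flat}s-p_1^{\flat}s=\alpha_Y^{-1}(1+a)$ and $a$ represents $\op{dlog}s$ in $\calI/\calI^2$.
\item Hence $p_2^{\#}(t)(1+a)-p_1^{\#}(t)\in\calI^2$, so $\varphi_{\epsilon_i,0}$ kills $b$.
\item The recursion $\varphi_{J-\epsilon_i,0}\cdot\varphi_{\epsilon_i,0}=(J_i-1)\varphi_{J-\epsilon_i,0}+J_i\varphi_{J,0}$ of \ref{lem1924Koko} then kills every $\varphi_{I,0}$, and no $\eta_{(q)}$-terms occur.
\end{itemize}

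A smaller point on the lower square: your formulas will not match on the nose. With the identifications $\varphi_{I,J}\leftrightarrow(-1)^{|J|}I!\,\partial_I\otimes\partial'^{[J]}$ and $\varphi'_I\leftrightarrow\partial'^{[I]}$, the $\partial'^{[L]}$ act with an extra sign $(-1)^{|L|}$. The paper absorbs this by twisting with the antipode $\partial'_i\mapsto-\partial'_i$ of $\w\Gamma$, which is harmless for the equivalence. Matching the $c_j$-terms also requires \ref{lemkraz} and the identity $\partial_{\epsilon_i}\circ u=\varphi_{\epsilon_i,0}$ for $u:\calQ_1\ra\calP_0$.
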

Finally, we deduce an equivalence of categories between crystals of $\frakB_X$-modules and crystals of $\frakA_X$-modules:
\begin{theorem}[\ref{THMXXYY}]
Let $\frakS$ be an fs logarithmic $p$-adic formal scheme which is log flat and locally of finite type over $\op{Spf}W(k)$ and denote its special fiber by $S.$ Let $X$ be an fs logarithmic scheme which is log smooth and of Cartier type over $S.$ Then, we have an equivalence of categories
$$
\begin{Bmatrix}\mathrm{Crystals\ of\ }\upmu'\text{-}\mathrm{indexed}\\ \frakB_X\text{-}\mathrm{modules\ of\ }\widetilde{\calE}(X'/\frakS)_{/\upmu'} \end{Bmatrix}  \ra \begin{Bmatrix}\mathrm{Crystals\ of\ }\underline{\upmu}\text{-}\mathrm{indexed}\\ \frakA_X\text{-}\mathrm{modules\ of\ }\widetilde{\underline{\calE}}(X/\frakS)_{/\underline{\upmu}} \end{Bmatrix}.
$$
\end{theorem}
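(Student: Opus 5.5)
The plan is to deduce the global statement from the local one (Theorem \ref{THMXX}) by étale descent. The global functor is $C_{X/\frakS,\upmu}^{-1}$, induced by the morphism of ringed topoi $C_{X/\frakS}$. Its pullback preserves crystals, and it sends $\frakB_X$ to $\frakA_X$ compatibly with the indexed algebra structures. Hence it sends crystals of $\upmu'$-indexed $\frakB_X$-modules to crystals of $\underline{\upmu}$-indexed $\frakA_X$-modules. This functor is defined without any lifting hypothesis, so only full faithfulness and essential surjectivity remain to be shown, and both can be checked étale locally on $X$.

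\textbf{Localization.} For an étale morphism $U\ra X$ (so $U'\ra X'$ is étale as well), I would check the following:
\begin{itemize}
\item the restriction of crystals to $U$ is compatible with $C_{X/\frakS,\upmu}^{-1}$;
\item $\frakA_X$ and $\frakB_X$ restrict to $\frakA_U$ and $\frakB_U$ (by \ref{propBglue} and the construction via \ref{prop19});
\item crystals of indexed $\frakB_X$-modules (resp. $\frakA_X$-modules) form a stack for the étale topology on $X$, i.e.\ they can be glued from étale local data with descent isomorphisms.
\end{itemize}
The stack property should follow from the description of a module on the localized Oyama topoi as a compatible family $\F_{(U,\frakT,u)}$ \eqref{indlindescentdata2}, since objects of $\calE(X'/\frakS)$ and $\underline{\calE}(X/\frakS)$ over an étale cover of $X$ form a covering in the site.

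\textbf{Local lifting.} Étale locally on $S$ and $X$, I would choose charts so that the hypotheses of Theorem \ref{THMXX} hold:
\begin{itemize}
\item $\frakS$ admits a framing $P$;
\item $X\ra S$ (log smooth, of Cartier type) lifts to a log smooth framed $(\frakX,Q)\ra(\frakS,P)$, using a good chart $P\ra Q$ and the log smoothness of the chart morphism;
\item $X'$ lifts to $(\frakX',Q')$, obtained as a base change of $\frakX$;
\item the exact relative Frobenius $F_1$ lifts to $F:(\frakX,Q)\ra(\frakX',Q')$, e.g.\ via multiplication by $p$ on $Q$ and a lift of Frobenius on the coordinates.
\end{itemize}
I expect this step to be the main obstacle. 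The exact relative Frobenius involves the saturation/exactification $X'$, so the lift must be built on the level of charts $Q\to Q'$ compatibly with the Cartier type condition. I would first try the standard local model $X=S\times_{\mathbb{Z}[P]}\mathbb{Z}[Q]$ étale, with $Q'$ the exactification of $P\oplus_{P,F}Q$, and lift everything via $W$-coefficients using the log flatness of $\frakS$.

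\textbf{Conclusion.} On each such $U$, Theorem \ref{THMXX} shows that $C_{U/\frakS,\upmu}^{-1}$ is an equivalence.
\begin{itemize}
\item \emph{Full faithfulness.} $\mathrm{Hom}$ sheaves between crystals are étale sheaves, so local bijectivity of the maps on $\mathrm{Hom}$ gives global bijectivity. Alternatively, full faithfulness can already be obtained globally from the fully faithful result of \cite{SF2} adapted to the indexed setting.
\item \emph{Essential surjectivity.} Take a global crystal $\calE$ of $\frakA_X$-modules and an étale cover $\{U_i\}$ with liftings. Choose $\F_i$ with $C^{-1}\F_i\simeq \calE|_{U_i}$. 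The resulting isomorphisms $C^{-1}(\F_i|_{U_{ij}})\simeq C^{-1}(\F_j|_{U_{ij}})$ descend uniquely to isomorphisms $\F_i|_{U_{ij}}\simeq\F_j|_{U_{ij}}$, by full faithfulness on $U_{ij}$. Uniqueness forces the cocycle condition, so the stack property glues the $\F_i$ to a crystal $\F$ with $C^{-1}\F\simeq\calE$.
\end{itemize}
Notably, the equivalence obtained does not depend on the choices of liftings, since the functor itself is lifting-free.
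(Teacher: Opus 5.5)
Your strategy is the paper's: reduce étale locally to $(H_2)$, apply Theorem \ref{THMXX} on each piece, and glue objects and morphisms through the stack $\mathbbm{E}'$. The gap is in the functor itself. You assert that $C_{X/\frakS,\upmu}^{-1}$ sends $\frakB_X$ to $\frakA_X$, so that pulling back a crystal of $\upmu'$-indexed $\frakB_X$-modules already gives a crystal of $\underline{\upmu}$-indexed $\frakA_X$-modules. This is false. There is only a canonical, lifting-independent morphism of indexed algebras $C_{X/\frakS,\upmu}^{-1}\frakB_X\ra\frakA_X$; under $(H_2)$ it is the map $F_1^*\calB_{X/S}\ra\calA_X$, and it is not an isomorphism once the log structure is nontrivial. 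Take $X=\op{Spec}k[t]$ with the log structure generated by $t$ (with $\alpha_X(m)=t$) over $S=\op{Spec}k$ with trivial log structure, and an index $n\in\Z$ prime to $p$ at the origin. The stalk there of $\calB_{X/S,n}$ is generated over $\Ox_{X'}$ by $t^re_{nm}$, with $0<r<p$ and $r\equiv -n \bmod p$. So the image of $F_1^*\calB_{X/S}$ in $\calA_{X,n}=\Ox_Xe_{nm}$ is only $t^r\Ox_Xe_{nm}$. Bare pullback therefore lands in crystals of $C_{X/\frakS,\upmu}^{-1}\frakB_X$-modules, and Theorem \ref{THMXX} says nothing about that functor.

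The functor of the theorem is $C^{-1}=L_{\mathrm{crys}}\circ C^{-1}_{X/\frakS,\upmu}$ of \eqref{ffC}, namely $\frakE\mapsto\frakA_X\circledast_{C^{-1}_{X/\frakS,\upmu}\frakB_X}C^{-1}_{X/\frakS,\upmu}\frakE$. This composite is what \ref{THMXX} shows to be an equivalence under $(H_2)$. To run your descent with it, your localization step needs two inputs you do not mention.
First, the indexed tensor product of crystals must again be a crystal (\ref{propcrystensstar}), so that $L_{\mathrm{crys}}$ is defined.
Second, the linearization must commute with restriction: $\underline{R_{X,U}}^{-1}(\frakA_X\circledast_{C^{-1}_{X/\frakS,\upmu}\frakB_X}\frakE)\simeq\frakA_U\circledast_{C^{-1}_{U/\frakS,\upmu}\frakB_U}\underline{R_{X,U}}^{-1}\frakE$. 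The paper gets this from \ref{ren3} (since $\underline{R_{X,U}}$ is a localization morphism) together with \ref{AXUESTiso} and \ref{commCXU}; see \ref{lemEST3} and \ref{lemEST2}.
Once these are added, your full faithfulness and gluing arguments coincide with the paper's.

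A smaller inaccuracy concerns the local lifting. $\frakX'$ is not a base change of $\frakX$, since there is no Frobenius lift on $\frakS$ to base change along. The paper instead lifts $U_i'$ directly as the unique strict étale formal scheme over $\frakS\times_{B\langle P\rangle}B\langle Q_i'\rangle$, using \ref{lemShiho208}. It then obtains the Frobenius lift by formal étaleness from the composite $\frakU_i\ra\frakS\times_{B\langle P\rangle}B\langle Q_i\rangle\ra\frakS\times_{B\langle P\rangle}B\langle Q_i'\rangle$.
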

Note that, in this last theorem, we do not require any lifting assumption neither on the exact relative Frobenius nor on $X.$
\end{parag}

\textbf{Acknowledgement.} This article is the third part of my thesis prepared at Université Paris-Saclay and IHES. I express my greatest gratitude to my PhD advisor Ahmed Abbes for introducing me to this topic, his guidance and his patience. I also thank Atsushi Shiho and Daxin Xu for their helpful comments and suggestions.

\section{Notations}

\begin{parag}
For an integer $d\ge 1$ and $I,J\in \N^d,$ we set $I!=\prod_{i=1}^dI_i!$ and $\begin{pmatrix}I \\ J \end{pmatrix}=\prod_{i=1}^d\begin{pmatrix}I_i \\ J_i \end{pmatrix}.$
\end{parag}

\begin{parag}
For integers $1\le i\le d,$ we denote by $\epsilon_{d,i}$ the multi-index of $\N^d$ whose all coefficients are zero except for the $i$th coefficient which equals $1.$ If no confusion arises, we simply denote it by $\epsilon_i.$
\end{parag}

\begin{parag}\label{not23}
For a prime number $p,$ an integer $n\ge 1$ and a logarithmic $p$-adic formal scheme $\frakX,$ we denote by $\frakX_n$ the logarithmic scheme obtained from $\frakX$ by reduction modulo $p^n$ (\cite{SF1} 7.8). The corresponding roman letter $X$ will denote $\frakX_1.$
\end{parag}

\begin{parag}
Let $p$ be a prime number. For morphisms of fs logarithmic schemes (resp. fs logarithmic $p$-adic formal schemes) $X\ra S$ and $Y\ra S,$ we denote by $X\times_S^{\op{log}}Y$ their fiber product in the category of fs logarithmic schemes (resp. fs logarithmic $p$-adic formal schemes).
\end{parag}

\section{Indexed objects}

\subsection*{Indexed algebras and modules}

\begin{parag}\label{indpar1}
In this section, we review the theory of indexed objects introduced by Lorenzon in \cite{Lor2000}. We fix a site $C$ in which all fiber products are representable and consider the associated topos $X$ and a sheaf $\calI$ of $X.$ We denote by $C_{/\calI}$ the category of objects of $C$ over $\calI$ and consider the functor
\begin{equation}
j_{\calI}:\begin{array}[t]{clc} C_{/\calI} & \ra & C \\ (A\ra \calI) & \mapsto & A. \end{array}
\end{equation}
We equip $C_{/\calI}$ with the topology induced by $j_{\calI}$ (\cite{SGA43} exposé III 3.1). Then $j_{\calI}$ is continuous and cocontinuous (\cite{SGA43} exposé III 5.2) and so it induces three functors
\begin{equation}\label{indeq1}
j_{\calI !}:\widetilde{C_{/\calI}}\ra \widetilde{C},\ j_{\calI}^*:\widetilde{C}\ra \widetilde{C_{/\calI}},\ j_{\calI *}:\widetilde{C_{/\calI}}\ra \widetilde{C}
\end{equation}
such that $(j_{\calI}^*,j_{\calI*})$ is a morphism of topoi, and $j_{\calI !}$ is a left adjoint of $j_{\calI}^*$ (\cite{SGA43} exposé III 5.3). By (\cite{SGA43} exposé III 5.4), the functor $j_{\calI !}$ factors as
$\widetilde{C_{/\calI}} \xrightarrow{e} \widetilde{C}_{/\calI} \ra \widetilde{C}$
where $e$ is an equivalence of categories and $\widetilde{C}_{/\calI}\ra \widetilde{C}$ is the canonical functor. By identifying $\widetilde{C_{/\calI}}$ with $\widetilde{C}_{/\calI}$ via $e,$ the three functors given above (\ref{indeq1}) correspond to functors:
\begin{equation}\label{indeq2}
j_{\calI !}:\widetilde{C}_{/\calI}\ra \widetilde{C},\ j_{\calI}^*:\widetilde{C}\ra \widetilde{C}_{/\calI},\ j_{\calI *}:\widetilde{C}_{/\calI}\ra \widetilde{C}
\end{equation}
We denote simply by $j_{\calI}:\widetilde{C}_{/\calI} \ra \widetilde{C}$ the morphism $(j_{\calI}^*,j_{\calI*}),$ called the localisation morphism of $\widetilde{C}$ at $\calI.$
By (\cite{SGA43} exposé III 5.4), the functor $j_{\calI}^*$ is given, for any sheaf $\F$ of $\widetilde{C},$ by 
\begin{equation}\label{indeq8}
j_{\calI}^*(\F)=(\F\times \calI \ra \calI),
\end{equation}
where $\F\times \calI \ra \calI$ is the canonical projection.

A morphism $f:\calJ\ra \calI$ in $\widetilde{C}$ defines an object of $\widetilde{C}_{/\calI}$ and so we have a functor
$(C_{/\calI})_{/f}  \ra  C_{/\calI},\quad (g\ra f) \mapsto  g$
that we canonically identify with the functor abusively denoted by
$
C_{/\calJ}  \ra  C_{/\calI},\quad
g  \mapsto  f\circ g.
$
Note that the topology on $C_{/\calJ}$ induced by $f$ is the same as the topology induced by the canonical functor $j_{\calJ}:C_{/\calJ} \ra C.$ Indeed, $j_{\calI}\circ f=j_{\calJ}.$ Since all fiber products in $C$ are representable, by (\cite{SGA43} Exposé II 1.3.1), any topology is determined by a pretopology. By (\cite{SGA43} Exposé III 3.3), for a family $F=(Y_i\ra Y)_{i\in I}$ of morphisms of $C_{/\calJ},$ $F$ is a covering family for the topology induced by $j_{\calJ}$ on $C_{/\calJ}$ if and only if $j_{\calJ}(F)=(j_{\calJ}(Y_i)\ra j_{\calJ}(Y))_{i\in I}$ is a covering if and only if $f(F)=(f(Y_i)\ra f(Y))_{i\in I}$ is a covering for the topology induced by $j_{\calI}$ on $C_{/\calI}.$

We get three functors
\begin{equation}\label{indeq3}
f_!:\widetilde{C}_{/\calJ}\ra \widetilde{C}_{/\calI},\ f^*:\widetilde{C}_{/\calI} \ra \widetilde{C}_{/\calJ},\ f_*:\widetilde{C}_{/\calJ} \ra \widetilde{C}_{/\calI}
\end{equation}
such that $(f^*,f_*)$ is the morphism of localisation of $\widetilde{C}_{/\calI}$ at $f$ and $f_!$ is left adjoint to $f^*.$

If $U$ is an object of $C$ and $i\in\calI(U),$ then, by (\ref{indeq3}), we get three functors
\begin{equation}\label{indeq6}
i_!:\widetilde{C}_{/U}\ra \widetilde{C}_{/\calI},\ i^*:\widetilde{C}_{/\calI} \ra \widetilde{C}_{/U},\ i_*:\widetilde{C}_{/U} \ra \widetilde{C}_{/\calI}
\end{equation} 
From now on, for every object $\F$ of $\widetilde{C}_{/\calI},$ we denote $i^*\F$ by $\F_i.$
\end{parag}

\begin{parag}\label{indfiber73}
Suppose that $C$ is the small étale site on a scheme $S,$ $\calI$ an étale sheaf on $S$ and $i\in\calI(U)$ a section of $\calI$ over an étale $S$-scheme $U.$ Denote by $\calI_{\text{ét}}$ the topos $S_{\text{ét}/\calI.}$ We have a canonical equivalence of topoi $U_{\text{ét}}\xrightarrow{\sim} S_{\text{ét}/U}.$ Then, by (\ref{indeq6}), we get three morphisms
\begin{equation}\label{indeq4}
i_!:U_{\text{ét}}\ra \calI_{\text{ét}},\ i^*:\calI_{\text{ét}} \ra U_{\text{ét}},\ i_*:U_{\text{ét}} \ra \calI_{\text{ét}}
\end{equation}
If $\F\ra \calI$ is an object of $\calI_{\text{ét}},$ then $i^*(\F\ra \calI)=\F\times_{\calI}U.$ We will denote it by $\F_i$ or $\F_{|U}.$
\end{parag}

\begin{remark}\label{indremark73kraz} 
Let $X$ be a topos, $f:\calJ\ra \calI$ a morphism of sheaves of $X$ and equip $X$ with its canonical topology and $X_{/\calI}$ with the topology induced by the functor $j_{\calI}:X_{/\calI} \ra X.$ Then $\widetilde{X}$ is canonically equivalent to $X$ (\cite{SGA43} Exposé IV 1.2) and by applying the results of \ref{indpar1} to the site $C=X,$ we get functors
\begin{equation}
j_{\calI !}:X_{/\calI}\ra X,\ j_{\calI}^*:X\ra X_{/\calI},\ j_{\calI *}:X_{/\calI}\ra X,
\end{equation}
\begin{equation}
f_!:X_{/\calJ}\ra X_{/\calI},\ f^*:X_{/\calI} \ra X_{/\calJ},\ f_*:X_{/\calJ} \ra X_{/\calI}.
\end{equation}
\end{remark}

\begin{parag}\label{indremark73}
Let $C$ be a site, $X$ the associated topos and $\calI$ a sheaf of $X.$ Let $\calT$ be the split fibered $\left (X_{/\calI} \right )$-category defined, for every sheaf $\F$ of $X_{/\calI},$ by the localized category $\left (X_{/\calI} \right )_{/\F}$ and, for every morphism $\varphi:\F \ra \calG,$ by the inverse image functor of the morphism of topoi
$
\left (X_{/\calI} \right )_{/\F} \ra \left (X_{/\calI} \right )_{/\calG},
$
induced by $\varphi.$ By (\cite{Giraud} II 3.4.4), $\calT$ is a stack. If $i$ is a local section of $\calI$ over an object $U$ of $C,$ then we have a canonical isomorphism of topoi
$
\left (X_{/\calI} \right )_{/i}\xrightarrow{\sim} X_{/U}.
$
The data of a sheaf $\F$ of $X_{/\calI}$ is then, equivalent to the data, for every local section $i$ of $\calI,$ of a sheaf $\F_i$ of $X_{/U},$ and, for every morphism $g:V\ra U$ of $C$ and every local section $i$ of $\calI$ over $U,$ of an isomorphism
$
c_{i,g}:g^{-1}\F_{i} \xrightarrow{\sim} \F_{i_{|V}},
$
satisfying cocycle conditions and such that $c_{i,\op{Id}_U}=\op{Id}_{\F_i}.$ In particular, the data of a morphism $f:\F \ra \calG$ in $X_{/\calI}$ is equivalent to the data, for every section $i\in\calI(U)$ for an object $U$ of $C,$ of a morphism $f_i:\F_i \ra \calG_i$ satisfying the following compatibility condition: for all morphisms $U\ra V$ of $C$ and sections $i\in \calI(U)$ and $j\in \calI(V)$ such that $j_{|U}=i,$ the following diagram is commutative:
$$
\begin{tikzcd}
\F_{j|U} \ar{r}{f_{j|U}} \ar[swap,sloped]{d}{\sim} & \calG_{j|U}\ar[sloped]{d}{\sim} \\
\F_i\ar{r}{f_i} & \calG_i
\end{tikzcd}
$$
where the vertical arrows are the restriction morphisms, which are isomorphisms.
\end{parag}

\begin{parag}\label{indpar2}
Let $(X,\Ox)$ be a ringed topos and $f:\calJ\ra \calI$ a morphism of $X$ and consider the functors
\begin{equation}\label{indeq5}
j_{\calI}^*:X\ra X_{/\calI},\ j_{\calI *}:X_{/\calI}\ra X,
\end{equation}
\begin{equation}\label{indeq7}
f^*:X_{/\calI} \ra X_{/\calJ},\ f_*:X_{/\calJ} \ra X_{/\calI},
\end{equation}
defined in \ref{indremark73kraz}. Then $(X,j_{\calI}^*\Ox)$ and $(X,j_{\calJ}^*\Ox)$ are ringed topoi and $(j_{\calI}^*,j_{\calI *})$ and $(f^*,f_*)$ are morphisms of ringed topoi. Note that there is no difference between $j_{\calI}^*$ and $j_{\calI}^{-1}$ (resp. $f^*$ and $f^{-1}$). In the rest of this section, we denote $j_{\calI}^*\Ox$ by $\Ox_{\calI}.$ Denote by $\boldsymbol{\op{Mod}}_{\Ox}$ (resp. $\boldsymbol{\op{Mod}}_{\Ox_{\calI}}$) the category of $\Ox$-modules (resp. $\Ox_{\calI}$-modules). The functor $j_{\calI}^*:\boldsymbol{\op{Mod}}_{\Ox}\ra \boldsymbol{\op{Mod}}_{\Ox_{\calI}}$ (resp. $f^*:\boldsymbol{\op{Mod}}_{\Ox_{\calJ}}\ra \boldsymbol{\op{Mod}}_{\Ox_{\calI}}$) has a left adjoint, that we denote by $j_{\calI !}$ (resp. $f_!$). Note that $j_{\calI !}$ and $f_!$ are different from the functors $j_{\calI !}$ and $f_!$ defined in \ref{indpar1}. Otherwise stated, we will only use the notions for modules. 

Consider the canonical projections $p_1:\calI \times \calJ \ra \calI$ and $p_2:\calI\times \calJ \ra \calJ.$ Let $\calE$ (resp. $\F$) be an $\Ox_{\calI}$-module (resp. $\Ox_{\calJ}$-module). We denote by $\calE \boxtimes \F$ \emph{the exterior tensor product of $\calE$ and $\F$}, i.e. the $\Ox_{\calI\times \calJ}$-module defined as follows:
\begin{equation}
\calE \boxtimes \F := p_1^*\calE \otimes_{\Ox_{\calI\times \calJ}}p_2^*\F.
\end{equation}
\end{parag}

\begin{proposition}\label{indpropshriek}
Let $C$ be a site, $X$ the associated topos, $\Ox$ a sheaf of rings on $C$ and $f:\calJ\ra \calI$ a morphism of sheaves of $X.$ Then:
\begin{enumerate}
\item For every $\Ox_{\calI}$-module $\F,$ $j_{\calI !}\F$ is the sheaf associated to the presheaf
$U \mapsto \bigoplus_{u\in \op{Hom}(U,\calI)}\F(u),$
where we view $\F$ as a sheaf on the localised site $C_{/\calI}.$
\item For every $\Ox_{\calJ}$-module $\F,$ $f_!\F$ is the sheaf associated to the presheaf
$(i:U\ra \calI) \mapsto \bigoplus_{\substack{j\in \op{Hom}(U,\calJ)\\ f\circ j=i}}\F(j)$
where $U$ is an object of $C$ and $\F$ is viewed as a sheaf on the localised site $C_{/\calJ}.$
\item Suppose that $\calI$ is a sheaf of monoids and let $\sigma:\calI^2 \ra \calI$ be the addition map and $\F$ an $\Ox_{\calI^2}$-module. Then $\sigma_!\F$ is the sheaf associated to the presheaf
$(s:U \ra \calI) \mapsto \bigoplus_{\substack{u,v\in \Gamma(U,\calI)\\ u+v=s}}\F((u,v)).$
\end{enumerate}
\end{proposition}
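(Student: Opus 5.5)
The plan is to prove (1) by writing down the candidate sheaf, checking the adjunction $j_{\calI!}\dashv j_{\calI}^*$ on modules directly, and then to deduce (2) and (3) as special cases.

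\textbf{Step 1: the candidate and its two descriptions of $j_{\calI}^*$.} Let $P$ be the presheaf of $\Ox$-modules on $C$ given by
$$P(U)=\bigoplus_{u\in \op{Hom}(U,\calI)}\F(u).$$
A morphism $g:V\ra U$ acts by sending the summand $\F(u)$ to $\F(u\circ g)$ via restriction, and $\Ox(U)$ acts on $\F(u)$ through $\Ox(U)=\Ox_{\calI}(u)$. By \eqref{indeq8}, for a sheaf $\calG$ on $C$ and an object $u:U\ra \calI$ of $C_{/\calI}$ we have $(j_{\calI}^*\calG)(u)=\calG(U)$.

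\textbf{Step 2: the adjunction bijection.} Let $\calG$ be an $\Ox$-module. I will construct mutually inverse bijections
$$\op{Hom}_{\Ox}(P,\calG)\simeq \op{Hom}_{\Ox_{\calI}}(\F,j_{\calI}^*\calG),$$
functorial in $\calG$, where the left side is taken in presheaves of modules.
\begin{itemize}
\item Given $\phi:P\ra\calG$, restrict $\phi(U)$ to the summand indexed by $u$. This gives maps $\F(u)\ra \calG(U)=(j_{\calI}^*\calG)(u)$, which are $\Ox$-linear and compatible with the morphisms of $C_{/\calI}$.
\item Conversely, a family $\psi_u:\F(u)\ra\calG(U)$ compatible with restriction sums to a map $P(U)\ra \calG(U)$. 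Naturality in $U$ follows because a morphism $V\ra U$ over $\calI$ is exactly a morphism $g:V\ra U$ with the summand index $u\circ g$.
\end{itemize}
These constructions are clearly inverse to each other. Composing with the adjunction between sheafification and the inclusion of sheaves into presheaves shows that $P^a$ satisfies the universal property of the left adjoint, hence $j_{\calI!}\F\simeq P^a$.

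\textbf{Step 3: statement (2).} Apply Step 2 in the localized situation. Take the site $C_{/\calI}$ with its induced topology and the sheaf $\calJ$ viewed as an object over $\calI$ via $f$. By \ref{indpar1}, $(C_{/\calI})_{/f}$ identifies with $C_{/\calJ}$, with the same topology. The morphism $f^*$ is the localization of $X_{/\calI}$ at $f$, which is $j_f^*$ in the notation above. Morphisms $(U\xrightarrow{i}\calI)\ra f$ in $X_{/\calI}$ are exactly the $j\in\op{Hom}(U,\calJ)$ with $f\circ j=i$. Substituting this into the formula of (1) gives the stated presheaf.

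\textbf{Step 4: statement (3).} Apply (2) with $\calJ=\calI^2$ and $f=\sigma$. Maps $U\ra \calI^2$ are pairs $(u,v)$ of sections of $\calI$ over $U$, and the condition $\sigma\circ(u,v)=s$ reads $u+v=s$.

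\textbf{Expected obstacle.} The main point needing care is the identification of sites and topologies in Step 3: one must check that the induced topologies agree, so that sheafification on $C_{/\calI}$ is the right one. This is recorded in \ref{indpar1}, so I will quote it. Beyond that, the only subtlety is checking that $P$ is genuinely a presheaf of $\Ox$-modules, which is routine.
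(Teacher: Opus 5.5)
Your proposal is correct and follows the paper's proof essentially step for step. You use the same candidate presheaf, verify the adjunction $j_{\calI!}\dashv j_{\calI}^*$ through $(j_{\calI}^*\calG)(u)=\calG(U)$, and deduce (2) and (3) as special cases, while being slightly more explicit than the paper about the sheafification adjunction and the identification $(C_{/\calI})_{/f}\simeq C_{/\calJ}$ used to pass from (1) to (2).
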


\begin{proof}
We just prove the first assertion as the second is a special case of the first and the third is an application of the second.
For any $\Ox_{\calI}$-module $\F$ of $X_{/\calI},$ denote by $v(\F)$ the sheaf associated to the presheaf
$U \mapsto \bigoplus_{u\in \op{Hom}_C(U,\calI)}\F(u).$
We want to prove that $v$ is left adjoint to $j_{\calI}^*.$ Let $\F$ (resp. $\calG$) be an $\Ox_{\calI}$-module (resp. $\Ox$-module). Then the data of a morphism $g:v(\F)\ra \calG$ is equivalent to the data of compatible morphisms $$g_U:v(\F)(U) \ra \calG(U)$$
for every object $U$ of $C.$ This is equivalent to the data of compatible morphisms
$\F(u)\ra \calG(U)$
for every object $U$ of $C$ and every morphism $u:U \ra \calI.$ Since $(j_{\calI}^*\calG)(u)=\calG(U)$ (\ref{indeq8}), we conclude that this is equivalent to the data of compatible morphisms
$\F(u) \ra (j_{\calI}^*\calG)(u)$
for every object $u$ of $C_{/\calI}.$
\end{proof}

\begin{proposition}\label{indpropshriekoh}
Let $C$ be a site, $X$ the associated topos and $\Ox$ a sheaf of rings on $C.$
\begin{enumerate} 
\item Let $\calI$ and $\calJ$ be sheaves of $X.$ Denote by $p:\calI\times \calJ \ra \calI$ and $q:\calI\times \calJ \ra \calJ$ the canonical projections. Then, for every $\Ox_{\calI}$-module $\F$ of $X_{/\calI},$ there exists a canonical isomorphism
$$q_!p^*\F\xrightarrow{\sim}j_{\calJ}^*j_{\calI !}\F.$$
\item Let $f:\calJ \ra \calI$ and $g:\calK \ra \calI$ be morphisms of sheaves of $X.$ Denote by $p:\calJ\times_{\calI} \calK \ra \calJ$ and $q:\calJ\times_{\calI} \calK \ra \calK$ the canonical projections. Then, for every $\Ox_{\calJ}$-module $\F$ of $X_{/\calJ},$ there exists a canonical isomorphism
$$q_!p^*\F\xrightarrow{\sim}g^*f_!\F.$$
\end{enumerate}
\end{proposition}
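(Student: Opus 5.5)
The plan is to prove the second assertion and deduce the first from it. Take the terminal sheaf $e$ of $X$ as base, with $f:\calI\ra e$ and $g:\calJ\ra e$. Then $\calI\times_e\calJ=\calI\times\calJ$, the shriek $j_{\calI!}$ is the functor $f_!$ for $f:\calI\ra e$, and $j_{\calJ}^*$ is $g^*$. (Here $X_{/e}=X$ and $\Ox_e=\Ox$.) So assertion 1 is the special case of assertion 2 with $\calI=e$, and only assertion 2 needs an argument.

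For assertion 2, the first step is to construct the canonical map as a base-change morphism by adjunction. Write $h:\calJ\times_{\calI}\calK\ra\calI$ for the common composite $f\circ p=g\circ q$. Since $h^*=p^*f^*=q^*g^*$ and $(f^*,f_*)$, $(g^*,g_*)$ are localisation morphisms, the map
$$q_!p^*\F\ra g^*f_!\F$$
corresponds under $q_!\dashv q^*$ to
$$p^*\F\xrightarrow{p^*(\eta)} p^*f^*f_!\F = q^*g^*f_!\F,$$
where $\eta:\F\ra f^*f_!\F$ is the unit of $f_!\dashv f^*$.

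The second step is to check this map is an isomorphism using the explicit description of $f_!$ in Proposition \ref{indpropshriek}(2). Let $U$ be an object of $C$ with a section $k:U\ra\calK$. Then $(g^*f_!\F)(k)=(f_!\F)(g\circ k)$, which is the sheafification of
$$\bigoplus_{j:U\ra\calJ,\ f\circ j=g\circ k}\F(j).$$
On the other side, $p^*\F$ evaluated at $(j,k):U\ra\calJ\times_{\calI}\calK$ is $\F(j)$. So $q_!p^*\F$ at $k$ is the sheafification of
$$\bigoplus_{(j,k')\,:\,q(j,k')=k}\F(j).$$
The condition $q(j,k')=k$ forces $k'=k$, and then $(j,k)$ lies in the fiber product exactly when $f\circ j=g\circ k$. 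The two presheaves are therefore identical.

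The remaining step, which I expect to be the only delicate point, is that $g^*$ commutes with sheafification. Concretely, restricting the site $C_{/\calI}$ along $g$ to $C_{/\calK}$ preserves the topology, as noted in \ref{indpar1}, so the sheaf associated to a presheaf on $C_{/\calI}$, restricted to $C_{/\calK}$, is the sheaf associated to the restricted presheaf. I would also check that the explicit identification is the adjunction map of the first step. Evaluated at $k$, that map sends the summand $\F(j)$ of $q_!p^*\F$ to the summand $\F(j)$ of $(f_!\F)(g\circ k)$, i.e.\ it is the identity on presheaves. This gives canonicity and compatibility with restrictions, so the map is an isomorphism.
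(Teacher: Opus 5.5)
Your proposal is correct and follows essentially the paper's argument: you build the base-change map by adjunction, then check it is an isomorphism by comparing the explicit presheaves given by Proposition \ref{indpropshriek}. The differences are minor. You obtain (1) as the case of (2) over the final object $e$, and you use the unit of $f_!\dashv f^*$ with $q_!\dashv q^*$ rather than the paper's counit $p_!p^*\F\ra\F$ with $j_{\calJ !}\dashv j_{\calJ}^*$ (the other mate of the same square). You also justify explicitly that restriction along the localisation commutes with sheafification, which the paper uses without comment.
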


\begin{proof}
We have the following commutative diagram of ringed topoi
$$
\begin{tikzcd}
X_{/\calI\times\calJ}\ar{r}{p}\ar{d}{q}\ar{dr}{j_{\calI\times\calJ}} & X_{/\calI}\ar{d}{j_{\calI}} \\
X_{/\calJ}\ar{r}{j_{\calJ}} & X
\end{tikzcd}
$$
The adjunction morphism
$p_!p^*\F \ra \F$
induces a morphism
$j_{\calJ !}q_! p^*\F=j_{\calI !}p_!p^*\F\ra j_{\calI !}\F.$
By adjunction, it yields a morphism
$
q_! p^*\F=j_{\calI !}p_!p^*\F\ra j_{\calJ}^*j_{\calI !}\F.
$
We check that it is an isomorphism using \ref{indpropshriek}. On one hand, $q_!p^*\F$ is the sheaf on $C_{/\calJ}$ associated to the presheaf
$$(j:U\ra \calJ) \mapsto \bigoplus\limits_{\substack{s:U\ra \calI\times \calJ \\ q\circ s=j}}(p^*\F)(s)=\bigoplus\limits_{i:U\ra \calI}\F(i).$$
On the other hand, $j_{\calJ}^*j_{\calI !}\F$ is the sheaf
$(j:U\ra \calJ) \mapsto (j_{\calI !}\F)(U)$ and thus it is associated to the presheaf
$(j:U\ra \calJ) \mapsto \bigoplus\limits_{i:U\ra \calI}\F(i).$
The second assertion is proved similary by considering the commutative diagram of ringed topoi
$$
\begin{tikzcd}
X_{/\calJ\times_{\calI}\calK}\ar{r}{p}\ar{d}{q}\ar{dr}{j_{\calJ\times_{\calI}\calK}} & X_{/\calJ}\ar{d}{f} \\
X_{/\calK}\ar{r}{g} & X_{/\calI}
\end{tikzcd}
$$
\end{proof}

\begin{corollaire}\label{indpropshriek2}
Let $C$ be a site, $X$ the associated topos, $\Ox$ a sheaf of rings on $C$ and $f:\calJ \ra \calI$ a morphism of sheaves of $X.$ For every module $\F$ of $X_{/\calJ}$ and every section $k\in \Gamma(U,\calI)$ over an object $U$ of $C,$ the sheaf $(j_{\calI !}\F)_k$ of $X_{/U}$ is associated to the presheaf
$$(i:V\ra U) \mapsto \bigoplus\limits_{\substack{j\in \Gamma(V,\calJ)\\ f\circ j=k\circ i}}\F(j),$$
where $V$ is an object of $C.$ 
\end{corollaire}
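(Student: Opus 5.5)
We read the sheaf in the statement as $(f_!\F)_k=k^*f_!\F$, since $\F$ lives on $X_{/\calJ}$ and $f_!$ is the only shriek functor from $X_{/\calJ}$ to $X_{/\calI}$. The plan is to reduce to the two previous results by a base change along $k$, followed by an explicit computation of a shriek along a projection.

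View the section $k\in\Gamma(U,\calI)$ as a morphism of sheaves $k:U\ra\calI$, where $U$ is identified with the sheaf it represents. By \ref{indfiber73} and \ref{indremark73}, the functor $k^*:X_{/\calI}\ra X_{/U}$ is the localization along $k$, and $X_{/U}$ is identified with the topos of the site $C_{/U}$. Apply \ref{indpropshriekoh}(2) with $g=k$ and $\calK=U$. Writing $p:\calJ\times_{\calI}U\ra\calJ$ and $q:\calJ\times_{\calI}U\ra U$ for the projections, this gives a canonical isomorphism
$$q_!p^*\F\xrightarrow{\sim}k^*f_!\F=(f_!\F)_k .$$

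It remains to compute $q_!p^*\F$. By \ref{indpropshriek}(2), applied to the morphism $q$ of sheaves of $X$, the module $q_!p^*\F$ is the sheaf associated to the presheaf on $C_{/U}$
$$(i:V\ra U)\mapsto\bigoplus_{\substack{s\in\op{Hom}(V,\calJ\times_{\calI}U)\\ q\circ s=i}}(p^*\F)(s).$$
A morphism $s:V\ra\calJ\times_{\calI}U$ with $q\circ s=i$ is the same as a pair $(j,i)$ with $j\in\Gamma(V,\calJ)$ and $f\circ j=k\circ i$. Moreover, by the description \eqref{indeq8} of pullback along localization, $(p^*\F)(s)=\F(p\circ s)=\F(j)$. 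Substituting these two facts gives exactly the presheaf in the statement.

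The only step needing care is the bookkeeping of identifications: the equivalence $(X_{/\calI})_{/k}\simeq X_{/U}$ together with its ring structure, and the fact that sheaves on $X_{/U}$ may be evaluated on objects $i:V\ra U$ of $C_{/U}$. These identifications should be compatible with the sheafification in \ref{indpropshriek}, because the topologies involved are all induced from $C$ (see \ref{indpar1}). No further obstacle is expected.
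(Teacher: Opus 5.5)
Your proposal is correct and is essentially the paper's proof: you base change along $k$ via \ref{indpropshriekoh}(2) to get $q_!p^*\F\simeq(f_!\F)_k$, then compute $q_!p^*\F$ with \ref{indpropshriek}(2) and unwind the fiber product. You read the left-hand side of the statement as $(f_!\F)_k$ and index the sum by $q\circ s=i$; both choices are correct, whereas the paper's text writes $(j_{\calI !}\F)_k$ in the statement and $p\circ s=i$ in its proof.
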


\begin{proof}
By the second assertion of \ref{indpropshriekoh} applied to $g=k,$ we have a canonical isomorphism
$q_!p^*\F\xrightarrow{\sim}(f_!\F)_k,$
where $p:U\times_{\calI}\calJ \ra \calJ$ and $q:U\times_{\calI}\calJ\ra U$ are the canonical projections. By \ref{indpropshriek}, the sheaf $q_!p^*\F$ is associated to the presheaf
$(i:V\ra U) \mapsto \bigoplus\limits_{\substack{s:V\ra U\times_{\calI}\calJ\\ p\circ s=i}}(p^*\F)(s).$
\end{proof}

\begin{parag}
Keep the same notations of \ref{indpar2} and suppose $\calI$ is a sheaf of commutative monoids. Consider the canonical projections 
$p_{ij}:\calI^3\ra \calI^2,\ 1\le i<j\le 3,\ q_i:\calI^3\ra \calI,\ 1\le i\le 3.$
For any $\Ox_{\calI}$-modules $\calE,$ $\F$ and $\calG,$ we have canonical isomorphisms of $\Ox_{\calI^3}$-modules
\begin{equation}
(\calE \boxtimes \F )\boxtimes \calG \xrightarrow{\sim} q_1^*\calE \otimes_{\Ox_{\calI^3}}q_2^*\F \otimes_{\Ox_{\calI^3}}q_3^*\calG \xleftarrow{\sim} \calE \boxtimes (\F \boxtimes \calG).
\end{equation}
We identify the triple exterior tensor products $(\calE \boxtimes \F) \boxtimes \calG$ and $\calE \boxtimes (\F \boxtimes \calG)$ via this canonical isomorphism and we denote them simply by $\calE\boxtimes \F \boxtimes \calG.$
We can now define $\calI$-indexed algebras and $\calI$-indexed modules:
\end{parag}

\begin{definition}\label{inddef}
Let $(X,\Ox)$ be a ringed topos, $\calI$ a sheaf of commutative monoids of $X,$ $p_1,p_2:\calI^2\ra \calI,$ $q_1,q_2,q_3:\calI^3\ra \calI$ and $p_{ij}:\calI^3\ra \calI^2,\ 1\le i<j\le 3$ the canonical projections, $\sigma:\calI^2\ra \calI$ and $\tau:\calI^3\ra \calI$ the addition maps, $e$ the final object of $X$ and $0:e\ra \calI$ the zero section and $\nu:\calI \ra e$ the canonical morphism.
\begin{enumerate}
\item An \emph{$\calI$-indexed $\Ox$-algebra} is an $\Ox_{\calI}$-module $\calA$ equipped with a $\Ox_{\calI^2}$-linear map
$$\pi:\calA \boxtimes \calA \ra \sigma^*\calA$$
and a global section $1_{\calA}:e\ra \calA_0,$ where $\calA_0$ is the $\Ox$-module $0^*\calA,$ such that the following diagrams are commutative:
\begin{enumerate}
\item Associativity:
$$
\begin{tikzcd}
\calA \boxtimes \calA \boxtimes \calA \ar{r}{\pi \boxtimes \op{Id}_{\calA}} \ar[swap]{d}{\op{Id}_{\calA} \boxtimes \pi} & (\sigma \times\op{Id}_{\calI})^*(\calA \boxtimes \calA) \ar{d}{(\sigma \times\op{Id}_{\calI})^*\pi} \\
(\op{Id}_{\calI}\times \sigma)^*(\calA \boxtimes \calA) \ar{r}{(\op{Id}_{\calI}\times \sigma)^*\pi} & \tau^*\calA
\end{tikzcd}
$$
\item Unity:
$$
\begin{tikzcd}
\calA \ar{r}{\op{Id}_{\calA}\otimes \nu^*1_{\calA}} \ar{d}[swap]{\nu^*1_{\calA}\otimes \op{Id}_{\calA}} \ar{dr}{\op{Id}_{\calA}} & \calA \otimes_{\Ox_{\calI}}\nu^*\calA_0=(\op{Id}_{\calI}\times 0)^*(\calA \boxtimes \calA) \ar{d}{(\op{Id}_{\calI}\times 0)^*\pi} \\  \nu^*\calA_0\otimes_{\Ox_{\calI}}\calA=(0\times \op{Id}_{\calI})^*(\calA \boxtimes \calA) \ar[swap]{r}{(0\times \op{Id}_{\calI})^*\pi} & \calA
\end{tikzcd}
$$
where the identification $\calA \otimes_{\Ox_{\calI}}\nu^*\calA_0=(\op{Id}_{\calI}\times 0)^*(\calA \boxtimes \calA)$ follows from the commutative diagram
$$
\begin{tikzcd}
\calI=\calI\times e \ar{r}{\nu} \ar{d}{\op{Id}_{\calI}\times 0} \ar[swap,bend right=50]{dd}{\op{Id}_{\calI}} & e\ar{d}{0} \\
 \calI^2\ar{r}{p_2} \ar{d}{p_1} & \calI \\ \calI &  
\end{tikzcd}
$$
and $\nu^*\calA_0\otimes_{\Ox_{\calI}}\calA=(0\times \op{Id}_{\calI})^*(\calA \boxtimes \calA)$ follows from a similar diagram.
\end{enumerate}
We say that $\calA$ is \emph{commutative} if the diagram
$
\begin{tikzcd}
\calA \boxtimes \calA \ar{r}{\pi} \ar{d}{s} & \sigma^*\calA \\
\calA \boxtimes \calA \ar{ur}{\pi} &
\end{tikzcd}
$
where $s$ is the isomorphism $x\boxtimes y\mapsto y\boxtimes x,$ is commutative.
\item Let $\calA$ be an $\calI$-indexed $\Ox$-algebra. The center of $\calA$ is the subsheaf $\calC$ of $\calA$ consisting of local sections $a$ of $\calA$ such that $\pi(p_1^*a\otimes p_2^*b)=\pi(p_1^*b\otimes p_2^*a)$ for every local section $b$ of $\calA.$ It is immediate that $\calC$ is an $\calI$-indexed $\Ox$-subalgebra of $\calA.$
\item Let $\calA$ and $\calB$ be two $\calI$-indexed $\Ox$-algebras. A morphism of $\calI$-indexed $\Ox$-algebras $f:\calA \ra \calB$ is a $\Ox_{\calI}$-linear morphism $f:\calA \ra \calB$ such that the following diagram is commutative:
$$
\begin{tikzcd}
\calA \boxtimes \calA \ar{r} \ar{d}{f\boxtimes f} & \sigma^*\calA \ar{d}{\sigma^*f} \\
\calB\boxtimes \calB \ar{r} & \sigma^*\calB
\end{tikzcd}
$$
where the horizontal arrows are the multiplication morphisms.
\item If $f:\calA \ra \calB$ is a morphism of $\calI$-indexed $\Ox$-algebras, we say that $\calB$ is an \emph{$\calI$-indexed $\calA$-algebra.}
\item Let $\calA$ be an $\calI$-indexed $\Ox$-algebra. An \emph{$\calI$-indexed left $\calA$-module} is an $\Ox_{\calI}$-module $\calE$ equipped with a $\Ox_{\calI^2}$-linear map
$\rho:\calA \boxtimes \calE \ra \sigma^*\calE,$
such that the following diagrams are commutative
\begin{enumerate}
\item Associativity:
$$
\begin{tikzcd}
\calA\boxtimes \calA \boxtimes \calE \ar{r}{\op{Id}_{\calA}\boxtimes \rho} \ar{d}{\pi \boxtimes \op{Id}_{\calE}} & \calA \boxtimes \sigma^*\calE = (\op{Id}_{\calI}\times \sigma)^*(\calA \boxtimes \calE) \ar{d}{(\op{Id}_{\calI}\times \sigma)^*\rho} \\ 
\sigma^*\calA \boxtimes \calE = (\sigma \times \op{Id}_{\calI})^*(\calA \boxtimes \calE) \ar{r}{(\sigma \times \op{Id}_{\calI})^*\rho} & \tau^*\calE
\end{tikzcd}
$$
\item Unity:
$$
\begin{tikzcd}
\calE \ar{dr}{\op{Id}_{\calE}} \ar[swap]{d}{\nu^*1_{\calA}\otimes \op{Id}_{\calE}} & \\
\nu^*\calA_0\otimes_{\Ox_{\calI}}\calE = (0\times \op{Id}_{\calI})^* (\calA \boxtimes \calE) \ar[swap]{r}{(0\times \op{Id}_{\calI})^*\rho} & \calE
\end{tikzcd}
$$
\end{enumerate}
We also define a \emph{$\calI$-indexed right $\calA$-module} as an $\Ox_{\calI}$-module $\calE$ equipped with a $\Ox_{\calI^2}$-linear map
$\calE \boxtimes \calA \ra \sigma^*\calE$
satisfying similar conditions. Unless explicitly stated, we only consider indexed left module structures.

\item Let $\calA$ be an $\calI$-indexed $\Ox$-algebra and $\calE$ and $\F$ two $\calI$-indexed $\calA$-modules. A \emph{morphism of $\calI$-indexed $\calA$-modules $f:\calE \ra \F$} is a $\Ox_{\calI}$-linear morphism
$f:\calE \ra \F$
such that the following diagram is commutative
$$\begin{tikzcd}
\calA \boxtimes \calE \ar{r}{\op{Id}_{\calA} \boxtimes f} \ar{d} & \calA \boxtimes \F \ar{d} \\
\sigma^*\calE \ar{r}{\sigma^*f} & \sigma^* \F
\end{tikzcd}$$ 
where the vertical arrows are the maps defining the $\calA$-module structures on $\calE$ and $\F.$

For an $\calI$-indexed $\Ox$-algebra $\calA,$ we denote by $\boldsymbol{\op{Mod}}_{\calA}$ the category of $\calI$-indexed $\calA$-modules. The category $\boldsymbol{\op{Mod}}_{\calA}$ is an abelian subcategory of $\boldsymbol{\op{Mod}}_{\Ox_{\calI}}.$
\end{enumerate}
\end{definition}

\begin{remark}\label{era2indrem}
Let $(X,\Ox)$ be a ringed topos on a site $C,$ $e$ the final object of $X$ and $\calI$ a sheaf of monoids of $X.$ Recall the notation \eqref{indfiber73}. For an object $U$ of $C,$ we denote by $\Ox_U$ the restriction of $\Ox$ to $U.$ By \ref{indremark73}, an $\calI$-indexed algebra structure on an $\Ox_{\calI}$-module $\calA$ is equivalent to the data of a global section $1:e\ra \calA_0$ and, for sections $s,t\in \Gamma(U,\calI)$ of $\calI$ over an object $U$ of $C,$ of an $\Ox_U$-linear morphism
$$\calA_s\otimes_{\Ox_U}\calA_t\ra \calA_{s+t},$$
such that, for any sections $r,s,t\in \Gamma(U,\calI)$ over an object $U$ of $C,$ the following properties are satisfied:
\begin{enumerate}
\item The diagram
$$
\begin{tikzcd}
\calA_r\otimes_{\Ox_U}\calA_s\otimes_{\Ox_U}\calA_t \ar{r}\ar{d} & \calA_{r+s}\otimes_{\Ox_U}\calA_t \ar{d} \\
\calA_r\otimes_{\Ox_U}\calA_{s+t} \ar{r} & \calA_{r+s+t}
\end{tikzcd}
$$
is commutative.
\item The composed morphisms
$\calA_s \xrightarrow{x\mapsto x\otimes 1} \calA_s\otimes_{\Ox_U}\calA_0 \ra \calA_s$ and
$\calA_s \xrightarrow{x\mapsto 1\otimes x} \calA_0\otimes_{\Ox_U}\calA_s \ra \calA_s$
are both equal to the identity.
\end{enumerate}
\end{remark}

\begin{proposition}\label{krazzzz}
Let $(X,\Ox)$ be a ringed topos on a site $C,$ $\calI$ a sheaf of monoids of $X,$ $\sigma:\calI^2 \ra \calI$ the addition morphism, $\calA$ an $\calI$-indexed algebra, $\calE$ an $\calI$-indexed $\calA$-module and $\F$ an $\Ox_{\calI}$-module. Then, there is a canonical $\calI$-indexed $\calA$-algebra structure on $\sigma_!\left (\calE \boxtimes \F\right ).$
\end{proposition}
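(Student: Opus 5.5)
We read the statement as asserting a canonical structure of $\calI$-indexed left $\calA$-module on $\sigma_!\left(\calE\boxtimes\F\right)$, with $\calA$ acting through the factor $\calE$. That is the structure built below. The plan is to construct an $\Ox_{\calI^2}$-linear map
$$\rho':\calA\boxtimes\sigma_!(\calE\boxtimes\F)\ra\sigma^*\sigma_!(\calE\boxtimes\F)$$
by adjunction, then check associativity and unity of Definition \ref{inddef}.

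Set $\calG=\calE\boxtimes\F$, an $\Ox_{\calI^2}$-module. The first step is to rewrite the source of $\rho'$.

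\emph{Base change.} Consider the cartesian square formed by $\op{Id}_{\calI}\times\sigma:\calI^3\ra\calI^2$ and the projections $p_2:\calI^2\ra\calI$, $p_{23}:\calI^3\ra\calI^2$. Here $\calI^3=\calI^2\times_{\calI}\calI^2$ via $p_2$ and $\sigma$. The second assertion of Proposition \ref{indpropshriekoh} gives a canonical isomorphism
$$p_2^*\sigma_!\calG\simeq(\op{Id}_{\calI}\times\sigma)_!\,p_{23}^*\calG.$$

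\emph{Projection formula.} For the localization morphism $h=\op{Id}_{\calI}\times\sigma$ we need
$$M\otimes h_!N\simeq h_!(h^*M\otimes N).$$
We construct the comparison map by adjunction and prove it is an isomorphism on the presheaf level using the description of $h_!$ in Proposition \ref{indpropshriek}. Both sides are associated to the presheaf
$$(u,s)\mapsto\bigoplus_{v+w=s}M(u,s)\otimes N(u,v,w).$$
Tensor products commute with direct sums, and the comparison on these presheaves is the identity. Applying this with $M=p_1^*\calA$ and $N=p_{23}^*\calG$, and noting $h^*p_1^*\calA=q_1^*\calA$, we obtain
$$\calA\boxtimes\sigma_!\calG\simeq(\op{Id}_{\calI}\times\sigma)_!(\calA\boxtimes\calE\boxtimes\F).$$

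\emph{Construction of $\rho'$.} By the adjunction $(h_!,h^*)$, giving $\rho'$ amounts to giving a map
$$\calA\boxtimes\calE\boxtimes\F\ra(\op{Id}\times\sigma)^*\sigma^*\sigma_!\calG.$$
Since $\sigma\circ(\op{Id}\times\sigma)=\tau=\sigma\circ(\sigma\times\op{Id})$, the target equals $(\sigma\times\op{Id})^*\sigma^*\sigma_!\calG$. We take the composite
$$\calA\boxtimes\calE\boxtimes\F\xrightarrow{\rho\boxtimes\op{Id}_{\F}}(\sigma\times\op{Id})^*(\calE\boxtimes\F)\xrightarrow{(\sigma\times\op{Id})^*\eta}(\sigma\times\op{Id})^*\sigma^*\sigma_!\calG,$$
where $\eta:\calG\ra\sigma^*\sigma_!\calG$ is the unit of adjunction. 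Everything is built from canonical isomorphisms and adjunctions, so the construction is canonical.

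\emph{Axioms.} To verify associativity and unity we pass to the local description of Remark \ref{era2indrem} and Corollary \ref{indpropshriek2}. For $s\in\Gamma(U,\calI)$, the sheaf $(\sigma_!\calG)_s$ is associated to the presheaf
$$V\mapsto\bigoplus_{u+v=s_{|V}}\calE_u\otimes\F_v.$$
On such sections, $\rho'$ sends $a\otimes(e\otimes f)$ to $\rho(a\otimes e)\otimes f$, placed in the summand indexed by $(r+u,v)$. Associativity of $\rho'$ then follows summand by summand from associativity of $\rho$. Unity follows from unity of $\rho$.

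The main obstacle is bookkeeping: one must check that the chosen identifications (base change, projection formula, and $\sigma\circ(\op{Id}\times\sigma)=\sigma\circ(\sigma\times\op{Id})$) are compatible, so that the local formula really describes $\rho'$. We would settle this by computing every identification on the presheaves given by Proposition \ref{indpropshriek}, where all maps become explicit reindexings of direct sums.
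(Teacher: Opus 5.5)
Your proof is correct, and you read the statement as the paper does: its proof also constructs an $\calI$-indexed $\calA$-module structure, with $\calA$ acting on the $\calE$ factor.

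The route is different, though. The paper works purely locally. Using the presheaf description of $(\sigma_!(\calE\boxtimes\F))_s$ from Proposition \ref{indpropshriek} (3), it writes down, for each $V\ra U$ and $s,t\in\Gamma(U,\calI)$, the map $\calA_s(V)\otimes\bigoplus_{u+v=t}\calE_u(V)\otimes\F_v(V)\ra\bigoplus_{\alpha+\beta=s+t}\calE_\alpha(V)\otimes\F_\beta(V)$ induced by $\calA_s\otimes\calE_u\ra\calE_{s+u}$. It then glues these maps via Remark \ref{era2indrem} and \ref{indremark73}. It uses neither base change nor the projection formula, and it leaves the axioms implicit.

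You instead build $\rho'$ globally from Proposition \ref{indpropshriekoh} (2), the projection formula and the adjunctions. This is the same machinery the paper uses later to put the $\calA$-module structure on $\calE\circledast_{\calA}\F$. Only afterwards do you show that $\rho'$ is given by the paper's local formula.

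Your route has two advantages. Canonicity and compatibility with restriction and sheafification become automatic, whereas the paper's gluing step passes over them silently. You also actually verify associativity and unity. The cost is the identification bookkeeping you flag, which the paper avoids by taking the local formula as the definition. Since the axioms must be checked on that local formula either way, the paper's route is the more economical one.
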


\begin{proof}
For an object $U$ of $C$ and local sections $s,t\in \Gamma(U,\calI),$ the sheaf $\left ( \sigma_! \left ( \calE \boxtimes \F \right ) \right )_s$ is, by \ref{indpropshriek} (3), the sheaf on $C_{/U}$ associated to the presheaf 
$$
(V\ra U) \mapsto \bigoplus_{\substack{u,v\in \calI(V) \\ u+v=s}}\calE_u(V) \otimes_{\Ox(V)}\F_v(V).
$$
For all morphisms $V\ra U$ of $C,$ we have a canonical morphism
$$
\calA_s(V) \otimes_{\Ox(V)} \bigoplus_{\substack{u,v\in \calI(V) \\ u+v=t}}\calE_u(V) \otimes_{\Ox(V)}\F_v(V) \ra \bigoplus_{\substack{\alpha,\beta \in \calI(V) \\ \alpha+\beta=s+t}} \calE_{\alpha}(V)\otimes_{\Ox(V)}\F_{\beta}(V),
$$
induced by the multiplications
$
\calA_s\otimes_{\Ox_U}\calE_u \ra \calE_{s+u}.
$
By \ref{indremark73}, these morphisms define the desired structure.
\end{proof}

\subsection*{Localization of indexed objects}

\begin{parag}\label{defresind}
Let $X$ be a topos, $\calI$ and $S$ sheaves of $X$ and $\F$ a sheaf of $X_{/\calI}.$ Consider the commutative diagram of topoi
$$
\begin{tikzcd}
(X_{/S})_{/\calI_{|S}}\ar{r}{\sim}\ar[swap]{dr}{j_{\calI_{|S}}} & X_{/S\times\calI} \ar{r}{j_{S\times \calI/\calI}} \ar{d}{j_{S\times\calI/S}} & X_{/\calI} \ar{d}{j_{\calI}} \\
 & X_{/S} \ar{r}{j_S} & X
\end{tikzcd}
$$
We call \emph{localization of $\F$ at $S$} the sheaf $j_{S\times \calI/\calI}^*\F$ of $(X_{/S})_{\calI_{|S}}$ and we denote it by $\F_{|S}.$
We will also denote the composition $(X_{/S})_{\calI_{|S}} \xrightarrow{\sim} X_{/S\times\calI} \ra X_{/\calI}$ by $j_{S\times \calI/\calI}.$
\end{parag}

\begin{proposition}\label{Locprop1}
Let $(X,\Ox)$ be a ringed topos, $\calI$ a sheaf of monoids of $X,$ $S$ a sheaf of sets of $X,$ $\calA$ an $\calI$-indexed $\Ox$-algebra and $\calE$ an $\calI$-indexed $\calA$-module. Then $\calA_{|S}$ (resp. $\calE_{|S}$) is canonically equipped with an $\calI_{|S}$-indexed $\Ox_S$-algebra structure (resp. an $\calI_{|S}$-indexed $\calA_{|S}$-module structure).
\end{proposition}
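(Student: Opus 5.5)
The plan is to transport all the structure maps along the localization morphism $j:=j_{S\times\calI/\calI}:(X_{/S})_{/\calI_{|S}}\simeq X_{/S\times\calI}\ra X_{/\calI}$. Its inverse image functor commutes with finite limits and tensor products, and carries $\Ox_{\calI}$ to $\Ox_{S\times\calI}$, which is $(\Ox_S)_{\calI_{|S}}$. The key observation is that localization at $S$ commutes with the maps $p_1,p_2,\sigma,\tau,0,\nu$ and with $p_{ij},q_i$. Concretely, $\calI_{|S}=S\times\calI$ is a sheaf of monoids of $X_{/S}$, with addition $\sigma_S=\op{Id}_S\times\sigma$. Products in $X_{/S}$ are fibre products over $S$, so $(\calI_{|S})^2=S\times\calI^2$, and we have a commutative square
$$
\begin{tikzcd}
X_{/S\times\calI^2} \ar{r}{\sigma_S} \ar{d}{j'} & X_{/S\times\calI} \ar{d}{j} \\
X_{/\calI^2} \ar{r}{\sigma} & X_{/\calI}
\end{tikzcd}
$$
where $j'$ is the localization morphism. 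The same holds for $p_1,p_2$, and similarly for $\calI^3$, $\tau$, the zero section $0_S=\op{Id}_S\times 0:S\ra S\times\calI$ and $\nu_S$. Since all these morphisms are localization morphisms induced by morphisms of sheaves, the squares commute up to canonical isomorphism of inverse image functors (on the level of sites they are just compositions of functors $C_{/\cdot}\ra C_{/\cdot}$).

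First I would establish canonical isomorphisms $j'^*(\calE\boxtimes\F)\simeq \calE_{|S}\boxtimes\F_{|S}$ and $j'^*\sigma^*\calE\simeq\sigma_S^*(\calE_{|S})$. The first holds because $j'^*$ commutes with $\otimes$ and $j'^*p_i^*=p_{i,S}^*j^*$; the second follows from the commutative square above. Next I would define the multiplication $\pi_S:=j'^*\pi:\calA_{|S}\boxtimes\calA_{|S}\ra\sigma_S^*\calA_{|S}$. For the unit, $(\calA_{|S})_0=0_S^*j^*\calA\simeq j_S^*0^*\calA=j_S^*\calA_0$, and I would take as unit the section $j_S^*1_{\calA}$, i.e. the restriction of $1_{\calA}$ to $S$ (a global section of $X_{/S}$, whose final object is $S$). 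The module structure $\rho_S:=j'^*\rho$ is defined the same way.

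The axioms of Definition \ref{inddef} then follow by applying the exact, tensor-compatible functor $j''^*$ (localization $X_{/S\times\calI^3}\ra X_{/\calI^3}$) to the associativity diagrams, and $j^*$ to the unity diagrams, and transporting them via the identifications above. The main obstacle is purely bookkeeping: one must check that the identifications used in the definition are compatible with the localization isomorphisms, namely $\calA\boxtimes\calA\boxtimes\calA$, the identity $\calA\otimes\nu^*\calA_0=(\op{Id}\times0)^*(\calA\boxtimes\calA)$, and $(\sigma\times\op{Id})^*(\calA\boxtimes\calA)$. Each is a canonical isomorphism $f^*g^*\simeq(gf)^*$ or a tensor identification, so coherence of pseudofunctoriality of inverse images settles it.

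Alternatively, and more concretely, I would use Remark \ref{era2indrem}. A section of $\calI_{|S}$ over $U\ra S$ is a section $s\in\calI(U)$, and $(\calA_{|S})_s=\calA_s$ as a sheaf on $X_{/U}$. The data $\calA_s\otimes\calA_t\ra\calA_{s+t}$ together with the unit therefore restrict verbatim, and the compatibilities of \ref{indremark73} hold, which gives the structure directly.
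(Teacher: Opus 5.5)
Your proposal is correct and matches the paper's proof. The paper likewise applies $j_{S\times\calI^2/\calI^2}^*$ to the structure maps $\calA\boxtimes\calA\ra\sigma^*\calA$ and $\calA\boxtimes\calE\ra\sigma^*\calE$, and uses the commutative squares of localization morphisms over $p_i$ and $\sigma$ to identify source and target with $\calA_{|S}\boxtimes\calA_{|S}$ and $\sigma_S^*\calA_{|S}$; your treatment of the unit and of the axioms via $\calI^3$ spells out what the paper leaves implicit.
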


\begin{proof}
Let $p_1,p_2:\calI^2\ra \calI$ and $p_{1,S},p_{2,S}:\calI_{|S}^2\ra \calI_{|S}$ be the canonical projections and $\sigma:\calI^2\ra \calI$ and $\sigma_S:\calI_{|S}^2\ra \calI_{|S}$ the addition maps. We have the following commutative diagrams of topoi
$$
\begin{tikzcd}
(X_{/S})_{/\calI^2_{|S}} \ar{r}{\sim} \ar[bend right=-30]{rr}{j_{S\times \calI^2/\calI^2}} \ar{d}{p_{i,S}} & X_{/S\times \calI^2} \ar{r} \ar{d} & X_{/\calI^2} \ar{d}{p_i} \\
(X_{/S})_{/\calI_{|S}} \ar{r}{\sim} \ar[bend right=30]{rr}{j_{S\times\calI/\calI}} & X_{S\times \calI} \ar{r} & X_{/\calI}
\end{tikzcd}
\quad
\begin{tikzcd}
(X_{/S})_{/\calI^2_{|S}} \ar{r}{\sim} \ar[bend right=-30]{rr}{j_{S\times \calI^2/\calI^2}} \ar{d}{\sigma_{S}} & X_{/S\times \calI^2} \ar{r} \ar{d} & X_{/\calI^2} \ar{d}{\sigma} \\
(X_{/S})_{/\calI_{|S}} \ar{r}{\sim} \ar[bend right=30]{rr}{j_{S\times\calI/\calI}} & X_{S\times \calI} \ar{r} & X_{/\calI}
\end{tikzcd}
$$
Applying $j_{S\times \calI^2/\calI^2}^*$ to the morphism
$\calA\boxtimes \calA \ra \sigma^*\calA$
defining the $\calI$-indexed algebra structure on $\calA,$ we get
$\calA_{|S}\boxtimes \calA_{|S}\ra \sigma_{S}^*\calA_{|S}.$
Similarly, applying $j_{S\times \calI^2/\calI^2}^*$ to the morphism
$\calA\boxtimes \calE \ra \sigma^*\calE$
defining the $\calI$-indexed $\calA$-module structure on $\calE,$ we get
$\calA_{|S}\boxtimes \calE_{|S} \ra \sigma_S^*\calE_{|S}.$
These two morphisms define the desired structures.
\end{proof}

\begin{parag}\label{inddef2}
Let $(X,\Ox)$ be a ringed topos on a site $C,$ $\calI$ a sheaf of commutative monoids of $X,$ $\calA$ an $\calI$-indexed $\Ox$-algebra and $\calE$ and $\F$ two $\calI$-indexed $\calA$-modules. Denote by $p_1,p_2:\calI^2 \ra \calI$ the canonical projections and by $\sigma:\calI^2 \ra \calI$ the addition map. Equip $p_{1*}\mathscr{Hom}_{\Ox_{\calI^2}}(p_2^*\calE,\sigma^*\F)$ with the $\Ox_{\calI}$-module structure induced by the canonical morphism of rings $\Ox_{\calI} \ra p_{1*}\Ox_{\calI^2}.$
We denote by $\mathscr{Hom}_{\calA}(\calE,\F)$ the sub-$\Ox_{\calI}$-module of $p_{1*}\mathscr{Hom}_{\Ox_{\calI^2}}(p_2^*\calE,\sigma^*\F)$ defined as follows: let $s\in \Gamma(U,\calI)$ be a section of $\calI$ over an object $U$ of $C$ and consider the morphism $\sigma_s:\calI_{|U} \ra \calI_{|U},\ x\mapsto x+s.$ The $\Ox(U)$-module $p_{1*}\mathscr{Hom}_{\Ox_{\calI^2}}(p_2^*\calE,\sigma^*\F)(s)=\mathscr{Hom}_{\Ox_{\calI^2}}(p_2^*\calE,\sigma^*\F)(s\times \op{Id}_{\calI})$ is the set of $\Ox_U$-linear morphisms $f:(s\times \op{Id}_{\calI})^*p_2^*\calE \ra (s\times \op{Id}_{\calI})^*\sigma^*\F.$
Consider the commutative diagrams of topoi:
$$
\begin{tikzcd}
X_{/U\times \calI} \ar{r}{s\times \op{Id}_{\calI}}  \ar[swap]{dr}{j_{U\times \calI/\calI}} & X_{/\calI^2} \ar{d}{p_2} & & X_{U\times\calI} \ar{r}{s\times \op{Id}_{\calI}} \ar{d}{\sigma_s} & X_{/\calI^2}\ar{d}{\sigma} \\
 & X_{/\calI} & & X_{U\times \calI} \ar{r}{j_{U\times \calI/\calI}} & X_{/\calI}
\end{tikzcd}
$$ 
Define $\mathscr{Hom}_{\calA}(\calE,\F)(s)$ to be the subset of $\left (p_{1*}\mathscr{Hom}_{\Ox_{\calI^2}}(p_2^*\calE,\sigma^*\F)\right )(s)$ consisting of the morphisms of $\calI_{|U}$-indexed $\calA_{|U}$-modules $\calE_{|U} \ra \sigma_s^*\F_{|U}.$ This is clearly a subsheaf of $\Ox_{\calI}$-modules of $p_{1*}\mathscr{Hom}_{\Ox_{\calI^2}}(p_2^*\calE,\sigma^*\F).$

If $\calE=\F,$ we denote $\mathscr{Hom}_{\calA}(\calE,\F)$ simply by $\mathscr{End}_{\calA}(\calE).$
\end{parag}

\begin{parag}\label{indremtakriz1}
Keep the same hypothesis of \ref{inddef2}.
\begin{enumerate}
\item Let $U$ be an object of $C$ and $s\in \Gamma(U,\calI).$ Then $\mathscr{Hom}_{\calA}(\calE,\F)_s$ is the $\Ox_U$-module of $X_{/U}$ defined as follows: it sends an object $v:V\ra U$ of $C_{/U}$ to the set of morphisms of $\calI_{|V}$-indexed $\calA_{|V}$-modules $\calE_{|V} \ra \sigma_{s_{|V}}^*\F_{|V}.$

\item Let $U$ an object of $C,$ $s\in \Gamma(U,\calI)$ and $a\in \Gamma(U,\calA_s).$ The section $a$ can be considered as an element of $\mathscr{Hom}_{\calA}(\F,\F)(s)$ as follows: for any object $V\ra U$ of $C_{/U}$ and any section $t\in \Gamma(V,\calI_{|U})=\Gamma(V,\calI),$ consider the composition
$$
\begin{array}[t]{clclc}
\F_t & \ra & \calA_s\otimes_{\Ox_U}\F_t & \ra & \F_{t+s}\\
x& \mapsto & a\otimes x& \mapsto & ax.
\end{array}
$$
By \ref{era2indrem}, these compositions define a morphism
\begin{equation}\label{era2a}
a_{\F}:\F_{|U}\ra \sigma_s^*\F_{|U}.
\end{equation}

\item $\mathscr{Hom}_{\calA}(\calE,\F)$ has an $\calI$-indexed $\calA$-module structure defined as follows: Let $s,t\in \Gamma(U,\calI)$ be sections of $\calI$ over an object $U$ of $C.$ We will define a morphism
$$\calA_s\otimes_{\Ox_U}\mathscr{Hom}_{\calA}(\calE,\F)_t \ra \mathscr{Hom}_{\calA}(\calE,\F)_{s+t}.$$
Let $V \ra U$ be an object of $C_{/U},$ $a\in \Gamma(V,\calA_s)$ and $f\in \Gamma(V,\mathscr{Hom}_{\calA}(\calE,\F)_t).$ We set
$$af:\calE_{|V} \xrightarrow{f}\sigma_{t_{|V}}^*\F_{|V} \xrightarrow{\sigma_{t_{|V}}^*a_{\F}}\sigma_{(s+t)_{|V}}^*\F_{|V},$$
where $a$ is defined in (\ref{era2a}). Note that, by the $\calA_{|V}$-linearity of $f,$ the morphism $af$ is equal to the composition
$
\calE_{|V} \xrightarrow{a_{\calE}} \sigma_{s_{|V}}^* \calE_{|V} \xrightarrow{\sigma_{s_{|V}}^*f}\sigma^*_{(s+t)_{|V}}\F_{|V}.
$ 
\end{enumerate}
\end{parag}

\begin{definition}\label{inddef33}
Let $(X,\Ox)$ be a ringed topos, $\calI$ a sheaf of commutative monoids of $X,$ $\calA$ an $\calI$-indexed algebra and $\calE$ and $\F$ two $\calI$-indexed $\calA$-modules. We call \emph{internal Hom functor of $\calI$-indexed $\calA$-modules} the sheaf $\mathscr{Hom}_{\calA}(\calE,\F)$ defined in \ref{inddef2}, which is naturally equipped with an $\calI$-indexed $\calA$-module structure in \ref{indremtakriz1} (3).
\end{definition}

\begin{parag}\label{algstrendleft}
The following is a generalization of \ref{indremtakriz1} (3). Let $(X,\Ox)$ be a ringed topos on a site $C,$ $\calI$ a sheaf of monoids of $X,$ $p_1,p_2:\calI^2 \ra \calI$ the canonical projections, $\sigma:\calI^2\ra \calI$ the addition map, $\calA \ra \calB$ a morphism of $\calI$-indexed algebras and $\calE$ and $\F$ two $\calI$-indexed $\calB$-modules. Let $s,t\in \Gamma(U,\calI)$ be two sections of $\calI$ over an object $U$ of $C$ and $\sigma_s:\calI_{|U} \ra \calI_{|U},\ x\mapsto s+x.$ As in \ref{indremtakriz1}, a section $b\in \calB(s)$ defines morphisms
$b_{\calE}:\calE_{|U} \ra \sigma_s^*\calE_{|U}$ and
$b_{\F}:\F_{|U} \ra \sigma_s^*\F_{|U}.$
We have two morphisms
$$
\varphi^g(s,t):\begin{array}[t]{clc}
\calB(s) \otimes_{\Ox(U)}\mathscr{Hom}_{\calA}(\calE,\F)(t) & \ra & \mathscr{Hom}_{\calA}(\calE,\F)(s+t) \\
b \otimes f & \mapsto & f \circ b_{\calE},
\end{array}
$$
$$
\varphi^d(s,t):\begin{array}[t]{clc}
\calB(s) \otimes_{\Ox(U)}\mathscr{Hom}_{\calA}(\calE,\F)(t) & \ra & \mathscr{Hom}_{\calA}(\calE,\F)(s+t) \\
b \otimes f & \mapsto & b_{\F} \circ f,
\end{array}
$$
where the compositions are defined as follows:
\begin{alignat*}{2}
f\circ b_{\calE} &: \calE_{|U} \xrightarrow{b_{\calE}} \sigma_s^*\calE_{|U} \xrightarrow{\sigma_s^*f} \sigma_{s+t}^*\F_{|U}, \quad b_{\F}\circ f &: \calE_{|U} \xrightarrow{f} \sigma_t^*\F_{|U} \xrightarrow{\sigma_t^*b_{\F}} \sigma_{s+t}^*\F_{|U}.
\end{alignat*}
These morphisms, for every $(s,t),$ define two morphisms
$$
\varphi^g,\varphi^d:\calB \boxtimes \mathscr{Hom}_{\calA}(\calE,\F) \ra \sigma^*\mathscr{Hom}_{\calA}(\calE,\F).
$$
These define two structures of $\calI$-indexed $\calB$-modules on $\mathscr{Hom}_{\calA}(\calE,\F).$ We call them \emph{the left} (resp. \emph{right}) \emph{$\calI$-indexed $\calB$-module structure on $\mathscr{Hom}_{\calA}(\calE,\F).$}
\end{parag}

\begin{parag}\label{indtensprod}
Let $(X,\Ox)$ be a ringed topos over a site $C,$ $\calI$ a sheaf of commutative monoids of $X,$ $\calA$ an $\calI$-indexed algebra, $\calE$ and $\F$ two $\calI$-indexed $\calA$-modules (\ref{inddef}), $p_1,p_2:\calI^2\ra \calI,$ $q_1,q_2,q_3:\calI^3\ra \calI$ and $p_{ij}:\calI^3\ra \calI^2,\ 1\le i<j\le 3$ the canonical projections and $\sigma:\calI^2\ra \calI$ and $\tau:\calI^3\ra \calI$ the addition maps. The $\calA$-module structures on $\calE$ and $\F$ induce morphisms
$\calA\boxtimes \calE \boxtimes \F \ra (\sigma^*\calE) \boxtimes \F$
and
$\calA\boxtimes \calE \boxtimes \F \ra \calE \boxtimes (\sigma^*\F).$
The commutativity of the diagrams
$$
\begin{tikzcd}
\calI^3 \ar{r}{p_{12}} \ar[swap]{d}{\sigma \times \op{Id}_{\calI}} & \calI^2\ar{d}{\sigma} & & \calI^3\ar{dr}{q_3}\ar[swap]{d}{\sigma\times \op{Id}_{\calI}} & \\
\calI^2\ar{r}{p_1} & \calI & & \calI^2\ar{r}{p_2} & \calI
\end{tikzcd}
$$
implies that
$(\sigma^*\calE) \boxtimes \F=(\sigma \times \op{Id}_{\calI})^*(\calE \boxtimes \F).$
Similarly, we have
$\calE \boxtimes (\sigma^*\F)=(\op{Id}_{\calI}\times\sigma)^*(\calE \boxtimes \F).$
Thus we have morphisms
\begin{alignat}{2}
\calA\boxtimes \calE \boxtimes \F \ra (\sigma \times \op{Id}_{\calI})^*(\calE \boxtimes \F), \label{indeqqq1} \\
\calA\boxtimes \calE \boxtimes \F \ra (\op{Id}_{\calI}\times\sigma)^*(\calE \boxtimes \F). \label{indeqqq2}
\end{alignat}
Since $\tau=\sigma \circ (\sigma \times \op{Id}_{\calI})=\sigma \circ (\op{Id}_{\calI}\times \sigma),$ we get
$$\tau_!=\sigma_! \circ (\sigma \times \op{Id}_{\calI})_!=\sigma_! \circ (\op{Id}_{\calI}\times \sigma)_!.$$
By adjunction and applying $\sigma_!$ on both sides of (\ref{indeqqq1}) and (\ref{indeqqq2}), we get two morphisms
\begin{equation}\label{indeq10}
\tau_!(\calA \boxtimes \calE \boxtimes \F) \rightrightarrows \sigma_!(\calE \boxtimes \F).
\end{equation}
We denote by $\calE \circledast_{\calA} \F$ the coequalizer of these two morphisms. We then have an exact sequence of $\Ox_{\calI}$-modules
\begin{equation}\label{exseqindtens}
\tau_!(\calA \boxtimes \calE \boxtimes \F) \ra \sigma_!(\calE \boxtimes \F) \ra \calE \circledast_{\calA}\F \ra 0.
\end{equation}
\end{parag}

\begin{proposition}
Keep the same hypothesis of \ref{indtensprod}. Then the $\calI$-indexed $\calA$-module structures on $\calE$ and $\F$ induce an $\calI$-indexed $\calA$-module structure on $\calE\circledast_{\calA}\F.$
\end{proposition}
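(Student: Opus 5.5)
The plan is to mimic Proposition \ref{krazzzz}. First put an indexed $\calA$-module structure on $\sigma_!(\calE\boxtimes\F)$ and on $\tau_!(\calA\boxtimes\calE\boxtimes\F)$. Then check that the two maps in \eqref{indeq10} are $\calA$-linear. Since $\boldsymbol{\op{Mod}}_{\calA}$ is an abelian subcategory of $\boldsymbol{\op{Mod}}_{\Ox_{\calI}}$, the cokernel $\calE\circledast_{\calA}\F$ of the exact sequence \eqref{exseqindtens} then inherits a unique $\calA$-module structure making $\sigma_!(\calE\boxtimes\F)\ra\calE\circledast_{\calA}\F$ $\calA$-linear. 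Here I use that the difference of the two arrows is a morphism in $\boldsymbol{\op{Mod}}_{\calA}$, and that the cokernel computed in $\boldsymbol{\op{Mod}}_{\calA}$ agrees with the one in $\boldsymbol{\op{Mod}}_{\Ox_{\calI}}$.

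For the structure on the source objects, I work through \ref{indremark73} and \ref{era2indrem}, that is, section by section. For $U$ an object of $C$ and $s,t\in\Gamma(U,\calI)$, Proposition \ref{indpropshriek}(3) identifies $(\sigma_!(\calE\boxtimes\F))_t$ with the sheaf associated to
$$V\mapsto\bigoplus_{u+v=t}\calE_u(V)\otimes_{\Ox(V)}\F_v(V).$$
I define
$$\calA_s\otimes_{\Ox_U}(\sigma_!(\calE\boxtimes\F))_t\ra(\sigma_!(\calE\boxtimes\F))_{s+t}$$
on presheaves by $a\otimes(x\otimes y)\mapsto ax\otimes y$, landing in the summand indexed by $(s+u,v)$. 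I then sheafify and use that tensor product commutes with sheafification. Proposition \ref{krazzzz} already gives exactly this for $\sigma_!(\calE\boxtimes\F)$, so I invoke it directly.

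For $\tau_!(\calA\boxtimes\calE\boxtimes\F)$, the analogous description is sums over $r+u+v=t$ of $\calA_r\otimes\calE_u\otimes\F_v$, obtained by applying \ref{indpropshriek}(2) to $\tau$. I let $\calA$ act on the $\calE$-factor, $a\otimes(b\otimes x\otimes y)\mapsto b\otimes ax\otimes y$. Equivalently, I apply \ref{krazzzz} with $\calE$ replaced by the module $\sigma_!(\calA\boxtimes\calE)$, using $\tau_!=\sigma_!\circ(\sigma\times\op{Id}_{\calI})_!$. Associativity and unit axioms reduce summand-wise to those of $\calE$.

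Next I describe the two arrows in \eqref{indeq10} on summands: $b\otimes x\otimes y\mapsto bx\otimes y$ and $b\otimes x\otimes y\mapsto x\otimes by$. Linearity for the chosen action means $a(bx)\otimes y$ versus $ba x\otimes y$ for the first arrow, and $ax\otimes by$ for the second. The second arrow is clearly $\calA$-linear. The first requires $a(bx)=b(ax)$, which uses commutativity of $\calA$.

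This is the main obstacle. If $\calA$ is not assumed commutative, I would instead let $\calA$ act on $\tau_!(\calA\boxtimes\calE\boxtimes\F)$ by left multiplication on the $\calA$-factor. Then the first arrow is linear by associativity, but the second fails. So I expect the statement to implicitly require $\calA$ commutative, which is the case in all later applications: $\calA_X$, $\calB_{X/S}$ and the centers. I would state this and use commutativity.

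Compatibility with the restriction isomorphisms $c_{i,g}$ of \ref{indremark73} is clear since all maps are defined by natural formulas. Passing to the cokernel then finishes the proof.
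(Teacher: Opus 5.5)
Your argument is valid when $\calA$ is commutative. However, it does not prove the proposition as stated, and your diagnosis that the statement implicitly needs commutativity is mistaken.

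The obstruction you hit comes from insisting that each arrow of \eqref{indeq10} be $\calA$-linear for some $\calA$-structure on $\tau_!(\calA\boxtimes\calE\boxtimes\F)$. All that is needed is weaker. Let $K$ be the image of the difference of the two arrows, that is, the sub-$\Ox_{\calI}$-module of $\sigma_!(\calE\boxtimes\F)$ generated by the local sections $(bx)\otimes y-x\otimes(by)$. It suffices that $K$ be stable under the action of \ref{krazzzz}, and this holds for an arbitrary $\calI$-indexed algebra $\calA$. Take $a\in\calA_m(V)$, $b\in\calA_r(V)$, $x\in\calE_s(V)$ and $y\in\F_t(V)$. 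Associativity of the module structures ($a(bx)=(ab)x$ and $a(by)=(ab)y$) gives
\[
a\cdot\bigl((bx)\otimes y-x\otimes(by)\bigr)=\bigl(((ab)x)\otimes y-x\otimes((ab)y)\bigr)-\bigl((ax)\otimes(by)-x\otimes(a(by))\bigr).
\]
Both brackets on the right are generators of $K$. The first is attached to $ab\in\calA_{m+r}(V)$. The second is attached to $a$, with $by\in\F_{r+t}(V)$. Hence $K$ is an $\calA$-submodule, and $\calE\circledast_{\calA}\F=\sigma_!(\calE\boxtimes\F)/K$ is a cokernel in $\boldsymbol{\op{Mod}}_{\calA}$. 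Commutativity of $\calA$ never enters; only commutativity of $\calI$ is used, to identify the indices.

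This is also the check on which the paper's argument rests. The paper puts no $\calA$-structure on $\tau_!(\calA\boxtimes\calE\boxtimes\F)$. It uses $\tau_!$ and one arrow of \eqref{indeq10} only to build, by adjunction, the map $\calA\boxtimes\sigma_!(\calE\boxtimes\F)\ra\sigma^*(\calE\circledast_{\calA}\F)$ given by $a\otimes x\otimes y\mapsto(ax)\otimes y$. It then asserts that this map kills the image of $\calA\boxtimes K$. By right exactness of $\calA\boxtimes-$, the map therefore factors through $\calA\boxtimes(\calE\circledast_{\calA}\F)$. The identity above is exactly what justifies that factorization in general.

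With this fix you can drop the $\calA$-structure on $\tau_!(\calA\boxtimes\calE\boxtimes\F)$ altogether. Note also that your alternative description, applying \ref{krazzzz} to $\sigma_!(\calA\boxtimes\calE)$, would produce the action on the $\calA$-factor, not on the $\calE$-factor as you intended.
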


\begin{proof}
By \ref{indpropshriek} and \ref{indpropshriek2}, the two morphisms (\ref{indeq10}) are given explicitly as follows: for a section $l\in \calI(U)$ over an object $U$ of $C,$ the morphisms
$\tau_!(\calA \boxtimes \calE \boxtimes \F)_l \rightrightarrows \sigma_!(\calE \boxtimes \F)_l$
are induced by
\begin{equation}\label{indeq14}
\begin{array}[t]{clc}
\bigoplus\limits_{\substack{i,j,k\in \calI(V) \\ i+j+k=l}}\calA_i(V)\otimes_{\Ox(V)}\calE_i(V)\otimes_{\Ox(V)}\F_k(V) & \ra & \bigoplus\limits_{\substack{r,s\in \calI(V) \\ r+s=l}}\calE_r(V)\otimes_{\Ox(V)} \F_s(V) \\
a \otimes x \otimes y & \mapsto & (ax) \otimes y \\
a \otimes x \otimes y & \mapsto & x \otimes (ay)
\end{array}
\end{equation}
where $V\ra U$ is an object of $C_{/U},$ $a \in \calA_i(V),$ $x\in \calE_j(V)$ and $y\in \F_k(V).$

The adjunction morphism
$\calE \boxtimes \F \ra \sigma^*\sigma_!(\calE\boxtimes \F)$
induces a morphism
$p_{23}^*(\calE\boxtimes \F) \ra  p_{23}^*\sigma ^* \sigma_!(\calE\boxtimes \F)=(\op{Id}_{\calI}\times \sigma)^*p_2^*\sigma_!(\calE\boxtimes \F).$
Again, by adjunction we obtain a morphism
\begin{equation}\label{indeq9}
(\op{Id}_{\calI}\times \sigma)_!p_{23}^*(\calE\boxtimes \F) \ra p_2^*\sigma_!(\calE\boxtimes \F).
\end{equation}
By \ref{indpropshriekoh}, it is an isomorphism.
Its inverse
$p_2^*\sigma_!(\calE\boxtimes \F) \ra (\op{Id}_{\calI}\times \sigma)_!p_{23}^*(\calE\boxtimes \F)$
induces a morphism
$\calA \boxtimes \sigma_!(\calE \boxtimes \F) \ra p_1^*\calA \otimes_{\Ox_{\calI^2}} (\op{Id}_{\calI}\times \sigma)_!p_{23}^*(\calE \boxtimes \F).$
By (\cite{SGA43} Exposé IV 12.11.b), there exists a canonical isomorphism
\begin{alignat*}{2}
p_1^*\calA \otimes_{\Ox_{\calI^2}} (\op{Id}_{\calI}\times \sigma)_!p_{23}^*(\calE \boxtimes \F) &\xrightarrow{\sim}  (\op{Id}_{\calI}\times \sigma)_!\left ( (\op{Id}_{\calI}\times \sigma)^*p_1^*\calA \otimes_{\Ox_{\calI^3}}p_{23}^*(\calE \boxtimes \F)\right ) \\
&\xrightarrow{\sim} (\op{Id}_{\calI}\times \sigma)_!(\calA \boxtimes \calE \boxtimes \F).
\end{alignat*}
It follows that we have a morphism
$\calA \boxtimes \sigma_!(\calE\boxtimes \F) \ra (\op{Id}_{\calI}\times \sigma)_!(\calA\boxtimes \calE\boxtimes \F)$
and then a morphism
$\sigma_!(\calA \boxtimes \sigma_!(\calE\boxtimes \F)) \ra \tau_!(\calA \boxtimes \calE\boxtimes \F).$
Composing with one of the morphisms (\ref{indeq10}) and then the canonical projection, we get a morphism
$$\sigma_!(\calA \boxtimes \sigma_!(\calE\boxtimes \F)) \ra \calE \circledast_{\calA}\F$$
which induces by adjunction
\begin{equation}\label{indeq13}
\calA \boxtimes \sigma_!(\calE \boxtimes \F)\ra \sigma^*(\calE \circledast_{\calA}\F).
\end{equation}
For sections $k,l\in \calI(U)$ over an object $U$ of $C,$ the fiber of (\ref{indeq13}) at $(k,l)$ is induced by
$$
\begin{array}[t]{clc}
\calA_k(U) \otimes_{\Ox(U)} \bigoplus\limits_{i+j=l}\calE_i(U) \otimes_{\Ox(U)}\F_j(U) & \ra & \bigoplus\limits_{r+s=k+l}\calE_r(U)\otimes_{\Ox(U)}\F_s(U) / \sim \\
a \otimes x\otimes y & \mapsto & (ax)\otimes y=x\otimes (ay)
\end{array}
$$
where $a\in \calA_k(U),$ $x\in \calE_i(U)$ and $y\in \F_j(U)$ and $\sim$ is the equalizer of the morphisms (\ref{indeq14}).
This morphism factors through the quotient of
$$\calA_k(U) \otimes_{\Ox(U)} \bigoplus\limits_{i+j=l}\calE_i(U) \otimes_{\Ox(U)}\F_j(U)$$
by the submodule generated by sections
$a \otimes ((bx)\otimes y-x\otimes(by)),$
where $a\in \calA_k(U),$ $b\in \calA_{\alpha}(U),$ $x\in \calE_{\beta}(U),$ $y\in \F_{\gamma}(U)$ and $\alpha,\beta,\gamma \in \calI(U)$ such that $\alpha+\beta+\gamma=i+j.$
It follows that the morphism (\ref{indeq13}) factors through
$\calA \boxtimes (\calE\circledast_{\calA}\F).$
The resulting morphism satisfies the conditions of (\ref{inddef} 3) and thus gives the desired structure.
The $\calI$-indexed $\calA$-module $\calE\circledast_{\calA}\F$ is called \emph{the tensor product of $\calE$ and $\F$ over $\calA.$} 
\end{proof}

\begin{proposition}\label{indproptensprod}
Keep the same hypothesis of \ref{indtensprod} and consider a section $k\in \Gamma(U,\calI)$ of $\calI$ over an object $U$ of $C.$ By construction, $(\calE \circledast_{\calA}\F)_k$ is the sheaf associated to the the presheaf
$$
\begin{array}[t]{clc}
C_{/U} & \mapsto & \op{Sets} \\
V & \mapsto & \bigoplus\limits_{\substack{i,j\in \calI(V) \\ i+j=k}}\calE_i(V)\otimes_{\Ox(V)}\F_j(V)/\sim,
\end{array}
$$
where $(ax)\otimes y\sim x\otimes (ay)$ for every section $a\in\calA_r(V),$ $x\in \calE_s(V),$ $y\in \F_t(V)$ and $r,s,t\in \calI(V)$ such that $r+s+t=k.$
\end{proposition}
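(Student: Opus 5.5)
The plan is to compute the fiber at $k$ of each term in the exact sequence \eqref{exseqindtens}. Since $k^*$ is exact and commutes with sheafification, the fiber $(\calE\circledast_{\calA}\F)_k$ is the cokernel of
$$
\tau_!(\calA\boxtimes\calE\boxtimes\F)_k \rightrightarrows \sigma_!(\calE\boxtimes\F)_k ,
$$
where $k:X_{/U}\ra X_{/\calI}$ is the localization at the section $k$. The inverse image $k^*$ is a localization functor, hence exact, and it is compatible with taking the sheaf associated to a presheaf. It therefore suffices to describe the two fibers and the two arrows at the level of presheaves on $C_{/U}$, and then sheafify.

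First, Corollary \ref{indpropshriek2}, applied to $f=\sigma:\calI^2\ra\calI$ and to $f=\tau:\calI^3\ra\calI$, identifies the two fibers with sheafifications of presheaves:
\begin{itemize}
\item $\sigma_!(\calE\boxtimes\F)_k$ is the sheaf associated to $V\mapsto\bigoplus_{i+j=k}(\calE\boxtimes\F)(i,j)$;
\item $\tau_!(\calA\boxtimes\calE\boxtimes\F)_k$ is the sheaf associated to $V\mapsto \bigoplus_{r+s+t=k}(\calA\boxtimes\calE\boxtimes\F)(r,s,t)$.
\end{itemize}
Here $(\calE\boxtimes\F)(i,j)$ is the sheaf $\calE_i\otimes_{\Ox_V}\F_j$ evaluated at $V$. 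It is in turn the sheafification of $V\mapsto\calE_i(V)\otimes_{\Ox(V)}\F_j(V)$, because $(i,j)^*p_1^*\calE=\calE_i$ and $(i,j)^*p_2^*\F=\F_j$. Replacing the sheaf tensor product by this presheaf tensor product inside the direct sum does not change the associated sheaf, so both presheaves may be taken in the naive form appearing in \eqref{indeq14}.

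Next, trace the two arrows \eqref{indeq10} through these identifications. They come by adjunction from \eqref{indeqqq1} and \eqref{indeqqq2}, which are themselves induced by the action maps $\calA_r\otimes\calE_s\ra\calE_{r+s}$ and $\calA_r\otimes\F_t\ra\F_{r+t}$ of Remark \ref{era2indrem}. On the summand indexed by $(r,s,t)$ they should therefore be $a\otimes x\otimes y\mapsto (ax)\otimes y$ and $a\otimes x\otimes y\mapsto x\otimes(ay)$, landing in the summands indexed by $(r+s,t)$ and $(s,r+t)$ respectively. This is the explicit description \eqref{indeq14} already recorded in the proof of the preceding proposition.

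Finally, cokernels of presheaves sheafify to cokernels of sheaves, since sheafification is exact. Hence $(\calE\circledast_{\calA}\F)_k$ is the sheaf associated to the presheaf cokernel of the two arrows, which is exactly $\bigoplus_{i+j=k}\calE_i(V)\otimes\F_j(V)/\sim$ with the stated relation. The main obstacle is the middle step: checking carefully that the adjunction maps defining \eqref{indeq10} really restrict on summands to these naive formulas. This amounts to unwinding the unit of the $\sigma_!\dashv\sigma^*$ adjunction with the explicit description of Proposition \ref{indpropshriek} (3), together with the identification $(\sigma\times\op{Id}_{\calI})^*(\calE\boxtimes\F)=(\sigma^*\calE)\boxtimes\F$ on fibers. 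The remaining steps are formal exactness arguments.
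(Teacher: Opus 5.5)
Your proposal is correct and follows the paper's own argument. The paper justifies the statement only by ``by construction'', meaning: take fibers at $k$ of the exact sequence \eqref{exseqindtens}, describe the two fibers via \ref{indpropshriek} and \ref{indpropshriek2}, and use the explicit formulas \eqref{indeq14} for the two arrows, which is exactly what you do. Your observation that replacing $(\calE_i\otimes_{\Ox_V}\F_j)(V)$ by the presheaf tensor product $\calE_i(V)\otimes_{\Ox(V)}\F_j(V)$ does not change the associated sheaf fills in a step the paper leaves implicit.
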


\begin{remark}\label{indrem1}
Keep the same hypothesis of \ref{indtensprod}. 
\begin{enumerate}
\item Let $f:\calA \ra \calA'$ be a morphism of $\calI$-indexed algebras. The composition
$$\calA\boxtimes \calA' \xrightarrow{f\boxtimes \op{Id}_{\calA'}} \calA' \boxtimes \calA' \xrightarrow{\pi_{\calA'}} \sigma^*\calA'$$
defines an $\calI$-indexed $\calA$-module structure on $\calA'.$
The morphism 
$$\pi_{\calA'}:\calA' \boxtimes \calA' \ra \sigma^*\calA'$$
defining the indexed algebra structure on $\calA',$ induces a morphism
$$\sigma_!(\calA' \boxtimes \calA') \ra \calA'$$
and then an $\calA$-linear morphism
\begin{equation}\label{indrem11}
v:\calA' \circledast_{\calA}\calA' \ra \calA'.
\end{equation}
We call $v$ the \emph{$\calI$-indexed codiagonal map.}
\item Let $\calA \ra \calA'$ be a morphism of $\calI$-indexed algebras. Denote by $e$ the final object of $X,$ $0:e\ra \calI$ the zero section of $\calI$ and $\nu:\calI \ra e$ the canonical morphism. The morphisms
$$\calA' \xrightarrow{\op{Id}_{\calA'}\times 1_{\calA'}} \calA' \otimes_{\Ox_{\calI}} \nu^*\calA'_0=(\op{Id}_{\calI}\times 0)^*(\calA' \boxtimes \calA')$$
$$\calA' \xrightarrow{1_{\calA'}\times \op{Id}_{\calA'}} \nu^*\calA'_0\otimes_{\Ox_{\calI}} \calA'=(0\times \op{Id}_{\calI})^*(\calA' \boxtimes \calA')$$
induce two morphisms
$$(\op{Id}_{\calI}\times 0)_! \calA' \ra \calA' \boxtimes \calA'$$
$$(0\times \op{Id}_{\calI})_! \calA' \ra \calA' \boxtimes \calA'$$
and then two morphisms
\begin{equation}\label{indeq17}
\calA'=\sigma_!(\op{Id}_{\calI}\times 0)_!\calA' \rightrightarrows \sigma_!(\calA' \boxtimes \calA').
\end{equation}
Composing with the canonical projection
$$\sigma_!(\calA' \boxtimes \calA') \ra \calA' \circledast_{\calA}\calA',$$
we get two $\calA$-linear morphisms
\begin{equation}\label{indrem12}
\pi_1,\pi_2:\calA' \ra \calA' \circledast_{\calA} \calA'.
\end{equation}
These morphisms are induced by the morphisms of presheaves
$$
\begin{array}[t]{clclc}
\calA'_k(U) & \ra & \calA'_k(U)\otimes_{\Ox(U)}\calA'_0(U) & \subset & \bigoplus\limits_{\substack{i,j\in \calI(U) \\ i+j=k}}\calA'_i(U) \otimes_{\Ox(U)}\calA'_j(U) \\
x & \mapsto & x \otimes 1 & &
\end{array}
$$
and
$$
\begin{array}[t]{clclc}
\calA'_k(U) & \ra & \calA'_0(U)\otimes_{\Ox(U)}\calA'_k(U) & \subset & \bigoplus\limits_{\substack{i,j\in \calI(U) \\ i+j=k}}\calA'_i(U) \otimes_{\Ox(U)}\calA'_j(U) \\
x & \mapsto & 1 \otimes x & &
\end{array}
$$
\end{enumerate}
\end{remark}

\begin{lemma}\label{lemtop15}
Let $u:(T',\Ox_{T'})\ra (T,\Ox_T)$ be a morphism of ringed topoi, $I$ a sheaf of $T,$ $I'=u^{-1}I$ and $j_I$ and $j_{I'}$ the localization morphisms. Equip $T_{/I}$ and $T'_{/I'}$ with the rings $j_I^{-1}\Ox_T$ and $j_{I'}^{-1}\Ox_{T'}$ respectively. Consider the commutative diagram of ringed topoi
$$
\begin{tikzcd}
T'_{/I'} \ar{r}{u'} \ar[swap]{d}{j_{I'}} & T_{/I} \ar{d}{j_I} \\
T' \ar{r}{u} & T,
\end{tikzcd}
$$
where $u'$ is induced by $u.$
\begin{enumerate}
\item For every sheaf $\F'$ of $T',$ there exists a canonical isomorphism
$$j_I^*u_*\F' \xrightarrow{\sim} u'_*j_{I'}^*\F'.$$
\item For every sheaf $\F$ of $T_{/I},$ there exists a canonical isomorphism
$$j_{I'!}u'^*\F \xrightarrow{\sim} u^*j_{I!}\F.$$
\end{enumerate}
The proposition is true if we consider topoi instead of ringed topoi and we replace $u^*,$ $u'^*,$ $j_{\calI}^*$ and $j_{\calI'}^*$ by $u^{-1},$ $u'^{-1},$ $j_{\calI}^{-1}$ and $j_{\calI'}^{-1}$ respectively.
\end{lemma}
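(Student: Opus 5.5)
The plan is to prove (1) first, by a base change property of localization, and then deduce (2) from it by adjunction.

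For (1) we first need to understand $u'$. Recall that $T_{/I}$ and $T'_{/I'}$ are equivalent to the topoi of sheaves on the localized sites. The inverse image $u'^{-1}$ sends an object $(\calG\ra I)$ of $T_{/I}$ to $(u^{-1}\calG \ra u^{-1}I=I')$. Its direct image $u'_*$ sends $(\calG'\ra I')$ to the fiber product $u_*\calG'\times_{u_*I'} I$, formed via the adjunction map $I\ra u_*u^{-1}I$. A quick check shows that $(u'^{-1},u'_*)$ is adjoint and that the square commutes, in the sense that $j_{I'}^{-1}u^{-1}\cong u'^{-1}j_I^{-1}$. Both sides send $\calG$ to $u^{-1}\calG\times I'$ by \eqref{indeq8}, since $u^{-1}$ commutes with finite products.

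Now let $\F'$ be a sheaf of $T'$. By \eqref{indeq8}, $j_{I'}^*\F'=\F'\times I'\ra I'$, and we compute
$$u'_*(\F'\times I')=u_*(\F'\times I')\times_{u_*I'}I=(u_*\F'\times u_*I')\times_{u_*I'}I=u_*\F'\times I=j_I^*u_*\F'.$$
This uses only the fact that $u_*$ preserves products. This gives the canonical isomorphism of sheaves. For ringed topoi we need it to be linear. The ring of $T_{/I}$ is $j_I^{-1}\Ox_T$, and the module structures on both sides are induced fiberwise over $I$ from $u_*\F'$ as a $u_*\Ox_{T'}$-module, through $\Ox_T\ra u_*\Ox_{T'}$. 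So the isomorphism is compatible with the module structures, and the identification of underlying sheaves suffices. Alternatively, and more canonically, we can take the base change morphism $j_I^*u_*\ra u'_*j_{I'}^*$ coming from the commutative square and check that it is the map above.

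For (2) we pass to left adjoints. Functors of modules have the following adjunctions: $j_{I!}\dashv j_I^*$ and $j_{I'!}\dashv j_{I'}^*$, as in \ref{indpar2}, together with $u^*\dashv u_*$ and $u'^*\dashv u'_*$. By (1) we have $j_I^*u_*\cong u'_*j_{I'}^*$ as functors from $\Ox_{T'}$-modules to $j_I^{-1}\Ox_T$-modules. The left adjoint of the left-hand side is $u^*j_{I!}$ and that of the right-hand side is $j_{I'!}u'^*$. By uniqueness of adjoints we obtain a canonical isomorphism $j_{I'!}u'^*\F\xrightarrow{\sim}u^*j_{I!}\F$. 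Explicitly, it is the mate of the isomorphism in (1), namely
$$j_{I'!}u'^*\F \ra j_{I'!}u'^*j_I^*j_{I!}\F\cong j_{I'!}j_{I'}^*u^*j_{I!}\F\ra u^*j_{I!}\F.$$
The sheaf-theoretic version is identical. On sheaves of sets, $j_{I!}$ is the forgetful functor $(\calG\ra I)\mapsto\calG$. In that case one can even check (2) directly: $u^{-1}$ of $\calG$ is the source of $u'^{-1}(\calG\ra I)$.

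I expect the main obstacle to be the bookkeeping in (1). One has to identify $u'_*$ correctly, including the ring structures: the ring of $T'_{/I'}$ is $j_{I'}^{-1}\Ox_{T'}$ and the morphism $u'$ of ringed topoi uses $j_I^{-1}\Ox_T\ra u'_*j_{I'}^{-1}\Ox_{T'}$, which is itself an instance of (1). One also has to check that the explicit isomorphism agrees with the base change map. Everything else is formal adjunction.
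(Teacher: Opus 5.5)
Your proof is correct and follows the paper's route. In (1) you compute $u'_*$ as the fibre product $u_*\calG'\times_{u_*I'}I$, while the paper takes the base change map $j_I^*u_*\F'\ra u'_*j_{I'}^*\F'$ and checks it on sections over objects $U\ra I$; this amounts to the same computation. In (2) you pass to left adjoints from (1), exactly as in the paper's $\op{Hom}$-chain argument.
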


\begin{proof}
The adjunction morphism
$\F' \ra j_{I'*}j_{I'}^*\F'$
induces
$u_*\F' \ra u_*j_{I'*}j_{I'}^*\F'=j_{I*}u'_*j_{I'}^*\F'.$
We then get a canonical morphism
\begin{equation}\label{eqtop11}
j_I^*u_*\F' \ra u'_*j_{I'}^*\F'.
\end{equation}
This morphism is an isomorphism. Indeed, if $U$ is an object of $T$ then
$$
\left (j_I^*u_*\F' \right )\left (U_{/I} \right )=\left (u_*\F'\right )(U)=\F'(u^{-1}U),
$$
$$
\left (u'_*j_{I'}^*\F' \right )\left (U_{/I} \right )=\left (j_{I'}^*\F' \right )\left ( (u^{-1}U)_{/I'}\right )=\F'(u^{-1}U).
$$
The second assertion follows from the first as follows: for any object $\calG$ of $T',$ we have
\begin{alignat*}{2}
\op{Hom}_{T'}(j_{I'!}u'^*\F,\calG) &= \op{Hom}_{T_{/I}}(\F,u'_*j_{I'}^*\calG )= \op{Hom}_{T_{/I}}(\F,j_I^*u_*\calG) &= \op{Hom}_{T'}(u^*j_{I!}\F,\calG).
\end{alignat*}
The proof for topoi instead of ringed topoi is similar.
\end{proof}

\begin{proposition}\label{ren3}
Let $(X,\Ox)$ be a ringed topos, $\calI$ a sheaf of monoids of $X,$ $S$ a sheaf of $X$, $\calA$ an $\calI$-indexed algebra and $\calE$ and $\F$ two $\calI$-indexed $\calA$-modules. There exists a canonical isomorphism
$$
\left (\calE_{|S} \right ) \circledast_{\calA_{|S}} \left (\F_{|S} \right ) \xrightarrow{\sim} \left (\calE \circledast_{\calA}\F \right )_{|S}  .
$$
\end{proposition}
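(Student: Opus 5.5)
The plan is to show that localization at $S$, i.e. the functor $j_{S\times\calI/\calI}^*$ of \ref{defresind}, commutes with each ingredient in the definition of $\circledast_{\calA}$: exterior products, inverse images $\sigma^*,\tau^*$, the functors $\sigma_!,\tau_!$, and cokernels. Write $j=j_{S\times\calI/\calI}$, $j_2=j_{S\times\calI^2/\calI^2}$ and $j_3=j_{S\times\calI^3/\calI^3}$. Write $\sigma_S,\tau_S$ for the addition maps of $\calI_{|S}$.

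\textbf{Step 1: the pieces.} As in the proof of \ref{Locprop1}, the commutative squares of localized topoi give canonical isomorphisms $j_2^*(\calE\boxtimes\F)\simeq \calE_{|S}\boxtimes\F_{|S}$ and $j_3^*(\calA\boxtimes\calE\boxtimes\F)\simeq \calA_{|S}\boxtimes\calE_{|S}\boxtimes\F_{|S}$. This is because $j_2^*$ commutes with the projections' inverse images and with $\otimes$, being the inverse image of a localization.

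\textbf{Step 2: the shriek functors.} This is the main point. I want a canonical isomorphism $\sigma_{S!}\,j_2^*\simeq j^*\sigma_!$, and similarly for $\tau$. The square formed by $\sigma$, $\sigma_S$, $j_2$ and $j$ is cartesian, since $S\times\calI^2=(S\times\calI)\times_{\calI}\calI^2$ over $\sigma$. So the second assertion of \ref{indpropshriekoh} applies to the morphisms $\sigma:\calI^2\to\calI$ and $S\times\calI\to\calI$. It gives $q_!p^*\simeq g^*\sigma_!$ with $p=j_2$, $q=\sigma_S$ and $g=j$, which is exactly what I want. Alternatively, one checks it directly on fibers using \ref{indpropshriek}(3): for $(x,s)\in\Gamma(U,S\times\calI)$, both sides are sheafifications of $\bigoplus_{u+v=s}\calE_u(V)\otimes\F_v(V)$. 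The identification $\tau_{S!}j_3^*\simeq j^*\tau_!$ follows in the same way.

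\textbf{Step 3: compatibility of the two arrows and passage to cokernels.} Next I check that under these identifications the two parallel morphisms \eqref{indeq10} for $(\calA,\calE,\F)$ are carried by $j^*$ to the corresponding morphisms for $(\calA_{|S},\calE_{|S},\F_{|S})$. These morphisms come by adjunction from the module structures, which localize to the structures of \ref{Locprop1}. The simplest way to verify the compatibility is on the explicit presheaf formulas \eqref{indeq14}: both pairs are given by $a\otimes x\otimes y\mapsto (ax)\otimes y$ and $a\otimes x\otimes y\mapsto x\otimes(ay)$.

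Since $j^*$ is exact (it has both adjoints), applying it to the exact sequence \eqref{exseqindtens} gives a cokernel presentation of $(\calE\circledast_{\calA}\F)_{|S}$. By Steps 1–2, this presentation coincides with the one defining $\calE_{|S}\circledast_{\calA_{|S}}\F_{|S}$, and this yields the canonical isomorphism. As a sanity check, by \ref{indproptensprod} both sides have the same fibers at every section.

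The main obstacle is bookkeeping rather than substance: keeping the base-change isomorphism of Step 2 compatible with the two arrows. I would settle it by the fiberwise presheaf description rather than by chasing adjunctions.
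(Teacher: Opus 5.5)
Your proposal is correct and follows essentially the paper's proof. Both arguments identify $\calE_{|S}\boxtimes\F_{|S}$ and the triple product with their localizations, use the base change $\sigma_{S!}j_2^*\simeq j^*\sigma_!$ (and likewise for $\tau$), and pass to cokernels by exactness of restriction applied to \eqref{exseqindtens}. The only differences are that the paper takes the base change from Lemma \ref{lemtop15}(2) rather than Proposition \ref{indpropshriekoh}(2), which amounts to the same statement here, and that you explicitly check that the two parallel arrows correspond, a point the paper leaves implicit.
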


\begin{proof}
Denote by $\sigma:\calI^2 \ra \calI$ and $\tau:\calI^3 \ra \calI$ the addition maps and $p_1,p_2:\calI^2 \ra \calI$ the canonical projections. We have the commutative diagram of topoi
$$
\begin{tikzcd}
\left (X_{/S} \right )_{/\calI_{|S}^2} \ar{r}{j_{\calI^2\times S/\calI^2}} \ar[swap]{d}{\sigma_{|S}} & X_{/\calI^2} \ar{d}{\sigma} \\
\left (X_{/S} \right )_{/\calI_{|S}} \ar{r}{j_{\calI\times S/\calI}} & X_{/\calI}.
\end{tikzcd}
$$
By \ref{lemtop15}, there exists a canonical isomorphism
$
\left (\sigma_{|S} \right )_! \left ( \left ( \calE \boxtimes \F \right )_{|S} \right ) \xrightarrow{\sim} \left ( \sigma_!(\calE \boxtimes \F) \right )_{|S}.
$
By the commutative diagram
$$
\begin{tikzcd}
\left (X_{/S} \right )_{/\calI_{|S}^2} \ar{r}{j_{\calI^2\times S/\calI^2}} \ar[swap]{d}{p_{i|S}} & X_{/\calI^2} \ar{d}{p_i} \\
\left (X_{/S} \right )_{/\calI_{|S}} \ar{r}{j_{\calI\times S/\calI}} & X_{/\calI},
\end{tikzcd}
$$
we get a canonical isomorphism
$
\left (\calE_{|S} \right ) \boxtimes \left (\F_{|S} \right ) \xrightarrow{\sim} \left ( \calE \boxtimes \F \right )_{|S}.
$
It follows that we have a canonical isomorphism
$
\left (\sigma_{|S} \right )_! \left (\calE_{|S}  \boxtimes \F_{|S}  \right ) \xrightarrow{\sim} \left ( \sigma_!(\calE \boxtimes \F) \right )_{|S}.
$
Similarly, we have a canonical isomorphism
$
\left (\tau_{|S} \right )_! \left ( \calA_{|S} \boxtimes \calE_{|S} \boxtimes \F_{|S} \right ) \xrightarrow{\sim} \left ( \tau_!(\calA \boxtimes \calE \boxtimes \F) \right )_{|S}.
$
By definition of $\calE \circledast_{\calA} \F$ and the right exactness of $\left (\tau_{|S} \right )_!$ and the restriction functor $\calM \mapsto \calM_{|S,}$ these isomorphisms fit into a commutative diagram
$$
\begin{tikzcd}
\left (\tau_{|S} \right )_! \left ( \calA_{|S} \boxtimes \calE_{|S} \boxtimes \F_{|S} \right ) \ar[sloped]{d}{\sim} \ar{r} & \left (\sigma_{|S} \right )_! \left ( \calE_{|S} \boxtimes \F_{|S} \right ) \ar{r} \ar[sloped]{d}{\sim} & \left (\calE_{|S} \right ) \circledast_{\calA_{|S}} \left (\F_{|S} \right ) \ar{r}  \ar[sloped,dashed]{d}{\sim} & 0 \\
\left ( \tau_!(\calA \boxtimes \calE \boxtimes \F) \right )_{|S} \ar{r} & \left ( \sigma_!(\calE \boxtimes \F) \right )_{|S} \ar{r} & \left (\calE \circledast_{\calA} \F\right )_{|S} \ar{r} & 0,
\end{tikzcd}
$$
where the rows are induced by \eqref{exseqindtens} and hence exact.
Hence the existence of the third vertical isomorphism.
\end{proof}

\begin{proposition}
Let $C$ be a site, $(X,\Ox)$ a ringed topos on $C,$ $\calI$ a sheaf of monoids of $X,$ $\calB \ra \calA$ a morphism of $\calI$-indexed algebras and $\calE,$ $\F$ and $\calG$ three $\calI$-indexed $\calA$-modules. Denote by $p_1,p_2:\calI^2 \ra \calI$ the canonical projections and by $\sigma:\calI^2 \ra \calI$ the addition map. Consider the morphism
\begin{equation}\label{takcompadj}
\mathscr{Hom}_{\calA}(\calE,\F) \boxtimes \mathscr{Hom}_{\calA}(\F,\calG) \ra \sigma^*\mathscr{Hom}_{\calA}(\calE,\calG)\end{equation}
defined, for every sections $s,t\in \Gamma(U,\calI)$ of $\calI$ over an object $U$ of $C,$ by
\begin{equation}
\begin{array}[t]{clc}
\mathscr{Hom}_{\calA}(\calE,\F)(s) \otimes_{\Ox(U)}\mathscr{Hom}_{\calA}(\F,\calG)(t) & \ra & \mathscr{Hom}_{\calA}(\calE,\calG)(s+t) \\
f\otimes g & \mapsto & g\circ f,
\end{array}
\end{equation}
where $g\circ f$ is the following composition
\begin{equation}\label{era2compshift}
g\circ f:\calE_{|U} \xrightarrow{f} \sigma_s^*\F_{|U} \xrightarrow{\sigma_s^*g} \sigma_{s+t}^*\calG_{|U}.
\end{equation}
The morphism \eqref{takcompadj} induces a morphism
\begin{equation}\label{era3comptakriz}
\mathscr{Hom}_{\calA}(\calE,\F) \circledast_{\calB} \mathscr{Hom}_{\calA}(\F,\calG) \ra \mathscr{Hom}_{\calA}(\calE,\calG).
\end{equation}
In addition, if $\calE=\F=\calG,$ the operation \eqref{era2compshift} defines an $\calI$-indexed algebra structure on $\mathscr{End}_{\calA}(\calE).$
\end{proposition}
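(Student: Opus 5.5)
The plan has three parts: check that the composition is a well-defined $\Ox_{\calI^2}$-linear map, show it coequalizes the two arrows defining $\circledast_{\calB}$, and then verify the algebra axioms when $\calE=\F=\calG$. All verifications are done fiberwise, using \ref{indremark73} and \ref{era2indrem}.

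\textbf{Well-definedness of \eqref{takcompadj}.} For $f\in\mathscr{Hom}_{\calA}(\calE,\F)(s)$ and $g\in\mathscr{Hom}_{\calA}(\F,\calG)(t)$, the composite $g\circ f$ of \eqref{era2compshift} is a morphism $\calE_{|U}\ra\sigma_{s+t}^*\calG_{|U}$, since $\sigma_s^*\sigma_t^*=\sigma_{s+t}^*$ because $\sigma_t\circ\sigma_s=\sigma_{s+t}$.
\begin{itemize}
\item It is a morphism of $\calI_{|U}$-indexed $\calA_{|U}$-modules: $f$ is $\calA$-linear, $g$ is $\calA$-linear, and $\sigma_s^*$ preserves $\calA$-linearity. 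The last point holds because $\sigma_s^*$ commutes with $\boxtimes$ via $(\op{Id}\times\sigma_s)$ and $\sigma\circ(\op{Id}\times\sigma_s)=\sigma_s\circ\sigma$.
\item The assignment $f\otimes g\mapsto g\circ f$ is $\Ox(U)$-bilinear.
\item It is compatible with restriction along $V\ra U$, since restriction commutes with the pullbacks $\sigma_s^*$.
\end{itemize}
By \ref{indremark73} these data glue to the $\Ox_{\calI^2}$-linear morphism \eqref{takcompadj}.

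\textbf{Passing to $\circledast_{\calB}$.} By adjunction, \eqref{takcompadj} gives $\sigma_!(\mathscr{Hom}_{\calA}(\calE,\F)\boxtimes\mathscr{Hom}_{\calA}(\F,\calG))\ra\mathscr{Hom}_{\calA}(\calE,\calG)$. By the exact sequence \eqref{exseqindtens}, it suffices to show this map coequalizes the two arrows \eqref{indeq10}. Here each $\mathscr{Hom}$ carries a $\calB$-module structure from \ref{algstrendleft}. I would take the \emph{right} structure $\varphi^d$ (post-composition) on $\mathscr{Hom}_{\calA}(\calE,\F)$ and the \emph{left} structure $\varphi^g$ (pre-composition) on $\mathscr{Hom}_{\calA}(\F,\calG)$.

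Using the fiberwise description \eqref{indeq14}, the required identity on sections is
$$g\circ(b_{\F}\circ f)=(g\circ b_{\F})\circ f$$
for $b\in\calB(r)$. This is associativity of shifted composition: $\sigma_s^*(\sigma_r^*g\circ b_{\F})=\sigma_{s+r}^*g\circ\sigma_s^*b_{\F}$. The identity holds at the presheaf level, so it passes to the associated sheaves.

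\textbf{Algebra structure on $\mathscr{End}_{\calA}(\calE)$.} When $\calE=\F=\calG$, I verify the conditions of \ref{era2indrem}.
\begin{itemize}
\item Associativity of the map $\mathscr{End}(\calE)_r\otimes\mathscr{End}(\calE)_s\otimes\mathscr{End}(\calE)_t\ra\mathscr{End}(\calE)_{r+s+t}$ is again associativity of shifted composition.
\item The unit is $\op{Id}_{\calE}\in\mathscr{End}_{\calA}(\calE)(0)$, using $\sigma_0=\op{Id}$, so $\op{Id}$ is neutral on both sides.
\end{itemize}

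\textbf{Main obstacle.} The only real subtlety is bookkeeping. One has to confirm $\sigma_s^*\sigma_t^*=\sigma_{s+t}^*$ canonically, which uses commutativity of $\calI$. One must also make sure the correct pair of left/right $\calB$-structures is chosen, so that the relation in $\circledast_{\calB}$ matches composition rather than requiring a commutation.
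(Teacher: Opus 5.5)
Your proposal is correct and follows essentially the paper's proof. It passes to $\sigma_!$ by adjunction, uses the presheaf description of $\circledast_{\calB}$ from \ref{indproptensprod}, and reduces everything to the identity $g\circ(bf)=(bg)\circ f$, which is associativity of shifted composition; the $\mathscr{End}$ case is then immediate. The only cosmetic difference is that the paper puts the single structure of \ref{indremtakriz1}~(3) on both factors, and as noted there this structure coincides with pre-composition by $\calA$-linearity, so your deliberate choice of $\varphi^d$ on the first factor and $\varphi^g$ on the second is harmless but not actually needed.
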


\begin{proof}
The morphism \eqref{takcompadj} induces, by adjunction,
\begin{equation}\label{eqa}
\sigma_!\left (\mathscr{Hom}_{\calA}(\calE,\F) \boxtimes \mathscr{Hom}_{\calA}(\F,\calG) \right ) \ra \mathscr{Hom}_{\calA}(\calE,\calG).
\end{equation}
By \ref{indpropshriek} (3), $\sigma_!\left (\mathscr{Hom}_{\calA}(\calE,\F) \boxtimes \mathscr{Hom}_{\calA}(\F,\calG) \right )$ is the sheaf associated to the presheaf
$$(s:U\ra \calI) \mapsto \bigoplus_{\substack{u,v\in \Gamma(U,\calI) \\ u+v=s}}\mathscr{Hom}_{\calA}(\calE,\F)(u) \otimes_{\Ox(U)} \mathscr{Hom}_{\calA}(\F,\calG)(v).$$
The morphism \eqref{eqa} is then induced, for every sections $u,v,s\in \Gamma(U,\calI)$ such that $u+v=s,$ by
$$
\varphi_{u,v}:\begin{array}[t]{clc}
\mathscr{Hom}_{\calA}(\calE,\F)(u) \otimes_{\Ox(U)} \mathscr{Hom}_{\calA}(\F,\calG)(v) & \ra & \mathscr{Hom}_{\calA}(\calE,\calG)(s) \\
f \otimes g & \mapsto & g\circ f.
\end{array}
$$
By \ref{indremtakriz1} (3), for any sections $u,v,w\in \Gamma(U,\calI),$ $f\in \mathscr{Hom}_{\calA}(\calE,\F)(u),$ $g\in\mathscr{Hom}_{\calA}(\F,\calG)(v)$ and $a\in \calB(w),$ we have
$$\varphi_{u+w,v}((af)\otimes g)=\varphi_{u,v+w}(f\otimes (ag)).$$
The result then follows by \ref{indproptensprod}.
The assertion about the $\calI$-indexed algebra structure on $\mathscr{End}_{\calA}(\calE)$ is clear.
\end{proof}

\begin{proposition}\label{indprop710}
Let $(X,\Ox)$ be a ringed topos and $\calI$ a sheaf of monoids of $X.$
\begin{enumerate}
\item For every $\calI$-indexed algebra $\calA$ and $\calI$-indexed $\calA$-module $\calE,$ there exists a canonical isomorphism
$$\calA\circledast_{\calA}\calE \xrightarrow{\sim} \calE.$$
\item For every $\Ox$-algebra $\calA$ over $X$ and $\calI$-indexed algebra $\calB,$ the $\Ox_{\calI}$-module $j_{\calI}^*\calA\otimes_{\Ox_{\calI}}\calB$ is naturally an $\calI$-indexed algebra and the canonical morphism
$$\calB \ra j_{\calI}^*\calA \otimes_{\Ox_{\calI}}\calB$$
is a morphism of $\calI$-indexed algebras.
\item For every $\calI$-indexed algebra $\calA,$ $\Ox$-module $\calE$ and $\calI$-indexed $\calA$-modules $\F$ and $\calG,$ the $\Ox_{\calI}$-module $(j_{\calI}^*\calE)\otimes_{\Ox_{\calI}}\F$ is naturally an $\calI$-indexed $\calA$-module and there exists a canonical isomorphism
\begin{equation}\label{indiso1}
\left ( j_{\calI}^*\calE \otimes _{\Ox_{\calI}}\F \right ) \circledast_{\calA} \calG \xrightarrow{\sim} j_{\calI}^*\calE \otimes_{\Ox_{\calI}} \left ( \F \circledast_{\calA}\calG \right ).
\end{equation}
\end{enumerate}
\end{proposition}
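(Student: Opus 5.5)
The plan is to work fiberwise, as the paper does throughout. By \ref{indremark73} and \ref{era2indrem}, a morphism of $\Ox_{\calI}$-modules, or an indexed structure, is the same as compatible data on the fibers at local sections $k\in\Gamma(U,\calI)$. By \ref{indproptensprod}, $(\calE\circledast_{\calA}\F)_k$ is the sheafification of the presheaf
$$V\mapsto \bigoplus_{i+j=k}\calE_i(V)\otimes_{\Ox(V)}\F_j(V)/\sim .$$
In each of the three assertions I will first write down the map on these presheaves, check that it respects the relations $\sim$ and the restriction maps, and then sheafify. The main tools will be the universal property of the coequalizer \eqref{exseqindtens} together with the adjunction $\sigma_!\dashv\sigma^*$.

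For (1), I would take the map $\sigma_!(\calA\boxtimes\calE)\ra\calE$ adjoint to the action $\rho:\calA\boxtimes\calE\ra\sigma^*\calE$. Associativity of $\rho$ shows that it coequalizes the two arrows \eqref{indeq10}, so it descends to a map $\calA\circledast_{\calA}\calE\ra\calE$. For the inverse, I would use the map $\calE\ra\calA\circledast_{\calA}\calE$, $x\mapsto 1\otimes x$, built exactly as $\pi_2$ in \ref{indrem1} (2) via $(0\times\op{Id}_{\calI})_!$.
\begin{itemize}
\item The composite $\calE\ra\calE$ is the identity by the unit axiom of \ref{inddef}.
\item For the other composite, on generators we have $a\otimes x\mapsto ax\mapsto 1\otimes ax\sim(1\cdot a)\otimes x=a\otimes x$, using the relation $\sim$ with the element $a\in\calA_i$.
\end{itemize}
So both composites are the identity on presheaf generators, and hence after sheafification.

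For (2), on fibers we have $(j_{\calI}^*\calA\otimes\calB)_s=\calA_{|U}\otimes_{\Ox_U}\calB_s$. I would define the product
$$(\calA_{|U}\otimes\calB_s)\otimes(\calA_{|U}\otimes\calB_t)\ra\calA_{|U}\otimes\calB_{s+t},\qquad (a\otimes b)(a'\otimes b')=aa'\otimes bb',$$
with unit $1\otimes1_{\calB}$ in the fiber at $0$. These maps are compatible with restriction, so by \ref{era2indrem} they define an indexed algebra structure. Associativity and unity reduce to those of $\calA$ and $\calB$. The map $b\mapsto 1\otimes b$ is visibly multiplicative.

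For (3), the structure on $j_{\calI}^*\calE\otimes\F$ is given on fibers by $a\otimes(e\otimes y)\mapsto e\otimes ay$. For the isomorphism \eqref{indiso1}, the route is:
\begin{enumerate}
\item The functor $j_{\calI}^*\calE\otimes_{\Ox_{\calI}}(-)$ is right exact.
\item It commutes with $\sigma_!$ and $\tau_!$, by the projection formula (\cite{SGA43} Exposé IV 12.11.b) already used above, noting $\sigma^*j_{\calI}^*\calE=j_{\calI^2}^*\calE$.
\item It commutes with $\boxtimes$ in the first factor: $(j_{\calI}^*\calE\otimes\F)\boxtimes\calG\simeq j_{\calI^2}^*\calE\otimes(\F\boxtimes\calG)$.
\end{enumerate}
Applying this functor to the exact sequence \eqref{exseqindtens} for $\F,\calG$, and comparing with the sequence \eqref{exseqindtens} for $j_{\calI}^*\calE\otimes\F,\calG$, gives isomorphic first two terms. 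It remains to check that the two parallel arrows correspond. By the fiber formulas \eqref{indeq14} they are $e\otimes a\otimes y\otimes z\mapsto e\otimes ay\otimes z$ and $e\otimes y\otimes az$, so they match. The induced map on cokernels is therefore an isomorphism, as in the proof of \ref{ren3}.

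The main obstacle is bookkeeping rather than any conceptual step. The real content is checking that the identifications from the projection formula and from the $\boxtimes$ commutation are compatible with the two arrows \eqref{indeq10}, and that the presheaf-level maps pass correctly through sheafification. I would handle both by reducing, via \ref{indproptensprod} and \ref{indpropshriek}, to explicit formulas on sections over $V\ra U$.
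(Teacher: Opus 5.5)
Your proposal follows the paper's proof essentially step for step.

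\begin{itemize}
\item For (1) you take the map adjoint to $\calA\boxtimes\calE\ra\sigma^*\calE$ and check it on the presheaves of \ref{indproptensprod} via $a\otimes x\sim 1\otimes ax$. You exhibit the inverse $x\mapsto 1\otimes x$, while the paper argues surjectivity and injectivity; this is the same computation.
\item For (2) you use the fiberwise product $(a\otimes b)(a'\otimes b')=aa'\otimes bb'$, as the paper does.
\item For (3) you use the identification $(j_{\calI}^*\calE\otimes_{\Ox_{\calI}}\F)\boxtimes\calG\simeq j_{\calI^2}^*\calE\otimes_{\Ox_{\calI^2}}(\F\boxtimes\calG)$ followed by the projection formula (\cite{SGA43} Exposé IV 12.11.b). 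Here you are more careful than the paper about the $\tau_!$ term and the two parallel arrows.
\end{itemize}

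One justification needs fixing. Associativity alone does not make $a\otimes x\mapsto ax$ coequalize the arrows \eqref{indeq10}. The relation $(ab)\otimes x\sim b\otimes(ax)$ is sent to $(ab)x$ on one side and to $b(ax)=(ba)x$ on the other. So you also need $\calA$ to be commutative. The paper uses this tacitly, and the hypothesis cannot be dropped. For $\calI$ the final object of $X$, one gets $\calA\circledast_{\calA}\calE\simeq\calE/[\calA,\calA]\calE$, which is $0$ for $\calA=M_2(k)$ and $\calE=k^2$. With commutativity added, your argument is complete.
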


\begin{proof}
For the first assertion, the morphism
$\calA \boxtimes \calE \ra \sigma^*\calE$
defining the indexed $\calA$-module structure on $\calE,$ induces by adjunction
\begin{equation}\label{indeq16}
\sigma_!(\calA \boxtimes \calE) \ra \calE.
\end{equation}
By the description given in \ref{indproptensprod}, (\ref{indeq16}) induces a morphism
\begin{equation}\label{indeq20}
\calA \circledast_{\calA}\calE \rightarrow \calE.
\end{equation}
Let $k\in \calI(U)$ be a section over an object $U$ of $C.$ The morphism
$\left (\calA \circledast_{\calA}\calE \right )_k\ra \calE_k,$
induced by (\ref{indeq20}), is defined by the morphism of presheaves $\varphi$ given, for every object $V$ of $C_{/U},$ by
$$
\varphi(V):\begin{array}[t]{clc}
\bigoplus\limits_{\substack{i,j\in \calI(V)\\ i+j=k}}\calA_i(V)\otimes \calE_j(V) /\sim& \ra & \calE_k(V) \\
a\otimes x & \mapsto & ax
\end{array}$$
for $a\in \calA_i(V),\ x\in \calE_j(V)$ and where $\sim$ is the relation defined in \ref{indproptensprod}.
The morphism $\varphi$ is clearly surjective. Now let $n$ be a positive integer, $i_1,\hdots,i_n,j_1,\hdots,j_n \in \calI(V)$ such that $i_1+j_1=i_2+j_2=\hdots=i_n+j_n=k,$ $a_{l}\in \calA_{i_l}(V),$ $x_{l}\in \calE_{j_l}(V)$ for all $1\le l\le n,$ such that 
$$\varphi(a_1\otimes x_1+\hdots+a_n\otimes x_n)=0.$$
Then
$\sum_{l=1}^na_{l}x_{l}=0.$
But
$\sum_{l=1}^na_{l}\otimes x_{l} \sim 1\otimes \left (\sum_{l=1}^n a_{l}x_{l} \right ).$
This proves that $\varphi$ is injective.

For the second assertion, the indexed algebra structure on $j_{\calI}^*\calA\otimes_{\Ox_{\calI}}\calB$ is given by the composition
\begin{alignat*}{2}
(j_{\calI}^*\calA\otimes_{\Ox_{\calI}}\calB)\boxtimes (j_{\calI}^*\calA\otimes_{\Ox_{\calI}}\calB) & \xrightarrow{\sim} j_{\calI^2}^*(\calA\otimes_{\Ox}\calA)\otimes_{\Ox_{\calI^2}}(\calB\boxtimes \calB) \\
& \xrightarrow{j_{\calI^2}^*\pi_{\calA}\otimes \pi_{\calB}} (j_{\calI^2}^*\calA)\otimes_{\Ox_{\calI^2}}\sigma^*\calB\\
& \xrightarrow{\sim} \sigma^*(j_{\calI}^*\calA\otimes_{\Ox_{\calI}}\calB),
\end{alignat*}
where the first and third arrows are the canonical isomorphisms and $\pi_{\calA}$ and $\pi_{\calB}$ are the multiplication maps defining the algebra structures on $\calA$ and $\calB$ respectively. The fact that the canonical morphism
$\calB \ra j_{\calI}^*\calA \otimes_{\Ox_{\calI}}\calB$
is a morphism of $\calI$-indexed algebras follows from the obvious commutativity of the following diagram, for any local sections $i$ and $j$ of $\calI$ over an object $U,$
$$
\begin{tikzcd}
\calB_i \otimes_{\Ox_U}\calB_j \ar{r}\ar{d} & (\calA_{|U}\otimes_{\Ox_U}\calB_i)\otimes_{\Ox_U}(\calA_{|U}\otimes_{\Ox_U}\calB_j)\ar{d} \\
\calB_{i+j}\ar{r} & \calA_{|U}\otimes_{\Ox_U}\calB_{i+j}
\end{tikzcd}
$$
For the third assertion, the $\calI$-indexed $\calA$-module structure on $j_{\calI}^*\calE \otimes_{\Ox_{\calI}}\F$ is given by the composition
\begin{alignat*}{2}
\calA \boxtimes (j^*_{\calI}\calE \otimes_{\Ox_{\calI}}\F) &\xrightarrow{\sim} (\calA\boxtimes \F)\otimes_{\Ox_{\calI^2}}j_{\calI^2}^*\calE \\
& \xrightarrow{\pi \otimes \op{Id}_{j_{\calI^2}^*\calE}} (\sigma^*\F) \otimes_{\Ox_{\calI^2}}j_{\calI^2}^*\calE \\
& \xrightarrow{\sim} \sigma^*\left (j_{\calI}^*\calE \otimes_{\Ox_{\calI}}\F \right ),
\end{alignat*}
where the first and third arrows are the canonical isomorphisms and $\pi$ is the multiplication map defining the indexed module structure on $\F.$
The canonical isomorphism
$$(j_{\calI}^*\calE \otimes_{\Ox_{\calI}}\F)\boxtimes \calG \xrightarrow{\sim} j_{\calI^2}^*\calE \otimes_{\Ox_{\calI^2}}(\F \boxtimes \calG)$$
induces an isomorphism
$$\sigma_!\left ((j_{\calI}^*\calE \otimes_{\Ox_{\calI}}\F)\boxtimes \calG \right )\xrightarrow{\sim} \sigma_!\left (j_{\calI^2}^*\calE \otimes_{\Ox_{\calI^2}}(\F \boxtimes \calG)\right ).$$
Composing it with the canonical isomorphism (\cite{SGA43} Exposé IV 12.11.b)
$$\sigma_!\left (j_{\calI^2}^*\calE \otimes_{\Ox_{\calI^2}}(\F \boxtimes \calG)\right ) \xrightarrow{\sim} j_{\calI}^*\calE \otimes_{\Ox_{\calI}} \sigma_!(\F \boxtimes \calG),$$
we get a canonical isomorphism
$$\sigma_!\left ((j_{\calI}^*\calE \otimes_{\Ox_{\calI}}\F)\boxtimes \calG \right )\xrightarrow{\sim}j_{\calI}^*\calE \otimes_{\Ox_{\calI}} \sigma_!(\F \boxtimes \calG)$$
which induces the desired isomorphism.
\end{proof}

\begin{proposition}\label{propisoAkaza}
Let $(X,\Ox)$ be a ringed topos, $\calI$ a sheaf of monoids of $X,$ $\calA$ an $\Ox$-algebra, $\calA'$ an $\calA$-algebra, $\calB$ an $\calI$-indexed $\Ox$-algebra and $\calE$ an $\calI$-indexed $(\calA_{\calI}\otimes_{\Ox_{\calI}}\calB)$-module. Then $\calE$ is canonically equipped with an $\calA_{\calI}$-module structure and there exists a canonical isomorphism of $\calI$-indexed $(\calA'_{\calI}\otimes_{\Ox_{\calI}}\calB)$-modules
\begin{equation}\label{propisoAkazahh}
\calA'_{\calI}\otimes_{\calA_{\calI}}\calE \xrightarrow{\sim} (\calA'_{\calI}\otimes_{\Ox_{\calI}}\calB)\circledast_{\calA_{\calI}\otimes_{\Ox_{\calI}}\calB}\calE.
\end{equation}
\end{proposition}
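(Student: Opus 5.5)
The approach is to reduce everything to the local presheaf description of \ref{indproptensprod} and check that the obvious map is bijective, imitating the proof of \ref{indprop710} (1). Here $\calA_{\calI}=j_{\calI}^*\calA$. First, the $\calA_{\calI}$-module structure on $\calE$ comes from the canonical morphism of $\calI$-indexed algebras $\calB\ra \calA_{\calI}\otimes_{\Ox_{\calI}}\calB$ of \ref{indprop710} (2), and from the analogous map $\calA_{\calI}=\calA_{\calI}\otimes 1\ra \calA_{\calI}\otimes_{\Ox_{\calI}}\calB$, whose image lies in index $0$. Concretely, for a local section $i$ of $\calI$ over $U$ we let $a\in \calA(U)$ act on $\calE_i$ through $(a\otimes 1)\in (\calA\otimes\calB)_0(U)$ followed by $(\calA\otimes\calB)_0\otimes \calE_i\ra \calE_{0+i}=\calE_i$. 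This is compatible with the restriction isomorphisms of \ref{indremark73}, so it defines an $\calA_{\calI}$-module structure on $\calE$. The left side of \eqref{propisoAkazahh} then becomes an $\calI$-indexed $(\calA'_{\calI}\otimes\calB)$-module by \ref{indprop710} (3), with $\calA'$ acting on the left factor and $\calB$ acting on $\calE$. These two actions commute because $\calA$ is central (index $0$) in $\calA_{\calI}\otimes\calB$ relative to $\calB$.

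Next I construct the map. The natural direction is the right-to-left one. Using \ref{indproptensprod}, a local generator of $\big((\calA'_{\calI}\otimes\calB)\circledast\calE\big)_k$ is $(a'\otimes b)\otimes x$ with $a'\otimes b$ in index $i$, $x\in\calE_j$ and $i+j=k$. I send it to $a'\otimes (bx)\in \calA'\otimes_{\calA}\calE_k$. This respects the relation $\sim$: for $c=a\otimes b''$ in index $r$, I must check that $((a'\otimes b)c)\otimes x\sim (a'\otimes b)\otimes (cx)$ is killed. This reduces to $a'a\otimes b b'' x = a'\otimes a(bb''x)$, which holds by $\calA$-balancing, together with associativity and the centrality of $\calA$. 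Sheafifying gives a morphism $\psi$. In the other direction I define $\phi:a'\otimes x\mapsto (a'\otimes 1)\otimes x$, with $1\in\calB_0$. This is $\calA$-balanced because $(a'a\otimes 1)\otimes x=((a'\otimes1)(a\otimes1))\otimes x\sim (a'\otimes 1)\otimes (ax)$.

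Then I check the two composites. The composite $\psi\circ\phi$ is clearly the identity by the unit axiom. For $\phi\circ\psi$, I use $(a'\otimes b)\otimes x=((a'\otimes 1)(1\otimes b))\otimes x\sim (a'\otimes1)\otimes (bx)$, exactly as in the injectivity argument of \ref{indprop710} (1). Finally, linearity for $\calA'_{\calI}\otimes\calB$ is checked on generators, and naturality is clear.

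The main obstacle is bookkeeping rather than an idea. Everything above is written on presheaves over $C_{/U}$, and one must make sure that the maps are compatible with the restriction isomorphisms $c_{i,g}$ and with sheafification, so that they glue to morphisms in $X_{/\calI}$ via \ref{indremark73}. One must also verify that the index-$0$ embedding $\calA_{\calI}\ra\calA_{\calI}\otimes\calB$ is genuinely central for the indexed multiplication defined in \ref{indprop710} (2). I would handle this by checking centrality directly on the componentwise formula $(a_1\otimes b_1)(a_2\otimes b_2)=a_1a_2\otimes b_1b_2$. This requires $\calA$ to be commutative, which I will assume, as the statement implicitly does.
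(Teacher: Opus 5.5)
Your proposal is correct and follows essentially the same route as the paper. Both use the same $\calA_{\calI}$-structure via $a\mapsto a\otimes 1\in(\calA_{\calI}\otimes_{\Ox_{\calI}}\calB)_0$, the same map $a'\otimes x\mapsto (a'\otimes 1)\otimes x$, and the same inverse $(a'\otimes b)\otimes x\mapsto a'\otimes bx$; the paper obtains this inverse by base-changing the action map along $\calA_{\calI^2}\ra\calA'_{\calI^2}$ and using the adjunction for $\sigma_!$, and checks mutual inverseness on the fiber descriptions of \ref{indproptensprod} as you do. One minor point: the only commutation you actually use is $(a\otimes 1)(1\otimes b)=(1\otimes b)(a\otimes 1)$, which holds in $\calA_{\calI}\otimes_{\Ox_{\calI}}\calB$ without commutativity of $\calA$, though assuming it is harmless in this setting.
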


\begin{proof}
Let $p_1,p_2:\calI^2\ra \calI$ be the canonical projections, $\sigma:\calI^2\ra \calI$ the addition map and $i_2:\calI \ra \calI^2,\ s\mapsto (0,s).$ The canonical map
$$(\calA_{\calI}\otimes_{\Ox_{\calI}}\calB)\boxtimes \calE \ra \sigma^*\calE$$
induces
\begin{equation}\label{eq7241}
i_2^*p_1^*(\calA_{\calI}\otimes_{\Ox_{\calI}}\calB)\otimes_{\Ox_{\calI}}i_2^*p_2^*\calE \ra i_2^*\sigma^*\calE.
\end{equation}
Let $e$ be the final object of $X.$ By the commutativity of the diagrams
$$
\begin{tikzcd}
\calI \ar{r}{i_2} \ar{d} & \calI^2\ar{d}{p_1} & & \calI \ar{r}{i_2} \ar[swap]{dr}{\op{Id}_{\calI}} & \calI^2 \ar{d}{p_2} & & \calI \ar{r}{i_2} \ar[swap]{dr}{\op{Id}_{\calI}} & \calI^2 \ar{d}{\sigma} \\
e \ar{r}{0} & \calI & & & \calI & & & \calI
\end{tikzcd}
$$
the equality \eqref{eq7241} is equivalent to
$(\calA_{\calI} \otimes_{\Ox_{\calI}}\calB_{0,\calI})\otimes_{\Ox_{\calI}}\calE \ra \calE.$
Composing this morphism with the morphism induced by
$\calA_{\calI} \ra \calA_{\calI}\otimes_{\Ox_{\calI}} \calB_{0,\calI},\ x\mapsto x\otimes 1,$
we get the $\calA_{\calI}$-module structure on $\calE.$

For any section $s\in  \Gamma(U,\calI)$ over an object $U,$ consider the composition of
$$\begin{array}[t]{clc}
\calA'_U\otimes_{\Ox_U}\calE_s & \ra & \calA'_U\otimes_{\Ox_U}\calB_0\otimes_{\Ox_U}\calE_s \\
a\otimes x & \mapsto & a\otimes 1 \otimes x
\end{array}$$
with the canonical morphism
$\calA'_U\otimes_{\Ox_U}\calB_0\otimes_{\Ox_U}\calE_s \ra \left ((\calA'_{\calI}\otimes_{\Ox_{\calI}}\calB)\circledast_{\calA_{\calI}\otimes_{\Ox_{\calI}}\calB}\calE \right )_s.$
These compositions induce a morphism
\begin{equation}\label{eq1Akaza}
\calA'_{\calI}\otimes_{\calA_{\calI}}\calE \ra (\calA'_{\calI}\otimes_{\Ox_{\calI}}\calB)\circledast_{\calA_{\calI}\otimes_{\Ox_{\calI}}\calB}\calE.
\end{equation}
Conversely, the canonical morphism
$(\calA_{\calI}\otimes_{\Ox_{\calI}}\calB)\boxtimes \calE \ra \sigma^*\calE$
induces a morphism
$$\calA'_{\calI^2}\otimes_{\calA_{\calI^2}}((\calA_{\calI}\otimes_{\Ox_{\calI}}\calB)\boxtimes \calE)= (\calA'_{\calI}\otimes_{\Ox_{\calI}}\calB)\boxtimes \calE \ra \calA'_{\calI^2}\otimes_{\calA_{\calI^2}}\sigma^*\calE=\sigma^*(\calA'_{\calI}\otimes_{\calA_{\calI}}\calE).$$
By adjunction, we get
$\sigma_!\left ( (\calA'_{\calI}\otimes_{\Ox_{\calI}}\calB)\boxtimes \calE\right ) \ra \calA'_{\calI}\otimes_{\calA_{\calI}}\calE.$
By checking on fibers, this morphism yields
\begin{equation}\label{eq2Akaza}
(\calA'_{\calI}\otimes_{\Ox_{\calI}}\calB)\circledast_{\calA_{\calI}\otimes_{\Ox_{\calI}}\calB}\calE \ra \calA'_{\calI}\otimes_{\calA_{\calI}}\calE.
\end{equation}
Again, by checking on fibers, we prove that (\ref{eq1Akaza}) and (\ref{eq2Akaza}) are inverse to each other.
\end{proof}

\begin{proposition}\label{kraz827}
Let $(X,\Ox)$ be a ringed topos, $\calI$ a sheaf of monoids of $X,$ $\calA$ an $\Ox$-algebra, $\calB$ an $\calI$-indexed $\Ox$-algebra, $\calB'$ an $\calI$-indexed $\calB$-algebra and $\calE$ an $\calI$-indexed $\calA_{\calI}\otimes_{\Ox_{\calI}}\calB$-module. Then $\calE$ is canonically equipped with an $\calI$-indexed $\calB$-module structure and with an $\calA_{\calI}$-module structure and there exists a canonical isomorphism of $\calI$-indexed $(\calB'\otimes_{\Ox_{\calI}}\calA_{\calI})$-modules
$$\calB'\circledast_{\calB}\calE \xrightarrow{\sim} (\calB'\otimes_{\Ox_{\calI}}\calA_{\calI})\circledast_{\calB\otimes_{\Ox_{\calI}}\calA_{\calI}}\calE.$$
\end{proposition}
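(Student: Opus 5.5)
We follow the pattern of the proof of \ref{propisoAkaza}: first extract the two module structures on $\calE$, then build maps in both directions at the level of presheaves and check that they are mutually inverse using the description of \ref{indproptensprod}.

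\textbf{Module structures.} The $\calA_{\calI}$-module structure on $\calE$ is the one of \ref{propisoAkaza}. The $\calI$-indexed $\calB$-module structure is obtained by composing the action
$$
(\calA_{\calI}\otimes_{\Ox_{\calI}}\calB)\boxtimes\calE \ra \sigma^*\calE
$$
with the morphism of $\calI$-indexed algebras $\calB\ra \calA_{\calI}\otimes_{\Ox_{\calI}}\calB$, $b\mapsto 1\otimes b$, given by \ref{indprop710} (2). Concretely, on fibers over a section $s\in\Gamma(U,\calI)$, we have $(\calA_{\calI}\otimes\calB)_s=\calA_U\otimes_{\Ox_U}\calB_s$. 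The two actions on $\calE$ commute, since $a\otimes b=(a\otimes 1)(1\otimes b)$ with $a\otimes 1$ of index $0$. Similarly, $\calB'\otimes_{\Ox_{\calI}}\calA_{\calI}$ is an indexed algebra by \ref{indprop710} (2), and it receives a morphism from $\calB\otimes\calA_{\calI}$.

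\textbf{Maps.} For $i+j=k$ in $\calI(V)$, define
$$
\calB'_i(V)\otimes\calE_j(V)\ra (\calB'_i\otimes\calA_V)(V)\otimes\calE_j(V),\qquad b'\otimes x\mapsto (b'\otimes 1)\otimes x.
$$
This respects the relation $(bb')\otimes x\sim b'\otimes (bx)$ of \ref{indproptensprod}, since $b\in\calB$ maps to $b\otimes 1$. It therefore induces
$$
\calB'\circledast_{\calB}\calE\ra(\calB'\otimes\calA_{\calI})\circledast_{\calB\otimes\calA_{\calI}}\calE.
$$
In the other direction, define
$$
(b'\otimes a)\otimes x\mapsto b'\otimes (ax).
$$
Well-definedness on the tensor product over $\Ox_V$ is clear. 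Compatibility with the relation for $\calB\otimes\calA_{\calI}$ means: for $(b\otimes a_0)$ acting, we need
$$
(b'b\otimes a a_0)\otimes x \quad\text{and}\quad (b'\otimes a)\otimes((b\otimes a_0)x)
$$
to have the same image. Both are sent to $b'b\otimes (aa_0x)$, using that the $\calA$-action commutes with the $\calB$-action and the relation in $\calB'\circledast_{\calB}\calE$. These presheaf maps are compatible with restrictions, so by \ref{indremark73} they glue to morphisms of sheaves after sheafification.

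\textbf{Inverse check and linearity.} The composite $\calB'\circledast_{\calB}\calE\ra\cdots\ra\calB'\circledast_{\calB}\calE$ is visibly the identity. For the other composite, $(b'\otimes a)\otimes x\mapsto (b'\otimes 1)\otimes ax$. This equals $(b'\otimes a)\otimes x$ by the relation with $(1\otimes a)\in(\calB\otimes\calA_{\calI})_0$, since $(b'\otimes 1)(1\otimes a)=b'\otimes a$. Finally, both maps are linear for the left indexed $(\calB'\otimes\calA_{\calI})$-structures: on the left-hand side this structure comes from $\calB'$ acting on the first factor and $\calA$ acting through $\calE$ (equivalently, through $\calB'$'s $\calA$-linearity).

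\textbf{Main obstacle.} The main obstacle is the bookkeeping in the second map. One has to ensure that the $\calA$-action can legitimately be moved onto $\calE$ in the quotient $\calB'\circledast_{\calB}\calE$. This rests on the commutativity of the two actions on $\calE$, which I would verify first from the associativity diagram of \ref{inddef} (5) applied with sections of index $0$.
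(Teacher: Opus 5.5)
Your proposal is correct and is essentially the paper's proof. The paper obtains the two module structures in the same way (via \ref{propisoAkaza} and $b\mapsto 1\otimes b$). It builds the two maps from $b'\mapsto b'\otimes 1$ and from the action $\calA_{\calI}\otimes_{\Ox_{\calI}}\calE\ra\calE$, i.e.\ $(b'\otimes a)\otimes x\mapsto b'\otimes ax$, and checks on fibers via \ref{indproptensprod} that they are mutually inverse, with less detail than you give.

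One small slip: with the paper's left-module convention the defining relation reads
\[
((b\otimes a_0)(b'\otimes a))\otimes x\sim(b'\otimes a)\otimes((b\otimes a_0)x),
\]
so the first term is $bb'\otimes a_0a$ rather than $b'b\otimes aa_0$. Your compatibility check should be written in that order. It then goes through using the commutation of the two actions on $\calE$ together with commutativity of $\calA$, which the paper also uses implicitly.
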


\begin{proof}
Let $p_1,p_2:\calI^2\ra \calI$ be the canonical projections and $\sigma:\calI^2\ra \calI$ the addition map.
The fact that $\calE$ is naturally equipped with an $\calA_{\calI}$-module structure follows from proposition \ref{propisoAkaza}. The $\calI$-indexed $\calB$-module structure on $\calE$ comes from the composition
$$\calB \boxtimes \calE \ra (\calA_{\calI}\otimes_{\Ox_{\calI}}\calB)\boxtimes \calE \ra \sigma^*\calE,$$
where the first arrow is induced by 
$\calB \ra \calA_{\calI}\otimes_{\Ox_{\calI}} \calB,\ b\mapsto 1\otimes b$
and the second one comes from the $\calI$-indexed $(\calA_{\calI}\otimes_{\Ox_{\calI}}\calB)$-module structure on $\calE.$

The canonical morphism
$\calA_{\calI}\otimes_{\Ox_{\calI}}\calE \ra \calE$
induces a morphism
$\calB' \boxtimes (\calA_{\calI} \otimes_{\Ox_{\calI}}\calE)=(\calB'\otimes_{\Ox_{\calI}}\calA_{\calI})\boxtimes \calE \ra \calB' \boxtimes \calE$
and then
$\sigma_!((\calB'\otimes_{\Ox_{\calI}}\calA_{\calI})\boxtimes \calE) \ra \sigma_!(\calB' \boxtimes \calE).$
By checking on fibers, we deduce a morphism
\begin{equation}\label{eq3Akaza}
(\calB'\otimes_{\Ox_{\calI}}\calA_{\calI})\circledast_{\calB\otimes_{\Ox_{\calI}}\calA_{\calI}} \calE \ra \calB' \circledast_{\calB} \calE.
\end{equation}
Conversely, the morphism
$\calB' \ra \calB'\otimes_{\Ox_{\calI}}\calA_{\calI},\ b\mapsto b\otimes 1$
induces
$\sigma_! (\calB' \boxtimes \calE ) \ra \sigma_!\left ( (\calB' \otimes_{\Ox_{\calI}}\calA_{\calI})\boxtimes \calE \right )$
and then
\begin{equation}\label{eq4Akaza}
\calB' \circledast_{\calB} \calE \ra (\calB'\otimes_{\Ox_{\calI}}\calA_{\calI})\circledast_{\calB\otimes_{\Ox_{\calI}}\calA_{\calI}} \calE.
\end{equation}
By checking on fibers, we prove that \eqref{eq3Akaza} and \eqref{eq4Akaza} are inverse to each other.
\end{proof}

\begin{proposition}\label{BCmatrix}
Let $(X,\Ox)$ be a ringed topos, $\calI$ a sheaf of monoids of $X,$ $\calA \ra \calB$ a morphism of $\calI$-indexed $\Ox$-algebras and $\calE$ an $\calI$-indexed $\calA$-module. There exists a canonical morphism of $\calI$-indexed $\calB$-algebras
\begin{equation}\label{erafisoprime}
\calB\circledast_{\calA}\mathscr{End}_{\calA}(\calE) \ra \mathscr{End}_{\calB}(\calB \circledast_{\calA} \calE).
\end{equation}
\end{proposition}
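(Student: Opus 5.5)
The map will be built on generators, exactly as the codiagonal map and the composition map \eqref{era3comptakriz} were. Fix a section $k\in\Gamma(U,\calI)$. By \ref{indproptensprod}, $\left(\calB\circledast_{\calA}\mathscr{End}_{\calA}(\calE)\right)_k$ is the sheaf associated to the presheaf
$$V\mapsto \bigoplus_{i+j=k}\calB_i(V)\otimes_{\Ox(V)}\mathscr{End}_{\calA}(\calE)_j(V)/\sim .$$
By \ref{indremtakriz1} (1), a section $f\in\mathscr{End}_{\calA}(\calE)_j(V)$ is a morphism of $\calI_{|V}$-indexed $\calA_{|V}$-modules $f:\calE_{|V}\ra\sigma_j^*\calE_{|V}$. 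To a generator $b\otimes f$ with $b\in\calB_i(V)$ we attach the endomorphism $\Theta(b\otimes f)$ of $(\calB\circledast_{\calA}\calE)_{|V}$ of degree $i+j=k$. It is defined on generators $c\otimes x$ of $(\calB\circledast_{\calA}\calE)_t$, with $c\in\calB_r(W)$, $x\in\calE_{t-r}$ (more precisely, for decompositions $r+q=t$), by
$$c\otimes x\longmapsto (cb)\otimes f(x),$$
which lies in degree $r+i+q+j=t+k$. Since $\calB$ is not assumed commutative, I will put $b$ on the right of $c$, so that $\Theta(b\otimes f)$ is left $\calB$-linear. In sheaf-theoretic language, $\Theta(b\otimes f)$ is the map $\calB\circledast_{\calA}\calE\ra\sigma_k^*(\calB\circledast_{\calA}\calE)$ induced on the coequalizer \eqref{exseqindtens}. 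It is built from $\sigma_!(\calB\boxtimes\calE)\ra\sigma_k^*\sigma_!(\calB\boxtimes\calE)$, which in turn comes from right multiplication by $b$ on $\calB$ tensored with $f$.

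The checks then proceed in order. First, $\Theta(b\otimes f)$ is well defined on $\calB\circledast_{\calA}\calE$. For $a\in\calA$, we need $(c\,a)\otimes x$ and $c\otimes ax$ to have the same image. They map to $(cab)\otimes f(x)$ and $(cb)\otimes f(ax)$. The $\calA$-linearity of $f$ gives $f(ax)=af(x)$, so the second is $(cb)\otimes af(x)\sim(cba)\otimes f(x)$. This forces the identity $cab=cba$, so we need the image of $\calA$ in $\calB$ to commute with $\calB$. That holds in the intended application (where $\calA$ maps to the center, as for $\calB_{X/S}\otimes S$ and $\widetilde{\calD}_{X/S}$), and I expect this to be the main obstacle.

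If central image is not available in general, I would instead restrict to $\calA$ commutative and interpret $\calB\circledast_{\calA}-$ via the right $\calA$-module structure on $\calB$. In that setting the relation reads $(ca)\otimes x\sim c\otimes ax$, and the formula $c\otimes x\mapsto (cb)\otimes f(x)$ needs $b$ to commute with the image of $\calA$. This again amounts to the image of $\calA$ being central. I would state this hypothesis implicitly, as the paper's $\calB$-algebra conventions ("$\calA\ra\calB$ morphism of indexed algebras" with $\calB$ an $\calA$-module on both sides) suggest.

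Second, $\Theta(b\otimes f)$ is a morphism of indexed $\calB$-modules; this is immediate from the formula. Third, $\Theta$ respects the relation $(ba)\otimes f\sim b\otimes (af)$ defining $\calB\circledast_{\calA}\mathscr{End}_{\calA}(\calE)$, where $af=f\circ a_{\calE}$ by \ref{indremtakriz1} (3). Both sides send $c\otimes x$ to $(cba)\otimes f(x)=(cb)\otimes f(ax)$, using the same relation in $\calB\circledast_{\calA}\calE$.

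Finally, $\Theta$ is multiplicative and unital. For the multiplication we use the composition \eqref{era2compshift} on $\mathscr{End}$ and the algebra structure on $\calB\circledast_{\calA}\mathscr{End}_{\calA}(\calE)$ given by $(b\otimes f)(b'\otimes f')=bb'\otimes f\circ f'$ (up to the order convention fixed by \eqref{era2compshift}). The check is a generator computation: $c\otimes x\mapsto cb'b\otimes f'f(x)$ versus the composite of the two $\Theta$'s. Here one may need to choose the opposite order convention so that the two sides match; I would fix the convention so that they do. For the unit, $1\otimes\op{Id}\mapsto\op{Id}$. Compatibility with restrictions $V'\ra V$ is clear, so by \ref{indremark73} these local maps glue after sheafification to the morphism \eqref{erafisoprime} of $\calI$-indexed $\calB$-algebras.
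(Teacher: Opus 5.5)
This is essentially the paper's proof: for $b\in\calB_s(V)$ and $f\in\mathscr{End}_{\calA}(\calE)_t(V)$ the paper applies $\sigma_!$ to $b\boxtimes f$, descends to an endomorphism of $\calB\circledast_{\calA}\calE$ via \ref{indproptensprod} and \ref{ren3}, and glues the resulting $\varphi_{(s,t)}$ by \ref{indremark73} and adjunction. It differs from yours only in letting $b$ act by left multiplication, which tacitly uses commutativity of $\calB$, and in omitting the well-definedness and multiplicativity checks you spell out; your right-multiplication choice is exactly the one compatible with the paper's product $f\otimes g\mapsto g\circ f$ of \eqref{era2compshift}.

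Your centrality caveat is legitimate, since the paper's indexed formalism is implicitly commutative, but you placed it at the wrong step. With the paper's convention ($\calB$ a left $\calA$-module as in \ref{indrem1}, relation $(ac)\otimes x\sim c\otimes ax$), your formula $c\otimes x\mapsto (cb)\otimes f(x)$ is well defined with no extra hypothesis. Centrality is really needed for $\calB\circledast_{\calA}\calE$ to be a left $\calB$-module and for compatibility with the relation $(ab)\otimes f\sim b\otimes (af)$ in $\calB\circledast_{\calA}\mathscr{End}_{\calA}(\calE)$.
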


\begin{proof}
Let $\sigma:\calI^2 \ra \calI$ be the addition morphism and $p_1,p_2:\calI^2\ra \calI$ the projection morphisms. Let $s,t\in \Gamma(U,\calI)$ be sections of $\calI$ over an object $U.$ We define a morphism
$$\varphi_{(s,t)}:\calB_s\otimes_{\Ox_U}\mathscr{End}_{\calA}(\calE)_t \ra \mathscr{End}_{\calB}\left (\calB\circledast_{\calA}\calE\right )_{s+t}$$ as follows:
Let $V \ra U$ be a morphism in the underlying site of $X,$ $b\in \Gamma(V,\calB_s)$ and $f\in \Gamma \left (V,\mathscr{End}_{\calA}(\calE)_t\right ).$ By \ref{indremtakriz1}, $f$ is a morphism of $\calI_{|V}$-indexed $\calA_{|V}$-modules
$$f:\calE_{|V} \ra \sigma_{t_{|V}}^*\calE_{|V},$$
where $\sigma_{t_{|V}}:\calI_{|V} \ra \calI_{|V}$ is addition by $t_{|V}.$
Also, by \ref{indremtakriz1}, the morphism $b$ induces a morphism of $\calI_{|V}$-indexed $\calB_{|V}$-modules
$$b:\calB_{|V} \ra \sigma_{s_{|V}}^*\calB_{|V}.$$
We then get an $\Ox_{\calI_{|V}^2}$-linear morphism
$$
b\boxtimes f:\calB_{|V} \boxtimes \calE_{|V} \ra \left (\sigma_{s_{|V}}^*\calB_{|V} \right )\boxtimes \left (\sigma_{t_{|V}}^*\calE_{|V} \right )=\sigma_{\left (s_{|V},t_{|V} \right )}^*\left ( \calB_{|V}\boxtimes \calE_{|V}\right ).
$$
We then get an $\Ox_{\calI_{|V}}$-linear morphism
\begin{equation}\label{ren1}
\sigma_{|V!} \left (b\boxtimes f\right ): \sigma_{|V!} \left (\calB_{|V} \boxtimes \calE_{|V}\right ) \ra \sigma_{|V!} \left (\sigma_{\left (s_{|V},t_{|V} \right )}^*\left ( \calB_{|V}\boxtimes \calE_{|V}\right ) \right ).
\end{equation}
By the commutativity of diagram
$$
\begin{tikzcd}
\left (X_{/U} \right )_{/\calI_{|U}^2} \ar{r}{\sigma_{(s,t)}} \ar[swap]{d}{\sigma_{|U}} & \left (X_{/U} \right )_{/\calI_{|U}^2} \ar{d}{\sigma_{|U}} \\
\left (X_{/U} \right )_{/\calI_{|U}} \ar{r}{\sigma_{s+t}} & \left (X_{/U} \right )_{/\calI_{|U}},
\end{tikzcd}
$$
we get a canonical morphism
$$
\sigma_{|V!} \left (\sigma_{\left (s_{|V},t_{|V} \right )}^*\left ( \calB_{|V}\boxtimes \calE_{|V}\right ) \right ) \ra \sigma_{s_{|V}+t_{|V}}^*\sigma_{|V!}\left ( \calB_{|V}\boxtimes \calE_{|V}\right ).
$$
Composing this with \eqref{ren1}, we get an $\Ox_{\calI_{|V}}$-linear morphism
\begin{equation}\label{ren2}
\sigma_{|V!} \left (b\boxtimes f\right ): \sigma_{|V!} \left (\calB_{|V} \boxtimes \calE_{|V}\right ) \ra \sigma_{s_{|V}+t_{|V}}^*\left (\sigma_{|V!}\left ( \calB_{|V}\boxtimes \calE_{|V}\right ) \right ).
\end{equation}
By \ref{indproptensprod}, \eqref{ren2} induces a $\calB_{|V}$-linear morphism
$$
b\circledast_{\calA_{|V}}f: \calB_{|V}\circledast_{\calA_{|V}}\calE_{|V} \ra \sigma_{s_{|V}+t_{|V}}^*\left (\calB_{|V}\circledast_{\calA_{|V}}\calE_{|V}\right ).
$$
By \ref{ren3}, we get
$$
\left (b\circledast_{\calA_{|V}}f:\left (\calB\circledast_{\calA}\calE\right )_{|V} \ra \sigma_{s_{|V}+t_{|V}}^*\left (\calB\circledast_{\calA}\calE\right )_{|V} \right ) \in \mathscr{End}_{\calB}\left (\calB\circledast_{\calA}\calE\right )_{s+t}(V).
$$
We set $\varphi_{(s,t)}(b\otimes f)=b\circledast_{\calA_{|V}}f.$ By \ref{indremark73}, the morphisms $\varphi_{(s,t)}$ define a morphism
$$
\calB\boxtimes \mathscr{End}_{\calA}(\calE) \ra \sigma^*\mathscr{End}_{\calB}(\calB\circledast_{\calA}\calE),
$$
and then, by adjunction,
\begin{equation}\label{ren4}
\sigma_!\left ( \calB\boxtimes \mathscr{End}_{\calA}(\calE) \right ) \ra \mathscr{End}_{\calB}(\calB\circledast_{\calA}\calE).
\end{equation}
Again, by \ref{indremark73},  \eqref{ren4} induces
$$
\varphi:\calB\circledast_{\calA}\mathscr{End}_{\calA}(\calE) \ra \mathscr{End}_{\calB}(\calB \circledast_{\calA} \calE).
$$
\end{proof}

\begin{proposition}\label{Koko829}
Let $f:Y\ra X$ be a morphism of ringed topoi, $\calI$ a sheaf of monoids of $X,$ $\calA$ an $\calI$-indexed algebra of $X$ and $\calE$ and $\F$ two $\calI$-indexed $\calA$-modules. There exist a canonical isomorphism of $f^{-1}\calI$-indexed $f_{/\calI}^{-1}\calA$-modules
$$
f_{/\calI}^{-1}\calE \circledast_{f_{/\calI}^{-1}\calA}f_{/\calI}^{-1}\F \xrightarrow{\sim} f_{/\calI}^{-1}\left ( \calE \circledast_{\calA}\F \right )
$$
and a canonical isomorphism of $f^{-1}\calI$-indexed $f^*\calA$-modules
$$
f_{/\calI}^*\calE \circledast_{f_{/\calI}^*\calA}f_{/\calI}^*\F \xrightarrow{\sim} f_{/\calI}^*\left ( \calE \circledast_{\calA}\F \right ).
$$
\end{proposition}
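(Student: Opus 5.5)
The plan is to follow the pattern of the proof of \ref{ren3}: exhibit both sides as cokernels of right exact sequences of the form \eqref{exseqindtens}, and compare these sequences term by term. Write $f_{/\calI}:Y_{/f^{-1}\calI}\ra X_{/\calI}$, and similarly $f_{/\calI^2}$ and $f_{/\calI^3}$, for the morphisms induced by $f$. Note that $(f^{-1}\calI)^n=f^{-1}(\calI^n)$, since $f^{-1}$ commutes with finite products. These morphisms fit into commutative squares with the morphisms $p_i,\sigma,\tau$ and $p_{ij}$ on the $X$-side and on the $Y$-side, because all of them are induced by morphisms of sheaves of $X$ to which $f^{-1}$ is applied.

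First, I will show that $f_{/\calI^2}^{-1}$ commutes with the exterior tensor product. Since $f_{/\calI^2}^{-1}p_i^{-1}=p_i^{-1}f_{/\calI}^{-1}$ and $f^{-1}$ is monoidal for tensor products over the pulled-back rings, we get $f_{/\calI^2}^{-1}(\calE\boxtimes\F)\simeq f_{/\calI}^{-1}\calE\boxtimes f_{/\calI}^{-1}\F$. The same holds for the triple product $\calA\boxtimes\calE\boxtimes\F$ over $\calI^3$. For $f^*$ this is even simpler, since $f^*$ is monoidal and commutes with $p_i^*$.

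Next, I will prove the base change for $\sigma_!$ and $\tau_!$, namely canonical isomorphisms
$$(\sigma_Y)_!\,f_{/\calI^2}^{-1}\calM\xrightarrow{\sim} f_{/\calI}^{-1}\sigma_!\calM$$
and the analogue for $\tau$, together with the corresponding statements for $f^*$. Applying \ref{lemtop15} (2) does not suffice, because it only treats localization morphisms $j_I$. However, $\sigma:X_{/\calI^2}\ra X_{/\calI}$ is itself a localization of the topos $X_{/\calI}$ at the object $\sigma:\calI^2\ra\calI$. Moreover, $f_{/\calI}$ pulls this object back to $(f^{-1}\calI)^2\ra f^{-1}\calI$, and the corresponding localization of $f_{/\calI}$ is $f_{/\calI^2}$. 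So \ref{lemtop15} (2), applied to the morphism of ringed topoi $f_{/\calI}$ and the sheaf $I=(\calI^2\xrightarrow{\sigma}\calI)$ of $X_{/\calI}$, gives the desired isomorphism. This works both for the module version with $f^*$ and for the sheaf version with $f^{-1}$, the latter also being stated in \ref{lemtop15}. Alternatively, one could check this directly on the presheaf descriptions of \ref{indpropshriek} (3) and then sheafify.

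Finally, I will check that these isomorphisms are compatible with the two arrows of \eqref{indeq10}. Those arrows are obtained from the module structure maps by adjunction, and the adjunction isomorphism of \ref{lemtop15} (2) is natural. So the compatibility reduces to the fact that the pulled-back action maps $f^{-1}\calA\boxtimes f^{-1}\calE\ra\sigma^*f^{-1}\calE$ are precisely the ones defining the structures of $f_{/\calI}^{-1}\calE$ and $f_{/\calI}^{-1}\F$. Both $f_{/\calI}^{-1}$ and $f_{/\calI}^*$ are right exact: they are exact and cocontinuous respectively, as left adjoints. Hence applying them to \eqref{exseqindtens} yields right exact rows. We then obtain a morphism of right exact sequences whose first two vertical maps are isomorphisms, so the induced map on cokernels is an isomorphism. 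Its compatibility with the indexed module structures is checked on the presheaf descriptions of \ref{indproptensprod}.

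The main obstacle is the base change identification for $\sigma_!$. Concretely, one must verify that the localization of $f_{/\calI}$ at $\calI^2\ra\calI$ really is $f_{/\calI^2}$, compatibly with the ring structures, so that \ref{lemtop15} applies. If this becomes cumbersome, I will instead argue stalkwise using \ref{indpropshriek} (3): for $f^{-1}$, the presheaf sums over decompositions $u+v=s$ are preserved by pullback after sheafification.
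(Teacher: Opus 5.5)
Your proposal is correct and follows essentially the same route as the paper: base change for $\sigma_!$ and $\tau_!$ via \ref{lemtop15}, then comparison of the right exact sequences \eqref{exseqindtens} (using right exactness of $f_{/\calI}^{-1}$, resp.\ $f_{/\calI}^*$) to get the isomorphism on cokernels. You also spell out two points the paper leaves implicit: \ref{lemtop15} is applied to $f_{/\calI}$ and the object $\sigma:\calI^2\ra\calI$ of $X_{/\calI}$, and pullback commutes with $\boxtimes$, which is what identifies the top row with the defining sequence of $f_{/\calI}^{-1}\calE\circledast_{f_{/\calI}^{-1}\calA}f_{/\calI}^{-1}\F$.
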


\begin{proof}
Denote by $\sigma:\calI^2 \ra \calI$ and $\tau:\calI^3\ra \calI$ the addition morphisms. By \ref{lemtop15} and the commutative diagram
$$
\begin{tikzcd}
Y_{/f^{-1}\calI^2} \ar{r}{f_{/\calI^2}} \ar[swap]{d}{f^{-1}\sigma} & X_{/\calI^2} \ar{d}{\sigma} \\
Y_{/f^{-1}\calI} \ar[swap]{r}{f_{/\calI}} & X_{/\calI},
\end{tikzcd}
$$
we get a canonical isomorphism
$$
\left (f^{-1}\sigma\right )_!f_{/\calI^2}^{-1} \left (\calE \boxtimes \F \right ) \xrightarrow{\sim} f_{/\calI}^{-1}\sigma_!\left ( \calE\boxtimes \F\right ).
$$
We similarly have a canonical isomorphism
$$
\left (f^{-1}\tau\right )_!f_{/\calI^3}^{-1} \left (\calA \boxtimes \calE \boxtimes \F \right ) \xrightarrow{\sim} f_{/\calI}^{-1}\tau_!\left ( \calA \boxtimes \calE\boxtimes \F\right ).
$$
By definition of $\calE \circledast_{\calA} \F$ and the exactness of $f_{/\calI}^{-1},$ these isomorphisms fit into a commutative diagram
$$
\begin{tikzcd}
\left (f^{-1}\tau\right )_!f_{/\calI^3}^{-1} \left (\calA \boxtimes \calE \boxtimes \F \right ) \ar[sloped]{d}{\sim} \ar{r} & \left (f^{-1}\sigma\right )_!f_{/\calI^2}^{-1} \left (\calE \boxtimes \F \right ) \ar{r} \ar[sloped]{d}{\sim} & \left (f_{/\calI}^{-1}\calE \right ) \circledast_{f_{/\calI}^{-1}\calA} \left (f_{/\calI}^{-1}\F \right )   \ar[sloped,dashed]{d}{\sim} \ra 0 \\
f^{-1}_{/\calI}\tau_!(\calA \boxtimes \calE \boxtimes \F)  \ar{r} & f^{-1}_{/\calI}\sigma_!(\calE \boxtimes \F)  \ar{r} & f^{-1}_{/\calI}\left (\calE \circledast_{\calA} \F\right ) \ra 0,
\end{tikzcd}
$$
where the rows are exact and induced by \eqref{exseqindtens}.
Hence the existence of the third vertical isomorphism. The proof of the second isomorphism is similar.
\end{proof}

\begin{parag}
We end this subsection by defining local freeness for indexed modules. We introduce the following notation: let $(X,\Ox)$ be a ringed topos on a site $C,$ $\calI$ a sheaf of monoids of $X,$ $\calA$ an $\calI$-indexed algebra and $\calE$ an $\calI$-indexed $\calA$-module. Let $m\in \Gamma(U,\calI)$ be a section of $\calI$ over an object $U$ of $C$ and consider the commutative diagram of topoi
\begin{equation}\label{diagtakriz22}
\begin{tikzcd}
\left (X_{/U} \right )_{/\calI_{|U}} \ar{dr}{j_{\calI_{|U}}} \ar{r}{\sim} & X_{/U\times \calI} \ar{r}{j_{U\times\calI/\calI}} \ar{d}{j_{\calI\times U/U}} & X_{/\calI} \ar{d}{j_{\calI}} \\
 & X_{/U} \ar{r}{j_U} & X
\end{tikzcd}
\end{equation}
We denote by $\sigma_m:\calI_{|U} \ra \calI_{|U},\ x\mapsto x+m$ and $\calE(m)=\sigma_m^*\calE_{|U},$ where $\calE_{|U}$ is the $\calI_{|U}$-indexed $\calA_{|U}$-module defined in \ref{Locprop1}. The sheaf $\calE(m)$ is clearly an $\calI_{|U}$-indexed $\calA_{|U}$-module.
\end{parag}

\begin{definition}\label{deflocfreeind}
Let $X$ be a scheme, $\calI$ a sheaf of abelian groups on $X,$ $\sigma:\calI^2 \ra \calI$ the addition map, $\calA$ an $\calI$-indexed algebra and $\calE$ an $\calI$-indexed $\calA$-module.
\begin{enumerate}
\item We say that $\calE$ is a \emph{free $\calI$-indexed $\calA$-module} (resp. \emph{free $\calI$-indexed $\calA$-module of finite rank}) if there exist sections (resp. finitely many sections) $m_{i}\in \Gamma(X,\calI), i\in I$ satisfying
$$\calE \cong \bigoplus_{i\in I} \calA(m_{i}).$$
In this case, a \emph{local basis} of the $\calI$-indexed $\calA$-module $\calE$ is a family of local sections $(e_i)_{i\in I}$ of $\prod_{i\in I}\calE_{s_i},$ where the $s_i$'s are local sections of $\calI$ over an étale $X$-scheme $U,$ satisfying: for all $V\ra U$ étale and all local sections $s\in \Gamma (U,\calI)$ and $x\in \Gamma(V,\calE_s),$ there exists unique local sections $(a_i)_{i\in I}$ of $\prod_{i\in I}\Gamma(V,\calA_{s-s_i})$ (all equal to zero but finitely many) such that $x=\sum_{i\in I}a_ie_i,$ where $a_ie_i$ denotes the image of $a_i\otimes e_i$ by $\calA_{s-s_i} \otimes_{\Ox_U} \calE_{s_i} \ra \calE_s.$
\item We say that $\calE$ is a \emph{locally free $\calI$-indexed $\calA$-module} (resp. \emph{locally free $\calI$-indexed $\calA$-module of finite rank}) if there exists an étale covering $(U_i)_{i\in I}$ of $X$ such that for every $i\in I,$ the $\calI_{|U_i}$-indexed $\calA_{|U_i}$-module $\calE_{|U_i}$ is free (resp. free of finite rank).
\end{enumerate}
\end{definition}

\begin{proposition}\label{propfreetakriz}
Let $X$ be a scheme, $\calI$ a sheaf of monoids on $X,$ $\calA \ra \calA'$ a morphism of $\calI$-indexed algebras and $\calE$ an $\calI$-indexed $\calA$-module. If $\calE$ is a locally free $\calI$-indexed $\calA$-module then $\calA' \circledast_{\calA}\calE$ is a locally free $\calI$-indexed $\calA'$-module. In addition, if $(e_i)_{i\in I}$ is a basis for the $\calI$-indexed $\calA$-module $\calE,$ where $e_i$ is over a section $s_i$ of $\calI,$ then $(1\otimes e_i)_{i\in I}$ is a basis for the $\calI$-indexed $\calA'$-module $\calA'\circledast_{\calA}\calE.$ Here, $1\otimes e_i$ is considered as a section of $\calA'\circledast_{\calA}\calE$ via the morphism $\calA'_0\otimes_{\Ox_X}\calE_{s_i} \ra (\calA' \circledast_{\calA}\calE)_{s_i}$ \eqref{indproptensprod}.
\end{proposition}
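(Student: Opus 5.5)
The plan is to reduce to the free case and then compute the indexed tensor product of a single shifted copy $\calA(m)$. Local freeness is étale local. By Proposition \ref{ren3}, localization commutes with $\circledast$, so for an étale covering $(U_i)$ we have $(\calA'\circledast_{\calA}\calE)_{|U_i}\cong \calA'_{|U_i}\circledast_{\calA_{|U_i}}\calE_{|U_i}$. It therefore suffices to treat the case where $\calE\cong\bigoplus_{i\in I}\calA(m_i)$ is free with global sections $m_i\in\Gamma(X,\calI)$.

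Next, $\circledast_{\calA}$ commutes with direct sums in the second variable. The functors $\sigma_!$ and $\tau_!$ are left adjoints, and $\boxtimes$ commutes with direct sums. Hence the defining exact sequence \eqref{exseqindtens} is additive in $\calE$, and taking cokernels gives $\calA'\circledast_{\calA}\bigoplus_i\calA(m_i)\cong\bigoplus_i \calA'\circledast_{\calA}\calA(m_i)$.

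The key step is a canonical isomorphism $\calA'\circledast_{\calA}\calA(m)\xrightarrow{\sim}\calA'(m)$, extending Proposition \ref{indprop710}(1), which is the case $m=0$ and $\calA'=\calA$. To build it, use the fiber description of Proposition \ref{indproptensprod}. Over $V$ and a section $k$, the presheaf is $\bigoplus_{i+j=k}\calA'_i(V)\otimes\calA_{j+m}(V)/\sim$. Send $a'\otimes a$ to $a'\cdot f(a)\in\calA'_{k+m}(V)=\calA'(m)_k(V)$, where $f:\calA\to\calA'$ is the given morphism. This map is compatible with the relation $\sim$ because $f$ is multiplicative and multiplication in $\calA'$ is associative.

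Surjectivity holds since $a'\mapsto a'\otimes 1$, with $1\in\calA_0(V)$ placed in $\calA(m)_{-m}$, hits everything. For injectivity, argue as in the proof of \ref{indprop710}(1): any $a'\otimes a$ is equivalent to $(a'f(a))\otimes 1$, so every class is equivalent to a single term $x\otimes 1$ with $x\in\calA'_{k+m}$. Such a term maps to $x$.

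The main obstacle is precisely this step. The "$1$" must sit in degree $-m$ of $\calA(m)$, which requires $\calI$ to be a group, as in Definition \ref{deflocfreeind}, where $\calI$ is a sheaf of abelian groups. One also has to check that the construction is compatible with restriction and sheafification via \ref{indremark73}.

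Finally, trace the basis through the isomorphisms. In $\calA(m_i)$ the basis element $e_i$ is $1\in\calA_0=\calA(m_i)_{-m_i}$, so $s_i=-m_i$. Its image $1\otimes e_i$ corresponds under the isomorphism to $1\in\calA'(m_i)_{-m_i}$. The uniqueness of the coefficients $a_i\in\calA'_{s-s_i}$ then follows from the direct sum decomposition $\bigoplus_i\calA'(m_i)$.

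For an arbitrary basis $(e_i)$ of $\calE$ over sections $s_i$, the basis defines an isomorphism $\bigoplus_i\calA(-s_i)\xrightarrow{\sim}\calE$, given by $a\mapsto ae_i$. Transporting through $\calA'\circledast_{\calA}(-)$ shows that $(1\otimes e_i)$ is a basis of $\calA'\circledast_{\calA}\calE$.
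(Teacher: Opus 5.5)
Your argument is correct and is essentially the paper's proof. The paper also reduces to a single shifted free module and invokes Lemma \ref{ren5} in the form $\calA(m)\circledast_{\calA}\calA'\cong\calA'(m)$ (factors in the other order), whose fiberwise check is what you carry out; you additionally spell out the localization, additivity and basis-tracing steps that the paper leaves implicit.

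One small correction: compatibility of $a'\otimes a\mapsto a'f(a)$ with $\sim$ does not follow from associativity alone. The relation gives $a'\otimes a\sim (f(a)a')\otimes 1$, so you need $f(\calA)$ to commute with $\calA'$. The paper's formalism uses this hypothesis tacitly as well, already in Lemma \ref{ren5} and for the $\calA'$-module structure on $\calA'\circledast_{\calA}\calE$ to be well defined.
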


\begin{proof}
It is sufficient to prove that, for any section $m\in \Gamma(X,\calI),$
$\calA(m)\circledast_{\calA}\calA' \cong \calA'(m).$
This is a special case of the lemma \ref{ren5} below.
\end{proof}

\begin{lemma}\label{ren5}
Let $X$ be a scheme, $\calI$ a sheaf of monoids on $X,$ $\calA$ an $\calI$-indexed algebra, $\calE$ an $\calI$-indexed $\calA$-module and $m\in \Gamma(X,\calI).$ Then there exists a canonical isomorphism
$$\calA(m)\circledast_{\calA}\calE \xrightarrow{\sim} \calE(m).$$
\end{lemma}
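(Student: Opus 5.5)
The plan is to reduce the statement to \ref{indprop710} (1) by showing that the shift $(-)(m)=\sigma_m^*(-)_{|X}$ commutes with $\circledast_{\calA}$ in the first variable. Since $m\in\Gamma(X,\calI)$ is global, $\calA(m)=\sigma_m^*\calA$. The $\calA$-module structure on $\calA(m)$ is given on fibers by
$$\calA_s\otimes_{\Ox_U}\calA(m)_t=\calA_s\otimes_{\Ox_U}\calA_{t+m}\ra\calA_{s+t+m}=\calA(m)_{s+t},$$
that is, by the multiplication of $\calA$ shifted by $m$. The target is $\calE(m)=\sigma_m^*\calE$, with fibers $\calE(m)_k=\calE_{k+m}$.

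\textbf{Step 1: the morphism.} I will first write down a canonical map $\calA(m)\circledast_{\calA}\calE\ra\calE(m)$. The module structure $\calA\boxtimes\calE\ra\sigma^*\calE$ pulls back along $\sigma_m\times\op{Id}_{\calI}$. Using $\sigma\circ(\sigma_m\times\op{Id}_{\calI})=\sigma_m\circ\sigma$, this gives a morphism $\calA(m)\boxtimes\calE\ra\sigma^*\calE(m)$, and by adjunction a morphism $\sigma_!(\calA(m)\boxtimes\calE)\ra\calE(m)$. By \ref{indproptensprod}, for each section $k\in\Gamma(U,\calI)$ this morphism is induced on presheaves by
$$\bigoplus_{i+j=k}\calA_{i+m}(V)\otimes_{\Ox(V)}\calE_j(V)\ra\calE_{k+m}(V),\qquad a\otimes x\mapsto ax.$$
The relations $(ba)\otimes x\sim a\otimes(bx)$ defining $\circledast_{\calA}$ are respected because of the associativity diagram of \ref{inddef} (5). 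By the exact sequence \eqref{exseqindtens}, the map therefore factors through the cokernel $\calA(m)\circledast_{\calA}\calE$. It is $\calA$-linear by the same associativity, and it is compatible with restriction to $V\ra U$ by \ref{indremark73}.

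\textbf{Step 2: bijectivity on fibers.} To show the map is an isomorphism, I will compare it with the isomorphism $\calA\circledast_{\calA}\calE\xrightarrow{\sim}\calE$ of \ref{indprop710} (1) after applying $\sigma_m^*$. Concretely, I will build the inverse presheaf by presheaf, modelled on the argument in \ref{indprop710} (1).
\begin{itemize}
\item For a decomposition $i+j=k$ and a sum $\sum_l a_l\otimes x_l$, with $a_l\in\calA_{i_l+m}(V)$ and $x_l\in\calE_{j_l}(V)$, whose image $\sum_l a_lx_l$ in $\calE_{k+m}(V)$ vanishes, I want to use the relation $\sim$ to collect every term into a single summand and conclude that the sum is zero. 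This is the injectivity argument of \ref{indprop710} (1).
\item Surjectivity is checked étale locally, using that the presentations agree after sheafification.
\end{itemize}

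\textbf{Main obstacle.} The hard step is the reindexing in Step 2. In \ref{indprop710} (1) one uses the unit $1\in\calA_0$ to write any $x\in\calE_k$ as $1\otimes x$. Here, for $x\in\calE_{k+m}$, the analogous element would have to lie in $\calA(m)_{-m}$, and $-m$ need not exist when $\calI$ is only a monoid. The direct analogue of the unit trick is therefore unavailable. I would first try to pass through $\sigma_m^*(\calA\circledast_{\calA}\calE)$ and \ref{Koko829}-type base change, comparing the index sets $\{(i,j):i+j=k+m\}$ and $\{(i'+m,j):i'+j=k\}$. When a decomposition $i+j=k+m$ does not have $i$ of the form $i'+m$, I would try to use the relation $\sim$ to move $m$ between the two factors, replacing $a\otimes x$ by an equivalent tensor with a shifted first index. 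If this fails, the lemma can only be used in the form needed for \ref{propfreetakriz}, namely for $\calI=\upmu$ a sheaf of groups as in \ref{deflocfreeind}. In that case $1\in\calA(m)_{-m}=\calA_0$ provides the inverse $x\mapsto 1\otimes x$, and the argument of \ref{indprop710} (1) goes through verbatim.
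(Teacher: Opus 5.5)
Your Step 1 is exactly the paper's construction: pull $\pi:\calA\boxtimes\calE\ra\sigma^*\calE$ back along $\sigma_{(m,0)}=\sigma_m\times\op{Id}_{\calI}$, use $\sigma\circ\sigma_{(m,0)}=\sigma_m\circ\sigma$, adjoin to get $\sigma_!(\calA(m)\boxtimes\calE)\ra\calE(m)$, and descend to the coequalizer. One correction: compatibility with $(ba)\otimes x\sim a\otimes(bx)$ needs $(ab)x=(ba)x$, i.e.\ commutativity of $\calA$ and not only associativity, because $\circledast_{\calA}$ is built from two left actions. This holds in every application in the paper.

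The paper then only says ``by checking on fibers''. The obstacle you found in Step 2 is not a technicality: for a general sheaf of monoids the lemma is false. So moving $m$ across the tensor sign cannot work, and both bullets of your Step 2 fail. Take $X=\op{Spec}K$ with $K$ separably closed, $\calI$ the constant sheaf $\N$, $\calA_n=\Ox_X T^n$ (the graded ring $K[T]$) and $m=1$.
\begin{itemize}
\item Let $\calE$ be $\Ox_X$ placed in degree $0$. By \ref{indproptensprod}, $(\calA(1)\circledast_{\calA}\calE)_0=\calA_1\otimes_{\Ox_X}\calE_0\cong\Ox_X$, since in degree $0$ the only relations come from $\calA_0=\Ox_X$. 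But $\calE(1)_0=\calE_1=0$, so the map is not injective.
\item Let $\calE$ be $\Ox_X$ placed in degree $1$. Then $(\calA(1)\circledast_{\calA}\calE)_0=\calA_1\otimes_{\Ox_X}\calE_0=0$, while $\calE(1)_0=\calE_1\cong\Ox_X$, so the map is not surjective.
\end{itemize}
The paper's fiberwise check tacitly uses the section $-m$.

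When $\calI$ is a sheaf of abelian groups, your fallback is a complete proof and is exactly the check the paper leaves implicit. This is also the only case in which the lemma is used: \ref{propfreetakriz} only makes sense through Definition \ref{deflocfreeind}, and \eqref{erafiso} and \ref{propKoko1717} are over groups, the latter with $\upmu=\ov{\calM}_X^{gp}$. The argument runs as follows.
\begin{itemize}
\item For $k\in\Gamma(U,\calI)$ and $V\ra U$, send $x\in\calE_{k+m}(V)$ to $1\otimes x$, with $1\in\calA_0(V)=\calA(m)_{-m}(V)$.
\item The composite $x\mapsto 1\otimes x\mapsto 1\cdot x=x$ is the identity by the unit axiom.
\item For $a\in\calA(m)_i(V)=\calA_{i+m}(V)$ and $x\in\calE_j(V)$ with $i+j=k$, one has $a\otimes x=(a\cdot 1)\otimes x\sim 1\otimes(ax)$, using the relation in degrees $(i+m,-m,j)$. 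So the other composite is the identity on the quotient presheaf.
\item Both presheaf maps commute with restrictions, so the isomorphism passes to the associated sheaves.
\end{itemize}
You should therefore state the lemma for a sheaf of abelian groups and keep only this argument.
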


\begin{proof}
Let $p_1,p_2:\calI^2 \ra \calI$ be the canonical projections, $\sigma:\calI^2 \ra \calI$ the addition map, $\sigma_m:\calI \ra \calI,\ x\mapsto x+m$ the addition by $m,$ $\sigma_{(m,0)}:\calI^2 \ra \calI^2,\ x\mapsto x+(m,0)$ the addition by $(m,0)$ and $\pi:\calA \boxtimes \calE \ra \sigma^*\calE$ the canonical map. We have the following commutative diagrams
$$
\begin{tikzcd}
\calI^2 \ar{r}{\sigma_{(m,0)}} \ar{d}{p_1} & \calI^2 \ar{d}{p_1} & \calI^2 \ar{r}{\sigma_{(m,0)}} \ar{dr}{p_2} & \calI^2 \ar{d}{p_2} & \calI^2 \ar{r}{\sigma_{(m,0)}} \ar{d}{\sigma} & \calI^2 \ar{d}{\sigma} \\
\calI \ar{r}{\sigma_m} & \calI & & \calI & \calI \ar{r}{\sigma_m} & \calI
\end{tikzcd}
$$
We then obtain a morphism
$\sigma_{(m,0)}^*\pi: \calA(m)\boxtimes \calE \ra \sigma^*\calE(m).$
By adjunction, we get a morphism
$\sigma_!(\calA(m) \boxtimes \calE) \ra \calE(m).$
By checking on fibers, we prove it induces an isomorphism
$\calA(m)\circledast_{\calA}\calE \xrightarrow{\sim} \calE(m).$
\end{proof}

\begin{proposition}
Let $X$ be a scheme, $\calI$ a sheaf of abelian groups of $X_{\text{ét}},$ $\calA \ra \calB$ a morphism of $\calI$-indexed algebras and $\calE$ a locally free $\calI$-indexed $\calA$-module of finite rank \eqref{deflocfreeind}. The canonical morphism \eqref{erafisoprime} is then an isomorphism:
\begin{equation}\label{erafiso}
\calB\circledast_{\calA}\mathscr{End}_{\calA}(\calE) \xrightarrow{\sim} \mathscr{End}_{\calB}(\calB \circledast_{\calA} \calE).
\end{equation}
\end{proposition}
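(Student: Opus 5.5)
The plan is to reduce to the free case and then to rank one. Being an isomorphism is étale local, so it suffices to check it on the members of an étale covering. By Proposition \ref{ren3} and Proposition \ref{Koko829}, both $\circledast$ and $\mathscr{End}$ are compatible with localization to an étale $U$: for $\mathscr{End}$ this follows from the description in \ref{indremtakriz1} (1). The morphism \eqref{erafisoprime} is built fiberwise and is therefore compatible with restriction. We may thus assume $\calE\cong\bigoplus_{i=1}^n\calA(m_i)$ with $m_i\in\Gamma(X,\calI)$.

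Both sides are additive in $\calE$. For $\mathscr{End}$ with finite direct sums,
$$\mathscr{End}_{\calA}\Bigl(\bigoplus_i\calE_i\Bigr)\cong\bigoplus_{i,j}\mathscr{Hom}_{\calA}(\calE_i,\calE_j),$$
which one checks directly from the definition in \ref{inddef2}, since morphisms of indexed modules out of and into finite sums decompose. For $\circledast$ the corresponding decomposition holds because $\sigma_!$, $\tau_!$ and cokernels commute with direct sums. We therefore formulate and prove the Hom analogue
$$\calB\circledast_{\calA}\mathscr{Hom}_{\calA}(\calE,\F)\ra \mathscr{Hom}_{\calB}(\calB\circledast_{\calA}\calE,\calB\circledast_{\calA}\F),$$
defined exactly as in the proof of \ref{BCmatrix}, and reduce to $\calE=\calA(m)$, $\F=\calA(m')$. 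By Lemma \ref{ren5}, $\calB\circledast_{\calA}\calA(m)\cong\calB(m)$, so the right side becomes $\mathscr{Hom}_{\calB}(\calB(m),\calB(m'))$.

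The key computation is
$$\mathscr{Hom}_{\calA}(\calA(m),\calA(m'))\cong\calA(m'-m),$$
and similarly for $\calB$. Here we use that $\calI$ is a sheaf of abelian groups. A morphism of $\calI_{|V}$-indexed $\calA_{|V}$-modules $\calA(m)_{|V}\ra\sigma_s^*\calA(m')_{|V}$ is determined by the image of $1\in\calA_0=\calA(m)_{-m}$. That image is a section of $\calA_{s-m+m'}$. Conversely, any such section $a$ gives a morphism via right multiplication $x\mapsto xa$. This uses associativity, and we should be careful about left versus right: if $\calA$ is not commutative, the Hom consists of right multiplications, which is the formula the fiberwise description of \ref{indremtakriz1} produces. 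Thus $\mathscr{Hom}_{\calA}(\calA(m),\calA(m'))_s\cong\calA_{s+m'-m}$. Applying Lemma \ref{ren5} once more, the left side becomes $\calB\circledast_{\calA}\calA(m'-m)\cong\calB(m'-m)$, and the right side is $\calB(m'-m)$ by the same computation.

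It remains to check that, under these identifications, the map \eqref{erafisoprime} is the identity of $\calB(m'-m)$. This follows by tracing $b\otimes a\mapsto b\circledast a$ on the generator $1\otimes1$: both sides send it to the section $ba$ of $\calB_{s+m'-m}$. I expect the main obstacle to be this bookkeeping of compatibilities, namely that the decomposition into matrix entries and the identifications of Lemma \ref{ren5} commute with the map of \ref{BCmatrix}. The finiteness of the rank is used precisely so that $\mathscr{Hom}$ out of the direct sum is a finite direct sum, allowing it to commute with $\calB\circledast_{\calA}-$.
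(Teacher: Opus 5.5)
Your proposal is correct and follows essentially the same route as the paper. You reduce étale locally (via Proposition \ref{ren3}) to the free case $\calE\cong\bigoplus_{i=1}^d\calA(s_i)$, then conclude from Lemma \ref{ren5} together with the matrix identifications $\mathscr{End}_{\calA}\bigl(\bigoplus_i\calA(s_i)\bigr)\cong\bigoplus_{i,j}\calA(s_j-s_i)$ and their analogue over $\calB$; the paper just asserts these identifications, while you also verify the rank-one computation $\mathscr{Hom}_{\calA}(\calA(m),\calA(m'))\cong\calA(m'-m)$ and check that the map of Proposition \ref{BCmatrix} becomes the identity under them, which is the bookkeeping the paper leaves implicit.
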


\begin{proof}
If $U\ra X$ is an étale morphism, then $\left (\calB \circledast_{\calA}\mathscr{End}_{\calA}(\calE) \right )_{|U}=\calB_{|U}\circledast_{\calA_{|U}}\mathscr{End}_{\calA}(\calE)_{|U}$ by \ref{ren3} and $\mathscr{End}_{\calA}(\calE)_{|U}=\mathscr{End}_{\calA_{|U}}\left (\calE_{|U} \right )$ by definition of $\mathscr{End}_{\calA}$ (\ref{inddef2}). We can then suppose that $\calE$ is a free $\calI$-indexed $\calA$-module. It follows that there exists $s_1,\hdots,s_d\in \Gamma(X,\calI)$ and an isomorphism
$$
\calE\xrightarrow{\sim} \bigoplus_{i=1}^d\calA(s_i).
$$
The result then follows from \ref{ren5} and the canonical isomorphisms
$$
\bigoplus_{i,j=1}^d\calA(s_j-s_i) \xrightarrow{\sim} \mathscr{End}_{\calA}\left (\bigoplus_{i=1}^d\calA(s_i) \right ),\ \bigoplus_{i,j=1}^d\calB(s_j-s_i) \xrightarrow{\sim} \mathscr{End}_{\calB}\left (\bigoplus_{i=1}^d\calB(s_i) \right ).
$$
\end{proof}

\subsection*{Indexed connections}

\begin{parag}\label{indparag77}
In this subsection, we fix a morphism $f:X\ra S$ of logarithmic schemes or logarithmic formal schemes and an étale sheaf $\calI$ on $X.$ Applying the results of the previous section to the small étale topos $X_{\text{ét}},$ we get three functors
\begin{equation}\label{indeq71}
j_{\calI !}:\calI_{\text{ét}}\ra X_{\text{ét}},\ j_{\calI}^*:X_{\text{ét}}\ra \calI_{\text{ét}},\ j_{\calI *}:\calI_{\text{ét}}\ra X_{\text{ét}}
\end{equation}
In the rest of this article and for any étale sheaf $\F$ on $X$ (resp. any morphism $f:\F \ra \calG$ of étale sheaves on $X$), we denote $j_{\calI}^*\F$ (resp. $j_{\calI}^*f$) by $\F_{\calI}$ (resp. $f_{\calI}$). In particular, we denote $j_{\calI}^*\Ox_X$ by $\Ox_{X,\calI}$ and, for any positive integer $i,$ we denote the sheaf $j_{\calI}^*\left (\omega^i_{X/S}\right )$ by $\omega^i_{X/S,\calI}.$
\end{parag}

\begin{definition}\label{indprop42}
Let $\calE$ be an $\Ox_{X,\calI}$-module and $\lambda$ a global section of $\Ox_{X,\calI}.$ A \emph{logarithmic $\lambda$-connection on $\calE$} is an $(f^{-1}\Ox_S)_{\calI}$-linear morphism $\nabla:\calE\ra \calE\otimes_{\Ox_{X,\calI}}\omega^1_{X/S,\calI}$ satisfying the modified Leibniz rule
$$\nabla(ax)=a\nabla(x)+\lambda x\otimes d_{\calI}a,$$
for all local sections $a$ and $x$ of $\Ox_{X,\calI}$ and $\calE$ respectively and where $d:\Ox_X\ra \omega^1_{X/S}$ denotes the universal derivation. 
Logarithmic $1$-connections are called \emph{logarithmic connections.}
\end{definition} 

\begin{proposition}
Let $\calE$ be an $\Ox_{X,\calI}$-module and $\lambda$ a global section of $\Ox_{X,\calI}.$ A $\lambda$-connection $\nabla$ on $\calE$ induces, for any positive integer $i,$ a morphism
$$\nabla^i:\calE\otimes_{\Ox_{X,\calI}}\omega^i_{X/S,\calI}\ra \calE\otimes_{\Ox_{X,\calI}}\omega^{i+1}_{X/S,\calI}$$
defined for any local sections $x$ and $\omega$ of $\calE$ and $\omega^i_{X/S,\calI},$ respectively, by
$$\nabla^i(x\otimes \omega)=\nabla(x)\wedge \omega+\lambda x\otimes d_{\calI}\omega$$
where $\nabla(x)\wedge \omega$ denotes the image of $\nabla(x)\otimes \omega$ by the canonical morphism $$\calE\otimes_{\Ox_{X,\calI}} \omega^1_{X/S,\calI}\otimes_{\Ox_{X,\calI}}\omega^i_{X/S,\calI}\ra \calE\otimes_{\Ox_{X,\calI}}\omega^{i+1}_{X/S,\calI}.$$
\end{proposition}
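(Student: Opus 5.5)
The statement asks us to show that $\nabla^i$ is well defined. The plan is to reduce to the classical, non-indexed construction by working fiberwise over local sections of $\calI$, and then to check that the resulting maps glue.

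\textbf{Step 1: reduction to fibers.} By \ref{indremark73}, a morphism of $\Ox_{X,\calI}$-modules is the same as a compatible family of morphisms $\calE_s\otimes_{\Ox_U}\omega^i_{X/S|U}\to \calE_s\otimes_{\Ox_U}\omega^{i+1}_{X/S|U}$, one for each étale $U\to X$ and $s\in\Gamma(U,\calI)$. Here we use that $(\omega^i_{X/S,\calI})_s=\omega^i_{X/S|U}$ and that $(\Ox_{X,\calI})_s=\Ox_U$, since $j_{\calI}^*$ of a sheaf restricts to that sheaf on every fiber \eqref{indeq8}. Likewise $\nabla$ amounts to a family of $f^{-1}\Ox_S$-linear maps $\nabla_s:\calE_s\to\calE_s\otimes\omega^1_{X/S|U}$, compatible with restriction. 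Each $\nabla_s$ is a $\lambda_s$-connection in the usual sense, where $\lambda_s\in\Gamma(U,\Ox_U)$ is the fiber of $\lambda$.

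\textbf{Step 2: the classical construction on each fiber.} On $\calE_s$ we define
\[
\nabla^i_s(x\otimes\omega)=\nabla_s(x)\wedge\omega+\lambda_s x\otimes d\omega .
\]
Starting from the map on $\calE_s\times\omega^i$ given by this formula, which is additive in each variable, we must check $\Ox_U$-balancedness:
\[
\nabla_s(ax)\wedge\omega+\lambda_s ax\otimes d\omega=\nabla_s(x)\wedge a\omega+\lambda_s x\otimes d(a\omega).
\]
The left side expands to $a\nabla_s(x)\wedge\omega+\lambda_s x\otimes da\wedge\omega+\lambda_s a x\otimes d\omega$. The right side expands, via $d(a\omega)=da\wedge\omega+a\,d\omega$, to $a\nabla_s(x)\wedge\omega+\lambda_s x\otimes da\wedge\omega+\lambda_s a x\otimes d\omega$. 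These agree provided the convention for $\nabla(x)\wedge\omega$ puts the $\omega^1$ factor first, which is the one fixed in the statement, so the formula descends to the tensor product over $\Ox_U$. This sign and ordering check is the only real point. It uses that the de Rham differential on $\omega^\bullet_{X/S}$ satisfies $d(a\omega)=da\wedge\omega+a\,d\omega$ and is $f^{-1}\Ox_S$-linear, and that $d_{\calI}=j_{\calI}^*d$ is simply $d$ on every fiber. The resulting map is $f^{-1}\Ox_S$-linear because both $\nabla_s$ and $d$ are.

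\textbf{Step 3: gluing.} For $g:V\to U$ étale, the formula in Step 2 commutes with restriction along $g$, because $\nabla$ is a morphism of sheaves on $\calI_{\text{ét}}$ and $d$ commutes with étale restriction. Hence the family $(\nabla^i_s)$ satisfies the compatibility of \ref{indremark73} and defines the required morphism $\nabla^i$, with the stated formula holding on local sections. Alternatively, one can avoid fibers altogether by rerunning the classical proof directly in the ringed topos $(\calI_{\text{ét}},\Ox_{X,\calI})$, viewing $j_{\calI}^*$ of the de Rham complex as a sheaf of dg-algebras; either route works. I expect no serious obstacle beyond the sign verification in Step 2.
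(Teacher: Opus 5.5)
Your argument is correct. The paper gives no computation here: it only cites \cite{Lor2000} (II 1.3) for $\lambda=1$ and says the general case is similar. Your fiberwise reduction via \ref{indremark73}, followed by the $\Ox_U$-balancedness check using $\nabla_s(ax)=a\nabla_s(x)+\lambda_s x\otimes da$ and $d(a\omega)=da\wedge\omega+a\,d\omega$, is the standard verification that citation stands in for, with $\lambda$ entering only through its fibers $\lambda_s$.

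The paper uses the same fiberwise device in the remark following Definition \ref{indhiggs}. The only step you leave implicit is the routine passage from the presheaf tensor product to its sheafification.
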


\begin{proof}
See (\cite{Lor2000} II 1.3) for the case $\lambda=1.$ The case for an arbitrary $\lambda$ is similar.
\end{proof}

\begin{definition}\label{indhiggs}
Let $\calE$ be an $\calI$-indexed $\Ox_{X}$-module, $\lambda$ a global section of $\Ox_{X,\calI}$ and $\nabla:\calE\ra \calE\otimes_{\Ox_{X,\calI}}\omega^1_{X/S,\calI}$ a $\lambda$-connection on $\calE.$ The \emph{curvature of $\nabla$,} denoted by $K(\nabla),$ is the composition $\nabla^1\circ \nabla$ (where $\nabla^1$ is defined in \ref{indprop42}). The curvature $K(\nabla)$ is $\Ox_{X,\calI}$-linear and we say that the $\lambda$-connection $\nabla$ is \emph{integrable} if $K(\nabla)=0.$ Integrable $0$-connections are called \emph{Higgs fields.} We denote by $\boldsymbol{\op{MIC^{ind}}(X/S)}$ (resp. $\boldsymbol{\op{HIG^{ind}}(X/S)}$) the category of $\Ox_{X,\calI}$-modules equipped with an integrable connection (resp. Higgs field).
\end{definition}

\begin{remark}
\begin{enumerate}
\item In the rest of this article, we will always drop the term "logarithmic" and refer to logarithmic connections and logarithmic Higgs fields simply by connections and Higgs fields.

\item Let $\calE$ be an $\Ox_{X,\calI}$-module and $\lambda$ a global section of $\Ox_{X,\calI}.$ By \ref{indremark73}, the data of a $\lambda$-connection $$\nabla : \calE \ra \calE \otimes_{\Ox_{X,\calI}} \omega^1_{X/S,\calI}$$
is equivalent to the data of a compatible family of $\lambda$-connections
$$\nabla_i:\calE_i \ra \calE_i \otimes_{\Ox_U}\omega^1_{U/S}$$
for every section $i\in \calI(U).$ Furthermore, $K(\nabla)_i=K(\nabla_i)$ and $\nabla$ is integrable if and only if $\nabla_i$ is integrable for every section $i$ of $\calI.$

\item Let $(\calE_1,\nabla_1)$ and $(\calE_2,\nabla_2)$ be two $\Ox_{X,\calI}$-modules equipped with $\lambda$-connections, where $\lambda$ is a global section of $\Ox_{X,\calI}.$ Then $\calE=\calE_1\otimes_{\Ox_{X,\calI}}\calE_2$ is canonically equipped with a $\lambda$-connection $\nabla$ defined by
$$
\nabla(x_1\otimes x_2)=\nabla_1(x_1)\otimes x_2+x_1\otimes \nabla_2(x_2),
$$
for local sections $x_1$ and $x_2$ of $\calE_1$ and $\calE_2$ respectively.
\end{enumerate}
\end{remark}

\subsection*{Indexed algebra associated to a logarithmic structure}

\begin{parag}\label{paragAX}
Let $X$ be a logarithmic scheme or logarithmic formal scheme.
We denote by $\calI$ the sheaf $\ov{\calM}^{gp}_X$ and by $\sigma:\calI^2\ra \calI$ the addition map and consider the functors introduced in (\ref{indeq71}):
\begin{equation}
j_{\calI !}:\calI_{\text{ét}}\ra X_{\text{ét}},\ j_{\calI}^*:X_{\text{ét}}\ra \calI_{\text{ét}},\ j_{\calI *}:\calI_{\text{ét}}\ra X_{\text{ét}}.
\end{equation}
The exact sequence of abelian groups
$0 \ra \Ox_X^* \xrightarrow{\alpha_X^{-1}} \calM_X^{gp} \ra \calI \ra 0$
turns $\calM_X^{gp}$ into an $\Ox_{X,\calI}^*$-torsor of $\calI_{\text{ét}}.$ To this torsor corresponds the invertible $\Ox_{X,\calI}$-module
\begin{equation}\label{Agroupe}
\calA_X=\Ox_{X,\calI}\times_{\calI}\calM_X^{gp}/\sim,
\end{equation}
where $\sim$ is the relation
\begin{equation}\label{eq8422c}
(a,m)\sim (a',m') \iff \exists m''\in \calM_X^{\times},\begin{cases} m+m''=m' \\ a'\alpha_X(m'')=a\end{cases}
\end{equation}
for any local sections $a,a'$ of $\Ox_{\calI,\ov{m}}$ and $m,m'$ of $\calM_X^{gp}$ such that $\ov{m}=\ov{m'}$ in $\ov{\calM}_X^{gp}=\calI.$

For a local section $m\in\calM_X^{gp}(U)$ over an étale $X$-scheme $U,$ the $\Ox_U$-module
\begin{equation}\label{eq8422b}
\calA_{X,\ov{m}}=\Ox_U\times \calM_{X,\ov{m}}^{gp}/\sim
\end{equation}
admits a basis
\begin{equation}\label{eqAbase}
e_m=(1,m).
\end{equation}
For any local sections $m,m'\in\calM_X^{gp}(U),$ we consider the morphism
\begin{equation}
\begin{array}[t]{clc}
\calA_{X,\ov{m}}\otimes_{\Ox_U}\calA_{X,\ov{m'}} & \ra & \calA_{X,\ov{m}+\ov{m'}}\\
a e_m\otimes a'e_{m'} & \mapsto & aa'e_{m+m'}
\end{array}
\end{equation}
These morphisms are well-defined and compatible and so, by \ref{era2indrem}, they define a morphism
\begin{equation}\label{Agroupe2}
\pi:\calA_X\boxtimes \calA_X \ra \sigma^*\calA_X
\end{equation}
which defines an $\calI$-indexed $\Ox_X$-algebra structure on $\calA_X$ (\ref{inddef} (2)).
\end{parag}

\begin{proposition}[\cite{Lor2000} 1.6 and 1.7]\label{dAX}
Let $X\ra S$ be a morphism of logarithmic schemes or logarithmic formal schemes. We denote by $\calI$ the sheaf $\ov{\calM}^{gp}_X.$ Then, the $\calI$-indexed $\Ox_X$-algebra $\calA_X$ \eqref{Agroupe} admits a unique connection \eqref{indprop42}
$$d_{\calA_X}:\calA_X \ra \calA_X \otimes_{\Ox_{X,\calI}}\omega^1_{X/S,\calI}$$
given by 
\begin{equation}\label{eq8431Koko}
d_{\calA_X}(e_m)=e_m\otimes \op{dlog}m,
\end{equation}
for any local section $m$ of $\calM_X^{gp}.$ In addition, $d_{\calA_X}$ is integrable and its $p$-curvature vanishes.
\end{proposition}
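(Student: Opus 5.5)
The plan is to work fibrewise, using \ref{indremark73} and \ref{era2indrem}. For every local section $s\in\Gamma(U,\calI)$ I will construct a connection $\nabla_s$ on the invertible $\Ox_U$-module $\calA_{X,s}$, check that these are compatible with restrictions, and then assemble them into $d_{\calA_X}$.

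\emph{Uniqueness and construction.} Uniqueness is immediate. Étale locally, $s=\ov{m}$ for some $m\in\calM_X^{gp}(U)$, and $\calA_{X,s}$ is free with basis $e_m$ \eqref{eqAbase}. The Leibniz rule then forces
$$\nabla(ae_m)=e_m\otimes(da+a\,\op{dlog}m).$$
For existence, I take this formula as the definition and check that it does not depend on the choice of $m$. If $m'=m+m''$ with $m''\in\calM_X^{\times}=\Ox_U^*$, then by \eqref{eq8422c} we have $e_{m}=\alpha_X(m'')e_{m'}$. Put $u=\alpha_X(m'')$. The formula for $m$ gives $\nabla(e_m)=e_m\otimes\op{dlog}m$. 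The formula for $m'$ gives
$$\nabla(ue_{m'})=e_{m'}\otimes(du+u\,\op{dlog}m')=ue_{m'}\otimes(\op{dlog}u+\op{dlog}m').$$
In the logarithmic de Rham complex $\op{dlog}m''=\op{dlog}u$ for a unit, so $\op{dlog}m'=\op{dlog}m+\op{dlog}u$. It follows that the two formulas agree exactly when the signs coming from the torsor convention \eqref{eq8422c} are respected. Keeping track of these signs is the only delicate bookkeeping step, and with the correct convention $d_{\calA_X}(e_m)=e_m\otimes\op{dlog}m$ is consistent. Independence of $m$ gives gluing by étale descent. Compatibility with the restriction maps of \ref{indremark73} is clear, because the formula is functorial in $U$.

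\emph{Integrability.} Curvature is $\Ox$-linear, so it suffices to evaluate it on $e_m$:
$$\nabla^1(e_m\otimes\op{dlog}m)=e_m\otimes\op{dlog}m\wedge\op{dlog}m+e_m\otimes d(\op{dlog}m)=0,$$
since $\op{dlog}m$ is closed and $\omega\wedge\omega=0$.

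\emph{Vanishing $p$-curvature.} Fibrewise, $(\calA_{X,s},\nabla)$ is a line bundle with connection form $\omega=\op{dlog}m$, which is locally logarithmic exact. For a log derivation $D$, the $p$-curvature $\psi(D)=\nabla_D^p-\nabla_{D^{(p)}}$ acts on $e_m$ through the function
$$(D+f)^p(1)-D^{(p)}(1)-f$$
with $f=\langle D,\op{dlog}m\rangle$. Locally I write $m$ as a product of units and chart elements $t$, whose $\op{dlog}t$ satisfy $D(t)=\langle D,\op{dlog}t\rangle t$. Then $\nabla$ coincides with the connection on the trivial sheaf twisted by the formal symbol $m$, and the $p$-curvature becomes Jacobson's formula applied to $f=m^{-1}D(m)$. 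I expect this to be the main obstacle. The cleanest route I would try first is to reduce to the case of a chart element $t$ and then use the identity $D^p(t)=D^{(p)}(t)$ (Hochschild), with $e_t$ behaving like $t$ itself. Alternatively, I would cite Lorenzon (\cite{Lor2000} 1.7), which the statement already references, and check only that his construction matches ours.
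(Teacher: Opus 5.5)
\textbf{Main gap: vanishing of the $p$-curvature.} The paper gives no argument for this proposition; it only cites \cite{Lor2000} 1.6--1.7. Your fallback is therefore what the paper does, but your own argument does not prove this part.

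First, your expression $(D+f)^p(1)-D^{(p)}(1)-f$ is not the $p$-curvature of a general log derivation. The term $D^{(p)}(1)$ is trivially $0$, and the term to subtract is $\langle D^{(p)},\op{dlog}m\rangle$, which equals $f$ only when $D^{(p)}=D$.

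Second, the reduction ``$e_t$ behaves like $t$, then use $D^p(t)=D^{(p)}(t)$'' gives the needed identity only after cancelling $\alpha_X(t)$. That requires $\alpha_X(t)$ to be a nonzerodivisor, which fails in the basic log examples. On the standard log point $\alpha_X(t)=0$, so the relation $D^p(\alpha_X(t))=\alpha_X(t)\,\delta^{(p)}(t)$ carries no information.

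The missing idea is the intrinsic $p$-th power of a log derivation. Write a log derivation as $\partial=(D,\delta)$ with $\delta(m)=\langle\partial,\op{dlog}m\rangle$. Then $\partial^{(p)}=(D^p,\,D^{p-1}\circ\delta+\delta^p)$. This is again a log derivation: iterate $D(\alpha_X(m)g)=\alpha_X(m)\,(D+\delta(m))(g)$ and apply Jacobson's formula $(D+f)^p=D^p+D^{p-1}(f)+f^p$ in characteristic $p$ to get $D^p(\alpha_X(m))=\alpha_X(m)\bigl(D^{p-1}(\delta(m))+\delta(m)^p\bigr)$. With this, for every $m$ and without charts or reduction to generators,
$$\psi_\partial(e_m)=\bigl[(D+\delta(m))^p(1)-\delta^{(p)}(m)\bigr]e_m=\bigl[D^{p-1}(\delta(m))+\delta(m)^p-\delta^{(p)}(m)\bigr]e_m=0.$$
In the log smooth case with coordinates $m_1,\dots,m_d$, this amounts to $\partial_i^{(p)}=\partial_i$, which you check on the basis $\op{dlog}m_j$.

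\textbf{Slip in the well-definedness check.} By \eqref{eq8422c}, $(1,m)\sim(\alpha_X(m'')^{-1},m+m'')$. So $e_{m'}=u\,e_m$ with $u=\alpha_X(m'')$, not $e_m=u\,e_{m'}$. With your relation the two formulas differ by $2\,e_m\otimes\op{dlog}u$, as your own computation shows, and appealing to sign conventions does not remove that. With the correct relation the check closes directly: $\nabla(ue_m)=ue_m\otimes(\op{dlog}u+\op{dlog}m)=e_{m'}\otimes\op{dlog}m'$. Uniqueness and integrability are fine as you wrote them.
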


\begin{definition}\label{Bgroupe}
Let $X\ra S$ be a morphism of logarithmic schemes or logarithmic formal schemes, $\calI$ the sheaf $\ov{\calM}^{gp}_X,$ $p_1,p_2:\calI^2\ra \calI$ the canonical projections and $\sigma:\calI^2\ra \calI$ the addition map. 
A connection $\nabla$ on an $\calI$-indexed $\calA_X$-module $\calE$ (\ref{indprop42}) is said to be \emph{admissible} if the map $\calA_X\boxtimes \calE \ra \sigma^*\calE$ defining the $\calA_X$-module structure on $\calE$ is horizontal with respect to the connections
$$
\begin{array}[t]{clc}
\calA_X \boxtimes \calE & \ra & (\calA_X\boxtimes \calE) \otimes_{\Ox_{X,\calI^2}} \omega^1_{X/S,\calI^2}\\
a \otimes x & \mapsto & \left ( p_1^*d_{\calA_X} \right )(a)\otimes x+a\otimes \left (p_2^*\nabla \right )(x)
\end{array}
$$
and $\sigma^*\nabla.$
\end{definition}

\begin{parag}\label{Bgroupe2}
Let $X\ra S$ be a morphism of logarithmic schemes or logarithmic formal schemes and $\calI$ the sheaf $\ov{\calM}^{gp}_X.$ Then the canonical connection $d_{\calA_X}$ is admissible. Indeed, for any local sections $s,t:U\ra \calM_X^{gp}$ over an étale $X$-scheme $U,$
$$
\left (\pi \otimes \op{Id}_{\omega^1_{U/S}} \right ) \left (d_{\calA_X}(e_s)\otimes e_t+e_s\otimes d_{\calA_X}(e_t) \right )=d_{\calA_X}(e_{s+t}),$$
where $\pi$ is the multiplication morphism \eqref{Agroupe2}. So, for any local section $a,b\in \Gamma(U,\Ox_X),$
\begin{alignat*}{2}
\left (\pi \otimes \op{Id}_{\omega^1_{U/S}} \right ) \left (d_{\calA_X}(ae_s)\otimes (be_t)+(ae_s)\otimes d_{\calA_X}(be_t) \right ) &= e_{s+t}\otimes \left ( d(ab)+ab \op{dlog}(s+t) \right ) \\
&= d_{\calA_X} \left (abe_{s+t} \right ).
\end{alignat*}
and so $\pi:\calA_X \boxtimes \calA_X \ra \sigma^*\calA_X$ is horizontal with respect to the connections $\sigma^*d_{\calA_X}$ and $\op{Id}_{\calA_X}\boxtimes d_{\calA_X}+d_{\calA_X}\boxtimes \op{Id}_{\calA_X}.$
It follows that the kernel of the connection $d_{\calA_X}$ is an $\calI$-indexed $\Ox_X$-subalgebra of $\calA_X.$ We denote it by $\calB_{X/S}.$
\end{parag}

\begin{proposition}\label{lemBres}
Let $X\ra S$ be a morphism of logarithmic schemes, $U\ra X$ an étale and strict morphism, $\calI=\ov{\calM}_X^{gp}$ and $\calI_U=\calI_{|U}.$
Let $u:U_{\text{ét}/\calI_{\cal_U}} \ra X_{\text{ét}/\calI}$ the morphism of ringed topoi induced by $U\ra X.$ Then, there exists a canonical isomorphism of $\calI$-indexed algebras
$$
u^*\calB_{X/S} \xrightarrow{\sim} \calB_{U/S}.
$$
\end{proposition}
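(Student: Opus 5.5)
The approach is to first identify $u^*\calA_X$ with $\calA_U$ and $u^*d_{\calA_X}$ with $d_{\calA_U}$, and then use exactness of $u^*$ to pass to kernels. Since $U\ra X$ is strict, $\calM_U=\calM_X{}_{|U}$, hence $\ov{\calM}_U^{gp}=\calI_{|U}=\calI_U$. Moreover $u^*$ on the localized étale topoi is just restriction to the open object $U$ of $X_{\text{ét}}$ (localization), so $u^*=u^{-1}$ is exact and $u^{-1}\Ox_{X,\calI}=\Ox_{U,\calI_U}$.

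\textbf{Step 1.} Construct a canonical isomorphism of $\calI_U$-indexed algebras $u^*\calA_X\xrightarrow{\sim}\calA_U$. By \eqref{Agroupe}, $\calA_X$ is built functorially from the torsor $\calM_X^{gp}\ra\calI$ under $\Ox_{X,\calI}^*$. Restriction to $U$ commutes with fiber products and quotients, and strictness gives $\calM_X^{gp}{}_{|U}=\calM_U^{gp}$ together with $\alpha_X{}_{|U}=\alpha_U$. This yields the isomorphism, which on local sections sends $e_m\mapsto e_m$ for $m$ a local section of $\calM_X^{gp}$ over $V\ra U$ étale \eqref{eqAbase}. Compatibility with the multiplications \eqref{Agroupe2} is immediate from $e_m e_{m'}=e_{m+m'}$. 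By \ref{indremark73}, it suffices to check everything on fibers $\calA_{X,\ov m}$ over objects of the étale site of $U$.

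\textbf{Step 2.} Show that this isomorphism is horizontal. Since $U\ra X$ is étale, $\omega^1_{X/S}{}_{|U}\cong\omega^1_{U/S}$, and therefore $u^*\omega^1_{X/S,\calI}=\omega^1_{U/S,\calI_U}$, compatibly with $d$ and $\op{dlog}$. The connection $u^*d_{\calA_X}$ satisfies $e_m\mapsto e_m\otimes\op{dlog}m$. By the uniqueness statement in \ref{dAX}, or simply by comparing with \eqref{eq8431Koko} on the basis $e_m$ together with the Leibniz rule, it therefore agrees with $d_{\calA_U}$.

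\textbf{Step 3.} Pass to kernels. The functor $u^*$ is exact, so $u^*\calB_{X/S}=u^*\ker d_{\calA_X}=\ker(u^*d_{\calA_X})$, and Step 2 identifies this with $\ker d_{\calA_U}=\calB_{U/S}$. The algebra structures on $\calB$ are induced from those on $\calA$ (\ref{Bgroupe2}), so the isomorphism is one of indexed algebras.

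The main obstacle is the bookkeeping of Step 1: one must check that restriction along $u$ commutes with the quotient construction \eqref{eq8422c} and that the identification $\ov{\calM}_U^{gp}=\calI_{|U}$ is compatible with the localized topoi. Strictness is used essentially here. Everything else is formal, given that $u^*$ is a localization functor.
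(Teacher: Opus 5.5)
Your proposal is correct and follows essentially the same argument as the paper. The paper likewise uses the identifications $u^*\calA_X\cong\calA_U$ (citing Lorenzon I 3.1 rather than building it from the torsor description as you do) and $u^*\omega^1_{X/S,\calI}\cong\omega^1_{U/S,\calI_U}$, and then gets the isomorphism on kernels from the left exactness of $u^*$ in a commutative diagram with exact rows; your Step 2 simply spells out the compatibility of the connections, which the paper leaves implicit in asserting that the diagram commutes.
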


\begin{proof}
The morphism $U\ra X$ is étale and strict so we have a canonical isomorphism
$$
u^*\omega^1_{X/S,\calI} \xrightarrow{\sim} \omega^1_{U/S,\calI_U}.
$$
We have the following commutative diagram of $\Ox_{U',\calI_U}$-modules:
$$
\begin{tikzcd}
0 \ar{r} & u^*\calB_{X/S} \ar{r} \ar[dashed]{d} & u^*\calA_X \ar[sloped]{d}{\sim} \ar{r}{u^*d_{\calA_X}} & u^*\calA_{X} \otimes_{\Ox_{U,\calI_U}} \omega^1_{U/S,\calI} \ar[sloped]{d}{\sim} \\
0 \ar{r} & \calB_{U/S} \ar{r} & \calA_U \ar{r}{d_{\calA_U}} & \calA_U \otimes_{\Ox_{U,\calI_U}} \omega^1_{U/S,\calI}.
\end{tikzcd}
$$
The lines are exact by definition of $\calB_{X/S}$ and $\calB_{U/S}$ and the left exactness of $u^*$ (which comes from the flatness of $U\ra X$). The vertical isomorphisms are given in (\cite{Lor2000} I 3.1 page 273). Hence the existence of the dashed arrow and the fact that it is an isomorphism.
\end{proof}

We end this section by recalling the following result of Lorenzon: 
\begin{theorem}[\cite{Lor2000} 2.6]\label{thmlor}
Let $f:X\ra S$ be a log smooth morphism of fine logarithmic schemes of characteristic $p.$ Denote by $\calI$ the sheaf $\ov{\calM}^{gp}_X.$ Then $\calA_X$ is an $\calI$-indexed $\calB_{X/S}$-module locally free of finite rank. More precisely, if $m_1,\hdots,m_d \in \Gamma(X,\calM_X)$ are local coordinates for $X/S$ and, for every $I=(I_1,\hdots,I_d)\in \Z^d,$ $m_I=\sum_{i=1}^dI_im_i\in \Gamma(X,\calM_X^{gp})$ then $(e_{m_I})_{I\in \{0,\hdots p-1\}^d}$ is a basis for the $\calI$-indexed $\calB_{X/S}$-module $\calA_X.$
\end{theorem}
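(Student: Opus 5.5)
The plan is to reduce to the case of a chart and a local computation. The statement is étale local on $X$. By \ref{ren3} and \ref{lemBres}, the formation of $\calA_X$ and $\calB_{X/S}$ is compatible with strict étale localization. Freeness of an indexed module over a cover gives local freeness. So we may assume that $X\ra S$ admits a chart and log coordinates $m_1,\hdots,m_d\in\Gamma(X,\calM_X)$ with $\op{dlog}m_1,\hdots,\op{dlog}m_d$ a basis of $\omega^1_{X/S}$.

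\textbf{Step 1: independence.} For a local section $s\in\Gamma(U,\calI)$, any $x\in\calA_{X,s}$ is $x=a\,e_m$ with $\ov m=s$. I would write a putative decomposition $x=\sum_{I\in\{0,\hdots,p-1\}^d} b_I e_{m_I}$ with $b_I\in\calB_{X/S,s-\ov{m_I}}$. Each $b_I$ is of the form $c_I e_{m-m_I}$ with $d(c_I)+c_I\,\op{dlog}(m-m_I)=0$.

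To see uniqueness, apply $d_{\calA_X}$ to $\sum b_Ie_{m_I}=0$. Using the Leibniz rule and \eqref{eq8431Koko}, one gets $\sum_I b_I e_{m_I}\otimes \op{dlog}m_I=0$. Iterating the derivations $\partial_j$ dual to $\op{dlog}m_j$ gives the relations $\sum_I I_j^k\, b_Ie_{m_I}=0$ for all $k$. A Vandermonde argument over $\mathbb{F}_p$, using that the $I_j$ are distinct mod $p$, then forces each $b_I=0$.

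\textbf{Step 2: generation.} This is the main obstacle. The idea is to reduce to the classical Cartier-type statement by twisting. Multiplying by $e_{-m}$ identifies $\calA_{X,s}$ with $\Ox_U$, carrying the twisted connection $\nabla_m=d+\op{dlog}m$. Under this identification, $\calB_{X/S,s}$ becomes the horizontal sections $\Ox_U^{\nabla_m}$. Since the $p$-curvature vanishes (\ref{dAX}), the operators $\theta_j=\partial_j+\langle\op{dlog}m,\partial_j\rangle$ commute and satisfy $\theta_j^p=\theta_j$ in the logarithmic sense.

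I then form the projectors $\prod_j\bigl(1-(\theta_j-I_j)^{p-1}\bigr)$. These decompose $\Ox_U$ into generalized eigenspaces with eigenvalues $I\in\mathbb{F}_p^d$, each of which is $\calB\cdot e_{m_I}$ after untwisting. This is the standard argument for $\ker$ versus $p$-curvature zero connections. It also uses that the eigenspace for $I$ twisted by $e_{-m_I}$ is horizontal. The delicate point is that we need the eigenvalues of $\theta_j$ on the relevant local sections to lie in $\mathbb{F}_p$. I would obtain this via the Cartier isomorphism for log smooth $X\ra S$, which identifies horizontal sections up to $p$-th powers; here I would use the Cartier type hypothesis if needed, or otherwise cite the explicit chart computation.

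\textbf{Step 3: conclusion.} Combining Steps 1 and 2, the family $(e_{m_I})_{I\in\{0,\hdots,p-1\}^d}$ is a basis of the $\calI$-indexed $\calB_{X/S}$-module $\calA_X$ in the sense of \ref{deflocfreeind}. Hence $\calA_X$ is free of rank $p^d$ on the chart, and so it is locally free of finite rank.
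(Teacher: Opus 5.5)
The paper gives no proof of this statement. It is quoted from Lorenzon (\cite{Lor2000} 2.6), and its input \ref{dAX} is quoted from the same source, so there is no internal argument to compare with. Your eigenspace argument is a self-contained route that uses only integrability of $d_{\calA_X}$ and the vanishing of its $p$-curvature, and in substance it works. You do not even need a chart, only local coordinates, which exist étale locally by log smoothness. Two points must be repaired, however.

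First, Step 1 does not suffice as written when $d\ge 2$. For a fixed $j$, many multi-indices $I$ share the same $I_j$. Writing $y_I=b_Ie_{m_I}$, the relations $\sum_I I_j^k\,y_I=0$ therefore only give $\sum_{I_j=c}y_I=0$ for each $j$ and $c$. For $p=d=2$, taking all four $y_I$ equal satisfies every one of these relations. You need the mixed operators $\prod_j\partial_j^{K_j}$, which commute by integrability and $[\partial_i,\partial_j]=0$. They give $\sum_I I^K y_I=0$ for all $K\in\N^d$, and you then conclude by interpolation on $\mathbb{F}_p^d$. It is simpler to drop Step 1 altogether: uniqueness comes for free from the projectors of Step 2, by applying the projector for $J$ to $\sum_I y_I=0$ and using $\partial_j\cdot y_I=I_jy_I$.

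Second, the ``delicate point'' of Step 2 is already settled by what you wrote. You must not invoke the Cartier isomorphism or the Cartier type hypothesis, which is not assumed in this statement; avoiding it is the purpose of the indexed formalism.
\begin{enumerate}
\item The log vector fields $\partial_j$ dual to $\op{dlog}m_j$ satisfy $\partial_j^{[p]}=\partial_j$. So the vanishing of the $p$-curvature in \ref{dAX} says exactly that $\theta_j^p=\theta_j$.
\item Each $\theta_j$ is therefore killed by the separable polynomial $x^p-x=\prod_{c\in\mathbb{F}_p}(x-c)$. The operators $1-(\theta_j-c)^{p-1}$ are then orthogonal idempotents summing to $1$. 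Their images are honest eigenspaces, not generalized ones, and the eigenvalues lie in $\mathbb{F}_p$ automatically.
\item Define $\partial_j$ directly on $\calA_{X,s}$ through $d_{\calA_X}$, instead of trivializing by $e_{-m}$. Then over every étale $V$ you get $\calA_{X,s}=\bigoplus_{I\in\{0,\hdots,p-1\}^d}\calA_{X,s}^{(I)}$, where $\calA_{X,s}^{(I)}$ is the joint eigen-subsheaf on which each $\partial_j$ acts by $I_j$.
\item The Leibniz rule for the indexed multiplication shows that $x\mapsto e_{-m_I}x$ identifies $\calA_{X,s}^{(I)}$ with $\calB_{X/S,s-\ov{m_I}}$.
\end{enumerate}
This gives existence and uniqueness at once, in the sense of Definition \ref{deflocfreeind}, without having to lift $s$ to $\calM_X^{gp}(V)$.
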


\section{The Azumaya algebra of logarithmic differential operators}

\begin{parag}
In this section, we consider a log smooth morphism $f_1:X\ra S$ of fs logarithmic schemes of characteristic $p.$ Set
$
\upmu = \ov{\calM}_X^{gp}.
$
Denote by $F_1:X\ra X'$ the exact relative Frobenius (\cite{SF1} 3.1) and by $P_0$ the PD envelope of the diagonal immersion $X\ra X\times_S^{\op{log}}X$ (\cite{Kat89} 5.3). We identify the étale sites of $X,$ $X'$ and $P_0$ via the universal homeomorphism $F_1$ and the nilimmersion $X\ra P_0$ (\cite{SP} 04DZ). Denote by $\calP_0$ the structural ring of $P_0$ and by $\ov{\calI}_{0}$ the PD ideal of $\calP_0.$ Denote  by $p_1,p_2:P_0 \ra X$ the canonical projections. Following (\cite{SF1} 4.1), set $\calP^n_{0}=\calP_0/\ov{\calI}_{0}^{[n+1]},$ $\calD^n_{X/S}=\mathscr{Hom}_{\Ox_X}(\calP_0^n,\Ox_X)$ for any integer $n\ge 0$ and
$\calD_{X/S}=\lim\limits_{\substack{\longrightarrow \\ n\ge 0}} \calD^n_{X/S},$
where $\calP^n_{0}$ is considered as an $\Ox_X$-module via the first projection $p_1:P_0 \ra X.$

Consider the $p$-curvature morphism (\cite{SF2} 4.1)
\begin{equation}\label{Takriz2}
\psi:S^{\bullet}\calT_{X'/S} \ra \calD_{X/S},
\end{equation}
where $S^{\bullet} \calT_{X'/S}$ is the symmetric algebra of $\calT_{X'/S}=\mathscr{Hom}_{\Ox_{X'}}(\omega^1_{X'/S},\Ox_{X'}).$
Recall that the image of $\psi$ is a sub $\Ox_{X'}$-algebra of the center of $\calD_{X/S}$ (\cite{SF2} 4.1) We consider $\calD_{X/S}$ as an algebra over $S^{\bullet} \calT_{X'/S}$ via this morphism.
Finally, for any $\Ox_X$-module $\F,$ denote by $\F_{\upmu}$ the image of $\F$ by the functor $j^*_{\upmu}:X_{\text{ét}} \ra X_{\text{ét}/\upmu}$ defined in \ref{indpar2} and set
\begin{equation}\label{Dtilda1713}
\widetilde{\calD}^n_{X/S}=\calA_X \otimes_{\Ox_{X,\upmu}}\calD^n_{X/S,\upmu},\quad \widetilde{\calD}_{X/S}=\calA_X \otimes_{\Ox_{X,\upmu}}\calD_{X/S,\upmu},
\end{equation}
where $\calA_X$ is defined in \ref{paragAX}. We recall results by Ohkawa on the Azumaya algebra property of $\widetilde{\calD}_{X/S}.$ For a more details, see (\cite{SFthesis} 17).
\end{parag}

\begin{proposition}[\cite{Ohkawa} 4.2]\label{era3parag142}
There exists a unique $\upmu$-indexed algebra structure on $\widetilde{\calD}_{X/S}$ such that the maps $(a\otimes 1)(b\otimes 1)=(ab)\otimes 1$ for all local sections $a$ and $b$ of $\calA_X$ and $(1\otimes \partial)(1\otimes \partial')=1\otimes (\partial \partial')$ for all local sections $\partial$ and $\partial'$ of $\calD_{X/S},$ and such that, for any local coordinates $m_1,\hdots m_d$ of $X\ra S$ (\cite{SF1} 4.4), any local sections $a$ and $\partial$ of $\calA_X$ and $\calD_{X/S}$ and $I\in \N^d,$ we have the relations $a\otimes \partial = (a\otimes 1)(1\otimes \partial)$ and
$$
(1\otimes \partial_I)(a\otimes 1)=\sum_{J\le I} \begin{pmatrix} I\\ J \end{pmatrix} (\partial_{I-J}\cdot a)\otimes \partial_J,
$$
where $\partial_I$ are the differential operators associated to $m_1,\hdots,m_d$ as in (\cite{SF1} 4.4) and $\partial_{I-J}\cdot a$ is the action given by the connection $d_{\calA_X}$ \eqref{dAX}.
\end{proposition}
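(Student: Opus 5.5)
Since $\calA_X$ is an invertible $\Ox_{X,\upmu}$-module and $\calD_{X/S,\upmu}$ is locally free over $\Ox_{X,\upmu}$ with basis $(\partial_I)_{I\in\N^d}$, the plan is to work locally and reduce to fibers. By \ref{era2indrem}, a $\upmu$-indexed algebra structure on $\widetilde{\calD}_{X/S}$ amounts to compatible $\Ox_U$-bilinear maps $\widetilde{\calD}_s\otimes\widetilde{\calD}_t\ra\widetilde{\calD}_{s+t}$, for local sections $s,t$ of $\upmu$ over étale $U$, together with a unit in $\widetilde{\calD}_0=\calA_{X,0}\otimes\calD_{X/S}=\calD_{X/S}$. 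Uniqueness is immediate. On $U$ with local coordinates $m_1,\hdots,m_d$, every section of $\widetilde{\calD}_s$ is a finite sum $\sum_I a_I\otimes\partial_I$ with $a_I\in\calA_{X,s}$. The prescribed relations then force
$$(a\otimes\partial_I)(b\otimes\partial_J)=\sum_{K\le I}\binom{I}{K}\,(a\cdot(\partial_{I-K}\cdot b))\otimes\partial_K\partial_J .$$
Two operations enter here: the action $\partial_{I-K}\cdot b$ of differential operators on $\calA_X$ coming from the integrable connection $d_{\calA_X}$ of \ref{dAX}, and the indexed multiplication of $\calA_X$. So any such structure is determined on a covering, and local uniqueness glues by \ref{indremark73}.

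For existence, I would first make the $\calD_{X/S}$-action on $\calA_X$ explicit. Writing $b=fe_m$ and using $d_{\calA_X}(e_m)=e_m\otimes\op{dlog}m$, one gets $\partial_I\cdot(fe_m)=\sum_{J\le I}\binom{I}{J}\partial_{I-J}(f)\,\partial_J(e_m)$. Here $\partial_J(e_m)=\prod_i\binom{\langle m,\partial_i\rangle}{J_i}J!\,e_m$, a Pochhammer-type polynomial in the coefficients of $\op{dlog}m$ (via \cite{SF1} 4.4). Next I would define the local product by the formula above. Then I would check three things: it lands in $\widetilde{\calD}_{s+t}$, it is compatible with the relation $\sim$ defining $\calA_X$ (change $m\mapsto m+m''$ with $m''\in\calM_X^\times$), and it restricts correctly under étale maps.

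The most conceptual route, and the one I would take, is to avoid choosing coordinates. Regard $\widetilde{\calD}_s=\calA_{X,s}\otimes\calD_{X/S}$ as the sheaf of differential operators $\calD_{X/S}\ra\calA_{X,s}$, or equivalently $\Ox$-linear maps $\calP^n_0\ra\calA_{X,s}$. Define the product via the PD-stratification $\epsilon_s:p_2^*\calA_{X,s}\simeq p_1^*\calA_{X,s}$ on $P_0$ induced by $d_{\calA_X}$, which exists because the connection is integrable. Concretely, $(a\otimes\partial)(b\otimes\partial')$ is the composite $\calP_0\xrightarrow{\delta}\calP_0\otimes\calP_0\ra\calA_{X,s+t}$. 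The compatibility of $\epsilon$ with the indexed multiplication, i.e.\ the horizontality of $\pi$ in \ref{Bgroupe2}, gives $\epsilon_{s+t}=\epsilon_s\otimes\epsilon_t$. This yields associativity exactly as for ordinary $\calD$, and unitality is clear. Expanding in coordinates then recovers the displayed commutation rule.

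I expect the main obstacle to be verifying associativity and well-definedness in the indexed setting, where the Hom/stratification picture must be matched with the $\boxtimes$-formalism of \ref{inddef}. My plan is to reduce to sections over a fixed $U$ with fixed indices $r,s,t$ using \ref{era2indrem}. There, associativity becomes the classical identity for twisted differential operators with coefficients in the line bundles $\calA_{X,r},\calA_{X,s},\calA_{X,t}$. That identity follows from the cocycle condition of the stratifications and the multiplicativity $\epsilon_{s+t}=\epsilon_s\otimes\epsilon_t$.
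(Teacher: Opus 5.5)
The paper gives no argument of its own for this statement (it is quoted from \cite{Ohkawa}), and your sketch is a correct proof.

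Uniqueness is as you say. Associativity and the prescribed relations force your product formula on the local generators $a\otimes\partial_I$, and these generate $\widetilde{\calD}_s$ étale locally.

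For existence, you identify $\calA_{X,s}\otimes_{\Ox_X}\calD^n_{X/S}$ with $\mathscr{Hom}_{\Ox_X}(\calP^n_0,\calA_{X,s})$ and compose through $\delta$ and the stratification of $\calA_{X,s}$. This is the same construction the paper itself uses in \ref{kraz1935} to make $\calQ_{1,\upmu_X}^{\vee}\otimes_{\Ox_{X,\upmu_X}}\calA_X$ an indexed algebra, with $\calQ_1$ in place of $\calP_0$ and associativity referred to (\cite{Ber96} 2.3.5). Its advantage over the coordinate route is that the commutation rule holds for every system of coordinates at once, so no change-of-coordinates or change-of-representative check is needed. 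Unwinding the composite with the Taylor expansion $\epsilon_s(1\otimes a)=\sum_J\partial_J(a)\otimes\eta^{[J]}$ does give $(1\otimes\partial_I)(a\otimes 1)=\sum_{J\le I}\binom{I}{J}(\partial_{I-J}\cdot a)\otimes\partial_J$.

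Three points should be corrected.

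\textbf{First}, the multiplication maps $\widetilde{\calD}_s\otimes\widetilde{\calD}_t\ra\widetilde{\calD}_{s+t}$ are not $\Ox_U$-bilinear for the $\Ox_{X,\upmu}$-module structure of $\widetilde{\calD}_{X/S}$. Already $(1\otimes\partial_{\epsilon_i})(f\otimes 1)=f\otimes\partial_{\epsilon_i}+\partial_{\epsilon_i}(f)\otimes 1$, which differs from $f\cdot(1\otimes\partial_{\epsilon_i})$ in general. The maps are bilinear only over $\Ox_{X'}$, which acts centrally, so the indexed algebra structure has to be read over $\Ox_{X'}$, as the paper tacitly does. Your uniqueness argument only uses biadditivity, so nothing breaks.

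\textbf{Second}, integrability of $d_{\calA_X}$ yields only the finite-order stratifications on the $\calP^n_0$, i.e.\ the $\calD_{X/S}$-action. That is all your construction uses. If you want $\epsilon_s$ on all of $P_0$, obtain it as in \eqref{eqeAQ} from Lorenzon's isomorphisms $p_i^*\calA_X\simeq\calA_{P_0}$ for the strict projections $p_1,p_2:P_0\ra X$. This also gives $\epsilon_{s+t}=\epsilon_s\otimes\epsilon_t$ and the cocycle condition directly.

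\textbf{Third}, in your coordinate digression the formula for $\partial_J(e_m)$ holds only when the coefficients of $\op{dlog}m$ in the basis $(\op{dlog}m_i)$ are constant. Already $\partial_{2\epsilon_i}=\partial_{\epsilon_i}^2-\partial_{\epsilon_i}$ also differentiates these coefficients. This is harmless, since you discard that route.
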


\begin{proposition}[\cite{Ohkawa} 4.16]\label{Dcenter}
The center of $\widetilde{\calD}_{X/S}$ is $\calB_{X/S}\otimes_{\Ox_{X',\upmu}}\left (S^{\bullet}\calT_{X'/S} \right )_{\upmu}.$
\end{proposition}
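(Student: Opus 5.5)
The statement is local on $X_{\text{ét}}$ and compatible with restriction to étale strict $U\ra X$: the center of $\widetilde{\calD}_{X/S}$ is defined fiberwise over sections of $\upmu$, $\calB_{X/S}$ localizes by \ref{lemBres}, and $\calA_X$, $\calD_{X/S}$ localize likewise. I will therefore work étale locally, where $X\ra S$ admits log coordinates $m_1,\hdots,m_d$. I will verify the claim on each fiber $\widetilde{\calD}_{X/S,s}=\calA_{X,s}\otimes_{\Ox_U}\calD_{X/S|U}$ for a section $s=\ov{m}$ of $\upmu$ over $U$. Every local section of this fiber is uniquely $\sum_I a_I e_m\otimes\partial_I$ with $a_I\in\Ox_U$.

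The inclusion $\supseteq$ comes first. Sections $b\otimes 1$ with $b\in\calB_{X/S}$ commute with $1\otimes\partial_I$ by the commutation formula of \ref{era3parag142}: every term with $J<I$ involves $\partial_{I-J}\cdot b$, which vanishes because $b$ is horizontal for $d_{\calA_X}$ (and higher $\partial_K\cdot b$ vanish since they are built from iterated derivations for the divided power structure, or by Lorenzon's description of $\calB$ as generated over $\Ox_{X'}$ by $e_{pm}$-type sections). They also commute with $\calA_X$ by commutativity of $\calA_X$. Sections $1\otimes\psi(\theta)$ with $\theta\in S^\bullet\calT_{X'/S}$ are central in $\calD_{X/S}$ by \cite{SF2} 4.1. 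They commute with $a\otimes 1$ because $\psi(\xi')=\partial^p-\partial^{(p)}$ acts on $\calA_X$ through the $p$-curvature of $d_{\calA_X}$, which vanishes by \ref{dAX}. Hence the formula gives $(1\otimes\psi(\theta))(a\otimes 1)=a\otimes\psi(\theta)$.

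For the reverse inclusion, let $z=\sum_I a_I e_m\otimes\partial_I$ be central. First I will commute $z$ with $e_{m_i}\otimes 1$. The formula, with $\partial_{\epsilon_i}\cdot e_{m_j}=\delta_{ij}e_{m_j}$, gives $[1\otimes\partial_I,e_{m_i}\otimes 1]=\sum_{0<J\le I,\ J\in\N\epsilon_i}\binom{I}{J}e_{m_i}\otimes\partial_{I-J}$. Vanishing of the commutator then forces $\binom{I_i}{k}a_I=0$ for the relevant $k$, so that $a_I\neq 0$ only when every $I_i$ is divisible by $p$. Writing $\partial_{pK}$ in terms of $\psi$ (via $\partial_{\epsilon_i}^p=\partial_{\epsilon_i}$ plus central terms, as in \cite{SF1} 4.4), this shows $z\in\calA_X\otimes\psi(S^\bullet\calT_{X'/S})$, using that $\calD_{X/S}$ is free over $S^\bullet\calT_{X'/S}$ with basis $\partial^{I}$, $I\in\{0,\dots,p-1\}^d$. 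Next I will commute $z$ with $1\otimes\partial_{\epsilon_i}$. Expanding in this basis, the coefficients $c_K\in\calA_{X,s}$ must satisfy $\partial_{\epsilon_i}\cdot c_K=0$ for all $i$, which means $c_K\in\calB_{X/S,s}$. Theorem \ref{thmlor} supplies the freeness needed to make these coefficient comparisons unique.

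The main obstacle is the bookkeeping in this second step. The Ohkawa commutation relations mix the $\Ox$-coefficients with the $e_m$ twist, so $\partial\cdot(ae_m)=(\partial a+a\,\mathrm{dlog}\,m(\partial))e_m$. Extracting "divisible by $p$" therefore requires care over $\F_p$-valued binomials. My first approach is to use the basis $\{e_{m_J}\,\partial^{I}\}$ together with \ref{thmlor}, reducing the question to the rank-$p^{2d}$ Azumaya computation over $\calB\otimes S^\bullet\calT$ as in the classical case.
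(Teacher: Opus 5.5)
The paper gives no argument for this statement; it quotes it from Ohkawa [4.16]. Your direct computation in log coordinates on each fiber $\calA_{X,s}\otimes_{\Ox_U}\calD_{X/S}$ is a legitimate self-contained route. The inclusion $\supseteq$ is fine, and so is your last step (commuting with $1\otimes\partial_{\epsilon_i}$). The problem is the decisive step of the reverse inclusion, which rests on a miscomputed commutator.

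You assert $[1\otimes\partial_I,e_{m_i}\otimes 1]=\sum_{0<J\le I,\,J\in\N\epsilon_i}\binom IJ e_{m_i}\otimes\partial_{I-J}$. This amounts to claiming $\partial_{k\epsilon_i}\cdot e_{m_i}=e_{m_i}$ for every $k\geq 1$. That holds for ordinary powers $\partial_{\epsilon_i}^k$, but not for the operators in Ohkawa's formula. There $\partial_I$ is the PD basis dual to $\eta^{[I]}$ with $u_i=1+\eta_i$. The $P_0$-stratification of $\calA_X$ sends $e_{m_i}$ to $e_{m_i}\otimes(1+\eta_i)$, so $\partial_{k\epsilon_i}\cdot e_{m_i}=0$ for $k\geq 2$; equivalently, $\partial_{k\epsilon_i}=\prod_{l=0}^{k-1}(\partial_{\epsilon_i}-l)$.

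Taken literally, your formula contradicts your own first paragraph. For $I=p\epsilon_i$ it gives $[1\otimes\partial_{p\epsilon_i},e_{m_i}\otimes 1]=e_{m_i}\otimes 1\neq 0$, although $\partial_{p\epsilon_i}=\psi(\partial'_i)$ is supposed to be central. Also, in your formula the coefficient of a fixed $\partial_{I'}$ mixes several $a_I$. So ``$\binom{I_i}{k}a_I=0$'' does not follow, and for $k=p$ it would even force $a_{p\epsilon_i}=0$.

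With the correct action, only $J\in\{0,\epsilon_i\}$ contributes, and
\[
z(e_{m_i}\otimes 1)-(e_{m_i}\otimes 1)z=\sum_I I_i\,a_I\,e_{m+m_i}\otimes\partial_{I-\epsilon_i}.
\]
Distinct $I$ give distinct $I-\epsilon_i$, so $I_ia_I=0$. Hence $a_I\neq 0$ forces $p\mid I_i$ for all $i$, which is the conclusion you wanted.

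The same product formula gives exactly $\partial_{pK}=\prod_i(\partial_{\epsilon_i}^p-\partial_{\epsilon_i})^{K_i}=\psi(\partial'^K)$, because $\prod_{l=0}^{pq-1}(x-l)=(x^p-x)^q$ in $(\Z/p\Z)[x]$. This exact identity, not an expansion ``up to central terms'', is what puts $z$ in $\calA_{X,s}\otimes\psi(S^{\bullet}\calT_{X'/S})$.

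It also makes your centrality check precise. By Lucas, $(1\otimes\partial_{pK})(a\otimes 1)=\sum_{J\le K}\binom KJ(\partial_{p(K-J)}\cdot a)\otimes\partial_{pJ}$, and vanishing $p$-curvature kills every term with $J\neq K$.

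After these corrections, your step with $1\otimes\partial_{\epsilon_i}$ goes through as written. The coefficient uniqueness there only uses that $e_m$ is a basis of $\calA_{X,s}$ and that $(\partial_I)$ is an $\Ox_X$-basis of $\calD_{X/S}$. The same facts give injectivity of $\calB_{X/S}\otimes_{\Ox_{X',\upmu}}(S^{\bullet}\calT_{X'/S})_{\upmu}\to\widetilde{\calD}_{X/S}$. Lorenzon's theorem is not needed for this.
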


\begin{proposition}[\cite{Ohkawa} 4.20, \cite{Schepler} 2.16, \cite{SFthesis} 17.8]\label{AzumayaAkaza}
$\widetilde{\calD}_{X/S}$ is a $\upmu$-indexed Azumaya algebra over its center $\calB_{X/S}\otimes_{\Ox_{X',\upmu}}\left (S^{\bullet}\calT_{X'/S} \right )_{\upmu}$ of rank $p^{2d}.$
\end{proposition}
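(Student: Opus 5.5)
The proof is local, so I would first reduce to a situation with explicit coordinates. Being a $\upmu$-indexed Azumaya algebra of rank $p^{2d}$ over $\calZ:=\calB_{X/S}\otimes_{\Ox_{X',\upmu}}\left (S^{\bullet}\calT_{X'/S}\right )_{\upmu}$ should mean two things: $\widetilde{\calD}_{X/S}$ is a locally free indexed $\calZ$-module of rank $p^{2d}$, and the canonical map $\widetilde{\calD}_{X/S}\circledast_{\calZ}\widetilde{\calD}_{X/S}^{\mathrm{op}}\ra \mathscr{End}_{\calZ}(\widetilde{\calD}_{X/S})$ is an isomorphism, possibly after an fppf/étale faithfully flat extension of $\calZ$. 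Both properties are étale local by \ref{ren3} and the compatibility of $\mathscr{End}$ with restriction. So I would work étale locally on $X$ and fix local coordinates $m_1,\hdots,m_d$ of $X\ra S$.

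\textbf{Step 1: basis over the center.} By \ref{Dcenter}, $\calZ$ is the center. By \ref{thmlor}, $\calA_X$ is free over $\calB_{X/S}$ with basis $(e_{m_I})_{I\in\{0,\hdots,p-1\}^d}$. The $p$-curvature map makes $\calD_{X/S}$ free over $S^{\bullet}\calT_{X'/S}$ with basis $(\partial^{J})_{J\in\{0,\hdots,p-1\}^d}$, since $\partial_i^p-\partial_i$ corresponds to $\psi$ of the dual basis in the log setting. Using the relations of \ref{era3parag142}, I would deduce that $(e_{m_I}\otimes\partial^J)$ is a basis of the indexed $\calZ$-module $\widetilde{\calD}_{X/S}$. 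This gives local freeness of rank $p^{2d}$.

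\textbf{Step 2: splitting.} Next I would show that $\widetilde{\calD}_{X/S}$ acting on the indexed module $\calA_X$, viewed over $\calZ$ via a suitable base change, realises a matrix algebra. The most direct approach is to use the indexed Cartier descent picture. The left action of $\widetilde{\calD}_{X/S}$ on $\calA_X$, given by multiplication together with $d_{\calA_X}$, is $\calB_{X/S}$-linear and factors through the augmentation $S^{\bullet}\calT_{X'/S}\ra\Ox_{X'}$, because $d_{\calA_X}$ has vanishing $p$-curvature (\ref{dAX}). This gives a morphism $\widetilde{\calD}_{X/S}\otimes_{S^\bullet}\Ox_{X'}\ra\mathscr{End}_{\calB_{X/S}}(\calA_X)$. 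Both sides are free of rank $p^{2d}$ over $\calB_{X/S}$, the right side by \ref{thmlor} together with the isomorphism preceding \ref{indproptensprod}-type matrix decompositions. I would prove bijectivity by checking injectivity on the explicit basis. The key computation is that $\partial^J$ acts on $e_{m_I}$ by multiplication by $\prod I_i^{J_i}$ modulo $p$, and these combine into a Vandermonde matrix over $\mathbb{F}_p$, which is invertible.

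\textbf{Step 3: general fibres of $S^\bullet$.} To pass from the zero section to the whole of $\calZ$, I would twist. Étale locally on $\calZ$, after adjoining $p$-th roots (an fppf cover, or the standard trick of Bezrukavnikov--Mirković--Rumynin/Ogus--Vologodsky), a point $\theta$ of $T^*X'$ gives a connection $d_{\calA_X}+\theta$ whose $p$-curvature equals $\theta$. Repeating Step 2 shows that $\widetilde{\calD}_{X/S}\otimes_{\calZ}(\cdot)$ becomes a matrix algebra fppf locally. Equivalently, one can verify that the map $\widetilde{\calD}\circledast_{\calZ}\widetilde{\calD}^{\mathrm{op}}\ra\mathscr{End}_{\calZ}(\widetilde{\calD})$ is an isomorphism by comparing free bases of rank $p^{4d}$ and reducing to fibres via Nakayama.

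I expect this last step to be the main obstacle, specifically making the indexed fppf base change rigorous. I would avoid it by using the second formulation: reduce the fibrewise isomorphism to a central simple algebra statement over each residue field of $\calZ$, where Steps 1–2 identify a simple algebra of dimension $p^{2d}$ with center equal to the field.
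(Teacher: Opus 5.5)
\textbf{Gap in Step~3.} Step~3 is where everything beyond the zero section must be proved, and the route you finally commit to (a fibrewise central-simple criterion fed by Steps~1--2) does not work.

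\begin{itemize}
\item Step~2 only sees the zero section. The action of $\widetilde{\calD}_{X/S}$ on $\calA_X$ factors through the augmentation $S^{\bullet}\calT_{X'/S}\ra\Ox_{X'}$. So it gives no information at points of the centre $\mathcal{Z}=\calB_{X/S}\otimes_{\Ox_{X',\upmu}}(S^{\bullet}\calT_{X'/S})_{\upmu}$ lying off $S^{+}=0$.
\item At those points Step~1 only gives the dimension. Knowing that the centre of $\widetilde{\calD}_{X/S}$ is $\mathcal{Z}$ (\ref{Dcenter}) says nothing about the centre or simplicity of a fibre, since centres do not commute with base change. For instance, the logarithmic $\calD_{X/S}$ of $\mathbb{A}^1$ with log structure at $0$ (take $\partial=x\,d/dx$) is free of rank $p^2$ over its centre $k[x^p,\partial^p-\partial]$, yet its fibre at $x=0$ contains the proper two-sided ideal $(x)$. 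This failure is exactly why $\calA_X$ is introduced.
\item $\mathcal{Z}$ is a $\upmu$-indexed algebra, not a ring, so ``residue fields of $\mathcal{Z}$'' and a fibrewise Azumaya criterion would first have to be set up.
\item The twist you sketch, which is what actually handles $\theta\neq 0$, is mis-stated: $d_{\calA_X}+\theta$ does not have $p$-curvature $\theta$.
\end{itemize}

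\textbf{How to repair it.} The twist is the right idea, and done correctly it removes the indexed fppf worry. For the log derivations $\partial_i$ dual to $\op{dlog}m_i$ one has $\partial_i^{[p]}=\partial_i$, so $\psi(\partial'_i)=\partial_i^p-\partial_i$. A horizontal twist $c_i\op{dlog}m_i$ therefore changes the $p$-curvature by $c_i^p-c_i$, so you need Artin--Schreier roots, not $p$-th roots.

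Set $S_c=S^{\bullet}\calT_{X'/S}[c_1,\hdots,c_d]/(c_i^p-c_i-\partial'_i)$ and $\mathcal{Z}_c=\calB_{X/S}\otimes_{\Ox_{X',\upmu}}(S_c)_{\upmu}$. Then $\mathcal{Z}_c$ is finite étale over $\mathcal{Z}$ and free with basis $(c^K)_{K\in\llbracket 0,p-1\rrbracket^d}$ of index $0$. Hence an isomorphism of indexed $\mathcal{Z}$-modules can be checked after base change to $\mathcal{Z}_c$, with no descent formalism.

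Let $\widetilde{\calD}_{X/S}$ act on $\calA_X\otimes_{\Ox_{X',\upmu}}(S_c)_{\upmu}$ through the connection $d_{\calA_X}+\sum_i c_i\op{dlog}m_i$. This connection is integrable and compatible with $\calA_X$-multiplication. By \ref{dAX}, the element $\partial_i^p-\partial_i$ acts by $c_i^p-c_i=\partial'_i$, so this is a module over $\widetilde{\calD}_{X/S}\circledast_{\mathcal{Z}}\mathcal{Z}_c$.

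Your Step~2 computation then goes through verbatim. The operator $\partial^J$ acts on $e_{m_I}$ by multiplication by $\prod_i(I_i+c_i)^{J_i}$. The resulting matrix is a tensor product of Vandermonde matrices with nodes $c_i,c_i+1,\hdots,c_i+p-1$, whose determinant is a unit of $\mathbb{F}_p$. Note that over $\calB_{X/S}$ you need this invertibility, not mere injectivity. Using also the invertible elements $e_{pm}\in\calB_{X/S}$, one gets
$\widetilde{\calD}_{X/S}\circledast_{\mathcal{Z}}\mathcal{Z}_c\xrightarrow{\sim}\mathscr{End}_{\mathcal{Z}_c}(\calA_X\otimes_{\Ox_{X',\upmu}}(S_c)_{\upmu})$.
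This étale-local splitting gives the Azumaya property.

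\textbf{Comparison with the paper.} The paper does not prove this statement itself; it cites Ohkawa, Schepler and the thesis. As far as I recall, those follow Ogus--Vologodsky, obtaining the result from a splitting after a faithfully flat extension of the centre in which $\calA_X$ (locally free over $\calB_{X/S}$ by \ref{thmlor}) replaces the non-flat Frobenius. The corrected twist is a more explicit alternative to that.
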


\subsection*{A splitting of the Azumaya algebra $\widetilde{\calD}_{X/S}$}

\begin{parag}
We now recall a specific splitting of the Azumaya algebra $\widetilde{\calD}_{X/S}$ that was constructed by Ohkawa in (\cite{Ohkawa2015} 3). In the remaining of this section, we consider a perfect field $k$ of positive characteristic $p$ and denote its ring of Witt vectors by $W.$ We equip $\op{Spf}W$ with the trivial logarithmic structure. We suppose that $f_1:X\ra S$ lifts to a log smooth morphism $f:\frakX \ra \frakS$ of fs $p$-adic logarithmic formal schemes such that $\frakS$ is log flat and locally of finite type over $\op{Spf}W.$ We suppose that $F_1:X \ra X'$ lifts to an $\frakS$-morphism of fs $p$-adic logarithmic formal schemes $F:\frakX \ra \frakX'$ such that $\frakX'$ is log smooth over $\frakS.$

Let $S=(S^{\bullet}\calT_{X'/S})_{\upmu},$ $\widehat{S}=(\widehat{S}^{\bullet}\calT_{X'/S})_{\upmu}$ and $F_1^*\widehat{S}=(F_1^*\widehat{S}^{\bullet}\calT_{X'/S})_{\upmu}.$
\end{parag}

\begin{theorem}[\cite{Ohkawa2015} 4.3]\label{erafPsi}
There exists an action of $(\calB_{X/S}\otimes_{\Ox_{X',\upmu}}\widehat{S})\circledast_{\calB_{X/S}\otimes_{\Ox_{X',\upmu}}S}\widetilde{\calD}_{X/S}$ on $\calA_X\otimes_{\Ox_{X,\upmu}}F_1^*\widehat{S},$ depending on the lifting $F$ of the relative Frobenius, inducing an isomorphism of $\upmu$-indexed algebras
\begin{alignat*}{2}
\psi:(\calB_{X/S}\otimes_{\Ox_{X',\upmu}}\widehat{S})\circledast_{\calB_{X/S}\otimes_{\Ox_{X',\upmu}}S}\widetilde{\calD}_{X/S} \xrightarrow{\sim} & \mathscr{End}_{\calB_{X/S}\otimes_{\Ox_{X',\upmu}}\widehat{S}}\left ( \calA_X\otimes_{\Ox_{X,\upmu}}F_1^*\widehat{S}\right ).
\end{alignat*}
\end{theorem}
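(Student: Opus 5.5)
We construct the action locally and show that the resulting morphism is an isomorphism of free indexed modules of the same rank. The construction follows Ohkawa's splitting, adapted to the indexed setting.

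\textbf{Step 1: the $\widehat{S}$-module structure.} The lifting $F$ determines a morphism $\frac{F^*}{p}:\omega^1_{\frakX'/\frakS}\ra \omega^1_{\frakX/\frakS}$, whose reduction gives a map $\zeta:F_1^*\omega^1_{X'/S}\ra \omega^1_{X/S}$. Dually, $\zeta$ yields a map on tangent sheaves. Together with $d_{\calA_X}$ (\ref{dAX}) and the canonical connection on $F_1^*\widehat{S}^{\bullet}\calT_{X'/S}$, this produces a connection on $\calA_X\otimes_{\Ox_{X,\upmu}}F_1^*\widehat{S}$. We modify this connection by the term $-\zeta$, exactly as in the Shiho/Oyama local Cartier transform (\cite{SF1}). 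On this module, $\calB_{X/S}\otimes\widehat{S}$ acts through the indexed $\calA_X$-algebra structure and through multiplication in $F_1^*\widehat{S}$. The modified connection is $\calB_{X/S}\otimes\widehat{S}$-linear because $\calB_{X/S}$ consists of the horizontal sections of $d_{\calA_X}$.

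\textbf{Step 2: the $\widetilde{\calD}_{X/S}$-action.} Using local coordinates $m_1,\hdots,m_d$ and the relations of \ref{era3parag142}, the connection from Step 1 extends to an action of $\widetilde{\calD}_{X/S}$. Integrability follows from the integrability of $d_{\calA_X}$ and the closedness of the forms $\zeta(F_1^*\op{dlog}m_i')$.

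We then check that the $p$-curvature of this action equals multiplication by the image of $S$ in $\widehat{S}$. This is the standard computation in which the $-\zeta$ term produces $\psi(\xi)$ acting as $\xi$: it uses that $d_{\calA_X}$ has vanishing $p$-curvature (\ref{dAX}) and the formula $(\frac{F^*}{p})^{p}$ mod $p$. Consequently, the central subalgebra $\calB_{X/S}\otimes S$ acts compatibly, and the action factors through $(\calB_{X/S}\otimes\widehat{S})\circledast_{\calB_{X/S}\otimes S}\widetilde{\calD}_{X/S}$ by the universal property of the coequalizer \eqref{exseqindtens} together with \ref{kraz827}. The result is a morphism of $\upmu$-indexed algebras $\psi$ into the indexed endomorphism algebra \eqref{era3comptakriz}.

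\textbf{Step 3: $\psi$ is an isomorphism.} This is local, so we work étale locally. By \ref{AzumayaAkaza}, $\widetilde{\calD}_{X/S}$ is Azumaya of rank $p^{2d}$ over $\calB_{X/S}\otimes S$. Base change by \ref{propfreetakriz} then gives that the source is an indexed Azumaya algebra of rank $p^{2d}$ over $\calB_{X/S}\otimes\widehat{S}$.

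By \ref{thmlor}, $\calA_X$ is a free indexed $\calB_{X/S}$-module with basis $(e_{m_I})_{I\in\{0,\hdots,p-1\}^d}$. Hence $\calA_X\otimes F_1^*\widehat{S}$ is free of rank $p^d$ over $\calB_{X/S}\otimes\widehat{S}$, and by \eqref{erafiso} its endomorphism algebra is a free matrix-type algebra of rank $p^{2d}$.

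Since the source is Azumaya, it has no nontrivial two-sided ideals coming from the center, so $\psi$ is injective. Concretely, the kernel is of the form $J\circledast$(source) for an ideal $J$ of the center, and $J=0$ because the unit acts faithfully. A morphism between locally free modules of equal finite rank that is injective after reduction modulo the maximal ideals of the center is an isomorphism. I would check this via the indexed Nakayama argument, reducing modulo the augmentation ideal of $\widehat{S}$, where the claim becomes Lorenzon's statement that $\widetilde{\calD}_{X/S}\otimes_S\Ox$ acts isomorphically on $\calA_X$. Completeness of $\widehat{S}$ then lifts the isomorphism.

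\textbf{Main obstacle.} I expect the hardest step to be the $p$-curvature computation in Step 2 in the indexed logarithmic setting: showing that the twisted action of $\partial_i^{p}-\partial_i^{[p]}$ is exactly multiplication by $\xi_i$ on each graded piece $\calA_{X,\ov{m}}$. I would try first to reduce to the generator $e_m$ and to the case of a single coordinate.
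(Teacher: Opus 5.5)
\textbf{The gap: the $p$-curvature claim in Step 2 is false over $\widehat{S}$.} Your Step 1 is the connection-level form of the paper's construction. The paper builds a $\calP_0$-stratification on $F_1^*\Gamma^{\bullet}\omega^1_{X'/S}$ from $\varphi/p$ on the log PD envelopes and then dualizes; to first order this is your $\zeta$-twisted connection. Your Step 3 is its ``Azumaya plus ranks'' argument.

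In the notation of \ref{remark29}, your connection is $\nabla_{\partial_i}=\nabla^0_{\partial_i}-\sum_j f_{ij}\,\partial'_j$, where $\nabla^0=d_{\calA_X}\otimes 1+1\otimes\nabla_{\mathrm{can}}$, $f_{ij}=\delta_{ij}+\partial_i(c_j)$, and $\partial'_j$ acts by multiplication. Three facts combine:
\begin{enumerate}
\item $\nabla^0$ has zero $p$-curvature (\ref{dAX} and Frobenius descent).
\item $\partial_i^{[p]}=\partial_i$ for the log derivation dual to $\op{dlog}m_i$.
\item Jacobson's formula $(D+g)^p=D^p+g^p+D^{p-1}(g)$ holds for $g$ in a commutative algebra stable under $D$.
\end{enumerate}
Together they give
\[
\nabla_{\partial_i}^{p}-\nabla_{\partial_i^{[p]}}=\partial'_i-\sum_{j=1}^{d}f_{ij}^{p}\,\partial_j'^{\,p}.
\]
Already when all $c_j=0$ (for example $F^{\flat}(\widetilde{m}'_i)=p\widetilde{m}_i$), this is multiplication by $\partial'_i-\partial_i'^{\,p}$, and $\partial_i'^{\,p}\neq 0$ in $\widehat{S}$. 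So the two actions of $\calB_{X/S}\otimes S$ on $\calA_X\otimes F_1^*\widehat{S}$ disagree: one goes through $\widetilde{\calD}_{X/S}$, the other through multiplication in $\widehat{S}$. Consequently your action does not descend to the coequalizer defining $(\calB_{X/S}\otimes\widehat{S})\circledast_{\calB_{X/S}\otimes S}\widetilde{\calD}_{X/S}$, and $\psi$ is not defined. The identity ``$\psi(\xi)$ acts as $\xi$'' that you are recalling is the Ogus--Vologodsky computation over $\widehat{\Gamma}$. There $\partial_j'^{\,p}=p!\,\partial_j'^{[p]}=0$ kills the extra terms, and the opposite sign of the $\zeta$-term gives $-\partial'_i$, i.e.\ the antipode. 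That is the setting of Corollary \ref{cor1713}, not of the statement over $\widehat{S}$.

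\textbf{How to repair it.} The coefficients $f_{ij}^{p}$ are $p$-th powers, so they lie in the image of $\Ox_{X'}$. Hence $\theta(\partial'_i)=\partial'_i-\sum_j f_{ij}^{p}\,\partial_j'^{\,p}$ defines a continuous $\Ox_{X'}$-algebra endomorphism of $\widehat{S}$ that is the identity modulo degrees $\geq p$, hence an automorphism. Let $\widehat{S}$ act on $\calA_X\otimes F_1^*\widehat{S}$ through $\theta$. Equivalently, use $\theta^{-1}(\partial'_j)$ instead of $\partial'_j$ in the $\zeta$-term; this makes the $p$-curvature exactly $\partial'_i$. These operators are still horizontal. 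The twist changes neither the commutant, nor freeness of rank $p^d$ over $\calB_{X/S}\otimes\widehat{S}$, nor the augmentation ideal, so your Step 1 and Step 3 survive.

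In Step 3 the detour through the augmentation ideal and Lorenzon is unnecessary. Once $\psi$ is defined and $\calB_{X/S}\otimes\widehat{S}$-linear, it is a center-preserving morphism between indexed Azumaya algebras of the same rank $p^{2d}$ over $\calB_{X/S}\otimes\widehat{S}$. The target is locally a matrix algebra by \ref{thmlor} and \eqref{erafiso}. Such a morphism is an isomorphism, which is exactly the paper's ``Azumaya plus dimensions'' argument.
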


\begin{proof}
We just outline how this isomorphism is obtained. The lifting $F$ induces a morphism $P \ra P',$ where $P$ and $P'$ are the log PD envelopes of $\frakX \ra \frakX \times_{\frakS} \frakX$ and $\frakX' \ra \frakX' \times_{\frakS} \frakX'$ respectively. Denote by $\varphi:\calP' \ra \calP$ the corresponding morphism of structural rings. Then $\varphi(\calP') \subset p\calP.$ Dividing by $p$ and reducing modulo $p,$ we obtain a morphism $\calP'_0=\calP'/(p) \ra \calP_0=\calP/(p).$ The restriction on the PD ideal $\calI'$ of $\calP'_0$ factors through $\calI'^{[2]}.$ We hence obtain a morphism $\omega^1_{X'/S}=\calI'/\calI'^{[2]} \ra \calP_0$ and then a morphism $u:F_1^*\Gamma^{\bullet}\omega^1_{X'/S} \ra \calP_0,$ where $\Gamma^{\bullet}$ denotes the PD algebra. The $\calP_0$-linearization of
$$
F_1^*\Gamma^{\bullet}\omega^1_{X'/S} \xrightarrow{\Delta} F_1^*\Gamma^{\bullet}\omega^1_{X'/S} \otimes_{\Ox_X} F_1^*\Gamma^{\bullet}\omega^1_{X'/S}\xrightarrow{\op{Id} \otimes u}  F_1^*\Gamma^{\bullet}\omega^1_{X'/S} \otimes_{\Ox_X} \calP_0,
$$
where $\Delta$ is the comultiplication map, is a $\calP_0$-stratification on $F_1^*\Gamma^{\bullet}\omega^1_{X'/S}.$ We hence have an action of $\calD_{X/S}$ on $F_1^*\Gamma^{\bullet}\omega^1_{X'/S}.$ By duality, we get an action of $F_1^*\Gamma^{\bullet}\omega^1_{X'/S}$ on $F_1^*\widehat{S}^{\bullet}\calT_{X'/S}.$ Along with the action of $\calD_{X/S,\upmu}$ on $\calA_X$ given by the connection $d_{\calA_X},$ we obtain an action of $\widetilde{\calD}_{X/S}$ on $\calA_X\otimes_{\Ox_{X,\upmu}}F_1^*\widehat{S}.$ This action extends to an action of $(\calB_{X/S}\otimes_{\Ox_{X',\upmu}}\widehat{S})\circledast_{\calB_{X/S}\otimes_{\Ox_{X',\upmu}}S}\widetilde{\calD}_{X/S}$ and that induces the morphism $\psi.$ An argument using the Azumaya algebra nature of $\widetilde{\calD}_{X/S}$ and dimensions proves that $\psi$ is an isomorphism.
\end{proof}

\begin{corollaire}[\cite{Ohkawa2015} 4.4]
The functor $\Psi^{az}$ from the category of $\upmu$-indexed $(\calB_{X/S}\otimes_{\Ox_{X',\upmu}}\widehat{S})$-modules to the category of $\upmu$-indexed left $(\calB_{X/S}\otimes_{\Ox_{X',\upmu}}\widehat{S})\circledast_{\calB_{X/S}\otimes_{\Ox_{X',\upmu}}S}\widetilde{\calD}_{X/S}$-modules, given by
$$\Psi^{az}(\calE')=(\calA_X\otimes_{\Ox_{X,\upmu}}F_1^*\widehat{S}) \circledast_{\calB_{X/S}\otimes_{\Ox_{X',\upmu}}\widehat{S}}  \calE'=\calA_X\circledast_{\calB_{X/S}}\calE'$$
is an equivalence of categories.
\end{corollaire}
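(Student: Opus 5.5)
This is an indexed Morita equivalence. Set $\calC=\calB_{X/S}\otimes_{\Ox_{X',\upmu}}\widehat{S}$, $\calD=\calC\circledast_{\calB_{X/S}\otimes_{\Ox_{X',\upmu}}S}\widetilde{\calD}_{X/S}$ and $\calM=\calA_X\otimes_{\Ox_{X,\upmu}}F_1^*\widehat{S}$. The plan is to exhibit $\calM$ as a locally free $\upmu$-indexed $\calC$-module of finite rank, so that the isomorphism $\psi:\calD\xrightarrow{\sim}\mathscr{End}_{\calC}(\calM)$ of \ref{erafPsi} identifies $\calD$ with an indexed matrix algebra over $\calC$. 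The standard Morita argument then applies, with $\circledast$ and $\mathscr{Hom}$ in place of $\otimes$ and $\op{Hom}$:
\[
\Psi^{az}(\calE')=\calM\circledast_{\calC}\calE',\qquad \calE\mapsto \mathscr{Hom}_{\calD}(\calM,\calE).
\]

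\emph{Step 1: local freeness of $\calM$.} Since $F_1^*\widehat{S}=\Ox_{X,\upmu}\otimes_{\Ox_{X',\upmu}}\widehat{S}$, we have $\calM\cong\calA_X\otimes_{\Ox_{X',\upmu}}\widehat{S}$. Base-changing Lorenzon's basis $(e_{m_I})_{I\in\{0,\hdots,p-1\}^d}$ from \ref{thmlor} along $\calB_{X/S}\ra\calC$, using \ref{propfreetakriz} and \ref{kraz827}, shows that étale locally $\calM\cong\bigoplus_I\calC(\ov{m_I})$, a free $\upmu$-indexed $\calC$-module of rank $p^d$. The same computation, done with \ref{ren5} and the identification of $\mathscr{End}$ of $\bigoplus\calC(s_i)$ with $\bigoplus\calC(s_j-s_i)$ from the proof of \eqref{erafiso}, presents $\mathscr{End}_{\calC}(\calM)$ locally as indexed $p^d\times p^d$ matrices.

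\emph{Step 2: unit and counit.} The evaluation map $\calM\circledast_{\calC}\mathscr{Hom}_{\calD}(\calM,\calE)\ra\calE$ and the coevaluation $\calE'\ra\mathscr{Hom}_{\calD}(\calM,\calM\circledast_{\calC}\calE')$ come from the composition morphism \eqref{era3comptakriz} together with the right $\calC$-structure on $\calM$. Whether they are isomorphisms is local, by \ref{ren3} and \ref{Koko829}, so we may assume $\calM$ is free with basis $e_I$ in indices $s_I$. Take the idempotent $\epsilon=\psi^{-1}(e_{00})\in\calD_0$. Then $\mathscr{Hom}_{\calD}(\calM,\calE)\cong\epsilon\calE$, up to the shift by $s_0$. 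The matrix units $e_{IJ}$, of index $s_J-s_I$, give $\calE\cong\bigoplus_I e_{I0}\,\epsilon\calE$, which is exactly $\calM\circledast_{\calC}\epsilon\calE$. Checking this on fibers, as in \ref{indproptensprod}, gives both isomorphisms.

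\emph{Step 3: restricting to locally PD-nilpotent objects.} One must check that $\Psi^{az}$ and its quasi-inverse preserve local PD-nilpotence. For $\calE'$, local PD-nilpotence is a condition on the action of the augmentation ideal of $\widehat{\Gamma}$, which acts on $\calE'$ through $\calC$. For $\calM\circledast_{\calC}\calE'$, the same ideal acts through the central subalgebra $\calC\subset\calD$ (\ref{Dcenter}). The central action on $\calM\circledast_{\calC}\calE'$ is $1\otimes(\text{action on }\calE')$, and the central action on $\epsilon\calE$ is the restriction of the action on $\calE$. Hence nilpotence transfers in both directions, and $\Phi_{\upmu}$ is the restriction of $\Psi^{az}$.

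The main obstacle is Step 2: turning the isomorphism $\psi$ into the functorial unit and counit isomorphisms while keeping track of index shifts. The idempotent $\epsilon$ lives in index $0$, but the off-diagonal matrix units have nonzero indices $s_J-s_I$. So one has to verify carefully, on fibers, that $\bigoplus_I e_{I0}\,\epsilon\calE$ reassembles $\calE$ with the correct indexing.
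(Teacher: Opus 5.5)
Your proposal is correct and follows the route the paper relies on. The paper does not prove this corollary itself but cites \cite{Ohkawa2015}; for the $\widehat{\Gamma}$-analogue \eqref{ren6} it invokes \cite{Schepler} 2.2 to turn the splitting isomorphism $\psi$ of Theorem \ref{erafPsi} onto $\mathscr{End}$ of the locally free indexed module $\calA_X\otimes_{\Ox_{X,\upmu}}F_1^*\widehat{S}$ into a Morita equivalence, whereas you verify that indexed Morita step directly with a local matrix-unit idempotent. Your Step 3 is not needed: the statement concerns $\widehat{S}$ and arbitrary indexed modules with no PD-nilpotence condition (that restriction, after base change to $\widehat{\Gamma}$, is Proposition \ref{propKoko1717}), and the identification $\calM\circledast_{\calC}\calE'\cong\calA_X\circledast_{\calB_{X/S}}\calE'$ in the statement follows from \ref{kraz827}.
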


\begin{corollaire}\label{cor1713}
Let $\widehat{\Gamma}^{\bullet}\calT_{X'/S}$ be the completed PD-algebra of $\calT_{X'/S}$ and $\widehat{\Gamma}=\left (\widehat{\Gamma}^{\bullet}\calT_{X'/S} \right )_{\upmu}.$ There exists a canonical isomorphism of $\upmu$-indexed algebras
\begin{equation}\label{iso17131}
(\calB_{X/S}\otimes_{\Ox_{X',\upmu}}\widehat{\Gamma})\circledast_{\calB_{X/S}\otimes_{\Ox_{X',\upmu}}S}\widetilde{\calD}_{X/S} \xrightarrow{\sim} \mathscr{End}_{\calB_{X/S}\otimes_{\Ox_{X',\upmu}}\widehat{\Gamma}}\left ( \calA_X\otimes_{\Ox_{X,\upmu}}F_1^*\widehat{\Gamma}\right ).
\end{equation}
\end{corollaire}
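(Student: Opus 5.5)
The plan is to deduce \eqref{iso17131} from Theorem \ref{erafPsi} by base change along $\widehat{S} \ra \widehat{\Gamma}$. The canonical morphism $S^{\bullet}\calT_{X'/S} \ra \Gamma^{\bullet}\calT_{X'/S}$ sends $\bigoplus_{n\ge1}S^n$ into the PD ideal. It therefore extends to the completions and gives a morphism of $\Ox_{X'}$-algebras $\widehat{S}^{\bullet}\calT_{X'/S} \ra \widehat{\Gamma}^{\bullet}\calT_{X'/S}$, hence a morphism of $\upmu$-indexed algebras
$$\calB_{X/S}\otimes_{\Ox_{X',\upmu}}\widehat{S} \ra \calB_{X/S}\otimes_{\Ox_{X',\upmu}}\widehat{\Gamma}.$$
I then apply $(\calB_{X/S}\otimes\widehat{\Gamma})\circledast_{\calB_{X/S}\otimes \widehat{S}}(-)$ to both sides of the isomorphism $\psi$ of \ref{erafPsi}.

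On the left-hand side, associativity of $\circledast$ gives
$$(\calB\otimes\widehat{\Gamma})\circledast_{\calB\otimes\widehat{S}}\big((\calB\otimes\widehat{S})\circledast_{\calB\otimes S}\widetilde{\calD}\big) \simeq (\calB\otimes\widehat{\Gamma})\circledast_{\calB\otimes S}\widetilde{\calD}.$$
This follows from the explicit fiberwise description \ref{indproptensprod} together with \ref{indprop710}(1), by checking on fibers as in the earlier proofs.

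On the right-hand side I use the base change map \eqref{erafisoprime} with $\calA=\calB\otimes\widehat{S}$, $\calB=\calB\otimes\widehat{\Gamma}$ and $\calE=\calA_X\otimes_{\Ox_{X,\upmu}}F_1^*\widehat{S}$. This map is an isomorphism \eqref{erafiso} provided $\calE$ is a locally free $\upmu$-indexed $\calA$-module of finite rank. That holds because, by \ref{propisoAkaza}, $\calE\simeq \calA_X\circledast_{\calB_{X/S}}(\calB_{X/S}\otimes\widehat{S})$, and $\calA_X$ is locally free over $\calB_{X/S}$ with basis $(e_{m_I})_{I\in\{0,\dots,p-1\}^d}$ by \ref{thmlor}. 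Then \ref{propfreetakriz} transfers this basis to $\calE$.

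It remains to identify $(\calB\otimes\widehat{\Gamma})\circledast_{\calB\otimes\widehat{S}}(\calA_X\otimes F_1^*\widehat{S})$ with $\calA_X\otimes_{\Ox_{X,\upmu}}F_1^*\widehat{\Gamma}$. Using \ref{propisoAkaza} and \ref{kraz827} to move the non-indexed factors $\widehat{S},\widehat{\Gamma}$ across $\circledast$, this reduces to $F_1^*\widehat{\Gamma}\simeq F_1^*\widehat{S}\otimes_{\widehat{S}}\widehat{\Gamma}$, which is clear. Composing these identifications with $\mathrm{Id}\circledast\psi$ gives the isomorphism \eqref{iso17131}.

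The main obstacle is checking that the composite is really a morphism of $\upmu$-indexed algebras, and that it is the map induced by the action of $\widetilde{\calD}_{X/S}$ and $\widehat{\Gamma}$ on $\calA_X\otimes F_1^*\widehat{\Gamma}$ (so that it deserves to be called canonical). I would verify this locally on the basis elements $1\otimes e_{m_I}$ and on generators $\partial_I$, using the explicit formulas of \ref{era3parag142} and the definition of $b\circledast f$ in the proof of \ref{BCmatrix}. A secondary point is the completion: $\circledast$ of $\widehat{S}$-modules need not commute with completion. Local freeness of finite rank makes all tensor products finite direct sums of shifts $\calA(m)$ by \ref{ren5}, which avoids any completion issue.
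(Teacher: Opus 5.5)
Your proposal is correct and follows the paper's own proof. The paper likewise base-changes the isomorphism $\psi$ of \ref{erafPsi} along $\calB_{X/S}\otimes_{\Ox_{X',\upmu}}\widehat{S}\to\calB_{X/S}\otimes_{\Ox_{X',\upmu}}\widehat{\Gamma}$, applies \eqref{erafiso} on the endomorphism side, and uses \ref{propisoAkaza} to identify the base-changed module with $\calA_X\otimes_{\Ox_{X,\upmu}}F_1^*\widehat{\Gamma}$. You also make explicit two points the one-line proof leaves implicit: the associativity of $\circledast$, and the finite-rank local freeness (from \ref{thmlor} and \ref{propfreetakriz}) that \eqref{erafiso} requires.
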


\begin{proof}
It is sufficient to base change the isomorphism $\psi$ of \ref{erafPsi} by the canonical morphism
$$\calB_{X/S}\otimes_{\Ox_{X',\upmu}}\widehat{S} \ra \calB_{X/S}\otimes_{\Ox_{X',\upmu}}\widehat{\Gamma}$$
and then apply \eqref{erafiso} and \ref{propisoAkaza}.
\end{proof}

\begin{remark}\label{remark29}
Let $\calE'$ be $\upmu$-indexed $(\calB_{X/S}\otimes_{\Ox_{X',\upmu}}\widehat{S})$-module and let $\calE=\Psi^{az}(\calE').$ The $\widehat{S}$-module structure on $\calE'$ gives rise to a Higgs field $\theta':\calE' \ra \calE'\otimes_{\Ox_{X',\upmu}}\omega^1_{X'/S,\upmu}.$ 

Let us give a local description of the module structure on $\calE$ over $\calD_{X/S,\upmu}$ and $\w{S}.$
Let $s$ and $t$ be local sections of $\calI$ over an étale $X$-scheme $U$ and $a\in \calA_{X,s},$ $x'\in \calE'_t.$ By \ref{indproptensprod}, $a\otimes x'\in \calA_{X,s} \otimes_{\Ox_{U'}} \calE'_t$ can be seen as a local section of
$$\left (\calA_X \circledast_{\calB_{X/S}}  \calE'\right )_{s+t}.$$
Suppose that we have local coordinates $m_1,\hdots,m_d\in \Gamma(X,\calM_X)$ for $X\ra S$ (\cite{SF1} 4.4) that lift to local sections $\widetilde{m}_1,\hdots,\widetilde{m}_d\in \Gamma(\frakX,\calM_{\frakX}).$ For every $1\le i\le d,$ let $m_i'=\pi^{\flat}m_i,$ where $\pi:X'\ra X$ is given in (\cite{SF1} (3.1.2)). Consider the differential operators $\partial_I$ defined in (\cite{SF1} 4.4) and let $(\partial_i')_{1\le i\le d}$ be the dual basis of $(\op{dlog}m_i')_{1\le i\le d}.$ 

We give the actions of $\partial_{\epsilon_i}\in \calD_{X/S}$ and $\partial'^{I} \in S^{\bullet}\calT_{X'/S}$ on $a\otimes x'.$ Recall that, by (\cite{SF1} 9.3), there exist local sections $b_i$ of $\Ox_{\frakX}$ such that $F^{\flat}(\widetilde{m}_i')=p\widetilde{m}_i+\alpha_{\frakX}^{-1}(1+pb_i).$ If we denote by $c_i$ the image of $b_i$ in $\Ox_X.$ 
By (\cite{Ohkawa2015} 4.6), we have
\begin{equation}\label{actionpartialax1}
\partial_{\epsilon_i} \cdot (a\otimes x') = (\partial_{\epsilon_i}\cdot a) \otimes x' +a\otimes \theta'_t(\partial'_{i})(x')+\sum_{j=1}^d\partial_{\epsilon_i}(p_2^{\#}c_j-p_1^{\#}c_j) a \otimes \theta'_t(\partial'_j)( x'),
\end{equation}
where $\partial_{\epsilon_i}\cdot a$ is the action of $\partial_{\epsilon_i}$ on $a$ via the connection $d_{\calA_X}$ and $\theta'_t(\partial_i')$ is the composition $\calE'_t \xrightarrow{\theta'_t} \calE'_t \otimes_{\Ox_{U'}} \omega^1_{U'/S} \xrightarrow{\op{Id} \otimes \partial_i'} \calE'_t.$
Note that $\partial_{\epsilon_i}(p_1^{\#}c_j)=0$ by definition of $\partial_{\epsilon_i},$ so $\partial_{\epsilon_i}(p_2^{\#}c_j-p_1^{\#}c_j)=\partial_{\epsilon_i}(p_2^{\#}c_j).$

Since $d_{\calA_X}$ has vanishing $p$-curvature \eqref{dAX}, the action of $S$ on $\calA_X$ vanishes and so
\begin{equation}\label{actionpartialax2}
\partial_i'^{k} \cdot (a \otimes x') = a \otimes \left ( \partial_i'^{k} \cdot x' \right ).
\end{equation}
\end{remark}

\begin{definition}\label{deflocnilind}
Let $\widehat{\Gamma}$ be as defined in \ref{cor1713}. We say that a $\widehat{\Gamma}$-module $\calE$ is \emph{locally PD-nilpotent} if, for every local section $s\in \Gamma(U,\upmu)$ over an étale $X$-scheme $U,$ the $\widehat{\Gamma}^{\bullet}\calT_{U'/S}$-module $\calE_s$ is locally PD-nilpotent (\cite{SF2} 7.16). If $\calA$ is a $\widehat{\Gamma}$-algebra then an $\calA$-module is said to be \emph{locally PD-nilpotent} if it is so as a $\widehat{\Gamma}$-module.
\end{definition}

\begin{proposition}\label{locPDnilprop}
Let $\widehat{\Gamma}$ be as defined in \ref{cor1713}. Let $\calE$ be a $\widehat{\Gamma}$-module. Suppose that there exists an étale covering $(U_i \ra X)_{i\in I}$ such that the restriction $\calE_{|U_i}$ \eqref{defresind} is locally PD-nilpotent for all $i\in I.$ Then $\calE$ is locally PD-nilpotent.
\end{proposition}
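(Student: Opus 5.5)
The plan is to reduce the statement to the étale-local nature of local PD-nilpotence in the non-indexed setting (\cite{SF2} 7.16). By Definition \ref{deflocnilind}, it suffices to fix an étale $X$-scheme $U$ and a section $s\in\Gamma(U,\upmu)$, and to show that the $\widehat{\Gamma}^{\bullet}\calT_{U'/S}$-module $\calE_s$ on $U$ is locally PD-nilpotent.

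We first base change the covering. Set $V_i=U\times_X U_i$; then $(V_i\ra U)_{i\in I}$ is an étale covering of $U$. Write $s_i$ for the restriction of $s$ to $V_i$, viewed as a section of $\upmu_{|U_i}$ over the étale $U_i$-scheme $V_i$. By the description of localization in \ref{defresind} and \ref{indremark73}, we have a canonical identification
$$(\calE_{|U_i})_{s_i}=(\calE_s)_{|V_i}.$$
This identification is compatible with the $\widehat{\Gamma}^{\bullet}\calT$-module structures, since $\widehat{\Gamma}$ is $j_\upmu^*$ of a sheaf on $X$ and so its localizations are just restrictions. We also use that the étale sites of $V_i$ and $V_i'$ are identified via Frobenius. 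The hypothesis that $\calE_{|U_i}$ is locally PD-nilpotent then shows that $(\calE_s)_{|V_i}$ is a locally PD-nilpotent $\widehat{\Gamma}^{\bullet}\calT_{V_i'/S}$-module for every $i$.

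It remains to show that, for a $\widehat{\Gamma}^{\bullet}\calT_{U'/S}$-module on $U$, local PD-nilpotence can be checked on an étale covering. This should follow directly from the definition in \cite{SF2} 7.16. That definition says that each local section is annihilated by a power of the PD ideal, meaning $\widehat{\Gamma}^{\geq n}$ (equivalently $\calI^{[n]}$) kills it étale locally. The condition is therefore étale-local by construction: a local section of $\calE_s$ over $W\ra U$ restricts to sections over $W\times_U V_i$, and the locally PD-nilpotent structure of $(\calE_s)_{|V_i}$ provides a further covering on which it is killed by some $\widehat{\Gamma}^{\geq n}$. Composing the coverings gives the required covering of $W$.

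The main obstacle is bookkeeping rather than mathematics. One must check the identification $(\calE_{|U_i})_{s_i}=(\calE_s)_{|V_i}$ together with the matching of the $\widehat{\Gamma}$-actions on the two sides. One must also recall the precise form of \cite{SF2} 7.16, in particular whether it is phrased pointwise on local sections or via a filtration; if it is the latter, one replaces "each section is killed" by "the union of the annihilator subsheaves is everything", which is again étale local.
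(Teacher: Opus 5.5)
Your proposal is correct and follows the paper's proof. The paper also fixes $s\in\Gamma(U,\upmu)$, restricts to $U_i\times_X U$, and uses the identification $(\calE_s)_{|U_i\times_XU}=(\calE_{|U_i\times_XU})_{s_i}=\calE_{s_i}$, which it reads off from a commutative diagram of localization morphisms; it then concludes directly, leaving implicit the étale-locality of the non-indexed notion that you spell out.
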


\begin{proof}
Let $s\in \Gamma(U,\upmu)$ be a section over an étale $X$-scheme $U$ and $s_i=s_{|U\times_XU_i}.$ The diagram
$$
\begin{tikzcd}
\left (U_i\times_XU \right )_{\text{ét}} \ar{rr} \ar{d}{s_i} \ar{drr}{s_i} & & U_{\text{ét}} \ar{d}{s} \\
\left (\left (U_i\times_XU \right )_{\text{ét}} \right )_{/\upmu_{|U_i\times_XU}} \ar[swap]{rr}{j_{\upmu\times (U_i\times_XU)/\upmu}} & & X_{/\upmu},
\end{tikzcd}
$$
where the upper arrow is the canonical morphism and the lower morphism is given in \ref{defresind}, is commutative so
$$
\left (\calE_s\right )_{|U_i\times_XU}=\left (\calE_{|U_i\times_XU} \right )_{s_i}=\calE_{s_i}.
$$
The result follows.
\end{proof}

\begin{parag}
Let $\widehat{\Gamma}$ be as defined in \ref{cor1713}. By (\cite{Schepler} 2.2), the isomorphism \eqref{iso17131} induces an equivalence of categories
\begin{equation}\label{ren6}
\begin{array}[t]{clc}
\op{Mod}\left (\calB_{X/S}\otimes_{\Ox_{X',\upmu}}\widehat{\Gamma} \right ) & \xrightarrow{\sim} & \op{Mod} \left ( (\calB_{X/S}\otimes_{\Ox_{X',\upmu}}\widehat{\Gamma})\circledast_{\calB_{X/S}\otimes_{\Ox_{X',\upmu}}S}\widetilde{\calD}_{X/S} \right ) \\
\calE & \mapsto & \left ( \calA_X\otimes_{\Ox_{X,\upmu}}F_1^*\widehat{\Gamma}\right ) \circledast_{\calB_{X/S}\otimes_{\Ox_{X',\upmu}}\widehat{\Gamma}} \calE.
\end{array} 
\end{equation}
The inverse functor is given by
\begin{equation}\label{ren7}
\F \mapsto \mathscr{Hom}_{(\calB_{X/S}\otimes_{\Ox_{X',\upmu}}\widehat{\Gamma})\circledast_{\calB_{X/S}\otimes_{\Ox_{X',\upmu}}S}\widetilde{\calD}_{X/S}} \left (\calA_X\otimes_{\Ox_{X,\upmu}}F_1^*\widehat{\Gamma},\F \right ).
\end{equation}
\end{parag}

\begin{proposition}\label{propKoko1717}
The equivalence \eqref{ren6} induces and equivalence between the full subcategories of locally PD-nilpotent modules:
$$
\Phi_{\upmu}:\begin{Bmatrix}\mathrm{Locally\ PD}\text{-}\mathrm{nilpotent}\\
\upmu\text{-}\mathrm{indexed\ }\left (\calB_{X/S}\otimes_{\Ox_{X',\upmu}}\widehat{\Gamma}\right )\text{-}\\ \mathrm{modules} \end{Bmatrix} \xrightarrow{\sim} \begin{Bmatrix} \mathrm{Locally\ PD}\text{-}\mathrm{nilpotent\ }\upmu\text{-}\mathrm{indexed}\\ \left ((\calB_{X/S}\otimes_{\Ox_{X',\upmu}}\widehat{\Gamma})\circledast_{\calB_{X/S}\otimes_{\Ox_{X',\upmu}}S}\widetilde{\calD}_{X/S} \right )\text{-} \\ \mathrm{modules}\end{Bmatrix}.
$$
\end{proposition}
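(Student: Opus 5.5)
Since \eqref{ren6} is already an equivalence with quasi-inverse \eqref{ren7}, the plan is to check that both functors send locally PD-nilpotent modules to locally PD-nilpotent modules. Full faithfulness on the full subcategories is then automatic. The $\widehat{\Gamma}$-module structure on a module over $\calC:=(\calB_{X/S}\otimes_{\Ox_{X',\upmu}}\widehat{\Gamma})\circledast_{\calB_{X/S}\otimes_{\Ox_{X',\upmu}}S}\widetilde{\calD}_{X/S}$ comes from the central map $\widehat{\Gamma}\ra \calB_{X/S}\otimes\widehat{\Gamma}\ra \calC$. Hence, for $\calE$ a $\calB_{X/S}\otimes\widehat{\Gamma}$-module, the $\widehat{\Gamma}$-action on $\Phi_\upmu(\calE)=(\calA_X\otimes_{\Ox_{X,\upmu}}F_1^*\widehat{\Gamma})\circledast_{\calB_{X/S}\otimes\widehat{\Gamma}}\calE$ is the action through the right factor $\calE$. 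The point to verify is that the central $\widehat{\Gamma}$ acts on $\calA_X\otimes F_1^*\widehat{\Gamma}$ compatibly with its action on $\calE$ across the tensor product $\circledast$. This is exactly the statement that the isomorphism \eqref{iso17131} is $\calB_{X/S}\otimes\widehat{\Gamma}$-linear, which holds by construction via \ref{erafPsi} and the base change in \ref{cor1713}.

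By \ref{locPDnilprop} the question is local, so I will work étale locally on $X$, where coordinates $m_1,\dots,m_d$ exist. There $\calA_X\otimes_{\Ox_{X,\upmu}}F_1^*\widehat{\Gamma}$ is a free $\upmu$-indexed $\calB_{X/S}\otimes\widehat{\Gamma}$-module of finite rank, with basis $e_{m_I}\otimes 1$ for $I\in\{0,\dots,p-1\}^d$ by \ref{thmlor} and \ref{propfreetakriz}. Then, for every section $s$ of $\upmu$, \ref{ren5} and \ref{indproptensprod} give
$$\Phi_\upmu(\calE)_s\cong\bigoplus_I \calE_{s-\ov{m_I}}$$
as $\widehat{\Gamma}^\bullet\calT_{U'/S}$-modules. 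Local PD-nilpotence (\cite{SF2} 7.16) is stable under finite direct sums, so $\Phi_\upmu(\calE)$ is locally PD-nilpotent whenever $\calE$ is.

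Conversely, let $\F$ be a locally PD-nilpotent $\calC$-module and $\calE=\Phi_\upmu^{-1}(\F)$ as in \eqref{ren7}. I expect this direction to be the main obstacle, since internal Hom into a PD-nilpotent module need not be nilpotent in general. Here it is, because the source is free of finite rank. Locally, $\calE_s$ is the submodule of $\calC$-linear maps inside
$$\mathscr{Hom}_{\calB_{X/S}\otimes\widehat{\Gamma}}\bigl(\calA_X\otimes F_1^*\widehat{\Gamma},\F(s)\bigr)\cong\bigoplus_I \F_{s+\ov{m_I}},$$
by evaluation on the basis. The $\widehat{\Gamma}$-action is again by postcomposition, using centrality. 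So $\calE_s$ is a $\widehat{\Gamma}$-submodule of a finite direct sum of locally PD-nilpotent modules, and I will check that the notion in (\cite{SF2} 7.16) passes to submodules; this should be immediate, since it asks that each local section be killed by some power of the PD ideal. Alternatively, one can avoid submodules: since $\Phi_\upmu(\calE)\cong\F$, the summand $e_0\otimes\calE_s$ is a direct summand of $\F_s$ as a $\widehat{\Gamma}$-module via the displayed decomposition, and nilpotence passes to direct summands.
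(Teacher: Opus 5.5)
Your proposal is correct and follows the paper's proof. Both reduce to showing that \eqref{ren6} and \eqref{ren7} preserve local PD-nilpotence, work locally where $\calA_X\otimes_{\Ox_{X,\upmu}}F_1^*\widehat{\Gamma}$ is free of finite rank over $\calB_{X/S}\otimes_{\Ox_{X',\upmu}}\widehat{\Gamma}$, use \ref{ren5} to write $\Phi_{\upmu}(\calE)$ as a finite sum of shifts of $\calE$, and control the internal Hom by evaluating on a basis. Two points in your write-up are valid refinements of the same argument: the explicit check that the central $\widehat{\Gamma}$ acts through the factor $\calE$, which the paper leaves as ``clear'', and the direct-summand variant for the inverse direction.
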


\begin{proof}
It is sufficient to prove that the functors \eqref{ren6} and \eqref{ren7} preserve locally PD-nilpotent modules.
To simplify notations, we set
$$
\calB_{\Gamma}=\calB_{X/S}\otimes_{\Ox_{X',\upmu}}\widehat{\Gamma},\ \calB_S=\calB_{X/S}\otimes_{\Ox_{X',\upmu}}S,\ \calA_{\Gamma}=\calA_X\otimes_{\Ox_{X,\upmu}}F_1^*\widehat{\Gamma}.
$$
Let $\calE$ be a locally PD-nilpotent object of $\op{Mod}\left (\calB_{\Gamma}\right ).$ Since the property of being locally PD-nilpotent is local \eqref{locPDnilprop} and $\calA_{\Gamma}$ is a locally free $\upmu$-indexed $\calB_{\Gamma}$-module of finite rank \eqref{thmlor}, we can suppose that there exists sections $m_1,\hdots,m_d\in \Gamma(X,\upmu)$ such that
$$
\calA_{\Gamma}=\bigoplus_{i=1}^d\calB_{\Gamma}(m_i).
$$
Then, by \ref{ren5}, we have
$$
\calA_{\Gamma}\circledast_{\calB_{\Gamma}}\calE=\bigoplus_{i=1}^d\calE(m_i).
$$
It is then clear that \eqref{ren6} preserves locally PD-nilpotent modules.

Suppose now that $\F$ is a locally PD-nilpotent module of $\op{Mod}\left (\calB_{\Gamma} \circledast_{\calB_S}\widetilde{\calD}_{X/S} \right ).$ Let $s\in \Gamma(U,\upmu)$ be a section over an étale $X$-scheme, $V$ an étale $U$-scheme and
$$
f\in \mathscr{Hom}_{\calB_{\Gamma}\circledast_{\calB_{S}}\widetilde{\calD}_{X/S}} \left (\calA_{\Gamma},\F \right )_s(V).$$
By replacing $s$ with $s_{|V},$ we can suppose that $V=U.$ By \ref{indremtakriz1} (1), $f$ is a morphism of $\upmu_{|U}$-indexed $\left (\calB_{\Gamma}\circledast_{\calB_{S}}\widetilde{\calD}_{X/S}\right )_{|U}$-modules
$$
f:\calA_{\Gamma|U} \ra \sigma_s^*\F_{|U},
$$
where $\sigma_s:\upmu_{|U} \ra \upmu_{|U}$ is addition by $s.$ After shrinking $U,$ we can suppose that $\calA_{\Gamma|U}$ is free of finite rank over $\calB_{\Gamma|U}.$ Take a basis $(e_i)_{1\le i\le d}$ of the $\upmu_{|U}$-indexed $\calB_{\Gamma|U}$-module $\calA_{\Gamma|U}$ \eqref{deflocfreeind}. There exists local sections $s_i$ of $\upmu$ such that $e_i\in \calA_{\Gamma,s_i}$ for all $i.$ Since $f(e_i)$ is locally annahilated by some power of the augmentation ideal of $\widehat{\Gamma}^{\bullet}\calT_{X'/S},$ we deduce that $\mathscr{Hom}_{\calB_{\Gamma}\circledast_{\calB_{S}}\widetilde{\calD}_{X/S}} \left (\calA_{\Gamma},\F \right )_s$ is locally PD-nilpotent.
\end{proof}

\begin{remark}
Let $\calE'$ be a locally PD-nilpotent $\upmu$-indexed $\left (\calB_{X/S}\otimes_{\Ox_{X',\upmu}}\w{\Gamma}\right )$-module and $\calE=\Phi_{\upmu}(\calE')=(\calA_X\otimes_{\Ox_{X,\upmu}}F_1^*\widehat{\Gamma}) \circledast_{\calB_{X/S}\otimes_{\Ox_{X',\upmu}}\widehat{\Gamma}}  \calE'=\calA_X\circledast_{\calB_{X/S}}\calE'.$ We give a local description of the module structure on $\calE$ over $\calD_{X/S,\upmu}$ and $\w{\Gamma}.$ Keep the assumption and notation of \ref{remark29}.
By \eqref{actionpartialax1} and \eqref{actionpartialax2}, we have
\begin{equation}\label{actionpartialax}
\begin{alignedat}{2}
\partial_{\epsilon_i} \cdot (a\otimes x') &= (\partial_{\epsilon_i}\cdot a) \otimes x' +a\otimes (\partial'_{i}\cdot x')+\sum_{j=1}^d\partial_{\epsilon_i}(p_2^{\#}c_j-p_1^{\#}c_j) a \otimes (\partial'_j\cdot x'), \\
\partial_i'^{[k]} \cdot (a \otimes x') &= a \otimes \left ( \partial_i'^{[k]} \cdot x' \right ).
\end{alignedat}
\end{equation}
 
\end{remark}

\section{Oyama Topoi}

\begin{parag}
In this section, we recall the logarithmic Oyama topoi introduced in (\cite{SF2} 8). Recall the notation \ref{not23}. Let $\frakS$ be a logarithmic formal scheme, log flat and locally of finite type over $\op{Spf}W(k)$ and let $X\ra S$ be a log smooth morphism of logarithmic schemes. 
\end{parag}

\begin{parag}\label{paragIran2}
Let $T$ be a fine logarithmic scheme of characteristic $p$ and $U$ an étale strict logarithmic $X$-scheme. 
Denote by $\underline{T}$ the logarithmic scheme-theoretic image of the Frobenius $F_T$ of $T$ (\cite{SF2} 5.3). Let $u:\underline{T}\ra U$ be a morphism of logarithmic schemes. Recall that $F_T$ factors as $T\xrightarrow{G_T} \underline{T} \xrightarrow{i} T,$ where $i$ is the canonical immersion (\cite{SF2} 5.5). Denote by $U''$ the base change of $U\ra S$ by $F_S$ in the category of fine logarithmic schemes and by $U\ra U'$ the exact relative Frobenius of $U$ with respect to $S.$ Then, by (\cite{SF2} 5.11), there exists unique morphisms $w:T\ra U''$ and $\Delta_u:T\ra U'$ fitting into the following commutative diagrams :
\begin{equation}\label{morphismShiho1}
\begin{tikzcd}
T\ar{r}{G_T} \ar{dr}{w}\ar[bend right=30]{rdd} & \underline{T} \ar{dr}{u} \ar{r}{i} & T\ar[bend right=-30]{dd} \\
 & U''\ar{d}\ar{r} & U\ar{d} \\
 & S \ar{r}{F_S} & S.
\end{tikzcd} \qquad 
\begin{tikzcd}
U \ar{rr} & & U' \ar{d} \\
\underline{T} \ar[swap]{u}{u} \ar{r}{i} & T \ar{ur}{\Delta_{u}} \ar[swap]{r}{w} & U''.
\end{tikzcd}
\end{equation}
\end{parag}

\begin{parag}\label{paragIran3}
The site $\calE(X/\frakS)$ (resp. $\underline{\calE}(X/\frakS)$) is defined as follows :
\begin{itemize}
\item The objects of the underlying category are triplets $(U,\frakT,u)$ where $U$ is an étale and strict $X$-scheme, $\frakT$ is a log flat logarithmic formal scheme over $\frakS$ and $u:T\ra U$ (resp. $u:\underline{T} \ra U$) is an affine morphism (\cite{SF2} 8.1).
\item A morphism $(U_1,\frakT_1,u_1) \ra (U_2,\frakT_2,u_2)$ of the underlying category is a pair $(f,g)$ where $g:U_1\ra U_2$ is an $X$-morphism and $f:\frakT_1,\ra \frakT_2$ is an $\frakS$-morphism satisfying $g\circ u_1=u_2\circ f_1$ (resp. $g\circ u_1=u_2\circ \underline{f_1}$) (\cite{SF2} 8.1).
\item A morphism $(U_1,\frakT_1,u_1) \ra (U_2,\frakT_2,u_2)$ is \emph{cartesian} if $T_1$ is isomorphic to $T_2\times_{U_2}U_1$ (resp. if $T_1 \ra T_2$ and $\Delta_{u_1}:T_1\ra U_1'$ induce an isomorphism $T_1\xrightarrow{\sim} U_1'\times_{U_2',\Delta_{u_2}}T_2$ and $\frakT_1 \ra \frakT_2$ is étale and strict) (\cite{SF2} 8.4).
\item A family of cartesian morphisms $\left ( (U_i,\frakT_i,u_i) \ra (U,\frakT,u) \right )_{i\in I}$ is a covering if $\left (U_i \ra U \right )_{i\in I}$ is an étale covering. This defines a pretopology and we consider the associated topology (\cite{SF2} 8.14).
\end{itemize}
The associated topos, denoted $\widetilde{\calE}(X/\frakS)$ (resp. $\widetilde{\underline{\calE}}(X/\frakS)$), is equipped with the ring $\Ox_{\calE(X/\frakS)}:(U,\frakT,u) \mapsto \Gamma \left (T,\Ox_T\right )$ (resp. $\Ox_{\underline{\calE}(X/\frakS)}:(U,\frakT,u) \mapsto \Gamma \left (T,\Ox_T\right )$).
\end{parag}

\begin{parag}\label{paragIran4}
For an object $(U,\frakT,u)$ of $\calE(X/\frakS)$ (resp. $\underline{\calE}(X/\frakS)$), we have a functor (\cite{SF2} 8.6)
\begin{equation}\label{alphasite}
\alpha_{(U,\frakT,u)}:\text{ét}_{/U} \ra \calE(X/\frakS)\quad \left ( \text{resp.}\ \underline{\calE}(X/\frakS) \right ).
\end{equation}
By (\cite{SF2} 8.19), this functor induces a morphism of topoi, abusively denoted by $\alpha_{(U,\frakT,u)},$
\begin{equation}\label{alphatopos}
\alpha_{(U,\frakT,u)}:U_{\text{ét}} \ra \widetilde{\calE}(X/\frakS)\ \left( \mathrm{resp. }\ \widetilde{\underline{\calE}}(X/\frakS)\right )
\end{equation}
whose inverse image functor is the composition by \eqref{alphasite}.
Also, by (\cite{SF2} 8.9), for a morphism
$$(f,g):(U_1,\frakT_1,u_1) \ra (U_2,\frakT_2,u_2)$$
of $\calE(X/\frakS)$ (resp. $\underline{\calE}(X/\frakS)$), we have a morphism
\begin{equation}
\beta_{(f,g)}:\alpha_{(U_1,\frakT_1,u_1)} \ra \alpha_{(U_2,\frakT_2,u_2)}\circ j_g,
\end{equation}
where $j_g:\text{ét}_{/U_1} \ra \text{ét}_{/U_2}$ is given by the composition by $f:U_1 \ra U_2.$
A morphism $f:(U_1,\frakT_1,u_1) \ra (U_2,\frakT_2,u_2)$ of either of the sites $\calE(X/\frakS)$ or $\underline{\calE}(X/\frakS)$ canonically induces a morphism of topoi
\begin{equation}\label{Kokoftilda}
\widetilde{f}:\left (U_{1,\text{ét}},u_{1*}\Ox_{T_1} \right ) \ra \left (U_{2,\text{ét}},u_{2*}\Ox_{T_2} \right ).
\end{equation}
A module $\F$ of $\widetilde{\calE}(X/\frakS)$ (resp. $\widetilde{\underline{\calE}}(X/\frakS)$) is equivalent to the data, for all objects $(U,\frakT,u),$ of a module $\F_{(U,\frakT,u)}$ of $\left (U_{\text{ét}},u_*\Ox_T\right )$ and, for every morphism $f:(U_1,\frakT_1,u_1) \ra (U_2,\frakT_2,u_2),$ a morphism of modules $c_{\F,f}:\widetilde{f}^*\F_{(U_2,\frakT_2,u_2)} \ra \F_{(U_1,\frakT_1,u_1)}$ satisfying functoriality and isomorphism conditions given in (\cite{SF2} 8.24). Explicitly, the module $\F_{(U,\frakT,u)}$ is obtained from $\F$ by composing with $\alpha_{(U,\frakT,u)}:\text{ét}_{/U} \ra \calE(X/\frakS)\ \left (\text{resp.}\ \underline{\calE}(X/\frakS) \right )$ (\cite{SF2} (8.19.10)). These data are called the descent data of $\F.$
\end{parag}

\begin{definition}[\cite{SF2} 8.25]\label{defcrys}
A module $\F$ of $\widetilde{\calE}(X/\frakS)$ or $\widetilde{\underline{\calE}}(X/\frakS)$ is a \emph{crystal} if $c_{\F,f}$ is an isomorphism for all morphisms $f.$
\end{definition}

\begin{parag}
Suppose that $X\ra S$ is of Cartier type. By (\cite{SF2} 8.32), the functor
\begin{equation}\label{era3rhoShiho}
\rho:\underline{\calE}(X/\frakS) \ra \calE(X'/\frakS),\ (U,\frakT,u) \mapsto (U',\frakT,\Delta_u)
\end{equation}
is continuous and cocontinuous and induces a morphism of ringed topoi
\begin{equation}\label{era3C}
C_{X/\frakS}:\widetilde{\underline{\calE}}(X/\frakS) \ra \widetilde{\calE}(X'/\frakS)
\end{equation}
such that $C_{X/\frakS}^{-1}\F=\F\circ \rho$ for all modules $\F.$ By (\cite{SF2} 8.35), the functor $C_{X/\frakS}^{-1}$ preserves crystals.
\end{parag}

\begin{parag}\label{paragIran5}
If $X\ra S$ lifts to a morphism of framed logarithmic formal schemes $(\frakX,Q) \ra (\frakS,P)$ (\cite{SF1} 8.3) then we can consider the factorization of the diagonal immersion $\frakX \ra \frakX\times_{\frakS}^{\op{log}} \frakX$ into a strict immersion $\Delta:\frakX\ra \frakY$ and a log étale affine morphism $\frakY\ra \frakX\times_{\frakS}^{\op{log}}\frakX$ (\cite{SF1} 8.10). We recall the formal groupoids $R_{\frakX,1}$ and $Q_{\frakX}$ defined in (\cite{SF1} 11.2) and (\cite{SF2} 6.4) respectively. Let $\frakX \ra \frakU \ra \frakY$ be a factorization of $\Delta$ into a closed immersion followed by an open one, $\calI$ the ideal of $\frakX \ra \frakU$ and $\calI^{\#}$ the ideal generated by $a^p$ for local sections $a$ of $\calI.$ Let $Q_{\frakX}$ (resp. $R_{\frakX,1}$) be the dilatation of $\calI^{\#}+(p)$ (resp. $\calI+(p)$) with respect to $p$ and equip them with the log structure pullback of that of $\frakY$ (\cite{SF2} 6.4). The formal schemes $Q_{\frakX}$ and $R_{\frakX,1}$ have natural structures of formal groupoids (\cite{SF2} 6.6 and 6.7). We have the following equivalence of categories:
\end{parag}

\begin{proposition}[\cite{SF2} 8.28]\label{equivRQ}
There exist equivalences of categories
$$
\begin{Bmatrix}
\Ox_X\text{-modules\ equipped\ with} \\ R_{\frakX,1}\text{-stratification}
\end{Bmatrix}
\xrightarrow{\sim}
\begin{Bmatrix}
\text{Crystals\ of\ }\widetilde{\calE}(X/\frakS)
\end{Bmatrix}
$$
$$\begin{Bmatrix}
\Ox_X\text{-modules\ equipped\ with} \\ Q_{\frakX}\text{-stratification}
\end{Bmatrix}
\xrightarrow{\sim}
\begin{Bmatrix}
\text{Crystals\ of\ }\widetilde{\underline{\calE}}(X/\frakS)
\end{Bmatrix}
$$
\end{proposition}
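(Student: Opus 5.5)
This statement is quoted from (\cite{SF2} 8.28), so the plan is to recall how that proof goes rather than build new machinery. I treat the two equivalences in parallel: the $R_{\frakX,1}$ case for $\widetilde{\calE}(X/\frakS)$, and the $Q_{\frakX}$ case for $\widetilde{\underline{\calE}}(X/\frakS)$. The guiding idea is that $\frakX$, with the identity $X\ra X$ (resp. the canonical map $\underline{X}\ra X$), is an object of the site that covers the final object. The groupoid $R_{\frakX,1}$ (resp. $Q_{\frakX}$) then plays the role of its self-product in the site. A crystal is therefore determined by its value on this object, together with descent data along the two projections.

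First I check that the pair $(X,\frakX,\op{Id})$ (resp. $(X,\frakX,u)$ with $u:\underline{X}\ra X$) is a weakly final object. Concretely, every object $(U,\frakT,u)$ should, étale locally on $U$, admit a morphism to it. For $\calE$ this is a lifting statement: lift $T\ra U\ra X$ to $\frakT\ra \frakX$ using log smoothness of $\frakX/\frakS$ and log flatness of $\frakT$ over $\frakS$, working $p$-adically by successive approximation. For $\underline{\calE}$ only the morphism on $\underline{T}$ is prescribed. The relevant uniqueness is \emph{up to the groupoid}: two such lifts $\frakT\rightrightarrows\frakX$ factor through $\frakX\times^{\op{log}}_{\frakS}\frakX$, hence through $\frakY$ after étale localization. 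Their difference reduces into the ideal $\calI+(p)$ (resp. $\calI^{\#}+(p)$), so by the universal property of dilatations they factor uniquely through $R_{\frakX,1}$ (resp. $Q_{\frakX}$). This is where the choice $\calI^{\#}$ matters: agreement on $\underline{T}$ means agreement modulo $p$-th powers, which is exactly the condition defining $Q_{\frakX}$. The same argument identifies the fiber products of the final object in the site with $R_{\frakX,1}$ and its triple analogue (resp. with $Q_{\frakX}$).

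With this in hand, I define the functor as follows. To a crystal $\F$, attach $\F_{(X,\frakX)}$, an $\Ox_X$-module. The two isomorphisms $c_{\F,q_i}$ give $q_2^*\F\simeq q_1^*\F$ on $R_{\frakX,1}$ (resp. $Q_{\frakX}$), and the cocycle condition follows from functoriality of the descent data on the triple product. Conversely, given a module $E$ with a stratification $\epsilon$, I define $\F_{(U,\frakT,u)}=h^*E$ for a local lift $h$. The choices are glued by pulling back $\epsilon$ along the factorization $\frakT\ra R_{\frakX,1}$ of two lifts, with the cocycle condition ensuring compatibility. Étale descent for quasi-coherent modules then gives a module on each object, the maps $c_{\F,f}$ are isomorphisms by construction, and the two constructions are mutually quasi-inverse.

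The main obstacle is the identification of pairs of lifts with sections of the groupoid in the $\underline{\calE}$ case. It requires showing that $\frakT\ra\frakX\times\frakX$, restricted to $\underline{T}$ landing in the diagonal, forces the ideal into $\calI^{\#}+(p)$ in a log setting where the relevant maps are not strict. I would handle this by passing to the strict factorization $\frakX\ra\frakY$ (\cite{SF1} 8.10) and using the cartesian morphism condition of (\cite{SF2} 8.4) to reduce to a computation on ideals.
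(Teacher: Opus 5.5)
Your outline is correct and is the standard argument behind (\cite{SF2} 8.28). The object $(X,\frakX,\op{Id})$ (resp. $(X,\frakX,i_X)$) covers the final object, and its self-product in the site is $(X,R_{\frakX,1},\lambda_R)$ (resp. $(X,Q_{\frakX},\lambda_Q)$) by the universal property of the dilatation, which requires $\frakT$ to be $p$-torsion free. The equivalence is then evaluation at that object with stratification $c_{\F,q_1}^{-1}\circ c_{\F,q_2}$, which is exactly the description the paper uses in \ref{parag1419}.

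Two small corrections. In the converse direction you only need the stack property of sheaves of modules on $U_{\text{\'et}}$; the statement assumes no quasi-coherence. And the cartesian condition of (\cite{SF2} 8.4) plays no role in identifying the product: that step only uses the strict immersion $\frakX\ra\frakY$ and the log \'etaleness of $\frakY\ra\frakX\times^{\op{log}}_{\frakS}\frakX$.
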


\begin{parag}\label{paragIran6}
If in addition the exact relative Frobenius $F_1:X\ra X'$ lifts to an $(\frakS,P)$-morphism of framed logarithmic formal schemes $F:(\frakX,Q) \ra (\frakX',Q')$ such that $\frakX'$ is log smooth over $\frakS,$ then, by (\cite{SF2} 6.17), $F$ induces a morphism of formal groupoids
\begin{equation}\label{nu}
Q_{\frakX} \ra R_{\frakX',1},
\end{equation}
over $F.$
Denote by $\nu:\calR_1' \ra \calQ_1$ the corresponding morphism of Hopf algebras mod $p$. We then have a functor $\calV$ from the category of $\Ox_{X'}$-modules equipped with an $\calR'_1$-stratification to the category of $\Ox_X$-modules equipped with an $\calQ_1$-stratification, defined by $\calV(\calE',\epsilon')=(F_1^*\calE',\nu^*\epsilon').$ This functor fits into the commutative diagram :
\begin{equation}\label{diag185}
\begin{tikzcd}
\begin{Bmatrix}\text{Crystals\ of\ }\widetilde{\calE}(X'/\frakS) \end{Bmatrix} \ar[swap, sloped]{d}{\sim} \ar{r}{C_{X/\frakS}^{-1}} & \begin{Bmatrix} \text{Crystals\ of\ }\widetilde{\underline{\calE}}(X/\frakS) \end{Bmatrix} \ar[sloped]{d}{\sim} \\
\begin{Bmatrix}\Ox_{X'}\text{-modules\ with\ an}\\ \calR_1'\text{-stratification} \end{Bmatrix} \ar{r}{\calV} & \begin{Bmatrix}\Ox_{X}\text{-modules\ with\ a}\\ \calQ_1\text{-stratification} \end{Bmatrix}.
\end{tikzcd}
\end{equation}
We finally denote by $\calE_{\str}(X/\frakS)$ and $\underline{\calE}_{\str}(X/\frakS)$ the subsites of $\calE(X/\frakS)$ and $\underline{\calE}(X/\frakS)$ respectively consisting of objects $(U,\frakT,u)$ such that $u$ is strict (\cite{SF2} 8.2 and 8.14). We refer to the associated topoi by the \emph{strict topoi}. Similarly to the big topoi, we have descent data for modules of the strict topoi and we can define crystals of these strict topoi. By (\cite{SF2} 8.26), the canonical embedding of the strict sites into the big sites induce equivalences of categories between the category of crystals of $\widetilde{\calE}_{\str}(X/\frakS)$ (resp. $\widetilde{\underline{\calE}}_{\str}(X/\frakS)$) and the category of crystals of $\widetilde{\calE}(X/\frakS)$ (resp. $\widetilde{\underline{\calE}}(X/\frakS)$).
\end{parag}

\begin{remark}\label{remShiho1}
If $(U,\frakT,u)$ is an object of $\calE(X/\frakS)$ (resp. $\underline{\calE}(X/\frakS)$) and if $V$ is an étale an strict $U$-scheme and $(V,\frakT_V,u_V)=\alpha_{(U,\frakT,u)}(V),$ then the construction of $u_V$ shows that $u_V$ is strict if $u$ is strict. It follows that, if $(U,\frakT,u)$ is an object of $\calE_{\mathrm{strict}}(X/\frakS)$ (resp. $\underline{\calE}_{\mathrm{strict}}(X/\frakS)$), then $\alpha_{(U,\frakT,u)}$ \eqref{alphasite} factors through $\calE_{\mathrm{strict}}(X/\frakS)$ (resp. $\underline{\calE}_{\mathrm{strict}}(X/\frakS)$).
\end{remark}

\section{Indexed logarithmic Oyama topoi}

\begin{parag}\label{intro19}
Let $\frakS$ be a logarithmic formal scheme, log flat and locally of finite type over $\op{Spf}W(k),$ and $X\ra S$ a log smooth morphism of logarithmic schemes. For an étale $X$-scheme $U,$ we equip $U$ with the logarithmic structure $\left (\calM_{X}\right )_{|U},$ restriction of $\calM_X$ on $U,$ so that $U\ra X$ is strict.
For every logarithmic scheme $T,$ we denote by $\upmu_T$ the sheaf $\ov{\calM}_T^{gp}.$
\end{parag}

\begin{parag}\label{paragu}
The functors
$$
u_X':\begin{array}[t]{clc}
\calE(X'/\frakS) & \ra & \text{ét}_{/X'} \\
(U,\frakT,u) & \mapsto & U
\end{array},\ 
\underline{u}_X:\begin{array}[t]{clc}
\underline{\calE}(X/\frakS) & \ra & \text{ét}_{/X} \\
(U,\frakT,u) & \mapsto & U
\end{array}
$$
clearly satisfy properties 1 and 2 of (\cite{SF2} 8.18). If $(U,\frakT,u)$ is an object of $\calE(X'/\frakS)$ (resp. $\underline{\calE}(X/\frakS)$) and $(V_i \ra U)_{i\in I}$ is an étale covering, then, by (\cite{SF2} 8.8 (1)),
$$\left (\alpha_{(U,\frakT,u)}(V_i) \ra (U,\frakT,u) \right )_{i\in I}$$
is an étale covering, where $\alpha_{(U,\frakT,u)}$ is given in \eqref{alphasite}. It follows that the functors $u_X'$ and $\underline{u}_X$ satisfy property 3 of (\cite{SF2} 8.18). We conclude that
$u_X'$ and $\underline{u}_X$ are continuous and cocontinuous for the étale topologies and they induce morphisms of topoi, that we abusively denote by
\begin{equation}\label{Kokou}
u_X':\widetilde{\calE}(X'/\frakS) \ra X'_{\text{ét}},\ \underline{u}_X:\widetilde{\underline{\calE}}(X/\frakS) \ra X_{\text{ét}},
\end{equation}
such that the inverse image functors are composition with $u_X'$ and $\underline{u}_X$ respectively. 

We denote by $\mathbbm{T}'$ (resp. $\underline{\mathbbm{T}}$) the split fibered $\widetilde{\calE}(X'/\frakS)$-category (resp. $\widetilde{\underline{\calE}}(X/\frakS)$-category) defined, for every sheaf $\F$ of $\widetilde{\calE}(X'/\frakS)$ (resp. $\widetilde{\underline{\calE}}(X/\frakS)$), by the localized category $\widetilde{\calE}(X'/\frakS)_{/\F}$ (resp. $\widetilde{\underline{\calE}}(X/\frakS)_{/\F}$) and, for every morphism $\varphi:\F \ra \calG,$ by the inverse image functor of the morphism of topoi
\begin{equation}\label{Kokotopoi1}
\widetilde{\calE}(X'/\frakS)_{/\F} \ra \widetilde{\calE}(X'/\frakS)_{/\calG} \left( \text{resp.}\ \widetilde{\underline{\calE}}(X/\frakS)_{/\F} \ra \widetilde{\underline{\calE}}(X/\frakS)_{/\calG}\right )
\end{equation}
induced by $\varphi.$ By (\cite{Giraud} II 3.4.4), $\mathbbm{T}'$ (resp. $\underline{\mathbbm{T}}$) is a stack. We consider the fibered category $\textgoth{T}' \ra X'_{\text{ét}}$ (resp. $\underline{\textgoth{T}} \ra X_{\text{ét}}$) defined by base change of $\mathbbm{T}' \ra \widetilde{\calE}(X'/\frakS)$ (resp. $\underline{\mathbbm{T}} \ra \widetilde{\underline{\calE}}(X/\frakS)$) by the functor $u_X'^{-1}$ (resp. $\underline{u}_X^{-1}$):
\begin{equation}\label{stack1}
\begin{tikzcd}
\textgoth{T}' \ar{r} \ar{d} & \mathbbm{T}' \ar{d} & \underline{\textgoth{T}} \ar{r} \ar{d} & \underline{\mathbbm{T}} \ar{d} \\
X'_{\text{ét}} \ar{r}{u_X'^{-1}} & \widetilde{\calE}(X'/\frakS) & X_{\text{ét}} \ar{r}{\underline{u}_X^{-1}} & \widetilde{\underline{\calE}}(X/\frakS). 
\end{tikzcd}
\end{equation}
By (\cite{Giraud} II 3.1.1), they are stacks.
Let $U$ be an étale $X$-scheme and denote by $U^a$ the associated sheaf of $X_{\text{ét}}.$ By definition, for any object $(V,\frakZ,w)$ of $\widetilde{\underline{\calE}}(X/\frakS),$ we have
$$
\underline{u}_X^{-1}\left (U^a\right )(V,\frakZ,w) = U^a(V)=\op{Hom}_{X}(V,U).
$$
A morphism $(V,\frakZ,w) \ra \underline{u}_X^{-1}(U^a)$ is then equivalent to an $X$-morphism $V\ra U.$ It follows that
$$
\begin{array}[t]{clc}
\underline{\calE}(X/\frakS)_{/\underline{u}_X^{-1}(U^a)} & \ra & \underline{\calE}(U/\frakS) \\
\left ((V,\frakZ,w) \ra \underline{u}_X^{-1}(U^a) \right ) & \mapsto & (V,\frakZ,w)
\end{array}
$$
is an equivalence of categories. This functor clearly satisfies the conditions of (\cite{SF2} 8.18) for the étale topology on the target and the topology on the source induced by that of $\underline{\calE}(X/\frakS).$ It is hence continuous and cocontinuous and induces an equivalence of topoi
\begin{equation}\label{stackequiv1}
\widetilde{\underline{\calE}}(X/\frakS)_{/\underline{u}_X^{-1}(U^a)} \xrightarrow{\sim} \widetilde{\underline{\calE}}(U/\frakS).
\end{equation}
Similarly, if $U$ is an étale $X'$-scheme, then we have an equivalence of topoi
\begin{equation}\label{stackequiv2}
\widetilde{\calE}(X'/\frakS)_{/u_X'^{-1}(U^a)} \xrightarrow{\sim} \widetilde{\calE}(U/\frakS).
\end{equation}
For every morphism of étale $X$-schemes $g:V\ra U,$ the functor
$$
\underline{\rho_g}:\underline{\calE}(V/\frakS) \ra \underline{\calE}(U/\frakS),
$$
defined by composition with $g,$ is clearly continuous and cocontinuous. It then induces a morphism of topoi, abusively denoted
\begin{equation}\label{eqrhoEST1}
\underline{\rho_g}:\widetilde{\underline{\calE}}(V/\frakS) \ra \widetilde{\underline{\calE}}(U/\frakS),
\end{equation}
whose inverse image functor is the composition with the functor $\underline{\rho_g}.$
The morphism of topoi $\underline{\rho_g}$ corresponds to the localization morphism of topoi $\underline{\mathbbm{T}}_{V^a} \ra \underline{\mathbbm{T}}_{U^a}$ \eqref{Kokotopoi1}.
Similarly, if $g:V\ra U$ is a morphism of étale $X'$-schemes, then we have a continuous and cocontinuous functor
$$
\rho_{g}:\calE(V/\frakS) \ra \calE(U/\frakS),
$$
that induces a morphism of topoi, abusively denoted by
\begin{equation}\label{eqrhoEST2}
\rho_{g}:\widetilde{\calE}(V/\frakS) \ra \widetilde{\calE}(U/\frakS),
\end{equation}
and that corresponds to the localization morphism $\mathbbm{T}'_{V^a} \ra \mathbbm{T}'_{U^a}$ \eqref{Kokotopoi1}.
If no ambiguity arises, we will denote $\rho_g$ and $\underline{\rho_g}$ by $\rho_{U,V}$ and $\underline{\rho_{U,V}}$ respectively.
\end{parag}

\begin{parag}\label{paragMuzan194}
Let $\calI_X$ be a sheaf of $X_{\text{ét}}.$ The exact relative Frobenius $F_1:X\ra X'$ is a universal homeomorphism and so we can identify the small étale sites of $X$ and $X'$ via $F_1.$ We hence consider $\calI_X$ also as a sheaf of $X'_{\text{ét}}.$ We set
$$\calI'=u_X'^{-1}\calI_X=\calI_X\circ u_X',\ \underline{\calI}=\underline{u}_X^{-1}\calI_X=\calI_X\circ \underline{u}_X.$$
Let $U$ be an étale $X$-scheme and $s\in \Gamma(U,\calI_X).$
For every object $(V,\frakZ,v)$ of $\calE(X'/\frakS)$ (resp. $\underline{\calE}(X/\frakS)$), we have
$$s_{|V} \in \Gamma(V,\calI_X)=\Gamma\left ( (V,\frakZ,v),\calI'\right )\ \left (\text{resp.}\ \Gamma \left ((V,\frakZ,v),\underline{\calI}\right )\right ).$$
So we have functors
$$
\alpha_s':\begin{array}[t]{clc}
\calE(U'/\frakS) & \ra & \calE(X'/\frakS)_{/\calI'} \\
(V,\frakZ,v) & \mapsto & \left ( \rho_{X',U'}(V,\frakZ,v),s_{|V}\right ),
\end{array}\quad
\underline{\alpha}_s:\begin{array}[t]{clc}
\underline{\calE}(U/\frakS) & \ra & \underline{\calE}(X/\frakS)_{/\underline{\calI}} \\
(V,\frakZ,v) & \mapsto & \left ( \underline{\rho_{X,U}}(V,\frakZ,v),s_{|V}\right ).
\end{array}
$$
Again, by (\cite{SF2} 8.18), $\alpha_s'$ and $\underline{\alpha}_s$ are continuous and cocontinuous so they induce morphisms of topoi
\begin{equation}\label{alphakraz}
\alpha_s':\widetilde{\calE}(U'/\frakS)  \ra  \widetilde{\calE}(X'/\frakS)_{/\calI'},\ 
\underline{\alpha}_s:\widetilde{\underline{\calE}}(U/\frakS) \ra  \widetilde{\underline{\calE}}(X/\frakS)_{/\underline{\calI}},
\end{equation}
such that the inverse image functors are composition by $\alpha_s'$ and $\underline{\alpha}_s$ respectively.

The morphisms of topoi $\alpha'_s$ and $\underline{\alpha}_s$ can be defined equivalently as follows: let $U$ be an étale $X$-scheme (resp. étale $X'$-scheme) and $s\in \Gamma(U,\calI_X).$ The local section $s$ corresponds to a morphism $s:U^a\ra \calI_X$ and so we have a morphism
$$
\underline{u}_X^{-1}(s):\underline{u}_X^{-1}(U^a) \ra \underline{u}_X^{-1}(\calI_X)=\underline{\calI}\ \left (\text{resp.}\ u_X'^{-1}(s):u_X'^{-1}(U^a) \ra u_X'^{-1}(\calI_X)=\calI'\right ).
$$
By \eqref{stackequiv1} (resp. \eqref{stackequiv2}), it induces, by localization, a morphism of topoi:
$$
\underline{\alpha}_s:\widetilde{\underline{\calE}}(U/\frakS) \ra \widetilde{\underline{\calE}}(X/\frakS)_{/\underline{\calI}}\ \left (\text{resp.}\ \alpha'_s:\widetilde{\calE}(U/\frakS)^{-1}(U^a) \ra \widetilde{\calE}(X'/\frakS)_{/\calI'} \right ).
$$
Finally, if $V \ra U$ is an étale morphism of $X$-schemes then the diagrams
$$
\begin{tikzcd}
\calE(V'/\frakS) \ar{r}{\rho_{U',V'}} \ar[swap]{dr}{\alpha_{s_{|V}}'} & \calE(U'/\frakS), \ar{d}{\alpha_s'} & \underline{\calE}(V/\frakS) \ar{r}{\underline{\rho_{U,V}}} \ar[swap]{dr}{\underline{\alpha}_{s_{|V}}} & \underline{\calE}(U/\frakS) \ar{d}{\underline{\alpha}_s} \\
 & \calE(X'/\frakS)_{/\calI'} & & \underline{\calE}(X/\frakS)_{\underline{\calI}}
\end{tikzcd}
$$
are commutative.
\end{parag}

\begin{remark}\label{remShiho194}
If $\calI_X$ is an étale sheaf on $X,$ we can restrict the sheaves $\calI'$ and $\underline{\calI},$ defined in \ref{paragMuzan194}, to the subsites $\calE_{\mathrm{strict}}(X'/\frakS)$ and $\underline{\calE}_{\mathrm{strict}}(X/\frakS)$ respectively. We denote by $\calI'_{\mathrm{strict}}$ and $\underline{\calI}_{\mathrm{strict}}$ these restrictions. We then have localized topoi
$$
\calE_{\mathrm{strict}}(X'/\frakS)_{/\calI'_{\mathrm{strict}}},\ \underline{\calE}_{\mathrm{strict}}(X/\frakS)_{/\underline{\calI}_{\mathrm{strict}}}.
$$
For a local section $s$ of $\calI_X$ over an étale $X$-scheme $U,$ we have functors
\begin{equation}\label{alphakrazShiho}
\underline{\alpha}^{\mathrm{strict}}_s:\widetilde{\underline{\calE}}_{\mathrm{strict}}(U/\frakS) \ra \widetilde{\underline{\calE}}_{\mathrm{strict}}(X/\frakS)_{/\underline{\calI}_{\mathrm{strict}}}\ \left (\text{resp.}\ \alpha'^{\mathrm{strict}}_s:\widetilde{\calE}_{\mathrm{strict}}(U'/\frakS) \ra \widetilde{\calE}_{\mathrm{strict}}(X'/\frakS)_{/\calI'_{\mathrm{strict}}} \right )
\end{equation}
and similar properties to those mentioned in \ref{paragMuzan194}.
\end{remark}

\begin{proposition}\label{indexeddescentdata}
Let $\calI_X$ be a sheaf of $X_{\text{ét}}$ and consider the sheaves $\calI'$ and $\underline{\calI}$ defined in \ref{paragMuzan194}. A presheaf of sets $\F$ on $\calE(X'/\frakS)_{/\calI'}$ (resp. $\underline{\calE}(X/\frakS)_{/\underline{\calI}}$) is equivalent to the data, for every section $s\in \Gamma(U,\calI_X)$ over an étale $X$-scheme $U,$ of a presheaf of sets $\F_s$ on $\calE(U'/\frakS)$ (resp. $\underline{\calE}(U/\frakS)$), such that, for every morphism $g:V\ra U$ of étale $X$-schemes and every section $s\in \Gamma(U,\calI_X),$ we have isomorphisms
$$\rho_{g'}^{-1}\F_s \xrightarrow{\sim} \F_{s_{|V}}\ \left (\text{resp.}\ \underline{\rho_g}^{-1}\F_s \xrightarrow{\sim} \F_{s_{|V}}\right ),$$
satisfying cocycle conditions.
\end{proposition}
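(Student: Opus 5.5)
The plan is to argue exactly as in \ref{indremark73}, replacing the étale site of $X$ by the sites $\calE(X'/\frakS)$ and $\underline{\calE}(X/\frakS)$. We treat only $\underline{\calE}(X/\frakS)_{/\underline{\calI}}$; the primed case is identical, using $\rho_{g'}$, $\alpha'_s$ and \eqref{stackequiv2} instead of $\underline{\rho_g}$, $\underline{\alpha}_s$ and \eqref{stackequiv1}.

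First we describe the objects of $\underline{\calE}(X/\frakS)_{/\underline{\calI}}$. Such an object is a pair $((V,\frakZ,w),t)$ with $t\in\underline{\calI}(V,\frakZ,w)=\Gamma(V,\calI_X)$. Hence it is precisely $\underline{\alpha}_t(V,\frakZ,w)$, where $(V,\frakZ,w)$ is regarded as an object of $\underline{\calE}(V/\frakS)$ via the identity of $V$. More generally, for $s\in\Gamma(U,\calI_X)$, the objects of $\underline{\calE}(U/\frakS)$ are triples $(V,\frakZ,w)$ together with an $X$-morphism $h:V\ra U$. A morphism $((V_1,\frakZ_1,w_1),t_1)\ra((V_2,\frakZ_2,w_2),t_2)$ is a morphism $(f,g)$ of $\underline{\calE}(X/\frakS)$ with $t_2|_{V_1}=t_1$.

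Given $\F$, we set $\F_s=\underline{\alpha}_s^{-1}\F=\F\circ\underline{\alpha}_s$. Since $\underline{\alpha}_{s|_V}=\underline{\alpha}_s\circ\underline{\rho_{U,V}}$ by the commutative triangle at the end of \ref{paragMuzan194}, we obtain equalities $\underline{\rho_g}^{-1}\F_s=\F_{s|_V}$. These are canonical identities, so the cocycle conditions hold tautologically.

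Conversely, suppose we are given data $(\F_s, c_{s,g})$. We define $\F((V,\frakZ,w),t)=\F_t(V,\frakZ,w)$, where $(V,\frakZ,w)$ is viewed in $\underline{\calE}(V/\frakS)$ via $\op{Id}_V$. For a morphism $(f,g):((V_1,\frakZ_1,w_1),t_1)\ra((V_2,\frakZ_2,w_2),t_2)$, the object $(V_1,\frakZ_1,w_1)$ with structure map $g$ is an object of $\underline{\calE}(V_2/\frakS)$, and $(f,g)$ is a morphism there to $(V_2,\frakZ_2,w_2,\op{Id})$. This yields a restriction map
\[
\F_{t_2}(V_2,\frakZ_2,w_2)\ra\F_{t_2}(V_1,\frakZ_1,w_1)=(\underline{\rho_g}^{-1}\F_{t_2})(V_1,\frakZ_1,w_1)\xrightarrow{c}\F_{t_1}(V_1,\frakZ_1,w_1),
\]
where the last step uses $t_1=t_2|_{V_1}$. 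Functoriality of this restriction is exactly the cocycle condition. We then check that the two constructions are mutually quasi-inverse: one composite is the identity on the nose, and the other is given by the isomorphisms $c$.

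The main bookkeeping obstacle is the second step: a given object of $\underline{\calE}(U/\frakS)$ lies in the image of several functors $\underline{\alpha}$. We must therefore ensure that $\F_s(V,\frakZ,w,h)$ is recovered, through $c_{s,h}$, from $\F_{s|_V}$ evaluated at the object with identity structure map. This is the analogue of the identification $(X_{/\calI})_{/i}\simeq X_{/U}$ in \ref{indremark73}, and it follows from the cocycle condition applied to the factorization $V\xrightarrow{\op{Id}}V\xrightarrow{h}U$.
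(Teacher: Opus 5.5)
Your proposal is correct and follows the paper's proof. The paper likewise sets $\F_s=\F\circ\underline{\alpha}_s$, and conversely defines $\F$ at an object $(V,\frakZ,w,t)$ as $\F_t$ evaluated at $(V,\frakZ,w)$ with identity structure map, building the restriction maps from the given isomorphisms exactly as you do. Your extra checks — functoriality from the cocycle condition, and recovering $\F_s$ at objects with structure map $h$ via $c_{s,h}$ — make explicit what the paper summarizes as ``clearly inverse to each other.''
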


\begin{proof}
If $\F$ is a presheaf of sets on $\calE(X'/\frakS)_{/\calI'},$ $U\ra X$ is an étale morphism and $s\in \Gamma(U,\calI_X)$ then we set $\F_s=\F\circ \alpha_s'.$
Conversly, if the presheaves $\F_s$ are given, we define $\F$ by
$$
\F:\begin{array}[t]{clc} 
\calE(X'/\frakS)_{/\calI'} & \ra & \op{Set} \\
(U,\frakT,u,s) & \mapsto & \F_s(\rho_{X',U}(U,\frakT,u)).
\end{array}
$$
If $(U_1,\frakT_1,u_1,s_1) \ra (U_2,\frakT_2,u_2,s_2)$ is a morphism in $\calE(X'/\frakS)_{/\calI'},$ then $s_{2|U_1}=s_1$ and $\rho_{X',U_2}(U_1,\frakT_1,u_1) \ra \rho_{X',U_2}(U_2,\frakT_2,u_2)$ is a morphism in $\calE(U_2/\frakS).$ We thus have a map
$$\F_{s_2}\circ \rho_{X',U_2}(U_2,\frakT_2,u_2) \ra \F_{s_2}\circ \rho_{X',U_2}(U_1,\frakT_1,u_1).$$
Since
$$
\rho_{X',U_2}=\rho_{U_1,U_2}\circ \rho_{X',U_1},
$$
we have
\begin{alignat*}{2}
\F_{s_2} \circ \rho_{X',U_2}=\F_{s_2}\circ \rho_{U_1,U_2}\circ \rho_{X',U_1} &= \left (\rho_{U_1,U_2}^{-1}\F_{s_2}\right )\circ \rho_{X',U_1}\\
&\xrightarrow{\sim}\F_{s_{2|U_1}}\circ \rho_{X',U_1}\\
&=\F_{s_1}\circ \rho_{X',U_1},
\end{alignat*}
we deduce a functorial map
$$\F(U_2,\frakT_2,u_2,s_2) \ra \F(U_1,\frakT_1,u_1,s_1).$$
These two constructions are clearly inverse to each other.
The proof for $\underline{\calE}(X/\frakS)_{/\underline{\calI}}$ is similar.
\end{proof}

\begin{remark}\label{remShiho196}
Proposition \ref{indexeddescentdata} remains true if we replace $\calE(X'/\frakS)_{/\calI'}$ and $\underline{\calE}(X/\frakS)_{\underline{\calI}}$ with the localized sites $\calE_{\mathrm{strict}}(X'/\frakS)_{/\calI'_{\mathrm{strict}}}$ and $\underline{\calE}_{\mathrm{strict}}(X/\frakS)_{/\underline{\calI}_{\mathrm{strict}}}$ respectively, $\calE(U'/\frakS)$ and $\underline{\calE}(U/\frakS)$ with $\calE_{\mathrm{strict}}(U'/\frakS)$ and $\underline{\calE}_{\mathrm{strict}}(U/\frakS)$ respectively and $\underline{\alpha}_s$ and $\alpha'_s$ with $\underline{\alpha}^{\mathrm{strict}}_s$ and $\alpha'^{\mathrm{strict}}_s$ respectively.
\end{remark}

\begin{definition}\label{inddescentdata}
Let $\calI_X$ be a sheaf of $X_{\text{ét}}$ and consider the sheaves $\calI'$ and $\underline{\calI}$ defined in \ref{paragMuzan194} and the sheaves $\calI'_{\mathrm{strict}}$ and $\underline{\calI}_{\mathrm{strict}}$ defined in \ref{remShiho194}. For a presheaf $\F$ on $\calE(X'/\frakS)_{/\calI'},$ $\underline{\calE}(X/\frakS)_{/\underline{\calI}},$ $\calE_{\mathrm{strict}}(X'/\frakS)_{/\calI'_{\mathrm{strict}}}$ or $\underline{\calE}_{\mathrm{strict}}(X/\frakS)_{/\underline{\calI}_{\mathrm{strict}}},$ we call \emph{descent data associated to $\F$} the presheaves $\F_s$ given in \ref{indexeddescentdata} and \ref{remShiho196}.
\end{definition}

\begin{proposition}\label{iranlem145}
Let $\calI_X$ be a sheaf of $X_{\text{ét}}$ and consider the sheaves $\calI'$ and $\underline{\calI}$ defined in \ref{paragMuzan194}. Let $\F$ be a presheaf on $\calE(X'/\frakS)_{/\calI'}$ (resp. $\underline{\calE}(X/\frakS)_{/\underline{\calI}}$) and $(\F_s)$ the associated descent data \eqref{inddescentdata}.  Then the presheaf $\F$ is a sheaf if and only if $\F_s$ is a sheaf for all $s.$
\end{proposition}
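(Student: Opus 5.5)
We compare coverings on both sides through the dictionary of \ref{indexeddescentdata}, $\F_s=\F\circ\alpha'_s$ (resp. $\F\circ\underline{\alpha}_s$). The argument is identical for $\calE(X'/\frakS)_{/\calI'}$ and $\underline{\calE}(X/\frakS)_{/\underline{\calI}}$, so we treat the first.

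First we describe the topology of the localized site $\calE(X'/\frakS)_{/\calI'}$. By \ref{indpar1} it is induced by the forgetful functor $j_{\calI'}$. Hence a family $\left((U_i,\frakT_i,u_i,s_i)\ra (U,\frakT,u,s)\right)_{i\in I}$ is a covering if and only if two conditions hold: the underlying family is a covering of $\calE(X'/\frakS)$, i.e. consists of cartesian morphisms with $(U_i\ra U)_i$ an étale covering; and $s_i=s_{|U_i}$. This uses that fiber products exist, so the topology is given by a pretopology (\cite{SGA43} II 1.3.1, III 3.3).

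Similarly, a covering of $\calE(U'/\frakS)$ is a family of cartesian morphisms whose first components form an étale covering. The functor $\rho_{X',U}$ only composes the structure map with $U\ra X'$, so it preserves and reflects cartesian morphisms and étale coverings. Consequently $\alpha'_s$ sends coverings of $(V,\frakZ,v)$ to coverings of $\alpha'_s(V,\frakZ,v)$. Conversely, every covering of $(U,\frakT,u,s)$ in the localized site is the image under $\alpha'_s$ of a covering of $(U,\frakT,u)$ viewed in $\calE(U'/\frakS)$. The reason is that $\alpha'_s$ identifies $\calE(U'/\frakS)$ with the full subcategory of objects over $s$ and their restrictions. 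The only subtle point is that sections $s_i$ of $\calI'$ on $(U_i,\frakT_i,u_i)$ are forced to equal $s_{|U_i}$, because $\calI'(U_i,\frakT_i,u_i)=\Gamma(U_i,\calI_X)$ depends only on $U_i$.

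Now suppose $\F$ is a sheaf. Then each $\F_s=\F\circ\alpha'_s$ is a sheaf, since $\alpha'_s$ is continuous, or directly because it carries coverings to coverings and commutes with the relevant fiber products. Conversely, suppose every $\F_s$ is a sheaf and take a covering of $(U,\frakT,u,s)$. It is the image under $\alpha'_s$ of a covering $\mathcal{V}=\left((U_i,\frakT_i,u_i)\ra (U,\frakT,u)\right)_i$ of $\calE(U'/\frakS)$. Fiber products of its members in the localized site are again of the form $\alpha'_s(\cdot)$, because the index on the fiber product is the common restriction of $s$. So the sheaf condition for $\F$ on this covering is exactly the sheaf condition for $\F_s$ on $\mathcal{V}$, which holds by assumption. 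By the construction in \ref{indexeddescentdata}, the value $\F(U,\frakT,u,s)$ is identified with $\F_s(\rho_{X',U}(U,\frakT,u))$, and the restriction maps match via the cocycle isomorphisms $\rho_{g'}^{-1}\F_s\xrightarrow{\sim}\F_{s_{|V}}$.

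The main obstacle is the bookkeeping in the previous two steps. One must check carefully that coverings and fiber products in the localized site correspond to those in $\calE(U'/\frakS)$. In particular, $\rho_{X',U}$ preserves the cartesian condition. This is immediate for $\calE$, but for $\underline{\calE}$ it involves $\Delta_u$ and $U'$. Here we use that $\Delta_{u}$ for the composed structure map is unchanged, since $U\ra X$ is étale and strict and the construction of \ref{paragIran2} is compatible with étale base change (\cite{SF2} 5.11).
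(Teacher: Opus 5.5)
Your argument is correct, but it takes a different route from the paper.

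\textbf{The paper's route.} The paper argues at the level of topoi, in one line. By \eqref{stackequiv2} (resp.\ \eqref{stackequiv1}), $\alpha'_s$ is the localization morphism attached to $u_X'^{-1}(s):u_X'^{-1}(U^a)\ra\calI'$. These morphisms, over all local sections $s$, jointly cover $\calI'$. Since the localized topoi form a stack $\mathbbm{T}'$ (resp.\ $\underline{\mathbbm{T}}$), an object over $\calI'$ is the same as a compatible family of objects of the $\widetilde{\calE}(U'/\frakS)$. This is a sheaf-level version of \ref{indexeddescentdata}, from which the claim is read off.

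\textbf{Your route.} You stay on the sites. You use that $\alpha'_s$ induces an isomorphism of slice categories $\calE(U'/\frakS)_{/(U,\frakT,u)}\xrightarrow{\sim}\left(\calE(X'/\frakS)_{/\calI'}\right)_{/(U,\frakT,u,s)}$, because the index on any object over $(U,\frakT,u,s)$ is forced to be the restriction of $s$. This isomorphism respects cartesian morphisms and étale coverings. Hence the sheaf axioms for $\F$ at $(U,\frakT,u,s)$ and for $\F_s$ at $(U,\frakT,u)$ coincide, and continuity of $\alpha'_s$ gives the other implication.

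\textbf{What each buys.} Your argument is elementary and proves exactly the presheaf-level assertion. The paper's proof leaves that step implicit: one still has to match the gluing from the stack property with the presheaf dictionary of \ref{indexeddescentdata}. The paper's argument is shorter and uses the stack formalism that is reused later, e.g.\ with $\mathbbm{E}'$ in the proof of \ref{THMXXYY}.

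\textbf{Two small precisions.}
\begin{itemize}
\item $\alpha'_s$ is not full in general. An $X'$-morphism between objects over $U'$ that is not a $U'$-morphism can still preserve the restricted sections. So your phrase about identifying $\calE(U'/\frakS)$ with a full subcategory should be replaced by the slice isomorphism above, which is all you actually use.
\item No compatibility of \ref{paragIran2} with étale base change is needed. $\Delta_u$ and the cartesian condition in $\underline{\calE}$ only involve $u$ and the structure morphism to $S$, not the base $X$ or $U$.
\end{itemize}
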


\begin{proof}
This follows from \eqref{stackequiv1}, \eqref{stackequiv2} and the fact that the fibered categories $\mathbbm{T}'$ and $\underline{\mathbbm{T}}$ \eqref{stack1} are stacks.
\end{proof}

\begin{remark}
Proposition \ref{iranlem145} remains true if we replace $\calE(X'/\frakS)_{/\calI'}$ and $\underline{\calE}(X/\frakS)_{\underline{\calI}}$ with the sites $\calE_{\mathrm{strict}}(X'/\frakS)_{/\calI'}$ and $\underline{\calE}_{\mathrm{strict}}(X/\frakS)_{/\underline{\calI}}$ respectively.
\end{remark}

\begin{parag}
Let $\calI_X$ be a sheaf of $X_{\text{ét}}$ and consider the sheaves $\calI'$ and $\underline{\calI}$ defined in \ref{paragMuzan194}.
Let $(U,\frakT,u)$ be an object of $\calE(X'/\frakS)$ (resp. $\underline{\calE}(X/\frakS)$). Consider the morphism of topoi $\alpha_{(U,\frakT,u)}$ \eqref{alphatopos} and denote by $\calI_U$ the restriction of $\calI_X$ on $U.$ We have
\begin{alignat*}{2}
\alpha_{(U,\frakT,u)}^{-1}\calI' &= \calI' \circ \alpha_{(U,\frakT,u)}=\calI_U\\
(\text{resp.}\ \alpha_{(U,\frakT,u)}^{-1}\underline{\calI} &= \underline{\calI} \circ \alpha_{(U,\frakT,u)}=\calI_U).
\end{alignat*}
The morphism $\alpha_{(U,\frakT,u)}$ then induces a morphism of topoi
\begin{equation}\label{deltatop}
\delta_{(U,\frakT,u),\calI_X}:U_{\text{ét}/\calI_U} \ra \widetilde{\calE}(X'/\frakS)_{/\calI'} \left (\text{resp.\ }\widetilde{\underline{\calE}}(X/\frakS)_{/\underline{\calI}} \right ).
\end{equation}
Its inverse image functor is composition by the following functor, that we abusively denote by $\delta_{(U,\frakT,u),\calI_X}:$
$$
\delta_{(U,\frakT,u),\calI_X}:
\begin{array}[t]{clc}
\text{ét}_{/U\times \calI_X} & \ra & \calE(X'/\frakS)_{/\calI'} \left (\text{resp.\ }\underline{\calE}(X/\frakS)_{/\underline{\calI}}\right )\\
(V,s) & \mapsto & \left (\alpha_{(U,\frakT,u)}(V),s\right ),
\end{array}
$$
where $V$ is an étale $U$-scheme and $s\in \Gamma(V,\calI_X)$ (note that we consider $\calI_X$ as sheaf of $X'_{\text{ét}}$ via $F_1:X\ra X'$) and $\alpha_{(U,\frakT,u)}$ is given in \eqref{paragIran4}. If $(U_1,\frakT_1,u_1) \ra (U_2,\frakT_2,u_2)$ is a morphism of $\calE(X'/\frakS)$ (resp. $\underline{\calE}(X/\frakS)$), we consider the functor
$$
j_{g,\calI_X}:\begin{array}[t]{cll}
\text{ét}_{/U_1\times \calI_X} & \ra & \text{ét}_{/U_2\times \calI_X} \\
(V \ra U_1\times \calI_X) & \mapsto & \left (V \ra U_1\times \calI_X \xrightarrow{g\times \op{Id}} U_2\times \calI_X \right ).
\end{array}
$$
If $V$ is an étale $U_1$-scheme then we have a morphism
$$
\beta_{(f,g)}(V):\alpha_{(U_1,\frakT_1,u_1)}(V) \ra \alpha_{(U_2,\frakT_2,u_2)}\circ j_g(V),
$$
where $\beta_{(f,g)}$ and $j_g$ are given in \ref{paragIran4}.
We thus get a morphism
\begin{equation}\label{defbeta115ind}
\beta_{(f,g),\calI_X}:\delta_{(U_1,\frakT_1,u_1)} \ra \delta_{(U_2,\frakT_2,u_2)} \circ j_{g,\calI_X}.
\end{equation}
For étale morphisms $V\ra U \ra X'$ (resp. $V\ra U \ra X$), an object $(V,\frakZ,v)$ of $\calE(U/\frakS)$ (resp. $\underline{\calE}(U/\frakS)$) and a section $s\in \Gamma(U,\calI_X),$ we have the following commutative diagram:
\begin{equation}\label{diag188}
\begin{tikzcd}
\text{ét}_{/V} \ar{rr}{\alpha_{(V,\frakZ,v)}} \ar[swap]{dd}{W\mapsto (W,s_{|W})} & & \calE(U/\frakS) \left (\text{resp.\ }\underline{\calE}(U/\frakS)\right )\ar{dd}{\alpha_s'\left (\text{resp.\ }\underline{\alpha}_s\right )} \\
 & & \\
\text{ét}_{/V\times \calI_X} \ar[swap]{rr}{\delta_{(V,\frakZ,v),\calI_X}} & & \calE(X'/\frakS)_{/\calI'} \left (\text{resp.\ }\underline{\calE}(X/\frakS)_{/\underline{\calI}}\right ).
\end{tikzcd}
\end{equation}
For a positive integer $n$ and $\calI_X=\upmu_X^n,$ we will denote $\delta_{(U,\frakT,u),\calI_X}$ simply by $\delta_{(U,\frakT,u),n}.$ If, furthermore $n=1,$ we denote $\delta_{(U,\frakT,u),n}$ simply by $\delta_{(U,\frakT,u)}.$
\end{parag}

\begin{remark}
Let $\calI_X$ be an étale sheaf on $X$ and consider the sheaves $\calI'$ and $\underline{\calI}$ defined in \ref{paragMuzan194} and the sheaves $\calI'_{\mathrm{strict}}$ and $\underline{\calI}_{\mathrm{strict}}$ defined in \ref{remShiho194}. If $(U,\frakT,u)$ is an object of $\calE_{\mathrm{strict}}(X'/\frakS)$ (resp. $\underline{\calE}_{\mathrm{strict}}(X/\frakS)$), then, by \ref{remShiho1}, the functor \eqref{deltatop} factors through $\widetilde{\calE}_{\mathrm{strict}}(X'/\frakS)_{/\calI'_{\mathrm{strict}}}$ (resp. $\widetilde{\underline{\calE}}_{\mathrm{strict}}(X/\frakS)_{/\underline{\calI}_{\mathrm{strict}}})$, yielding a morphism of topoi, abusively denoted
\begin{equation}
\delta_{(U,\frakT,u),\calI_X}^{\mathrm{strict}}:U_{\text{ét}/\calI_U} \ra \widetilde{\calE}_{\mathrm{strict}}(X'/\frakS)_{/\calI'_{\mathrm{strict}}} \left (\text{resp.\ }\widetilde{\underline{\calE}}_{\mathrm{strict}}(X/\frakS)_{/\underline{\calI}_{\mathrm{strict}}} \right ).
\end{equation}
\end{remark}

\begin{proposition}\label{inddescentdata2}
Let $\calI_X$ be an étale sheaf on $X$ an consider it as an étale sheaf on $X'$ via the exact relative Frobenius $X\ra X'.$
For an étale $X$-scheme or an étale $X'$-scheme $U,$ we denote by $\calI_U$ the restriction of $\calI_X$ on $U.$ A presheaf $\F$ on $\calE(X'/\frakS)_{/\calI'}$ (resp. $\underline{\calE}(X/\frakS)_{/\underline{\calI}}$) is equivalent to the following data:
\begin{enumerate}
\item For every object $(U,\frakT,u)$ of $\calE(X'/\frakS)$ (resp. $\underline{\calE}(X/\frakS)$), a presheaf $\F_{(U,\frakT,u)}$ on $\text{ét}_{/U\times \calI_X}.$
\item For every morphism $(f,g):(U_1,\frakT_1,u_1) \ra (U_2,\frakT_2,u_2)$ in $\calE(X'/\frakS)$ (resp. $\underline{\calE}(X/\frakS)$), a morphism of presheaves on $\text{ét}_{/U_1\times \calI_X},$
$$\gamma_{\F,(f,g)}:g^{-1}\F_{(U_2,\frakT_2,u_2)} \ra \F_{(U_1,\frakT_1,u_1)},$$
\end{enumerate}
where $g$ denotes abusively the morphism of topoi $U_{1,\text{ét}/\calI_{U_1}} \ra U_{2,\text{ét}/\calI_{U_2}}$
induced by $g:U_1\ra U_2,$
such that
\begin{enumerate}[(i)]
\item For every object $(U,\frakT,u)$ of $\calE(X/\frakS)$ (resp. $\underline{\calE}(X/\frakS)$),
$$\gamma_{\F,\op{Id}_{(U,\frakT,u)}}=\op{Id}_{\F_{(U,\frakT,u)}}.$$
\item For all composable morphisms $(f_1,g_1):(U_1,\frakT_1,u_1) \ra (U_2,\frakT_2,u_2)$ and $(f_2,g_2):(U_2,\frakT_2,u_2) \ra (U_3,\frakT_3,u_3)$ in $\calE(X'/\frakS)$ (resp. $\underline{\calE}(X/\frakS)$),
$$\gamma_{\F,(f_1,g_1)} \circ g_1^{-1}\gamma_{\F,(f_2,g_2)}=\gamma_{\F,(f_2,g_2)\circ (f_1,g_1)}.$$
\item If $(f,g)$ is a cartesian morphism then $\gamma_{\F,(f,g)}$ is an isomorphism.
\end{enumerate}
This equivalence is given as follows: for a presheaf $\F$ on $\calE(X'/\frakS)_{/\calI'}$ (resp. $\underline{\calE}(X/\frakS)_{/\underline{\calI}}$),
$$\F_{(U,\frakT,u)}=\F \circ \delta_{(U,\frakT,u),\calI_X}$$
and $\gamma_{\F,(f,g)}$ is induced by $\beta_{(f,g),\calI_X}$ \eqref{defbeta115ind}. Conversely, given a data $(\F_{(U,\frakT,u)},\gamma_{\F,(f,g)})$ as above, the presheaf is defined by
$$\F(U,\frakT,u,s)=\F_{(U,\frakT,u)}(U,s)$$
and, for a morphism $(f,g):(U_1,\frakT_1,u_1,s_1) \ra (U_2,\frakT_2,u_2,s_2),$ the morphism
$$\F(U_2,\frakT_2,u_2,s_2) \ra \F(U_1,\frakT_1,u_1,s_1)$$
is equal to $\gamma_{\F,(f,g)}(U_1,s_1).$
\end{proposition}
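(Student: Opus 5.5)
The plan is to realize the equivalence as a composite of two bijections already available: the description of presheaves on $\calE(X'/\frakS)_{/\calI'}$ (resp. $\underline{\calE}(X/\frakS)_{/\underline{\calI}}$) as families of presheaves on localized sites, and the description of modules (here presheaves) on the non-indexed Oyama site by descent data recalled in \ref{paragIran4} (\cite{SF2} 8.24). Rather than passing through \ref{indexeddescentdata}, I would work directly with the objects of the localized site. These are quadruples $(U,\frakT,u,s)$ with $s\in\Gamma(U,\calI_X)$; a morphism $(U_1,\frakT_1,u_1,s_1)\ra(U_2,\frakT_2,u_2,s_2)$ is a morphism $(f,g)$ of the underlying site with $s_{2|U_1}=s_1$. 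Given $\F$, I set $\F_{(U,\frakT,u)}=\F\circ\delta_{(U,\frakT,u),\calI_X}$, so that $\F_{(U,\frakT,u)}(V,s)=\F(\alpha_{(U,\frakT,u)}(V),s)$ for $V$ étale over $U$ and $s\in\Gamma(V,\calI_X)$. The transition maps $\gamma_{\F,(f,g)}$ are obtained by applying $\F$ to the morphisms $\beta_{(f,g),\calI_X}$ of \eqref{defbeta115ind}. One gets first a map $\F_{(U_2,\frakT_2,u_2)}\circ j_{g,\calI_X}\ra\F_{(U_1,\frakT_1,u_1)}$ and then, by adjunction, a map from $g^{-1}$ of the presheaf, using that $g^{-1}$ on presheaves is the left adjoint of composition with $j_{g,\calI_X}$.

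Conditions (i) and (ii) follow from the corresponding identities for $\beta$ in the non-indexed setting (\cite{SF2} 8.9): $\beta_{\op{Id}}=\op{Id}$, and $\beta$ is compatible with composition. These identities pass unchanged to $\beta_{(f,g),\calI_X}$, because the section $s$ is carried along untouched. For (iii), let $(f,g)$ be cartesian and $V$ étale over $U_1$. I would check that $\beta_{(f,g)}(V):\alpha_{(U_1,\frakT_1,u_1)}(V)\ra\alpha_{(U_2,\frakT_2,u_2)}(V)$ is an isomorphism in the site, where on the right $V$ is viewed over $U_2$; this is the content of (\cite{SF2} 8.8, 8.9) for cartesian morphisms. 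It then remains to identify the stalk of $g^{-1}\F_{(U_2,\frakT_2,u_2)}$ at $(V,s)$ with $\F_{(U_2,\frakT_2,u_2)}(j_{g,\calI_X}(V,s))$. This identification holds because $g:U_1\ra U_2$ is étale, so $g^{-1}$ is just restriction along the localization.

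For the converse, given data $(\F_{(U,\frakT,u)},\gamma_{\F,(f,g)})$, I define $\F(U,\frakT,u,s)=\F_{(U,\frakT,u)}(U,s)$, with restriction along $(f,g)$ given by $\gamma_{\F,(f,g)}(U_1,s_1)$ composed with the adjunction unit $\F_{(U_2,\frakT_2,u_2)}(U_2,s_2)\ra(g^{-1}\F_{(U_2,\frakT_2,u_2)})(U_1,s_1)$. Conditions (i) and (ii) then say exactly that $\F$ is a functor. Showing that the two constructions are mutually inverse requires a reconstruction step. Starting from data, we need $\F_{(U,\frakT,u)}(V,s)=\F_{(V,\frakT_V,u_V)}(V,s)$, where $(V,\frakT_V,u_V)=\alpha_{(U,\frakT,u)}(V)$. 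This comes from (iii) applied to the cartesian morphism $\alpha_{(U,\frakT,u)}(V)\ra(U,\frakT,u)$ (\cite{SF2} 8.8), which gives an isomorphism $\gamma:(g^{-1}\F_{(U,\frakT,u)})_{|V}\xrightarrow{\sim}\F_{(V,\frakT_V,u_V)}$. Starting from a presheaf, the composite is $\F$ itself, since $\delta_{(U,\frakT,u),\calI_X}(U,s)=(U,\frakT,u,s)$ because $\alpha_{(U,\frakT,u)}(U)=(U,\frakT,u)$.

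The main obstacle is this round trip from data back to data. One must verify that the isomorphisms from (iii) are compatible with restriction in $V$ and with the maps $\gamma$, so that the reconstructed family and transition maps agree with the original ones. I would reduce this to the non-indexed statement (\cite{SF2} 8.24), applied objectwise in the index $s$. For a fixed section $s$ over $V$, the data restricted to pairs $(W,s_{|W})$ is a non-indexed descent datum on the relevant Oyama site, by diagram \eqref{diag188}, and the cocycle checks are then inherited from (ii). The remaining bookkeeping consists of tracking the sections $s$, which are only ever restricted.
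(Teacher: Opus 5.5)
Your proposal is correct and follows essentially the paper's route: the paper's proof simply adapts the non-indexed descent-data argument of (\cite{SF2} 8.11), carrying the index $s$ along, which is exactly what you do via $\delta_{(U,\frakT,u),\calI_X}$ and $\beta_{(f,g),\calI_X}$. One small correction: since $g:U_1\ra U_2$ is étale, $g^{-1}$ is literally precomposition with the localization functor $j_{g,\calI_X}$, as you note yourself later, not its left adjoint, so $\gamma_{\F,(f,g)}$ needs no adjunction; also, the relevant presheaf-level reference is (\cite{SF2} 8.11), not the module version 8.24.
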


\begin{proof}
Similar to (\cite{SF2} 8.11).
\end{proof}

\begin{remark}
Proposition \ref{inddescentdata2} remains true if we replace $\calE(X'/\frakS)_{/\calI'}$ and $\underline{\calE}(X/\frakS)_{\underline{\calI}}$ with $\calE_{\mathrm{strict}}(X'/\frakS)_{/\calI'_{\mathrm{strict}}}$ and $\underline{\calE}_{\mathrm{strict}}(X/\frakS)_{/\underline{\calI}_{\mathrm{strict}}}$ respectively and if we replace $\calE(X'/\frakS)$ and $\underline{\calE}(X/\frakS)$ with $\calE_{\mathrm{strict}}(X'/\frakS)$ and $\underline{\calE}_{\mathrm{strict}}(X/\frakS)$ respectively.
\end{remark}

\begin{definition}\label{definddescentdata}
For a presheaf of sets $\F$ on $\calE(X'/\frakS)_{/\calI'}$ (resp. $\underline{\calE}(X/\frakS)_{/\underline{\calI}},$ $\calE_{\mathrm{strict}}(X'/\frakS)_{/\calI'_{\mathrm{strict}}},$ $\underline{\calE}_{\mathrm{strict}}(X/\frakS)_{/\underline{\calI}_{\mathrm{strict}}}$), we call \emph{indexed descent data associated with $\F$} the data $(\F_{(U,\frakT,u)},\gamma_{\F,(f,g)})$ given in \ref{inddescentdata2}.
\end{definition}

\begin{proposition}\label{prop1409}
Let $\F$ be a presheaf of sets on $\calE(X'/\frakS)_{/\calI'}$ (resp. $\underline{\calE}(X/\frakS)_{/\underline{\calI}},$ $\calE_{\mathrm{strict}}(X'/\frakS)_{/\calI'},$ $\underline{\calE}_{\mathrm{strict}}(X/\frakS)_{/\underline{\calI}}$) and $(\F_{(U,\frakT,u)},\gamma_{\F,(f,g)})$ the indexed descent data associated with $\F$ \eqref{definddescentdata}. Then $\F$ is a sheaf for the étale topology if and only if $\F_{(U,\frakT,u)}$ is a sheaf of $U_{\text{ét}/\calI_U}$ for every $(U,\frakT,u).$
\end{proposition}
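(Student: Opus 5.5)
The plan is to reduce the statement to the non-indexed sheaf criterion of (\cite{SF2} 8.24) and to Proposition \ref{iranlem145}, passing through the descent data of \ref{indexeddescentdata}. I treat the case of $\calE(X'/\frakS)_{/\calI'}$; the case $\underline{\calE}(X/\frakS)_{/\underline{\calI}}$ and the strict cases are identical after replacing $\alpha'_s$, $\delta$ by their underlined or strict analogues.

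First, recall that the topology on $\calE(X'/\frakS)_{/\calI'}$ is induced by $j_{\calI'}$. A covering of $(U,\frakT,u,s)$ is therefore a family of cartesian morphisms $(U_i,\frakT_i,u_i,s_{|U_i})\ra (U,\frakT,u,s)$ such that $(U_i\ra U)$ is an étale covering. By Proposition \ref{iranlem145}, $\F$ is a sheaf if and only if each $\F_s=\F\circ\alpha'_s$ is a sheaf on $\calE(U'/\frakS)$. By the non-indexed criterion (\cite{SF2} 8.24), $\F_s$ is a sheaf if and only if, for every object $(V,\frakZ,v)$ of $\calE(U'/\frakS)$, the presheaf $\F_s\circ\alpha_{(V,\frakZ,v)}$ is an étale sheaf on $V$. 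The commutative diagram \eqref{diag188} identifies this presheaf with the restriction of $\F_{(V,\frakZ,v)}=\F\circ\delta_{(V,\frakZ,v),\calI_X}$ along the functor $\text{ét}_{/V}\ra \text{ét}_{/V\times\calI_X}$, $W\mapsto (W,s_{|W})$. By \ref{indremark73}, this restriction is exactly $(\F_{(V,\frakZ,v)})_{s}$.

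Second, to conclude I will use \ref{indremark73}, or equivalently the stack property of $V_{\text{ét}/\calI_V}$ over $V_{\text{ét}}$. This gives that a presheaf $\calG$ on $\text{ét}_{/V\times\calI_X}$ is a sheaf if and only if $\calG_t$ is a sheaf on $\text{ét}_{/W}$ for every $W\ra V$ étale and every $t\in\Gamma(W,\calI_X)$. The reason is that coverings of $(W,t)$ in the induced topology are just étale coverings $(W_i\ra W)$ with the restricted sections. Combining the two steps gives a chain of equivalences: $\F$ is a sheaf if and only if $(\F_{(V,\frakZ,v)})_s$ is an étale sheaf for all objects $(V,\frakZ,v)$ and all local sections $s$ defined on $V$, if and only if every $\F_{(V,\frakZ,v)}$ is a sheaf of $V_{\text{ét}/\calI_V}$.

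The main obstacle is the matching of indices in the first step. The sections $s$ appearing in \ref{iranlem145} live over $U$, whereas the objects $(V,\frakZ,v)$ live over étale $U$-schemes $V$, and one has to check that every pair $(V,t)$ with $t\in\Gamma(V,\calI_X)$ is realised. It is: take $U=V$ and $s=t$, and use the compatibility $\alpha'_s\circ\rho_{U',V'}=\alpha'_{s_{|V}}$ from \ref{paragMuzan194}.

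I would carry out the proof directly with sieves if this bookkeeping becomes awkward. One checks the sheaf condition for $\F$ on a covering of $(U,\frakT,u,s)$. By the definition $\F(U,\frakT,u,s)=\F_{(U,\frakT,u)}(U,s)$ in \ref{inddescentdata2}, and since the $\gamma_{\F,(f,g)}$ are isomorphisms for cartesian morphisms, the equalizer diagram for that covering becomes the equalizer diagram for $\F_{(U,\frakT,u)}$ on the étale covering $((U_i,s_{|U_i})\ra (U,s))$. Fibre products in the site correspond under this identification, by (\cite{SF2} 8.8).
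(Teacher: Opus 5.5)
Your argument is correct, but your main route differs from the paper's. The paper proves this by saying it is ``similar to (\cite{SF2} 8.15)'', i.e.\ one reruns the non-indexed argument with the indexed descent data. That is essentially your last paragraph: check the sheaf condition on a covering of $(U,\frakT,u,s)$ using $\F(U,\frakT,u,s)=\F_{(U,\frakT,u)}(U,s)$, the fact that the $\gamma_{\F,(f,g)}$ are isomorphisms for cartesian morphisms, and the fibre-product compatibilities of (\cite{SF2} 8.8). Your main route instead uses the non-indexed criterion as a black box. You first split off the index via \ref{iranlem145}, then apply the criterion to each $\F_s$, and finally reassemble through \eqref{diag188} and the fibrewise description of sheaves on $V_{\text{ét}/\calI_V}$. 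This buys you not having to redo the equalizer and fibre-product bookkeeping in the indexed setting, at the cost of the index matching you point out. The direct route is shorter and handles both directions at once. Note also that the non-indexed sheaf criterion is (\cite{SF2} 8.15), not 8.24, which is the linearized statement for modules.

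One step should be made explicit. In the direction ``$\F$ sheaf $\Rightarrow$ each $\F_{(V,\frakZ,v)}$ sheaf'', your chain only shows that $(\F_{(V,\frakZ,v)})_t$ is a sheaf for sections $t$ defined on all of $V$. Your fibrewise criterion, however, needs $t\in\Gamma(W,\calI_X)$ for every étale $W\ra V$, and such $t$ need not extend to $V$. This is harmless and can be fixed in either of two ways:
\begin{enumerate}
\item Note that $\F_{(V,\frakZ,v)}=\F\circ\delta_{(V,\frakZ,v),\calI_X}$, where $\delta_{(V,\frakZ,v),\calI_X}$ is the inverse image of the morphism of topoi \eqref{deltatop}; this direction is then immediate.
\item Identify $(\F_{(V,\frakZ,v)})_t$ with $(\F_{\alpha_{(V,\frakZ,v)}(W)})_t$ through the cartesian morphism $\alpha_{(V,\frakZ,v)}(W)\ra(V,\frakZ,v)$, and apply your step-one conclusion to the object $\alpha_{(V,\frakZ,v)}(W)$.
\end{enumerate}
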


\begin{proof}
The proof is similar to (\cite{SF2} 8.15).
\end{proof}

\begin{parag}\label{kraz1911}
Let $\calI_X$ be an étale sheaf on $X$ and consider the sheaves $\calI'$ and $\underline{\calI}$ defined in \ref{paragMuzan194} and the sheaves $\calI'_{\mathrm{strict}}$ and $\underline{\calI}_{\mathrm{strict}}$ defined in \ref{remShiho194}. Consider the localization morphisms of topoi
$$j_{\calI'}:\widetilde{\calE}(X'/\frakS)_{/\calI'} \ra \widetilde{\calE}(X/\frakS),\ j_{\underline{\calI}}:\widetilde{\underline{\calE}}(X/\frakS)_{/\underline{\calI}} \ra \widetilde{\underline{\calE}}(X/\frakS),$$
$$
j_{\calI'_{\mathrm{strict}}}:\widetilde{\calE}_{\mathrm{strict}}(X'/\frakS)_{/\calI'_{\mathrm{strict}}} \ra \widetilde{\calE}_{\mathrm{strict}}(X/\frakS),\ j_{\underline{\calI}_{\mathrm{strict}}}:\widetilde{\underline{\calE}}_{\mathrm{strict}}(X/\frakS)_{/\underline{\calI}_{\mathrm{strict}}} \ra \widetilde{\underline{\calE}}_{\mathrm{strict}}(X/\frakS)
$$
given in \ref{indeq2}.
We equip the topos $\widetilde{\calE}(X'/\frakS)_{/\calI'}$ (resp. $\widetilde{\underline{\calE}}(X/\frakS)_{/\underline{\calI}}$) with the ring $\Ox_{\calI'}=j_{\calI'}^{-1}\Ox_{\calE(X'/\frakS)}$ (resp. $\Ox_{\underline{\calI}}=j_{\underline{\calI}}^{-1}\Ox_{\underline{\calE}(X/\frakS)}$). Notice that, if $s\in \Gamma(U,\calI_X)$ is a local section over an étale $X'$-scheme (resp. $X$-scheme) $U,$ then
$$\alpha_s'^{-1}\Ox_{\calI'}=\rho_{X',U}^{-1}\Ox_{\calE(X'/\frakS)}=\Ox_{\calE(U/\frakS)}, \quad \left (\text{resp.}\ \underline{\alpha}_s^{-1}\Ox_{\underline{\calI}}=\underline{\rho_{X,U}^{-1}}\Ox_{\underline{\calE}(X/\frakS)}=\Ox_{\underline{\calE}(U/\frakS)}\right ).$$
If $\F$ is a module of $\widetilde{\calE}(X'/\frakS)_{/\calI'}$ (resp. $\widetilde{\underline{\calE}}(X/\frakS)_{/\underline{\calI}}$) then $\alpha_s'^{-1}\F$ (resp. $\underline{\alpha}_s^{-1}\F$) is a module on $\calE(U/\frakS)$ (resp. $\underline{\calE}(U/\frakS)$).

We similarly define the rings $\Ox_{\calI'_{\mathrm{strict}}}$ and $\Ox_{\underline{\calI}_{\mathrm{strict}}}.$ We then have similar properties to those given above.
\end{parag}

\begin{lemma}
Let $(U,\frakT,u)$ be an object of $\calE(X'/\frakS)$ (resp. $\underline{\calE}(X/\frakS)$). Consider the morphism of topoi $\delta_{(U,\frakT,u),\calI_X}$ \eqref{deltatop}. Set $\Ox=\Ox_{\calE(X'/\frakS)}$ (resp. $\Ox=\Ox_{\underline{\calE}(X/\frakS)}$) and $j=j_{\calI'}$ (resp $j=j_{\underline{\calI}}$). Then
$$
\delta_{(U,\frakT,u),\calI_X}^{-1}j^{-1}\Ox=\left (u_*\Ox_T\right )_{/\calI_X}.
$$
\end{lemma}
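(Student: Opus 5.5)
The plan is to compute both sides on objects of the site $\text{ét}_{/U\times \calI_X}$. By the definition of $\delta_{(U,\frakT,u),\calI_X}$ given before \eqref{deltatop}, its inverse image on presheaves is composition with the functor $(V,s)\mapsto (\alpha_{(U,\frakT,u)}(V),s)$. By \eqref{indeq8}, $j^{-1}\Ox$ is $\Ox\times \calI'\ra \calI'$, or $\Ox\times\underline{\calI}\ra\underline{\calI}$. Viewed as a sheaf on the localized site, it sends an object $(W,\frakZ,w,s)$ to $\Ox(W,\frakZ,w)=\Gamma(Z,\Ox_Z)$ and forgets $s$. Hence, for an étale $U$-scheme $V$ and $s\in\Gamma(V,\calI_X)$,
$$
\left(\delta_{(U,\frakT,u),\calI_X}^{-1}j^{-1}\Ox\right)(V,s)=\Ox\left(\alpha_{(U,\frakT,u)}(V)\right).
$$
Write $\alpha_{(U,\frakT,u)}(V)=(V,\frakT_V,u_V)$. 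The key input is the nonindexed statement, recalled in \ref{paragIran4} (\cite{SF2} (8.19.10) and 8.24), that $\alpha_{(U,\frakT,u)}^{-1}\Ox=u_*\Ox_T$ as sheaves on $U_{\text{ét}}$. This means $\Gamma(T_V,\Ox_{T_V})=\Gamma(T\times_U V,\Ox)=(u_*\Ox_T)(V)$, which holds because $T_V$ is the étale localisation of $T$ over $V$ in the construction of $\alpha$. This identification is functorial in $V$.

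The right-hand side $(u_*\Ox_T)_{/\calI_X}=j_{\calI_U}^{-1}(u_*\Ox_T)$ is, again by \eqref{indeq8}, the sheaf $(V,s)\mapsto (u_*\Ox_T)(V)$. The two sides therefore agree object by object, via the isomorphism above, and this isomorphism is independent of $s$. It remains to check compatibility with the morphisms $(V_1,s_1)\ra(V_2,s_2)$, meaning $g:V_1\ra V_2$ over $U$ with $s_{2|V_1}=s_1$. On the left, the transition map is $\Ox$ applied to $\alpha_{(U,\frakT,u)}(g)$. On the right, it is the restriction $(u_*\Ox_T)(V_2)\ra(u_*\Ox_T)(V_1)$. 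These coincide because the nonindexed isomorphism $\alpha^{-1}\Ox\simeq u_*\Ox_T$ is an isomorphism of sheaves. The ring structures agree as well, since both sides are computed sectionwise.

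The only point needing care, and the main obstacle, is justifying that $\Ox(\alpha_{(U,\frakT,u)}(V))=\Gamma(T\times_UV,\Ox)$. In the resp. case of $\underline{\calE}$, one must first check that the formal scheme attached to $V$ by $\alpha$ has special fiber $T\times_U V$ (via $\underline{T}\ra T$ being a homeomorphism), rather than something twisted by Frobenius. I would cite the explicit construction in (\cite{SF2} 8.6) for this. The strict variants follow identically, using \ref{remShiho1}.
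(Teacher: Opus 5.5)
Your proposal is correct and is essentially the paper's argument. The paper notes that $j_{\calI'}\circ\delta_{(U,\frakT,u),\calI_X}=\alpha_{(U,\frakT,u)}\circ j_{\calI_X}$ as functors of sites and then applies the nonindexed identification $\alpha_{(U,\frakT,u)}^{-1}\Ox=u_*\Ox_T$ (\cite{SF2} (8.24.11)); your sectionwise computation just unwinds this, and the point you flag as the main obstacle is already contained in that cited nonindexed result.
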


\begin{proof}
For $\calE(X'/\frakS),$ this follows from (\cite{SF2} (8.24.11)) and the commutative diagram
$$
\begin{tikzcd}
\text{ét}_{/U\times \calI_X} \ar{rr}{\delta_{(U,\frakT,u),\calI_X}} \ar[swap]{d}{j_{\calI_X}} & & \calE(X'/\frakS)_{/\calI'} \ar{d}{j_{\calI'}} \\
\text{ét}_{/U} \ar[swap]{rr}{\alpha_{(U,\frakT,u)}} & & \calE(X'/\frakS).
\end{tikzcd}
$$
The proof for $\underline{\calE}(X/\frakS)$ is similar.
\end{proof}

\begin{parag}
Let $(f,g):(U_1,\frakT_1,u_1) \ra (U_2,\frakT_2,u_2)$ be a morphism in $\calE(X'/\frakS)$ (resp. $\underline{\calE}(X/\frakS)$). We have a morphism of ringed topoi \eqref{Kokoftilda}
\begin{equation}
\widetilde{f}:\left (U_{1,\text{ét}},u_{1*}\Ox_{T_{1}} \right ) \ra \left ( U_{2,\text{ét}}, u_{2*}\Ox_{T_{2}} \right ).
\end{equation}
We denote by $\calI_{U_i}$ the restriction of $\calI_X$ to $U_i.$ We have the following commutative diagram of topoi
$$
\begin{tikzcd}
U_{1,\text{ét}/\calI_{U_1}} \ar{r}{g_{\calI}} \ar[swap]{d}{j_{\calI_{U_1}}} & U_{2,\text{ét}/\calI_{U_2}} \ar{d}{j_{\calI_{U_2}}} \\
U_{1,\text{ét}} \ar{r}{g} & U_{2,\text{ét}}, 
\end{tikzcd}
$$
where the upper arrow is induced by $g.$ We then have
$$
\left (g^{-1}u_{2*}\Ox_{T_2} \right )_{\calI_{U_1}} =g_{\calI}^{-1}\left ( \left (u_{2*}\Ox_{T_2} \right )_{\calI_{U_2}} \right ).
$$
It follows that the canonical morphism
$$
g^{-1}u_{2*}\Ox_{T_2}=\widetilde{f}^{-1}u_{2*}\Ox_{T_2} \ra u_{1*} \Ox_{T_1}
$$
induces
$$
g_{\calI}^{-1}\left ( \left (u_{2*}\Ox_{T_2} \right )_{\calI_{U_2}} \right ) \ra \left ( u_{1*}\Ox_{T_1} \right )_{\calI_{U_1}}.
$$
We then get a morphism of ringed topoi
\begin{equation}\label{gmu}
\widetilde{f}_{\calI}: \left (U_{1,\text{ét}/\calI_{U_1}},\left ( u_{1*}\Ox_{T_1} \right )_{\calI_{U_1}} \right ) \ra \left (U_{2,\text{ét}/\calI_{U_2}},  \left (u_{2*}\Ox_{T_2} \right )_{\calI_{U_2}}\right ).
\end{equation}
\end{parag}

\begin{proposition}\label{indlindescentdata2}
Let $\calI_X$ be an étale sheaf on $X$ an consider it as an étale sheaf on $X'$ via the exact relative Frobenius $X\ra X'.$ For an étale $X$-scheme $U,$ we denote by $\calI_U$ the restriction of $\calI_X$ on $U.$ A module $\F$ on $\calE(X'/\frakS)_{/\calI'}$ (resp. $\underline{\calE}(X/\frakS)_{/\underline{\calI}}$) is equivalent to the following data:
\begin{enumerate}
\item For every object $(U,\frakT,u)$ of $\calE(X'/\frakS)$ (resp. $\underline{\calE}(X/\frakS)$), a module $\F_{(U,\frakT,u)}$ of
$$\left (U_{\text{ét}/ \calI_U},\left (u_*\Ox_T \right )_{\calI_U}\right ).$$
\item For every morphism $(f,g):(U_1,\frakT_1,u_1) \ra (U_2,\frakT_2,u_2)$ in $\calE(X'/\frakS)$ (resp. $\underline{\calE}(X/\frakS)$), a morphism of modules of $U_{1,\text{ét}/ \calI_{U_1}},$
\begin{equation}\label{cind}
c_{\F,(f,g)}:\widetilde{f}_{\calI}^*\F_{(U_2,\frakT_2,u_2)} \ra \F_{(U_1,\frakT_1,u_1)},
\end{equation}
\end{enumerate}
where $\widetilde{f}_{\calI}$ is given in \eqref{gmu}.
such that
\begin{enumerate}[(i)]
\item For every object $(U,\frakT,u)$ of $\calE(X/\frakS)$ (resp. $\underline{\calE}(X/\frakS)$),
$$c_{\F,\op{Id}_{(U,\frakT,u)}}=\op{Id}_{\F_{(U,\frakT,u)}}.$$
\item For all composable morphisms $(f_1,g_1):(U_1,\frakT_1,u_1) \ra (U_2,\frakT_2,u_2)$ and $(f_2,g_2):(U_2,\frakT_2,u_2) \ra (U_3,\frakT_3,u_3)$ in $\calE(X'/\frakS)$ (resp. $\underline{\calE}(X/\frakS)$),
$$c_{\F,(f_1,g_1)} \circ \widetilde{f}_{1,\calI}^{*}\gamma_{\F,(f_2,g_2)}=\gamma_{\F,(f_2,g_2)\circ (f_1,g_1)}.$$
\item If $(f,g)$ is a cartesian morphism then $\gamma_{\F,(f,g)}$ is an isomorphism.
\end{enumerate}
This equivalence is given as follows: for a module $\F$ on $\calE(X'/\frakS)_{/\calI'}$ (resp. $\underline{\calE}(X/\frakS)_{/\underline{\calI}}$),
$$\F_{(U,\frakT,u)}=\F \circ \delta_{(U,\frakT,u),\calI_X}$$
and $c_{\F,(f,g)}$ is induced by $\gamma_{\F,(f,g)}$ \eqref{inddescentdata2}. Conversely, given a data $(\F_{(U,\frakT,u)},c_{\F,(f,g)})$ as above, the module is defined by
$$\F(U,\frakT,u,s)=\F_{(U,\frakT,u)}(U,s)$$
and, for a morphism $(f,g):(U_1,\frakT_1,u_1,s_1) \ra (U_2,\frakT_2,u_2,s_2),$ the morphism
$$\F(U_2,\frakT_2,u_2,s_2) \ra \F(U_1,\frakT_1,u_1,s_1)$$
is equal to $c_{\F,(f,g)}(U_1,s_1).$
\end{proposition}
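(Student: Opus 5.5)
The plan is to deduce the statement from the set-theoretic version \ref{inddescentdata2}, adding the module structure by the same adjunction argument as in the non-indexed case (\cite{SF2} 8.24). First, a presheaf of $\Ox_{\calI'}$-modules $\F$ (resp. $\Ox_{\underline{\calI}}$-modules) is in particular a presheaf of sets. By \ref{inddescentdata2}, it therefore corresponds to presheaves $\F_{(U,\frakT,u)}=\F\circ\delta_{(U,\frakT,u),\calI_X}$ on $\text{ét}_{/U\times\calI_X}$, together with transition maps $\gamma_{\F,(f,g)}:g_{\calI}^{-1}\F_{(U_2,\frakT_2,u_2)}\ra \F_{(U_1,\frakT_1,u_1)}$. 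These satisfy (i)–(iii) of \ref{inddescentdata2}.

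By \ref{prop1409}, $\F$ is a sheaf if and only if every $\F_{(U,\frakT,u)}$ is a sheaf of $U_{\text{ét}/\calI_U}$. By the preceding lemma, $\delta_{(U,\frakT,u),\calI_X}^{-1}j^{-1}\Ox=(u_*\Ox_T)_{\calI_U}$. Since $\delta^{-1}$ is composition with a functor, it respects the module structure. Hence the $\Ox_{\calI'}$-module structure on $\F$ is equivalent to a compatible family of $(u_*\Ox_T)_{\calI_U}$-module structures on the $\F_{(U,\frakT,u)}$, with $\gamma_{\F,(f,g)}$ linear over $g_{\calI}^{-1}((u_{2*}\Ox_{T_2})_{\calI_{U_2}})$. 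Conversely, given such data, the formula $\F(U,\frakT,u,s)=\F_{(U,\frakT,u)}(U,s)$ produces a module. The point is that the restriction maps $\gamma_{\F,(f,g)}(U_1,s_1)$ are semilinear with respect to the ring maps $\Gamma(T_2,\Ox_{T_2})\ra\Gamma(T_1,\Ox_{T_1})$.

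Next, I would use the adjunction for the morphism of ringed topoi $\widetilde{f}_{\calI}$ of \eqref{gmu}. A $g_{\calI}^{-1}(u_{2*}\Ox_{T_2})_{\calI_{U_2}}$-linear map $g_{\calI}^{-1}\F_2\ra\F_1$ is equivalent to an $(u_{1*}\Ox_{T_1})_{\calI_{U_1}}$-linear map
$$\widetilde{f}_{\calI}^*\F_2=(u_{1*}\Ox_{T_1})_{\calI_{U_1}}\otimes_{g_{\calI}^{-1}(u_{2*}\Ox_{T_2})_{\calI_{U_2}}}g_{\calI}^{-1}\F_2\ra\F_1 .$$
This identifies $\gamma_{\F,(f,g)}$ with $c_{\F,(f,g)}$. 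The cocycle condition (ii) for $\gamma$ translates into (ii) for $c$ by functoriality of this adjunction under composition, using $\widetilde{(f_2\circ f_1)}_{\calI}=\widetilde{f_2}_{\calI}\circ\widetilde{f_1}_{\calI}$. This composition identity follows from the commutative square relating $g_{\calI}$ and $g$ together with the identity $(g^{-1}u_{2*}\Ox_{T_2})_{\calI_{U_1}}=g_{\calI}^{-1}((u_{2*}\Ox_{T_2})_{\calI_{U_2}})$. Condition (i) is immediate.

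The main obstacle is condition (iii). It concerns $\gamma$ for cartesian morphisms, not $c$, so it has to be checked that the statement as written is consistent with \ref{inddescentdata2}. Here I would argue exactly as in (\cite{SF2} 8.24): the isomorphism condition on $\gamma_{\F,(f,g)}$ for cartesian $(f,g)$ is what comes from the definition of the topology, namely that cartesian morphisms give coverings and that $\alpha$ is computed through them. It transports unchanged because $\gamma$ is the underlying set-theoretic map. The remaining work is to check carefully that $\F(U,\frakT,u,s)=\F_{(U,\frakT,u)}(U,s)$ is compatible with the $\beta_{(f,g),\calI_X}$ of \eqref{defbeta115ind}. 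This can be reduced fiberwise in $s$ to the non-indexed statement using the diagram \eqref{diag188} and \ref{indexeddescentdata}.
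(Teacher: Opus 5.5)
Your proposal is correct and follows the paper's route. The paper's proof only defers to the non-indexed case (\cite{SF2} 8.24), which amounts to what you do: reduce to the set-theoretic indexed descent data of \ref{inddescentdata2} and obtain $c_{\F,(f,g)}$ from the semilinear map $\gamma_{\F,(f,g)}$ by adjunction for $\widetilde{f}_{\calI}$.

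On condition (iii), the covering argument you sketch is not needed. \ref{inddescentdata2} already gives (iii) for $\gamma$. Moreover, for a cartesian $(f,g)$ the ring map $g_{\calI}^{-1}(u_{2*}\Ox_{T_2})_{\calI_{U_2}}\ra(u_{1*}\Ox_{T_1})_{\calI_{U_1}}$ is an isomorphism, so $\widetilde{f}_{\calI}^*$ reduces to $g_{\calI}^{-1}$ and the condition can equivalently be stated for $c_{\F,(f,g)}$.
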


\begin{proof}
The proof is similar to (\cite{SF2} 8.24).
\end{proof}

\begin{remark}\label{remShiho1920}
Proposition \ref{indlindescentdata2} remains true if we replace $\calE(X'/\frakS),$ $\underline{\calE}(X/\frakS),$ $\calI',$ $\underline{\calI}$ and $\delta_{(U,\frakT,u),\calI_X}$ with $\calE_{\mathrm{strict}}(X'/\frakS),$ $\underline{\calE}_{\mathrm{strict}}(X/\frakS),$ $\calI'_{\mathrm{strict}},$ $\underline{\calI}_{\mathrm{strict}}$ and $\delta_{(U,\frakT,u),\calI_X}^{\mathrm{strict}}$ respectively.
\end{remark}

\begin{definition}\label{defindlindescentdata}
For a module $\F$ on $\calE(X'/\frakS)_{/\calI'}$ (resp. $\underline{\calE}(X/\frakS)_{/\underline{\calI}}$), we call \emph{indexed linearized descent data associated with $\F$} the data $(\F_{(U,\frakT,u)},c_{\F,(U,\frakT,u)})$ defined in \ref{indlindescentdata2}.
\end{definition}

\begin{parag}
Let $p_1,p_2:\calI_X^2 \ra \calI_X$ be the canonical projections. These projections induce morphisms of ringed topoi, that we abusively denote by
$$
p_1,p_2:\widetilde{\calE}(X'/\frakS)_{/\calI'^2} \ra \widetilde{\calE}(X'/\frakS)_{/\calI'} \left (\text{resp.}\ \widetilde{\underline{\calE}}(X/\frakS)_{/\underline{\calI}^2} \ra \widetilde{\underline{\calE}}(X/\frakS)_{/\underline{\calI}}\right ).
$$
For modules $\F$ and $\calG$ of $\widetilde{\calE}(X'/\frakS)_{/\calI'}$ (resp. $\widetilde{\underline{\calE}}(X/\frakS)_{/\underline{\calI}}$), we denote by $\F\boxtimes \calG$ the exterior tensor product
$$
\F\boxtimes \calG=p_1^*\F \otimes_{\Ox_{\calI'^2}} p_2^*\calG \left (\text{resp.}\ \F\boxtimes \calG=p_1^*\F \otimes_{\Ox_{\underline{\calI}^2}} p_2^*\calG\right ).
$$
\end{parag}

\begin{proposition}\label{prop1916Ko}
Let $\calG$ and $\F$ be two $\Ox_{\calI'}$-modules of $\widetilde{\calE}(X'/\frakS)_{/\calI'}$ (resp. two $\Ox_{\underline{\calI}}$-modules of $\widetilde{\underline{\calE}}(X/\frakS)_{/\underline{\calI}}$) and $(\calG_s),$ $(\F_s)$ and $\left (\calG\boxtimes\F \right )_{t}$ (resp. $\left (\calG\boxtimes\F \right )_{t}$) the corresponding descent data \eqref{inddescentdata}. Then, for any local sections $s,t\in \Gamma(U,\calI_X)$ over an étale $X'$-scheme (resp. $X$-scheme) $U,$
$$\left (\calG\boxtimes\F \right )_{(s,t)}=\calG_s\otimes_{\Ox_{\calE(U/\frakS)}}\calG_t$$
$$\left (\text{resp.}\ \left (\calG\boxtimes\F \right )_{(s,t)}=\calG_s\otimes_{\Ox_{\underline{\calE}(U/\frakS)}}\calG_t\right ).$$
\end{proposition}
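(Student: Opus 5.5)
The formula to prove reads $(\calG\boxtimes\F)_{(s,t)}=\calG_s\otimes\F_t$; the displayed $\calG_t$ is evidently a typo for $\F_t$. I treat $\widetilde{\calE}(X'/\frakS)_{/\calI'}$; the case of $\widetilde{\underline{\calE}}(X/\frakS)_{/\underline{\calI}}$ is identical after replacing $\alpha'$ and $\rho$ by $\underline{\alpha}$ and $\underline{\rho}$.

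The plan is to compute the descent datum of $\calG\boxtimes\F=p_1^*\calG\otimes_{\Ox_{\calI'^2}}p_2^*\F$ at the section $(s,t)\in\Gamma(U,\calI_X^2)$. By \ref{indexeddescentdata}, applied to the sheaf $\calI_X^2$, this datum is $\alpha'^{-1}_{(s,t)}(p_1^*\calG\otimes p_2^*\F)$. Since $\alpha'^{-1}_{(s,t)}$ is the inverse image of a morphism of topoi, it commutes with tensor products. Moreover $\alpha'^{-1}_{(s,t)}\Ox_{\calI'^2}=\Ox_{\calE(U/\frakS)}$, by the computation in \ref{kraz1911} applied to $\calI_X^2$. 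So the datum equals $\alpha'^{-1}_{(s,t)}p_1^{-1}\calG\otimes_{\Ox_{\calE(U/\frakS)}}\alpha'^{-1}_{(s,t)}p_2^{-1}\F$. Here $p_i^*=p_i^{-1}$, because the rings on both localized topoi are pulled back from $\Ox_{\calE(X'/\frakS)}$ (cf. \ref{indpar2}).

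Everything then reduces to the equality of morphisms of topoi $p_1\circ\alpha'_{(s,t)}=\alpha'_s$ and $p_2\circ\alpha'_{(s,t)}=\alpha'_t$, from $\widetilde{\calE}(U/\frakS)$ to $\widetilde{\calE}(X'/\frakS)_{/\calI'}$. I would check this on the level of sites. On objects, $\alpha'_{(s,t)}(V,\frakZ,v)=(\rho_{X',U}(V,\frakZ,v),(s_{|V},t_{|V}))$. The functor underlying the localization $p_1$ sends an object $(W,(a,b))$ of $\calE(X'/\frakS)_{/\calI'^2}$ to $(W,a)$. Composing, one gets $(\rho_{X',U}(V,\frakZ,v),s_{|V})=\alpha'_s(V,\frakZ,v)$, and similarly for $p_2$. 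These identities are compatible with morphisms. Since all the inverse image functors in question are given by composition with these functors (\ref{paragMuzan194}, \ref{indpar1}), we get $\alpha'^{-1}_{(s,t)}p_1^{-1}\calG=\calG\circ\alpha'_s=\calG_s$, and likewise $\alpha'^{-1}_{(s,t)}p_2^{-1}\F=\F_t$.

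The main obstacle is a subtlety in the last step. The inverse image functor of a localization morphism is composition with the forgetful functor only after identifying $\widetilde{C_{/\calI}}$ with $\widetilde{C}_{/\calI}$ via $e$ (\cite{SGA43} III 5.4). One must also be careful that the tensor product of modules involves sheafification, which commutes with $\alpha'^{-1}_{(s,t)}$ because the latter is exact and commutes with colimits. Equivalently, one can argue directly with the description of $\alpha'_s$ as a localization along $u_X'^{-1}(s)$ in \ref{paragMuzan194}. Then $p_1\circ\alpha'_{(s,t)}$ is the localization along $u_X'^{-1}(p_1\circ(s,t))=u_X'^{-1}(s)$, and the required equality follows from the functoriality of localization (\cite{SGA43} IV 5). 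I would use this second formulation to avoid checking sheafification compatibilities by hand.
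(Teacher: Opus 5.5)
Your proposal is correct and follows the paper's own proof, which likewise reduces everything to the commutativity $p_1\circ\alpha'_{(s,t)}=\alpha'_s$ and $p_2\circ\alpha'_{(s,t)}=\alpha'_t$ of morphisms of topoi. You additionally spell out the compatibility of $\alpha'^{-1}_{(s,t)}$ with tensor products and structure rings, which the paper leaves implicit, and you are right that the $\calG_t$ in the statement should read $\F_t$.
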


\begin{proof}
Let $p_1,p_2:\calI'^2 \ra \calI'$ be the canonical projections. The proposition results from the commutativity of the following diagram, for any étale $X'$-scheme $U$ and $s_1,s_2\in \Gamma(U,\calI_X):$
$$
\begin{tikzcd}
\widetilde{\calE}(U/\frakS) \ar{rr}{\alpha_{(s_1,s_2)}'} \ar[swap]{drr}{\alpha_{s_i}'} & & \widetilde{\calE}(X'/\frakS)_{/\calI'^2} \ar{d}{p_i}  \\
 & & \widetilde{\calE}(X'/\frakS)_{/\calI'} 
\end{tikzcd}
$$
and from a similar diagram for $\underline{\calE}(X/\frakS).$
\end{proof}

\begin{parag}
Suppose that $\calI_X$ is a sheaf of monoids. Let $p_1,p_2:\calI'^2\ra \calI'$ and $p_{1,X},p_{2,X}:\calI_X^2 \ra \calI_X$ be the canonical projections and $\sigma:\calI'^2\ra \calI'$ and $\sigma_X:\calI_X^2\ra \calI_X$ the addition maps. Then
$$
u_X'^{-1}p_i=p_{i,X},\ u_X'^{-1}\sigma=\sigma_X,
$$
where $u_X'$ is given in \eqref{Kokou}.
\end{parag}

\begin{proposition}\label{prop1415}
Let $\F$ be a presheaf of sets on $\calE(X'/\frakS)_{/\calI'}$ (resp. $\underline{\calE}(X/\frakS)_{/\underline{\calI}}$) and $(\F_{(U,\frakT,u)},\gamma_{\F,(f,g)})$ the indexed descent data associated with $\F$ \eqref{definddescentdata}. Then, if $\F$ has a $\calI'$-indexed (resp. $\underline{\calI}$-indexed) algebra structure, the sheaves $\F_{(U,\frakT,u)}$ have $\calI_U$-indexed algebra structures for any object $(U,\frakT,u)$ of $\calE(X'/\frakS)_{/\calI'}$ (resp. $\underline{\calE}(X/\frakS)_{/\underline{\calI}}$). The same is true for indexed modules.
\end{proposition}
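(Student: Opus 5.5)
The plan is to transport the structure along the morphisms of topoi $\delta_{(U,\frakT,u),\calI_X}$ \eqref{deltatop}, which are the functors defining the indexed descent data. I treat $\calE(X'/\frakS)_{/\calI'}$; the case of $\underline{\calE}(X/\frakS)_{/\underline{\calI}}$ is word for word the same.

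First, I identify the pullbacks of the monoid structure maps. The inverse image $\alpha_{(U,\frakT,u)}^{-1}\calI'=\calI_U$ holds, and likewise $\alpha_{(U,\frakT,u)}^{-1}(\calI'^n)=\calI_U^n$ for $n=2,3$, since inverse image functors commute with finite products. This is the same computation as in the paragraph preceding \eqref{deltatop}, using $u_X'\circ\alpha_{(U,\frakT,u)}$ equal to the inclusion $\text{ét}_{/U}\ra\text{ét}_{/X'}$. The preceding paragraph gives $u_X'^{-1}p_i=p_{i,X}$ and $u_X'^{-1}\sigma=\sigma_X$. It follows that $\alpha_{(U,\frakT,u)}^{-1}$ sends $p_i,\sigma:\calI'^2\ra\calI'$ to the projections and addition of $\calI_U$, and sends the zero section to the zero section.

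Second, I localize, as in the commutative diagram of \ref{Koko829}. For $n=1,2,3$, the morphism $\alpha_{(U,\frakT,u)}$ induces the localized morphisms $\delta_{(U,\frakT,u),\calI_X^n}:U_{\text{ét}/\calI_U^n}\ra\widetilde{\calE}(X'/\frakS)_{/\calI'^n}$. These commute with the morphisms $p_i,\sigma,\tau,p_{ij}$ and with $0,\nu$ on both sides. Hence $\delta^{-1}$ commutes with the inverse images $p_i^{-1},\sigma^{-1},\dots$. The preceding lemma gives $\delta^{-1}j^{-1}\Ox=(u_*\Ox_T)_{\calI_U}$, and $\delta^{-1}$ commutes with tensor products. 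Therefore
\[
\delta_{(U,\frakT,u),\calI_X^2}^{-1}(\F\boxtimes\F)=\F_{(U,\frakT,u)}\boxtimes\F_{(U,\frakT,u)},\qquad \delta_{(U,\frakT,u),\calI_X^2}^{-1}\sigma^*\F=\sigma_U^*\F_{(U,\frakT,u)},
\]
where $\boxtimes$ on the right is taken over $(u_*\Ox_T)_{\calI_U^2}$. The same holds for triple products.

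Third, I apply $\delta^{-1}$ to the structure. Applying it to the multiplication $\pi:\F\boxtimes\F\ra\sigma^*\F$ and to the unit $1:e\ra 0^*\F$ gives a multiplication and a unit on $\F_{(U,\frakT,u)}$. The associativity and unity diagrams of \ref{inddef} are carried to the corresponding diagrams, because $\delta^{-1}$ is a functor compatible with all the identifications above. The module case is identical, with $\rho:\calA\boxtimes\calE\ra\sigma^*\calE$ in place of $\pi$.

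The main obstacle is the second step. One must check that the identifications of $\delta^{-1}$ applied to the exterior tensor products and to $\sigma^*$ are canonical and compatible with each other, so that the transported diagrams really are the diagrams of \ref{inddef}. I would handle this through the fiberwise description of \ref{indremark73} and \ref{era2indrem}. For sections $s,t$ over an étale $V\ra U$, the $(s,t)$-component of $\delta^{-1}(\F\boxtimes\F)$ is $\F_{(U,\frakT,u),s}\otimes\F_{(U,\frakT,u),t}$, by \ref{prop1916Ko} combined with the diagram \eqref{diag188}. This reduces everything to the maps $\F_s\otimes\F_t\ra\F_{s+t}$, where the axioms are evident.
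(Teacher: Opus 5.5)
Your proposal is correct and follows essentially the same route as the paper. The paper also obtains the structure on $\F_{(U,\frakT,u)}=\delta_{(U,\frakT,u),\calI_X}^*\F$ by pulling back the structure maps along $\delta_{(U,\frakT,u),\calI_X}$ and $\delta_{(U,\frakT,u),\calI_X^2}$, using only the commutative squares that relate these to $p_i,\sigma$ and $p_{i,U},\sigma_U$. Your additional bookkeeping (the identification of the ring via the preceding lemma, the unit and triple products, and the fiberwise check via \ref{prop1916Ko} and \eqref{diag188}) fills in details the paper leaves implicit.
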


\begin{proof}
Let $p_1,p_2:\calI'^2\ra \calI'$ and $p_{1,X},p_{2,X}:\calI_X^2 \ra \calI_X$ be the canonical projections and $\sigma:\calI'^2\ra \calI'$ and $\sigma_X:\calI_X^2\ra \calI_X$ the addition maps.
For any object $(U,\frakT,u)$ of $\calE(X'/\frakS),$ we have the following commutative diagram
$$
\begin{tikzcd}
U_{\text{ét}/\calI_U^2} \ar{rr}{\delta_{(U,\frakT,u),\calI_X^2}} \ar[swap]{d}{p_{i,U}\ (\text{resp.}\ \sigma_U)} & & \widetilde{\calE}(X'/\frakS)_{/\calI'^2} \ar{d}{p_{i}\ (\text{resp.}\ \sigma)} \\
U_{\text{ét}/\calI_U} \ar[swap]{rr}{\delta_{(U,\frakT,u),\calI_X}} & & \widetilde{\calE}(X'/\frakS)_{/\calI'}
\end{tikzcd}
$$
This implies that, if $\F$ has a $\calI'$-indexed algebra structure, then $\F_{(U,\frakT,u)}=\delta_{(U,\frakT,u),\calI_X}^*\F$ has a $\calI_U$-indexed algebra structure. The proof for $\underline{\calE}(X/\frakS)_{/\underline{\calI}}$ and for indexed modules is similar.
\end{proof}

\begin{proposition}\label{propKoko1915}
Let $\calA$ be a $\calI'$-indexed algebra on $\calE(X'/\frakS)_{/\calI'}$ and $\calE$ and $\F$ two $\calI'$-indexed $\calA$-modules \eqref{inddef}. For any object $(U,\frakT,u)$ of $\calE(X'/\frakS),$ there exists a canonical isomorphism
$$\calE_{(U,\frakT,u)}\circledast_{\calA_{(U,\frakT,u)}}\F_{(U,\frakT,u)} \xrightarrow{\sim} \left (\calE \circledast_{\calA}\F \right )_{(U,\frakT,u)}.$$
\end{proposition}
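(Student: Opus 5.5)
The plan is to view $\F_{(U,\frakT,u)}$ as the inverse image $\delta_{(U,\frakT,u),\calI_X}^{*}\F$ along the morphism of ringed topoi
$$\delta=\delta_{(U,\frakT,u),\calI_X}:\left (U_{\text{ét}/\calI_U},(u_*\Ox_T)_{\calI_U}\right ) \ra \left (\widetilde{\calE}(X'/\frakS)_{/\calI'},\Ox_{\calI'}\right ).$$
By the lemma computing $\delta^{-1}j^{-1}\Ox$, this morphism satisfies $\delta^{-1}\Ox_{\calI'}=(u_*\Ox_T)_{\calI_U}$. Hence $\delta^*=\delta^{-1}$ on modules, and the descriptions $\F_{(U,\frakT,u)}=\F\circ\delta$ of \ref{indlindescentdata2} and $\delta^{*}\F$ agree. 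I would then argue exactly as in the proof of \ref{Koko829}, with $f_{/\calI}$ replaced by $\delta$.

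First, I will show that $\delta$ is compatible with the projections and with the addition maps on $\calI^2$ and $\calI^3$. The commutative squares written in the proof of \ref{prop1415} give $\delta_{\calI_X^2}\circ p_{i,U}$ compatible with $p_i$ and $\sigma_U$ compatible with $\sigma$. The same argument for $\calI_X^3$ gives the corresponding squares for $q_i$ and for $\tau$. From the squares for $p_i$ and the fact that $\delta^{-1}$ commutes with tensor products, I get canonical isomorphisms
$$\delta_2^{-1}(\calE\boxtimes\F)\simeq \calE_{(U,\frakT,u)}\boxtimes\F_{(U,\frakT,u)}$$
and similarly
$$\delta_3^{-1}(\calA\boxtimes\calE\boxtimes\F)\simeq \calA_{(U,\frakT,u)}\boxtimes\calE_{(U,\frakT,u)}\boxtimes\F_{(U,\frakT,u)}.$$
These isomorphisms are compatible with the indexed structures described in \ref{prop1415}.

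Second, I will establish the base change isomorphisms
$$\sigma_{U!}\,\delta_2^{-1}\simeq\delta^{-1}\sigma_!,\qquad \tau_{U!}\,\delta_3^{-1}\simeq\delta^{-1}\tau_!.$$
This is the main obstacle, since \ref{lemtop15} is stated for a square obtained by pulling back a sheaf $I$ along a morphism of topoi. Here the relevant squares are of that form: $\delta_{\calI_X^2}$ is the localization of $\alpha_{(U,\frakT,u)}$ at $\calI'^2$, whose pullback is $\calI_U^2$, and $\sigma$ is a localization morphism along $\calI'^2\ra\calI'$. I would therefore apply \ref{lemtop15} (2) to the morphism $U_{\text{ét}/\calI_U}\ra\widetilde{\calE}(X'/\frakS)_{/\calI'}$ and the sheaf $\calI'^2\ra\calI'$ of the target topos. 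Its pullback is $\calI_U^2\ra\calI_U$, so the lemma gives the first isomorphism, and the same argument gives the one for $\tau$.

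If this identification proves awkward, a fallback is available. One can compute both sides fiberwise on sections $(V,s)$ using \ref{indpropshriek} (3): the $!$-pushforwards are sheafifications of direct sums over decompositions $s=u+v$, and evaluating at $\delta(V,s)=(\alpha_{(U,\frakT,u)}(V),s)$ yields the same presheaves. Then \ref{indproptensprod} gives the isomorphism directly.

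Finally, I will apply the exact functor $\delta^{-1}$ to the exact sequence \eqref{exseqindtens}. By the isomorphisms above, the first two terms become $\tau_{U!}(\calA_{(U,\frakT,u)}\boxtimes\calE_{(U,\frakT,u)}\boxtimes\F_{(U,\frakT,u)})$ and $\sigma_{U!}(\calE_{(U,\frakT,u)}\boxtimes\F_{(U,\frakT,u)})$. The two maps are induced by the module structures, and these are transported by $\delta$, so the diagram commutes. Comparing cokernels then gives the canonical isomorphism of the statement. Its compatibility with the $\calA_{(U,\frakT,u)}$-module structures is checked on the explicit description of \ref{indproptensprod}.
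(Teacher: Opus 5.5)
Your proposal is correct and takes essentially the paper's route. The paper also base-changes $\sigma_!$ and $\tau_!$ along $\delta_{(U,\frakT,u),\calI_X}$ via \ref{lemtop15}, identifies the exterior tensor products using the commutative squares for the projections, pulls back the exact sequence \eqref{exseqindtens}, and compares cokernels. Your fiberwise fallback and your check of compatibility with the $\calA_{(U,\frakT,u)}$-module structures are extras that the paper does not spell out.
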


\begin{proof}
Let $p_1,p_2:\calI'^2\ra \calI'$ and $p_{1,X},p_{2,X}:\calI_X^2 \ra \calI_X$ be the canonical projections and $\sigma:\calI'^2 \ra \calI'$ and $\sigma_X:\calI_X^2\ra \calI_X$ the addition maps.
Applying \ref{lemtop15} to
$$
\begin{tikzcd}
U_{\text{ét}/\calI_U^2} \ar{rr}{\delta_{(U,\frakT,u),\calI_X^2}} \ar[swap]{d}{\sigma_U} & & \widetilde{\calE}(X'/\frakS)_{/\calI'^2} \ar{d}{\sigma} \\
U_{\text{ét}/\calI_U} \ar[swap]{rr}{\delta_{(U,\frakT,u),\calI_X}} & & \widetilde{\calE}(X'/\frakS)_{/\calI'},
\end{tikzcd}
$$
we get a canonical isomorphism
$$\sigma_{U!}\delta_{(U,\frakT,u),\calI_X^2}^*\left (\calE\boxtimes \F \right ) \xrightarrow{\sim} \delta_{(U,\frakT,u),\calI_X}^*\sigma_!\left (\calE\boxtimes \F \right ).$$
Then, by the commutative diagram
$$
\begin{tikzcd}
U_{\text{ét}/\calI_U^2} \ar{rr}{\delta_{(U,\frakT,u),\calI_X^2}} \ar[swap]{d}{p_{i,X}} & & \widetilde{\calE}(X'/\frakS)_{/\calI'^2} \ar{d}{p_i} \\
U_{\text{ét}/\calI_U} \ar[swap]{rr}{\delta_{(U,\frakT,u),\calI_X}} & & \widetilde{\calE}(X'/\frakS)_{/\calI'},
\end{tikzcd}
$$
we get a canonical isomorphism
\begin{equation}\label{feq012}
\sigma_{U!}\left (\calE_{(U,\frakT,u)}\boxtimes \F_{(U,\frakT,u)} \right ) \xrightarrow{\sim} \left (\sigma_!\left (\calE\boxtimes \F \right )\right )_{(U,\frakT,u)}.
\end{equation}
Let $\tau:\calI^3 \ra \calI$ and $\tau_X:\calI_X^3\ra \calI_X$ be the addition maps.
Recall that, by \eqref{exseqindtens}, we have an exact sequence of $\Ox_{\calI}$-modules
$$\tau_!\left (\calA \boxtimes \calE\boxtimes \F \right )\ra \sigma_!\left (\calE \boxtimes \F\right ) \ra \calE \circledast_{\calA}\F \ra 0.$$
Since $\delta_{(U,\frakT,u)}^*$ is right exact, we obtain the exact sequence
$$\left (\tau_!\left (\calA \boxtimes \calE\boxtimes \F \right )\right )_{(U,\frakT,u)}\ra \left (\sigma_!\left (\calE \boxtimes \F\right )\right )_{(U,\frakT,u)} \ra \left (\calE \circledast_{\calA}\F\right )_{(U,\frakT,u)} \ra 0.$$
For a sheaf $\calG$ of $\widetilde{\calE}(X'/\frakS)_{/\calI'},$ we denote $\calG_{(U,\frakT,u)}$ simply by $\calG_U.$
By \eqref{feq012} and a similar isomorphism for $\tau,$ we get a commutative diagram
$$
\begin{tikzcd}
\tau_{U!}\left (\calA_U \boxtimes \calE_U\boxtimes \F_U \right )\ar{r} \ar[sloped]{d}{\sim} & \sigma_{U!}\left (\calE_U \boxtimes \F_U\right ) \ar{r} \ar[sloped]{d}{\sim} & \calE_U  \circledast_{\calA_U }\F_U \ar{r} \ar[dashed]{d} & 0 \\
\left (\tau_!\left (\calA \boxtimes \calE\boxtimes \F \right )\right )_{(U,\frakT,u)}\ar{r} & \left (\sigma_!\left (\calE \boxtimes \F\right )\right )_{(U,\frakT,u)} \ar{r} & \left (\calE \circledast_{\calA}\F\right )_{(U,\frakT,u)} \ar{r} & 0
\end{tikzcd}
$$
with exact lines.
Hence the existence of the dashed arrow and the fact that it is an isomorphism.
\end{proof}

\subsection*{Crystals and the Cartier transform}

\begin{parag}
In this subsection, let $\calI_X$ be an étale sheaf on $X$ and consider the sheaves $\calI'$ and $\underline{\calI}$ defined in \ref{paragMuzan194} and the sheaves $\calI'_{\mathrm{strict}}$ and $\underline{\calI}_{\mathrm{strict}}$ defined in \ref{remShiho194}.
\end{parag}

\begin{definition}
Let $\F$ be an $\Ox_{\calI'}$-module of $\widetilde{\calE}(X'/\frakS)_{/\calI'}$ (resp. an $\Ox_{\underline{\calI}}$-module of $\widetilde{\underline{\calE}}(X/\frakS)_{/\underline{\calI}},$ an $\Ox_{\calI'_{\mathrm{strict}}}$-module of $\widetilde{\calE}_{\mathrm{strict}}(X'/\frakS)_{/\calI'_{\mathrm{strict}}},$ an $\Ox_{\underline{\calI}_{\mathrm{strict}}}$-module of $\widetilde{\underline{\calE}}_{\mathrm{strict}}(X/\frakS)_{/\underline{\calI}_{\mathrm{strict}}}$) \eqref{kraz1911} and $(\F_s)$ the corresponding descent data \eqref{inddescentdata}. Recall that for every local section $s\in \Gamma(U,\calI_X)$ over an étale $X'$-scheme (resp. $X$-scheme) $U,$ the sheaf $\F_s$ is a module on $\calE(U/\frakS)$ (resp. $\underline{\calE}(U/\frakS),$ $\calE_{\mathrm{strict}}(U'/\frakS),$ $\underline{\calE}_{\mathrm{strict}}(U/\frakS)$). We say that $\F$ is \emph{quasi-coherent} (resp. a \emph{crystal}) if $\F_s$ is quasi-coherent (resp. a crystal) \eqref{defcrys} for every local section $s$ of $\calI_X.$
\end{definition}

\begin{proposition}\label{equivShihoind}
The restriction functor
$$
\widetilde{\calE}(X'/\frakS) \ra \widetilde{\calE}_{\mathrm{strict}}(X'/\frakS)
$$
$$
\left ( \text{resp.}\ \widetilde{\underline{\calE}}(X/\frakS) \ra \widetilde{\underline{\calE}}_{\mathrm{strict}}(X/\frakS) \right )
$$
induces an equivalence between the subcategories of crystals
$$
\begin{Bmatrix}\text{Crystals\ of\ }\widetilde{\calE}(X'/\frakS)_{/\calI'} \end{Bmatrix} \xrightarrow{\sim} \begin{Bmatrix}\text{Crystals\ of\ }\widetilde{\calE}_{\mathrm{strict}}(X'/\frakS)_{/\calI'_{\mathrm{strict}}} \end{Bmatrix}
$$
$$
\left (\text{resp.}\ \begin{Bmatrix}\text{Crystals\ of\ }\widetilde{\underline{\calE}}(X/\frakS)_{/\underline{\calI}} \end{Bmatrix} \xrightarrow{\sim} \begin{Bmatrix}\text{Crystals\ of\ }\widetilde{\underline{\calE}}_{\mathrm{strict}}(X/\frakS)_{/\underline{\calI}_{\mathrm{strict}}} \end{Bmatrix}
\right )
$$
\end{proposition}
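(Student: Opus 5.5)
The plan is to reduce the indexed statement to the non-indexed equivalence (\cite{SF2} 8.26), recalled at the end of \ref{paragIran6}, using the description of sheaves on the localized sites by descent data (\ref{indexeddescentdata}, \ref{remShiho196}, \ref{iranlem145}). I treat $\calE(X'/\frakS)$; the case of $\underline{\calE}(X/\frakS)$ is identical. First, the restriction functor is compatible with localization. The inclusion $\calE_{\mathrm{strict}}(X'/\frakS)\ra \calE(X'/\frakS)$ sends $\calI'$ to $\calI'_{\mathrm{strict}}$, since both are given by $(U,\frakT,u)\mapsto \Gamma(U,\calI_X)$. It therefore induces a functor $\calE_{\mathrm{strict}}(X'/\frakS)_{/\calI'_{\mathrm{strict}}}\ra \calE(X'/\frakS)_{/\calI'}$, and restriction along this functor is the functor in the statement. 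Moreover, for every étale $U$ and $s\in\Gamma(U,\calI_X)$, the square formed with $\alpha'_s$ and $\alpha'^{\mathrm{strict}}_s$ \eqref{alphakrazShiho} commutes. Hence $(\F_{|\mathrm{strict}})_s=(\F_s)_{|\mathrm{strict}}$: taking descent data commutes with restriction. Restriction also preserves the structure rings, so it sends crystals to crystals, because the crystal condition is checked on each $\F_s$.

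Full faithfulness comes next. By \ref{indexeddescentdata}, a morphism $\F\ra\calG$ of modules on the localized topos is the same as a family of morphisms $\F_s\ra\calG_s$, compatible with the isomorphisms $\rho_{g'}^{-1}\F_s\xrightarrow{\sim}\F_{s_{|V}}$. By (\cite{SF2} 8.26) applied to $U'$, each restriction map $\op{Hom}(\F_s,\calG_s)\ra \op{Hom}((\F_s)_{\mathrm{str}},(\calG_s)_{\mathrm{str}})$ is bijective. The compatibility conditions are equalities of morphisms between crystals, so they hold on the big site if and only if they hold on the strict site, again by faithfulness. Full faithfulness follows.

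The main point is essential surjectivity. Let $\calG$ be a crystal on the strict localized topos, with descent data $(\calG_s)$. For each $s$, let $\F_s$ be the crystal on $\widetilde{\calE}(U'/\frakS)$ extending $\calG_s$, which exists by (\cite{SF2} 8.26); it is unique up to unique isomorphism. The transition isomorphisms $\rho_{g'}^{-1}\calG_s\xrightarrow{\sim}\calG_{s_{|V}}$ then lift uniquely to isomorphisms $\rho_{g'}^{-1}\F_s\xrightarrow{\sim}\F_{s_{|V}}$. For this one must check two things. First, $\rho_{g'}^{-1}$ preserves crystals, which is clear since it is composition with a functor sending morphisms to morphisms. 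Second, $\rho_{g'}^{-1}$ commutes with restriction to strict sites, which holds because $\rho_{g'}$ preserves strictness. The cocycle conditions then transfer by the full faithfulness just established. Finally, \ref{indexeddescentdata} and \ref{iranlem145} glue the family $(\F_s)$ into a sheaf $\F$ on $\calE(X'/\frakS)_{/\calI'}$, which is a module and a crystal by construction and restricts to $\calG$.

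I expect the main obstacle to be the bookkeeping that the (\cite{SF2} 8.26) equivalence is natural with respect to the localizations $\rho_{g'}$, so that the local extensions are canonical enough to glue. I would handle this by using only uniqueness up to unique isomorphism together with full faithfulness, and avoiding any explicit formula for the extension.
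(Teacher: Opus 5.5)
Your proposal is correct and follows the paper's route. The commutative square relating $\alpha_s'^{-1}$ and $(\alpha_s'^{\mathrm{strict}})^{-1}$ with the restriction functors gives $R(\F)_s=R_U(\F_s)$, so crystals go to crystals, and the equivalence is then deduced from (\cite{SF2} 8.26) on each $U'$; the paper compresses that deduction into one sentence, and your full faithfulness and gluing argument through the descent data of \ref{indexeddescentdata} and \ref{iranlem145} is what it leaves implicit.
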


\begin{proof}
Note that, for any local section $s\in \Gamma \left (U,\calI_X \right )$ over an étale $X$-scheme $U,$ we clearly have the commutative diagram
$$
\begin{tikzcd}
\widetilde{\calE}(X'/\frakS)_{/\calI'} \ar[swap]{d}{\alpha_s'^{-1}} \ar{r}{R} & \widetilde{\calE}_{\mathrm{strict}}(X'/\frakS)_{/\calI'_{\mathrm{strict}}} \ar{d}{\left (\alpha_s'^{\mathrm{strict}} \right )^{-1}} \\
\widetilde{\calE}(U' /\frakS) \ar{r}{R_U}  & \widetilde{\calE}_{\mathrm{strict}}(U'/\frakS),
\end{tikzcd}
$$
where the horizontal arrows are the restriction functors and the vertical ones are the inverse image functors of the morphisms of topoi \eqref{alphakraz} and \eqref{alphakrazShiho}. It follows that, if $\F$ is a crystal of $\widetilde{\calE}(X'/\frakS)_{/\calI'},$ then
$$
R(\F)_s=R_U(\F_s).
$$
The module $R(\F)_s$ is then a crystal for all local sections $s$ and hence $R(\F)$ is a crystal. The functor $R$ then induces
$$
R:\begin{Bmatrix}\text{Crystals\ of\ }\widetilde{\calE}(X'/\frakS)_{/\calI'} \end{Bmatrix} \ra \begin{Bmatrix}\text{Crystals\ of\ }\widetilde{\calE}_{\mathrm{strict}}(X'/\frakS)_{/\calI'_{\mathrm{strict}}} \end{Bmatrix}.
$$
The fact that it is an equivalence follows from (\cite{SF2} 8.26). The proof for $\underline{\calE}(X/\frakS)$ is similar.
\end{proof}

\begin{parag}\label{AcrysShiho}
Recall that, for a logarithmic scheme $T,$ we set $\upmu_T=\ov{\calM}_T^{gp}.$ For an object $(U,\frakT,u)$ of $\underline{\calE}_{\mathrm{strict}}(X/\frakS),$ we identify the étale sites on $T$ and $\underline{T}$ via the canonical nilpotent immersion $\underline{T} \ra T.$ The strict morphism $u:\underline{T} \ra U$ and the canonical immersion $\underline{T} \ra T,$ which is strict, induce an isomorphism
$$
u^{-1}\upmu_U=u^{-1}\ov{\calM}_U^{gp} \xrightarrow{\sim} \ov{\calM}_{\underline{T}}^{gp} \xrightarrow{\sim} \ov{\calM}_T^{gp}=\upmu_T.
$$
We then have a morphism of ringed topoi, abusively denoted
$$
u:\left (T_{\text{ét}/\upmu_T},\Ox_{T,\upmu_T} \right ) \xrightarrow{\sim} \left (\underline{T}_{\text{ét}/\upmu_{T}},\Ox_{T,\upmu_T} \right ) \ra \left (U_{\text{ét}/\upmu_U},\left (u_*\Ox_T\right )_{\upmu_U} \right ).
$$
We set
\begin{equation}\label{Pandaeq2021}
\frakA_{X,(U,\frakT,u)}=u_*\calA_{T}.
\end{equation}
Let
$$
(f,g):(U_1,\frakT_1,u_1) \ra (U_2,\frakT_2,u_2)
$$
be a morphism in $\underline{\calE}_{\mathrm{strict}}(X/\frakS).$ The strict morphism $f:\frakT_1 \ra \frakT_2$ induces a strict morphism $f_1:T_1 \ra T_2,$ which yields, by (\cite{Lor2000} I 3.1), an isomorphism
$$
f_1^*\calA_{T_2} \xrightarrow{\sim} \calA_{T_1}.
$$
This is equivalent to an isomorphism
\begin{equation}\label{Pandaeq2022}
c_{\frakA_X,(f,g)}:\widetilde{f}_{\upmu}^*\frakA_{X,(U_2,\frakT_2,u_2)} \xrightarrow{\sim} \frakA_{X,(U_1,\frakT_1,u_1)},
\end{equation}
where $\widetilde{f}_{\upmu}$ is given in \eqref{gmu}. The data \eqref{Pandaeq2021} and \eqref{Pandaeq2022} satisfy the conditions of \ref{remShiho1920} and hence are the descent data of a module $\frakA_X$ of $\widetilde{\underline{\calE}}_{\mathrm{strict}}(X/\frakS)_{/\underline{\calI}_{\mathrm{strict}}}.$ Since \eqref{Pandaeq2022} is an isomorphism, $\frakA_X$ is a crystal. By \ref{equivShihoind}, it extends to a crystal $\frakA_X$ of $\widetilde{\underline{\calE}}(X/\frakS)_{/\underline{\calI}}.$
\end{parag}

\begin{proposition}\label{propcrystens}
Let $\F$ and $\calG$ be $\Ox_{\calI'}$-modules of $\widetilde{\calE}(X'/\frakS)_{/\calI'}$ (resp. an $\Ox_{\underline{\calI}}$-modules of $\widetilde{\underline{\calE}}(X/\frakS)_{/\underline{\calI}}$) \eqref{kraz1911}.
\begin{enumerate}
\item If $\F$ and $\calG$ are both quasi-coherent modules or both crystals, then so is $\F \otimes_{\Ox_{\calI'}}\calG$ (resp. $\F\otimes_{\Ox_{\underline{\calI}}}\calG$).
\item Let $h_X:\calJ_X \ra \calI_X$ be a morphism of sheaves of $X_{\text{ét}},$ $h':\calJ':=u_X'^{-1}\calJ_X \ra \calI'$ (resp. $\underline{h}:\underline{\calJ}:=\underline{u}_X^{-1}\calJ_X \ra \underline{\calI}$) the morphism induced by $h_X$ and
$$
h':\widetilde{\calE}(X'/\frakS)_{/\calJ'} \ra \widetilde{\calE}(X'/\frakS)_{/\calI'}\ \left (\widetilde{\underline{\calE}}(X/\frakS)_{/\underline{\calJ}} \ra \widetilde{\underline{\calE}}(X/\frakS)_{/\underline{\calI}} \right )
$$
the morphism of ringed topoi induced by $h'$ (resp. $\underline{h}$). If $\F$ is a quasi-coherent module or a crystal of $\widetilde{\calE}(X'/\frakS)_{/\calI'}$ (resp. $\widetilde{\underline{\calE}}(X/\frakS)_{/\underline{\calI}}$), then so is $h'^*\F$ (resp. $\underline{h}^*\F$).
\end{enumerate}
\end{proposition}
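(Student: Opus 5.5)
The plan is to reduce both assertions to the non-indexed statements on the topoi $\widetilde{\calE}(U/\frakS)$ and $\widetilde{\underline{\calE}}(U/\frakS)$. By definition, quasi-coherence and the crystal property for an $\Ox_{\calI'}$-module are tested on the descent data $(\F_s)$ of \ref{inddescentdata}, namely on $\alpha_s'^{-1}\F$ for all local sections $s\in\Gamma(U,\calI_X)$. The key point is that the inverse image functor $\alpha_s'^{-1}$ is exact and commutes with the relevant constructions. I treat only $\calE(X'/\frakS)$; the case of $\underline{\calE}(X/\frakS)$ is identical with $\underline{\alpha}_s$ in place of $\alpha_s'$.

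For (1), I would first note that $\alpha_s'^{-1}\Ox_{\calI'}=\Ox_{\calE(U/\frakS)}$ by \ref{kraz1911}. Since $\alpha_s'$ is a morphism of ringed topoi whose ring map is an isomorphism, its inverse image commutes with tensor products. This gives
$$(\F\otimes_{\Ox_{\calI'}}\calG)_s\cong \F_s\otimes_{\Ox_{\calE(U/\frakS)}}\calG_s.$$
The claim therefore reduces to the non-indexed fact that the tensor product of two quasi-coherent modules, or of two crystals, on $\widetilde{\calE}(U/\frakS)$ is again quasi-coherent, respectively a crystal. That fact should be in \cite{SF2}. Concretely, the descent data of a tensor product is the tensor product of the descent data. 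Each transition map $c_{\F\otimes\calG,f}$ is then $c_{\F,f}\otimes c_{\calG,f}$ composed with the isomorphism $\widetilde{f}^*(\F\otimes\calG)\cong\widetilde{f}^*\F\otimes\widetilde{f}^*\calG$, so it is an isomorphism whenever $c_{\F,f}$ and $c_{\calG,f}$ are.

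For (2), let $t\in\Gamma(U,\calJ_X)$ and set $s=h_X(t)\in\Gamma(U,\calI_X)$. The main step is the commutativity of the diagram of topoi
$$h'\circ\alpha_t'=\alpha_s':\widetilde{\calE}(U/\frakS)\ra\widetilde{\calE}(X'/\frakS)_{/\calI'},$$
which I would check at the level of sites. On an object $(V,\frakZ,v)$, $\alpha_t'$ sends it to $(\rho_{X',U}(V,\frakZ,v),t_{|V})$, and composing with $h'$ yields $(\rho_{X',U}(V,\frakZ,v),h_X(t)_{|V})=\alpha_s'(V,\frakZ,v)$. Because $\alpha_t'^{-1}\Ox_{\calJ'}$ and $\alpha_s'^{-1}\Ox_{\calI'}$ are both $\Ox_{\calE(U/\frakS)}$, this diagram also commutes as ringed topoi. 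It follows that
$$(h'^*\F)_t=\alpha_t'^{-1}h'^{-1}\F=\alpha_s'^{-1}\F=\F_s,$$
which is quasi-coherent, respectively a crystal, by hypothesis.

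I expect the main obstacle to be bookkeeping: making sure that the inverse images here coincide with the module pullbacks $h'^*$. This holds because the ring of $\widetilde{\calE}(X'/\frakS)_{/\calJ'}$ is $h'^{-1}\Ox_{\calI'}$, so $h'^*=h'^{-1}$, as noted in \ref{indpar2}. A secondary point is citing, or briefly reproving via \cite{SF2} 8.24–8.25, that quasi-coherence and the crystal property are stable under tensor products in the non-indexed case.
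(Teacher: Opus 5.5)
Your proposal is correct and is essentially the paper's proof. For (1) the paper uses exactly the canonical isomorphism $(\F\otimes_{\Ox_{\calI'}}\calG)_s\cong\F_s\otimes_{\Ox_{\calE(U/\frakS)}}\calG_s$ obtained from $\alpha_s'^*$ to reduce to the non-indexed case, and for (2) it uses the commutativity $h'\circ\alpha'_t=\alpha'_{h_X(t)}$ to get $(h'^*\F)_t=\F_{h_X(t)}$; your explicit checks on objects of the sites and on the transition maps $c_{\F,f}\otimes c_{\calG,f}$ only fill in details the paper leaves implicit.
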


\begin{proof}
We only prove the case of $\widetilde{\calE}(X'/\frakS)_{/\calI'}$ as the other is similar. Let $(\F_s)$ and $(\calG_s)$ be descent data corresponding to $\F$ and $\calG$ respectively \eqref{inddescentdata}. Then, for every local section $s\in \Gamma(U,\calI_X)$ over an étale $X'$-scheme $U,$ we have a canonical isomorphism
$$
\left (\F \otimes_{\Ox_{\calI'}} \calG \right )_s=\alpha_s'^*\left (\F \otimes_{\Ox_{\calI'}}\calG \right )\xrightarrow{\sim} \left (\alpha_s'^*\F \right ) \otimes_{\Ox_{\calE(U/\frakS)}} \left (\alpha_s'^*\calG \right )=\F_s \otimes_{\Ox_{\calE(U/\frakS)}}  \calG_s.
$$
By the commutativity of the diagram
$$
\begin{tikzcd}
\widetilde{\calE}(U/\frakS) \ar{r}{\alpha_s'} \ar[swap]{dr}{\alpha_{h_X(s)}'} & \widetilde{\calE}(X'/\frakS)_{/\calJ'} \ar{d}{h'} \\
 & \widetilde{\calE}(X'/\frakS)_{\calI'}, 
\end{tikzcd}
$$
we have
$$
(h'^*\F)_s=\F_{h_X(s)}.
$$
The result follows.
\end{proof}

\begin{proposition}\label{propcrystensstar}
Let $\calA$ be a $\calI'$-indexed (resp. $\underline{\calI}$-indexed) algebra of $\widetilde{\calE}(X'/\frakS)_{/\calI'}$ (resp. of $\widetilde{\underline{\calE}}(X/\frakS)_{/\underline{\calI}}$) and $\F$ and $\calG$ two $\calI'$-indexed (resp. $\underline{\calI}$-indexed) $\calA$-modules \eqref{inddef}. If $\calA,$ $\F$ and $\calG$ are crystals then so is $\F \circledast_{\calA} \calG.$
\end{proposition}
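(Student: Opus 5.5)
We need to show $\F\circledast_{\calA}\calG$ is a crystal, i.e. that for every local section $s\in\Gamma(U,\calI_X)$ the module $(\F\circledast_{\calA}\calG)_s=\alpha_s'^{-1}(\F\circledast_{\calA}\calG)$ is a crystal of $\widetilde{\calE}(U/\frakS)$. The simplest route is to use the explicit presentation \eqref{exseqindtens} and argue with the descent data of \ref{indlindescentdata2}. The argument below is for $\widetilde{\calE}(X'/\frakS)_{/\calI'}$; the case of $\widetilde{\underline{\calE}}(X/\frakS)_{/\underline{\calI}}$ is identical.

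First, I would reformulate the crystal condition in terms of the indexed linearized descent data. A module $\calM$ is a crystal exactly when, for every morphism $(f,g):(U_1,\frakT_1,u_1)\ra(U_2,\frakT_2,u_2)$, the transition map $c_{\calM,(f,g)}:\widetilde{f}_{\calI}^*\calM_{(U_2,\frakT_2,u_2)}\ra \calM_{(U_1,\frakT_1,u_1)}$ is an isomorphism. This follows by comparing \eqref{diag188} with the descent data of $\calM_s$: the fibers at $s$ of the $\calI_U$-indexed module $\calM_{(V,\frakZ,v)}$ are exactly $(\calM_s)_{(V,\frakZ,v)}$.

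Second, by \ref{propKoko1915}, for each object $(U,\frakT,u)$ we have $(\F\circledast_{\calA}\calG)_{(U,\frakT,u)}\simeq \F_{(U,\frakT,u)}\circledast_{\calA_{(U,\frakT,u)}}\calG_{(U,\frakT,u)}$. By naturality of the construction in \ref{propKoko1915}, which comes from \ref{lemtop15} and right exactness, the transition map $c_{\F\circledast\calG,(f,g)}$ identifies with the map
$$\widetilde{f}_{\calI}^*\left(\F_2\circledast_{\calA_2}\calG_2\right)\xrightarrow{\sim}\widetilde{f}_{\calI}^*\F_2\circledast_{\widetilde{f}_{\calI}^*\calA_2}\widetilde{f}_{\calI}^*\calG_2\longrightarrow \F_1\circledast_{\calA_1}\calG_1 .$$
The first arrow is the isomorphism of \ref{Koko829}, applied to the morphism of ringed topoi $\widetilde{f}_{\calI}$; note that it lives over the sheaf of monoids $\calI_{U_1}=g^{-1}\calI_{U_2}$. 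The second arrow is induced functorially by $c_{\calA,(f,g)}$, $c_{\F,(f,g)}$ and $c_{\calG,(f,g)}$. Since $\calA$, $\F$ and $\calG$ are crystals, these three maps are isomorphisms, so the second arrow is an isomorphism, being induced by compatible isomorphisms on the presentations \eqref{exseqindtens}. Hence $c_{\F\circledast\calG,(f,g)}$ is an isomorphism for every $(f,g)$.

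The main obstacle is the compatibility check in the second step: that the transition maps of $\F\circledast_{\calA}\calG$ really coincide with the composite above. Equivalently, we need the isomorphisms of \ref{propKoko1915} and \ref{Koko829} to be compatible with the base-change maps $\beta_{(f,g),\calI}$. I would verify this at the level of the presentations $\tau_!(\calA\boxtimes\F\boxtimes\calG)\ra\sigma_!(\F\boxtimes\calG)$, using the explicit presheaf descriptions \ref{indpropshriek} and \ref{indproptensprod}. On sections, both maps send $x\otimes y$ to $c_{\F}(x)\otimes c_{\calG}(y)$, so the check reduces to a routine diagram chase.
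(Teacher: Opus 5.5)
Your proposal is correct and follows essentially the same route as the paper: you reduce to the transition maps of the descent data, identify the value of $\F\circledast_{\calA}\calG$ at each object $(U,\frakT,u)$ via \ref{propKoko1915}, commute $\widetilde{f}_{\calI}^*$ with $\circledast$ via \ref{Koko829}, and conclude from the crystal isomorphisms of $\calA$, $\F$ and $\calG$. The only difference is that you state and use the crystal condition directly for the indexed transition maps $c_{\calM,(f,g)}$, whereas the paper passes to the fibers at $s$ through the morphisms $\Lambda_i$ and \eqref{diag188}; your formulation is slightly cleaner, and the compatibility check you flag is exactly the step the paper leaves implicit.
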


\begin{proof}
We only prove the case of $\widetilde{\calE}(X'/\frakS)_{/\calI'}$ as the other is similar. Let $s\in \Gamma(U,\calI_X)$ be a section over an étale $X$-scheme $U.$ We have to prove that $\left (\F \circledast_{\calA} \calG\right )_s=\alpha_s'^*\left (\F \circledast_{\calA} \calG\right )$ is a crystal of $\widetilde{\calE}(U'/\frakS).$ Let $(f,g):(U_1,\frakT_1,u_1) \ra (U_2,\frakT_2,u_2)$ be a morphism of $\calE(U'/\frakS).$ We have to prove that the morphism
\begin{equation}\label{eqkraz1923}
c_{\alpha_s'^* \left (\F \circledast_{\calA} \calG \right )}:\widetilde{f}^* \left ( \left (\alpha_s'^* \left (\F \circledast_{\calA} \calG\right )\right )_{(U_2,\frakT_2,u_2)}\right ) \ra \left (\alpha_s'^*\left (\F \circledast_{\calA} \calG\right ) \right ) _{(U_1,\frakT_1,u_1)} 
\end{equation}
is an isomorphism, where
$$
\widetilde{f}:\left (U_{1,\text{ét}},u_{1*}\Ox_{T_{1}} \right ) \ra \left ( U_{2,\text{ét}}, u_{2*}\Ox_{T_{2}} \right )
$$
is given in \ref{paragIran4}. Recall the morphism of topoi
$$
\widetilde{f}_{\calI}: \left (U_{1,\text{ét}/\calI_{U_1}},\left ( u_{1*}\Ox_{T_1} \right )_{\calI_{U_1}} \right ) \ra \left (U_{2,\text{ét}/\calI_{U_2}},  \left (u_{2*}\Ox_{T_2} \right )_{\calI_{U_2}}\right )
$$
is given in \eqref{gmu}.
Consider the morphism of ringed topoi
$$
\Lambda_i:U_{i,\text{ét}} \ra U_{i,\text{ét}/\calI_{U_i}},
$$
induced by the continuous and cocontinuous functor
$$
\text{ét}_{/U_i} \ra \text{ét}_{/U_{i,\text{ét}}\times \calI_{U_i}},\ V \mapsto (V,s_{|V}).
$$
By \eqref{diag188}, the morphism \eqref{eqkraz1923} becomes
$$
\widetilde{f}^*\left (\Lambda_2^*\delta_{(U_2,\frakT_2,u_2),\calI_X}^*\left (\F \circledast_{\calA} \calG \right ) \right ) \ra \Lambda_1^* \delta_{(U_1,\frakT_1,u_1),\calI_X}^* \left (\F \circledast_{\calA} \calG\right ).
$$
By \ref{propKoko1915}, this is isomorphic to
$$
\widetilde{f}^*\left (\Lambda_2^*\left (\F_{(U_2,\frakT_2,u_2)} \circledast_{\calA_{(U_2,\frakT_2,u_2)}} \calG_{(U_2,\frakT_2,u_2)} \right ) \right ) \ra \Lambda_1^* \left (\F_{(U_1,\frakT_1,u_1)} \circledast_{\calA_{(U_1,\frakT_1,u_1)}} \calG_{(U_1,\frakT_1,u_1)}\right ).
$$
By the commutativity of the diagram
$$
\begin{tikzcd}
U_{1,\text{ét}} \ar{rr}{\widetilde{f}} \ar[swap]{d}{\Lambda_1}  & & U_{2,\text{ét}} \ar{d}{\Lambda_2} \\
U_{1,\text{ét}/\calI_{U_1}} \ar{rr}{\widetilde{f}_{\calI}} & & U_{2,\text{ét}/\calI_{U_2}},
\end{tikzcd}
$$
it is isomorphic to
$$
\Lambda_1^*\widetilde{f}_{\calI}^* \left (\F_{(U_2,\frakT_2,u_2)} \circledast_{\calA_{(U_2,\frakT_2,u_2)}} \calG_{(U_2,\frakT_2,u_2)} \right )  \ra \Lambda_1^* \left (\F_{(U_1,\frakT_1,u_1)} \circledast_{\calA_{(U_1,\frakT_1,u_1)}} \calG_{(U_1,\frakT_1,u_1)}\right ).
$$
By \ref{Koko829}, it is isomorphic to
$$
\Lambda_1^*\widetilde{f}_{\calI}^*\F_{(U_2,\frakT_2,u_2)} \circledast_{\Lambda_1^*\widetilde{f}_{\calI}^*\calA_{(U_2,\frakT_2,u_2)}} \Lambda_1^*\widetilde{f}_{\calI}^*\calG_{(U_2,\frakT_2,u_2)}   \ra \Lambda_1^* \F_{(U_1,\frakT_1,u_1)} \circledast_{\Lambda_1^*\calA_{(U_1,\frakT_1,u_1)}} \Lambda_1^*\calG_{(U_1,\frakT_1,u_1)}.
$$
Since $\calA,$ $\F$ and $\calG$ are crystals, we have isomorphisms
\begin{alignat*}{2}
\Lambda_1^*\widetilde{f}_{\calI}^*\calA_{(U_2,\frakT_2,u_2)} & \xrightarrow{\sim} \Lambda_1^* \calA_{(U_1,\frakT_1,u_1)},\\
\Lambda_1^*\widetilde{f}_{\calI}^*\F_{(U_2,\frakT_2,u_2)} & \xrightarrow{\sim} \Lambda_1^* \F_{(U_1,\frakT_1,u_1)},\\
\Lambda_1^*\widetilde{f}_{\calI}^*\calG_{(U_2,\frakT_2,u_2)} & \xrightarrow{\sim} \Lambda_1^* \calG_{(U_1,\frakT_1,u_1)}.
\end{alignat*}
The result follows.
\end{proof}

\begin{parag}\label{paragEST1}
In the rest of this section, we take $\calI_X=\upmu_X=\ov{\calM}_X^{gp}.$ Let $V$ be an étale $X$-scheme.
We identify the small étale sites of $V$ and $V'$ via the exact relative Frobenius $V\ra V'$ and hence consider $\upmu_V=\ov{\calM}_V^{gp}$ as a sheaf of $V'_{\text{ét}}.$ We set
$$\upmu'_V=u_V'^{-1}\upmu_V=\upmu_V\circ u_V',\ \underline{\upmu}_V=\underline{u}_V^{-1}\upmu_V=\upmu_V\circ \underline{u}_V.$$
For $V=X,$ we denote $\upmu'_V$ (resp. $\underline{\upmu}_V$) simply by $\upmu'$ (resp. $\underline{\upmu}$).
Let $n$ be a positive integer, $U$ an étale $X$-scheme. Since $u_X'^{-1}$ and $\underline{u}_X^{-1}$ commute with finite products, we have
$$
\upmu'^n=u_X'^{-1}\left (\upmu_X^n\right )=\upmu_X^n\circ u_X',\ \underline{\upmu}^n=\underline{u}_X^{-1}\left (\upmu_X^n\right )=\upmu_X^n \circ \underline{u}_X.
$$
We easily prove that the morphism of topoi \eqref{eqrhoEST1} (resp. \eqref{eqrhoEST2})
$$
\underline{\rho_{X,V}}:\widetilde{\underline{\calE}}(V/\frakS) \ra \widetilde{\underline{\calE}}(X/\frakS)\ \left (\text{resp.}\ \rho_{X',V'}:\widetilde{\calE}(V'/\frakS) \ra \widetilde{\calE}(X'/\frakS)\right )
$$
induces a morphism of topoi
\begin{equation}\label{eqREST}
\underline{R_{X,V}}:\widetilde{\underline{\calE}}(V/\frakS)_{/\underline{\upmu}_V} \ra \widetilde{\underline{\calE}}(X/\frakS)_{/\underline{\upmu}}\ \left (\text{resp.}\ R_{X',V'}:\widetilde{\calE}(V'/\frakS)_{/\upmu'_V} \ra \widetilde{\calE}(X'/\frakS)_{\upmu'}\right ).
\end{equation}
For a local section $s\in \Gamma \left ( W,\upmu_X \right )$ over an étale $V$-scheme $W,$ we clearly have the commutative diagram
$$
\begin{tikzcd}
\widetilde{\underline{\calE}}(W/\frakS) \ar{drr} \ar{d} & & \\
\widetilde{\underline{\calE}}(V/\frakS)_{\underline{\upmu}_V} \ar[swap]{rr}{\underline{R_{X,V}}} & & \widetilde{\underline{\calE}}(X/\frakS)_{/\underline{\upmu}},
\end{tikzcd}
$$
where the vertical and oblique arrows are given in \eqref{alphakraz}. By a similar diagram for $R_{X',V'},$ we deduce that the inverse image functors $\underline{R_{X,V}}^{-1}$ and $R_{X',V'}^{-1}$ preserve crystals.
By \eqref{stackequiv1} and \eqref{stackequiv2}, the morphisms $\underline{R_{X,V}}$ and $R_{X',V'}$ identify with the localization morphisms
\begin{equation}\label{eqRlocEST}
\underline{R_{X,V}}:\widetilde{\underline{\calE}}(X/\frakS)_{/\underline{u}_X^{-1}\left (V^a\right ) \times \underline{\upmu}} \ra \widetilde{\underline{\calE}}(X/\frakS)_{/\underline{\upmu}},\ R_{X',V'}:\widetilde{\calE}(V'/\frakS)_{/u_X'^{-1}\left (V'^a\right ) \times \upmu'} \ra \widetilde{\calE}(X'/\frakS)_{\upmu'}.
\end{equation}
It follows, by \ref{Locprop1}, that the inverse image functors $\underline{R_{X,V}}^{-1}$ and $R_{X',V'}^{-1}$ preserve structures of indexed algebras and indexed modules.
\end{parag}

\begin{parag}\label{paragEST2}
Consider the split fibered $\widetilde{\calE}(X'/\frakS)_{/\upmu'}$-category $\mathbbm{E}'$ (resp. $\widetilde{\underline{\calE}}(X/\frakS)_{/\underline{\upmu}}$-category $\underline{\mathbbm{E}}$) defined, for every sheaf $\F$ of $\widetilde{\calE}(X'/\frakS)_{/\upmu'}$ (resp. $\widetilde{\underline{\calE}}(X/\frakS)_{/\underline{\upmu}}$), by the localized category $\left (\widetilde{\calE}(X'/\frakS)_{/\upmu'} \right )_{/\F}$ (resp. $\left (\widetilde{\underline{\calE}}(X/\frakS)_{/\underline{\upmu}}\right )_{/\F}$) and, for every morphism $\varphi:\F \ra \calG,$ by the inverse image functor of the morphism of topoi
\begin{equation}
\left (\widetilde{\calE}(X'/\frakS)_{/\upmu'} \right )_{/\F} \ra \left (\widetilde{\calE}(X'/\frakS)_{/\upmu'} \right )_{/\calG} \left( \text{resp.}\ \left (\widetilde{\underline{\calE}}(X/\frakS)_{\underline{\upmu}}\right )_{/\F} \ra \left (\widetilde{\underline{\calE}}(X/\frakS)_{/\underline{\upmu}}\right )_{/\calG}\right )
\end{equation}
induced by $\varphi.$ By (\cite{Giraud} II 3.4.4), $\mathbbm{E}'$ (resp. $\underline{\mathbbm{E}}$) is a stack.
\end{parag}

\begin{parag}
Recall the functor $\rho:\underline{\calE}(X/\frakS) \ra \calE(X'/\frakS)$ \eqref{era3rhoShiho} and the functors $u_X'$ and $\underline{u}_X$ defined in \ref{paragu}. By definition, these functors fit into the commutative diagram
$$
\begin{tikzcd}
\underline{\calE}(X/\frakS) \ar[swap]{d}{\underline{u}_X} \ar{r}{\rho} & \calE(X'/\frakS) \ar{d}{u_X'} \\
\text{ét}_{/X} \ar{r}{\sim} & \text{ét}_{/X'},
\end{tikzcd}
$$
where the lower equivalence of sites is induced by the exact relative Frobenius $F_1:X\ra X'.$
Since these functors are continuous and cocontinuous, we deduce that the diagram
\begin{equation}\label{diag14151}
\begin{tikzcd}
\widetilde{\underline{\calE}}(X/\frakS) \ar{rr}{C_{X/\frakS}} \ar[swap]{d}{\underline{u}_X} & & \widetilde{\calE}(X'/\frakS) \ar{d}{u_X'} \\
X_{\text{ét}} \ar{rr}{\sim} & & X'_{\text{ét}},
\end{tikzcd}
\end{equation}
where $C_{X/\frakS}$ is given in \eqref{era3C} and $u_X'$ and $\underline{u}_X$ are defined in \eqref{Kokou}, is commutative up to a canonical isomorphism.
Recall that $\upmu'=u_X'^{-1}F_{1*}\upmu_X.$ Then
$$C_{X/\frakS}^{-1}\upmu'=\underline{u}_X^{-1}F_1^{-1}F_{1*}\upmu_X=\underline{u}_X^{-1}\upmu_X=\underline{\upmu}.$$
Therefore \eqref{diag14151} induces a commutative diagram
\begin{equation}\label{diagkraz2}
\begin{tikzcd}
\widetilde{\underline{\calE}}(X/\frakS)_{/\underline{\upmu}} \ar{rr}{C_{X/\frakS,\upmu}} \ar{d}{\underline{u}_{X,\underline{\upmu}}} & & \widetilde{\calE}(X'/\frakS)_{/\upmu'} \ar{d}{u'_{X,\upmu'}} \\
X_{\text{ét}/\upmu_X} \ar{rr}{\sim} & & X'_{\text{ét}/F_{1*}\upmu_X}.
\end{tikzcd}
\end{equation}
The continuous and cocontinuous functor $\rho$ also induces a continuous and cocontinuous functor
$$
\rho_{\upmu}:\underline{\calE}(X/\frakS)_{/\underline{\upmu}} \ra \calE(X'/\frakS)_{/\upmu'}.
$$
The inverse image functor $C_{X/\frakS,\upmu}^{-1}$ is then given by composition with $\rho_{\upmu}.$
For every local section $s$ of $\upmu_X$ over an étale $X$-scheme $U,$ we have a bigger commutative diagram:
\begin{equation}\label{diagkraz19253}
\begin{tikzcd}
\widetilde{\underline{\calE}}(U/\frakS) \ar{dd}{\underline{u}_X} \ar{rr}{C_{U/\frakS}} \ar{dr}{\underline{\alpha}_s} & & \widetilde{\calE}(U'/\frakS) \ar[dashed]{dd}[pos=0.20]{u_X'} \ar{dr}{\alpha_s'} &  \\
 & \widetilde{\underline{\calE}}(X/\frakS)_{/\underline{\upmu}} \ar{rr}[pos=0.20]{C_{X/\frakS,\upmu}} \ar{dd}[pos=0.20]{\underline{u}_{X,\underline{\upmu}}} & & \widetilde{\calE}(X'/\frakS)_{/\upmu'} \ar{dd}{u'_{X,\upmu'}} \\
U_{\text{ét}} \ar{dr} \ar[dashed]{rr}[pos=0.20]{\sim} & & U'_{\text{ét}} \ar[dashed]{dr} &  \\
 & X_{\text{ét}/\upmu_X} \ar{rr}{\sim} & & X'_{\text{ét}/\upmu_X},
\end{tikzcd}
\end{equation}
where $U_{\text{ét}} \ra X_{\text{ét}/\upmu_X}$ and $U'_{\text{ét}} \ra X'_{\text{ét}/\upmu_X}$ are the localization morphisms corresponding to
$$
s\in \Gamma(U,\upmu_X)=\Gamma(U',F_{1*}\upmu_X).
$$
\end{parag}

\begin{proposition}\label{commCXU}
Let $U$ be an étale $X$-scheme. Then, the diagram
$$
\begin{tikzcd}
\widetilde{\underline{\calE}}(U/\frakS)_{/\underline{\upmu}_U} \ar{d} \ar{rr}{C_{U/\frakS,\upmu}} & & \widetilde{\calE}(U'/\frakS)_{/\upmu'_U} \ar{d} \\
\widetilde{\underline{\calE}}(X/\frakS)_{/\underline{\upmu}} \ar[swap]{rr}{C_{X/\frakS,\upmu}} & & \widetilde{\calE}(X'/\frakS)_{/\upmu'},
\end{tikzcd}
$$
where $\upmu'_U$ and $\underline{\upmu}_U$ are defined in \ref{paragEST1}, the horizontal arrows are defined in \eqref{diagkraz2} and the vertical arrows are \eqref{eqREST}, is commutative.
\end{proposition}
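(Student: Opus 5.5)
It suffices to check commutativity at the level of sites, because every morphism of topoi in the diagram is induced by a continuous and cocontinuous functor and its inverse image functor is composition with that functor. The two horizontal arrows $C_{U/\frakS,\upmu}$ and $C_{X/\frakS,\upmu}$ come from the functors
$$
\rho_{U,\upmu}:\underline{\calE}(U/\frakS)_{/\underline{\upmu}_U} \ra \calE(U'/\frakS)_{/\upmu'_U},\qquad \rho_{\upmu}:\underline{\calE}(X/\frakS)_{/\underline{\upmu}} \ra \calE(X'/\frakS)_{/\upmu'}.
$$
The vertical arrows $\underline{R_{X,U}}$ and $R_{X',U'}$ come from the functors $\underline{\rho_{X,U}}$ and $\rho_{X',U'}$ of \ref{paragu}, which compose with $U\ra X$ (resp. $U'\ra X'$) and keep the index section. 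The plan is therefore to prove the equality of functors
$$
\rho_{\upmu}\circ \underline{\rho_{X,U}} = \rho_{X',U'}\circ \rho_{U,\upmu}
$$
from $\underline{\calE}(U/\frakS)_{/\underline{\upmu}_U}$ to $\calE(X'/\frakS)_{/\upmu'}$. This equality will give an equality, or at worst a canonical isomorphism, of inverse image functors, and hence the commutativity of the diagram of topoi.

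On objects I would proceed as follows. Take $(V,\frakT,v,s)$ with $V$ étale over $U$, $v:\underline{T}\ra V$, and $s\in\Gamma(V,\upmu_V)$. Going down then right gives $(V\ra U\ra X,\frakT,v,s)\mapsto (V',\frakT,\Delta_v,s)$ with $V'$ viewed over $X'$. Going right then down gives $(V',\frakT,\Delta_v,s)$ with $V'$ viewed over $U'$ and then over $X'$.

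The only point to check is that the morphism $\Delta_v:T\ra V'$ of \ref{paragIran2} does not depend on whether $V$ is regarded over $U$ or over $X$. By (\cite{SF2} 5.11), $\Delta_v$ is characterized by the diagram \eqref{morphismShiho1}, and that diagram involves only $V$, $S$, the exact relative Frobenius $V\ra V'$, and $V''=V\times_{S,F_S}S$. None of these depends on the étale base $U$ or $X$, so uniqueness gives the same morphism. The identification $\upmu_V$ as a sheaf on $V'_{\text{ét}}$ via the exact relative Frobenius is also compatible with base change, since the exact relative Frobenius of an étale strict $X$-scheme is the base change of that of $X$. Hence $s$ is sent to the same section on both sides.

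On morphisms, both composites send $(f,g)$ to $(f,g')$ with $g'$ the induced morphism of Frobenius twists, so the two functors agree there as well.

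Finally I would note that both functors are continuous and cocontinuous, so the induced morphisms of topoi coincide. The only place anything nontrivial can occur is the independence of $\Delta_v$ from the base, and that is handled by the uniqueness statement in (\cite{SF2} 5.11). Everything else is bookkeeping with the identifications \eqref{stackequiv1} and \eqref{stackequiv2}, together with the fact that $\upmu'_U$ and $\underline{\upmu}_U$ are the restrictions of $\upmu'$ and $\underline{\upmu}$ \eqref{eqRlocEST}.
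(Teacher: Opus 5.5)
Your proposal is correct and is essentially the paper's argument. The paper deduces the indexed square directly from the commutativity of the non-indexed square $C_{X/\frakS}\circ\underline{\rho_{X,U}}\simeq\rho_{X',U'}\circ C_{U/\frakS}$, which rests on the same site-level identity you verify; you carry the index section along unchanged, and your check via the uniqueness in (\cite{SF2} 5.11) that $\Delta_v$ does not depend on the étale base supplies the detail the paper leaves implicit.
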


\begin{proof}
This follows from the commutativity of
$$
\begin{tikzcd}
\widetilde{\underline{\calE}}(U/\frakS) \ar[swap]{d}{\underline{\rho_{X,U}}} \ar{rr}{C_{U/\frakS}} & & \widetilde{\calE}(U'/\frakS) \ar{d}{\rho_{X',U'}} \\
\widetilde{\underline{\calE}}(X/\frakS) \ar[swap]{rr}{C_{X/\frakS}} & & \widetilde{\calE}(X'/\frakS).
\end{tikzcd}
$$
\end{proof}

\begin{proposition}\label{ommek4}
Let $\frakE$ be a module of $\widetilde{\calE}(X'/\frakS)_{/\upmu'}.$ For any object $(U,\frakT,u)$ of $\underline{\calE}(X/\frakS),$ there exists a canonical isomorphism
$$
\left (C_{X/\frakS,\upmu}^{-1} \frakE \right )_{(U,\frakT,u)}\xrightarrow{\sim} \frakE_{\rho(U,\frakT,u)}.
$$
\end{proposition}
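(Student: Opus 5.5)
The isomorphism should come from comparing the two functors $\text{ét}_{/U\times \upmu_X} \ra \calE(X'/\frakS)_{/\upmu'}$ whose compositions with $\frakE$ define the two sides. By \ref{indlindescentdata2}, $\left (C_{X/\frakS,\upmu}^{-1}\frakE\right )_{(U,\frakT,u)} = \left (C_{X/\frakS,\upmu}^{-1}\frakE\right )\circ \delta_{(U,\frakT,u)}$. Since $C_{X/\frakS,\upmu}^{-1}$ is composition with $\rho_{\upmu}$, this equals $\frakE\circ \rho_{\upmu}\circ \delta_{(U,\frakT,u)}$, at least at the level of presheaves; both functors are continuous, so no sheafification is involved. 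On the other side, $\frakE_{\rho(U,\frakT,u)} = \frakE\circ \delta_{(U',\frakT,\Delta_u)}$, a sheaf on $\text{ét}_{/U'\times \upmu_X}$. We identify $\text{ét}_{/U'\times\upmu_X}$ with $\text{ét}_{/U\times \upmu_X}$ via the exact relative Frobenius, as is done throughout \ref{paragMuzan194}. It therefore suffices to produce a canonical isomorphism of functors
$$\rho_{\upmu}\circ \delta_{(U,\frakT,u)} \xrightarrow{\sim} \delta_{\rho(U,\frakT,u)}:\text{ét}_{/U\times\upmu_X} \ra \calE(X'/\frakS)_{/\upmu'}.$$

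Both functors leave the index unchanged: $(V,s)\mapsto (\rho(\alpha_{(U,\frakT,u)}(V)),s)$ and $(V,s)\mapsto (\alpha_{(U',\frakT,\Delta_u)}(V'),s)$. The statement therefore reduces to the non-indexed compatibility
$$\rho\circ \alpha_{(U,\frakT,u)} \simeq \alpha_{\rho(U,\frakT,u)}\circ(V\mapsto V'),$$
naturally in $V$. Here I would use the explicit construction of $\alpha_{(U,\frakT,u)}(V)=(V,\frakT_V,u_V)$ from (\cite{SF2} 8.6), in which $\frakT_V$ is the étale strict formal scheme over $\frakT$ lifting $U'$-étale data pulled back along $\Delta_u$. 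By the characterization of cartesian morphisms in $\underline{\calE}$ (\ref{paragIran3}), $T_V \simeq V'\times_{U',\Delta_u} T$. Applying $\rho$ gives $(V',\frakT_V,\Delta_{u_V})$. The object $\alpha_{(U',\frakT,\Delta_u)}(V')$ is $(V',\frakT'_{V'},\cdot)$ with $T'_{V'} = T\times_{U'}V'$, lifted étale-strictly over $\frakT$. These two special fibers agree, and the uniqueness of étale lifts, together with the uniqueness in (\cite{SF2} 5.11) for $\Delta$, gives a unique isomorphism of triples compatible with the structure maps. This is essentially the computation behind (\cite{SF2} 8.35), namely that $C^{-1}_{X/\frakS}$ preserves crystals, and I would cite or mimic that argument.

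Finally, I would check that these isomorphisms are natural in $V$ and compatible with the maps $\beta$ of \eqref{defbeta115ind}, so that they assemble into the claimed isomorphism of modules of $(U_{\text{ét}/\upmu_U},(u_*\Ox_T)_{\upmu_U})$. The rings agree because both are $u_*\Ox_T$. The main obstacle is this middle step: the identification $\rho\circ\alpha_{(U,\frakT,u)}\simeq\alpha_{\rho(U,\frakT,u)}$ has to be made with care about $\Delta_{u_V}$ versus the restriction of $\Delta_u$. I would first verify that $\Delta_{u_V}$ equals the base change of $\Delta_u$, using the uniqueness in \eqref{morphismShiho1}.
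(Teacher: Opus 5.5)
Your proposal is correct and is essentially the paper's argument. The paper deduces the isomorphism directly from the commutativity of the square comparing $\rho_{\upmu}\circ\delta_{(U,\frakT,u)}$ with $\delta_{\rho(U,\frakT,u)}\circ\bigl((V,s)\mapsto (V',s)\bigr)$, which is exactly the isomorphism of functors you reduce to; your extra check of the non-indexed compatibility $\rho\circ\alpha_{(U,\frakT,u)}\simeq\alpha_{\rho(U,\frakT,u)}$ (via cartesian morphisms and $\Delta_{u_V}$ being the base change of $\Delta_u$) spells out what the paper treats as evident.
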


\begin{proof}
This follows from the commutativity of
$$
\begin{tikzcd}
\text{ét}_{/U\times \upmu_X} \ar{rr}{\delta_{(U,\frakT,u)}} \ar[swap]{d}{(V,s) \mapsto (V',s)} & & \underline{\calE}(X/\frakS)_{/\underline{\upmu}} \ar{d}{\rho_{\upmu}} \\
\text{ét}_{/U'\times \upmu_X} \ar[swap]{rr}{\delta_{\rho(U,\frakT,u)}} & & \calE(X'/\frakS)_{/\upmu'}.
\end{tikzcd}
$$
\end{proof}

\section{Indexed logarithmic Cartier transform}

\begin{parag}
In this section, we fix a perfect field $k$ of positive characteristic $p$ and denote by $W(k)$ its ring of Witt vectors. We equip $\op{Spf}W(k)$ with the trivial logarithmic structure. If a gothic letter $\frakT$ denotes a logarithmic $p$-adic formal scheme (\cite{SF1} 7), we denote by $\frakT_n,$ for any positive integer $n,$ the logarithmic scheme obtained from $\frakT$ by reduction modulo $p^n$ (\cite{SF1} 7.8). The corresponding roman letter $T$ will denote the logarithmic scheme $\frakT_1.$

Let $\frakS$ be a logarithmic formal scheme, log flat and locally of finite type over $\op{Spf}W(k),$ and $X\ra S$ a log smooth morphism of logarithmic schemes, which is of Cartier type.

For an étale $X$-scheme $U,$ we equip $U$ with the logarithmic structure $\left (\calM_{X}\right )_{|U},$ restriction of $\calM_X$ on $U,$ so that $U\ra X$ is strict.
For every logarithmic scheme $T,$ we denote by $\upmu_T$ the sheaf $\ov{\calM}_T^{gp}.$ We consider the monoids $\upmu_X,$ $\upmu'$ and $\underline{\upmu}$ defined in \ref{paragEST1}.

We will sometimes need to assume one of following two hypothesis: 
\begin{itemize}
\item[$(H_1)$] The morphism $X\ra S$ lifts to a log smooth morphism $f:(\frakX,Q) \ra (\frakS,P)$ of framed logarithmic formal schemes and $X'$ lifts to a framed logarithmic formal scheme $(\frakX',Q')$ over $(\frakS,P),$ such that $\frakX' \ra \frakS$ is log smooth.
\item[$(H_2)$] The hypothesis $(H_1)$ is satisfied and the exact relative Frobenius $F_1:X\ra X'$ lifts to an $(\frakS,P)$-morphism of framed logarithmic formal schemes $F:(\frakX,Q) \ra (\frakX',Q').$
\end{itemize}
When we suppose that $(H_1)$ is satisfied, we consider the logarithmic formal schemes $R_{\frakX',1}$ and $Q_{\frakX}$ \eqref{paragIran5} and we denote by $R_1'$ and $Q_1$ their respective special fibers and by $\calR_1'$ and $\calQ_1$ the corresponding Hopf algebras. Also denote by $\lambda_Q:\underline{Q_1} \ra X$ and $\lambda_{R'}:R_1' \ra X'$ the morphisms given in (\cite{SF2} 6.5) and (\cite{SF1} 11.4). When we suppose that $(H_2)$ is satisfied, we consider the morphism of Hopf algebras $\calV:\calR_1' \ra \calQ_1$ induced by $\nu:Q_{\frakX}\ra R_{\frakX',1}$ \eqref{paragIran6}.
We start by studying the crystals in the indexed logarithmic Oyama topoi, introduced in the previous section, and then construct an indexed version of the Cartier transform.
\end{parag}

\begin{proposition}\label{equivRQind}
Assume $(H_1)$ is satisfied. There exists a canonical equivalence of categories between the category of crystals of $\widetilde{\underline{\calE}}(X/\frakS)_{/\underline{\upmu}}$ (resp. $\widetilde{\calE}(X'/\frakS)_{/\upmu'}$) and the category of $\Ox_{X,\upmu_X}$-modules equipped with a $\calQ_{1,\upmu_X}$-stratification (resp. $\Ox_{X',\upmu_X}$-modules equipped with an $\calR_{1,\upmu_X}'$-stratification). In addition, these equivalences preserve structures of $\upmu_X$-indexed algebras and $\upmu_X$-indexed modules.
\end{proposition}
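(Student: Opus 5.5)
We reduce to the non-indexed equivalence \ref{equivRQ} applied on each index, and glue.

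A module $\F$ of $\widetilde{\underline{\calE}}(X/\frakS)_{/\underline{\upmu}}$ is, by \ref{indexeddescentdata} and \ref{iranlem145}, the same as a family $(\F_s)$ indexed by local sections $s\in\Gamma(U,\upmu_X)$ over étale $X$-schemes $U$. Here $\F_s=\underline{\alpha}_s^{-1}\F$ is a module of $\widetilde{\underline{\calE}}(U/\frakS)$, and the family comes with isomorphisms $\underline{\rho_g}^{-1}\F_s\xrightarrow{\sim}\F_{s_{|V}}$ satisfying cocycle conditions. By definition, $\F$ is a crystal exactly when every $\F_s$ is a crystal. On the other side, \ref{indremark73} says that an $\Ox_{X,\upmu_X}$-module $\calE$ with a $\calQ_{1,\upmu_X}$-stratification is the same as a compatible family of $\Ox_U$-modules $\calE_s$ carrying $\calQ_{1|U}$-stratifications. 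To see this, note that $q_1,q_2:Q_1\to X$ induce the same map on étale sites, so the stratification $q_2^*\calE\to q_1^*\calE$ can be taken fiberwise in $s$.

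Under $(H_1)$, the restriction of $(\frakX,Q)\to(\frakS,P)$ to an étale strict $\frakU\to\frakX$ lifting $U\to X$ again satisfies $(H_1)$. The groupoid $Q_{\frakU}$ is the restriction of $Q_{\frakX}$, since the dilatation construction is compatible with strict étale localization. So \ref{equivRQ} gives an equivalence for each $U$:
\[
\{\text{crystals of }\widetilde{\underline{\calE}}(U/\frakS)\}\simeq\{\Ox_U\text{-modules with }\calQ_{1|U}\text{-stratification}\}.
\]
I then check that this equivalence commutes with the restriction functors $\underline{\rho_g}^{-1}$ for étale $g:V\to U$. I expect this to follow from the construction in \cite{SF2} 8.28, where the module is the value on the object $(U,\frakU,\ldots)$ and the stratification is the value on $Q_{\frakU}$, and both are compatible with $\alpha$ and with localization as in \ref{paragEST1}. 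Applying the equivalence to each $\F_s$ gives a compatible family $(\calE_s,\epsilon_s)$, hence a stratified $\Ox_{X,\upmu_X}$-module. The quasi-inverse goes the same way, and full faithfulness and essential surjectivity hold index by index. The case $\widetilde{\calE}(X'/\frakS)_{/\upmu'}$ with $R_{\frakX',1}$ is identical.

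For the compatibility with indexed structures, an indexed algebra structure on a crystal $\F$ is a map $\F\boxtimes\F\to\sigma^*\F$ together with a unit. By \ref{prop1916Ko}, this amounts to maps $\F_s\otimes\F_t\to\F_{s+t}$ of crystals on $\widetilde{\underline{\calE}}(U/\frakS)$, compatible with restrictions, and similarly for modules; \ref{propcrystens} guarantees that $\F\boxtimes\F$ and $\sigma^*\F$ are crystals. On the stratification side, \ref{era2indrem} identifies the corresponding data with maps $\calE_s\otimes\calE_t\to\calE_{s+t}$ compatible with the stratifications. The key point is that the non-indexed equivalence \ref{equivRQ} is monoidal, i.e.\ compatible with tensor products: tensor products of crystals are computed objectwise, and the stratification on a tensor product is $\epsilon\otimes\epsilon$. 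Associativity and unit diagrams then transfer by full faithfulness.

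The main obstacle is the compatibility of the \cite{SF2} 8.28 equivalence with étale localization and with tensor products. This requires revisiting its construction, where the crystal is evaluated on the objects built from $\frakX$ and $Q_{\frakX}$, and checking that this evaluation is natural in strict étale $\frakU\to\frakX$. I would verify this first; the remainder is formal gluing.
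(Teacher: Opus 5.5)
Your proposal is correct and follows essentially the same route as the paper. Like the paper, you identify a crystal with its descent data $(\mathfrak{E}_s)$, apply \ref{equivRQ} over each \'etale $U$ using $\calQ_{\frakU}=\calQ_{\frakX|\frakU}$, and glue via \ref{indremark73}. You then transfer the indexed algebra and module structures by using \ref{propcrystens} and \ref{prop1916Ko} to reduce $\pi$ to morphisms of crystals $\mathfrak{E}_s\otimes\mathfrak{E}_t\to\mathfrak{E}_{s+t}$. The paper's proof also uses implicitly the two facts you single out to verify, namely that \ref{equivRQ} is compatible with \'etale restriction and with tensor products, so your version is if anything more explicit on those points.
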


\begin{proof}
A crystal $\mathfrak{E}$ of $\widetilde{\underline{\calE}}(X/\frakS)_{/\underline{\upmu}}$ is equivalent to its descent data $(\mathfrak{E}_s)$ (\ref{inddescentdata}). By definition, for any étale $X$-scheme $U$ and $s\in \Gamma(U,\upmu_X),$ the $\Ox_{\underline{\calE}(U/\frakS)}$-module $\mathfrak{E}_s$ is a crystal of $\widetilde{\underline{\calE}}(U/\frakS).$ Let $\frakU$ be the unique étale formal $\frakX$-scheme such that $U=\frakU\times_{\frakX}X.$ Consider the logarithmic formal scheme $Q_{\frakU}$ obtained from $\frakU \ra \frakS$ just as $Q_{\frakX}$ is obtained from $\frakX \ra \frakS.$ By \ref{equivRQ}, $\mathfrak{E}_s$ corresponds to an $\Ox_U$-module $\F_s$ equipped with a $Q_{\frakU}$-stratification $\epsilon_s.$ Since $\calQ_{\frakU}=\calQ_{\frakX|\frakU}$ then, by \ref{indremark73}, the objects $(\F_s,\epsilon_s)$ correspond to an $\Ox_{X,\upmu_X}$-module $\F$ equipped with a $\calQ_1$-stratification $\epsilon.$ This proves the equivalence.

Suppose now that $\mathfrak{E}$ has a structure of a $\upmu_X$-indexed algebra given by a morphism
$$\pi:\mathfrak{E}\boxtimes \mathfrak{E} \ra \sigma^*\mathfrak{E},$$
where $\sigma:\upmu^2 \ra \upmu$ is the addition map. By \ref{propcrystens}, $\mathfrak{E}\boxtimes \mathfrak{E}$ and $\sigma^*\mathfrak{E}$ are crystals. For every local sections $s,t$ of $\upmu_X$ over an étale $X$-scheme $U,$ by applying $\underline{\alpha}_{(s,t)}^{-1}$ and \ref{prop1916Ko}, we obtain a morphism of crystals
$$\pi_{(s,t)}:\mathfrak{E}_s\otimes_{\Ox_{\underline{\calE}(U/\frakS)}}\mathfrak{E}_t \ra \mathfrak{E}_{s+t}.$$
This corresponds, by \ref{equivRQ}, to a morphism
$$\F_s \otimes_{\Ox_U}\F_t \ra \F_{s+t}$$
of $\Ox_U$-modules with $Q_{\frakU}$-stratification. These morphisms correspond, by \ref{indremark73}, to a morphism
$$\F\boxtimes \F\ra \sigma_X^*\F,$$
where $\sigma_X:\upmu_X^2 \ra \upmu_X$ is the addition map. This defines a $\upmu_X$-indexed algebra structure on $\F.$
The proof for $\widetilde{\calE}(X'/\frakS)_{/\upmu'}$ and for indexed modules is similar.
\end{proof}

\begin{parag}\label{parag1419}
Assume $(H_1)$ is satisfied. We have canonical projections $q_1,q_2:(X,Q_{\frakX},\lambda_Q) \ra (X,\frakX,i_X)$ of $\underline{\calE}(X/\frakS),$ where $i_X:\underline{X} \ra X$ is the canonical immersion, and $q_1',q_2':(X',R_{\frakX',1},\lambda_{R'}) \ra (X',\frakX',\op{Id}_{X'})$ of $\calE'(X/\frakS).$
Let $\F$ be a module of $\widetilde{\calE}(X'/\frakS)_{/\upmu'}.$ The morphism $c_{\F,q_i'}$ \eqref{cind} is equal to
$$
c_{\F,q_i'}:\left (\lambda_{R'*}\Ox_{R_1'} \right )_{\upmu_X} \otimes_{\Ox_{X',\upmu_X}} \F_{(X',\frakX',\op{Id}_{X'})} \ra \F_{(X',R_{\frakX',1},\lambda_{R'})},
$$
where $\Ox_{X'} \ra \lambda_{R'*}\Ox_{R_1'}$ is induced by $q_i'.$ Similarly, if $\F$ is a module of $\widetilde{\underline{\calE}}(X/\frakS)_{/\underline{\upmu}}$ the morphism $c_{\F,q_i}$ is equal to
$$
c_{\F,q_i}:\left (\lambda_{Q*}\Ox_{Q_1} \right )_{\upmu_X} \otimes_{\Ox_{X,\upmu_X}} \F_{(X,\frakX,i_{X})} \ra \F_{(X,Q_{\frakX},\lambda_{Q})}.
$$
Recall that $\calR_1'=\lambda_{R'*}\Ox_{R_1'}$ and $\calQ_1=\lambda_{Q*}\Ox_{Q_1}.$
By commutativity of \eqref{diag188}, the equivalence given in \ref{equivRQind} can be explicitly given as follows:
To a crystal $\mathfrak{E}$ of $\widetilde{\underline{\calE}}(X/\frakS)_{/\underline{\upmu}}$ (resp. $\widetilde{\calE}(X'/\frakS)_{/\upmu'}$), corresponds the $\Ox_{X,\upmu_X}$-module $\mathfrak{E}_{(X,\frakX,i_X)}$ (resp. the $\Ox_{X',\upmu_X}$-module $\mathfrak{E}_{(X',\frakX',\op{Id}_{X'})}$) equipped with the $\calQ_{1,\upmu_X}$-stratification
$$\epsilon:\calQ_{1,\upmu_X} \otimes_{\Ox_{X,\upmu_X}} \mathfrak{E}_{(X,\frakX,i_X)} \xrightarrow{c_{\mathfrak{E},q_2}} \mathfrak{E}_{(X,Q_{\frakX},\lambda_Q)} \xrightarrow{c_{\mathfrak{E},q_1}^{-1}} \mathfrak{E}_{(X,\frakX,i_X)} \otimes_{\Ox_{X,\upmu_X}}\calQ_{1,\upmu_X}.$$
(resp. the $\calR_{1,\upmu_X}'$-stratification
$$\epsilon':\calR'_{1,\upmu_X} \otimes_{\Ox_{X',\upmu_X}} \mathfrak{E}_{(X',\frakX',\op{Id}_{X'})} \xrightarrow{c_{\mathfrak{E},q_2'}} \mathfrak{E}_{(X',R_{\frakX',1},\lambda_{R'})} \xrightarrow{c_{\mathfrak{E},q_1'}^{-1}} \mathfrak{E}_{(X',\frakX',\op{Id}_{X'})})\otimes_{\Ox_{X',\upmu_X}}\calR'_{1,\upmu_X})$$
\end{parag}

\begin{proposition}\label{propEST1831}
Assume $(H_1)$ is satisfied. Let $U$ be an étale $X$-scheme. The diagrams
$$
\begin{tikzcd}
\begin{Bmatrix}
\mathrm{Crystals\ of\ }\widetilde{\underline{\calE}}(X/\frakS)_{/\underline{\upmu}}
\end{Bmatrix}
\ar{r}{\sim} \ar{d} &
\begin{Bmatrix}
\Ox_{X,\upmu_X}\text{-}\mathrm{modules\ equipped} \\
\mathrm{with\ a\ }\calQ_{1,\upmu_X}\text{-}\mathrm{stratification}
\end{Bmatrix}
 \ar{d} \\
\begin{Bmatrix}
\mathrm{Crystals\ of\ }\widetilde{\underline{\calE}}(U/\frakS)_{/\underline{\upmu}_U}
\end{Bmatrix} 
\ar{r}{\sim} &
\begin{Bmatrix}
\Ox_{U,\upmu_U}\text{-}\mathrm{modules\ equipped} \\
\mathrm{with\ a\ }\calQ_{1|U,\upmu_U}\text{-}\mathrm{stratification}
\end{Bmatrix}
\end{tikzcd}
$$
and
$$
\begin{tikzcd}
\begin{Bmatrix}
\mathrm{Crystals\ of\ }\widetilde{\calE}(X'/\frakS)_{/\upmu'}
\end{Bmatrix}
\ar{r}{\sim} \ar{d} &
\begin{Bmatrix}
\Ox_{X',\upmu_X}\text{-}\mathrm{modules\ equipped} \\
\mathrm{with\ a\ }\calR'_{1,\upmu_X}\text{-}\mathrm{stratification}
\end{Bmatrix}
 \ar{d} \\
\begin{Bmatrix}
\mathrm{Crystals\ of\ }\widetilde{\calE}(U'/\frakS)_{/\upmu'_U}
\end{Bmatrix}
\ar{r}{\sim} &
\begin{Bmatrix}
\Ox_{U',\upmu_U}\text{-}\mathrm{modules\ equipped} \\
\mathrm{with\ a\ }\calR'_{1|U',\upmu_U}\text{-}\mathrm{stratification}
\end{Bmatrix},
\end{tikzcd}
$$
where the horizontal arrows are the equivalences given in \ref{equivRQind}, the right vertical arrows are the restriction functors and the left arrows are induced by the inverse image functors of \eqref{eqREST},
are commutative.
\end{proposition}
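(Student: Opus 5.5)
We trace a crystal $\mathfrak{E}$ of $\widetilde{\underline{\calE}}(X/\frakS)_{/\underline{\upmu}}$ around both paths of the first diagram, using the explicit description of the horizontal equivalences in \ref{parag1419}. The second diagram is handled identically, with $Q$, $\underline{\calE}$, $\underline{R_{X,U}}$ replaced by $R'_1$, $\calE$, $R_{X',U'}$. Let $\frakU$ be the étale formal $\frakX$-scheme lifting $U$. Then $U$ satisfies $(H_1)$ with the framed lift induced by $(\frakX,Q)$. Since $\frakU \ra \frakX$ is étale and strict, we have $Q_{\frakU}=Q_{\frakX}\times_{\frakX}\frakU$ and $\calQ_{\frakU}=\calQ_{\frakX|\frakU}$, as already used in the proof of \ref{equivRQind}. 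So the lower horizontal equivalence is defined with respect to the objects $(U,\frakU,i_U)$ and $(U,Q_{\frakU},\lambda_{Q_{\frakU}})$ of $\underline{\calE}(U/\frakS)$ and the projections $q_{1,U},q_{2,U}$ between them.

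\textbf{Step 1: the underlying module.} We need $\bigl(\underline{R_{X,U}}^{-1}\mathfrak{E}\bigr)_{(U,\frakU,i_U)}\cong \bigl(\mathfrak{E}_{(X,\frakX,i_X)}\bigr)_{|U}$ as $\Ox_{U,\upmu_U}$-modules. The inverse image $\underline{R_{X,U}}^{-1}$ is composition with the functor induced by $\underline{\rho_{X,U}}$. So the left side is $\mathfrak{E}\circ\delta_{\underline{\rho_{X,U}}(U,\frakU,i_U)}$, i.e. $\mathfrak{E}_{(U,\frakU,i_U)}$ computed in $\underline{\calE}(X/\frakS)$. The morphism $(U,\frakU,i_U)\ra (X,\frakX,i_X)$ of $\underline{\calE}(X/\frakS)$ is cartesian: $\frakU\ra\frakX$ is étale and strict, and the fibre product condition holds by construction. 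Since $\mathfrak{E}$ is a crystal, $c_{\mathfrak{E}}$ of \eqref{cind} gives an isomorphism $\widetilde{f}_{\upmu}^*\mathfrak{E}_{(X,\frakX,i_X)} \xrightarrow{\sim} \mathfrak{E}_{(U,\frakU,i_U)}$. Here $\widetilde{f}_{\upmu}$ is just the localization $U_{\text{ét}/\upmu_U}\ra X_{\text{ét}/\upmu_X}$, so the left side is the restriction. Similarly, $(U,Q_{\frakU},\lambda_{Q_{\frakU}})\ra (X,Q_{\frakX},\lambda_Q)$ is cartesian and gives $\mathfrak{E}_{(U,Q_{\frakU},\lambda_{Q_{\frakU}})}\cong \bigl(\mathfrak{E}_{(X,Q_{\frakX},\lambda_Q)}\bigr)_{|U}$.

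\textbf{Step 2: the stratification.} The projections satisfy $q_i\circ h_Q = h\circ q_{i,U}$, where $h,h_Q$ are the two cartesian morphisms above. The cocycle condition (ii) of \ref{indlindescentdata2} then gives
$c_{\mathfrak{E},q_i}\circ$ (restriction) $=$ (restriction) $\circ\, c_{\mathfrak{E},q_{i,U}}$ after the identifications of Step 1. Composing $c_{q_2}$ with $c_{q_1}^{-1}$ as in \ref{parag1419}, the $\calQ_{1|U,\upmu_U}$-stratification obtained on the lower path coincides with the restriction of $\epsilon$. Naturality in $\mathfrak{E}$ follows because all isomorphisms used are the $c_{\mathfrak{E},-}$, which are functorial in $\mathfrak{E}$. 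This gives commutativity up to a canonical isomorphism.

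\textbf{Main obstacle.} The hard part is the identification in Step 1 of $\underline{R_{X,U}}^{-1}$ at the level of indexed descent data. The map $\underline{R_{X,U}}$ was defined through localization \eqref{eqRlocEST}, so we must check it is compatible with the functors $\delta$ of \eqref{deltatop}. I would verify directly that $\delta_{\underline{\rho_{X,U}}(V,\frakZ,v),\upmu_X}$ factors as $\underline{R_{X,U}}\circ\delta_{(V,\frakZ,v),\upmu_U}$ on sites. This is immediate from the definitions: both send $(W,s)$ to $(\alpha_{(V,\frakZ,v)}(W),s)$. The remaining check is that the object $(U,Q_{\frakU},\ldots)$ really is the cartesian pullback of $(X,Q_{\frakX},\ldots)$. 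That amounts to the étale base change property of the dilatation construction in \ref{paragIran5}, which I would quote from \cite{SF2} 6.
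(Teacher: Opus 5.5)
Your proof is correct, but it takes a different route from the paper's.

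\textbf{The paper's route.} The paper never uses the objects $(U,\frakU,i_U)$ and $(U,Q_{\frakU},\lambda_{Q_{\frakU}})$ or the cocycle condition. Instead it stacks a second square under the one to be proved. That square goes from crystals of $\widetilde{\underline{\calE}}(U/\frakS)_{/\underline{\upmu}_U}$ to crystals of $\widetilde{\underline{\calE}}(V/\frakS)$ via $\underline{\alpha}_s^{-1}$, and from stratified indexed modules on $U$ to stratified modules on $V$ via $(\F,\epsilon)\mapsto(\F_s,\epsilon_s)$, for every local section $s$ of $\upmu_U$ over an étale $V$. The equivalence of \ref{equivRQind} is defined fibrewise through the non-indexed equivalence \ref{equivRQ}, so the lower and outer squares commute by definition. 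The upper square then follows because an indexed stratified module is determined by its fibres (\ref{indremark73}).

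\textbf{Your route.} You work instead with the explicit description of \ref{parag1419}. The crystal isomorphisms along $(U,\frakU,i_U)\to(X,\frakX,i_X)$ and $(U,Q_{\frakU},\lambda_{Q_{\frakU}})\to(X,Q_{\frakX},\lambda_Q)$ identify the evaluations of $\underline{R_{X,U}}^{-1}\mathfrak{E}$ with restrictions of those of $\mathfrak{E}$. The cocycle condition, applied to $q_i\circ h_Q=h\circ q_{i,U}$, then transports $c_{\mathfrak{E},q_i}$ to $c_{\mathfrak{E},q_{i,U}}$.

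\textbf{Comparison.} Your version produces an explicit natural isomorphism. It also makes visible exactly where $\calQ_{\frakU}=\calQ_{\frakX|\frakU}$ and the compatibility of $\underline{R_{X,U}}^{-1}$ with the functors $\delta$ (a \ref{lemShiho209}-type statement) enter. The paper's version is much shorter because the indexed equivalence was built fibrewise. The cost is that it leaves implicit two points: that the fibrewise identifications glue, and that \ref{equivRQ} for $V$ does not depend on whether $V$ is regarded over $X$ or over $U$.

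\textbf{A minor remark.} You do not need the morphisms $h,h_Q$ to be cartesian to know that $c_{\mathfrak{E},h}$ and $c_{\mathfrak{E},h_Q}$ are isomorphisms. For a crystal these are isomorphisms for every morphism, fibrewise by Definition \ref{defcrys}. What you actually use is the base-change identification $g^{-1}\calQ_1\cong\calQ_{\frakU,1}$, which makes $\widetilde{f}_{\upmu}^*$ the restriction functor.
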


\begin{proof}
We just check the commutativity of the first diagram. The second is similar. For any local section $s\in \Gamma\left (V,\upmu_X \right )$ over an étale $X$-scheme $V,$ we have the diagram
$$
\begin{tikzcd}
\begin{Bmatrix}
\mathrm{Crystals\ of\ }\widetilde{\underline{\calE}}(X/\frakS)_{/\underline{\upmu}}
\end{Bmatrix}
\ar{r}{\sim} \ar{d} &
\begin{Bmatrix}
\Ox_{X,\upmu_X}\text{-}\mathrm{modules\ equipped} \\
\mathrm{with\ a\ }\calQ_{1,\upmu_X}\text{-}\mathrm{stratification}
\end{Bmatrix}
 \ar{d} \\
\begin{Bmatrix}
\mathrm{Crystals\ of\ }\widetilde{\underline{\calE}}(U/\frakS)_{/\underline{\upmu}_U}
\end{Bmatrix} 
\ar{r}{\sim} \ar[swap]{d}{\alpha_s^{-1}} &
\begin{Bmatrix}
\Ox_{U,\upmu_U}\text{-}\mathrm{modules\ equipped} \\
\mathrm{with\ a\ }\calQ_{1|U,\upmu_U}\text{-}\mathrm{stratification}
\end{Bmatrix} \ar{d}{(\F,\epsilon ) \mapsto (\F_s,\epsilon_s)} \\
\begin{Bmatrix}
\mathrm{Crystals\ of\ }\widetilde{\underline{\calE}}(V/\frakS)
\end{Bmatrix} 
\ar{r}{\sim} &
\begin{Bmatrix}
\Ox_{V}\text{-}\mathrm{modules\ equipped} \\
\mathrm{with\ a\ }\calQ_{1|V}\text{-}\mathrm{stratification}
\end{Bmatrix}.
\end{tikzcd}
$$
The lower and outer squares are commutative by definition of the equivalences \ref{equivRQind}. This implies the commutativity of the upper square.
\end{proof}

\begin{parag}\label{defQstrat}
Here, we give an alternate description of the crystal $\frakA_X$ defined in \ref{AcrysShiho} under the hypothesis $(H_1).$ Assume $(H_1)$ is satisfied. Let $q_1,q_2:Q_{\frakX} \ra \frakX$ and $q_1',q_2':R_{\frakX',1} \ra \frakX'$ be the canonical projections. By definition of $Q_{\frakX},$ the projections $q_1$ and $q_2$ are strict and so they induce, by (\cite{Lor2000} I 3.1 page 273), isomorphisms of $\upmu_X$-indexed algebras
\begin{equation}\label{Khaminei2151}
c_i:q_i^*\calA_X \xrightarrow{\sim} \calA_{Q_1}.
\end{equation}
We get a $\calQ_{1,\upmu_X}$-stratification
\begin{equation}\label{eqeAQ}
\epsilon_{\calA_X,Q}:q_2^*\calA_X \xrightarrow{c_2} \calA_{Q_1} \xrightarrow{c_1^{-1}} q_1^*\calA_X.
\end{equation}
Note that $\epsilon_{\calA_X,Q}$ is a morphism of $\upmu_X$-indexed algebras.
We denote by $\frakA$ the crystal of $\widetilde{\underline{\calE}}(X/\frakS)_{/\underline{\upmu}}$ corresponding to $\left (\calA_X,\epsilon_{\calA_X,Q} \right )$ \eqref{equivRQind}. By \ref{equivRQind}, $\frakA$ is a $\upmu$-indexed algebra. By construction,
\begin{equation}\label{eq14201}
\frakA_{(X,\frakX,i_X)}=\calA_X,\ \frakA_{(X,Q_{\frakX},\lambda_Q)}=\calA_{Q_1}.
\end{equation}
The crystal $\frakA$ is hence equal to the crystal $\frakA_X$ defined in \ref{AcrysShiho}.
\end{parag}

\begin{remark}\label{remkraz1}
By definition, the stratification $\epsilon_{\calA_X,Q}$ is an isomorphism of $\upmu_X$-indexed algebras.
\end{remark}

\begin{parag}\label{paragBglue}
There exist étale coverings $\left ( \frakS_i \ra \frakS \right )_{i\in I}$ and $\left (X_{ij} \ra X_i:=X\times_SS_i \right )_{j\in I_i}$ satisfying:
\begin{itemize}
\item $\frakS_i$ has a chart $\frakS_i \ra B\langle P_i \rangle.$
\item $X_{ij} \ra S_i$ has a chart $A_1[Q_{ij}] \ra A_1[P_i]$ such that the morphism $X_{ij} \ra S_i\times_{A_1[P_i]}A_1[Q_{ij}],$ induced by $X_{ij} \ra S_i$ and the chart $X_{ij} \ra A_1[Q_{ij}],$ is étale and strict and $P_i^{gp} \ra Q_{ij}^{gp}$ is injective and the torsion subgroup of its cokernel has a finite order coprime with $p.$
\end{itemize}
Let $Q_{ij}'$ and $Q_{ij}''$ be the monoids defined from $P_i \ra Q_{ij}$ as in (\cite{SF1} 3.1) and $X_{ij}''$ the fiber product, in the category of fine logarithmic schemes, of $X_{ij}\ra S_i$ and the Frobenius $F_{S_i}:S_i \ra S_i.$ We then have a chart $X_{ij}'' \ra A_1[Q_{ij}''],$ $X_{ij}'=X_{ij}''\times_{A_1[Q_{ij}'']}A_1[Q_{ij}']$ and the canonical morphism $P_i \ra Q_{ij}'$ is a chart of $X_{ij}' \ra S_i.$
For $i\in I$ and $j\in I_i,$ we have seen, in \ref{paragu}, that the functor
\begin{equation}\label{eqShiho1}
\begin{array}[t]{clc}
\calE(X_{ij}'/\frakS) & \ra & \text{ét}_{/X_{ij}'} \\
(U,\frakT,u) & \mapsto & U
\end{array}
\end{equation}
is continuous and cocontinuous and hence induces a morphism of topoi
$
u_{ij}:\widetilde{\calE}(X_{ij}'/\frakS) \ra X'_{{ij},\text{ét}},
$
such that $u_{ij}^{-1}$ is given by composition with \eqref{eqShiho1}. We set
$
\upmu_{ij}'=u_{ij}^{-1} \left (\upmu_{X_{ij}} \right ) = u_{ij}^{-1} \left (\ov{\calM}_{X_{ij}}^{gp} \right ).
$
Similarly, for $i,j\in I,$ $k\in I_i$ and $l\in I_j,$ we have a morphism of topoi
$$
u_{ikjl}:\widetilde{\calE}\left ( X_{ik}'\times_{X'}X_{jl}'/\frakS \right ) \ra \left (X_{ik}'\times_{X'}X_{jl}' \right )_{\text{ét}}
$$
and we set
$
\upmu_{ikjl}'=u_{ikjl}^{-1} \left (\upmu_{X_{ik}\times_XX_{jl}} \right ).
$
By the lemma \ref{lemShiho208} below, the morphism $X_{ij}' \ra S_i\times_{A_1[P_i]}A_1[Q_{ij}']$ is étale and strict and so there exists a unique étale and strict morphism of logarithmic formal schemes $\frakX'_{ij} \ra \frakS_i\times_{B\langle P_i \rangle}B\langle Q_{ij}' \rangle,$ fitting into a cartesian square
$$
\begin{tikzcd}
X_{ij}' \ar{r} \ar{d} & \frakX_{ij}' \ar{d} \\
S_i \times_{A_1[P_i]}A_1[Q_{ij}'] \ar{r} & \frakS_i\times_{B\langle P_i \rangle }B\langle Q_{ij}' \rangle .
\end{tikzcd}
$$
We have the dilatation $R_{\frakX_{ij}',1},$ defined from $\left ( \frakX_{ij}',Q_{ij}' \right ) \ra \left ( \frakS_i, P_i \right ),$ as in \ref{paragIran5}. Let $R_{ij}'$ be the logarithmic scheme obtained by reduction of $R_{\frakX_{ij}',1}$ modulo $p$ and $\calR_{ij}'$ its structural ring. Now consider the $\Ox_{X_{ij}',\upmu_{X_{ij}}}$-module $\calB_{X_{ij}/S}$ \eqref{Bgroupe2} equipped with the stratification
$$
\calR'_{ij,\upmu_{X_{ij}}} \otimes_{\Ox_{X_{ij}',\upmu_{X_{ij}}}} \calB_{X_{ij}/S} \xrightarrow{\sim} \calB_{X_{ij}/S} \otimes_{\Ox_{X_{ij}',\upmu_{X_{ij}}}} \calR'_{{ij},\upmu_{X_{ij}}}
$$
that exchanges factors (note that the two projections $R_{ij}' \ra X'_{ij}$ are equal by (\cite{SF1} 11.4)). By \ref{equivRQind}, it corresponds to a crystal $\frakB_{ij}$ of $\widetilde{\calE}\left ( X_{ij}' / \frakS \right )_{/\upmu_{ij}'}.$ By \ref{equivRQind}, $\frakB_{ij}$ is a $\upmu_{ij}$-indexed algebra.
We prove in \ref{propBglue} that the crystals $\frakB_{ij}$ glue together into a crystal $\frakB$ of $\widetilde{\calE}\left (X'/\frakS \right )_{/\upmu'}.$
\end{parag}

\begin{lemma}\label{lemShiho208}
Keep the notation of \ref{paragBglue}. For $i\in I$ and $j\in I_i,$ the morphism $X_{ij}' \ra S_i\times_{A_1[P_i]}A_1[Q_{ij}'],$ induced by $X_{ij}' \ra S_i$ and the chart $X_{ij}' \ra A_1[Q_{ij}'],$ is étale and strict.
\end{lemma}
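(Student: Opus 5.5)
The plan is to reduce the statement to a purely combinatorial fact about the monoid charts, and then use the fact that étale-and-strict morphisms are stable under base change and composition. Recall from (\cite{SF1} 3.1) how $X_{ij}'$ is built. First $X_{ij}''$ is the fine base change of $X_{ij}\ra S_i$ along $F_{S_i}$, and it has the chart $A_1[Q_{ij}'']$, where $Q_{ij}''=Q_{ij}\oplus_{P_i,F}P_i$ is the pushout along Frobenius. Then $X_{ij}'=X_{ij}''\times_{A_1[Q_{ij}'']}A_1[Q_{ij}']$, with $Q_{ij}'' \ra Q_{ij}'$ the canonical map (an integralization/saturation-type map whose groupification is the identity up to the chosen construction).

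First, I would show that $X_{ij}''\ra S_i\times_{A_1[P_i]}A_1[Q_{ij}'']$ is étale and strict. By hypothesis, $X_{ij}\ra S_i\times_{A_1[P_i]}A_1[Q_{ij}]$ is étale and strict. Base change along $F_{S_i}\times F$ identifies
$$\left (S_i\times_{A_1[P_i]}A_1[Q_{ij}]\right )\times_{S_i,F_{S_i}}S_i \cong S_i\times_{A_1[P_i]}A_1[Q_{ij}\oplus_{P_i}P_i],$$
so the base change of our morphism is again étale and strict. The point needing care is that the fiber product in fine log schemes is the integralization of the naive one. For a strict morphism, however, integralization commutes with base change: the integralization is computed on the charts, and since $X_{ij}\ra S_i\times A_1[Q_{ij}]$ is strict, the fine fiber product $X_{ij}''$ is just the scheme base change of $X_{ij}$ over the integralized target $S_i\times_{A_1[P_i]}A_1[Q_{ij}'']$. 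Hence the first claim follows.

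Second, I would base change once more along $S_i\times_{A_1[P_i]}A_1[Q_{ij}']\ra S_i\times_{A_1[P_i]}A_1[Q_{ij}'']$, which is induced by $Q_{ij}''\ra Q_{ij}'$. The definition $X_{ij}'=X_{ij}''\times_{A_1[Q_{ij}'']}A_1[Q_{ij}']$ gives the identification
$$X_{ij}'\cong X_{ij}''\times_{S_i\times_{A_1[P_i]}A_1[Q_{ij}'']}\left (S_i\times_{A_1[P_i]}A_1[Q_{ij}']\right ),$$
with the log structure coming from $Q_{ij}'$. Since étale strict morphisms are stable under base change, $X_{ij}'\ra S_i\times_{A_1[P_i]}A_1[Q_{ij}']$ is étale and strict. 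Finally, I would check that this morphism agrees with the one induced by $X_{ij}'\ra S_i$ and the chart $X_{ij}'\ra A_1[Q_{ij}']$; this is immediate from the construction.

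The main obstacle is the compatibility between fine (integralized) fiber products and scheme fiber products in the first step, together with making sure the log structure on $X_{ij}'$ is indeed the one associated to $Q_{ij}'$, so that the morphism is strict. To handle this, I would rely on (\cite{SF1} 3.1), where $X'$ is defined and the chart $P_i\ra Q_{ij}'$ is established, and on the standard fact that strictness transfers charts. The conditions that $P_i^{gp}\ra Q_{ij}^{gp}$ is injective with torsion of order prime to $p$ are only needed to know that the relevant monoid maps are of Kummer/étale type, as in loc. cit.
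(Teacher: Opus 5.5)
Your proposal is correct and follows the paper's proof: both base change the étale strict morphism $X_{ij}\ra S_i\times_{A_1[P_i]}A_1[Q_{ij}]$ first along $F_{S_i}$, identifying the new target with $S_i\times_{A_1[P_i]}A_1[Q_{ij}'']$, and then base change along $A_1[Q_{ij}']\ra A_1[Q_{ij}'']$. The paper spells out the justification of the first identification, which you leave implicit: the chart $S_i\ra A_1[P_i]$ composed with $F_{S_i}$ equals $F_{A_1[P_i]}$ composed with the chart. Conversely, your remark that a fine base change of a strict morphism needs no integralization is a correct detail that the paper omits.
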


\begin{proof}
Consider the cartesian squares in the category of fine logarithmic schemes:
$$
\begin{tikzcd}
T_{ij} \ar{r} \ar{d} & S_i \times_{A_1[P_i]}A_1[Q_{ij}] \ar{r} \ar{d} & A_1[Q_{ij}] \ar{d} \\
S_i \ar[swap]{r}{F_{S_i}} & S_i \ar{r} & A_1[P_i].
\end{tikzcd}
$$
The composition
$
S_i \xrightarrow{F_{S_i}} S_i \rightarrow A_1[P_i]
$
is equal to
$
S_i \ra A_1[P_i] \xrightarrow{F_{A_1[P_i]}} A_1[P_i].
$
It follows that we also have the following cartesian squares in the category of fine logarithmic schemes:
$$
\begin{tikzcd}
T_{ij} \ar{r} \ar{d} & A_1[Q_{ij}''] \ar{r} \ar{d} & A_1[Q_{ij}] \ar{d} \\
S_i \ar{r} & A[P_i] \ar[swap]{r}{F_{A_1[P_i]}} & A_1[P_i].
\end{tikzcd}
$$
The morphism $X_{ij}\ra S_i \times_{A_1[P_i]}A_1[Q_{ij}]$ is étale and strict hence so is its base change by $F_{S_i}$ in the category of fine logarithmic schemes, which is $X_{ij}''\ra T_{ij}=S_i \times_{A_1[P_i]}A_1[Q_{ij}''].$ The result then follows by base change by the canonical $A_1[Q_{ij}'] \ra A_1[Q_{ij}''].$
\end{proof}

\begin{proposition}\label{propBglue}
Keep the notation of \ref{paragBglue}. For $i\in I,$ let
$$
Res_{ij}:\widetilde{\calE}\left (X_{ij}' / \frakS \right )_{/\upmu'_{ij}}  \ra  \widetilde{\calE} \left (X' / \frakS \right )_{/\upmu'}
$$
be the morphism of topoi induced by $\rho_{X',X_{ij}'}$ \eqref{eqrhoEST2}. There exists a unique crystal $\frakB_X$ of $\widetilde{\calE}(X'/\frakS)_{/\upmu'}$ such that, for all $i\in I$ and $j\in I_i,$ there exists a canonical isomorphism
$$
Res_{ij}^{-1}\frakB_X \xrightarrow{\sim} \frakB_{ij}.
$$
In addition, $\frakB_X$ is naturally equipped with a structure of a $\upmu'$-indexed algebra.
\end{proposition}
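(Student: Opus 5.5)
The plan is to glue the locally defined crystals $\frakB_{ij}$ using that $\mathbbm{E}'$ \eqref{paragEST2} is a stack. The family $(X_{ij}'\ra X')$ is an étale covering, so the sheaves $u_X'^{-1}(X_{ij}'^a)$ cover the final object of $\widetilde{\calE}(X'/\frakS)$. By \eqref{eqRlocEST}, $Res_{ij}$ identifies with the localization at $u_X'^{-1}(X_{ij}'^a)\times\upmu'$. To obtain $\frakB_X$ together with the isomorphisms $Res_{ij}^{-1}\frakB_X\xrightarrow{\sim}\frakB_{ij}$, it therefore suffices to do two things: construct, on each double overlap $X'_{ik}\times_{X'}X'_{jl}$, an isomorphism between the restrictions of $\frakB_{ik}$ and $\frakB_{jl}$ to $\widetilde{\calE}(X_{ik}'\times_{X'}X_{jl}'/\frakS)_{/\upmu'_{ikjl}}$, and check the cocycle condition on triple overlaps. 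Uniqueness then follows from the stack property. Since $Res^{-1}$ of a crystal is again a crystal (\ref{paragEST1}), and being a crystal is a condition on the descent data $\F_s$, which may be checked étale locally on $U$, the glued object is a crystal.

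For the gluing isomorphisms, I would make the $\frakB_{ij}$ intrinsic. Using \ref{propEST1831}, the restriction of $\frakB_{ik}$ to an étale $V'\ra X'_{ik}$ corresponds to $(\calB_{X_{ik}/S})_{|V}$ with the factor-exchange stratification with respect to $\calR'_{ik|V}$. By \ref{lemBres}, $(\calB_{X_{ik}/S})_{|V}\cong\calB_{V/S}$ canonically. The key claim is that the crystal attached to $\calB_{V/S}$ in this way does not depend on the lift. I would show this by describing it through its linearized descent data \eqref{indlindescentdata2}: for an object $(U,\frakT,u)$ of $\calE(V'/\frakS)$, the value is $(u_*\Ox_T)_{\upmu}\otimes_{\Ox_{U',\upmu}}\calB_{U/S}$, with transition maps the obvious base change maps. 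It must be checked that, when $(U,\frakT,u)$ is $(X',\frakX',\mathrm{Id})$ or $(X',R_{\frakX',1},\lambda_{R'})$, this recovers $\calB$ with the exchange stratification. That holds because the two projections $R'_1\ra X'$ coincide (\cite{SF1} 11.4), so $c_{q_1'}$ and $c_{q_2'}$ agree. This description involves no choice of chart or lift, so over $X'_{ik}\times_{X'}X'_{jl}$ both restrictions are canonically isomorphic to the same crystal, and the cocycle condition becomes automatic from canonicity.

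The indexed algebra structure glues in the same way: the multiplications $\frakB_{ij}\boxtimes\frakB_{ij}\ra\sigma^*\frakB_{ij}$ (\ref{equivRQind}) come from the multiplication of $\calB$. The gluing isomorphisms respect them, since by \ref{lemBres} they are isomorphisms of indexed algebras. As morphisms in a stack they glue, and $Res_{ij}^{-1}$ preserves $\boxtimes$ and $\sigma^*$ by \ref{paragEST1} and \ref{Locprop1}.

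The main obstacle I expect is the independence-of-lift step. One must verify that the descent data just described are well defined as a module of $\widetilde{\calE}(V'/\frakS)_{/\upmu'_V}$. This means checking the cocycle conditions of \ref{indlindescentdata2}, together with a comparison with the equivalence \ref{equivRQind} via \ref{parag1419}. If a direct intrinsic definition proves awkward, I would fall back on comparing two lifts directly. Any two lifts $\frakX'_{ik|W},\frakX'_{jl|W}$ over an overlap $W$ produce, through the product lift, an isomorphism of the associated crystals that is the identity on $\calB$, because the stratifications are exchange maps. Cocycles would then be checked on the triple-product lift.
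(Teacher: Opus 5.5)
Your argument is correct, and its core coincides with the paper's. Both rest on canonically identifying the value of $\frakB_{ik}$ at an object $(U,\frakT,u)$ with $(u_*\Ox_T)_{\upmu}\otimes\calB$, then using \ref{lemBres} on overlaps and the stack property. What differs is how that identification is obtained.

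The paper starts from the given crystal. For $U$ affine it chooses a lift $\frakT\ra\frakX'_{ik}$, which gives a morphism $(U,\frakT,u)\ra(X'_{ik},\frakX'_{ik},\op{Id})$. The crystal property then yields $\varphi_{ik}^*\calB_{X_{ik}/S}\xrightarrow{\sim}\frakB_{ik,(U,\frakT,u)}$. The paper only records that $\varphi_{ik}$ is independent of the lift. That the isomorphism itself is lift-independent, which comes from the exchange stratification, is left implicit.

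You instead take the formula itself as linearized descent data (\ref{indlindescentdata2}). This datum is tautologically a crystal, since its transition maps are base-change isomorphisms. You then match it with $(\calB,\text{exchange})$ via \ref{parag1419}, using that the reductions modulo $p$ of $q'_1$ and $q'_2$ both equal $\lambda_{R'}$. Your route needs no lifting of $\frakT$ and makes lift-independence manifest. The cost is verifying the conditions of \ref{indlindescentdata2} for a new datum, which the paper avoids by only manipulating the already constructed $\frakB_{ik}$.

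Note also that your formula makes sense verbatim on $\calE(X'/\frakS)$, with values $(u_*\Ox_T)_{\upmu}\otimes\calB_{X/S|U}$. It therefore defines $\frakB_X$ and its indexed algebra structure globally. The gluing, the cocycle check and your fallback via product lifts all become unnecessary. What remains is to obtain $Res_{ij}^{-1}\frakB_X\cong\frakB_{ij}$ from \ref{lemShiho209}, \ref{lemBres} and \ref{equivRQind}, with uniqueness from the stack property.

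A minor notational slip: in your formula, $\calB_{U/S}$ over $\Ox_{U',\upmu}$ should read $\calB_{W/S}$ over $\Ox_{U,\upmu}$, where $W$ is the \'etale $X$-scheme with $W'=U$.
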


\begin{proof}
For $i,j\in I$ and $k\in I_i,$ $l\in I_j,$ the morphism of topoi $\rho_{X_{ik}',X_{ik}'\times_{X'}X_{jl}'}$ \eqref{eqrhoEST2} induces a morphism of topoi
$$
Res_{ikjl}: \widetilde{\calE}\left (X_{ik}' \times_{X'} X_{jl}' / \frakS \right )_{/\upmu'_{ikjl}}  \ra  \widetilde{\calE} \left (X_{ik}' / \frakS \right )_{/\upmu'_{ik}}
$$
such that $Res_{ikjl}^{-1}$ is given by composition with
\begin{equation}\label{eqShiho2}
\begin{array}[t]{clc}
\calE\left (X_{ik}' \times_{X'} X_{jl}' / \frakS \right )_{/\upmu_{ikjl}'} & \ra & \calE \left (X_{ik}' / \frakS \right )_{/\upmu_{ik}'} \\
(U,\frakT,u,s) & \mapsto & \left (\rho_{X_{ik}',X_{ik}'\times_{X'}X_{jl}'}\left (U,\frakT,u \right ),s \right ),
\end{array}
\end{equation}
where $(U,\frakT,u)$ is an object of $\calE\left (X_{ik}' \times_{X'} X_{jl}' / \frakS \right )$ and $s\in \Gamma(U,\upmu_X).$
We construct an isomorphism
$$
Res_{ikjl}^{-1} \frakB_{ik} \xrightarrow{\sim} Res_{jlik}^{-1}\frakB_{jl}.
$$
Let $(U,\frakT,u)$ be an object of $\calE \left (X_i' \times_{X'} X_j' / \frakS \right ).$ We abusively denote $\rho_{X_{ik}',X_{ik}'\times_{X'}X_{jl}'}(U,\frakT,u)$ by $(U,\frakT,u).$ By lemma \ref{lemShiho209} below,
$$
\left (Res_{ikjl}^{-1}\frakB_{ik} \right )_{(U,\frakT,u)}=\frakB_{ik,(U,\frakT,u)}. 
$$
Suppose that $U$ is affine, $\frakT$ is then affine and there exists a morphism $\frakT \ra \frakX_{ik}'$ fitting into the commutative diagram
$$
\begin{tikzcd}
U \ar{r} & X_{ik}'\times_{X'}X_{jl}' \ar{r} & \frakX_{ik}' \ar{d} \\
T \ar{u} \ar{r} & \frakT \ar{ur} \ar{r} & \frakS.
\end{tikzcd}
$$
We obtain a morphism
$
(U,\frakT,u) \ra (X_{ik}',\frakX_{ik}',\op{Id}_{X_{ik}'})
$
of $\calE\left ( X_{ik}'/\frakS \right ).$ The corresponding morphism of ringed topoi
$$
\varphi_{ik}: \left ( U_{\text{ét}/\upmu_U},\left (u_*\Ox_T \right )_{\upmu_U} \right ) \ra \left ( X_{{ik},\text{ét}/\upmu_{X_{ik}}}',\Ox_{X_{ik}',\upmu_{X_{ik}}}\right )
$$
does not depend on the choice of the lifting $\frakT \ra \frakX_{ik}'.$ Since $\frakB_{ik}$ is a crystal, we have an isomorphism
\begin{equation}\label{eqShiho3}
\varphi_{ik}^*\calB_{X_{ik}/S}=\varphi_{ik}^*\frakB_{{ik},(X_{ik}',\frakX_{ik}',\op{Id}_{X_{ik}'})} \xrightarrow{\sim} \frakB_{{ik},(U,\frakT,u)}.
\end{equation}
Similarly, we have an isomorphism
\begin{equation}\label{eqShiho4}
\varphi_{jl}^*\calB_{X_{jl}/S} \xrightarrow{\sim} \frakB_{{jl},(U,\frakT,u)}.
\end{equation}
Since $U \ra X_{ik}'$ and $U\ra X_{jl}'$ factor through $U \ra X_{ik}'\times_{X'}X_{jl}',$ the morphisms of ringed topoi $\varphi_{ik}$ and $\varphi_{jl}$ factor through
$$
\varphi_{ikjl}:\left ( U_{\text{ét}/\upmu_U},\left (u_*\Ox_T \right )_{\upmu_U} \right ) \ra \left ( \left ( X_{ik}'\times_{X'}X_{jl}' \right )_{\text{ét}/\upmu_{X_{ik}\times_XX_{jl}}},\Ox_{X_{ik}'\times_{X'}X_{jl}',\upmu_{X_{ik}\times_XX_{jl}}}\right ).
$$
By \ref{lemBres}, the isomorphisms \eqref{eqShiho3} and \eqref{eqShiho4} induce then isomorphisms
$$
\varphi_{ikjl}^*\calB_{X_{ik}\times_XX_{jl}/S} \xrightarrow{\sim} \frakB_{ik,(U,\frakT,u)},\quad 
\varphi_{ikjl}^*\calB_{X_{ik}\times_XX_{jl}/S} \xrightarrow{\sim} \frakB_{jl,(U,\frakT,u)}.
$$
We obtain an isomorphism
$
\frakB_{ik,(U,\frakT,u)} \xrightarrow{\sim} \varphi_{ikjl}^*\calB_{X_{ik}\times_XX_{jl}/S} \xrightarrow{\sim} \frakB_{jl,(U,\frakT,u)}.
$
These isomorphisms satisfy the cocyclicity condition, hence the result. If $\sigma_{ik}':\upmu_{ik}'^2 \ra \upmu_{ik}'$ and $\sigma':\upmu'^2\ra \upmu'$ are the addition maps, then the morphisms
$
\frakB_{ik} \boxtimes \frakB_{ik} \ra \sigma_{ik}'^*\frakB_{ik},
$
defining the $\upmu_{ik}'$-indexed algebra structure on $\frakB_{ik},$ induce
$
\frakB_X\boxtimes \frakB_X \ra \sigma'^*\frakB_X.
$
This morphism defines the $\upmu'$-indexed algebra structure on $\frakB_X.$
\end{proof}

\begin{lemma}\label{lemShiho209}
Keep the notation of \ref{propBglue}. For any sheaf $\F$ of $\widetilde{\calE}(X'/\frakS)_{/\upmu'}$ and any object $(U,\frakT,u)$ of $\calE(X'_{ik}/\frakS),$ we have
$$
\left (Res_{ik}^{-1}\F \right )_{(U,\frakT,u)}=\F_{\rho_{X',X_{ik}'}\left (U,\frakT,u \right )}.
$$
\end{lemma}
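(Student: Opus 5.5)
The approach is to unwind both sides through the explicit descriptions of inverse images of localization-type morphisms. By construction, $Res_{ik}$ is the morphism of topoi induced by the continuous and cocontinuous functor
$$
\rho_{X',X_{ik}',\upmu}:\calE(X_{ik}'/\frakS)_{/\upmu'_{ik}} \ra \calE(X'/\frakS)_{/\upmu'},\quad (V,\frakZ,v,s) \mapsto \left (\rho_{X',X_{ik}'}(V,\frakZ,v),s\right ),
$$
so that, as in \ref{paragEST1}, $Res_{ik}^{-1}\F=\F\circ \rho_{X',X_{ik}',\upmu}$. In particular $Res_{ik}^{-1}\F$ is computed objectwise on the site, with no sheafification needed, since the functor is cocontinuous.

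Next I recall from \ref{inddescentdata2} and \ref{indlindescentdata2} that, for an object $(U,\frakT,u)$ of $\calE(X_{ik}'/\frakS)$, the component $(Res_{ik}^{-1}\F)_{(U,\frakT,u)}$ is $Res_{ik}^{-1}\F\circ \delta_{(U,\frakT,u),\upmu}$. Here $\delta_{(U,\frakT,u),\upmu}:\text{ét}_{/U\times\upmu_U}\ra \calE(X_{ik}'/\frakS)_{/\upmu'_{ik}}$ sends $(V,s)$ to $(\alpha_{(U,\frakT,u)}(V),s)$. The key step is therefore the equality of functors
$$
\rho_{X',X_{ik}',\upmu}\circ \delta_{(U,\frakT,u),\upmu}=\delta_{\rho_{X',X_{ik}'}(U,\frakT,u),\upmu}
$$
on $\text{ét}_{/U\times \upmu_U}$. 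Since the $\upmu$-component $s$ is carried along unchanged on both sides, this reduces to the non-indexed compatibility
$$
\rho_{X',X_{ik}'}\circ\alpha_{(U,\frakT,u)}=\alpha_{\rho_{X',X_{ik}'}(U,\frakT,u)}.
$$
This holds because $\alpha_{(U,\frakT,u)}(V)=(V,\frakT_V,u_V)$ is constructed from $V\ra U$ and $(\frakT,u)$ alone (\cite{SF2} 8.6), whereas $\rho_{X',X_{ik}'}$ merely regards an object over $X_{ik}'$ as an object over $X'$ by composing with $X_{ik}'\ra X'$. The construction of $\frakT_V$, the étale lift of $V$ over $\frakT$, does not depend on the base, so the two functors agree on the nose.

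I expect the main obstacle to be only this bookkeeping: checking that the identification of the étale sites of $U$ over $X_{ik}'$ versus over $X'$ is compatible with $\upmu_U$, which is the restriction of $\upmu_X$ in both cases because $X_{ik}\ra X$ is strict. Likewise, the transition maps $\gamma$ and $c$ must correspond, which follows from the same equality applied to the morphisms $\beta_{(f,g)}$. Composing the equalities then gives $(Res_{ik}^{-1}\F)_{(U,\frakT,u)}=\F\circ\delta_{\rho(U,\frakT,u),\upmu}=\F_{\rho_{X',X_{ik}'}(U,\frakT,u)}$, which is the statement of \ref{lemShiho209}, in the same way as \ref{ommek4}.
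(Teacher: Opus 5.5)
Your argument is correct and is essentially the paper's proof: the paper simply invokes the commutativity of the triangle $Res_{ik}\circ\delta_{(U,\frakT,u)}=\delta_{\rho_{X',X_{ik}'}(U,\frakT,u)}$, and you verify this at the level of sites by reducing it to $\rho_{X',X_{ik}'}\circ\alpha_{(U,\frakT,u)}=\alpha_{\rho_{X',X_{ik}'}(U,\frakT,u)}$. One small correction: $Res_{ik}^{-1}\F$ is the plain composition $\F\circ\rho_{X',X_{ik}',\upmu}$, with no sheafification, because the functor is \emph{continuous}; cocontinuity is what makes it induce a morphism of topoi, and cocontinuity alone would only give the sheafification of that composition.
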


\begin{proof}
This follows from the obvious commutativity of
$$
\begin{tikzcd}
\text{ét}_{/U\times \upmu_U} \ar{rr}{\delta_{(U,\frakT,u)}} \ar[swap]{drr}{\delta_{\rho_{X',X_{ik}'}(U,\frakT,u)}} & & \widetilde{\calE}(X_{ik}'/\frakS)_{/\upmu_{ik}'} \ar{d}{Res_{ik}} \\
 & & \widetilde{\calE}(X'/\frakS)_{/\upmu'}.
\end{tikzcd}
$$
\end{proof}

\begin{proposition}\label{AXUESTiso}
Consider an étale $X$-scheme $U$ and the morphisms $\underline{R_{X,U}}$ and $R_{X',U'}$ \eqref{eqREST}.
Then we have canonical isomorphisms
$$
\underline{R_{X,U}}^{-1}\frakA_X \xrightarrow{\sim} \frakA_U,\ R_{X',U'}^{-1}\frakB_X \xrightarrow{\sim}\frakB_U.
$$
\end{proposition}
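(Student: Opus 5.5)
The plan is to prove both isomorphisms by comparing indexed descent data, using \ref{indlindescentdata2} and its strict variant \ref{remShiho1920}. A morphism of modules is determined by its components $\F_{(V,\frakT,v)}$ and the transition maps $c_{\F,(f,g)}$. Moreover, $\underline{R_{X,U}}$ and $R_{X',U'}$ are induced by composition with $\underline{\rho_{X,U}}$ and $\rho_{X',U'}$. By the same argument as in \ref{lemShiho209} (the triangle $\delta_{(V,\frakT,v)} = \underline{R_{X,U}}\circ\delta_{(V,\frakT,v)}$, with the object viewed in $\underline{\calE}(X/\frakS)$), this gives, for every object $(V,\frakT,v)$ of $\underline{\calE}(U/\frakS)$,
$$\left(\underline{R_{X,U}}^{-1}\frakA_X\right)_{(V,\frakT,v)}=\frakA_{X,\underline{\rho_{X,U}}(V,\frakT,v)},$$
compatibly with the transition maps. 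The analogous statement holds for $R_{X',U'}$.

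\emph{Case of $\frakA$.} I first work on the strict sites. The crystal $\frakA_X$ is defined on $\underline{\calE}_{\mathrm{strict}}(X/\frakS)_{/\underline{\upmu}}$ by $\frakA_{X,(V,\frakT,v)} = v_*\calA_T$, with transition maps given by Lorenzon's isomorphisms $f_1^*\calA_{T_2}\xrightarrow{\sim}\calA_{T_1}$. This description depends only on $(V,\frakT,v)$ and not on the ambient $X$, because $U\ra X$ is strict and so $\upmu_V$ is the same whether computed from $U$ or from $X$. Consequently, restricting $\frakA_X$ along $\underline{\rho_{X,U}}$ gives exactly the descent data of $\frakA_U$ on $\underline{\calE}_{\mathrm{strict}}(U/\frakS)_{/\underline{\upmu}_U}$, and the identity on components is compatible with the $c$'s. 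Restriction to the strict site commutes with $\underline{R_{X,U}}^{-1}$, as in the square in the proof of \ref{equivShihoind}. Since $\underline{R_{X,U}}^{-1}$ preserves crystals (\ref{paragEST1}), the equivalence \ref{equivShihoind} promotes the isomorphism on strict sites to the desired isomorphism $\underline{R_{X,U}}^{-1}\frakA_X\xrightarrow{\sim}\frakA_U$ of crystals. Compatibility with the indexed algebra structures follows from \ref{paragEST1} and \ref{Locprop1}, since the multiplications are componentwise the Lorenzon multiplications.

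\emph{Case of $\frakB$.} By uniqueness in \ref{propBglue}, it suffices to show that $R_{X',U'}^{-1}\frakB_X$ satisfies the characterizing property of $\frakB_U$ for a suitable covering. I choose the coverings for $U$ to be the pullbacks $U_{ij}=U\times_X X_{ij}$ of the coverings chosen for $X$. Each $U_{ij}\ra X_{ij}$ is étale and strict, so the charts of $X_{ij}$ restrict to $U_{ij}$, and $\frakU'_{ij}$ is étale over $\frakX'_{ij}$. The restriction morphisms $Res$ compose as $\rho$'s do, so
$$Res_{U_{ij}}^{-1}R_{X',U'}^{-1}\frakB_X = R_{X'_{ij},U'_{ij}}^{-1}Res_{ij}^{-1}\frakB_X \simeq R_{X'_{ij},U'_{ij}}^{-1}\frakB_{ij}.$$
It then remains to identify $R_{X'_{ij},U'_{ij}}^{-1}\frakB_{ij}$ with the crystal built from $\calB_{U_{ij}/S}$ with the exchange stratification. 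Via \ref{propEST1831}, this amounts to restricting $(\calB_{X_{ij}/S},\text{exchange})$ to $U_{ij}$. That restriction is $(\calB_{U_{ij}/S},\text{exchange})$ by \ref{lemBres}, together with $\calR'_{ij}|_{U'_{ij}}$ being the Hopf algebra of $U'_{ij}$.

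I expect the main obstacle to be independence of choices. The crystal $\frakB_U$ is defined via some covering of $U$, whereas I use the particular covering $(U_{ij})$. So I must check that the gluing in \ref{propBglue} is independent of the covering, which I plan to do by passing to a common refinement and using that the gluing isomorphisms come canonically from \ref{lemBres}. I also need the identification above to be compatible with those gluing isomorphisms, which can be checked componentwise on affine objects exactly as in the proof of \ref{propBglue}.
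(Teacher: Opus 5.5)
Your argument is correct, and for $\frakA_X$ it takes a genuinely different route from the paper. The paper works entirely on the stratification side. By \ref{propEST1831}, pulling back crystals along $\underline{R_{X,U}}$ corresponds to restricting stratified modules. The crystal $\frakA_X$ corresponds to $(\calA_X,\epsilon_{\calA_X,Q})$, which restricts to $(\calA_U,\epsilon_{\calA_U,Q})$, i.e.\ to $\frakA_U$. The case of $\frakB$ is declared ``similar'', using $(\calB_{X/S},\text{exchange})$. This is quick, but it tacitly uses the lifting hypothesis $(H_1)$: both \ref{propEST1831} and the description \ref{defQstrat} of $\frakA_X$ assume it. That hypothesis is not in the statement. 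It is also not available for $X$ itself when the result is fed, through \ref{lemEST3} and \ref{lemEST2}, into the proof of \ref{THMXXYY}. You instead compare the defining descent data $v_*\calA_T$ from \ref{AcrysShiho} on the strict sites, and transfer the isomorphism with \ref{equivShihoind}, using that $\underline{R_{X,U}}^{-1}$ preserves crystals. This needs no lifting at all and stays closer to how $\frakA_X$ was actually defined. For $\frakB$, your argument is essentially the paper's (stratifications plus \ref{lemBres}), but carried out on the chart covering $(X_{ij})$, where the lifts $\frakX'_{ij}$ genuinely exist, and then glued. The cost is showing that $\frakB_U$ does not depend on the covering used in \ref{propBglue}. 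You rightly flag this, and the paper never checks it. Your plan is the right way to settle it: pass to a common refinement, with the gluing isomorphisms being the canonical ones coming from \ref{lemBres}.
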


\begin{proof}
By \ref{propEST1831}, we have the commutative diagram
$$
\begin{tikzcd}
\begin{Bmatrix}
\mathrm{Crystals\ of\ }\widetilde{\underline{\calE}}(X/\frakS)_{/\underline{\upmu}}
\end{Bmatrix}
\ar{r}{\sim} \ar[swap]{d}{\underline{R_{X,U}}^{-1}} &
\begin{Bmatrix}
\Ox_{X,\upmu_X}\text{-}\mathrm{modules\ equipped} \\
\mathrm{with\ a\ }\calQ_{1,\upmu_X}\text{-}\mathrm{stratification}
\end{Bmatrix}
 \ar{d} \\
\begin{Bmatrix}
\mathrm{Crystals\ of\ }\widetilde{\underline{\calE}}(U/\frakS)_{/\underline{\upmu}_U}
\end{Bmatrix} 
\ar{r}{\sim} &
\begin{Bmatrix}
\Ox_{U,\upmu_U}\text{-}\mathrm{modules\ equipped} \\
\mathrm{with\ a\ }\calQ_{1|U,\upmu_U}\text{-}\mathrm{stratification}
\end{Bmatrix}.
\end{tikzcd}
$$
The crystal $\frakA_X$ is an object of the upper left category. It corresponds to the stratified $\Ox_{X,\upmu_X}$-module $\left (\calA_X,\epsilon_{\calA_X,Q}\right ).$ This is sent by the right vertical arrow to $\left (\calA_U,\epsilon_{\calA_U,Q}\right ).$ This corresponds by the lower equivalence to $\frakA_U.$ It follows that we have a canonical isomorphism
$$
\underline{R_{X,U}}^{-1}\frakA_X \xrightarrow{\sim} \frakA_U.
$$
The isomorphism for $\frakB_X$ is proved similarly.
\end{proof}

\begin{lemma}\label{lemkraz}
Let $X\ra Y$ be the exact diagonal immersion, $X \xrightarrow{\Delta} U \ra Y$ a factorization into a closed immersion followed by an open one, $\calI$ the ideal of $\Delta,$ $p_1,p_2:U \ra X$ the canonical projections, $\ov{x} \ra X$ a geometric point, and $a_1,\hdots a_d$ generators of $\calI_{\ov{x}}$ that reduce to a basis of the $\Ox_{X,\ov{x}}$-module $\calI_{\ov{x}}/\calI_{\ov{x}}^2.$ Equip $\calI$ with a structure of $\Ox_X$-module via the first projection $p_1.$ For $I\in \N^d,$ set 
$
a^I=\prod_{i=1}^da_i^{I_i}.
$
Then, for any positive integer $n$ and $b\in \calI_{\ov{x}},$ there exist $c_I\in \Ox_{X,\ov{x}}$ such that
$$
b-\sum_{0<|I|\le n}c_Ia^I\in \calI_{\ov{x}}^{n+1}.
$$
\end{lemma}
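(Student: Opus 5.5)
The proof will be an induction on $n$, working entirely in the local ring $A=\Ox_{U,\ov{x}}$ (the $p_1$-module structure makes $A$ an $\Ox_{X,\ov{x}}$-algebra via $p_1^{\#}$). The key facts are that $A/\calI_{\ov{x}}=\Ox_{X,\ov{x}}$ via the section $\Delta$, that $p_1\circ \Delta=\op{Id}$, and that the graded pieces $\calI_{\ov{x}}^m/\calI_{\ov{x}}^{m+1}$ are generated over $\Ox_{X,\ov{x}}$ by the classes of the monomials $a^I$ with $|I|=m$.

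For the last fact: the surjection $\op{Sym}^m(\calI/\calI^2)\ra \calI^m/\calI^{m+1}$ is always available. Since $\calI_{\ov{x}}/\calI^2_{\ov{x}}$ is free with basis $\bar a_1,\hdots,\bar a_d$, the monomials $\bar a^I$ with $|I|=m$ generate $\calI^m_{\ov{x}}/\calI^{m+1}_{\ov{x}}$ as an $A/\calI_{\ov{x}}$-module. Through $p_1^{\#}$, this is the same as generation as an $\Ox_{X,\ov{x}}$-module: any $r\in A$ differs from $p_1^{\#}\Delta^{\#}(r)$ by an element of $\calI_{\ov{x}}$, because $\Delta^{\#}\circ p_1^{\#}=\op{Id}$. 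Multiplying such a difference by an element of $\calI^m_{\ov{x}}$ lands in $\calI^{m+1}_{\ov{x}}$, so the two module structures agree on these graded quotients.

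The induction then runs as follows.
\begin{itemize}
\item For $n=1$, write $b \equiv \sum_i c_i a_i \pmod{\calI_{\ov{x}}^2}$ with $c_i\in\Ox_{X,\ov{x}}$ acting through $p_1$. This is possible by the graded statement for $m=1$.
\item Suppose we have $b-\sum_{0<|I|\le n}c_Ia^I\in \calI_{\ov{x}}^{n+1}$.
\item Apply the graded statement for $m=n+1$ to this remainder. It gives $c_I\in\Ox_{X,\ov{x}}$ for $|I|=n+1$ with the remainder minus $\sum_{|I|=n+1}c_Ia^I$ lying in $\calI_{\ov{x}}^{n+2}$.
\item Adding these new terms to the previous sum gives the statement for $n+1$.
\end{itemize}

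The main subtlety is the compatibility of the two module structures on $\calI^m/\calI^{m+1}$, handled above using $p_1\circ\Delta=\op{Id}$. The surjectivity of the symmetric power map comes from the fact that $\calI^m$ is generated by products of $m$ elements of $\calI$, each of which is an $A$-combination of the $a_i$ modulo $\calI^2$. Neither step is deep, so the proof will be short.
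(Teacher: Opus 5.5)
Your argument is correct. Its inductive skeleton is the same as the paper's: at each step you lift the class of the current remainder in $\calI^{m}_{\ov{x}}/\calI^{m+1}_{\ov{x}}$ to a combination of the $a^I$ with $|I|=m$. The difference is the input used at each step.

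The paper first shows that the projections $U\ra X$ are strict and log smooth, hence smooth. It deduces that $\Delta$ is a regular immersion, so $S^{\bullet}(\calI/\calI^2)\ra\bigoplus_{k}\calI^k/\calI^{k+1}$ is an isomorphism, and then uses that the classes of the $a^I$ with $|I|=m$ form a \emph{basis} of $\calI^{m}_{\ov{x}}/\calI^{m+1}_{\ov{x}}$.

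You observe that only \emph{generation} is needed, and this holds for any ideal generated by $a_1,\hdots,a_d$. So the geometry of $U$ (log smoothness, strictness, regularity of $\Delta$) plays no role in the existence statement, and neither does the freeness of $\calI_{\ov{x}}/\calI_{\ov{x}}^2$. You also make explicit a point the paper leaves tacit: the $\Ox_{X,\ov{x}}$-structure through $p_1^{\#}$ agrees with the $A/\calI_{\ov{x}}$-structure on the graded pieces, because $\Delta^{\#}\circ p_1^{\#}=\op{Id}$.

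What the paper's stronger input buys is uniqueness of the coefficients $c_I$. The lemma does not assert this, and its later applications (writing $g^{\#}(a)$ or $q_2^{\#}c-q_1^{\#}c$ in terms of the $\eta^I$) only use existence.
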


\begin{proof}
The canonical morphism $U\ra X\times_S^{\op{log}}X$ is log étale and the projections $X\times^{\op{log}}_SX\ra X$ are log smooth. It follows that the projections $U \ra X$ are log smooth. Since they are strict, they are smooth and $\Delta$ is a regular immersion. It follows that we have a canonical isomorphism
\begin{equation}\label{SymgrKoko}
S^{\bullet}\left (\calI/\calI^2\right )\xrightarrow{\sim} \bigoplus_{k\ge 0} \calI^k/\calI^{k+1}.
\end{equation}
There exist $c_1,\hdots,c_d\in \Ox_{X,\ov{x}}$ such that
$
b-\sum_{i=1}^dc_ia_i \in \calI_{\ov{x}}^2.
$
By \eqref{SymgrKoko}, $\left (a^I\right )_{I\in \N^d, |I|=2}$ is a basis of the $\Ox_{X,\ov{x}}$-module $\calI_{\ov{x}}^2/\calI_{\ov{x}}^3.$ Then there exist $c_I\in \Ox_{X,\ov{x}}$ such that
$$
b-\sum_{i=1}^dc_ia_i-\sum_{I\in \N^d, |I|=2}c_Ia^I\in \calI_{\ov{x}}^3.
$$
The result is then proved by induction.
\end{proof}

\begin{proposition}\label{propkraz1932}
Assume $(H_1)$ is satisfied. Let $s\in \Gamma(V,\calM^{gp}_X)$ be a section over an étale $X$-scheme $V$ and denote by $\ov{s}$ its image in $\Gamma(V,\upmu_X).$ Let $X\ra Y$ be the exact diagonal immersion, $X \xrightarrow{\Delta} U \ra Y$ a factorization into a closed immersion followed by an open one, $\calI$ the ideal of $\Delta,$ $p_1,p_2:U \ra X$ the canonical projections, $g:Q_1 \ra U$ and $\iota:X\ra Q_1$ the canonical morphisms and $q_1=p_1\circ g,$ $q_2=p_2\circ g.$ Consider a geometric point $\ov{x} \ra V.$
Then, for every $m\in \calM_{X,\ov{x}},$ there exists $a \in \calI_{\Delta(\ov{x})}$ such that
\begin{equation}\label{Kokooooh}
p_{2,\Delta(\ov{x})}^{\flat}m-p_{1,\Delta(\ov{x})}^{\flat}m=\alpha_{Y,\Delta(\ov{x})}^{-1}(1+a).
\end{equation}
In addition, if $e_s$ is the basis \eqref{eqAbase} of the $\Ox_V$-module $\calA_{X,\ov{s}}$ and $b$ is a local section of $\Ox_V,$ then
\begin{equation}\label{eqepsilonAQ2}
\epsilon_{\calA_X,Q,\ov{s}}(1\otimes be_s) = \left (e_s\otimes g^{\#}\left (p_2^{\#}(b)(1+a)\right )\right ).
\end{equation}
Moreover, we suppose that the hypothesis (\cite{SF2} 6.12) is satisfied. We have a basis $\left (\eta^I\eta_{(q)}^J \right )_{\substack{I\in \llbracket 0,p-1 \rrbracket^d \\ J\in \N^d}}$ of the $\Ox_X$-module $\calQ_1.$ Let $\left (\varphi_{I,J} \right )_{\substack{I\in \llbracket 0,p-1 \rrbracket^d \\ J\in \N^d}}$ be its dual basis. Then, for any local section $x$ of $\calA_{X,\ov{s}},$ we have
\begin{equation}\label{eqepsilonAQ}
\epsilon_{\calA_X,Q,\ov{s}}(1\otimes x) = \sum_{I\in \llbracket 0,p-1 \rrbracket^d} (\varphi_{I,0}\cdot x) \otimes \eta^I,
\end{equation}
where $\varphi_{I,0}\cdot x$ is the action of $\varphi_{I,0}$ on $x$ via the $\calQ_1$-stratification $\epsilon_{\calA_X,Q,\ov{s}}.$
\end{proposition}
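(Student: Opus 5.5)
The proof splits into the three assertions of the statement, handled in order.

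For \eqref{Kokooooh}, I would work at the stalk at $\Delta(\ov{x})$. Both $p_1$ and $p_2$ are strict, and $p_1\circ\Delta=p_2\circ\Delta=\op{Id}_X$. Hence the images of $p_2^{\flat}m$ and $p_1^{\flat}m$ in $\ov{\calM}_{Y,\Delta(\ov{x})}$ coincide: the immersion $\Delta$ is exact and induces an isomorphism $\ov{\calM}_{Y,\Delta(\ov{x})}\xrightarrow{\sim}\ov{\calM}_{X,\ov{x}}$, and both images map to $\ov{m}$. Therefore $p_2^{\flat}m-p_1^{\flat}m$ lies in $\calM_{Y}^{\times}=\alpha_Y^{-1}(\Ox_Y^*)$, say it equals $\alpha_Y^{-1}(u)$ with $u\in\Ox_{Y,\Delta(\ov{x})}^*$. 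Pulling back by $\Delta$ gives $\Delta^{\flat}(p_2^{\flat}m-p_1^{\flat}m)=m-m=0$, so $\Delta^{\#}(u)=1$. Thus $a:=u-1\in\calI_{\Delta(\ov{x})}$.

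For \eqref{eqepsilonAQ2}, I would use the definition \eqref{eqeAQ}: $\epsilon_{\calA_X,Q}=c_1^{-1}\circ c_2$, where $c_i:q_i^*\calA_X\xrightarrow{\sim}\calA_{Q_1}$ is Lorenzon's isomorphism for the strict morphism $q_i$, sending $e_s$ to $e_{q_i^{\flat}s}$. Write $s$ stalkwise as $m-m'$ with $m,m'\in\calM_{X,\ov{x}}$ and apply \eqref{Kokooooh} to both, which produces an $a\in\calI$ with $p_2^{\flat}s=p_1^{\flat}s+\alpha^{-1}(1+a)$. By the relation \eqref{eq8422c} defining $\calA$, we have $e_{p_2^{\flat}s}=(1+a)e_{p_1^{\flat}s}$ in $\calA_U$. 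Pulling back to $Q_1$ via $g$ gives $c_2(1\otimes be_s)=g^{\#}(p_2^{\#}(b)(1+a))\,e_{q_1^{\flat}s}$, and applying $c_1^{-1}$ yields the stated formula. The only subtle point is keeping the left and right module structures straight (which projection the scalar acts through); this is just bookkeeping.

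For \eqref{eqepsilonAQ}, which I expect to be the main obstacle, I would reduce to $x=e_s$. Both sides are compatible with $\Ox_V$-linearity in the sense dictated by the stratification formalism, since $\varphi_{I,0}\cdot(bx)$ is given by the standard Leibniz-type expansion. The general definition of the action $\varphi\cdot x=(\op{Id}\otimes\varphi)(\epsilon(1\otimes x))$ then gives $\epsilon(1\otimes x)=\sum_{I,J}(\varphi_{I,J}\cdot x)\otimes\eta^I\eta_{(q)}^J$. It therefore suffices to show that $\varphi_{I,J}\cdot e_s=0$ for $J\neq0$, that is, that $g^{\#}(1+a)$ expands in the basis using only the $\eta^I$ with $I\in\llbracket0,p-1\rrbracket^d$ and no $\eta_{(q)}^J$ with $J\ne0$.

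To establish this, I would choose $a_i$ to be the elements from \eqref{Kokooooh} attached to the coordinates $m_i$ (under hypothesis (\cite{SF2} 6.12) these give the $\eta_i$), and write $s=\sum n_im_i+(\text{unit})$ locally. Then $1+a=\prod(1+a_i)^{n_i}\cdot(\text{unit part})$. The unit part contributes $p_2^{\#}(v)/p_1^{\#}(v)$, which is expanded by Lemma \ref{lemkraz}. The key computation, which is where I expect the difficulty, is that in $\calQ_1$ the images of $a_i^p$ are $p$-divisible, since $Q$ is the dilatation of $\calI^{\#}+(p)$. Modulo $p$, the terms involving $\eta_{(q)}^J$ arise only from $a_i^p/p$-type contributions, and the $p$-curvature-zero property of $d_{\calA_X}$ (Proposition \ref{dAX}) should force them to vanish. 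I would check this first for $s=m_i$ by the explicit binomial expansion of $(1+\eta_i)^{n}$, and then conclude for general $x$ by multiplicativity of $\epsilon_{\calA_X,Q}$ (Remark \ref{remkraz1}).
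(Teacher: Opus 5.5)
Your treatment of \eqref{Kokooooh} and \eqref{eqepsilonAQ2} is correct and is essentially the paper's. You extract the unit $1+a$ directly from the exactness of $\Delta$, where the paper quotes the exact sequence \cite{SF1} (4.3.1). The identity $(1,q_2^{\flat}s)=(g^{\#}(1+a),q_1^{\flat}s)$ in $\calA_{Q_1}$, followed by $q_2^{\#}$-linearity, is the same computation.

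The gap is in \eqref{eqepsilonAQ}, at exactly the step you flag. Your proposed mechanism is that $\eta_{(q)}^J$-components ``arise from $a_i^p/p$-type contributions'' and are then killed by the vanishing $p$-curvature of $d_{\calA_X}$ (Proposition \ref{dAX}). This is not an argument. By \eqref{eqepsilonAQ2}, $\epsilon_{\calA_X,Q,\ov{s}}(1\otimes be_s)=e_s\otimes g^{\#}(c)$, where $c=p_2^{\#}(b)(1+a)$ is a section of $\Ox_U$. Nothing is ever divided by $p$, so no such contributions exist and there is nothing to cancel. Even made precise through the identification \eqref{eqmuzanf1}, vanishing $p$-curvature only says that the operators $J!\,\partial'^{[J]}$ act by zero. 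That is vacuous as soon as some $J_i\ge p$, so it cannot yield $\varphi_{I,J}\cdot x=0$ for all $J\neq 0$. As written, ``should force them to vanish'' is an unproved assertion at the crux of the statement.

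The missing idea is the structural fact that $g^{\#}$ maps $\Ox_{U,\Delta(\ov{x})}$ into $\bigoplus_{I\in\llbracket 0,p-1\rrbracket^d}\Ox_{X,\ov{x}}\,\eta^I$, with $\calQ_1$ viewed as an $\Ox_X$-module through $q_1$. It follows from two ingredients you already mention but do not combine.
First, $\eta_i^p=p\eta_{i(q)}=0$ in $\calQ_1$. Since $\calI$ is generated near $\Delta(\ov{x})$ by elements $a_1,\hdots,a_d$ with $g^{\#}(a_i)=\eta_i$, every monomial of degree at least $(p-1)d+1$ in the $a_i$ dies in $\calQ_1$. Hence $\calI^{pd}\Ox_{Q_1}=0$.
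Second, Lemma \ref{lemkraz} applies to an arbitrary section of $\calI$, not only to your ``unit part''. It writes such a section, modulo $\calI^{pd}$, as an $\Ox_X$-linear combination through $p_1$ of monomials $a^I$.
Since $c-p_1^{\#}\Delta^{\#}(c)\in\calI$ for every section $c$ of $\Ox_U$, the element $g^{\#}(p_2^{\#}(b)(1+a))$ is an $\Ox_X$-combination of the $\eta^I$ with $I\in\llbracket 0,p-1\rrbracket^d$. Comparing with $\epsilon_{\calA_X,Q,\ov{s}}(1\otimes x)=\sum_{I,J}(\varphi_{I,J}\cdot x)\otimes\eta^I\eta_{(q)}^J$ gives \eqref{eqepsilonAQ} for every $x=be_s$ at once. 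This is the paper's argument.

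It makes the reduction to $e_s$, the decomposition $s=\sum n_im_i+(\text{unit})$ and the appeal to Remark \ref{remkraz1} superfluous. Your reduction from $be_s$ to $e_s$ actually needs this same fact for $q_2^{\#}(b)$. The decomposition of $s$ is also not available in general: sections coming from $\calM_S^{gp}$ must be allowed, and the coordinates may only generate a subgroup of index prime to $p$ of $\calM_X^{gp}$ modulo units and $\calM_S^{gp}$.
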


\begin{proof}
Let $\upmu_X=\ov{\calM}_X^{gp}$ and $\upmu_{Q_1}=\ov{\calM}_{Q_1}^{gp}.$ The projection $q_i:Q_1 \ra X$ induces a morphism $
q_i^{-1}\calA_X \ra \calA_{Q_1},
$
of $Q_{1,\text{ét}/q_i^{-1}\upmu_X}$
and then a morphism $
\calA_X\ra \iota^{-1}\calA_{Q_1},
$ of $X_{\text{ét}/\upmu_X}.$
Taking the fibers over $\ov{s}\in \Gamma(V,\upmu_X)$ on both sides and then the stalks over a geometric point $\ov{x} \ra V,$ we get
$$
c_i:\calA_{X,\ov{s},\ov{x}} \ra \left (\iota^{-1}\calA_{Q_1}\right )_{\ov{s},\ov{x}}
$$
Consider $\ov{s}$ as an element of $\upmu_{X,\ov{x}}\xrightarrow{\sim} \upmu_{Q_1,\iota(\ov{x})}$ (the isomorphism comes from the fact that $\iota$ is strict). Recall the description $\calA_{X,\ov{s},\ov{x}}=\Ox_{X,\ov{x}}\times \calM_{X,\ov{x},\ov{s}}^{gp}/\sim$ (\eqref{eq8422b} and \eqref{eq8422c}) where $\calM_{X,\ov{x},\ov{s}}^{gp}$ is the fiber of $\calM_{X,\ov{x}}^{gp}$ over $\ov{s}.$ Similarly
$$
\left (\iota^{-1}\calA_{Q_1}\right )_{\ov{s},\ov{x}}=\left (\Ox_{Q_1,\iota(\ov{x})} \times \calM_{Q_1,\iota(\ov{x}),\ov{s}}^{gp} \right )/\sim.
$$
We have, for all $m\in \calM_{X,\ov{x},\ov{s}}^{gp},$
$$
c_i(1,m)=(1,q_{i,\iota(\ov{x})}^{\flat}m).
$$
To lighten notation, for a morphism $f$ of sheaves of $X_{\text{ét}}$ (resp. $Y_{\text{ét}},$ $Q_{1,\text{ét}}$), we denote $f_{\ov{x}}$ (resp. $f_{\Delta(\ov{x})},$ $f_{\iota(\ov{x})}$) simply by $f.$ For every $m\in \calM_{X,\ov{x}} \subset \calM_{X,\ov{x}}^{gp},$ by the exact sequence (\cite{SF1} (4.3.1)), there exists a local section $a$ of $\Delta^{-1}\calI$ such that, in $\calM_{Y,\Delta(\ov{x})}^{gp},$
\begin{equation}
p_2^{\flat}m-p_1^{\flat}m=\alpha_Y^{-1}(1+a).
\end{equation}
It follows that, in $\calM^{gp}_{Q_1,\iota(\ov{x})},$ we have
$$
q_2^{\flat}m-q_1^{\flat}m=g^{\flat}(p_2^{\flat}m-p_1^{\flat}m)=g^{\flat}\alpha_Y^{-1}(1+a)=\alpha_{Q_1}^{-1}g^{\#}(1+a).
$$
If $m$ is over $\ov{s},$ then, in $\left (\iota^{-1}\calA_{Q_1}\right )_{\ov{s},\ov{x}},$ we have
$
(1,q_2^{\flat}m)=(g^{\#}(1+a),q_1^{\flat}m).
$
We finally get, in $\left (\calA_{X,\ov{s}}\otimes_{\Ox_U}\calQ_1\right )_{\ov{x}},$ the equality
$
\epsilon_{\calA_X,Q,\ov{s}}(1\otimes(1,m)) = ((1,m)\otimes g^{\#}(1+a)).
$
If $b$ is a local section of $\Ox_U,$ then
\begin{equation}
\begin{alignedat}{2}
\epsilon_{\calA_X,Q,\ov{s}}(1\otimes(b,s)) &= \epsilon_{\calA_X,Q,\ov{s}}(q_2^{\#}(b)\otimes(1,s)) = q_2^{\#}(b) \epsilon_{\calA_X,Q,\ov{s}}(1\otimes(1,s)) \\
&= q_2^{\#}(b) \left ((1,s)\otimes g^{\#}(1+a)\right ) = \left ((1,s)\otimes g^{\#}\left (p_2^{\#}(b)(1+a)\right )\right ).
\end{alignedat}
\end{equation}
Suppose that the hypothesis of (\cite{SF2} 6.12) is satisfied. For $1\le i\le d,$ we have, in $\calQ_1,$
$$
\eta_i^p=p\eta_{i(q)}=0.
$$
Then $\calI^{pd} \Ox_{Q_1}=0.$
Therefore, by \ref{lemkraz}, $g^{\#}(a)$ is an $\Ox_X$-linear combination of $\eta^{I}:=\prod_{i=1}^d\eta_i^{I_i}$ for $I\in \llbracket 0,p-1 \rrbracket^d$ (recall that the Hopf algebra $\calQ_1$ is considered as an $\Ox_X$-module via the first projection). It follows that, for any local section $x$ of $\calA_{X,\ov{s}},$ there exist local sections $x_I \in \calA_{X,\ov{s}}$ such that
\begin{equation*}
\epsilon_{\calA_X,Q,\ov{s}}:\begin{array}[t]{clc}
\calQ_1 \otimes_{\Ox_U} \calA_{X,\ov{s}} & \ra & \calA_{X,\ov{s}} \otimes_{\Ox_U} \calQ_1 \\
1\otimes x & \mapsto & \sum_{I\in \llbracket 0,p-1 \rrbracket^d} x_I \otimes \eta^I.
\end{array}
\end{equation*}
By definition, the action $\varphi_{I,J}\cdot x$ is the image of $1\otimes x$ by the composition
$$
\calQ_1 \otimes_{\Ox_U}\calA_{X,\ov{s}} \xrightarrow{\epsilon_{\calA_X,Q,\ov{s}}} \calA_{X,Q,\ov{s}} \otimes_{\Ox_U} \calQ_1 \xrightarrow{\op{Id}\otimes \varphi_{I,J}} \calA_{X,Q,\ov{s}}.
$$
It is then clear that $x_I=\varphi_{I,0}\cdot x.$
\end{proof}

\begin{lemma}\label{lem1924Koko}
Assume $(H_1)$ and the hypothesis (\cite{SF2} 6.12) are satisfied and consider the basis
$\left (\eta^I\eta_{(q)}^J\right )_{\substack{I\in \llbracket 0,p-1\rrbracket^d\\ J\in \N^d}}$
of the $\Ox_X$-module $\calQ_1,$ where
$
\eta^I=\prod_{i=1}^d\eta_i^{I_i},\ \eta_{(q)}^J=\prod_{i=1}^d\eta_{(q)}^{J_i}.
$
Let $\left (\varphi_{I,J} \right )$ be its dual basis and extend the notation $\varphi_{I,J}$ by setting $\varphi_{I,J}=0$ if $I\in \Z^d$ and $I\notin \llbracket 0,p-1\rrbracket^d.$
Consider the ring structure on $\calQ_1^{\vee}$ induced by the Hopf algebra structure on $\calQ_1$ (\cite{SF2} 6.16). Let $\epsilon_i$ be the multi-index of $\N^d$ whose all coefficients are zero except for the $i^{th}$ which is equal to $1.$ Then
$$
\varphi_{I,0}\cdot \varphi_{\epsilon_i,0}=I_i\varphi_{I,0}+(I_i+1)\varphi_{I+\epsilon_i,0}-\varphi_{I-(p-1)\epsilon_i,\epsilon_i}.
$$
In particular, if $J\in \llbracket 0,p-1\rrbracket^d$ and $1\le i\le d$ such that $J_i\ge 1,$ then
\begin{equation}\label{eq1925Koko}
\varphi_{J-\epsilon_i,0}\cdot \varphi_{\epsilon_i,0}=(J_i-1)\varphi_{J-\epsilon_i,0}+J_i\varphi_{J,0}.
\end{equation}
\end{lemma}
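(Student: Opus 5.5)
The product on $\calQ_1^{\vee}$ is dual to the comultiplication $\delta:\calQ_1\ra \calQ_1\otimes_{\Ox_X}\calQ_1$ of the Hopf algebra. Concretely, $(\varphi\cdot\psi)(f)=(\varphi\otimes\psi)(\delta f)$, up to the left/right convention fixed in (\cite{SF2} 6.16). So to compute $\varphi_{I,0}\cdot\varphi_{\epsilon_i,0}$ I only need to know, for each basis element $\eta^K\eta_{(q)}^L$, the coefficient of $\eta^I\otimes\eta_i$ in $\delta(\eta^K\eta_{(q)}^L)$. The other coefficients do not contribute, because $\varphi_{I,0}\otimes\varphi_{\epsilon_i,0}$ kills all other basis tensors.

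The first step is to recall the comultiplication on the generators. In the framed situation of (\cite{SF2} 6.12), $\eta_i$ is the image of $u_i-1$ with $u_i=p_2^\flat m_i-p_1^\flat m_i$ (as in \eqref{Kokooooh}), and $u_i$ is grouplike. Hence
\[
\delta(\eta_i)=\eta_i\otimes1+1\otimes\eta_i+\eta_i\otimes\eta_i .
\]
For $\eta_{(q)i}$, which is $\eta_i^p/p$ reduced mod $p$, I would compute $\delta$ by lifting to $Q_{\frakX}$, expanding $\big((1+\eta_i)\otimes(1+\eta_i)-1\big)^p/p$, and reducing mod $p$. The result is $\delta(\eta_{(q)i})=\eta_{(q)i}\otimes(1+\eta_i)^p+(1+\eta_i)^p\otimes\eta_{(q)i}+\cdots$ plus the term $\frac1p\sum_{0<k<p}\binom pk(\eta_i\otimes1+\cdots)^k(\cdots)^{p-k}$. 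Only the part of $\delta(\eta_{(q)}^L)$ that is linear in the second factor, with no $\eta_{(q)}$ in the first factor, matters here.

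The second step is to multiply out $\delta(\eta^K\eta_{(q)}^L)=\prod\delta(\eta_j)^{K_j}\prod\delta(\eta_{(q)j})^{L_j}$ and extract the coefficient of $\eta^I\otimes\eta_i$. There are three sources.
\begin{itemize}
\item $K=I$, $L=0$: from $\delta(\eta_i)^{I_i}$, choose $\eta_i\otimes\eta_i$ or $1\otimes\eta_i$ once and $\eta_i\otimes1$ in the other factors. This gives $I_i$, with the $\eta_i\otimes\eta_i$ choice keeping total degree $I_i$ in the first factor.
\item $K=I+\epsilon_i$, $L=0$: choose $1\otimes\eta_i$ once among the $I_i+1$ factors, giving $I_i+1$. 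This term is present only when $I_i+1\le p-1$, which matches the convention $\varphi=0$ outside $\llbracket0,p-1\rrbracket^d$.
\item The overflow case: when $I_i=p-1$, the product $\eta_i^{p-1}\cdot\eta_i=\eta_i^p=p\eta_{(q)i}$ vanishes in $\calQ_1$. The compensating contribution then comes from $L=\epsilon_i$ and $K=I-(p-1)\epsilon_i$, through the mixed term of $\delta(\eta_{(q)i})$. Its coefficient on $\eta_i^{p-1}\otimes\eta_i$ is $\frac1p\binom{p}{1}=1$ up to sign, and I need to check carefully that the sign is $-1$.
\end{itemize}

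This third source is the main obstacle. I would handle it by working in $Q_{\frakX}$, where the relation is $\eta_i^p=p\,\eta_{(q)i}$, and computing $\delta(\eta_i)^p/p$ exactly modulo $p$. I also need to confirm that no other $(K,L)$ produce $\eta^I\otimes\eta_i$ after rewriting $\eta_i^p$ terms. Once the three coefficients are established, the formula follows.

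The particular case \eqref{eq1925Koko} then comes from substituting $I=J-\epsilon_i$. Here $I_i=J_i-1\le p-2$, so $I-(p-1)\epsilon_i\notin\llbracket0,p-1\rrbracket^d$ and the last term vanishes by the convention.
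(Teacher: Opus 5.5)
Your strategy is the paper's. You write $\varphi_{I,0}\cdot\varphi_{\epsilon_i,0}=\varphi_{I,0}\circ(\op{Id}\otimes\varphi_{\epsilon_i,0})\circ\delta$ and read off, for each basis element $\eta^K\eta_{(q)}^L$, the coefficient of $\eta^I\otimes\eta_i$ in $\delta(\eta^K\eta_{(q)}^L)$. Your first two coefficients ($I_i$ from $K=I$, and $I_i+1$ from $K=I+\epsilon_i$) are correct, and so is your deduction of \eqref{eq1925Koko}.

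\textbf{The gap is the third term.} It is the only non-routine part of the general formula, you leave its sign open, and the method you propose for settling it gives the \emph{opposite} sign. Put $x=\eta_i\otimes 1$ and $y=1\otimes\eta_i$, so $\delta(\eta_i)=x+y+xy$. With $\eta_i^p=p\,\eta_{(q)i}$ on the lift (the relation the paper itself uses, e.g.\ in the proof of \ref{propkraz1932}), you get $\delta(\eta_{(q)i})=(x+y+xy)^p/p$. The coefficient of $x^{p-1}y$ in $(x+y+xy)^p$ is exactly $p$: one factor is $y$ and all the others are $x$, since choosing any factor $xy$ pushes the $x$-degree to at least $p$. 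Every other monomial either acquires an $\eta_{(q)}$ after rewriting $\eta_i^p$ or has the wrong bidegree. So this computation gives $+\eta_i^{p-1}\otimes\eta_i$, hence $+\varphi_{I-(p-1)\epsilon_i,\epsilon_i}$. The coefficient is $\frac1p\binom{p}{p-1}=1$, with no sign ambiguity.

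The paper does not derive $\delta(\eta_{(q)})$. It quotes the closed formula of (\cite{SF2} 6.16), whose mixed part is $\sum_{0<b+c<p}\frac{(-1)^{b+c}}{b+c}\binom{b+c}{b}\eta_j^{b+c}\otimes\eta_j^{p-b}$. Its $(b,c)=(p-1,0)$ term is $\frac{(-1)^{p-1}}{p-1}\eta_j^{p-1}\otimes\eta_j\equiv-\eta_j^{p-1}\otimes\eta_j$ modulo $\calK^2$, and that is the source of the $-1$. Expanding shows this mixed part is exactly the negative of the one in $(x+y+xy)^p/p$. For $p=3$ the quoted formula gives $x^2y^2-x^2y-xy^2$, while the direct expansion gives $x^2y+xy^2-x^2y^2$. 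So the quoted formula corresponds to a normalization like $\eta_{(q)}=-\eta^p/p$. For odd $p$ the two genuinely differ; for $p=2$ they agree.

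By contrast, $\eta^p=p\,\eta_{(q)}$ together with the formula $\calV(\eta_{i(r)})=\eta_{i(q)}+\sum_k\frac{(-1)^{k+1}}{k}\eta_i^k+\cdots$ used in \ref{THMXXaa} is consistent with your $+$ sign. Indeed $(u_i^p-1)/p$ is primitive modulo $p$. So you cannot ``check'' the sign $-1$ from first principles. You must either adopt the cited formula of \cite{SF2} and its normalization verbatim, as the paper does, or conclude that under the stated conventions the last term carries a $+$ sign. The discrepancy is harmless downstream: only \eqref{eq1925Koko} is used (in \ref{Kokoprop1925}), and there $I_i\le p-2$, so the third term vanishes by the convention $\varphi_{I,J}=0$ outside $\llbracket 0,p-1\rrbracket^d$.
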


\begin{proof}
By definition, the product $\varphi_{I,0}\cdot \varphi_{\epsilon_i,0}$ is equal to the composition
$$
\calQ_1 \xrightarrow{\delta} \calQ_1 \otimes_{\Ox_X}\calQ_1 \xrightarrow{\op{Id}\otimes \varphi_{\epsilon_i,0}} \calQ_1 \xrightarrow{\varphi_{I,0}} \Ox_X,
$$
where $\delta$ is given in (\cite{SF2} 6.16). Let $\alpha \in \llbracket 0,p-1 \rrbracket ^d$ and $\beta\in \N^d.$ Let $\calK \subset \calQ_1$ be the ideal generated by $\eta_1,\hdots,\eta_d,\eta_{1(q)},\hdots,\eta_{d(q)}.$ The linear form $\varphi_{\epsilon_i,0}:\calQ_1 \ra \Ox_X$ factors clearly through the canonical projection $\pi:\calQ_1 \ra \calQ_1/\calK^2.$ By (\cite{SF2} 6.16), we have
\begin{alignat*}{2}
\delta \left (\eta^{\alpha}\eta_{(q)}^{\beta} \right ) &= \prod_{j=1}^d\left (1\otimes \eta_j+\eta_j\otimes 1+\eta_j\otimes \eta_j\right )^{\alpha_j} \\
& \times \prod_{j=1}^d\left (1\otimes \eta_{j(q)}+\sum_{0<b+c<p} \frac{(-1)^{b+c}}{b+c}\begin{pmatrix}b+c \\ b \end{pmatrix} \eta_j^{b+c}\otimes \eta_j^{p-b} +\eta_{j(q)}\otimes 1\right )^{\beta_j}.
\end{alignat*}
Then, in $\calQ_1 \otimes_{\Ox_X} \left ( \calQ_1/\calK^2\right ),$ we have
\begin{alignat*}{2}
\left (\pi \otimes \op{Id}_{\calQ_1} \right )\circ \delta \left (\eta^{\alpha}\eta_{(q)}^{\beta} \right ) &= \prod_{j=1}^d\left (1\otimes \eta_j+\eta_j\otimes 1+\eta_j\otimes \eta_j\right )^{\alpha_j} \\
& \times \prod_{j=1}^d\left (1\otimes \eta_{j(q)}- \eta_j^{p-1}\otimes \eta_j +\eta_{j(q)}\otimes 1\right )^{\beta_j}.
\end{alignat*}
By the binomial formula, we have
\begin{alignat*}{2}
\left (1\otimes \eta_j+\eta_j\otimes 1+\eta_j\otimes \eta_j\right )^{\alpha_j} &= \sum_{\substack{r,s,t\in \N \\ r+s+t=\alpha_j}} \frac{\alpha_j!}{r!s!t!} \eta_j^{s+t}\otimes \eta_j^{r+t}, \\
\left (1\otimes \eta_{j(q)}- \eta_j^{p-1}\otimes \eta_j +\eta_{j(q)}\otimes 1\right )^{\beta_j} &= \sum_{\substack{u,v,w\in \N \\ u+v+w=\beta_j}}\frac{\beta_j!}{u!v!w!}(-1)^v \times \left (\eta_{j(q)}^{w}\eta_j^{(p-1)v} \right )\otimes\left ( \eta_{j(q)}^{u}\eta_j^{v} \right ).
\end{alignat*}
We have the same equalities with multi-indices:
\begin{alignat*}{2}
\prod_{j=1}^d\left (1\otimes \eta_j+\eta_j\otimes 1+\eta_j\otimes \eta_j\right )^{\alpha_j}&=\sum_{\substack{r,s,t\in\N^d \\ r+s+t=\alpha}} \frac{\alpha!}{r!s!t!} \eta^{s+t}\otimes \eta^{r+t}, \\
\prod_{j=1}^d\left (1\otimes \eta_{j(q)}- \eta_j^{p-1}\otimes \eta_j +\eta_{j(q)}\otimes 1\right )^{\beta_j} &= \sum_{\substack{u,v,w\in \N^d \\ u+v+w=\beta}}\frac{\beta!}{u!v!w!}(-1)^{|v|} \times \left (\eta_{(q)}^{w}\eta^{(p-1)v} \right )\otimes\left ( \eta_{(q)}^{u}\eta^{v} \right ).
\end{alignat*}
Suppose that $\alpha_i \ge 1$ and $\beta_i\ge 1.$
Then, the only terms whose image doesn't vanish by $\op{Id}\otimes \varphi_{\epsilon_i,0}$ are those that either correspond to
$$
\begin{cases}r+t=\epsilon_i\\
u=v=0, \end{cases}\ \text{or}\ 
\begin{cases}
r+t=0\\
u=0\\
v=\epsilon_i. 
\end{cases}
$$
These cases are equivalent to
$$
\begin{cases}r=\epsilon_i, t=0, s=\alpha-\epsilon_i\\
u=v=0, w=\beta,\end{cases}, \begin{cases}t=\epsilon_i, r=0, s=\alpha-\epsilon_i\\
u=v=0, w=\beta,\end{cases}\ \text{or}\ 
\begin{cases}
r=t=0, s=\alpha\\
u=0\\
v=\epsilon_i\\
w=\beta-\epsilon_i.
\end{cases}
$$
They yield
$
\alpha_i\left (\eta^{\alpha-\epsilon_i}\otimes \eta_i+\eta^{\alpha}\otimes \eta_i \right )\left (\eta_{(q)}^{\beta}\otimes 1\right )-\beta_i(\eta^{\alpha}\otimes 1)\left (\left (\eta^{(p-1)\epsilon_i}\eta_{(q)}^{\beta-\epsilon_i} \right )\otimes \eta_i \right ).
$
If $\alpha_i=0$ and $\beta_i\ge 1,$ the only term that doesn't vanish by $\op{Id}\otimes \varphi_{\epsilon_i,0}$ is
$$
-\beta_i(\eta^{\alpha}\otimes 1)\left (\left (\eta^{(p-1)\epsilon_i}\eta_{(q)}^{\beta-\epsilon_i} \right )\otimes \eta_i \right ).
$$
If $\alpha_i \ge 1$ and $\beta_i=0,$ the only term that doesn't vanish by $\op{Id}\otimes \varphi_{\epsilon_i,0}$ is
$$
\alpha_i\left (\eta^{\alpha-\epsilon_i}\otimes \eta_i+\eta^{\alpha}\otimes \eta_i \right )\left (\eta_{(q)}^{\beta}\otimes 1\right ).
$$
If $\alpha_i=\beta_i=0,$ all terms vanish by $\op{Id}\otimes \varphi_{\epsilon_i,0}.$
It follows that
$$
\left (\varphi_{I,0}\cdot \varphi_{\epsilon_i,0} \right )\left (\eta^{\alpha}\eta_{(q)}^{\beta} \right )=\begin{cases}I_i+1\ \text{if}\ \alpha=I+\epsilon_i\ \text{and}\ \beta=0,\\
I_i\ \text{if}\ \alpha=I\ \text{and}\ \beta=0,\\
-1\ \text{if}\ \alpha=I-(p-1)\epsilon_i\ \text{and}\ \beta=\epsilon_i.
\end{cases}
$$
The result follows.
\end{proof}

\begin{proposition}\label{Kokoprop1925}
Assume $(H_1)$ is satisfied. The stratification $\epsilon_{\calA_X,Q}$ \eqref{eqeAQ} is $\calB_{X/S}$-linear, where $\calB_{X/S}$ is defined in \ref{Bgroupe2}.
\end{proposition}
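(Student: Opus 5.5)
The claim to prove is this: for local sections $b\in\calB_{X/S,\ov{t}}$ and $x\in\calA_{X,\ov{s}}$ over an étale $V$, we have $\epsilon_{\calA_X,Q,\ov{s}+\ov{t}}(1\otimes bx)=b\,\epsilon_{\calA_X,Q,\ov{s}}(1\otimes x)$. Here $\calB_{X/S}$ acts on $q_i^*\calA_X$ through $q_i^*$, viewing $\calB_{X/S}$ as an $\Ox_{X'}$-algebra. By Remark \ref{remkraz1}, $\epsilon_{\calA_X,Q}$ is multiplicative, so $\epsilon(1\otimes bx)=\epsilon(1\otimes b)\,\epsilon(1\otimes x)$. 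It therefore suffices to show that $\epsilon_{\calA_X,Q}$ restricted to $\calB_{X/S}$ is the "identity": $\epsilon(1\otimes b)=b\otimes 1$ for every local section $b$ of $\calB_{X/S}$. The question is local, so I pass to a geometric point $\ov{x}$ and an étale neighbourhood where the hypothesis (\cite{SF2} 6.12) holds. This gives the basis $\eta^I\eta_{(q)}^J$ of $\calQ_1$ and its dual basis $\varphi_{I,J}$.

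Next I apply \eqref{eqepsilonAQ}: $\epsilon(1\otimes b)=\sum_{I\in\llbracket 0,p-1\rrbracket^d}(\varphi_{I,0}\cdot b)\otimes\eta^I$. Since $\varphi_{0,0}$ is the counit, $\varphi_{0,0}\cdot b=b$. So the task reduces to showing $\varphi_{I,0}\cdot b=0$ for every $I\neq 0$ in $\llbracket 0,p-1\rrbracket^d$. For $I=\epsilon_i$, I will compare the action of $\varphi_{\epsilon_i,0}$ with the connection $d_{\calA_X}$. Using \eqref{eqepsilonAQ2} together with \eqref{Kokooooh}, the first-order part of $\epsilon(1\otimes be_s)$ is $e_s\otimes g^{\#}(d b+b\,a)$ modulo $\calK^2$. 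The class of $a$ in $\calI/\calI^2$ is $\op{dlog} m$. The coefficient of $\eta_i$ is therefore the $\partial_i$-component of $d_{\calA_X}(be_s)=e_s\otimes(db+b\,\op{dlog}s)$ from \eqref{eq8431Koko}. Equivalently, $\varphi_{\epsilon_i,0}$ acts as the derivation $\partial_{\epsilon_i}$ dual to $\op{dlog}m_i$ through $d_{\calA_X}$. Since $b$ lies in $\ker d_{\calA_X}=\calB_{X/S}$, this gives $\varphi_{\epsilon_i,0}\cdot b=0$.

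For general $I$, I induct on $|I|$ using \eqref{eq1925Koko}: $J_i\varphi_{J,0}=\varphi_{J-\epsilon_i,0}\cdot\varphi_{\epsilon_i,0}-(J_i-1)\varphi_{J-\epsilon_i,0}$. The action of $\calQ_1^\vee$ on a stratified module is a ring action. With this order of multiplication, $(\varphi\cdot\psi)\cdot b$ equals one of $\varphi\cdot(\psi\cdot b)$ or $\psi\cdot(\varphi\cdot b)$. Either way, the induction hypothesis ($\varphi_{J-\epsilon_i,0}\cdot b=0$ if $J\ne\epsilon_i$) together with $\varphi_{\epsilon_i,0}\cdot b=0$ makes both terms on the right vanish. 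Since $1\le J_i\le p-1$, $J_i$ is invertible in characteristic $p$, and we conclude $\varphi_{J,0}\cdot b=0$. The base case $J=\epsilon_i$ is the derivation step above.

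The main obstacle is identifying $\varphi_{\epsilon_i,0}$ with the operator $\partial_{\epsilon_i}$ coming from $d_{\calA_X}$. This requires matching $g^{\#}(a)$ mod $\calK^2$ with $\op{dlog}m$ expressed in the coordinates $\eta_i$, and checking that the $\eta_i$ are exactly the images of $\op{dlog}m_i$, up to the normalisation of \cite{SF2} 6.12. I will do this by applying Lemma \ref{lemkraz} with $n=1$ and computing on the basis vectors $e_s$. The order convention in the ring action also needs checking, though the argument above is insensitive to it.
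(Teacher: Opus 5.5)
Your proposal is correct and follows the paper's proof essentially step for step. Both reduce via the multiplicativity of $\epsilon_{\calA_X,Q}$ (Remark \ref{remkraz1}) to $\epsilon_{\calA_X,Q,\ov{s}}(1\otimes b)=b\otimes 1$, expand with \eqref{eqepsilonAQ}, reduce to $\varphi_{\epsilon_i,0}\cdot b=0$ by induction via \eqref{eq1925Koko}, and get this base case from $d_{\calA_X}(te_s)=0$ together with \eqref{eqepsilonAQ2} and \eqref{Kokooooh}. The one difference is that the paper never identifies $\varphi_{\epsilon_i,0}$ with $\partial_{\epsilon_i}$: it only uses that $dt+t\,\op{dlog}s=0$ forces $p_2^{\#}(t)(1+a)-p_1^{\#}(t)\in\calI^2$, and that $\varphi_{\epsilon_i,0}$ kills both $q_1^{\#}(t)$ and $g^{\#}(\calI^2)$ because it factors through $\calQ_1/\calK^2$, so the matching of $\eta_i$ with $\op{dlog}m_i$ that you flag as the main obstacle is not actually needed.
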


\begin{proof}
Let $s\in \Gamma(U,\calM_X^{gp})$ be a local section of $\calM_X^{gp}$ over an étale $X$-scheme $U$ and $\ov{s}\in \Gamma(U,\upmu_X)$ its image. By definition, $\epsilon_{\calA_X,Q}$ is a morphism of $\upmu_X$-indexed algebras \eqref{remkraz1}. It is thus sufficient to prove that, for any local section $b$ of $\calB_{X/S,\ov{s}},$
$$
\epsilon_{\calA_X,Q,\ov{s}}(1\otimes b)=b\otimes 1.
$$
The stratification $\epsilon_{\calA_X,Q,\ov{s}}$ defines a $\calQ_1^{\vee}$-module structure on $\calA_{X,\ov{s}},$ given explicitly by
$$
\begin{array}[t]{clc}
\calQ_1^{\vee} \otimes_{\Ox_U} \calA_{X,\ov{s}} & \ra & \calA_{X,\ov{s}} \\
\varphi\otimes a & \mapsto & \left (\op{Id} \otimes \varphi \right )\circ \epsilon_{\calA_X,Q,\ov{s}}(1\otimes a).
\end{array}
$$
We may work étale locally on $X$ and suppose that the hypothesis (\cite{SF2} 6.12) is satisfied. We take again the notation of \ref{lem1924Koko} and denote by $\varphi_{I,J}\cdot b$ the action of $\varphi_{I,J}$ on $b.$
By \eqref{eqepsilonAQ}, we have
$$
\epsilon_{\calA_X,Q,\ov{s}}(1\otimes b)=\sum_{I\in \llbracket 0,p-1\rrbracket^d}\left (\varphi_{I,0}\cdot b\right )\otimes \eta^I.
$$
We have to prove that $\varphi_{I,0}\cdot b=0$ for all $I\in\llbracket 0,p-1\rrbracket^d$ such that $|I|>0.$ By \eqref{eq1925Koko} and by induction on $|I|,$ it is sufficient to prove that $\varphi_{\epsilon_i,0}\cdot b=0$ for $1\le i\le d.$ 
Recall that $e_s=(1,s)$ \eqref{eqAbase} is a basis for the $\Ox_U$-module $\calA_{X,\ov{s}}.$ It follows that there exists a local section $t$ of $\Ox_X$ such that
$$
b=te_s.
$$
By \eqref{eq8431Koko}, we have
$$
dt+t\op{dlog}s=0.
$$
Let $X\ra Y$ be the exact diagonal immersion, $\calI$ its ideal, $\iota:X\ra Q_1$ and $g:Q_1\ra Y$ the canonical morphisms and $p_1,p_2:Y\ra X$ and $q_1,q_2:Q_1 \ra X$ the canonical projections. By \eqref{eqepsilonAQ2} and \eqref{Kokooooh}, there exists a local section $a$ of $\Delta^{-1}\calI$ such that
$$
\begin{cases}
\epsilon_{\calA_X,Q,\ov{s}}\left (1\otimes te_s\right )=\left (e_s\otimes g^{\#}(p_2^{\#}(t)(1+a)) \right ),\\
p_2^{\flat}(s)=\alpha_Y^{-1}(1+a)+p_1^{\flat}(s).
\end{cases}
$$
We have a canonical isomorphism of $\Ox_X$-modules
$$
\Delta^{-1}\left (\calI/\calI^2\right )\xrightarrow{\sim}\omega^1_{X/S}.
$$
Let $\ov{x} \ra X$ be a geometric point.
By (\cite{SF1} 6.5), the differential $\op{dlog}s \in \omega^1_{X/S,\ov{x}}$ corresponds to the class of $a\in \calI_{\Delta(\ov{x})}$ modulo $\calI_{\Delta(\ov{x})}^2.$ The local section $dt$ corresponds to the class of $p_{2,\Delta(\ov{x})}^{\#}(t)-p_{1,\Delta(\ov{x})}^{\#}(t).$ Since $dt+t\op{dlog}s=0,$
$$
p_{2,\Delta(\ov{x})}^{\#}(t)-p_{1,\Delta(\ov{x})}^{\#}(t)+p_{2,\Delta(\ov{x})}^{\#}(t)a=p_{2,\Delta(\ov{x})}^{\#}(t)(1+a)-p_{1,\Delta(\ov{x})}^{\#}(t)
$$
belongs to $\calI_{\Delta(\ov{x})}^2.$ Since
$$
\varphi_{\epsilon_i,0}(q_{1,\iota(\ov{x})}^{\#}(t))=t\varphi_{\epsilon_i,0}(1)=0,
$$
we deduce that, for $1\le i\le d,$
$$
\varphi_{\epsilon_i,0}\left (g_{\Delta(\ov{x})}^{\#}(p_{2,\Delta(\ov{x})}^{\#}(t)(1+a))\right )=0.
$$
By \ref{eqepsilonAQ2}, we have
\begin{equation*}
\epsilon_{\calA_X,Q,\ov{s}}(1\otimes b) =\epsilon_{\calA_X,Q,\ov{s}}(1\otimes te_s) = e_s\otimes g^{\#}\left (p_2^{\#}(t)(1+a)\right ).
\end{equation*}
Then,
$$
\varphi_{\epsilon_i,0}\cdot b=e_s\otimes \varphi_{\epsilon_i,0}\left (g^{\#}\left (p_2^{\#}(t)(1+a)\right )\right )=0.
$$
This finishes the proof.
\end{proof}

\begin{remark}\label{actiondepsQ}
Assume $(H_1)$ is satisfied and consider the special fiber $h:P_0 \ra Q_1$ of the morphism (\cite{SF2} (7.2.1)). The pull-back
$$
h^*\epsilon_{\calA_X,Q}:(q_2\circ h)^*\calA_X \ra (q_1\circ h)^*\calA_X
$$
is, by definition, equal to the $P_0$-stratification $\epsilon_{\calA_X}$ defined by Montagnon in \cite{Mon} 4.4.1 page 46.
\end{remark}

\begin{proposition}\label{propkraz1934}
Assume $(H_2)$ is satisfied. Consider the morphism of Hopf algebras $\calV:\calR_1' \ra \calQ_1$ induced by $\nu:Q_{\frakX}\ra R_{\frakX',1}$ \eqref{nu}. Then, the diagram
\begin{equation}\label{diag186kraz}
\begin{tikzcd}
\begin{Bmatrix}\text{Crystals\ of\ }\widetilde{\calE}(X'/\frakS)_{/\upmu'} \end{Bmatrix} \ar{r}{C^{-1}_{X/\frakS,\upmu}} \ar[swap, sloped]{d}{\sim} & \begin{Bmatrix}\text{Crystals\ of\ }\widetilde{\underline{\calE}}(X/\frakS)_{/\underline{\upmu}} \end{Bmatrix}  \ar[sloped]{d}{\sim} \\
\begin{Bmatrix}\Ox_{X',\upmu_X}\text{-modules\ with\ an}\\ \calR_{1,\upmu_X}'\text{-stratification} \end{Bmatrix} \ar{r}{\Psi_{\upmu}} & \begin{Bmatrix}\Ox_{X,\upmu_X}\text{-modules\ with\ a}\\ \calQ_{1,\upmu_X}\text{-stratification} \end{Bmatrix},
\end{tikzcd}
\end{equation}
where the vertical equivalences are given in \ref{equivRQind}, $C_{X/\frakS,\upmu}$ is given in \eqref{diagkraz2} and $\Psi_{\upmu}(\calE',\epsilon') = \left (F_1^*\calE',\calV^*\epsilon'\right )$ for an $\Ox_{X',\upmu_X}$-module $\calE'$ equipped with an $\calR_{1,\upmu_X}'$-stratification $\epsilon',$ is 2-commutative.
\end{proposition}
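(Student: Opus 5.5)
The plan is to reduce to the non-indexed 2-commutative diagram \eqref{diag185} (applied over each étale $U$), fiber by fiber on the index. The reduction uses the indexed descent data \eqref{inddescentdata} together with the compatibility of $C_{X/\frakS,\upmu}$ with the morphisms $\alpha'_s,\underline{\alpha}_s$ recorded in \eqref{diagkraz19253}.

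\textbf{Step 1: reduce to fibers.} Let $\frakE$ be a crystal of $\widetilde{\calE}(X'/\frakS)_{/\upmu'}$. For a local section $s\in\Gamma(U,\upmu_X)$ over an étale $X$-scheme $U$, the commutativity of \eqref{diagkraz19253} gives a canonical isomorphism
$$\underline{\alpha}_s^{-1}C_{X/\frakS,\upmu}^{-1}\frakE\xrightarrow{\sim}C_{U/\frakS}^{-1}\alpha_s'^{-1}\frakE,$$
that is, $(C^{-1}_{X/\frakS,\upmu}\frakE)_s\simeq C_{U/\frakS}^{-1}(\frakE_s)$. This isomorphism is compatible with the transition isomorphisms along étale maps $V\ra U$, by the commutative triangles at the end of \ref{paragMuzan194}.

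\textbf{Step 2: compare via the stratifications.} On the other side, by construction (proof of \ref{equivRQind} and \ref{parag1419}), the stratified module attached to $\frakE$ has $s$-fiber $(\F_s,\epsilon_s)$, which is the stratified module attached to $\frakE_s$ by \ref{equivRQ} applied to $U'$ and the lifting $\frakU'$. Moreover, $\Psi_\upmu$ is computed fiberwise: $(F_1^*\calE')_s=F_{1|U}^*\calE'_s$ and $(\calV^*\epsilon')_s=\calV_{|U}^*\epsilon'_s$. This holds because $F_1^*$ and $\calV^*$ commute with $j_{\upmu}^*$ and with restriction to $U$, using \ref{lemtop15} and the identification of étale sites via $F_1$. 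The lifting $F$ restricts to a lifting $F_U:\frakU\ra\frakU'$ of the Frobenius of $U$, and this restricted lifting induces $\calV_{|U}$. Hence applying \eqref{diag185} to $U$ gives, for each $s$, an isomorphism between the $s$-fiber of the two composites in \eqref{diag186kraz}.

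\textbf{Step 3: glue.} By \ref{indremark73} (modules on $X_{\text{ét}/\upmu_X}$ are compatible families of fibers) and \ref{propEST1831}, these fiberwise isomorphisms glue into an isomorphism of $\Ox_{X,\upmu_X}$-modules with $\calQ_{1,\upmu_X}$-stratification, natural in $\frakE$. Gluing requires the isomorphisms of \eqref{diag185} to be natural with respect to étale restriction $V\ra U$, i.e.\ compatible with restriction along $\rho_{U,V}$ and with the canonical identification $\calQ_{\frakV}=\calQ_{\frakU|\frakV}$. I expect this to be the main obstacle. To handle it, I would go back to the explicit form of the isomorphism in \eqref{diag185}: by \ref{parag1419}, this isomorphism is the transition map $c$ of the crystal along the morphism $(X,\frakX,i_X)\ra\rho(X,\frakX,i_X)=(X',\frakX,\Delta)\ra(X',\frakX',\op{Id})$ given by $F$, combined with \ref{ommek4}. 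That explicit description is manifestly compatible with localization, since the crystal transition maps satisfy the cocycle conditions of \ref{indlindescentdata2} and $F$ localizes.

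Having a global description in terms of $c_{\frakE,F}$, $\frakE_{\rho(X,\frakX,i_X)}\simeq(C^{-1}\frakE)_{(X,\frakX,i_X)}$ (\ref{ommek4}), one could alternatively define the comparison isomorphism directly, as $F_1^*\frakE_{(X',\frakX',\op{Id})}\xrightarrow{c}\frakE_{(X',\frakX,\Delta)}$. It then remains to check that this isomorphism intertwines $\calV^*\epsilon'$ with the stratification of $C^{-1}\frakE$. To do so, I would use the morphism $(X,Q_{\frakX},\lambda_Q)\ra(X',R_{\frakX',1},\lambda_{R'})$ induced by $\nu$ and the cocycle condition (ii) of \ref{indlindescentdata2} applied to the commutative squares with $q_i,q_i'$. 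I would take this direct route, verifying the square exactly as in the non-indexed case (\cite{SF2}), since every ingredient is fiberwise in the index.
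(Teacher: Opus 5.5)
Your proposal is correct. Steps 1--3 are essentially the paper's proof. The paper draws a cube: the top face is \eqref{diagkraz19253}, the side faces commute by construction of \ref{equivRQind}, and the front face is the non-indexed square of (\cite{SF2} 8.36) for $U$. It then deduces the back face from \ref{indremark73}, by comparing fibers over every local section $s$ of $\upmu_X$. It does not discuss the point you single out, namely that the fiberwise isomorphisms must be compatible with étale restriction.

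The route you finally commit to is genuinely different. You define the comparison globally as $c_{\frakE,F}\colon F_1^*\frakE_{(X',\frakX',\op{Id}_{X'})}\xrightarrow{\sim}\frakE_{(X',\frakX,F_1)}\simeq(C^{-1}_{X/\frakS,\upmu}\frakE)_{(X,\frakX,i_X)}$, using \ref{ommek4}. You then verify the stratifications using the squares $r_i\circ\nu=F\circ\rho(q_i)$ in $\calE(X'/\frakS)$. Note that $\nu$ is a morphism out of $\rho(X,Q_{\frakX},\lambda_Q)$, not out of $(X,Q_{\frakX},\lambda_Q)$. Two applications of the cocycle condition give $c_{\frakE,\rho(q_1)}^{-1}\circ c_{\frakE,\rho(q_2)}=\widetilde{\rho(q_1)}^*c_{\frakE,F}\circ\widetilde{\nu}^*(c_{\frakE,r_1}^{-1}\circ c_{\frakE,r_2})\circ(\widetilde{\rho(q_2)}^*c_{\frakE,F})^{-1}$. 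The factor $c_{\frakE,\nu}$ cancels because $\frakE$ is a crystal, and the middle term is $\calV^*\epsilon'$. This is precisely the mechanism the paper itself uses later, in the proof of \ref{thmKoko20125}.

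Your route gives a canonical isomorphism that is visibly natural in $\frakE$ and compatible with localization. It therefore needs no gluing, and no knowledge of how the isomorphism in (\cite{SF2} 8.36) is built. The paper's route buys brevity, by importing the non-indexed result fiber by fiber, at the cost of leaving that naturality implicit.
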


\begin{proof}
Let $U$ be an étale $X$-scheme and $s\in \Gamma(U,\upmu_X)=\Gamma(U',F_{1*}\upmu_X).$ Consider the diagram
$$
\begin{tikzcd}
\begin{Bmatrix}\text{Crystals\ of}\\ \widetilde{\calE}(X'/\frakS)_{/\upmu'} \end{Bmatrix} \ar{rr}{C^{-1}_{X/\frakS,\upmu}} \ar[swap, sloped]{dd}{\sim} \ar{dr}{\alpha_s'^*} & &  \begin{Bmatrix}\text{Crystals\ of} \\ \widetilde{\underline{\calE}}(X/\frakS)_{/\underline{\upmu}} \end{Bmatrix}  \ar[pos=0.20,dashed,sloped]{dd}{\sim} \ar{dr}{\underline{\alpha}_s^*} & \\
 & \calC(U'/\frakS) \ar[pos=0.20]{rr}{C_{U/\frakS}^{-1}} \ar[pos=0.20,swap,sloped]{dd}{\sim} & & \underline{\calC}(U/\frakS) \ar[sloped] {dd}{\sim} \\
\begin{Bmatrix}\Ox_{X',\upmu_X}\text{-mod}\\ \text{with\ an}\\ \calR_{1,\upmu_X}'\text{-strat.} \end{Bmatrix} \ar[pos=0.20,dashed]{rr}{\Psi_{\upmu}} \ar{dr} & & \begin{Bmatrix}\Ox_{X,\upmu_X}\text{-mod}\\ \text{with\ a}\\ \calQ_{1,\upmu_X}\text{-strat.} \end{Bmatrix} \ar[dashed]{dr} & \\
& \begin{Bmatrix}\Ox_{U'}\text{-mod}\\ \text{with\ an}\\ \calR_{1|U}'\text{-strat.} \end{Bmatrix} \ar{rr}{\Psi_{0}} & & \begin{Bmatrix}\Ox_{U}\text{-mod}\\ \text{with\ a}\\ \calQ_{1|U}\text{-strat.} \end{Bmatrix},
\end{tikzcd}
$$
where $\alpha_s'$ and $\underline{\alpha}_s$ are given in \eqref{alphakraz}, $\calC(U'/\frakS)$ and $\underline{\calC}(U/\frakS)$ are defined in \ref{defcrys}, $\Psi_0$ is given in (\cite{SF2} 8.36) and the lower oblique lines take a module with a stratification to its fiber over $s.$
The front square is commutative by (\cite{SF2} 8.36). The upper square is commutative by definition of $C^{-1}_{X/\frakS,\upmu}$ \eqref{diagkraz19253}. The two side squares are commutative by construction of the equivalences of \ref{equivRQind} and the lower square is clearly commutative.
This being true for all local sections $s$ of $\upmu_X,$ the commutativity of the back square then follows from the fact that two $\Ox_{X,\upmu_X}$-modules with $\calQ_{1,\upmu_X}$-stratifications $(\calE,\epsilon)$ and $(\calE',\epsilon')$ are equal if and only if, for any local section $s$ of $\upmu_X,$ $(\calE_s,\epsilon_s)=(\calE'_s,\epsilon'_s)$ \eqref{indremark73}.
\end{proof}

\begin{lemma}\label{qF}
Suppose that $X\ra S$ is of Cartier type. Let $Q_1$ be the special fiber of $Q_{\frakX}$ and $\underline{Q_1}$ the logarithmic scheme-theoretic image of the Frobenius of $Q_1$ (\cite{SF2} 5). Denote by $q_1,q_2:Q_1\ra X$ the canonical projections and $F_1:X\ra X'$ the exact relative Frobenius (which is equal to the relative Frobenius since $X\ra S$ is of Cartier type). Then
$$
F_1\circ q_1=F_1\circ q_2.
$$
\end{lemma}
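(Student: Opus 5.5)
The plan is to reduce the statement to a computation in a local chart and then to the defining property of the Frobenius. The two morphisms $F_1\circ q_1$ and $F_1\circ q_2$ have the same source $Q_1$ and target $X'$. Since $X\ra S$ is of Cartier type, $F_1$ is the relative Frobenius, so $X'=X\times_{S,F_S}S$ is an honest fibre product. A morphism $Q_1\ra X'$ is then determined by its two components $Q_1\ra X$ and $Q_1\ra S$. The components over $S$ agree, because $q_1,q_2$ are $S$-morphisms. It therefore suffices to show that
$$
F_X\circ q_1=F_X\circ q_2
$$
as morphisms of log schemes $Q_1\ra X$. Because $F_X\circ q_i=q_i\circ F_{Q_1}$, this amounts to showing that the absolute Frobenius of $Q_1$ equalizes $q_1$ and $q_2$. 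Equality of morphisms is étale local, so I would work locally, assuming the hypothesis (\cite{SF2} 6.12) so that the coordinates $\eta_i,\eta_{i(q)}$ are available, and treat the ring map and the log map separately.

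On structure sheaves, I need $q_2^{\#}(b)^p=q_1^{\#}(b)^p$ in $\Ox_{Q_1}$ for every local section $b$ of $\Ox_X$. By \ref{lemkraz}, the difference $q_2^{\#}(b)-q_1^{\#}(b)$ lies in the image of $\calI$ in $\Ox_{Q_1}$. Raising to the $p$-th power in characteristic $p$ gives
$$
q_2^{\#}(b)^p-q_1^{\#}(b)^p=\left(q_2^{\#}(b)-q_1^{\#}(b)\right)^p,
$$
which lies in the image of $\calI^{\#}$. By construction of $Q_{\frakX}$ as the dilatation of $\calI^{\#}+(p)$ with respect to $p$, we have $\calI^{\#}\subset p\,\Ox_{Q_{\frakX}}$, so this image vanishes mod $p$. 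Concretely, $\eta_i^p=p\eta_{i(q)}=0$ in $\calQ_1$, as already used in \ref{propkraz1932}.

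On log structures, take $m\in\calM_X$. By \eqref{Kokooooh}, $q_2^{\flat}m-q_1^{\flat}m=\alpha_{Q_1}^{-1}(g^{\#}(1+a))$ with $a\in\calI$. Multiplying by $p$, which is what the Frobenius does on log structures, gives
$$
\alpha^{-1}\!\left((1+g^{\#}a)^p\right)=\alpha^{-1}\!\left(1+(g^{\#}a)^p\right)=\alpha^{-1}(1)=0,
$$
again because $(g^{\#}a)^p$ is the image of an element of $\calI^{\#}$ and hence vanishes in $\Ox_{Q_1}$. So $F_{Q_1}^{\flat}$ equalizes $q_1^{\flat}$ and $q_2^{\flat}$.

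The main obstacle I expect is the reduction step: checking that in this log setting the relative Frobenius $F_1$ really is the map into the fibre product, so that equality can be tested componentwise on underlying schemes and log structures, and that the log structure of $X'$ is pulled back from $X$ so that the log part reduces to the computation with $pm$ above. For Cartier type this should follow from \cite{SF1} 3.1: the exact relative Frobenius coincides with the relative Frobenius, and $X'\ra X$ is then the base change of $F_S$. An alternative is to use the factorization of $F_{Q_1}$ through $\underline{Q_1}$, together with the fact that $\lambda_Q:\underline{Q_1}\ra X$ is well defined, which already encodes that the two projections agree after Frobenius. I would try the direct computation above first.
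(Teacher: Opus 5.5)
Your proof is correct, and its first half is exactly the paper's. Since $X\ra S$ is of Cartier type, $X'$ is the base change of $X$ by $F_S$. The components of $F_1\circ q_i$ over $S$ agree, so one is reduced to $F_X\circ q_1=F_X\circ q_2$, i.e.\ to $q_1\circ F_{Q_1}=q_2\circ F_{Q_1}$. The ``main obstacle'' you anticipate is handled by the universal property of the fibre product alone. After the reduction only the absolute Frobenius $F_X$ enters, so nothing about the log structure of $X'$ is needed.

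The divergence is in the last step. The paper takes the route you list as an alternative: it factors $F_{Q_1}$ as $Q_1\ra\underline{Q_1}\hookrightarrow Q_1$ and cites (\cite{SF2} 6.5), which says that $q_1$ and $q_2$ already agree on $\underline{Q_1}$ (the fact that makes $\lambda_Q$ well defined). Your direct computation in effect reproves the part of (\cite{SF2} 6.5) that is needed. On functions you use $(q_2^{\#}b-q_1^{\#}b)^p\in g^{\#}(\calI^{\#})\subset p\,\Ox_{Q_{\frakX}}$, and on log structures $p\,\alpha^{-1}(g^{\#}(1+a))=\alpha^{-1}(1+g^{\#}(a)^p)=0$. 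This is self-contained and shows transparently why the lemma holds. The paper's argument is two lines and reuses a fact already required to define $\underline{\calE}(X/\frakS)$.

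If you write yours out, fix three small points.
\begin{enumerate}
\item The inclusion $q_2^{\#}b-q_1^{\#}b\in g^{\#}(\calI)\Ox_{Q_1}$ holds simply because $\calI$ is the ideal of the diagonal; \ref{lemkraz} is not what gives it.
\item Before comparing ring maps locally, you need the underlying maps of $q_1$ and $q_2$ to coincide. This is not automatic, since $Q_1$ is not a nilpotent thickening of $X$ (the $\eta_{i(q)}$ are not nilpotent). It follows because $g^{\#}(\calI)$ consists of nilpotents, so $g$ factors set-theoretically through the diagonal, where $p_1=p_2$. The same observation is what lets you apply \eqref{Kokooooh}, which is stated at the points $\Delta(\ov{x})$, at every point of $Q_1$.
\item Passing from equality in $\calM^{gp}_{Q_1}$ to equality in $\calM_{Q_1}$ uses that $\calM_{Q_1}$ is integral.
\end{enumerate}
The coordinates of (\cite{SF2} 6.12) play no role in your argument.
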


\begin{proof}
Since $X\ra S$ is of Cartier type, $X'$ is the base change of $X$ by the Frobenius $F_S$ of $S.$ Since $Q_1 \xrightarrow{q_i} X\ra X' \ra S,$ where $X'\ra S$ is the canonical projection, is equal to the structural morphism $Q_1 \ra S$ for $i=1,2,$ it is sufficient to prove that $Q_1 \xrightarrow{q_i} X\ra X' \ra X$ is the same for $i=1,2,$ where $X'\ra X$ is the canonical projection. This is equal to $Q_1 \xrightarrow{q_i} X\xrightarrow{F_X} X.$ We have the following commutative diagram
$$
\begin{tikzcd}
Q_1 \ar{r}{q_i} \ar{dr}{F_{Q_1}} \ar[swap]{d}& X \ar{r}{F_X} & X \\
\underline{Q_1} \ar[hook]{r} & Q_1 \ar{ur}{q_i} & 
\end{tikzcd}
$$
By (\cite{SF2} 6.5), the compositions $\underline{Q_1} \hookrightarrow Q_1 \xrightarrow{q_1} X$ and $\underline{Q_1} \hookrightarrow Q_1 \xrightarrow{q_2} X$ are equal. The result follows.
\end{proof}

\begin{proposition}
There exists a canonical morphism
$$C_{X/\frakS,\upmu}^{-1}\frakB_X \ra \frakA_X,$$
where $\frakB_X$ and $\frakA_X$ are defined in \ref{defQstrat} and \ref{propBglue} and $C_{X/\frakS,\upmu}$ is defined in \eqref{diagkraz2}.
\end{proposition}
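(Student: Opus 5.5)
We construct the morphism locally under $(H_1)$ via stratifications and then glue. We do not use the equivalence of \ref{propkraz1934}, since it requires $(H_2)$; instead we work object by object in the site.

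\textbf{Step 1: the morphism on strict objects.} Since both sides are crystals (\ref{propcrystens}, and $C^{-1}_{X/\frakS}$ preserves crystals), \ref{equivShihoind} lets us define the morphism on the strict subsite $\underline{\calE}_{\mathrm{strict}}(X/\frakS)$ only. By \ref{ommek4} it amounts to giving, for each object $(U,\frakT,u)$ with $u$ strict, a morphism of $\Ox_{T,\upmu_T}$-modules
$$\frakB_{X,\rho(U,\frakT,u)} \ra \frakA_{X,(U,\frakT,u)}=u_*\calA_T,$$
compatible with the transition maps $c_{\cdot,(f,g)}$ of \ref{indlindescentdata2}.

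\textbf{Step 2: local construction.} Work étale locally, with $U$ affine and $(H_1)$ holding for $U$ (via \ref{paragBglue}, \ref{propBglue}, \ref{AXUESTiso}). Choose a lifting $\frakT\ra\frakU'$ of $\Delta_u:T\ra U'$. The crystal property of $\frakB$ then gives
$$\frakB_{X,\rho(U,\frakT,u)}\simeq\Delta_u^*\calB_{U/S}.$$
Since $u$ is strict, $\calA_T\simeq u^*\calA_U$ by \cite{Lor2000} I 3.1.

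The natural candidate is the inclusion $\calB_{U/S}\subset \calA_U$ composed with a comparison $\Delta_u^*\calB_{U/S}\ra u^*\calA_U$. For this we use the factorization $F_T=i\circ G_T$ together with $\Delta_u\circ i = F_{1}\circ u$ from \eqref{morphismShiho1}. Precisely, $\calB_{U/S}$ is an $\Ox_{U'}$-module on which $\Ox_U$ acts through $F_1$ (horizontal sections), and $\calA_U$ is an $\calB_{U/S}$-module. Pulling back along $\Delta_u$ and using that $\Ox_T$ receives $\Delta_u^{-1}\Ox_{U'}$, we get the map $\Delta_u^*\calB_{U/S}\ra \Ox_T\otimes_{u^{-1}\Ox_U}u^{-1}\calA_U$, $\;x\otimes b\mapsto x\otimes b$. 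To make this well defined one must check that the two $\Ox_{U'}$-structures on $\calB_{U/S}$, namely via $F_1$ into $\Ox_U$ and via $\Delta_u$ into $\Ox_T$, agree after mapping to $u_*\Ox_T$. This holds because $\Delta_u\circ i=F_1\circ u$, and $i$ identifies the étale sites.

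\textbf{Step 3: independence and gluing.} This is the main obstacle. We must show the map does not depend on the chosen lifting $\frakT\ra\frakU'$. Two liftings differ by a map $\frakT\ra R_{\frakU',1}$, so it suffices that the map is compatible with the $\calR'_1$-stratification of $\calB$, which is the swap of factors. Both projections $R'_1\ra X'$ coincide (\cite{SF1} 11.4), so this reduces to a tautology. On the $\frakA$ side, transport along the projections is given by $\epsilon_{\calA_X,Q}$. Its $\calB_{X/S}$-linearity (\ref{Kokoprop1925}) together with \ref{qF} ($F_1q_1=F_1q_2$) shows that the map $q_i^*F_1^*\calB\ra q_i^*\calA$ intertwines the two structures. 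Pulling back along morphisms of the site gives functoriality in $(U,\frakT,u)$. The local maps are canonical, so by \ref{propBglue} and the stack property (\ref{iranlem145}) they glue to a morphism of crystals $C_{X/\frakS,\upmu}^{-1}\frakB_X\ra\frakA_X$. Finally, the map is compatible with the indexed algebra structures, since the inclusion $\calB\subset\calA$ is.
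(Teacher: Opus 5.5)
\textbf{Gap in Step 2.} For an object $(U,\frakT,u)$ of $\underline{\calE}_{\mathrm{strict}}(X/\frakS)$, the morphism $u$ is defined on $\underline{T}$, not on $T$. So \cite{Lor2000} I 3.1 gives $u^*\calA_U\simeq\calA_{\underline{T}}$, not $\calA_T$. Moreover, $\Ox_T$ is not a $u^{-1}\Ox_U$-algebra in any way compatible with $\Delta_u$; the only natural map $u^{-1}\Ox_U\ra\Ox_T$ passes through $G_T$ and is Frobenius-twisted.

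The identity $\Delta_u\circ i=F_1\circ u$ only says that the two $\Ox_{U'}$-structures agree in $\Ox_{\underline{T}}=\Ox_T/\ker F_T^{\#}$. Identifying étale sites via $i$ does not identify the structure sheaves. Hence your formula $x\otimes b\mapsto x\otimes b$ gives at most a map into $\calA_{\underline{T}}$. Lifting it to $\calA_T=\frakA_{X,(U,\frakT,u)}$ is exactly the non-trivial point.

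Consistently with this, Step 3 checks independence of the wrong choice. The lifting $\frakT\ra\frakU'$ is harmless, because the stratification of $\frakB$ is the swap. The genuinely non-canonical choice lives on the $\frakA$-side and never enters your construction, so your final appeal to \ref{Kokoprop1925} and \ref{qF} has nothing to act on.

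\textbf{How to repair it, and relation to the paper.} Étale locally, choose a morphism $(U,\frakT,u)\ra(U,\frakU,i_U)$ of $\underline{\calE}(X/\frakS)$, i.e. an $S$-morphism $f_1:T\ra U$ with $f_1\circ i=u$, lifted to $\frakT\ra\frakU$. Then:
\begin{itemize}
\item $f_1$ is strict, and the crystal $\frakA_X$ gives $\calA_T\simeq f_1^*\calA_U$;
\item $\Delta_u=F_1\circ f_1$, so $\frakB_{X,\rho(U,\frakT,u)}\simeq\Delta_u^*\calB_{U/S}\simeq f_1^*F_1^*\calB_{U/S}$;
\item the desired map is $f_1^*$ of the canonical morphism $F_1^*\calB_{U/S}\ra\calA_U$.
\end{itemize}
Two such choices are, étale locally, $q_1\circ h$ and $q_2\circ h$ for some $h:(U,\frakT,u)\ra(U,Q_{\frakU},\lambda_Q)$. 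Independence of the choice is therefore precisely the compatibility of $F_1^*\calB_{U/S}\ra\calA_U$ with $\sigma_Q$ (built using \ref{qF}) and with $\epsilon_{\calA_X,Q}$ (\ref{Kokoprop1925}).

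This is exactly the check the paper performs, packaged through stratifications. Under $(H_2)$, which holds étale locally, \ref{propkraz1934} identifies $C^{-1}_{X/\frakS,\upmu}\frakB_X$ with the crystal of $\Psi_{\upmu}(\calB_{X/S},\sigma_R)=(F_1^*\calB_{X/S},\sigma_Q)$, and this does not depend on $F$. Then \ref{Kokoprop1925} makes $F_1^*\calB_{X/S}\ra\calA_X$ a morphism of $\calQ_{1,\upmu_X}$-stratified modules, hence of crystals by \ref{equivRQind}. Independence of $F$ gives the gluing.

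So avoiding $(H_2)$ is not necessary. That said, once corrected, your object-wise version does work and needs only local lifts $\frakU,\frakU'$, with no Frobenius lift.
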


\begin{proof}
Assume $(H_2)$ is satisfied. Consider the morphism of Hopf algebras $\calV:\calR_1' \ra \calQ_1$ induced by the morphism $\nu:Q_{\frakX}\ra R_{\frakX',1}$ given in \eqref{nu}. By \ref{propkraz1934}, we have a 2-commutative diagram
\begin{equation}\label{diag186}
\begin{tikzcd}
\begin{Bmatrix}\text{Crystals\ of\ }\widetilde{\calE}(X'/\frakS)_{/\upmu'} \end{Bmatrix} \ar{r}{C^{-1}_{X/\frakS,\upmu}} \ar[swap, sloped]{d}{\sim} & \begin{Bmatrix}\text{Crystals\ of\ }\widetilde{\underline{\calE}}(X/\frakS)_{/\underline{\upmu}} \end{Bmatrix}  \ar[sloped]{d}{\sim} \\
\begin{Bmatrix}\Ox_{X',\upmu_X}\text{-modules\ with\ an}\\ \calR_{1,\upmu_X}'\text{-stratification} \end{Bmatrix} \ar{r}{\Psi_{\upmu}} & \begin{Bmatrix}\Ox_{X,\upmu_X}\text{-modules\ with\ a}\\ \calQ_{1,\upmu_X}\text{-stratification} \end{Bmatrix}
\end{tikzcd}
\end{equation}
The lower arrow $\Psi_{\upmu}$ is defined by
$$\Psi_{\upmu}(\calE',\epsilon')=\left (F_1^*\calE',\calV^*\epsilon'\right )=\left (\Ox_{X,\upmu_X}\otimes_{\Ox_{X',\upmu_X}}\calE',\calV^*\epsilon'\right ).$$
Consider the morphism
$$
\sigma_R:\begin{array}[t]{clc}
\calR_{1,\upmu_X}' \otimes_{\Ox_{X',\upmu_X}} \calB_{X/S} & \ra & \calB_{X/S} \otimes_{\Ox_{X',\upmu_X}} \calR_{1,\upmu_X}'\\
a\otimes b & \mapsto & b\otimes a.
\end{array}
$$
Clearly, $\sigma_R$ is an $\calR_{1,\upmu_X}'$-stratification on the $\Ox_{X',\upmu_X}$-module $\calB_{X/S}.$
By \ref{qF}, we have a canonical isomorphism
$$
\calQ_{1,\upmu_X} \otimes_{\Ox_{X',\upmu_X}}\calB_{X/S} 
\xrightarrow{\sim} \calB_{X/S}\otimes_{\Ox_{X',\upmu_X}}\calQ_{1,\upmu_X}.
$$
Consider the composition
\begin{equation}\label{eqsigmaQkraz}
\begin{alignedat}{2}
\sigma_Q:
\calQ_{1,\upmu_X} \otimes_{\Ox_{X,\upmu_X}} (F_1^*\calB_{X/S}) &\xrightarrow{\sim} \calQ_{1,\upmu_X} \otimes_{\Ox_{X',\upmu_X}}\calB_{X/S} \\
&\xrightarrow{\sim} \calB_{X/S}\otimes_{\Ox_{X',\upmu_X}}\calQ_{1,\upmu_X} \\
&\xrightarrow{\sim} (F_1^*\calB_{X/S}) \otimes_{\Ox_{X,\upmu_X}} \calQ_{1,\upmu_X}.
\end{alignedat}
\end{equation}
Clearly, $\sigma_Q$ is a $\calQ_{1,\upmu_X}$-stratification on the $\Ox_{X,\upmu_X}$-module $(F_1^*\calB_{X/S})$ and we have
$$
\Psi_{\upmu}\left (\calB_{X/S},\sigma_R\right )=\left (F_1^*\calB_{X/S},\sigma_Q\right ).
$$
Note that $\Psi_{\upmu}\left (\calB_{X/S},\sigma_R\right )$ is independant of the choice of the lifting $F.$
By \ref{Kokoprop1925}, the diagram
$$
\begin{tikzcd}
\calQ_{1,\upmu_X} \otimes_{\Ox_{X',\upmu_X}} \calB_{X/S} \ar{d} \ar{r}{\sigma_Q} & \calB_{X/S} \otimes_{\Ox_{X',\upmu_X}} \calQ_{1,\upmu_X} \ar{d} \\
\calQ_{1,\upmu_X}\otimes_{\Ox_{X,\upmu_X}} \calA_X \ar{r}{\epsilon_{\calA_X,Q}} & \calA_X \otimes_{\Ox_{X,\upmu_X}}\calQ_{1,\upmu_X},
\end{tikzcd}
$$
where the vertical arrows are the canonical ones, is commutative.
It follows that the canonical morphism
$$\Ox_{X,\upmu_X}\otimes_{\Ox_{X',\upmu_X}}\calB_{X/S} \ra \calA_X$$
is compatible with the $\calQ_{1,\upmu_X}$-stratifications. We deduce a canonical morphism, independant of the lifting $F,$ of $\upmu$-indexed algebras
$C_{X/\frakS,\upmu}^{-1}\frakB_X \ra \frakA_X.$
\end{proof}

\begin{parag}
The functor \eqref{diagkraz2}
$$
C^{-1}_{X/\frakS,\upmu}:\begin{Bmatrix}\text{Crystals\ of\ }\widetilde{\calE}(X'/\frakS)_{/\upmu'} \end{Bmatrix} \ra  \begin{Bmatrix}\text{Crystals\ of\ }\widetilde{\underline{\calE}}(X/\frakS)_{/\underline{\upmu}} \end{Bmatrix}
$$
induces a functor
\begin{equation}\label{CESTeq}
C^{-1}_{X/\frakS,\upmu}:\begin{Bmatrix}\text{Crystals\ of\ }\upmu'\text{-indexed}\\ \frakB_X\text{-modules\ of\ }\widetilde{\calE}(X'/\frakS)_{/\upmu'} \end{Bmatrix} \ra \begin{Bmatrix}\text{Crystals\ of\ }\underline{\upmu}\text{-indexed}\\ C_{X/\frakS,\upmu}^{-1}\frakB_X\text{-modules\ of\ }\widetilde{\underline{\calE}}(X/\frakS)_{/\underline{\upmu}} \end{Bmatrix}.
\end{equation}
By \ref{propcrystensstar}, we have a well-defined linearization functor
\begin{equation}\label{ffL}
L_{\mathrm{crys}}:\begin{array}[t]{clc}
\begin{Bmatrix}\text{Crystals\ of\ }\underline{\upmu}\text{-indexed}\\ C_{X/\frakS,\upmu}^{-1}\frakB_X\text{-modules\ of\ }\widetilde{\underline{\calE}}(X/\frakS)_{/\underline{\upmu}} \end{Bmatrix} & \ra & \begin{Bmatrix}\text{Crystals\ of\ }\underline{\upmu}\text{-indexed}\\ \frakA_X\text{-modules\ of\ }\widetilde{\underline{\calE}}(X/\frakS)_{/\underline{\upmu}} \end{Bmatrix} \\
\mathfrak{E} & \mapsto & \frakA_X \circledast_{C_{X/\frakS,\upmu}^{-1}\frakB_X}\mathfrak{E}.
\end{array}
\end{equation}
Set
\begin{equation}\label{ffC}
C^{-1}=L_{\mathrm{crys}}\circ C_{X/\frakS,\upmu}^{-1}:\begin{array}[t]{clc}
\begin{Bmatrix}\text{Crystals\ of\ }\upmu'\text{-indexed}\\ \frakB_X\text{-modules\ of\ }\widetilde{\calE}(X'/\frakS)_{/\upmu'} \end{Bmatrix} & \ra & \begin{Bmatrix}\text{Crystals\ of\ }\underline{\upmu}\text{-indexed}\\ \frakA_X\text{-modules\ of\ }\widetilde{\underline{\calE}}(X/\frakS)_{/\underline{\upmu}} \end{Bmatrix} \\
\mathfrak{E} & \mapsto & \frakA_X \circledast_{C_{X/\frakS,\upmu}^{-1}\frakB_X}C_{X/\frakS,\upmu}^{-1}\mathfrak{E}.
\end{array}
\end{equation}
\end{parag}

\begin{definition}~  
\begin{enumerate}
\item A $\upmu_X$-indexed $\calB_{X/S}$-module $\calE'$ equipped with an $\calR_{1,\upmu_X}'$-stratification
$$\epsilon':\calR_{1,\upmu_X}'\otimes_{\Ox_{X',\upmu_X}}\calE' \ra \calE'\otimes_{\Ox_{X',\upmu_X}}\calR_{1,\upmu_X}'$$
is said to be \emph{admissible} if $\epsilon'$ is $\calB_{X/S}$-linear.
\item Let $\frakA$ be a $\upmu_X$-indexed algebra equipped with a $\calQ_{1,\upmu_X}$-stratification $\epsilon_{\frakA}$ and $\calE$ a $\upmu_X$-indexed $\frakA$-module equipped with a $\calQ_{1,\upmu_X}$-stratification
$$\epsilon:\calQ_{1,\upmu_X}\otimes_{\Ox_{X,\upmu_X}}\calE \ra \calE\otimes_{\Ox_{X,\upmu_X}}\calQ_{1,\upmu_X}.$$
We have a morphism
\begin{alignat*}{2}
&\epsilon_{\frakA}\boxtimes \op{Id}_{\calE}=p_1^*\epsilon_{\frakA}\otimes p_2^*\op{Id}_{\calE}:\\ 
&\calQ_{1,\upmu_X^2}\otimes_{\Ox_{X,\upmu_X^2}} p_1^*\frakA \otimes_{\Ox_{X,\upmu_X^2}}p_2^*\calE &\ra p_1^*\frakA \otimes_{\Ox_{X,\upmu_X^2}}\calQ_{1,\upmu_X^2}\otimes_{\Ox_{X,\upmu_X^2}}p_2^*\calE,
\end{alignat*}
which identifies with
$$
\epsilon_{\frakA}\boxtimes \op{Id}_{\calE}:\calQ_{1,\upmu_X^2}\otimes_{\Ox_{X,\upmu_X^2}}\left (\frakA \boxtimes  \calE\right ) \ra \frakA \boxtimes \left (\calQ_{1,\upmu_X}\otimes_{\Ox_{X,\upmu_X}}\calE\right ).
$$
Similarly, we have a morphism
$$
\op{Id}_{\frakA} \boxtimes \epsilon:\frakA \boxtimes (\calQ_{1,\upmu_X}\otimes_{\Ox_{X,\upmu_X}} \calE) \ra (\frakA \boxtimes \calE)\otimes_{\Ox_{X,\upmu_X^2}}\calQ_{1,\upmu_X^2}.
$$
The stratification $\epsilon$ is said to be \emph{$\frakA$-admissible} if it is compatible with $\epsilon_{\frakA}$ i.e. if the diagram
\begin{equation}\label{admindQstratdiag}
\begin{tikzcd}
\calQ_{1,\upmu_X^2}\otimes_{\Ox_{X,\upmu_X^2}}(\frakA\boxtimes \calE) \ar{d}{\epsilon_{\frakA}\boxtimes \op{Id}_{\calE}} \ar{rr}{\op{Id}_{\calQ_{1,\upmu_X^2}}\otimes \pi} & & \calQ_{1,\upmu_X^2}\otimes_{\Ox_{X,\upmu_X^2}} \sigma_X^*\calE \ar{dd}{\sigma_X^*\epsilon} \\
\frakA \boxtimes (\calQ_{1,\upmu_X}\otimes_{\Ox_{X,\upmu_X}} \calE) \ar{d}{\op{Id}_{\frakA}\boxtimes \epsilon} & & \\
(\frakA \boxtimes \calE)\otimes_{\Ox_{X,\upmu_X^2}}\calQ_{1,\upmu_X^2}\ar{rr}{\pi\otimes \op{Id}_{\calQ_{1,\upmu_X^2}}} & & \sigma_X^*\calE\otimes_{\Ox_{X,\upmu_X^2}}\calQ_{1,\upmu_X^2}
\end{tikzcd}
\end{equation}
is commutative, where $\sigma_X:\upmu_X^2 \ra \upmu_X$ is the addition map and $\pi:\frakA \boxtimes \calE \ra \sigma_X^*\calE$ is the morphism defining the $\upmu_X$-indexed $\frakA$-module structure on $\calE.$
If $\frakA=\calA_X$ and $\epsilon_{\frakA}=\epsilon_{\calA_X,Q}$ \eqref{defQstrat}, then we say that $\epsilon$ is \emph{admissible}.
\end{enumerate}
\end{definition}

\begin{proposition}\label{equivRQAB}
Assume $(H_1)$ is satisfied. The canonical equivalence of categories between the category of crystals of $\widetilde{\underline{\calE}}(X/\frakS)_{/\underline{\upmu}}$ (resp. $\widetilde{\calE}(X'/\frakS)_{/\upmu'}$) and the category of $\Ox_{X,\upmu_X}$-modules equipped with a $\calQ_{1,\upmu_X}$-stratification (resp. $\Ox_{X',\upmu_X}$-modules equipped with an $\calR_{1,\upmu_X}'$-stratification), given in \ref{equivRQind}, induces a canonical equivalence of categories between the full subcategory of crystals of $\upmu$-indexed $\frakA_X$-modules of $\widetilde{\underline{\calE}}(X/\frakS)_{/\underline{\upmu}}$ (resp. crystals of $\upmu'$-indexed $\frakB_X$-modules of $\widetilde{\calE}(X'/\frakS)_{/\upmu'}$) and the full subcategory of $\upmu_X$-indexed $\calA_X$-modules equipped with an admissible $\calQ_{1,\upmu_X}$-stratification (resp. $\upmu_X$-indexed $\calB_{X/S}$-modules equipped with an admissible $\calR_{1,\upmu_X}'$-stratification).
\end{proposition}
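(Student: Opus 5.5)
We start from the equivalence of \ref{equivRQind}. It already says that a $\upmu$-indexed module structure of a crystal $\mathfrak{E}$ over a crystal algebra corresponds to a $\upmu_X$-indexed module structure of the associated stratified module, with horizontal structure maps. The plan is to check that, under this dictionary, ``$\mathfrak{E}$ is an indexed $\frakA_X$-module'' corresponds exactly to ``$(\calE,\epsilon)$ is an indexed $\calA_X$-module and $\epsilon$ is admissible''. The argument is parallel for $\frakB_X$ and the $\calR'_1$-side.

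\textbf{Step 1: the dictionary.} Let $\mathfrak{E}$ be a crystal of $\widetilde{\underline{\calE}}(X/\frakS)_{/\underline{\upmu}}$ and $(\calE,\epsilon)$ its image, with $\calE=\mathfrak{E}_{(X,\frakX,i_X)}$ as in \ref{parag1419}. By \ref{propcrystens}, $\frakA_X\boxtimes\mathfrak{E}$ and $\sigma^*\mathfrak{E}$ are crystals. By \ref{prop1916Ko} and the explicit description \ref{parag1419}, their images are $\calA_X\boxtimes\calE$ and $\sigma_X^*\calE$. The stratification on $\calA_X\boxtimes\calE$ is $(\op{Id}_{\calA_X}\boxtimes\epsilon)\circ(\epsilon_{\calA_X,Q}\boxtimes\op{Id}_{\calE})$, and the one on $\sigma_X^*\calE$ is $\sigma_X^*\epsilon$. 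Both identifications follow from the fact that $c_{\F\otimes\calG,q_i}=c_{\F,q_i}\otimes c_{\calG,q_i}$ and $c_{\sigma^*\F,q_i}=\sigma^*c_{\F,q_i}$.

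\textbf{Step 2: morphisms.} Since the functor of \ref{equivRQind} is an equivalence, fully faithful and compatible with these tensor constructions, morphisms of crystals $\pi:\frakA_X\boxtimes\mathfrak{E}\ra\sigma^*\mathfrak{E}$ correspond bijectively to $\Ox_{X,\upmu_X^2}$-linear maps $\calA_X\boxtimes\calE\ra\sigma_X^*\calE$ that are horizontal for the stratifications of Step 1. Horizontality is precisely the commutativity of \eqref{admindQstratdiag}.

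\textbf{Step 3: axioms.} The associativity and unity diagrams of \ref{inddef} are equalities of morphisms between crystals built from $\frakA_X$ and $\mathfrak{E}$ by the same operations. By faithfulness, such a diagram holds for crystals if and only if it holds after evaluation at $(X,\frakX,i_X)$. For the unit we use that the unit of $\frakA_X$ goes to $1\in\calA_{X,0}$, which is horizontal because $\epsilon_{\calA_X,Q}$ is an algebra morphism (\ref{remkraz1}). Morphisms of indexed modules match for the same reason. This gives the first equivalence.

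\textbf{Step 4: the $\frakB_X$-side.} Here $\frakB_X$ corresponds to $(\calB_{X/S},\sigma_R)$, where $\sigma_R$ is the factor-exchange. The structure map $\calB_{X/S}\boxtimes\calE'\ra\sigma_X^*\calE'$ must be horizontal for $\sigma_R\boxtimes\epsilon'$ and $\sigma_X^*\epsilon'$. Unwinding, since $\sigma_R$ acts trivially on $\calB_{X/S}$, this says $\epsilon'(r\otimes bx)=b\,\epsilon'(r\otimes x)$, i.e. $\epsilon'$ is $\calB_{X/S}$-linear, which is the definition of admissibility.

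The main obstacle is Step 1, specifically justifying that the equivalence of \ref{equivRQind} is monoidal for $\boxtimes$ and commutes with $\sigma^*$, including the indexed-level identifications. I would verify this fiberwise on sections $(s,t)$ of $\upmu_X^2$, using \ref{prop1916Ko} and the commutative diagram \eqref{diag188} to reduce to the non-indexed statement of \ref{equivRQ} over each $U$. I would then glue the fiberwise statements via \ref{indremark73}.
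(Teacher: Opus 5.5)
Your proposal is correct and follows essentially the same route as the paper. Both identify the crystals $\frakA_X\boxtimes\frakE$ and $\sigma^*\frakE$ with the stratified modules $\calA_X\boxtimes\F$ and $\sigma_X^*\F$, so that, by full faithfulness of \ref{equivRQind}, an indexed $\frakA_X$-module structure on the crystal corresponds exactly to a horizontal structure map, i.e.\ to admissibility. You are more explicit than the paper, which handles the forward direction by naturality of the maps $c_{\cdot,q_i}$, does not spell out the associativity and unit axioms, and calls the $\frakB_X$-side similar; the argument is nonetheless the same.
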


\begin{proof}
Let $\frakE$ be a crystal of $\widetilde{\underline{\calE}}(X/\frakS)_{/\underline{\upmu}}$ and $(\F,\epsilon)$ the corresponding $\Ox_{X,\upmu_X}$-module with a $\calQ_{1,\upmu_X}$-stratification. We take again the notation of \ref{parag1419}. To prove the proposition, it is sufficient to prove that $\frakE$ is equipped with a structure of $\upmu$-indexed $\frakA_X$-module if and only if $\epsilon$ is admissible. Suppose that $\frakE$ is a $\upmu$-indexed $\frakA_X$-module. By \ref{parag1419}, $\F=\frakE_{(X,\frakX,i_X)},$ where $i_X:\underline{X} \ra X$ is the canonical immersion, and $\epsilon$ is the composition
$$\epsilon:\calQ_{1,\upmu_X} \otimes_{\Ox_{X,\upmu_X}} \mathfrak{E}_{(X,\frakX,i_X)} \xrightarrow{c_{\mathfrak{E},q_2}} \mathfrak{E}_{(X,Q_{\frakX},\lambda_Q)} \xrightarrow{c_{\mathfrak{E},q_1}^{-1}} \mathfrak{E}_{(X,\frakX,i_X)}\otimes_{\Ox_{X,\upmu_X}}\calQ_{1,\upmu_X}.$$
By \ref{prop1415} and \eqref{eq14201}, $\F$ is a $\upmu_X$-indexed $\calA_X$-module. Let $\sigma:\upmu^2\ra \upmu$ and $\sigma_X:\upmu_X^2 \ra \upmu_X$ be the addition maps and $\frakA_X \boxtimes \frakE \ra \sigma^*\frakE$ the morphism defining the $\upmu$-indexed $\frakA_X$-algebra structure on $\frakE.$ This morphism and the projections $q_1$ and $q_2$ yield the commutative diagram
$$
\begin{tikzcd}
\calQ_{1,\upmu_X^2} \otimes_{\Ox_{X,\upmu_X^2}}\left (\frakA_X \boxtimes \frakE \right )_{(X,\frakX,i_X)} \ar[swap]{d}{c_{\frakA_X\boxtimes \frakE,q_2}} \ar{r} & \calQ_{1,\upmu_X^2} \otimes_{\Ox_{X,\upmu_X^2}}\left ( \left (\sigma^*\frakE\right )_{(X,\frakX,i_X)}\right ) \ar{d}{c_{\sigma^*\frakE,q_2}} \\
\left (\frakA_X\boxtimes\frakE \right )_{(X,Q_{\frakX},\lambda_Q)} \ar{r} \ar[swap]{d}{c_{\frakA_X\boxtimes \frakE,q_1}^{-1}} & \left (\sigma^*\frakE \right )_{(X,Q_{\frakX},\lambda_Q)} \ar{d}{c_{\sigma^*\frakE,q_1}^{-1}} \\
\left (\frakA_X \boxtimes \frakE \right )_{(X,\frakX,i_X)} \otimes_{\Ox_{X,\upmu_X^2}}\calQ_{1,\upmu_X^2}  \ar{r} & \left ( \left (\sigma^*\frakE\right )_{(X,\frakX,i_X)}\right )\otimes_{\Ox_{X,\upmu_X^2}}\calQ_{1,\upmu_X^2}.
\end{tikzcd}
$$
This proves that $\epsilon$ is admissible.

Conversly, suppose that $\F$ is a $\upmu_X$-indexed $\calA_X$-module and that $\epsilon$ is admissible. The stratifications $\epsilon$ and $\epsilon_{\calA_X,Q}$ \eqref{eqeAQ} canonically induce stratifications on $\calA_X\boxtimes \F$ and $\sigma_X^*\F.$ The crystals corresponding to $\calA_X \boxtimes \F$ and $\sigma_X^*\F$ are $\frakA_X \boxtimes \frakE$ and $\sigma^*\frakE$ respectively. The admissibility of $\epsilon$ proves that the morphism $\calA_X\boxtimes \F \ra \sigma_X^*\F,$ defining the $\upmu_X$-indexed $\calA_X$-algebra structure on $\F,$ is compatible with the stratifications of $\calA_X\boxtimes \F$ and $\F$ and hence induces, by \ref{equivRQind}, a morphism of crystals
$$
\frakA_X \boxtimes \frakE \ra \sigma^*\frakE.
$$
This yields the desired $\upmu$-indexed $\frakA_X$-module structure on $\frakE.$ These two constructions are quasi-inverse to each other. The proof for crystals of $\widetilde{\calE}(X'/\frakS)_{/\upmu'}$ is similar.
\end{proof}

\begin{lemma}\label{ommek1}
Let $\calP$ be a presheaf of $\Ox_X$-modules, $\calE$ an $\Ox_X$-module and $\F$ the presheaf
$$
\F:U\mapsto \calE(U) \otimes_{\Ox_X(U)}\calP(U).
$$
Then, there exists a canonical isomorphism
$$
a\F \xrightarrow{\sim} \calE \otimes_{\Ox_X} a\calP,
$$
where $a$ is the associated sheaf functor.
\end{lemma}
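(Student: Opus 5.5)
The plan is to build the comparison map at the presheaf level and then use the universal property of sheafification together with the usual reduction to a statement about stalks, or simply to the universal property. First, define a morphism of presheaves $\F \ra \calE \otimes_{\Ox_X} a\calP$. For each open (or étale) $U$, the canonical map $\calP(U) \ra (a\calP)(U)$ is $\Ox_X(U)$-linear. Tensoring it with $\calE(U)$ gives
$$
\calE(U)\otimes_{\Ox_X(U)}\calP(U) \ra \calE(U)\otimes_{\Ox_X(U)}(a\calP)(U),
$$
and composing with the canonical map to the sections of the sheaf $\calE\otimes_{\Ox_X}a\calP$ gives the desired map. These maps are compatible with restrictions. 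Since the target is a sheaf, the map factors uniquely through $a\F$, producing
$$
\varphi:a\F \ra \calE\otimes_{\Ox_X}a\calP.
$$

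Second, show that $\varphi$ is an isomorphism by exhibiting an inverse via universal properties. The cleanest route is through $\op{Hom}$ into an arbitrary $\Ox_X$-module $\calG$. A morphism $a\F\ra \calG$ is the same as a compatible family of $\Ox_X(U)$-bilinear maps $\calE(U)\times\calP(U)\ra\calG(U)$. This is the same as an $\Ox_X$-linear morphism of presheaves $\calP\ra\mathscr{Hom}_{\Ox_X}(\calE,\calG)$, because a compatible family of bilinear maps on sections over all $U$ amounts to maps $\calP(U)\ra \op{Hom}(\calE_{|U},\calG_{|U})$ compatible with restriction. Since $\mathscr{Hom}_{\Ox_X}(\calE,\calG)$ is a sheaf, such a morphism is in turn the same as a morphism $a\calP\ra\mathscr{Hom}_{\Ox_X}(\calE,\calG)$. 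That is the same as a morphism $\calE\otimes_{\Ox_X}a\calP\ra\calG$, by the tensor–Hom adjunction for sheaves. One then checks that this chain of bijections is induced by $\varphi$, and the Yoneda lemma gives the isomorphism.

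The step needing the most care is the identification of compatible bilinear families on $\calE(U)\times\calP(U)$ with presheaf morphisms $\calP\ra\mathscr{Hom}(\calE,\calG)$. Here the sections of $\calE$ over $U$ alone do not determine a morphism $\calE_{|U}\ra\calG_{|U}$: an element $x\in\calP(U)$ must be sent to the map whose value on $e\in\calE(V)$, for $V\ra U$, is $\beta_V(e,x_{|V})$. Compatibility of the family $\beta$ with restrictions then makes this a morphism of sheaves. Conversely, evaluation recovers $\beta$. If this becomes awkward, the fallback is to check on stalks, where both sides become $\calE_{\ov x}\otimes_{\Ox_{X,\ov x}}\calP_{\ov x}$, since filtered colimits commute with tensor products. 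This is available in the étale setting used throughout the paper.
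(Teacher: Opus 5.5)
Your proof is correct, but its main argument takes a different route from the paper's.

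The paper's proof goes as follows.
\begin{itemize}
\item It starts, as you do, from the map $a\F\ra\calE\otimes_{\Ox_X}a\calP$ induced by $\calP\ra a\calP$.
\item It then constructs an explicit inverse. Because $a$ is left exact and $\calE$ is a sheaf, the bilinear map of presheaves $\calE\times\calP\ra\F\ra a\F$ factors through $a(\calE\times\calP)=\calE\times a\calP$. The factored map is still bilinear, so it yields $\calE\otimes_{\Ox_X}a\calP\ra a\F$.
\item Finally, it checks on stalks that the two maps are mutually inverse. This is essentially your fallback.
\end{itemize}

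Your main argument instead passes through presheaf morphisms $\calP\ra\mathscr{Hom}_{\Ox_X}(\calE,\calG)$. You use that the internal Hom is a sheaf, then the tensor--Hom adjunction, and conclude by Yoneda. The delicate step you flag is handled correctly: $x\in\calP(U)$ is sent to $e\mapsto\beta_V(e,x_{|V})$ for $e\in\calE(V)$, $V\ra U$. Unwinding the chain does give precomposition with $\varphi$, so Yoneda applies.

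The trade-off between the two routes:
\begin{itemize}
\item Your route never uses points, so it works verbatim on any ringed site. The paper's verification relies on the étale topos having enough points.
\item The paper's route gives an explicit formula for the inverse. Note, though, that its stalk computation alone already shows $\varphi$ is an isomorphism, so the inverse is not strictly needed.
\end{itemize}
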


\begin{proof}
The canonical morphism of presheaves $\calP \ra a\calP$ induces a canonical morphism of presheaves
$$
 \F \ra \calE \otimes_{\Ox_X}a\calP,
$$
and then a canonical morphism of $\Ox_X$-modules
\begin{equation}\label{omek2}
a\F \ra \calE \otimes_{\Ox_X} a\calP.
\end{equation}
Conversly, since $a$ is left exact (\cite{SGA43} II 3.4), the canonical $\Ox_X$-bilinear morphism of presheaves
$$
\calE \times  \calP \ra \F \ra a\F
$$
factors through $\calE \times a\calP.$ It then induces a canonical $\Ox_X$-linear morphism
\begin{equation}\label{omek3}
\calE\otimes_{\Ox_X}a\calP \ra a\F.
\end{equation}
By checking the stalks, the morphisms \eqref{omek2} and \eqref{omek3} are inverse to each other.
\end{proof}

\begin{proposition}\label{propdraft1}
Assume $(H_1)$ is satisfied. Let $(\calE_1,\epsilon_1)$ and $(\calE_2,\epsilon_2)$ be two $\Ox_{X,\upmu_X}$-modules equipped with $\calQ_{1,\upmu_X}$-stratifications.
Then $\epsilon=p_1^*\epsilon_1 \otimes_{\calQ_{1,\upmu_X^2}} p_2^*\epsilon_2$ (note that we tensor over $\calQ_{1,\upmu_X^2}$) is a $\calQ_{1,\upmu_X^2}$-stratification on $\calE=\calE_1 \boxtimes \calE_2.$
\end{proposition}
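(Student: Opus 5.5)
The plan is to reduce the statement to fibers over pairs of local sections of $\upmu_X$ and then invoke the classical (non-indexed) fact that the tensor product of two modules with $\calQ_1$-stratifications is again stratified. By \ref{indremark73}, an $\Ox_{X,\upmu_X^2}$-module with a $\calQ_{1,\upmu_X^2}$-stratification amounts to the following data:

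\begin{itemize}
\item for every étale $U\ra X$ and every pair $(s,t)\in\Gamma(U,\upmu_X)^2$, an $\Ox_U$-module with a $\calQ_{1|U}$-stratification;
\item compatibility of these data with restriction along étale maps $V\ra U$.
\end{itemize}

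The axioms are the normalization along the diagonal and the cocycle condition on $\calQ_1\otimes\calQ_1$ via the Hopf comultiplication.

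First I compute the fiber of $\epsilon$ at $(s,t)$. Since $p_1^*$ and $p_2^*$ are inverse images along localization morphisms, we have $(p_1^*\calE_1)_{(s,t)}=\calE_{1,s}$ and $(p_2^*\calE_2)_{(s,t)}=\calE_{2,t}$ (this is the computation in \ref{prop1916Ko} transported to $X_{\text{ét}}$). Likewise, $(\calQ_{1,\upmu_X^2})_{(s,t)}=\calQ_{1|U}$, so that
$$
\left (p_1^*\epsilon_1\otimes_{\calQ_{1,\upmu_X^2}}p_2^*\epsilon_2\right )_{(s,t)}=\epsilon_{1,s}\otimes_{\calQ_{1|U}}\epsilon_{2,t}.
$$
Here the right-hand side is the map
$$
\calQ_1\otimes(\calE_{1,s}\otimes_{\Ox_U}\calE_{2,t})\cong(\calQ_1\otimes\calE_{1,s})\otimes_{\calQ_1}(\calQ_1\otimes\calE_{2,t})\ra(\calE_{1,s}\otimes\calQ_1)\otimes_{\calQ_1}(\calE_{2,t}\otimes\calQ_1)\cong(\calE_{1,s}\otimes\calE_{2,t})\otimes\calQ_1,$$
which uses the identifications $\calQ_1\otimes_{\Ox}M\otimes_{\calQ_1}\calQ_1\otimes_{\Ox}N\simeq\calQ_1\otimes_{\Ox}(M\otimes N)$ for both module structures on $\calQ_1$. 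Tensoring over $\calQ_{1,\upmu_X^2}$ rather than $\Ox_{X,\upmu_X^2}$ is exactly what makes these identifications available.

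Second, I verify that $\epsilon_{1,s}\otimes_{\calQ_1}\epsilon_{2,t}$ is a $\calQ_1$-stratification, which is the standard argument:

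\begin{itemize}
\item it is $\calQ_1$-linear as a tensor product of $\calQ_1$-linear maps;
\item it is an isomorphism because both factors are;
\item its pullback along the diagonal $X\ra Q_1$ is $\op{Id}\otimes\op{Id}$;
\item the cocycle condition follows by pulling back along the three projections $Q_1\times_XQ_1\ra Q_1$, since pullback commutes with $\otimes_{\calQ_1}$ and the cocycle identity holds for each factor separately.
\end{itemize}

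Third, the fiber constructions commute with restriction along étale $V\ra U$ because the identifications above are canonical. Hence the family $(\epsilon_{(s,t)})$ glues, via \ref{indremark73}, to the morphism $\epsilon$ itself, and the stratification axioms for $\epsilon$ are checked fiberwise.

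I expect the main obstacle to be bookkeeping rather than anything conceptual. One must identify $\calQ_{1,\upmu_X^2}$ correctly and see that $p_1^*\calQ_{1,\upmu_X}=p_2^*\calQ_{1,\upmu_X}=\calQ_{1,\upmu_X^2}$, which holds because both equal $j_{\upmu_X^2}^*\calQ_1$. One must also make sure that the isomorphism $\calQ\otimes(\calE_1\boxtimes\calE_2)\cong p_1^*(\calQ\otimes\calE_1)\otimes_{\calQ}p_2^*(\calQ\otimes\calE_2)$ respects both $\Ox$-structures on $\calQ_1$. If necessary, I would check this on stalks using \ref{ommek1}.
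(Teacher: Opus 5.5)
Your proof is correct and is essentially the paper's argument. The paper also identifies the source and target of $\epsilon$ with $\calQ_{1,\upmu_X^2}\otimes(\calE_1\boxtimes\calE_2)$ and $(\calE_1\boxtimes\calE_2)\otimes\calQ_{1,\upmu_X^2}$ via the canonical isomorphisms that tensoring over $\calQ_{1,\upmu_X^2}$ makes available, and it gets the cocycle condition by tensoring together the $p_i^*$-pullbacks of the cocycle diagrams of $\epsilon_1$ and $\epsilon_2$; the only difference is that it does this directly in $X_{\text{ét}/\upmu_X^2}$, so your passage to the fibers $\epsilon_{1,s}\otimes_{\calQ_{1|U}}\epsilon_{2,t}$ and regluing via \ref{indremark73} is harmless but not needed.
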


\begin{proof}
First, we have a commutative diagram
$$
\begin{tikzcd}
\left (\calQ_{1,\upmu_X^2} \otimes_{\Ox_{X,\upmu_X^2}} p_1^*\calE_1 \right ) \otimes_{\calQ_{1,\upmu_X^2}} \left (\calQ_{1,\upmu_X^2} \otimes_{\Ox_{X,\upmu_X^2}} p_2^*\calE_2 \right ) \ar{d}{\epsilon}  \ar{rr}{\sim} & & \calQ_{1,\upmu_X^2} \otimes_{\Ox_{X,\upmu_X^2}} \left (\calE_1 \boxtimes \calE_2 \right ) \ar{d}\\
\left (p_1^*\calE_1 \otimes_{\Ox_{X,\upmu_X^2}} \calQ_{1,\upmu_X^2} \right ) \otimes_{\calQ_{1,\upmu_X^2}} \left (p_2^*\calE_2 \otimes_{\Ox_{X,\upmu_X^2}} \calQ_{1,\upmu_X^2} \right ) \ar{rr}{\sim} & & \left (\calE_1 \boxtimes \calE_2 \right )\otimes_{\Ox_{X,\upmu_X^2}} \calQ_{1,\upmu_X^2},
\end{tikzcd}
$$
where the horizontal isomorphisms are the canonical ones. This way, we consider $\epsilon$ as an isomorphism
$$
\epsilon:\calQ_{1,\upmu_X^2} \otimes_{\Ox_{X,\upmu_X^2}} \left (\calE_1 \boxtimes \calE_2 \right ) \ra \left (\calE_1 \boxtimes \calE_2 \right )\otimes_{\Ox_{X,\upmu_X^2}} \calQ_{1,\upmu_X^2}.
$$
To prove that it is a stratification, the only difficult part is to check the cocyclicity condition. The cocyclicity condition satisfied by $\epsilon_i$ implies the commutativity of the diagram
$$
\begin{tikzcd}
\calQ_{1,\upmu_X^2} \otimes \calQ_{1,\upmu_X^2} \otimes p_i^* \calE_i \ar{rr}{\delta_{\upmu_X^2}^*p_i^*\epsilon_i} \ar[swap]{dr}{\op{Id}\otimes p_i^*\epsilon_i} & & p_i^*\calE_i \otimes \calQ_{1,\upmu_X^2}\otimes \calQ_{1,\upmu_X^2}, \\
 & \calQ_{1,\upmu_X^2} \otimes p_i^*\calE_i \otimes \calQ_{1,\upmu_X^2} \ar[swap]{ur}{p_i^*\epsilon_i \otimes \op{Id}} &
\end{tikzcd}
$$
where the tensor products are over $\Ox_{X,\upmu_X^2}$ and $\delta$ is the comultiplication map of the Hopf algebra $\calQ_1,$ given in (\cite{SF2} 6.16). Tensoring both diagrams for $i=1,2$ over $\calQ_{1,\upmu_X^2},$ we get the cocycle condition for $\epsilon.$
\end{proof}

\begin{proposition}\label{propdraft2}
Assume $(H_1)$ is satisfied. Let $(\calE,\epsilon)$ be an $\Ox_{X,\upmu_X^2}$-module equipped with a $\calQ_{1,\upmu_X^2}$-stratification. Then $\sigma_!\epsilon$ is a $\calQ_{1,\upmu_X}$-stratification on $\sigma_!\calE.$ 
\end{proposition}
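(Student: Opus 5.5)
We first make $\sigma_!\epsilon$ into a morphism of the right shape. The key point is that $\calQ_{1,\upmu_X^2}=j_{\upmu_X^2}^*\calQ_1$ and $\calQ_{1,\upmu_X}=j_{\upmu_X}^*\calQ_1$ are both pulled back from $X_{\text{ét}}$. Since $j_{\upmu_X}\circ\sigma=j_{\upmu_X^2}$, we have $\sigma^*\calQ_{1,\upmu_X}=\calQ_{1,\upmu_X^2}$. The projection formula (\cite{SGA43} Exposé IV 12.11.b), already used in \ref{indprop710}, then gives canonical isomorphisms
$$
\sigma_!\left(\calQ_{1,\upmu_X^2}\otimes_{\Ox_{X,\upmu_X^2}}\calE\right)\xrightarrow{\sim}\calQ_{1,\upmu_X}\otimes_{\Ox_{X,\upmu_X}}\sigma_!\calE,\qquad
\sigma_!\left(\calE\otimes_{\Ox_{X,\upmu_X^2}}\calQ_{1,\upmu_X^2}\right)\xrightarrow{\sim}\sigma_!\calE\otimes_{\Ox_{X,\upmu_X}}\calQ_{1,\upmu_X}.
$$
Conjugating $\sigma_!\epsilon$ by these isomorphisms yields a map $\calQ_{1,\upmu_X}\otimes\sigma_!\calE\ra\sigma_!\calE\otimes\calQ_{1,\upmu_X}$. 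This map is an isomorphism because $\sigma_!$ is a functor and $\epsilon$ is an isomorphism. The left and right module structures must be respected. These use the two structure maps $\Ox_X\ra\calQ_1$ (via $q_1$ and $q_2$), and both are pulled back from $X$, so the projection formula applies to each.

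It remains to check the two axioms of a stratification: the normalization along the diagonal $\calQ_1\ra\Ox_X$, and the cocycle condition with respect to the comultiplication $\delta$ of \cite{SF2} 6.16. The cocycle condition involves $\calQ_1\otimes_{\Ox_X}\calQ_1$, which is again pulled back from $X$. Hence the projection formula also identifies $\sigma_!$ of the three terms in the cocycle diagram for $\epsilon$ with the corresponding terms for $\sigma_!\calE$, and similarly for the base change along $\calQ_1\ra\Ox_X$. Applying the functor $\sigma_!$ to the commutative cocycle diagram for $\epsilon$ then gives the cocycle diagram for $\sigma_!\epsilon$.

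The main obstacle is verifying that the projection-formula isomorphisms are compatible with the maps in these diagrams. Concretely, we need the identification $\sigma_!(\delta^*\epsilon)=\delta^*(\sigma_!\epsilon)$, and similarly for $\op{Id}\otimes\epsilon$ and $\epsilon\otimes\op{Id}$. We would check this on the explicit presheaf description of \ref{indpropshriek}(3). For a section $s\in\Gamma(U,\upmu_X)$, $(\sigma_!\calE)_s$ is the sheafification of $\bigoplus_{u+v=s}\calE_{(u,v)}$. On this description, $\sigma_!\epsilon$ acts componentwise as $\bigoplus_{u+v=s}\epsilon_{(u,v)}$, and tensoring with $\calQ_{1|U}$ commutes with the direct sum and with sheafification by \ref{ommek1}. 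With this description, both axioms reduce to the corresponding axioms for each $\epsilon_{(u,v)}$, which hold by hypothesis, and \ref{indremark73} glues the componentwise statements.
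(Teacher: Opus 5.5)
Your proposal is correct and follows the paper's skeleton. Both proofs use the projection formula to give $\sigma_!\epsilon$ the shape $\calQ_{1,\upmu_X}\otimes\sigma_!\calE\ra\sigma_!\calE\otimes\calQ_{1,\upmu_X}$. Both then reduce the cocycle and normalization conditions to the compatibilities $\sigma_!\delta_{\upmu_X^2}^*\epsilon\simeq\delta_{\upmu_X}^*\sigma_!\epsilon$ and $\sigma_!\pi_{\upmu_X^2}^*\epsilon\simeq\pi_{\upmu_X}^*\sigma_!\epsilon$.

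The difference is how these compatibilities are justified. The paper gets them formally from \ref{lemtop15} (2), applied to commutative squares of ringed topoi whose horizontal arrows are induced by the comultiplication $\delta$ (resp. the counit $\pi$) and whose vertical arrows are $\sigma$. That needs no sheafification bookkeeping and treats both compatibilities as instances of one lemma.

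You instead compute fiberwise with \ref{indpropshriek} (3), \ref{ommek1} and \ref{indremark73}, which is the technique the paper itself uses later in \ref{propdraft3}. Your route shows directly that the axioms for $(\sigma_!\epsilon)_s$ are assembled from the axioms for the fibers $\epsilon_{(u,v)}$. It also makes the passage to \ref{propdraft3} immediate.

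If you write it out, be precise on two points. First, the index set $\{(u,v)\in\Gamma(V,\upmu_X)^2 : u+v=s_{|V}\}$ depends on $V$, so $(\sigma_!\calE)_s$ is the sheafification of a presheaf on the \'etale site of $U$, not a direct sum of sheaves on $U$. Second, $\epsilon_{(u,v)}(1\otimes x)$ lands in a sheaf tensor product, so the componentwise map is defined only after sheafifying, which is exactly what \ref{ommek1} handles. With these made explicit, your argument is complete.
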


\begin{proof}
By (\cite{SGA43} 12.11 b), we have canonical isomorphisms
$$
\sigma_! \left (\calQ_{1,\upmu_X} \otimes  \calE \right ) \xrightarrow{\sim} \calQ_{1,\upmu_X} \otimes  \sigma_!\calE,\ \sigma_! \left (\calE \otimes \calQ_{1,\upmu_X} \right ) \xrightarrow{\sim} \sigma_!\calE \otimes \calQ_{1,\upmu_X}.
$$
Via these isomorphisms, $\sigma_!\epsilon$ identifies with an isomorphism
$$
\sigma_!\epsilon:\calQ_{1,\upmu_X} \otimes  \sigma_!\calE \ra \sigma_!\calE \otimes \calQ_{1,\upmu_X}.
$$
We start by checking the cocycle condition: applying $\sigma_!$ to the cocyclicity diagram of $\epsilon,$ we get a commutative diagram
$$
\begin{tikzcd}
\sigma_! \left (\calQ_{1,\upmu_X^2} \otimes \calQ_{1,\upmu_X^2} \otimes  \calE \right )\ar{rr}{\sigma_!\delta_{\upmu_X^2}^*\epsilon} \ar[swap]{dr}{\sigma_! \left (\op{Id}\otimes \epsilon\right )} & & \sigma_! \left (\calE \otimes \calQ_{1,\upmu_X^2}\otimes \calQ_{1,\upmu_X^2}\right ), \\
 & \sigma_! \left (\calQ_{1,\upmu_X^2} \otimes \calE \otimes \calQ_{1,\upmu_X^2}\right ) \ar[swap]{ur}{\sigma_! \left (\epsilon \otimes \op{Id}\right )} &
\end{tikzcd}
$$
where the tensor products are over $\Ox_{X,\upmu_X^2}$ and $\delta$ is the comultiplication map of the Hopf algebra $\calQ_1,$ given in (\cite{SF2} 6.16). By (\cite{SGA43} 12.11 b), we have a canonical isomorphism
$$
\sigma_! \left (\calQ_{1,\upmu_X^2} \otimes \calQ_{1,\upmu_X^2} \otimes  \calE \right ) \xrightarrow{\sim} \calQ_{1,\upmu_X} \otimes \calQ_{1,\upmu_X} \otimes  \sigma_!\calE,
$$
where the tensor products in the right-hand side are over $\Ox_{X,\upmu_X}.$ By similar isomorphisms for the other terms of the diagram, we deduce the commutativity of the diagram
$$
\begin{tikzcd}
\calQ_{1,\upmu_X} \otimes \calQ_{1,\upmu_X} \otimes  \sigma_!\calE \ar{rr}{\sigma_!\delta_{\upmu_X^2}^*\epsilon} \ar[swap]{dr}{\op{Id}\otimes \sigma_!\epsilon} & & \sigma_!\calE \otimes \calQ_{1,\upmu_X}\otimes \calQ_{1,\upmu_X}. \\
 & \calQ_{1,\upmu_X} \otimes \sigma_!\calE \otimes \calQ_{1,\upmu_X} \ar[swap]{ur}{\sigma_!\epsilon \otimes \op{Id}} &
\end{tikzcd}
$$
It remains to prove that $\sigma_!\delta_{\upmu_X^2}^*\epsilon$ identifies with $\delta_{\upmu_X}^*\sigma_!\epsilon.$
We have a commutative diagram of ringed topoi
$$
\begin{tikzcd}
\left (X_{\text{ét}/\upmu_X^2}, \calQ_{1,\upmu_X^2}\otimes \calQ_{1,\upmu_X^2} \right ) \ar[swap,bend right=80]{dd}{j_{\upmu_X^2}} \ar{rr}{\delta_{\upmu_X^2}} \ar[swap]{d}{\sigma} & & \left (X_{\text{ét}/\upmu_X^2}, \calQ_{1,\upmu_X^2} \right ) \ar{d}{\sigma} \ar[bend right=-80]{dd}{j_{\upmu_X^2}} \\
\left (X_{\text{ét}/\upmu_X}, \calQ_{1,\upmu_X} \otimes \calQ_{1,\upmu_X} \right ) \ar[swap]{d}{j_{\upmu_X}} \ar{rr}{\delta_{\upmu_X}} & & \left (X_{\text{ét}/\upmu_X}, \calQ_{1,\upmu_X} \right ) \ar{d}{j_{\upmu_X}} \\
\left (X_{\text{ét}}, \calQ_{1}\otimes \calQ_{1} \right ) \ar{rr}{\delta} & & \left (X_{\text{ét}}, \calQ_{1} \right ).
\end{tikzcd}
$$
By \ref{lemtop15} (2), $\sigma_!\delta_{\upmu_X^2}^*\epsilon$ identifies with $\delta_{\upmu_X}^*\sigma_!\epsilon.$ The cocyclicity condition of $\sigma_!\epsilon$ is hence proved. To prove that $\sigma_!\epsilon$ is a stratification, it remains to check that
$$
\pi_{\upmu_X}^*\sigma_!\epsilon=\op{Id}_{\sigma_!\calE},
$$
where $\pi:\Ox_X\ra \calQ_1$ is the counit of the Hopf algebra $\calQ_1.$ For that, it is sufficient to prove that $\pi_{\upmu_X}^*\sigma_!\epsilon$ identifies with $\sigma_!\pi_{\upmu_X^2}^*\epsilon.$ This is again proved using \ref{lemtop15} (2) and the commutative diagram
$$
\begin{tikzcd}
\left (X_{\text{ét}/\upmu_X^2},  \calQ_{1,\upmu_X^2} \right ) \ar{rr}{\pi_{\upmu_X^2}} \ar[swap]{d}{\sigma} & & \left (X_{\text{ét}/\upmu_X^2}, \Ox_{X,\upmu_X^2} \right ) \ar{d}{\sigma} \\
\left (X_{\text{ét}/\upmu_X}, \calQ_{1,\upmu_X} \right ) \ar{rr}{\pi_{\upmu_X}} & & \left (X_{\text{ét}/\upmu_X}, \Ox_{X,\upmu_X} \right ).
\end{tikzcd}
$$
\end{proof}

\begin{corollaire}\label{cordraft1}
Assume $(H_1)$ is satisfied. Let $(\calE_1,\epsilon_1)$ and $(\calE_2,\epsilon_2)$ be two $\Ox_{X,\upmu_X}$-modules equipped with $\calQ_{1,\upmu_X}$-stratifications and $\epsilon=p_1^*\epsilon_1 \otimes_{\calQ_{1,\upmu_X^2}} p_2^*\epsilon_2$ the stratification on $\calE=\calE_1 \boxtimes \calE_2$ as in \ref{propdraft1}. Then $\sigma_!\epsilon$ is a $\calQ_{1,\upmu_X}$-stratification on $\sigma_!\calE.$
\end{corollaire}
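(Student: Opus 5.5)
The corollary follows by chaining the two preceding propositions, so the plan is to apply them in sequence.

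First, by \ref{propdraft1} applied to $(\calE_1,\epsilon_1)$ and $(\calE_2,\epsilon_2)$, the morphism $\epsilon=p_1^*\epsilon_1 \otimes_{\calQ_{1,\upmu_X^2}} p_2^*\epsilon_2$ is a $\calQ_{1,\upmu_X^2}$-stratification on the $\Ox_{X,\upmu_X^2}$-module $\calE=\calE_1\boxtimes\calE_2$. Here it is viewed through the canonical identifications
$$
\calQ_{1,\upmu_X^2}\otimes_{\Ox_{X,\upmu_X^2}}(\calE_1\boxtimes\calE_2)\quad\text{and}\quad(\calE_1\boxtimes\calE_2)\otimes_{\Ox_{X,\upmu_X^2}}\calQ_{1,\upmu_X^2}
$$
given in the proof of \ref{propdraft1}. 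Second, \ref{propdraft2} applies to any $\Ox_{X,\upmu_X^2}$-module equipped with a $\calQ_{1,\upmu_X^2}$-stratification. Applying it to $(\calE,\epsilon)$ shows that $\sigma_!\epsilon$, transported via the isomorphisms of (\cite{SGA43} IV 12.11 b), is a $\calQ_{1,\upmu_X}$-stratification on $\sigma_!\calE$.

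The only point to check is that the objects match. The $\calQ_{1,\upmu_X^2}$ appearing in \ref{propdraft1} must be the same ring $j_{\upmu_X^2}^*\calQ_1$ used in \ref{propdraft2}. Both are the pullback of $\calQ_1$ along $j_{\upmu_X^2}$, and $p_i^*\calQ_{1,\upmu_X}=\calQ_{1,\upmu_X^2}$ because $j_{\upmu_X}\circ p_i=j_{\upmu_X^2}$. So no further compatibility is needed, and there is no real obstacle. The composite of the two canonical identifications is the stated stratification, and the corollary follows.
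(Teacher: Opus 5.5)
Your proposal is correct and matches the paper's proof, which consists of the single line that the corollary follows from \ref{propdraft1} followed by \ref{propdraft2}. Your additional check is correct and harmless: both propositions use the same ring $\calQ_{1,\upmu_X^2}=j_{\upmu_X^2}^*\calQ_1$, and $p_i^*\calQ_{1,\upmu_X}=\calQ_{1,\upmu_X^2}$ because $j_{\upmu_X}\circ p_i=j_{\upmu_X^2}$.
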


\begin{proof}
This is a consequence of \ref{propdraft1} and \ref{propdraft2}.
\end{proof}

\begin{proposition}\label{propdraft3}
Assume $(H_1)$ is satisfied. Let $\frakA$ be a $\upmu_X$-indexed algebra and $(\calE_1,\epsilon_1)$ and $(\calE_2,\epsilon_2)$ two $\upmu_X$-indexed $\frakA$-modules equipped with $\calQ_{1,\upmu_X}$-stratifications and $\epsilon=p_1^*\epsilon_1 \otimes_{\calQ_{1,\upmu_X^2}} p_2^*\epsilon_2$ the stratification on $\calE=\calE_1 \boxtimes \calE_2$ as in \ref{propdraft1}. If $\epsilon_1$ and $\epsilon_2$ are $\frakA$-linear, then the stratification $\sigma_!\epsilon$ \eqref{cordraft1} induces a $\calQ_{1,\upmu_X}$-stratification
$$
\epsilon_1 \circledast_{\calQ_{1,\upmu_X}\otimes_{\Ox_{X,\upmu_X}}\frakA}\epsilon_2:
\calQ_{1,\upmu_X} \otimes_{\Ox_{X,\upmu_X}}\left (\calE_1\circledast_{\frakA}\calE_2 \right ) \ra \left (\calE_1\circledast_{\frakA}\calE_2 \right )\otimes_{\Ox_{X,\upmu_X}}\calQ_{1,\upmu_X}
$$
on $\calE_1\circledast_{\frakA}\calE_2.$
\end{proposition}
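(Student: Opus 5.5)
The plan is to descend the stratification $\sigma_!\epsilon$ of \ref{cordraft1} along the presentation \eqref{exseqindtens}
$$
\tau_!(\frakA\boxtimes\calE_1\boxtimes\calE_2) \rightrightarrows \sigma_!(\calE_1\boxtimes\calE_2) \ra \calE_1\circledast_{\frakA}\calE_2 \ra 0 .
$$
Since $\calQ_{1,\upmu_X}\otimes_{\Ox_{X,\upmu_X}}-$ and $-\otimes_{\Ox_{X,\upmu_X}}\calQ_{1,\upmu_X}$ are right exact, tensoring this sequence on either side gives two exact rows. The map $\sigma_!\epsilon$ goes between their middle terms. It will therefore induce a morphism between the cokernels as soon as it is compatible with the two parallel arrows.

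Step 1. I first put a stratification on the left term, using the same recipe as in \ref{propdraft1}. Let $\epsilon_{\frakA}$ be a stratification on $\frakA$; when $\frakA=\calA_X$ this is $\epsilon_{\calA_X,Q}$, and in general it is the one for which $\epsilon_1$ and $\epsilon_2$ are linear. Tensoring the pullbacks $q_i^*$ of $\epsilon_{\frakA},\epsilon_1,\epsilon_2$ over $\calQ_{1,\upmu_X^3}$ gives a $\calQ_{1,\upmu_X^3}$-stratification $\epsilon'$ on $\frakA\boxtimes\calE_1\boxtimes\calE_2$. The cocycle condition follows verbatim as in \ref{propdraft1}. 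Then $\tau_!\epsilon'$ is a $\calQ_{1,\upmu_X}$-stratification by the argument of \ref{propdraft2}, this time with $\tau$ in place of $\sigma$; that argument only used \cite{SGA43} 12.11 b) and \ref{lemtop15} (2).

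Step 2. Next I show that the two arrows $\tau_!(\frakA\boxtimes\calE_1\boxtimes\calE_2)\rightrightarrows\sigma_!(\calE_1\boxtimes\calE_2)$ are horizontal, meaning they intertwine $\tau_!\epsilon'$ with $\sigma_!\epsilon$. Both arrows are obtained by applying $\sigma_!$ and adjunction to the action maps
$$
\frakA\boxtimes\calE_1\boxtimes\calE_2\ra(\sigma\times\op{Id})^*(\calE_1\boxtimes\calE_2)
\quad\text{and}\quad
\frakA\boxtimes\calE_1\boxtimes\calE_2\ra(\op{Id}\times\sigma)^*(\calE_1\boxtimes\calE_2).
$$
So it is enough to check that these maps are compatible with the stratifications. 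That is exactly the $\frakA$-linearity (admissibility, diagram \eqref{admindQstratdiag}) of $\epsilon_1$ and of $\epsilon_2$, tensored with the identity stratification factor on the other module. The identifications of $\sigma_!$ and $\tau_!$ with tensor products by $\calQ_1$ commute with these adjunction maps, again by \cite{SGA43} 12.11 b) and \ref{lemtop15} (2).

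Step 3. By Steps 1 and 2, $\sigma_!\epsilon$ induces a morphism $\epsilon_1\circledast\epsilon_2$ between the cokernels, that is, between $\calQ_{1,\upmu_X}\otimes(\calE_1\circledast_{\frakA}\calE_2)$ and $(\calE_1\circledast_{\frakA}\calE_2)\otimes\calQ_{1,\upmu_X}$. It is an isomorphism: $(\sigma_!\epsilon)^{-1}$ is horizontal as well, so it descends to an inverse. The cocycle identity and the counit identity hold for $\sigma_!\epsilon$, and the pullbacks $\delta^*$ and $\pi^*$ are right exact, so the quotient maps stay epimorphisms after applying them. Hence both identities pass to the quotient.

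The main obstacle is the bookkeeping in Step 2. There the $\calQ_{1,\upmu_X^3}$-stratification has to be matched with the pullbacks of the $\calQ_{1,\upmu_X^2}$-stratification along $\sigma\times\op{Id}$ and $\op{Id}\times\sigma$. I would check this fiberwise, via \ref{indremark73}, on sections $(r,s,t)$ of $\upmu_X^3$. On a fiber the compatibility reduces to the statement $\epsilon_{1}(1\otimes ax)=\epsilon_{\frakA}(1\otimes a)\,\epsilon_1(1\otimes x)$, which is precisely the admissibility hypothesis.
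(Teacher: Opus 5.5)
Your argument is correct, but it is organized differently from the paper's.

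\textbf{The paper's route.} It never puts a stratification on the relation term $\tau_!(\frakA\boxtimes\calE_1\boxtimes\calE_2)$. It uses the presheaf description of $\sigma_!(\calE_1\boxtimes\calE_2)_s$ from \ref{indpropshriek} (3) and \ref{ommek1}, noting that $(\sigma_!\epsilon)_s$ is induced by $1\otimes x\otimes y\mapsto\epsilon_{1,\alpha}(1\otimes x)\otimes\epsilon_{2,\beta}(1\otimes y)$. It then checks, via \ref{indproptensprod}, that $1\otimes(ax)\otimes y$ and $1\otimes x\otimes(ay)$ have the same image in $(\calE_1\circledast_{\frakA}\calE_2)_s\otimes\calQ_1$. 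In other words, the composite of $\sigma_!\epsilon$ with the projection kills the relations. The stratification axioms are then simply asserted to be inherited from $\sigma_!\epsilon$.

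\textbf{Your route.} You lift the stratification to $\frakA\boxtimes\calE_1\boxtimes\calE_2$, push it forward by $\tau_!$, and show that both parallel arrows of \eqref{exseqindtens} are horizontal. The fiberwise identity you verify at the end is the same one the paper checks. What you gain is that invertibility (descend $(\sigma_!\epsilon)^{-1}$ using $(\tau_!\epsilon')^{-1}$) and the cocycle and counit identities pass to the cokernel formally, which the paper leaves implicit. What it costs is building $\epsilon'$ and checking that the projection-formula isomorphisms are natural with respect to the adjunctions; the direct check needs neither.

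\textbf{The choice of $\epsilon_{\frakA}$.} Your route needs a stratification $\epsilon_{\frakA}$ on $\frakA$, which the statement does not supply. In the paper, ``$\frakA$-linear'' means $\epsilon_i(1\otimes ax)=a\,\epsilon_i(1\otimes x)$. The result is applied in \ref{ffLcrys} with the algebra $F_1^*\calB_{X/S}$, not $\calA_X$. There, $\calB_{X/S}$-linearity is exactly admissibility with respect to the exchange stratification $\sigma_Q$ of \eqref{eqsigmaQkraz}, so that is the $\epsilon_{\frakA}$ to take, not $\epsilon_{\calA_X,Q}$. With an arbitrary $\epsilon_{\frakA}$ your version is still a valid generalization. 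The reason is that $\epsilon_{\frakA}(1\otimes a)\,\epsilon_1(1\otimes x)\otimes\epsilon_2(1\otimes y)$ and $\epsilon_1(1\otimes x)\otimes\epsilon_{\frakA}(1\otimes a)\,\epsilon_2(1\otimes y)$ agree modulo the relations of $\circledast_{\frakA}$, since $\calQ_1$ is commutative.
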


\begin{proof}
Let $s\in \Gamma(U,\upmu_X)$ be a local section over an étale $X$-scheme $U.$ By \ref{indpropshriek} (3), the fiber $\sigma_!\left (\calE_1 \boxtimes \calE_2 \right )_s$ is the sheaf associated to the presheaf $\calP$ on $\text{ét}_{/U}$ defined by
$$
\calP:V \mapsto \bigoplus_{\substack{\alpha,\beta \in \Gamma(V,\upmu_X) \\ \alpha+\beta=s_{|V}}} \calE_{1,\alpha}(V)\otimes_{\Ox_{X}(V)}\calE_{2,\beta}(V).
$$
By \ref{ommek1}, the fiber $\left (\calQ_{1,\upmu_X} \otimes_{\Ox_{X,\upmu_X}} \sigma_!\left (\calE_1\boxtimes \calE_2 \right ) \right )_s=\calQ_{1|U} \otimes_{\Ox_U} \sigma_!\left (\calE_1 \boxtimes \calE_2\right )_s$ is then the sheaf associated to the presheaf on $\text{ét}_{/U}$ defined by
$$
V \mapsto \calQ_1(V) \otimes_{\Ox_X(V)}\bigoplus_{\substack{\alpha,\beta \in \Gamma(V,\upmu_X) \\ \alpha+\beta=s_{|V}}} \calE_{1,\alpha}(V)\otimes_{\Ox_{X}(V)}\calE_{2,\beta}(V).
$$
The morphism $\left (\sigma_!\epsilon \right )_s$ is then associated to the morphism of presheaves
$$
\varphi_s:\calQ_{1|U} \otimes_{\Ox_U}\calP \ra \calP\otimes_{\Ox_U} \calQ_{1|U}
$$
defined, for every étale $U$-scheme $V,$ by
$$
\begin{tikzcd}
\calQ_1(V) \otimes_{\Ox_X(V)} \left (\displaystyle\bigoplus_{\substack{\alpha,\beta \in \Gamma(V,\upmu_X) \\ \alpha+\beta=s_{|V}}} \calE_{1,\alpha}(V)\otimes_{\Ox_{X}(V)}\calE_{2,\beta}(V) \right ) \ar{d} & 1 \otimes x \otimes y \ar[mapsto]{d} \\
\left (\displaystyle\bigoplus_{\substack{\alpha,\beta \in \Gamma(V,\upmu_X) \\ \alpha+\beta=s_{|V}}} \calE_{1,\alpha}(V)\otimes_{\Ox_{X}(V)}\calE_{2,\beta}(V) \right ) \otimes_{\Ox_X(V)} \calQ_1(V) & \epsilon_{1,\alpha}(1\otimes x)\otimes \epsilon_{2,\beta}(1\otimes y),
\end{tikzcd}
$$
where $x$ and $y$ are local sections of $\calE_{1,\alpha}$ and $\calE_{2,\beta}$ over $V$ and
$$
\epsilon_{1,\alpha}(1\otimes x)\otimes \epsilon_{2,\beta}(1\otimes y)
$$
is a local section of
$$
\left (\calE_{1,\alpha}\otimes_{\Ox_U} \calQ_{1|U}\right ) \otimes_{\calQ_{1|U}} \left (\calE_{2,\beta} \otimes_{\Ox_U} \calQ_{1|U} \right ) \xrightarrow{\sim} \left (\calE_{1,\alpha}\otimes_{\Ox_U} \calE_{2,\beta} \right )\otimes_{\Ox_U} \calQ_{1|U}.
$$
For every $\alpha,\beta,\gamma \in \Gamma(V,\upmu_X)$ such that $\alpha+\beta+\gamma=s_{|V}$ and $x\in \calE_{1,\alpha}(V),$ $y\in \calE_{2,\beta}(V),$ $a\in \frakA_{\gamma}(V),$ we have
\begin{alignat*}{2}
\varphi_s(1\otimes x\otimes (by)) &= \epsilon_{1,\alpha}(1\otimes x)\otimes \epsilon_{2,\beta+\gamma}(1\otimes ay) \\
&=\epsilon_{1,\alpha}(1\otimes x)\otimes a\epsilon_{2,\beta}(1\otimes y), \\
\varphi_s(1\otimes (bx)\otimes y) &= \epsilon_{1,\alpha+\gamma}(1\otimes ax)\otimes \epsilon_{2,\beta}(1\otimes y) \\
&= \left (a\epsilon_{1,\alpha}(1\otimes x)\right )\otimes \epsilon_{2,\beta}(1\otimes y).
\end{alignat*}
We deduce that, when composing $\varphi_s$ with the canonical morphism
$$\calP\otimes_{\Ox_U} \calQ_{1|U} \ra \left (\calE_1 \circledast_{\frakA}\calE_2 \right )_s \otimes \calQ_{1|U},$$
the images of $\varphi_s(1\otimes x\otimes (by))$ and $\varphi_s(1\otimes (bx)\otimes y)$ are equal.
It follows, by \ref{indpropshriek} (3), that $\sigma_!\epsilon$ induces a $\calQ_{1,\upmu_X}$-linear isomorphism
\begin{equation}
\epsilon_1 \circledast_{\calQ_{1,\upmu_X}\otimes_{\Ox_{X,\upmu_X}}\frakA}\epsilon_2:
\calQ_{1,\upmu_X} \otimes_{\Ox_{X,\upmu_X}}\left (\calE_1\circledast_{\frakA}\calE_2 \right ) \ra \left (\calE_1\circledast_{\frakA}\calE_2 \right )\otimes_{\Ox_{X,\upmu_X}}\calQ_{1,\upmu_X}.
\end{equation}
The fact that it is a stratification follows from the fact that $\sigma_!\epsilon$ is.
\end{proof}

\begin{proposition}\label{propdraft4}
Assume $(H_1)$ is satisfied. Let $\frakB \ra \frakA$ a morphism of $\upmu_X$-indexed algebras and $(\calE,\epsilon)$ a $\upmu_X$-indexed $\frakB$-module equipped with a $\frakB$-linear $\calQ_{1,\upmu_X}$-stratification. Suppose that $\frakA$ is equipped with a $\frakB$-linear $\calQ_{1,\upmu_X}$-stratification $\epsilon_{\frakA}$ satisfying
$$
\epsilon_{\frakA}(1\otimes (ab))=\epsilon_{\frakA}(1\otimes a)\epsilon_{\frakA}(1\otimes b)
$$
for any local sections $a$ and $b$ of $\frakA.$ Then the stratification
$$
\epsilon_1=\epsilon_{\frakA} \circledast_{\frakB \otimes_{\Ox_{X,\upmu_X}} \calQ_{1,\upmu_X}} \epsilon: \calQ_{1,\upmu_X} \otimes_{\Ox_{X,\upmu_X}} \left (\frakA \circledast_{\frakB}\calE \right ) \xrightarrow{\sim} \left (\frakA \circledast_{\frakB}\calE \right )  \otimes_{\Ox_{X,\upmu_X}} \calQ_{1,\upmu_X},
$$
defined in \ref{propdraft3}, is $\frakA$-admissible.
\end{proposition}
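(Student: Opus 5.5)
We need to show that the square \eqref{admindQstratdiag} commutes for $\calE_1:=\frakA\circledast_{\frakB}\calE$ with stratification $\epsilon_1$ and $\frakA$ with $\epsilon_{\frakA}$. The plan is to reduce everything to a computation on local sections of fibers. All morphisms in \eqref{admindQstratdiag} are morphisms of $\Ox_{X,\upmu_X^2}$-modules, so by \ref{indremark73} it suffices to check the commutativity after taking fibers at every pair $(s,t)$ of local sections of $\upmu_X$ over an étale $X$-scheme $U$. At such a fiber the square becomes a square of $\Ox_U$-modules,
\[
\calQ_{1|U}\otimes_{\Ox_U}\frakA_s\otimes_{\Ox_U}(\frakA\circledast_{\frakB}\calE)_t \longrightarrow (\frakA\circledast_{\frakB}\calE)_{s+t}\otimes_{\Ox_U}\calQ_{1|U}.
\]
By \ref{indproptensprod}, $(\frakA\circledast_{\frakB}\calE)_t$ is the sheaf associated to the presheaf $V\mapsto\bigoplus_{u+v=t}\frakA_u(V)\otimes\calE_v(V)/\sim$. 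Using \ref{ommek1}, both sides are then sheafifications of presheaves built from these sums. It therefore suffices to check equality on elements of the form $1\otimes a\otimes(a'\otimes x)$, with $a\in\frakA_s(V)$, $a'\in\frakA_u(V)$, $x\in\calE_v(V)$ and $u+v=t$, since such elements generate.

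Next I make both paths explicit, following the description of $\varphi_s$ in the proof of \ref{propdraft3}. There $\epsilon_1$ is induced by $\sigma_!\epsilon$, which sends $1\otimes a'\otimes x$ to $\epsilon_{\frakA,u}(1\otimes a')\otimes\epsilon_v(1\otimes x)$, viewed in $(\frakA_u\otimes\calE_v)\otimes\calQ_1$ via the identification $(\frakA_u\otimes\calQ_1)\otimes_{\calQ_1}(\calE_v\otimes\calQ_1)\cong(\frakA_u\otimes\calE_v)\otimes\calQ_1$ and then projected. The left-hand path is $\epsilon_{\frakA}\boxtimes\op{Id}$ followed by $\op{Id}\boxtimes\epsilon_1$ and multiplication. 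It yields
\[
\epsilon_{\frakA,s}(1\otimes a)\cdot\bigl(\epsilon_{\frakA,u}(1\otimes a')\otimes\epsilon_v(1\otimes x)\bigr),
\]
where the product uses the $\frakA$-module structure on $\frakA\circledast_{\frakB}\calE$ (acting on the left factor) extended $\calQ_1$-linearly. This equals $\bigl(\epsilon_{\frakA,s}(1\otimes a)\,\epsilon_{\frakA,u}(1\otimes a')\bigr)\otimes\epsilon_v(1\otimes x)$. The right-hand path first multiplies, giving $1\otimes(aa')\otimes x$ in the fiber at $s+t$, and then applies $\epsilon_1$, giving $\epsilon_{\frakA,s+u}(1\otimes aa')\otimes\epsilon_v(1\otimes x)$. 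The hypothesis $\epsilon_{\frakA}(1\otimes ab)=\epsilon_{\frakA}(1\otimes a)\epsilon_{\frakA}(1\otimes b)$ makes these two agree.

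The main obstacle is bookkeeping rather than mathematics. One must verify that the identifications used to define $\epsilon_{\frakA}\boxtimes\op{Id}$ and $\op{Id}\boxtimes\epsilon_1$ (the rewriting $\calQ_{1,\upmu_X^2}\otimes(\frakA\boxtimes\calE_1)\cong\frakA\boxtimes(\calQ_{1,\upmu_X}\otimes\calE_1)$, etc.) are compatible with the presheaf-level description of $\epsilon_1$ from \ref{propdraft3}. One must also check that the $\frakA$-action on $(\frakA\circledast_{\frakB}\calE)\otimes\calQ_1$ commutes with the $\calQ_1$-linear extension and with the projection to the $\circledast$ quotient; this is where $\frakB$-linearity of $\epsilon_{\frakA}$ and $\epsilon$ guarantees that everything is well defined on the quotient by $\sim$. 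I would do this by writing each identification on local generators, as in the proof of \ref{propdraft3}. Finally, sheafification (via \ref{ommek1} and \ref{indpropshriek}) transports the equality of presheaf maps to the sheaves.
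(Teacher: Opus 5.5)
Your proposal is correct and follows essentially the paper's argument: reduce to fibers at pairs $(s,t)$, describe the indexed tensor product via the presheaf of sums $\bigoplus \frakA_u\otimes\calE_v$, evaluate both paths on elements $1\otimes a\otimes a'\otimes x$, and conclude from the multiplicativity hypothesis on $\epsilon_{\frakA}$. The only packaging difference is that the paper first lifts to $\sigma_!(\frakA\boxtimes\calE)$ with the stratification $\sigma_!\epsilon_2$, using right exactness, and then checks on stalks; this is equivalent to your check on images of local generators followed by sheafification.
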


\begin{proof}
Let $\epsilon_2=p_1^*\epsilon_{\frakA} \otimes_{\calQ_{1,\upmu_X^2}} p_2^*\epsilon$ be the stratification on $\calE_2=\frakA \boxtimes \calE$ defined as in \ref{propdraft1}. Since $\epsilon_{\frakA}$ and $\epsilon$ are $\frakB$-linear, the stratification $\sigma_!\epsilon_2$ \eqref{cordraft1} induces the $\calQ_{1,\upmu_X}$-stratification $\epsilon_1$ on $\frakA\circledast_{\frakB}\calE.$
We have to prove the commutativity of the diagram \eqref{admindQstratdiag}
\begin{equation*}
\begin{tikzcd}
\calQ_{1,\upmu_X^2}\otimes_{\Ox_{X,\upmu_X^2}}(\frakA\boxtimes \left (\frakA \circledast_{\frakB}\calE \right )) \ar{d}{\epsilon_{\frakA}\boxtimes \op{Id}_{\left (\frakA \circledast_{\frakB}\calE \right )}} \ar{rr}{\op{Id}_{\calQ_{1,\upmu_X^2}}\otimes \pi} & & \calQ_{1,\upmu_X^2}\otimes_{\Ox_{X,\upmu_X^2}} \sigma^*\left (\frakA \circledast_{\frakB}\calE \right ) \ar{dd}{\sigma^*\epsilon_1} \\
\frakA \boxtimes (\calQ_{1,\upmu_X}\otimes_{\Ox_{X,\upmu_X}} \left (\frakA \circledast_{\frakB}\calE \right )) \ar{d}{\op{Id}_{\frakA}\boxtimes \epsilon_1} & & \\
(\frakA\boxtimes \left (\frakA \circledast_{\frakB}\calE \right ))\otimes_{\Ox_{X,\upmu_X^2}}\calQ_{1,\upmu_X^2}\ar{rr}{\pi\otimes \op{Id}_{\calQ_{1,\upmu_X^2}}} & & \sigma^*\left (\frakA \circledast_{\frakB}\calE \right )\otimes_{\Ox_{X,\upmu_X^2}}\calQ_{1,\upmu_X^2},
\end{tikzcd}
\end{equation*}
where $\pi$ is the morphism defining the $\upmu_X$-indexed $\frakA$-algebra structure on $\frakA \circledast_{\frakB} \calE.$
Since $\sigma^*$ is right exact, it is sufficient to prove the commutativity of the diagram
$$
\begin{tikzcd}
\calQ_{1,\upmu_X^2} \otimes_{\Ox_{X,\upmu_X^2}} \left (\frakA \boxtimes\sigma_!\left (\frakA\boxtimes \calE \right )\right ) \ar[swap]{d}{\epsilon_{\frakA}\boxtimes \op{Id}} \ar{r} & \calQ_{1,\upmu_X^2} \otimes_{\Ox_{X,\upmu_X^2}}  \sigma^*\sigma_{!}\left (\frakA \boxtimes \calE\right ) \ar{dd}{\sigma^*\sigma_{!}\epsilon_{2}} 
\\
 \frakA \boxtimes \left (\calQ_{1,\upmu_X} \otimes_{\Ox_{X,\upmu_X}} \sigma_!\left (\frakA\boxtimes\calE\right )\right ) \ar[swap]{d}{\op{Id}\boxtimes \sigma_!\epsilon_{2}} & 
\\
 \left (\frakA \boxtimes \left (\sigma_!\left (\frakA\boxtimes\calE\right )\right )\right )  \otimes_{\Ox_{X,\upmu_X^2}}  \calQ_{1,\upmu_X^2} \ar{r} &  \sigma^*\sigma_{!}\left (\frakA \boxtimes \calE\right ) \otimes_{\Ox_{X,\upmu_X^2}} \calQ_{1,\upmu_X^2},
\end{tikzcd}
$$
where the upper and lower arrows are induced by the $\upmu_X$-indexed $\frakA$-algebra structure on $\sigma_!\left (\frakA\boxtimes \calE\right )$ \eqref{krazzzz}.
For that, it is sufficient to prove the commutativity of the fiber of the previous diagram over all local sections of $\upmu_X^2.$ Let $s,t\in \Gamma(U,\upmu_X)$ be local sections over an étale $X$-scheme $U.$ It is then sufficient to prove the commutativity of
\begin{equation}\label{diagdraft2}
\begin{tikzcd}
\calQ_{1|U} \otimes \frakA_s \otimes \left (\sigma_!\left (\frakA\boxtimes \calE \right )\right )_t \ar[swap]{d}{\epsilon_{\frakA,s}\otimes \op{Id}} \ar{r} & \calQ_{1|U} \otimes \left (\sigma_{!}\left (\frakA \boxtimes \calE\right )\right )_{s+t} \ar{dd}{\left (\sigma_{!}\epsilon_{2}\right )_{s+t}} 
\\
 \frakA_s \otimes \calQ_{1|U} \otimes \left (\sigma_!\left (\frakA\boxtimes\calE\right )\right )_t \ar[swap]{d}{\op{Id}\otimes \left (\sigma_!\epsilon_{2}\right )_t} & 
\\
 \frakA_s \otimes \left (\sigma_!\left (\frakA\boxtimes\calE\right )\right )_t  \otimes \calQ_{1|U} \ar{r} &  \left (\sigma_{!}\left (\frakA \boxtimes \calE\right )\right )_{s+t} \otimes \calQ_{1|U},
\end{tikzcd}
\end{equation}
where the tensor products are over $\Ox_U.$
By \ref{indpropshriek} (3), the fiber $\sigma_!\left (\frakA \boxtimes \calE \right )_s$ is the sheaf associated to the presheaf $\calP_s$ on $\text{ét}_{/U}$ defined by
$$
\calP_s:V \mapsto \bigoplus_{\substack{\alpha,\beta \in \Gamma(V,\upmu_X) \\ \alpha+\beta=s_{|V}}} \frakA_{\alpha}(V)\otimes_{\Ox_{X}(V)}\calE_{\beta}(V).
$$
If $V$ is an étale $U$-scheme, $\alpha,\beta \in \Gamma(V,\upmu_X)$ and $b$ and $y$ are local sections of $\frakA_{\alpha}$ and $\calE_{\beta}$ respectively, then
\begin{alignat*}{2}
\epsilon_{2,(\alpha,\beta)}(1\otimes b\otimes y) = \epsilon_{\frakA,\alpha}(1\otimes b) \otimes \epsilon_{\beta}(1\otimes y) &\in \left ( \frakA_{\alpha} \otimes_{\Ox_V} \calQ_{1|V} \right ) \otimes_{\calQ_{1|V}} \left ( \calE_{\beta} \otimes_{\Ox_V} \calQ_{1|V} \right )\\ &\xrightarrow{\sim} \left (\frakA_{\alpha} \otimes_{\Ox_V} \calE_{\beta} \right )\otimes_{\Ox_V}\calQ_{1|V}.
\end{alignat*}
Then
$$
\left (\sigma_!\epsilon_2 \right )_{\alpha+\beta}(1\otimes b \otimes y)=\epsilon_{\frakA,\alpha}(1\otimes b) \otimes \epsilon_{\beta}(1\otimes y).
$$
Let $\ov{x} \ra U$ be a geometric point. We have then a canonical isomorphism of stalks
$$
\left (\sigma_!\left (\frakA \boxtimes \calE \right )\right )_{t,\ov{x}} \xrightarrow{\sim} \calP_{t,\ov{x}}.
$$
For simplicity, when tensoring over $\Ox_{X,\ov{x}},$ we use the notation $\otimes$ instead of $\otimes_{\Ox_{X,\ov{x}}}.$
To prove the commutativity of \eqref{diagdraft2}, it is sufficient to prove the commutativity of the diagram
\begin{equation}\label{diagdraft1}
\begin{tikzcd}
\calQ_{1,\ov{x}} \otimes \frakA_{s,\ov{x}} \otimes \calP_{t,\ov{x}} \ar[swap]{d}{\epsilon_{\frakA,s,\ov{x}}\otimes \op{Id}} \ar{r} & \calQ_{1,\ov{x}} \otimes \calP_{s+t,\ov{x}} \ar{dd}{\left (\sigma_{!}\epsilon_{2}\right )_{s+t,\ov{x}}} 
\\
 \frakA_{s,\ov{x}} \otimes \calQ_{1,\ov{x}} \otimes \calP_{t,\ov{x}} \ar[swap]{d}{\op{Id}\otimes \left (\sigma_!\epsilon_{2}\right )_{t,\ov{x}}} & 
\\
 \frakA_{s,\ov{x}} \otimes \calP_{t,\ov{x}}  \otimes \calQ_{1,\ov{x}} \ar{r} &  \calP_{s+t,\ov{x}} \otimes \calQ_{1,\ov{x}}.
\end{tikzcd}
\end{equation}
If $V$ is an étale $U$-scheme, $\alpha,\beta \in \Gamma(V,\upmu_X)$ such that $\alpha+\beta=t_{|V},$ and if $a,$ $b$ and $y$ are local sections of $\frakA_{s},$ $\frakA_{\alpha}$ and $\calE_{\beta}$ respectively, then the images of the element $1\otimes a \otimes b\otimes y$ of $\calQ_{1,\ov{x}} \otimes \frakA_{s,\ov{x}} \otimes \calP_{t,\ov{x}}$ by the arrows of the previous diagram are as follows:
$$
\begin{tikzcd}
1\otimes a \otimes b \otimes y \ar[mapsto]{r} \ar[mapsto]{d} & 1\otimes (ab) \otimes y \ar[mapsto]{dd} \\
\epsilon_{\frakA,s,\ov{x}}(1\otimes a) \otimes b \otimes y \ar[mapsto]{d} & \\
\begin{matrix}\epsilon_{\frakA,s,\ov{x}} (1\otimes a) \otimes \epsilon_{2,(\alpha,\beta),\ov{x}}(1\otimes b \otimes y) \\
= \epsilon_{\frakA,s,\ov{x}}(1\otimes a) \otimes \epsilon_{\frakA,\alpha,\ov{x}}(1\otimes b) \otimes \epsilon_{\beta,\ov{x}}(1\otimes y). \end{matrix} \ar[mapsto]{r} & \begin{matrix} \left (\epsilon_{\frakA,s,\ov{x}}(1\otimes a) \cdot \epsilon_{\frakA,\alpha,\ov{x}}(1\otimes b) \right ) \otimes \epsilon_{\beta,\ov{x}}(1\otimes y) \\
= \epsilon_{\frakA,s+\alpha,\ov{x}}(1\otimes ab) \otimes \epsilon_{\beta,\ov{x}}(1\otimes y), \end{matrix}
\end{tikzcd}
$$
where
\begin{alignat*}{2}
\epsilon_{\frakA,s,\ov{x}} (1\otimes a) \otimes \epsilon_{2,(\alpha,\beta),\ov{x}}(1\otimes b \otimes y) &\in \left (\frakA_{s,\ov{x}} \otimes \calQ_{1,\ov{x}} \right ) \otimes_{\calQ_{1,\ov{x}}} \left (\frakA_{\alpha,\ov{x}} \otimes \calE_{\beta,\ov{x}} \otimes \calQ_{1,\ov{x}} \right ) \\
& \xrightarrow{\sim} \left (\frakA_{s,\ov{x}}\otimes \frakA_{\alpha,\ov{x}} \otimes \calE_{\beta,\ov{x}} \right ) \otimes \calQ_{1,\ov{x}}.
\end{alignat*}
This proves the commutativity of \eqref{diagdraft1} and hence the $\frakA$-admissibility of $\epsilon.$
\end{proof}

\begin{parag}\label{ffLcrys}
Assume $(H_1)$ is satisfied. Let $\calE'$ be a $\upmu_X$-indexed $F_1^*\calB_{X/S}$-module equipped with a $\calB_{X/S}$-linear $\calQ_{1,\upmu_X}$-stratification
$$
\epsilon':\calQ_{1,\upmu_X}\otimes_{\Ox_{X,\upmu_X}}\calE' \ra \calE' \otimes_{\Ox_{X,\upmu_X}} \calQ_{1,\upmu_X}.
$$
Recall the stratification
$
\epsilon_{\calA_X,Q}:\calQ_{1,\upmu_X}\otimes_{\Ox_{X,\upmu_X}}\calA_X \ra \calA_X \otimes_{\Ox_{X,\upmu_X}} \calQ_{1,\upmu_X}
$
given in \eqref{eqeAQ}.
By \ref{propdraft3} we have a stratification
$
\epsilon=\epsilon_{\calA_X,Q} \circledast_{\calQ_{1,\upmu_X}\otimes_{\Ox_{X,\upmu_X}}F_1^*\calB_{X/S}}\epsilon'.
$
Its source is
$$
\left (\calQ_{1,\upmu_X} \otimes_{\Ox_{X,\upmu_X}} \calA_X \right ) \circledast_{\calQ_{1,\upmu_X}\otimes_{\Ox_{X,\upmu_X}}F_1^*\calB_{X/S}} \left (\calQ_{1,\upmu_X}\otimes_{\Ox_{X,\upmu_X}} \calE' \right ).
$$
By \ref{kraz827} and \eqref{indiso1}, this source is canonically isomorphic to
$$
\calQ_{1,\upmu_X} \otimes_{\Ox_{X,\upmu_X}} \left (\calA_X \circledast_{F_1^*\calB_{X/S}} \calE'\right ) = \left (\calQ_{1,\upmu_X} \otimes_{\Ox_{X,\upmu_X}} \calA_X \right )\circledast_{F_1^*\calB_{X/S}} \calE'.
$$
Similarly, the target is isomorphic to
$
\calA_X \circledast_{F_1^*\calB_{X/S}}\calE'\otimes_{\Ox_{X,\upmu_X}}\calQ_{1,\upmu_X},
$
considered as a $\upmu_X$-indexed module over $\calQ_{1,\upmu_X}\otimes_{\Ox_{X,\upmu_X}}F_1^*\calB_{X/S}$ via the morphism
$$
\sigma_Q:\calQ_{1,\upmu_X}\otimes_{\Ox_{X,\upmu_X}}F_1^*\calB_{X/S} \ra F_1^*\calB_{X/S} \otimes_{\Ox_{X,\upmu_X}}\calQ_{1,\upmu_X}
$$
given in \eqref{eqsigmaQkraz}.
By \ref{propdraft4}, $\epsilon$ is an admissible $\calQ_{1,\upmu_X}$-stratification.
We obtain a linearization functor
\begin{equation}\label{eqLstrat1942f}
\begin{array}[t]{clc}
L_{\mathrm{strat}}:\begin{Bmatrix}\underline{\upmu}\text{-indexed\ }F_1^*\calB_{X/S}\text{-modules\ with\ a}\\ \calB_{X/S}\text{-linear}\\Q_{1,\upmu_X}\text{-stratification} \end{Bmatrix} & \ra & \begin{Bmatrix}\underline{\upmu}\text{-indexed\ }\calA_X\text{-modules\ with}\\ \text{an\ admissible}\\Q_{1,\upmu_X}\text{-stratification} \end{Bmatrix} \\
 & & \\
(\calE',\epsilon') & \mapsto & \left (\calA_X\circledast_{F_1^*\calB_{X/S}}\calE',\epsilon \right ).
\end{array}
\end{equation}
\end{parag}

\begin{theorem}\label{thmKoko20125}
Assume $(H_2)$ is satisfied. Keep the notation of \ref{parag1419}. Consider the diagram
\begin{equation}\label{diag187}
\begin{tikzcd}
\begin{Bmatrix}\mathrm{Crystals\ of\ }\upmu'\text{-}\mathrm{indexed}\\ \frakB_X\text{-}\mathrm{modules\ of\ }\widetilde{\calE}(X'/\frakS)_{/\upmu'} \end{Bmatrix} \ar{r}{C^{-1}} \ar[swap,sloped]{d}{\sim} & \begin{Bmatrix}\mathrm{Crystals\ of\ }\underline{\upmu}\text{-}\mathrm{indexed}\\ \frakA_X\text{-}\mathrm{modules\ of\ }\widetilde{\underline{\calE}}(X/\frakS)_{/\underline{\upmu}} \end{Bmatrix} \ar[sloped]{d}{\sim}\\
\begin{Bmatrix}\upmu_X\text{-}\mathrm{indexed\ }\calB_{X/S}\text{-}\mathrm{modules\ with}\\ \mathrm{an\ admissible}\\ \calR_{1,\upmu_X}'\text{-}\mathrm{stratification} \end{Bmatrix} \ar{r}{L_{\mathrm{strat}} \circ \Psi_{\upmu}} & \begin{Bmatrix}\upmu_X\text{-}\mathrm{indexed\ }\calA_X\text{-}\mathrm{modules\ with}\\ \mathrm{an\ admissible}\\Q_{1,\upmu_X}\text{-}\mathrm{stratification} \end{Bmatrix},
\end{tikzcd}
\end{equation}
where the vertical equivalences are given in \ref{equivRQAB} and $C^{-1},$ $L_{\mathrm{strat}}$ and $\Psi_{\upmu}$ are given respectively in \eqref{ffC}, \ref{ffLcrys} and \eqref{diag186}.
The diagram \eqref{diag187} is 2-commutative.
\end{theorem}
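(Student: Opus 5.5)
We factor the top arrow as $C^{-1}=L_{\mathrm{crys}}\circ C^{-1}_{X/\frakS,\upmu}$ and the bottom one as $L_{\mathrm{strat}}\circ\Psi_{\upmu}$. The plan is to insert an intermediate row and prove 2-commutativity of two squares. The middle column is the category of crystals of $\underline{\upmu}$-indexed $C^{-1}_{X/\frakS,\upmu}\frakB_X$-modules of $\widetilde{\underline{\calE}}(X/\frakS)_{/\underline{\upmu}}$. Its stratified counterpart is the category of $\upmu_X$-indexed $F_1^*\calB_{X/S}$-modules with a $\calB_{X/S}$-linear $\calQ_{1,\upmu_X}$-stratification, which is the source of $L_{\mathrm{strat}}$.

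\textbf{Middle vertical equivalence.} First I identify the crystal $C^{-1}_{X/\frakS,\upmu}\frakB_X$ with the stratified algebra $(F_1^*\calB_{X/S},\sigma_Q)$. By \ref{propkraz1934} and the computation $\Psi_{\upmu}(\calB_{X/S},\sigma_R)=(F_1^*\calB_{X/S},\sigma_Q)$ from the proof of the preceding proposition, $C^{-1}_{X/\frakS,\upmu}\frakB_X$ corresponds under \ref{equivRQind} to $(F_1^*\calB_{X/S},\sigma_Q)$. Then, arguing exactly as in \ref{equivRQAB}, crystals of indexed $C^{-1}_{X/\frakS,\upmu}\frakB_X$-modules correspond to indexed $F_1^*\calB_{X/S}$-modules whose stratification is compatible with $\sigma_Q$. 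Since $\sigma_Q$ is the exchange-of-factors stratification, this compatibility is precisely $\calB_{X/S}$-linearity.

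\textbf{Left square.} Given a crystal of $\frakB_X$-modules corresponding to $(\calE',\epsilon')$ with $\epsilon'$ admissible, \ref{propkraz1934} shows that $C^{-1}_{X/\frakS,\upmu}$ sends it to $(F_1^*\calE',\calV^*\epsilon')$. The $\upmu$-indexed module structures correspond, because the vertical equivalences of \ref{equivRQind} and the functor $C^{-1}_{X/\frakS,\upmu}$ preserve indexed module structures: the multiplication map $\frakB_X\boxtimes\frakE\to\sigma^*\frakE$ is pulled back to $F_1^*\calB_{X/S}\boxtimes F_1^*\calE'\to\sigma_X^*F_1^*\calE'$. One must also check that $\calV^*\epsilon'$ is $\calB_{X/S}$-linear. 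This follows from the $\calB_{X/S}$-linearity of $\epsilon'$ together with $\calV^*\sigma_R=\sigma_Q$.

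\textbf{Right square and main obstacle.} For a crystal $\frakE$ of $C^{-1}_{X/\frakS,\upmu}\frakB_X$-modules with stratified counterpart $(\calE',\epsilon')$, I must show that $\frakA_X\circledast_{C^{-1}\frakB_X}\frakE$ corresponds to $(\calA_X\circledast_{F_1^*\calB_{X/S}}\calE',\epsilon)$, where $\epsilon=\epsilon_{\calA_X,Q}\circledast\epsilon'$. I expect this to be the main obstacle.

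\emph{Underlying module.} Using \ref{propKoko1915}, the evaluation of $\frakA_X\circledast\frakE$ at $(X,\frakX,i_X)$ is $\calA_X\circledast_{F_1^*\calB_{X/S}}\calE'$. The same result gives that its evaluation at $(X,Q_{\frakX},\lambda_Q)$ is the $\circledast$ of the evaluations there.

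\emph{Stratification.} The stratification of the crystal is $c_{q_1}^{-1}\circ c_{q_2}$, as in \ref{parag1419}. Via the isomorphisms of \ref{propKoko1915}, the transition maps $c_{\frakA_X\circledast\frakE,q_i}$ are the $\circledast$ of the transition maps of $\frakA_X$ and $\frakE$; this naturality is the content of the proof of \ref{propcrystensstar}, which combines \ref{Koko829} with \ref{propKoko1915}. Hence the stratification is $(c_{\frakA,q_1}^{-1}c_{\frakA,q_2})\circledast(c_{\frakE,q_1}^{-1}c_{\frakE,q_2})=\epsilon_{\calA_X,Q}\circledast\epsilon'$.

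\emph{Compatibility with \ref{propdraft3}.} It remains to match this with the construction of \ref{propdraft3}, which is done on the presheaf level via \ref{indpropshriek} (3). The check can be performed fiberwise over sections $s$ of $\upmu_X$ and then on stalks, where both sides send $1\otimes a\otimes x$ to $\epsilon_{\calA_X,Q}(1\otimes a)\otimes\epsilon'(1\otimes x)$. The identification of the sources and targets uses \ref{kraz827} and \eqref{indiso1}, exactly as in \ref{ffLcrys}.

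Finally, the indexed $\frakA_X$-module structures agree, since both arise from the multiplication on the $\frakA_X$ factor. Composing the two squares then gives the 2-commutativity of \eqref{diag187}.
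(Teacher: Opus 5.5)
Your proposal is correct and follows essentially the same route as the paper. It uses the same factorization through crystals of indexed $C^{-1}_{X/\frakS,\upmu}\frakB_X$-modules and gets the left square from \ref{propkraz1934}. It handles the right square the same way, by evaluating $\frakA_X\circledast_{C^{-1}_{X/\frakS,\upmu}\frakB_X}\frakE$ at $(X,\frakX,i_X)$ and $(X,Q_{\frakX},\lambda_Q)$ via \ref{propKoko1915} and taking the $\circledast$ of the transition maps.

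The only minor difference concerns the evaluations of $C^{-1}_{X/\frakS,\upmu}\frakB_X$. The paper identifies them directly through \ref{ommek4} and the crystal property of $\frakB_X$ along the morphisms induced by $F$ and $\nu$. You read them off from \ref{propkraz1934} applied to $(\calB_{X/S},\sigma_R)$, which rests on the same mechanism.
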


\begin{proof}
By definition of $C^{-1}$ \eqref{ffC}, the diagram \eqref{diag187} factors as follows:
$$
\begin{tikzcd}
\begin{Bmatrix}\mathrm{Crystals\ of\ }\\ \upmu'\text{-indexed}\\ \frakB_X\text{-modules\ of\ }\\ \widetilde{\calE}(X'/\frakS)_{/\upmu'} \end{Bmatrix} \ar{r}{C_{X/\frakS,\upmu}^{-1}} \ar[bend right=-30]{rr}{C^{-1}} \ar[swap,sloped]{d}{\sim} & \begin{Bmatrix}\mathrm{Crystals\ of\ }\\ \underline{\upmu}\text{-indexed}\\ C_{X/\frakS,\upmu}^{-1}\frakB_X\text{-modules\ of\ }\\ \widetilde{\underline{\calE}}(X/\frakS)_{/\underline{\upmu}} \end{Bmatrix} \ar{r}{L_{\mathrm{crys}}} \ar[swap,sloped]{d}{\sim} \ar{d}{\omega} & \begin{Bmatrix}\mathrm{Crystals\ of\ } \\ \underline{\upmu}\text{-indexed}\\ \frakA_X\text{-modules\ of\ } \\ \widetilde{\underline{\calE}}(X/\frakS)_{/\underline{\upmu}} \end{Bmatrix} \ar[swap, sloped]{d}{\sim} \ar{d}{\omega'} \\
\begin{Bmatrix}\upmu_X\text{-indexed\ }\\ \calB_{X/S}\text{-modules\ with}\\ \mathrm{an\ admissible}\\ \calR_{1,\upmu_X}'\text{-stratification} \end{Bmatrix} \ar{r}{\Psi_{\upmu}} \ar[swap,bend right=30]{rr}{L_{\mathrm{strat}} \circ \Psi_{\upmu}} & \begin{Bmatrix}\upmu_X\text{-indexed\ } \\ F_1^*\calB_{X/S}\text{-modules\ with\ a}\\ \calB_{X/S}\text{-linear}\\ \calQ_{1,\upmu_X}\text{-stratification} \end{Bmatrix} \ar{r}{L_{\mathrm{strat}}} & \begin{Bmatrix}\upmu_X\text{-indexed\ } \\ \calA_X\text{-modules\ with}\\ \mathrm{an\ admissible}\\Q_{1,\upmu_X}\text{-stratification} \end{Bmatrix}.
\end{tikzcd}
$$
The 2-commutativity of the left square was proved in \ref{propkraz1934}. It is hence sufficient to prove the 2-commutativity of the right square. For simplicity, when we tensor over $\Ox_{X,\upmu_X},$ we use the notation $\otimes$ instead of $\otimes_{\Ox_{X,\upmu_X}}.$
Let $\frakE$ be an object of the upper middle category. Keep the notation of \ref{parag1419}. By definition, we have
$$
L_{\mathrm{crys}}(\frakE)=\frakA_X \circledast_{C_{X/\frakS,\upmu}^{-1}\frakB_X}\mathfrak{E},
$$
and
$$
\omega'\left (\frakA_X \circledast_{C_{X/\frakS,\upmu}^{-1}\frakB_X}\mathfrak{E} \right )= \left ( \calE',\epsilon'\right ),
$$
where $\calE'=\left (\frakA_X \circledast_{C_{X/\frakS,\upmu}^{-1}\frakB_X}\mathfrak{E}\right )_{(X,\frakX,i_X)}$ and $\epsilon'$ is the composition
$$
\epsilon':\calQ_{1,\upmu_X} \otimes \calE' \xrightarrow{c_{\frakA_X \circledast_{C_{X/\frakS,\upmu}^{-1}\frakB_X}\frakE,q_2}} \left (\frakA_X \circledast_{C_{X/\frakS,\upmu}^{-1}\frakB_X}\mathfrak{E}\right )_{(X,Q_{\frakX},\lambda_Q)} \xrightarrow{c_{\frakA_X \circledast_{C_{X/\frakS,\upmu}^{-1}\frakB_X}\mathfrak{E},q_1}^{-1}}\calE'  \otimes \calQ_{1,\upmu_X}.
$$
On the other hand, we have
$$
\omega(\frakE)=\left (\F,\epsilon \right ),
$$
where $\F=\frakE_{(X,\frakX,i_X)}$ and $\epsilon$ is the composition
$$
\epsilon:\calQ_{1,\upmu_X} \otimes \F \xrightarrow{c_{\mathfrak{E},q_2}} \mathfrak{E}_{(X,Q_{\frakX},\lambda_Q)} \xrightarrow{c_{\mathfrak{E},q_1}^{-1}} \F \otimes \calQ_{1,\upmu_X}.
$$
By \eqref{eqLstrat1942f}, we have
$$
L_{\mathrm{strat}}\left (\F,\epsilon \right )=\left (\calA_X\circledast_{F_1^*\calB_{X/S}}\F,\epsilon_{\calA_X,Q}\circledast_{\calQ_{1,\upmu_X}\otimes F_1^*\calB_{X/S}}\epsilon\right ),
$$
where the target of
$$
\epsilon_{\calA_X,Q}\circledast_{\calQ_{1,\upmu_X}\otimes F_1^*\calB_{X/S}}\epsilon
$$
is considered as a $\upmu_X$-indexed module over $\calQ_{1,\upmu_X}\otimes_{\Ox_{X,\upmu_X}}F_1^*\calB_{X/S}$ via the morphism 
$$
\sigma_Q:\calQ_{1,\upmu_X}\otimes F_1^*\calB_{X/S} \ra F_1^*\calB_{X/S} \otimes \calQ_{1,\upmu_X}$$
given in \eqref{eqsigmaQkraz}.
We have to prove the existence of an isomorphism
\begin{equation}\label{eqtakriz185555}
\left (\calE',\epsilon' \right ) \xrightarrow{\sim} \left (\calA_X\circledast_{F_1^*\calB_{X/S}}\F,\epsilon_{\calA_X,Q}\circledast_{\calQ_{1,\upmu_X}\otimes F_1^*\calB_{X/S}}\epsilon\right ).
\end{equation}
By \ref{propKoko1915}, there exists canonical isomorphisms
$$
\calE'=\left (\frakA_X \circledast_{C_{X/\frakS,\upmu}^{-1}\frakB_X}\mathfrak{E}\right )_{(X,\frakX,i_X)} \xrightarrow{\sim} \calA_X\circledast_{ \left ( C_{X/\frakS,\upmu}^{-1}\frakB_X\right )_{(X,\frakX,i_X)}} \F,
$$
$$
\left (\frakA_X \circledast_{C_{X/\frakS,\upmu}^{-1}\frakB_X}\mathfrak{E}\right )_{(X,Q_{\frakX},\lambda_Q)} \xrightarrow{\sim} \frakA_{X,(X,Q_{\frakX},\lambda_Q)} \circledast_{\left ( C_{X/\frakS,\upmu}^{-1}\frakB_X \right )_{(X,Q_{\frakX},\lambda_Q)}}\frakE_{(X,Q_{\frakX},\lambda_Q)}.
$$
By \ref{ommek4}, we have canonical isomorphisms
$$
\left ( C_{X/\frakS,\upmu}^{-1}\frakB_X\right )_{(X,\frakX,i_X)} \xrightarrow{\sim} \frakB_{X,\rho(X,\frakX,i_X)}, \quad
\left ( C_{X/\frakS,\upmu}^{-1}\frakB_X \right )_{(X,Q_{\frakX},\lambda_Q)} \xrightarrow{\sim} \frakB_{X,\rho(X,Q_{\frakX},\lambda_Q)}.
$$
The lifting $F:\frakX \ra \frakX'$ and the morphism $\nu:Q_{\frakX} \ra R_{\frakX',1}$ induce morphisms (\cite{SF2} (8.36.24)), abusively denoted $F$ and $\nu,$
$$
F:\rho(X,\frakX,i_X) = (X',\frakX,F_1) \ra (X',\frakX',\op{Id}_{X'}),
$$
$$
\nu:\rho(X,Q_{\frakX},\lambda_Q) \ra (X',R_{\frakX',1},\lambda_{R'}),
$$
such that the diagram
$$
\begin{tikzcd}
\rho(X,Q_{\frakX},\lambda_Q) \ar{r}{\nu} \ar[swap]{d}{\rho(q_i)} & (X',R_{\frakX',1},\lambda_{R'}) \ar{d}{r_i} \\
\rho(X,\frakX,i_X) \ar[swap]{r}{F} & (X',\frakX',\op{Id}_{X'})
\end{tikzcd}
$$
is commutative, where $r_1$ and $r_2$ are the canonical projections.
Since $\frakB_X$ is a crystal, we have isomorphisms
$$
c_{\frakB_X,F}:F_1^*\calB_{X/S}=F_1^*\frakB_{X,(X',\frakX',\op{Id}_{X'})} \xrightarrow{\sim} \frakB_{X,\rho(X,\frakX,i_X)},
$$
$$
c_{\frakB_X,\nu}:\calQ_{1,\upmu_X} \otimes_{\calR'_{1,\upmu_X}} \frakB_{X,(X',R_{\frakX',1},\lambda_{R'})} \xrightarrow{\sim} \frakB_{X,\rho(X,Q_{\frakX},\lambda_Q)},
$$
$$
c_{\frakB_X,r_2}:\calR_{1,\upmu_X}' \otimes_{\Ox_{X',\upmu_X}} \calB_{X/S} \xrightarrow{\sim} \frakB_{X,(X',R_{\frakX',1},\lambda_{R'})},
$$
$$
c_{\frakB_X,r_1}:  \calB_{X/S}\otimes_{\Ox_{X',\upmu_X}} \calR_{1,\upmu_X}'\xrightarrow{\sim} \frakB_{X,(X',R_{\frakX',1},\lambda_{R'})}.
$$
We deduce canonical isomorphisms
$$
\calE'=\left (\frakA_X \circledast_{C_{X/\frakS,\upmu}^{-1}\frakB_X}\mathfrak{E}\right )_{(X,\frakX,i_X)} \xrightarrow{\sim} \calA_X\circledast_{F_1^*\calB_{X/S}} \F,
$$
\begin{alignat*}{2}
\left (\frakA_X \circledast_{C_{X/\frakS,\upmu}^{-1}\frakB_X}\mathfrak{E}\right )_{(X,Q_{\frakX},\lambda_Q)} &\xrightarrow{\sim} \frakA_{X,(X,Q_{\frakX},\lambda_Q)} \circledast_{\calQ_{1,\upmu_X} \otimes_{\calR'_{1,\upmu_X}} \frakB_{X,(X',R_{\frakX',1},\lambda_{R'})}}\frakE_{(X,Q_{\frakX},\lambda_Q)} \\
&\xrightarrow{\sim}  \frakA_{X,(X,Q_{\frakX},\lambda_Q)} \circledast_{\calQ_{1,\upmu_X} \otimes_{\Ox_{X',\upmu_X}} \calB_{X/S}}\frakE_{(X,Q_{\frakX},\lambda_Q)},
\end{alignat*}
fitting into the commutative diagram
$$
\begin{tikzcd}
\calQ_{1,\upmu_X} \otimes \calE' \ar{r}{\sim} \ar[swap]{dd}{c_{\frakA_X \circledast_{C_{X/\frakS,\upmu}^{-1}\frakB_X}\frakE,q_2}} &
\calQ_{1,\upmu_X} \otimes \left (\calA_X\circledast_{F_1^*\calB_{X/S}} \F\right ) \ar[sloped]{d}{\sim} \\
 &
\left (\calQ_{1,\upmu_X} \otimes \calA_X \right )\circledast_{\calQ_{1,\upmu_X} \otimes F_1^*\calB_{X/S}}  \left ( \calQ_{1,\upmu_X} \otimes \F\right ) \ar{d}{c_{\frakA_X,q_2}\circledast c_{\frakE,q_2}} \\
\left (\frakA_X \circledast_{C_{X/\frakS,\upmu}^{-1}\frakB_X}\mathfrak{E}\right )_{(X,Q_{\frakX},\lambda_Q)} \ar{r}{\sim} \ar[swap]{dd}{c_{\frakA_X \circledast_{C_{X/\frakS,\upmu}^{-1}\frakB_X}\mathfrak{E},q_1}^{-1}}  &
 \frakA_{X,(X,Q_{\frakX},\lambda_Q)} \circledast_{\calQ_{1,\upmu_X} \otimes F_1^*\calB_{X/S}}\frakE_{(X,Q_{\frakX},\lambda_Q)}  \ar{d}{c_{\frakA_X,q_1}^{-1}\circledast c_{\frakE,q_1}^{-1}} \\
&
\left ( \calA_X \otimes \calQ_{1,\upmu_X} \right )\circledast_{ F_1^*\calB_{X/S} \otimes \calQ_{1,\upmu_X} }  \left (  \F \otimes \calQ_{1,\upmu_X} \right ) \ar[sloped]{d}{\sim} \\
\calE' \otimes \calQ_{1,\upmu_X} \ar{r}{\sim} & \left (\calA_X\circledast_{F_1^*\calB_{X/S}} \F \right )\otimes \calQ_{1,\upmu_X}.
\end{tikzcd}
$$
We deduce the desired isomorphism of stratified $\upmu_X$-indexed $\calA_X$-modules \eqref{eqtakriz185555}.
\end{proof}

\begin{parag}\label{kraz1935}
In this paragraph, we assume $(H_1)$ is satisfied and we define a $\upmu_X$-indexed algebra structure on $\calQ_{1,\upmu_X}^{\vee} \otimes_{\Ox_{X,\upmu_X}} \calA_X.$
Since $\calA_X$ is an invertible $\Ox_{X,\upmu_X}$-module, we have a canonical isomorphism
\begin{equation}\label{isoQHom}
\calQ_{1,\upmu_X}^{\vee} \otimes_{\Ox_{X,\upmu_X}} \calA_X \xrightarrow{\sim} \mathscr{Hom}_{\Ox_{X,\upmu_X}}\left ( \calQ_{1,\upmu_X},\calA_X \right ).
\end{equation}
Note that, by (\cite{SGA43} IV 12.3), we have a canonical isomorphism
$
\calQ_{1,\upmu_X}^{\vee} \xrightarrow{\sim} \left (\calQ_1^{\vee} \right )_{\upmu_X}.
$
We define a multiplication on $\mathscr{Hom}_{\Ox_{X,\upmu_X}}\left ( \calQ_{1,\upmu_X},\calA_X \right )$ as follows: for local sections $s,t\in \Gamma(U,\upmu_X)$ over an étale $X$-scheme $U,$ and global sections $f$ and $g$ of
$$\mathscr{Hom}_{\Ox_{X,\upmu_X}}\left ( \calQ_{1,\upmu_X},\calA_X \right )_s\xrightarrow{\sim} \mathscr{Hom}_{\Ox_{U}}\left ( \calQ_{1|U},\calA_{X,s} \right ),$$
$$\mathscr{Hom}_{\Ox_{X,\upmu_X}}\left ( \calQ_{1,\upmu_X},\calA_X \right )_t\xrightarrow{\sim}\mathscr{Hom}_{\Ox_{U}}\left ( \calQ_{1|U},\calA_{X,t} \right )$$
respectively (where the isomorphisms follow from (\cite{SGA43} IV 12.3)). We abusively denote $\calQ_{1|U}$ simply by $\calQ_1.$ Define $f\circ g$ to be the composition
$$
\calQ_1 \xrightarrow{\delta} \calQ_1 \otimes_{\Ox_U} \calQ_1 \xrightarrow{\op{Id}\otimes g} \calQ_1 \otimes_{\Ox_U} \calA_{X,t} \xrightarrow{\epsilon_{\calA_X,Q,t}} \calA_{X,t}\otimes_{\Ox_U} \calQ_1  \xrightarrow{\op{Id}\otimes f} \calA_{X,t} \otimes_{\Ox_U} \calA_{X,s} \ra \calA_{X,s+t}.
$$
The only difficulty we encounter when proving that this multiplication defines an indexed algebra structure on $\mathscr{Hom}_{\Ox_{X,\upmu_X}}\left ( \calQ_{1,\upmu_X},\calA_X \right )$ and hence on $\calQ_{1,\upmu_X}^{\vee} \otimes_{\Ox_{X,\upmu_X}} \calA_X,$ is the associativity. This is proved exactly the same way as in (\cite{Ber96} 2.3.5).

Suppose that the hypothesis (\cite{SF2} 6.12) is satisfied. We have a basis $\left (\eta^K\eta_{(q)}^L \right )_{\substack{K\in \llbracket 0,p-1 \rrbracket^d\\ L\in \N^d}}$ for the $\Ox_X$-module $\calQ_1.$ We denote by $\left (\varphi_{K,L} \right )_{\substack{K\in \llbracket 0,p-1 \rrbracket^d \\ L\in \N^d}}$ its dual basis. Let $a$ be a local section of $\calA_{X,s}$ and $(I,J) \in \llbracket 0,p-1 \rrbracket^d \times \N^d.$ Then $\varphi_{I,J}\otimes 1$ is a section of $\calQ_{1}^{\vee} \otimes_{\Ox_{U}} \calA_{X,0}.$ Let us compute the product
$$
(\varphi_{I,J} \otimes 1)\cdot (1 \otimes a).
$$
By definition, $1\otimes a$ corresponds, by \eqref{isoQHom}, to the morphism
$$
\theta_a:\begin{array}[t]{clc}
\calQ_1 & \ra & \calA_{X,s} \\
\eta^{\alpha} \eta_{(q)}^{\beta} & \mapsto & \begin{cases} a\ \text{if }\alpha=\beta=0,\\ 0\ \text{else.} \end{cases} 
\end{array}
$$
By \eqref{isoQHom}, the product $(\varphi_{I,J} \otimes 1)\cdot (1 \otimes a)$ corresponds then to the composition
$$
\calQ_1 \xrightarrow{\delta} \calQ_1 \otimes_{\Ox_U} \calQ_1 \xrightarrow{\op{Id}\otimes \theta_a} \calQ_1 \otimes_{\Ox_U} \calA_{X,s} \xrightarrow{\epsilon_{\calA_X,Q,s}} \calA_{X,s}\otimes_{\Ox_U} \calQ_1  \xrightarrow{\op{Id}\otimes \varphi_{I,J}} \calA_{X,s}.
$$
By (\cite{SF2} 6.16), $\eta^{\alpha}\eta_{(q)}^{\beta}$ is sent, by $\delta,$ to
\begin{alignat*}{2}
&\prod_{i=1}^d\left (1\otimes \eta_i+\eta_i\otimes 1 + \eta_i\otimes \eta_i \right )^{\alpha_i} \\
\times & \prod_{i=1}^d \left (1\otimes \eta_{i(q)}+\sum_{0<b+c<p} \frac{(-1)^{b+c}}{b+c}\begin{pmatrix}b+c \\ b \end{pmatrix} \eta_i^{b+c}\otimes \eta_i^{p-b} +\eta_{i(q)}\otimes 1 \right )^{\beta_i}.
\end{alignat*}
By developing the right hand side, this is sent by $\op{Id}\otimes \theta_a$ to
$
\eta^{\alpha}\eta_{(q)}^{\beta} \otimes a.
$
By \eqref{eqepsilonAQ}, this is sent, by $\epsilon_{A_X,Q,s},$ to
$
\sum_{K\in \llbracket 0,p-1 \rrbracket^d} \left (\varphi_{K,0}\cdot a\right )\otimes \eta^{K+\alpha}\eta_{(q)}^{\beta}.
$
This is then sent, by $\op{Id}\otimes \varphi_{I,J},$ to
$
\left (\varphi_{I-\alpha,0}\cdot a\right )\delta_{\beta,J}.
$
Recall that, for $1\le i\le d,$ we have $\eta_i^p=p\eta_{i(q)}=0.$
We conclude that
\begin{equation}\label{eq20311}
(\varphi_{I,J} \otimes 1)\cdot (1 \otimes a)=\sum_{K+L=I}\left (\varphi_{K,0}\cdot a \right )\varphi_{L,J}.
\end{equation}
\end{parag}

\begin{proposition}\label{prop1932Ko}
Assume $(H_1)$ is satisfied. Let $(\calE,\epsilon)$ be a $\upmu_X$-indexed $\calA_X$-module equipped with a $\calQ_{1,\upmu_X}$-stratification and consider the corresponding $\calQ_{1,\upmu_X}^{\vee}$-module structure on $\calE.$ This structure extends to a structure of $\left (\calQ_{1,\upmu_X}^{\vee} \otimes_{\Ox_{X,\upmu_X}} \calA_X\right )$-module if and only if $\epsilon$ is admissible.
\end{proposition}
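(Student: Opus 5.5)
The plan is to reduce the statement to a local computation on fibers and then compare the relation \eqref{eq20311} with the admissibility diagram \eqref{admindQstratdiag}. Both conditions are local on $X$ and can be checked fiberwise over local sections $s,t$ of $\upmu_X$ (by \ref{indremark73}). So we may work étale locally, assume the hypothesis (\cite{SF2} 6.12), and use the basis $(\eta^K\eta_{(q)}^L)$ of $\calQ_1$ with dual basis $(\varphi_{K,L})$. For $s,t\in\Gamma(U,\upmu_X)$, $a\in\calA_{X,s}$ and $x\in\calE_t$, write $\varphi\cdot x=(\op{Id}\otimes\varphi)\epsilon_t(1\otimes x)$ for the $\calQ_1^{\vee}$-action.

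The first step is to pin down what an extension of the structure must be. The $\upmu_X$-indexed algebra $\calQ_{1,\upmu_X}^{\vee}\otimes\calA_X$ of \ref{kraz1935} contains $\calQ_{1,\upmu_X}^{\vee}\otimes\calA_{X,0}\supset\calQ_1^{\vee}$ via $\varphi\mapsto\varphi\otimes 1$, and it contains $\calA_X$ via $a\mapsto 1\otimes a$. These two subalgebras generate it, since $(\varphi\otimes1)(1\otimes a)$ spans by \eqref{eq20311}; one checks that the unitriangular shape of \eqref{eq20311} lets these products span. Hence an extension exists if and only if the two given actions (of $\calQ_1^{\vee}$ via $\epsilon$, and of $\calA_X$ via the indexed module structure) satisfy the defining commutation relation. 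By \eqref{eq20311}, this relation reads
$$\varphi_{I,J}\cdot(ax)=\sum_{K+L=I}(\varphi_{K,0}\cdot a)(\varphi_{L,J}\cdot x).$$
It must also be compatible with the relation $(1\otimes a)(\varphi\otimes1)=\varphi\otimes a$, which is automatic from the definition. Sufficiency then requires checking that the formula $(\varphi\otimes a)\cdot x:=\varphi\cdot(ax)$ defines an action whenever the relation holds. I would do this by identifying the algebra with $\mathscr{Hom}(\calQ_{1,\upmu_X},\calA_X)$ via \eqref{isoQHom}, and then mimicking the associativity argument of (\cite{Ber96} 2.3.5) used for the multiplication.

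The second step shows that this relation is exactly admissibility. Evaluate \eqref{admindQstratdiag} on $1\otimes a\otimes x$ at the fiber $(s,t)$. Going right then down gives $\epsilon_{s+t}(1\otimes ax)=\sum_{I,J}\varphi_{I,J}\cdot(ax)\otimes\eta^I\eta_{(q)}^J$. Going down gives $\epsilon_{\calA_X,Q,s}(1\otimes a)\cdot\epsilon_t(1\otimes x)$, with the product computed in $\calQ_1$. Expand this using \eqref{eqepsilonAQ}, which says $\epsilon_{\calA_X,Q,s}(1\otimes a)=\sum_K(\varphi_{K,0}\cdot a)\otimes\eta^K$, together with $\epsilon_t(1\otimes x)=\sum_{L,J}(\varphi_{L,J}\cdot x)\otimes\eta^L\eta_{(q)}^J$. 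Because $\eta_i^p=0$ in $\calQ_1$, the product $\eta^K\cdot\eta^L\eta_{(q)}^J$ equals $\eta^{K+L}\eta_{(q)}^J$, or $0$ when some exponent reaches $p$. This matches the convention $\varphi_{I,J}=0$ outside $\llbracket0,p-1\rrbracket^d$. Comparing coefficients in the basis then shows that the diagram commutes exactly when the relation holds for all $a,x,I,J$, which gives both implications.

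The main obstacle is the sufficiency direction of the first step: proving that the candidate action is associative and well-defined on $\calQ^{\vee}_{1,\upmu_X}\otimes\calA_X$, as opposed to just on generators. I would avoid generators altogether. Define the action via $\mathscr{Hom}(\calQ_{1,\upmu_X},\calA_X)$ directly: $f\cdot x=(\mu\circ(f\otimes\op{Id}))\circ\epsilon(1\otimes x)$, suitably ordered with $\mu$ the module multiplication. Then prove associativity by the same diagram chase as for the multiplication, using the cocycle condition for $\epsilon$, the compatibility with $\delta$, and the admissibility diagram in place of \eqref{eq20311}.
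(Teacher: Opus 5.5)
Your second step is exactly the paper's proof. The paper evaluates both paths of \eqref{admindQstratdiag} on $1\otimes a\otimes x$ using \eqref{eqepsilonAQ} and $\eta_i^p=0$, which reduces admissibility to $\varphi_{I,J}\cdot(ax)=\sum_{K+L=I}(\varphi_{K,0}\cdot a)(\varphi_{L,J}\cdot x)$. It then just says ``we conclude by \eqref{eq20311}'', leaving implicit the equivalence between this relation and the existence of an extension, which your first step tries to justify.

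In that first step, the candidate formula $(\varphi\otimes a)\cdot x:=\varphi\cdot(ax)$ is wrong.
\begin{itemize}
\item $\varphi\cdot(ax)$ is the action of $(\varphi\otimes 1)(1\otimes a)$. But $\varphi\otimes a=(1\otimes a)(\varphi\otimes 1)$, the relation you yourself call automatic, so any extension is forced to satisfy $(\varphi\otimes a)\cdot x=a(\varphi\cdot x)$.
\item With your formula, the action axiom for the product $(1\otimes a)(\varphi_{\epsilon_i,0}\otimes 1)$ would require $\varphi_{\epsilon_i,0}\cdot(ax)=a(\varphi_{\epsilon_i,0}\cdot x)$.
\item This fails even for admissible $\epsilon$ whenever $\varphi_{\epsilon_i,0}\cdot a\neq 0$. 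Indeed, the relation with $I=\epsilon_i$, $J=0$ produces the extra term $(\varphi_{\epsilon_i,0}\cdot a)x$.
\end{itemize}

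Your later, $\mathscr{Hom}$-based definition is the right one. Set $f\cdot x=\sum_k f(q_k)x_k$, where $\epsilon_t(1\otimes x)=\sum_k x_k\otimes q_k$; this gives $(\varphi\otimes a)\cdot x=a(\varphi\cdot x)$. With it:
\begin{itemize}
\item the unit axiom and the restrictions to $\calQ_{1,\upmu_X}^{\vee}$ and to $\calA_X$ follow from the counit condition on $\epsilon$;
\item associativity $(f\circ g)\cdot x=f\cdot(g\cdot x)$ follows from the cocycle condition for $\epsilon$ together with admissibility applied to $g(q_k)x_k$. This is the same computation that makes the product of \ref{kraz1935} associative.
\end{itemize}
So delete the first formula and keep the second. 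Also, generation of $\calQ_{1,\upmu_X}^{\vee}\otimes_{\Ox_{X,\upmu_X}}\calA_X$ by the two subalgebras is immediate from $(1\otimes a)(\varphi\otimes 1)=\varphi\otimes a$, so the unitriangularity argument is unnecessary. With this correction, your first step supplies the detail the paper omits.
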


\begin{proof}
Denote by $\sigma:\upmu_X^2 \ra \upmu_X$ the addition map and by $\pi:\calA_X\boxtimes \calE \ra \sigma^*\calE$ the morphism defining the $\upmu_X$-indexed $\calA_X$-module structure on $\calE.$ Keep the notation of \ref{kraz1935}. Let $s,t\in \Gamma(U,\upmu_X)$ be local sections over an étale $X$-scheme $U$ and $a$ and $x$ local sections of $\calA_{X,s}$ and $\calE_t$ respectively. We have
$$
\epsilon_{s+t}(1\otimes (ax)) = \sum_{\substack{I \in \llbracket 0,p-1 \rrbracket^d \\ J\in \N^d}} \left (\varphi_{I,J}\cdot (ax) \right )\otimes \eta^I\eta_{(q)}^J.
$$
By \eqref{eqepsilonAQ}, we have
$$
\epsilon_{\calA_X,Q,s}(1\otimes a)=\sum_{K\in \llbracket 0,p-1 \rrbracket^d} \left (\varphi_{K,0}\cdot a\right )\otimes \eta^K.
$$
The image of $1\otimes a\otimes x$ by the composition
$$
\calQ_1 \otimes_{\Ox_U} \calA_{X,s} \otimes_{\Ox_U} \calE_t \xrightarrow{\epsilon_{\calA_X,Q,s}\otimes \op{Id}} \calA_{X,s}\otimes_{\Ox_U}\calQ_1\otimes_{\Ox_U}\calE_t \xrightarrow{\op{Id}\otimes \epsilon_t} \calA_{X,s}\otimes_{\Ox_U}\calE_t\otimes_{\Ox_U}\calQ_1
$$
is then equal to
$$
\sum_{\substack{I,K \in \llbracket 0,p-1 \rrbracket^d \\ J\in \N^d}} \left (\varphi_{K,0}\cdot a \right )\otimes \left (\varphi_{I,J}\cdot x\right ) \otimes \eta^{I+K}\eta_{(q)}^J.
$$
This is sent, by the morphism $\pi_{(s,t)}\otimes \op{Id}_{\calQ_1}:\calA_{X,s}\otimes_{\Ox_U}\calE_t\otimes_{\Ox_U}\calQ_1 \ra \calE_{s+t}\otimes_{\Ox_U}\calQ_1,$ to
$$
\sum_{\substack{I,K \in \llbracket 0,p-1 \rrbracket^d \\ J\in \N^d}} \left (\varphi_{K,0}\cdot a \right ) \left (\varphi_{I,J}\cdot x\right ) \otimes \eta^{I+K}\eta_{(q)}^J.
$$
Recall that, for $1\le i\le d,$ we have $\eta_i^p=p\eta_{i(q)}=0.$
It follows that $\epsilon$ is admissible if and only if, for all $I\in\llbracket 0,p-1 \rrbracket^d$ and $J\in \N^d,$
$$
\varphi_{I,J}\cdot (ax)=\sum_{K+L=I}\left (\varphi_{K,0}\cdot a \right ) \left (\varphi_{L,J}\cdot x\right ).
$$
We conclude by \eqref{eq20311}.
\end{proof}

\begin{definition}
Let
$$
S=\left (S^{\bullet}\calT_{X'/S} \right )_{\upmu_X},\ \widehat{\Gamma}=\left (\widehat{\Gamma}^{\bullet}\calT_{X'/S} \right )_{\upmu_X}.
$$
Recall the $\upmu_X$-indexed algebra $\widetilde{\calD}_{X/S}=\calA_X \otimes_{\Ox_{X,\upmu_X}} \calD_{X/S,\upmu_X}$ \eqref{Dtilda1713} and the $\Ox_X$-algebra $\calD_{X/S}^{\gamma}=\calD_{X/S} \otimes_{S^{\bullet}\calT_{X'/S}} \w{\Gamma}^{\bullet} \calT_{X'/S}.$
We denote
$$
\widetilde{\calD}_{X/S}^{\gamma}=\widetilde{\calD}_{X/S}\otimes_S\widehat{\Gamma}=\calA_X\otimes_{\Ox_{X,\upmu_X}}\calD_{X/S,\upmu_X}^{\gamma}.
$$
\end{definition}

\begin{parag}\label{ommek5}
Assume $(H_1)$ is satisfied. By \ref{indremark73}, a locally PD-nilpotent $\calD_{X/S,\upmu_X}^{\gamma}$-module $\calE$ \eqref{deflocnilind} is equivalent to the data, for every local section $s$ of $\upmu_X$ over an étale $X$-scheme $U,$ of a locally PD-nilpotent $\calD_{U/S}^{\gamma}$-module $\calE_s$ satisfying the conditions given in \ref{indremark73}. By (\cite{SF2} 7.17), the data $\calE_s$ is equivalent to the data of an $\Ox_U$-module $\calE_s$ equipped with a $\calQ_{1|U}$-stratification $\epsilon_s.$ This is in turn equivalent to an $\Ox_{X,\upmu_X}$-module $\calE$ equipped with a $\calQ_{1,\upmu_X}$-stratification. We obtain an equivalence of categories
\begin{equation}\label{starrr1}
\begin{Bmatrix}\mathrm{Locally\ PD}\text{-}\mathrm{nilpotent}\\\calD_{X/S,\upmu_X}^{\gamma}\text{-}\mathrm{modules} \end{Bmatrix} \xrightarrow{\sim} \begin{Bmatrix}\Ox_{X,\upmu_X}\text{-}\mathrm{modules\ equipped\ with\ a\ }\calQ_{1,\upmu_X}\text{-}\mathrm{stratification} \end{Bmatrix}.
\end{equation}
Similarly, we have an equivalence of categories
\begin{equation}\label{starrr2}
\begin{Bmatrix}\mathrm{Locally\ PD}\text{-}\mathrm{nilpotent}\\ \left (\w{\Gamma}^{\bullet}\calT_{X'/S}\right )_{\upmu_X}\text{-}\mathrm{modules} \end{Bmatrix} \xrightarrow{\sim} \begin{Bmatrix}\Ox_{X',\upmu_X}\text{-}\mathrm{modules\ equipped\ with\ an\ }\calR'_{1,\upmu_X}\text{-}\mathrm{stratification} \end{Bmatrix}.
\end{equation}
\end{parag}

\begin{proposition}\label{propKoko19129}
Assume $(H_1)$ is satisfied. The equivalences \eqref{starrr1} and \eqref{starrr2} induce equivalences of categories
$$
\begin{Bmatrix}\mathrm{Locally\ PD}\text{-}\mathrm{nilpotent}\\\widetilde{\calD}_{X/S}^{\gamma}\text{-}\mathrm{modules} \end{Bmatrix} \xrightarrow{\sim} \begin{Bmatrix}\upmu_X\text{-}\mathrm{indexed\ }\calA_X\text{-}\mathrm{modules\ equipped}\\ \mathrm{with\ an\ admissible\ }\calQ_{1,\upmu_X}\text{-}\mathrm{stratification} \end{Bmatrix},
$$
$$
\begin{Bmatrix}\mathrm{Locally\ PD}\text{-}\mathrm{nilpotent}\\\left (\left (\w{\Gamma}^{\bullet}\calT_{X'/S}\right )_{\upmu_X}\otimes_{\Ox_{X',\upmu_X}}\calB_{X/S} \right )\text{-}\mathrm{modules} \end{Bmatrix} \xrightarrow{\sim} \begin{Bmatrix}\upmu_X\text{-}\mathrm{indexed\ }\calB_{X/S}\text{-}\mathrm{modules\ equipped}\\ \mathrm{with\ an\ admissible\ }\calR'_{1,\upmu_X}\text{-}\mathrm{stratification} \end{Bmatrix}.
$$
\end{proposition}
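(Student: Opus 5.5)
We treat the two equivalences separately; in each, the task is to upgrade the underlying equivalence \eqref{starrr1} (resp. \eqref{starrr2}) of plain modules to one of indexed modules with extra structure. The extra structure is a left indexed $\calA_X$-action (resp. an indexed $\calB_{X/S}$-action), and it has to be compatible with the differential structure.

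For the first equivalence, we begin with the structure on the left side. A $\widetilde{\calD}^{\gamma}_{X/S}=\calA_X\otimes_{\Ox_{X,\upmu_X}}\calD^{\gamma}_{X/S,\upmu_X}$-module $\calE$ consists of three pieces of data:
\begin{enumerate}
\item a $\calD^{\gamma}_{X/S,\upmu_X}$-module structure, obtained by restriction along $\partial\mapsto 1\otimes\partial$;
\item a $\upmu_X$-indexed $\calA_X$-module structure, obtained by restriction along $a\mapsto a\otimes 1$, both via \ref{propisoAkaza} and \ref{kraz827};
\item the commutation relation of \ref{era3parag142}.
\end{enumerate}
The relation in (3) says that $\partial_I(ax)=\sum_{J\le I}\binom{I}{J}(\partial_{I-J}\cdot a)(\partial_J x)$, i.e. the action map $\calA_X\boxtimes\calE\ra\sigma^*\calE$ satisfies a Leibniz rule. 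By \eqref{starrr1}, the $\calD^{\gamma}$-structure corresponds to a $\calQ_{1,\upmu_X}$-stratification $\epsilon$. The $\calD^{\gamma}_{X/S}$-module structure attached to $\epsilon$ factors through $\calQ_1^{\vee}$ via the dual of $h:P_0\ra Q_1$ (see \cite{SF2} 7.17), and by \ref{actiondepsQ} the $\calD$-action on $\calA_X$ coming from $\epsilon_{\calA_X,Q}$ is Montagnon's, i.e. the one given by $d_{\calA_X}$. We then apply \ref{prop1932Ko}: $\epsilon$ is admissible if and only if the $\calQ_{1,\upmu_X}^{\vee}$-action extends to a $\calQ_{1,\upmu_X}^{\vee}\otimes\calA_X$-module structure with the multiplication \eqref{eq20311}. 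So it is enough to identify the indexed algebra $\calQ^{\vee}_{1,\upmu_X}\otimes\calA_X$ of \ref{kraz1935}, restricted to its locally PD-nilpotent modules, with $\widetilde{\calD}^{\gamma}_{X/S}$. Concretely, we will show that the $\calQ_1^{\vee}$-version of the commutation rule \eqref{eq20311} corresponds, under $\calQ_1^{\vee}\leftrightarrow\calD^{\gamma}_{X/S}$ (on PD-nilpotent modules), to Ohkawa's rule in \ref{era3parag142}. This is a local computation: assume (\cite{SF2} 6.12) and local coordinates, apply the formula for $\delta$ from (\cite{SF2} 6.16), and use \eqref{eqepsilonAQ}. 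Admissibility can be checked fiberwise on sections $s$ of $\upmu_X$ and étale locally, by \ref{indremark73} and \ref{locPDnilprop}, so the local computation is enough. Morphisms match because both sides are full subcategories cut out by compatibility conditions.

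For the second equivalence the argument is simpler. A $(\w{\Gamma}\otimes\calB_{X/S})$-module is a $\w{\Gamma}$-module together with an indexed $\calB_{X/S}$-action, and the two actions commute, since $\w{\Gamma}$ acts $\calB_{X/S}$-linearly. Under \eqref{starrr2}, $\w{\Gamma}$-linearity of the $\calB_{X/S}$-action should translate into $\calB_{X/S}$-linearity of the $\calR'_{1,\upmu_X}$-stratification, which is exactly admissibility. Note that on $\calB_{X/S}$ itself the stratification is the factor exchange, because the two projections $R'_1\ra X'$ coincide (\cite{SF1} 11.4). We will verify this fiberwise using the duality between $\calR'_1$ and $\w{\Gamma}^{\bullet}\calT_{X'/S}$. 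Since the action of $\calR_1'^{\vee}$ on $\calB_{X/S}$ is trivial, $\calR_1'^{\vee}\otimes\calB_{X/S}$ is just the tensor product algebra, and no twisted multiplication is needed.

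The main obstacle is the identification in the first part between the twisted product \eqref{eq20311} on $\calQ^{\vee}_1\otimes\calA_X$ and Ohkawa's product on $\calA_X\otimes\calD^{\gamma}$. Here we have to track how $\varphi_{I,0}$ corresponds to divided-power differential operators through $h^{\vee}$, including the non-PD part indexed by $J$. We will first try to reduce to generators: by \eqref{eq1925Koko} the $\varphi_{I,0}$ are generated by the $\varphi_{\epsilon_i,0}$, so it suffices to compare the rule $\varphi_{\epsilon_i,0}\cdot(1\otimes a)$ with $\partial_{\epsilon_i}a=a\partial_{\epsilon_i}+\partial_{\epsilon_i}\cdot a$. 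For the $\varphi_{0,J}$, which act centrally on $\calA_X$ by \eqref{eq20311} (the term with $K=0$), we check that they correspond to the central $\w{\Gamma}$ part. Since $d_{\calA_X}$ has vanishing $p$-curvature, this matches \eqref{actionpartialax2}.
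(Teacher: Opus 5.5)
Your proposal is correct and follows the paper's route. The first equivalence comes from \ref{prop1932Ko} together with the identification $\calD^{\gamma}_{X/S,\upmu_X}\xrightarrow{\sim}\calQ^{\vee}_{1,\upmu_X}$ of (\cite{SF2} (7.14.26)), and the second from (\cite{SF2} (7.5.9)) plus the $\calB_{X/S}$-linearity of admissible $\calR'_{1,\upmu_X}$-stratifications.

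The paper leaves implicit the matching of the product \eqref{eq20311} on $\calQ^{\vee}_{1,\upmu_X}\otimes_{\Ox_{X,\upmu_X}}\calA_X$ with Ohkawa's product on $\widetilde{\calD}^{\gamma}_{X/S}$. Your reduction to the generators $\varphi_{\epsilon_i,0}\leftrightarrow\partial_{\epsilon_i}$ and the $\calA_X$-central $\varphi_{0,J}$ supplies this step. One correction: \ref{propisoAkaza} and \ref{kraz827} concern untwisted tensor products, so you should obtain the $\calD^{\gamma}$- and $\calA_X$-actions by restriction along the algebra maps of \ref{era3parag142} instead.
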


\begin{proof}
The first equivalence results from \ref{prop1932Ko} and (\cite{SF2} (7.14.26)). The second one follows from the isomorphism (\cite{SF2} (7.5.9)) and the $\calB_{X/S}$-linearity of admissible $\calR'_{1,\upmu_X}$-stratifications.
\end{proof}

\begin{theorem}\label{THMXXaa}
Assume $(H_2)$ is satisfied. The diagram
\begin{equation}
\begin{tikzcd}
\begin{Bmatrix}\upmu_X\text{-}\mathrm{indexed\ }\calB_{X/S}\text{-}\mathrm{modules\ with}\\ \mathrm{an\ admissible}\\ \calR_{1,\upmu_X}'\text{-}\mathrm{stratification} \end{Bmatrix} \ar[swap,sloped]{d}{\sim} \ar{d}{\omega'} \ar{r}{L_{\mathrm{strat}} \circ \Psi_{\upmu}} & \begin{Bmatrix}\upmu_X\text{-}\mathrm{indexed\ }\calA_X\text{-}\mathrm{modules\ with}\\ \mathrm{an\ admissible}\\Q_{1,\upmu_X}\text{-}\mathrm{stratification} \end{Bmatrix} \ar[sloped]{d}{\sim} \ar[swap]{d}{\omega} \\
\begin{Bmatrix}\mathrm{Locally\ PD}\text{-}\mathrm{nilpotent\ } \upmu_X\text{-}\mathrm{indexed}\\\left (\left (\w{\Gamma}^{\bullet}\calT_{X'/S}\right )_{\upmu_X}\otimes_{\Ox_{X',\upmu_X}}\calB_{X/S} \right )\text{-}\mathrm{modules} \end{Bmatrix} \ar{r}{\Phi_{\upmu}} & \begin{Bmatrix}\mathrm{Locally\ PD}\text{-}\mathrm{nilpotent\ }\upmu_X\text{-}\mathrm{indexed}\\\widetilde{\calD}_{X/S}^{\gamma}\text{-}\mathrm{modules} \end{Bmatrix},
\end{tikzcd}
\end{equation}
where the vertical arrows are given in \ref{propKoko19129} and $L_{\mathrm{strat}},$ $\Psi_{\upmu}$ and $\Phi_{\upmu}$ are given respectively in \eqref{eqLstrat1942f}, \eqref{diag186} and \ref{propKoko1717}, is 2-commutative.
\end{theorem}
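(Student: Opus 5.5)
The plan is to reduce the 2-commutativity to an explicit comparison of module structures on the same underlying sheaf, fiber by fiber over local sections of $\upmu_X$. Start with a $\upmu_X$-indexed $\calB_{X/S}$-module $\calE'$ with an admissible $\calR'_{1,\upmu_X}$-stratification $\epsilon'$, and let $\theta'$ be the associated locally PD-nilpotent $\widehat{\Gamma}$-action given by $\omega'$ (via \eqref{starrr2} and (\cite{SF2} (7.5.9))). The top route gives $L_{\mathrm{strat}}(\Psi_{\upmu}(\calE',\epsilon'))=(\calA_X\circledast_{F_1^*\calB_{X/S}}F_1^*\calE',\epsilon)$. Since $F_1^*\calE'=F_1^*\calB_{X/S}\circledast_{\calB_{X/S}}\calE'$ (by \ref{propisoAkaza}), this underlying module is canonically $\calA_X\circledast_{\calB_{X/S}}\calE'$. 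The bottom route gives $\Phi_{\upmu}(\calE')=(\calA_X\otimes F_1^*\widehat{\Gamma})\circledast_{\calB_{X/S}\otimes\widehat{\Gamma}}\calE'$, which is also $\calA_X\circledast_{\calB_{X/S}}\calE'$ (remark following \ref{propKoko1717}). So the two underlying $\upmu_X$-indexed $\calA_X$-modules agree canonically, and it remains to show that the two $\widetilde{\calD}^{\gamma}_{X/S}$-module structures coincide. One of these is obtained from $\epsilon$ through $\omega$, that is, through \ref{prop1932Ko} and (\cite{SF2} (7.14.26)); the other is the one described in \eqref{actionpartialax}.

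Both structures are compatible with restriction to étale $U$ (by \ref{propEST1831} and \ref{ren3}) and both are defined sheaf-theoretically, so the question is local. I would therefore assume the hypothesis of (\cite{SF2} 6.12), local coordinates $m_1,\hdots,m_d$ lifting to $\frakX$, and $F^{\flat}(\widetilde m_i')=p\widetilde m_i+\alpha^{-1}(1+pb_i)$ as in \ref{remark29}. Both actions are $\calA_X$-semilinear in the indexed sense: on the $\omega$ side this is admissibility (\ref{prop1932Ko}), and on the $\Phi_{\upmu}$ side it is the algebra structure of \ref{era3parag142}. In addition, $\widetilde{\calD}^{\gamma}_{X/S}$ is generated over $\calA_X$ by the $\partial_{\epsilon_i}$ and the $\partial_i'^{[k]}$. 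It is therefore enough to compare the actions of these generators on local sections $a\otimes x'$ with $a\in\calA_{X,s}$ and $x'\in\calE'_t$.

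For the $\partial_i'^{[k]}$, the $\widehat{\Gamma}$-action on the $\omega$ side comes from the $\calQ_1$-stratification $\calV^*\epsilon'$ pulled back along $h:P_0\to Q_1$. Since $\epsilon_{\calA_X,Q}$ is trivial in the $\eta_{(q)}$-directions (see \eqref{eqepsilonAQ}), these operators act only on $x'$, in agreement with the second line of \eqref{actionpartialax}. For $\partial_{\epsilon_i}=\varphi_{\epsilon_i,0}\circ h$, I would apply the product formula from the proof of \ref{prop1932Ko} to $\epsilon$. On the factor $a$ this gives $\varphi_{\epsilon_i,0}\cdot a$, which is the $d_{\calA_X}$-action by \ref{actiondepsQ} and Montagnon's description. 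On the factor $x'$ it gives $\varphi_{\epsilon_i,0}$ applied to $\calV^*\epsilon'(1\otimes x')$.

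The main obstacle is computing $\calV(\eta'_j)$ modulo $\calK^2$. I expect to obtain $\calV(\eta'_j)\equiv\eta_{j(q)}$-type terms plus $(q_2^\#c_j-q_1^\#c_j)$, using the formula for $F^{\flat}(\widetilde m'_j)$ together with $(1+pb)$ divided by $p$. Granting this, pairing with $\varphi_{\epsilon_i,0}$ gives $\theta'(\partial_i')x'+\sum_j\partial_{\epsilon_i}(p_2^\#c_j)\,\theta'(\partial'_j)x'$, which is exactly the first line of \eqref{actionpartialax}, as in (\cite{Ohkawa2015} 4.6) and (\cite{SF2} 8.36). Finally, I would check that the resulting identifications are natural in $(\calE',\epsilon')$, which gives the 2-commutativity.
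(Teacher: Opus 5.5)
Your reduction is the same as the paper's. You identify both underlying modules with $\calA_X\circledast_{\calB_{X/S}}\calE'$ (the paper uses \ref{kraz827}), work under (\cite{SF2} 6.12), and compare the actions of $\partial_{\epsilon_i}$ and $\partial'^{[L]}$ on $a\otimes x'$, using \eqref{actionpartialax} on one side and \eqref{eqepsilonAQ} with the product rule of \ref{prop1932Ko} on the other.

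The step for $\partial'^{[L]}$ does not go through as you state it. On the $\omega$ side, the $\widehat{\Gamma}$-action is read off from the forms $\varphi_{0,L}$ on $\calQ_1$ through \eqref{eqmuzanf1}. It is not obtained from the pullback to $P_0$, which only sees the $\calD_{X/S}$-action. Under \eqref{eqmuzanf1}, $\varphi_{I,J}$ corresponds to $(-1)^{|J|}I!\,\partial_I\otimes\partial'^{[J]}$. By contrast, $\omega'$ is defined via \eqref{eqMuzanf2}, under which $\varphi'_L$ corresponds to $\partial'^{[L]}$. The computation gives $\varphi_{0,L}\cdot(a\otimes x')=a\otimes(\varphi'_L\cdot x')$, that is
\[
\partial'^{[L]}\cdot(a\otimes x')=(-1)^{|L|}\,a\otimes(\partial'^{[L]}\cdot x'),
\]
which is not the second line of \eqref{actionpartialax}. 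So with these normalizations the identity on sections $a\otimes x'$ is not $\widetilde{\calD}^{\gamma}_{X/S}$-linear, and your claimed agreement fails. The paper repairs this with the antipode $\partial'_i\mapsto-\partial'_i$ of $\widehat{\Gamma}$ and the associated base-change functor $T$. It then compares $\omega\circ L_{\mathrm{strat}}\circ\Psi_{\upmu}$ with $T\circ\Phi_{\upmu}\circ\omega'$. Some such twist is missing from your argument.

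Second, the formula you propose to grant is the heart of the computation, and the shape you anticipate for it is incomplete. What is needed is (\cite{SF2} (6.18.9)),
\[
\calV(\eta_{i(r)})=\eta_{i(q)}+\sum_{k=1}^{p-1}\frac{(-1)^{k+1}}{k}\eta_i^k+q_2^{\#}c_i-q_1^{\#}c_i .
\]
You also need \ref{lemkraz}: since the projections $q_j$ factor through $Y$, the difference $q_2^{\#}c_i-q_1^{\#}c_i$ has components only on the $\eta^{\alpha}$ with $\alpha\neq0$. This formula is used in two ways.

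\begin{itemize}
\item Modulo $(\eta_1,\ldots,\eta_d)$ it gives $\calV(\eta_{i(r)})\equiv\eta_{i(q)}$. This is what makes $\varphi_{0,L}$ act through $x'$ alone. The fact that $\epsilon_{\calA_X,Q}$ involves no $\eta_{(q)}$ only takes care of the factor $a$.
\item Modulo the square of the ideal generated by the $\eta_j$ and $\eta_{j(q)}$, it gives $\eta_{i(q)}+\eta_i+\sum_j\varphi_{\epsilon_j,0}(q_2^{\#}c_i-q_1^{\#}c_i)\eta_j$. The linear term $\eta_i$, coming from the truncated logarithm, is exactly what produces $a\otimes\theta'(\partial'_i)(x')$.
\end{itemize}

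With only ``$\eta_{j(q)}$-type terms plus $q_2^{\#}c_j-q_1^{\#}c_j$'', pairing with $\varphi_{\epsilon_i,0}$ (which kills $\eta_{(q)}$) would not produce that term. Matching the $c$-terms then uses $\partial_{\epsilon_i}\circ u=\varphi_{\epsilon_i,0}$ for $u\colon\calQ_1\to\calP_0$, which follows from $u\circ q_j^{\#}=p_j^{\#}$ and $u(\eta_j)=\eta_j$. This is the correct form of your $\partial_{\epsilon_i}=\varphi_{\epsilon_i,0}\circ h$.
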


\begin{proof}
We may suppose that the hypothesis (\cite{SF2} 6.12) is satisfied. Let $(\partial_1',\hdots,\partial_d') \in \calT_{X'/S}$ be the dual basis of $(\op{dlog}m_1',\hdots,\op{dlog}m_d')$ and denote by $\partial_I$ the differential operators of $\calD_{X/S},$ obtained from $\op{dlog}m_1,\hdots,\op{dlog}m_d$ (\cite{SF1} 4.4). For $I\in \N^d,$ set
\begin{alignat*}{1}
\partial'^{[I]}=\prod_{i=1}^d\partial_i'^{[I_i]} \in \Gamma^{\bullet}\calT_{X'/S},\ \eta_{(r)}'^I=\prod_{i=1}^d\eta_{i(r)}'^{I_i}\in \calR_1', \\
\eta^I=\prod_{i=1}^d\eta_i^{I_i}\in \calQ_1,\ \eta_{(q)}^I=\prod_{i=1}^d\eta_{i(q)}^{I_i}\in \calQ_1.
\end{alignat*}
The family $\left (\eta_{(r)}'^{I} \right )_{I\in \N^d}$ is a basis for the $\Ox_X$-module $\calR_1'.$
Let $\left (\varphi_I' \right )_{I\in \N^d}$ be its dual basis. We also have a basis $\left (\eta^K\eta_{(q)}^L \right )_{\substack{K\in \llbracket 0,p-1 \rrbracket^d\\ L\in \N^d}}$ for the $\Ox_X$-module $\calQ_1.$ We denote by $\left (\varphi_{K,L} \right )_{\substack{K\in \llbracket 0,p-1 \rrbracket^d \\ L\in \N^d}}$ its dual basis. Let $\nu:Q_1 \ra R_1'$ be the morphism induced by \eqref{nu} and $\calV:\calR_1' \ra \calQ_1$ the corresponding morphism of Hopf algebras.
Before starting the proof, let us recall isomorphisms that we will need:
\begin{itemize}
\item By (\cite{SGA43} IV 12.3), there exists canonical isomorphisms
$$
\left (\calR'^{\vee}_1 \right )_{\upmu_X} \xrightarrow{\sim} \left (\calR'_{1,\upmu_X} \right )^{\vee},\ \left (\calQ^{\vee}_1 \right )_{\upmu_X} \xrightarrow{\sim} \left (\calQ_{1,\upmu_X} \right )^{\vee}.
$$
\item We have an isomorphism (\cite{SF2} (7.14.26))
\begin{equation}\label{eqmuzanf1}
\calD_{X/S,\upmu_X}^{\gamma} = \calD_{X/S,\upmu_X} \otimes_{\left (S^{\bullet}\calT_{X'/S} \right )_{\upmu_X}} \left (\w{\Gamma}^{\bullet}\calT_{X'/S}\right )_{\upmu_X}\xrightarrow{\sim} \calQ_{1,\upmu_X}^{\vee},
\end{equation}
by which, $(-1)^{|J|}I! \partial_I \otimes \partial'^J$ corresponds to $\varphi_{I,J}.$ 
\item We have an isomorphism (\cite{SF2} (7.5.9))
\begin{equation}\label{eqMuzanf2}
\left (\w{\Gamma}^{\bullet}\calT_{X'/S}\right )_{\upmu_X} \xrightarrow{\sim} \calR_{1,\upmu_X}'^{\vee},
\end{equation}
by which, the local section $\partial'^{[I]}$ corresponds to $\varphi'_{I}.$
\end{itemize}
Let $(\calE',\epsilon')$ be an object of the upper left category i.e. $\calE'$ is a $\upmu_X$-indexed $\calB_{X/S}$-module equipped with an admissible $\calR_{1,\upmu_X}'$-stratification
$$
\epsilon':\calR_{1,\upmu_X}' \otimes_{\Ox_{X',\upmu_X}} \calE' \ra \calE' \otimes_{\Ox_{X',\upmu_X}} \calR_{1,\upmu_X}'.
$$
We have to prove the existence of an isomorphism
$$
\omega \circ L_{\mathrm{strat}} \circ \Psi_{\upmu} \left ( \calE',\epsilon' \right ) \xrightarrow{\sim} \Phi_{\upmu} \circ \omega' \left ( \calE' , \epsilon' \right ).
$$
We start by expliciting the action of $\widetilde{\calD}_{X/S}^{\gamma}$ on both of the these modules.
Let $s,t,u\in \Gamma(U,\upmu_X)$ be sections of $\upmu_X$ over an étale $X$-scheme $U$ and $a,$ $b$ and $x'$ local sections of $\calA_{X,s},$ $\calB_{X/S,u}$ and $\calE'_t$ respectively.
The image of $(\calE',\epsilon')$ by $\omega'$ is the $\upmu_X$-indexed $\left (\left (\w{\Gamma}^{\bullet}\calT_{X'/S}\right )_{\upmu_X}\otimes_{\Ox_{X',\upmu_X}}\calB_{X/S} \right )$-module $\calE'.$ Let us explicit the actions of $\left (\w{\Gamma}^{\bullet}\calT_{X'/S}\right )_{\upmu_X}$ and $\calB_{X/S}$ on $\calE':$
\begin{itemize}
\item First, the action of $\calB_{X/S}$ on $\calE'$ comes from the structure of $\upmu_X$-indexed $\calB_{X/S}$-module on $\calE'.$
\item Second, the action of $\left (\w{\Gamma}^{\bullet}\calT_{X'/S}\right )_{\upmu_X}$ on $\calE'$ is given by
$$
\left (\w{\Gamma}^{\bullet}\calT_{X'/S}\right )_{\upmu_X} \times \calE' \xrightarrow{\sim} \calR_{1,\upmu_X}'^{\vee}\times \calE' \ra \calE',
$$
where the first isomorphism is induced by \eqref{eqMuzanf2} and the second morphism is
$$
\begin{array}[t]{clc}
\calR_{1,\upmu_X}'^{\vee} \times \calE' & \ra & \calE' \\
(\varphi,x') & \mapsto & \left (\op{Id}_{\calE'} \otimes \varphi \right ) \circ \epsilon'(1\otimes x').
\end{array}
$$
By the $\calB_{X/S}$-linearity of $\epsilon',$ we have
\begin{alignat*}{2}
\varphi \cdot (bx') &= \left (\op{Id}_{\calE'}\otimes \varphi \right )\circ \epsilon'(1\otimes (bx')) \\
&=\left (\op{Id}_{\calE'} \otimes \varphi\right )(b \epsilon'(1\otimes x')) \\
&= b\left (\op{Id}_{\calE'} \otimes \varphi\right )( \epsilon'(1\otimes x')) = b\left (\varphi'\cdot x' \right ).
\end{alignat*}
By the isomorphism \eqref{eqMuzanf2}, $\partial'^{[I]}$ corresponds to $\varphi'_I.$
The action of
$$
\partial'^{[I]}\otimes b\in \left (\left (\w{\Gamma}^{\bullet}\calT_{X'/S}\right )_{\upmu_X}\otimes_{\Ox_{X',\upmu_X}}\calB_{X/S} \right )_u=\w{\Gamma}^{\bullet}\calT_{U'/S}\otimes_{\Ox_{U'}}\calB_{X/S,u}
$$
on $x'\in \calE'_t$ is then given by
\begin{equation}\label{eq20z}
\left (\partial'^{[I]}\otimes b\right )\cdot x'=b\left (\varphi_I'\cdot x' \right ) \in \calE'_{u+t}.
\end{equation}
\end{itemize}
Let $\calE=\Phi_{\upmu}(\calE').$ Then $\calE=\calA_X \circledast_{\calB_{X/S}}\calE'$ (\eqref{ren6} and \ref{kraz827}). Let us explicit the action of $\widetilde{\calD}^{\gamma}_{X/S}$ on $\calE.$ For that, we explicit the actions of $\calD_{X/S,\upmu_X}^{\gamma}$ and $\calA_X$ on $\calE:$
\begin{itemize}
\item The action of $\calA_X$ comes from the canonical structure of $\upmu_X$-indexed $\calA_X$-module on $\calE.$
\item By \ref{indproptensprod}, we can consider $a\otimes x'$ as a local section of $\calE_{s+t}.$ Let $P_0$ be the special fiber of $P_{\frakX/\frakS,0}$ and $p_1,p_2:P_0 \ra X$ the canonical projections. By \eqref{actionpartialax}, the local sections $c_1,\hdots,c_d$ of $\Ox_X,$ appearing in \eqref{remark29}, such that the action of $\partial_{\epsilon_i}\in \calD_{X/S}$ on $a\otimes x'$ is given by
\begin{equation}\label{eq20zz}
\partial_{\epsilon_i}\cdot (a\otimes x')=(\partial_{\epsilon_i}\cdot a) \otimes x' +a\otimes (\partial'_{i}\cdot x')+\sum_{j=1}^d\partial_{\epsilon_i}(p_2^{\#}c_j-p_1^{\#}c_j) a \otimes (\partial'_j\cdot x'),
\end{equation}
where $\partial_{\epsilon_i}\cdot a$ denotes the action of $\partial_{\epsilon_i} \in \calD_{X/S}$ on $a\in \calA_{X,s}$ given by the canonical connection $d_{\calA_X}.$
By \eqref{actionpartialax}, the action $\partial'^{[L]} \cdot (a\otimes x')$ of $\partial'^{[L]} \in \Gamma^{\bullet} \calT_{X'/S}$ is given, for $L\in \N^d,$ by
\begin{equation}\label{eq20zztt}
\partial'^{[L]} \cdot (a\otimes x')=a \otimes \left ( \partial'^{[L]} \cdot x' \right ).
\end{equation}
\end{itemize}
To conclude, the module structure on $\Phi_{\upmu} \circ \omega' \left (\calE',\epsilon' \right )$ is given by
\begin{equation}\label{Muzanf3}
\begin{alignedat}{2}
\partial_{\epsilon_i}\cdot (a\otimes x') &= (\partial_{\epsilon_i}\cdot a) \otimes x' +a\otimes (\partial'_{i}\cdot x')+\sum_{j=1}^d\partial_{\epsilon_i}(p_2^{\#}c_j-p_1^{\#}c_j) a \otimes (\partial'_j\cdot x'), \\
\partial'^{[L]} \cdot (a\otimes x') &= a \otimes \left ( \partial'^{[L]} \cdot x' \right ).
\end{alignedat}
\end{equation}
Let us now determine $\omega \circ L_{\mathrm{strat}}\circ \Psi_{\upmu}\left (\calE',\epsilon'\right ):$
We consider $\calQ_{1,\upmu_X} \otimes_{\Ox_{X,\upmu_X}} \calA_X$ as a $\upmu_X$-indexed $\left (\calR_{1,\upmu_X}' \otimes_{\Ox_{X',\upmu_X}} \calB_{X/S} \right )$-module via $\nu:Q_1 \ra R_1'$ and the canonical embedding $\calB_{X/S} \hookrightarrow \calA_X.$
The image of $(\calE',\epsilon')$ by $\Psi_{\upmu}$ \eqref{diag186} is 
$$
\Psi_{\upmu}\left (\calE',\epsilon'\right )=\left (F_1^*\calE',\calV^*\epsilon' \right ).
$$
This is sent, by $L_{\text{strat}}$ \eqref{eqLstrat1942f}, to
$$
\left (\calA_X \circledast_{F_1^*\calB_{X/S}} F_1^*\calE',\epsilon_{\calA_X,Q}\circledast_{\calQ_{1,\upmu_X} \otimes_{\Ox_{X',\upmu_X}}\calB_{X/S} } \calV^*\epsilon' \right ).
$$
By \ref{kraz827}, we have canonical isomorphisms
\begin{alignat*}{2}
\calA_X \circledast_{F_1^*\calB_{X/S}} F_1^*\calE' \xrightarrow{\sim} & \calA_X \circledast_{\calB_{X/S}} \calE', \\
\epsilon_{\calA_X,Q}\circledast_{\calQ_{1,\upmu_X} \otimes_{\Ox_{X',\upmu_X}}\calB_{X/S} } \calV^*\epsilon'
 \xrightarrow{\sim} & \epsilon_{\calA_X,Q}\circledast_{\calV^*\left (\calR_{1,\upmu_X}' \otimes_{\Ox_{X',\upmu_X}}\calB_{X/S}\right ) } \calV^*\epsilon'\\
\xrightarrow{\sim} &  \epsilon_{\calA_X,Q} \circledast_{\calR_{1,\upmu_X}' \otimes_{\Ox_{X',\upmu_X}} \calB_{X/S} } \epsilon'.
\end{alignat*}
We conclude that the image of $(\calE',\epsilon')$ by $L_{\mathrm{strat}} \circ \Psi_{\upmu}$ is $(\calE,\epsilon)$ where
$$
\begin{cases}
\calE=\calA_X \circledast_{\calB_{X/S}}\calE' \\
\epsilon = \epsilon_{\calA_X,Q} \circledast_{\calR_{1,\upmu_X}' \otimes_{\Ox_{X',\upmu_X}} \calB_{X/S} } \epsilon'.
\end{cases}
$$
By \eqref{eqepsilonAQ}, we have
\begin{alignat*}{2}
\epsilon_{\calA_X,Q,s}(1\otimes a) &= \sum_{K \in \llbracket 0,p-1 \rrbracket^d} (\varphi_{K,0}\cdot a)\otimes \eta^K, \\
\epsilon_t'(1\otimes x') &= \sum_{I\in \N^d}(\varphi'_I\cdot x')\otimes \eta_{(r)}'^I.
\end{alignat*}
It follows that
\begin{equation}\label{eq2030}
\epsilon_{s+t} (1\otimes a\otimes x')=\sum_{\substack{K \in \llbracket 0,p-1 \rrbracket^d \\ I\in \N^d}} (\varphi_{K,0}\cdot a)\otimes (\varphi_I'\cdot x') \otimes \eta^K \calV \left (\eta_{(r)}^I \right ).
\end{equation}
By (\cite{SF2} (6.18.9)), we have
\begin{alignat*}{2}
\calV \left (\eta_{i(r)} \right ) &= \eta_{i(q)} + \sum_{k=1}^{p-1}\frac{(-1)^{k+1}}{k}\eta_i^k + q_2^{\#}c_i-q_1^{\#}c_i \\
&= \eta_{i(q)} + \sum_{k=1}^{p-1}\frac{(-1)^{k+1}}{k}\eta_i^k + \sum_{\substack{\alpha \in \llbracket 0,p-1 \rrbracket^d \\ \beta\in \N^d}} \varphi_{\alpha,\beta}(q_2^{\#}c_i-q_1^{\#}c_i)\eta^{\alpha}\eta_{(q)}^{\beta},
\end{alignat*}
where $q_1,q_2:Q_1 \ra X$ are the canonical projections. Note that the local sections $c_i$ are the same in \eqref{eq20zz}.
Let $X\ra Y$ be the exact diagonal immersion. The projection $q_i$ factors, by definition, through $Y.$ It follows, by \ref{lemkraz}, that $\varphi_{\alpha,\beta}(q_2^{\#}c_i-q_1^{\#}c_i)=0$ whenever $\beta \neq 0$ and if $(\alpha,\beta)=(0,0).$ Then
\begin{equation}\label{eqfinalMuzan33}
\calV \left (\eta_{i(r)} \right ) = \eta_{i(q)} + \sum_{k=1}^{p-1}\frac{(-1)^{k+1}}{k}\eta_i^k + \sum_{\substack{\alpha \in \llbracket 0,p-1 \rrbracket^d \\ \alpha\neq 0}} \varphi_{\alpha,0}(q_2^{\#}c_i-q_1^{\#}c_i)\eta^{\alpha}.
\end{equation}
Denote by $\calK_0 \subset \calQ_1$ and $\calK_1 \subset \calQ_1$ the ideals generated by $\eta_1,\hdots,\eta_d$ and by $\eta_1,\hdots,\eta_d,\eta_{1(q)}, \hdots, \eta_{d(q)}$ respectively. Also denote by $\pi_0:\calQ_1 \ra \calQ_1/\calK_0,$ $\pi_1:\calQ_1 \ra \calQ_1/\calK_1^2$ the canonical projections. By \eqref{eqfinalMuzan33}, we have 
$
\pi_0 \circ \calV\left (\eta_{i(r)} \right ) = \eta_{i(q)}.
$
It follows that, for $L\in \N^d,$
$$
\varphi_{0,L}\left ( \eta^K \calV \left ( \eta_{(r)}^I \right ) \right ) = \begin{cases}
0\ \text{if\ }(K,I) \neq (0,L) \\
1\ \text{if\ }(K,I)=(0,L).
\end{cases}
$$
Then, by \eqref{eq2030},
\begin{equation}\label{eqkraz25}
\left ( \op{Id} \otimes \varphi_{0,L} \right )\circ \epsilon_{s+t}(1 \otimes a \otimes x')=a \otimes \left ( \varphi_L' \cdot x' \right ).
\end{equation}
We also have, by \eqref{eqfinalMuzan33},
$$
\pi_1\circ \calV\left (\eta_{i(r)} \right ) = \eta_{i(q)} + \eta_i + \varphi_{\epsilon_1,0}(q_2^{\#}c_1-q_1^{\#}c_1)\eta_1+ \hdots +\varphi_{\epsilon_d,0}(q_2^{\#}c_d-q_1^{\#}c_d)\eta_d.
$$ 
By \eqref{eq2030}, we have
\begin{equation}\label{eq2030kraz}
\begin{alignedat}{2}
\left (\op{Id}\otimes \pi_1 \right )\circ \epsilon_{s+t} (1\otimes a\otimes x') =& \sum_{1\le i\le d } (\varphi_{\epsilon_i,0}\cdot a)\otimes x' \otimes \eta_i + \sum_{1\le i \le d}a\otimes \left (\varphi_{\epsilon_i}'\cdot x' \right )\otimes \pi_1 \circ \calV\left (\eta_{i(r)} \right ) \\
=& \sum_{1\le i\le d} (\varphi_{\epsilon_i,0}\cdot a)\otimes x' \otimes \eta_i + a\otimes \left (\varphi_{\epsilon_i}'\cdot x' \right )\otimes \left (\eta_{i(q)}+\eta_i \right ) \\
&+ \sum_{1\le i,j\le d} a\otimes \left (\varphi_{\epsilon_i}'\cdot x' \right )\otimes \left (\varphi_{\epsilon_j,0}\left (q_2^{\#}c_j-q_1^{\#}c_j\right )\eta_j \right ).
\end{alignedat}
\end{equation}
By \eqref{eqkraz25} and \eqref{eq2030kraz}, we get
\begin{equation}\label{eq20zzz}
\begin{alignedat}{2}
\varphi_{\epsilon_i,0}\cdot (a\otimes x') &= (\varphi_{\epsilon_i,0}\cdot a)\otimes x' + a\otimes (\varphi_{\epsilon_i}'\cdot x')+\sum_{j=1}^d \varphi_{\epsilon_j,0}(q_2^{\#}c_j-q_1^{\#}c_j) a\otimes (\varphi'_{\epsilon_j}\cdot x'),\\
\varphi_{0,L} \cdot (a\otimes x') &= a \otimes (\varphi'_{L} \cdot x').
\end{alignedat}
\end{equation}
Since $\varphi_{I,J}$ corresponds to $(-1)^{|J|}I! \partial_I \otimes \partial'^{[J]}$ by \eqref{eqmuzanf1} and $\varphi_I'$ corresponds to $\partial'^{[I]}$ by \eqref{eqMuzanf2}, we get
\begin{equation}\label{eq20zzzv2}
\begin{alignedat}{2}
\partial_{\epsilon_i}\cdot (a\otimes x') &= (\partial_{\epsilon_i}\cdot a)\otimes x' + a\otimes (\partial_i'\cdot x')+\sum_{j=1}^d \varphi_{\epsilon_j,0}(q_2^{\#}c_j-q_1^{\#}c_j) a\otimes (\partial_j'\cdot x'),\\
\partial'^{[L]} \cdot (a\otimes x') &= (-1)^{|L|}a \otimes (\partial'^{[L]} \cdot x').
\end{alignedat}
\end{equation}
We have thus determined $\omega \circ L_{\mathrm{strat}} \circ \Psi_{\upmu} \left ( \calE' , \epsilon' \right ).$

As we see, there are two differences between \eqref{Muzanf3} and \eqref{eq20zzzv2}: a difference of $(-1)^{|L|}$ in the second equality and, in the first equality, we have $\partial_{\epsilon_i}(p_2^{\#}c-p_1^{\#}c)$ on one hand and $\varphi_{\epsilon_j,0}(q_2^{\#}c-q_1^{\#}c)$ in the other. To fix the first difference, we introduce the antipode isomorphism
$$
\sigma:\begin{array}[t]{clc}
\left (\w{\Gamma}^{\bullet}\calT_{X'/S}\right )_{\upmu_X} & \xrightarrow{\sim} & \left (\w{\Gamma}^{\bullet}\calT_{X'/S}\right )_{\upmu_X} \\
\partial'_i & \mapsto & -\partial'_i,
\end{array}
$$
and the functor $T$ defined by base change by $\sigma$
$$
T: \calM \mapsto \calM \otimes_{\left (\w{\Gamma}^{\bullet}\calT_{X'/S}\right )_{\upmu_X},\sigma} \left (\w{\Gamma}^{\bullet}\calT_{X'/S}\right )_{\upmu_X}.
$$
Since $\sigma$ is an isomorphism, we have an isomorphism
$$
\Phi_{\upmu} \circ \omega' \left ( \calE' , \epsilon' \right ) \xrightarrow{\sim} T \circ \Phi_{\upmu} \circ \omega' \left ( \calE' , \epsilon' \right ).
$$
In addition, $\sigma \left ( \partial'^{[L]} \right ) = (-1)^{|L|}\partial'^{[L]}.$ It follows that the actions of $\partial_{\epsilon_i}$ and $\partial'^{[L]}$ on the local section $a\otimes x'$ of $T \circ \Phi_{\upmu} \circ \omega' \left ( \calE' , \epsilon' \right )$ are given by
\begin{equation}\label{eq20zzzv3}
\begin{alignedat}{2}
\partial_{\epsilon_i}\cdot (a\otimes x') &= (\partial_{\epsilon_i}\cdot a)\otimes x' + a\otimes (\partial_i'\cdot x')+\sum_{j=1}^d \varphi_{\epsilon_j,0}(q_2^{\#}c_j-q_1^{\#}c_j) a\otimes (\partial_j'\cdot x'),\\
\partial'^{[L]} \cdot (a\otimes x') &= a \otimes (\partial'^{[L]} \cdot x').
\end{alignedat}
\end{equation}
The last thing to check is that, for a local section $c$ of $\Ox_X,$
$
\partial_{\epsilon_i}(p_2^{\#}c-p_1^{\#}c) = \varphi_{\epsilon_j,0}(q_2^{\#}c-q_1^{\#}c).
$
Recall the morphism of Hopf algebras $u:\calQ_1 \ra \calP_0$ (\cite{SF2} (7.3.2)). By definition of $u,$ we have $u \circ q_i^{\#}=p_i^{\#}.$ Thus
$
\partial_{\epsilon_i}(p_2^{\#}c-p_1^{\#}c)=\partial_{\epsilon_i} \circ u \left ( q_2^{\#}c-q_1^{\#}c \right ).
$
It is hence sufficient to prove that, as linear forms on $\calQ_1,$ we have
$
\partial_{\epsilon_i} \circ u = \varphi_{\epsilon_i,0}.
$
This is immediate since $u(\eta_j)=\eta_j$ for all $1\le j\le d$ (\cite{SF2} (7.3.4)).

By \eqref{Muzanf3} and \eqref{eq20zzzv3}, we conclude that we have an isomorphism
$$
T \circ \Phi_{\upmu} \circ \omega' \left ( \calE' , \epsilon' \right ) \xrightarrow{\sim} \omega \circ L_{\mathrm{strat}} \circ \Psi_{\upmu} \left ( \calE' , \epsilon' \right ).
$$
\end{proof}

\begin{corollaire}\label{THMXX}
Assume $(H_2)$ is satisfied. The diagram
\begin{equation}
\begin{tikzcd}
\begin{Bmatrix}\mathrm{Crystals\ of\ }\upmu'\text{-}\mathrm{indexed}\\ \frakB_X\text{-}\mathrm{modules\ of\ }\widetilde{\calE}(X'/\frakS)_{/\upmu'} \end{Bmatrix} \ar{r}{C^{-1}} \ar[swap,sloped]{d}{\sim} & \begin{Bmatrix}\mathrm{Crystals\ of\ }\underline{\upmu}\text{-}\mathrm{indexed}\\ \frakA_X\text{-}\mathrm{modules\ of\ }\widetilde{\underline{\calE}}(X/\frakS)_{/\underline{\upmu}} \end{Bmatrix} \ar[sloped]{d}{\sim}\\
\begin{Bmatrix}\upmu_X\text{-}\mathrm{indexed\ }\calB_{X/S}\text{-}\mathrm{modules\ with}\\ \mathrm{an\ admissible}\\ \calR_{1,\upmu_X}'\text{-}\mathrm{stratification} \end{Bmatrix} \ar[swap,sloped]{d}{\sim} \ar{d}{\omega'} \ar{r}{L_{\mathrm{strat}} \circ \Psi_{\upmu}} & \begin{Bmatrix}\upmu_X\text{-}\mathrm{indexed\ }\calA_X\text{-}\mathrm{modules\ with}\\ \mathrm{an\ admissible}\\Q_{1,\upmu_X}\text{-}\mathrm{stratification} \end{Bmatrix} \ar[sloped]{d}{\sim} \ar[swap]{d}{\omega} \\
\begin{Bmatrix}\mathrm{Locally\ PD}\text{-}\mathrm{nilpotent\ } \upmu_X\text{-}\mathrm{indexed}\\\left (\left (\w{\Gamma}^{\bullet}\calT_{X'/S}\right )_{\upmu_X}\otimes_{\Ox_{X',\upmu_X}}\calB_{X/S} \right )\text{-}\mathrm{modules} \end{Bmatrix} \ar{r}{\Phi_{\upmu}} & \begin{Bmatrix}\mathrm{Locally\ PD}\text{-}\mathrm{nilpotent\ }\upmu_X\text{-}\mathrm{indexed}\\\widetilde{\calD}_{X/S}^{\gamma}\text{-}\mathrm{modules} \end{Bmatrix},
\end{tikzcd}
\end{equation}
where the upper vertical equivalences are given in \ref{equivRQAB}, the lower vertical arrows are given in \ref{propKoko19129} and $C^{-1},$ $L_{\mathrm{strat}},$ $\Psi_{\upmu}$ and $\Phi_{\upmu}$ are given respectively in \eqref{ffC}, \eqref{eqLstrat1942f}, \eqref{diag186} and \ref{propKoko1717}, is 2-commutative. In addition, the functor $C^{-1}$ is an equivalence of categories.
\end{corollaire}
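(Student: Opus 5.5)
The corollary is obtained by stacking two squares that are already established, and then deducing the equivalence from the fact that all other functors in the resulting diagram are equivalences.

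\textbf{Upper square.} Its 2-commutativity is exactly Theorem \ref{thmKoko20125}. That theorem is proved under $(H_2)$ by factoring $C^{-1}=L_{\mathrm{crys}}\circ C^{-1}_{X/\frakS,\upmu}$, using \ref{propkraz1934} for the left half and computing on the objects $(X,\frakX,i_X)$ and $(X,Q_{\frakX},\lambda_Q)$ for the right half.

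\textbf{Lower square.} Its 2-commutativity is Theorem \ref{THMXXaa}. There the two $\widetilde{\calD}^{\gamma}_{X/S}$-module structures are compared through the local formulas \eqref{Muzanf3} and \eqref{eq20zzzv3}, after twisting by the antipode $\sigma$ of $\widehat{\Gamma}$.

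Pasting the two 2-commutative squares gives a 2-commutative outer rectangle. The only bookkeeping needed is that the composite 2-isomorphism is natural in the object; this is automatic, because each square's isomorphism is natural.

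\textbf{Equivalence of $C^{-1}$.} In the outer rectangle:
\begin{itemize}
\item the upper vertical arrows are equivalences by \ref{equivRQAB};
\item the lower vertical arrows $\omega,\omega'$ are equivalences by \ref{propKoko19129};
\item the bottom arrow $\Phi_{\upmu}$ is an equivalence by \ref{propKoko1717}.
\end{itemize}
Hence
\[
(\text{right verticals})\circ C^{-1}\ \cong\ \Phi_{\upmu}\circ(\text{left verticals}).
\]
The right-hand side is an equivalence, since it is a composite of equivalences. Composing on the left with a quasi-inverse of the right vertical equivalences then shows that $C^{-1}$ is naturally isomorphic to a composite of equivalences, and so is itself an equivalence.

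\textbf{Twist by the antipode.} In \ref{THMXXaa} the comparison passes through the base-change functor $T$ along $\sigma$. Since $\sigma$ is an automorphism of $\widehat{\Gamma}$, $T$ is an autoequivalence. It also preserves local PD-nilpotence, because $\sigma$ preserves the augmentation ideal and its divided powers. So $T$ does not affect the argument, whether it is absorbed into the 2-isomorphism or into $\Phi_{\upmu}$.

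\textbf{Main obstacle.} The only genuine subtlety is this role of $T$: one must make sure the square of \ref{THMXXaa} is stated up to $T$ in a way compatible with the pasting. Everything else is formal.
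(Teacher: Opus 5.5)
Your proposal is correct and follows the paper's proof exactly: the upper square is Theorem~\ref{thmKoko20125}, the lower square is Theorem~\ref{THMXXaa}, and $C^{-1}$ is an equivalence because the vertical arrows and $\Phi_{\upmu}$ are equivalences. Your paragraph on the antipode twist is not needed, since Theorem~\ref{THMXXaa} is stated as a genuine 2-commutativity, and its justification is too quick: for $p$ odd, $\sigma$ is not the identity on the image of $S$ in $\widehat{\Gamma}$, so base change along $\sigma$ is an autoequivalence of $\widehat{\Gamma}$-modules but does not by itself define a functor on $\widetilde{\calD}^{\gamma}_{X/S}=\widetilde{\calD}_{X/S}\otimes_S\widehat{\Gamma}$-modules.
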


\begin{proof}
The commutativity of the upper square was proved in \ref{thmKoko20125}. The lower square's commutativity was proved in \ref{THMXXaa}. Since $\Phi_{\upmu}$ is an equivalence of categories \eqref{propKoko1717}, so is $C^{-1}.$
\end{proof}

\begin{lemma}\label{lemEST3}
Assume $(H_1)$ is satisfied. Let $U$ be an étale $X$-scheme. Consider the diagram
$$
\begin{tikzcd}
\begin{Bmatrix}\text{Crystals\ of\ }\underline{\upmu}\text{-indexed}\\ C_{X/\frakS,\upmu}^{-1}\frakB_X\text{-modules\ of\ }\widetilde{\underline{\calE}}(X/\frakS)_{/\underline{\upmu}} \end{Bmatrix} \ar[swap]{dd}{\mathfrak{E} \mapsto \frakA_X \circledast_{C_{X/\frakS,\upmu}^{-1}\frakB_X}\mathfrak{E}} \ar{r}{\underline{R_{X,U}}^{-1}} & \begin{Bmatrix}\text{Crystals\ of\ }\underline{\upmu}_U\text{-indexed}\\ C_{U/\frakS,\upmu}^{-1}\frakB_U\text{-modules\ of\ }\widetilde{\underline{\calE}}(U/\frakS)_{/\underline{\upmu}_U} \end{Bmatrix} \ar{dd}{\mathfrak{E}  \mapsto  \frakA_U \circledast_{C_{U/\frakS,\upmu}^{-1}\frakB_U}\mathfrak{E}} \\ 
 & \\
\begin{Bmatrix}\text{Crystals\ of\ }\underline{\upmu}\text{-indexed}\\ \frakA_X\text{-modules\ of\ }\widetilde{\underline{\calE}}(X/\frakS)_{/\underline{\upmu}} \end{Bmatrix} \ar[swap]{r}{\underline{R_{X,U}}^{-1}} & \begin{Bmatrix}\text{Crystals\ of\ }\underline{\upmu}_U\text{-indexed}\\ \frakA_U\text{-modules\ of\ }\widetilde{\underline{\calE}}(U/\frakS)_{/\underline{\upmu}_U} \end{Bmatrix},
\end{tikzcd}
$$
where the horizontal arrows are given in \eqref{eqREST} and the linearization functors are given in \eqref{ffL}. Then, this diagram is commutative.
\end{lemma}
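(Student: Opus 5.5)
The commutativity of the diagram in \ref{lemEST3} amounts to producing, for every crystal $\mathfrak{E}$ of $\underline{\upmu}$-indexed $C_{X/\frakS,\upmu}^{-1}\frakB_X$-modules, a canonical isomorphism
$$
\underline{R_{X,U}}^{-1}\left (\frakA_X \circledast_{C_{X/\frakS,\upmu}^{-1}\frakB_X}\mathfrak{E}\right ) \xrightarrow{\sim} \frakA_U \circledast_{C_{U/\frakS,\upmu}^{-1}\frakB_U}\underline{R_{X,U}}^{-1}\mathfrak{E},
$$
natural in $\mathfrak{E}$. I would obtain it by composing three canonical isomorphisms.

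\emph{Step 1: localization commutes with $\circledast$.} By \eqref{eqRlocEST}, $\underline{R_{X,U}}$ identifies with the localization morphism of $\widetilde{\underline{\calE}}(X/\frakS)_{/\underline{\upmu}}$ at the sheaf $\underline{u}_X^{-1}(U^a)$. Proposition \ref{ren3} then gives a canonical isomorphism
$$
\underline{R_{X,U}}^{-1}\left (\frakA_X \circledast_{C^{-1}_{X/\frakS,\upmu}\frakB_X}\mathfrak{E}\right ) \xrightarrow{\sim} \underline{R_{X,U}}^{-1}\frakA_X \circledast_{\underline{R_{X,U}}^{-1}C^{-1}_{X/\frakS,\upmu}\frakB_X} \underline{R_{X,U}}^{-1}\mathfrak{E}.
$$
Proposition \ref{Koko829} could be used equally well here. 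By \ref{paragEST1}, the indexed algebra and module structures on the right-hand side are the restricted ones.

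\emph{Step 2: identify the restricted algebras.} Proposition \ref{AXUESTiso} gives $\underline{R_{X,U}}^{-1}\frakA_X\xrightarrow{\sim}\frakA_U$. For the other algebra, Proposition \ref{commCXU} gives
$$
\underline{R_{X,U}}^{-1}C_{X/\frakS,\upmu}^{-1} \simeq C_{U/\frakS,\upmu}^{-1}R_{X',U'}^{-1}.
$$
Combined with $R_{X',U'}^{-1}\frakB_X\xrightarrow{\sim}\frakB_U$ from \ref{AXUESTiso}, this yields
$$
\underline{R_{X,U}}^{-1}C^{-1}_{X/\frakS,\upmu}\frakB_X \xrightarrow{\sim} C^{-1}_{U/\frakS,\upmu}\frakB_U.
$$

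\emph{Step 3: compatibility of the structure maps (the main obstacle).} One must check that these identifications respect the algebra morphisms $C^{-1}_{X/\frakS,\upmu}\frakB_X \to \frakA_X$ and $C^{-1}_{U/\frakS,\upmu}\frakB_U \to \frakA_U$, since these are what make $\frakA_X$ and $\frakA_U$ algebras over the $\frakB$'s. Once that holds, the tensor products on the two sides are formed over the same data and the three isomorphisms compose to the desired one. Naturality in $\mathfrak{E}$ is then clear from the functoriality of \ref{ren3}.

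The canonical morphism $C^{-1}_{X/\frakS,\upmu}\frakB_X\to\frakA_X$ was built independently of the lifting $F$, but it is only described explicitly under $(H_2)$, through stratifications. So I would first work étale locally where $(H_2)$ holds, using that crystals and morphisms of crystals glue (\ref{prop1409}, together with the stack property of \ref{paragEST2}). In that setting, \ref{propEST1831} turns the claim into a statement about stratified modules: the restriction to $U$ of the canonical map $\Ox_{X,\upmu_X}\otimes_{\Ox_{X',\upmu_X}}\calB_{X/S}\to\calA_X$ should be the corresponding map for $U$. This follows from \ref{lemBres} and the isomorphism $\calA_X|_U\simeq\calA_U$ of (\cite{Lor2000} I 3.1), both of which are compatible with the inclusion $\calB\hookrightarrow\calA$.
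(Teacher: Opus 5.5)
Your Steps 1 and 2 are exactly the paper's proof. There, $\underline{R_{X,U}}$ is recognized as a localization morphism so that \ref{ren3} applies, and then \ref{AXUESTiso} together with \ref{commCXU} identify $\underline{R_{X,U}}^{-1}\frakA_X$ with $\frakA_U$ and $\underline{R_{X,U}}^{-1}C^{-1}_{X/\frakS,\upmu}\frakB_X$ with $C^{-1}_{U/\frakS,\upmu}\frakB_U$. Your Step 3 checks that these identifications respect the structure morphisms $C^{-1}\frakB\to\frakA$, working locally under $(H_2)$ via \ref{propEST1831} and \ref{lemBres}; the paper leaves this compatibility implicit, and your argument for it is sound.
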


\begin{proof}
For simplicity, we denote $R_{X',U'}$ and $\underline{R_{X,U}}$ by $R'$ and $\underline{R}$ respectively. Let $\frakE$ be a crystal of $C_{X/\frakS,\upmu}^{-1}\frakB_X$-modules. We have to prove the existence of a canonical isomorphism
\begin{equation}\label{almostthere1}
\underline{R}^{-1} \left ( \frakA_X \circledast_{C_{X/\frakS,\upmu}^{-1}\frakB_X}\mathfrak{E} \right ) \xrightarrow{\sim} \frakA_U \circledast_{C_{U/\frakS,\upmu}^{-1}\frakB_U}\underline{R}^{-1}\mathfrak{E}.
\end{equation}
In \ref{paragEST1}, we have seen that $\underline{R}$ is a localization morphism. By \ref{ren3}, we deduce a canonical isomorphism
$$
\underline{R}^{-1} \left ( \frakA_X \circledast_{C_{X/\frakS,\upmu}^{-1}\frakB_X}\mathfrak{E} \right ) \xrightarrow{\sim} \left ( \underline{R}^{-1}  \frakA_X \right ) \circledast_{\underline{R}^{-1}C_{X/\frakS,\upmu}^{-1}\frakB_X} \left (  \underline{R}^{-1} \mathfrak{E} \right ).
$$
By \ref{AXUESTiso}, we have canonical isomorphisms
$$
\underline{R}^{-1}\frakA_X \xrightarrow{\sim} \frakA_U,
$$
$$
\underline{R}^{-1}C_{X/\frakS,\upmu}^{-1}\frakB_X \xrightarrow{\sim} C_{U/\frakS,\upmu}^{-1}R'^{-1}\frakB_X \xrightarrow{\sim} C_{U/\frakS,\upmu}^{-1}\frakB_U.
$$
The isomorphism \eqref{almostthere1} follows.
\end{proof}

\begin{lemma}\label{lemEST2}
Assume $(H_1)$ is satisfied. Let $U$ be an étale $X$-scheme. Consider the diagram
$$
\begin{tikzcd}
\begin{Bmatrix}\text{Crystals\ of\ }\upmu'\text{-indexed}\\ \frakB_X\text{-modules\ of\ }\widetilde{\calE}(X'/\frakS)_{/\upmu'} \end{Bmatrix} \ar{r}{R_{X',U'}^{-1}} \ar[swap]{d}{C_{X/\frakS,\upmu}^{-1}} & \begin{Bmatrix}\text{Crystals\ of\ }\upmu'_U\text{-indexed}\\ \frakB_U\text{-modules\ of\ }\widetilde{\calE}(U'/\frakS)_{/\upmu'_U} \end{Bmatrix} \ar{d}{C_{U/\frakS,\upmu}^{-1}} \\
\begin{Bmatrix}\text{Crystals\ of\ }\underline{\upmu}\text{-indexed}\\ C_{X/\frakS,\upmu}^{-1}\frakB_X\text{-modules\ of\ }\widetilde{\underline{\calE}}(X/\frakS)_{/\underline{\upmu}} \end{Bmatrix} \ar[swap]{dd}{\mathfrak{E} \mapsto \frakA_X \circledast_{C_{X/\frakS,\upmu}^{-1}\frakB_X}\mathfrak{E}} \ar[swap]{r}{\underline{R_{X,U}}^{-1}} & \begin{Bmatrix}\text{Crystals\ of\ }\underline{\upmu}_U\text{-indexed}\\ C_{U/\frakS,\upmu}^{-1}\frakB_U\text{-modules\ of\ }\widetilde{\underline{\calE}}(U/\frakS)_{/\underline{\upmu}_U} \end{Bmatrix} \ar{dd}{\mathfrak{E}  \mapsto  \frakA_U \circledast_{C_{U/\frakS,\upmu}^{-1}\frakB_U}\mathfrak{E}} \\ 
 & \\
\begin{Bmatrix}\text{Crystals\ of\ }\underline{\upmu}\text{-indexed}\\ \frakA_X\text{-modules\ of\ }\widetilde{\underline{\calE}}(X/\frakS)_{/\underline{\upmu}} \end{Bmatrix} \ar[swap]{r}{\underline{R_{X,U}}^{-1}} & \begin{Bmatrix}\text{Crystals\ of\ }\underline{\upmu}_U\text{-indexed}\\ \frakA_U\text{-modules\ of\ }\widetilde{\underline{\calE}}(U/\frakS)_{/\underline{\upmu}_U} \end{Bmatrix},
\end{tikzcd}
$$
where the horizontal arrows are given in \eqref{eqREST}, $C_{X/\frakS,\upmu}^{-1}$ and $C_{U/\frakS,\upmu}^{-1}$ are given in \eqref{CESTeq} and the linearization functors are given in \eqref{ffL}. Then, this diagram is commutative.
\end{lemma}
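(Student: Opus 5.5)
The diagram factors into an upper square, with $R_{X',U'}^{-1}$, $\underline{R_{X,U}}^{-1}$ horizontal and $C^{-1}_{X/\frakS,\upmu}$, $C^{-1}_{U/\frakS,\upmu}$ vertical, and a lower square, which is exactly the diagram of \ref{lemEST3}. The lower square is commutative by \ref{lemEST3}. So the whole task reduces to the upper square, and for that I intend to use \ref{commCXU}, which gives the commutativity, up to canonical isomorphism, of the square of morphisms of topoi
$$
C_{X/\frakS,\upmu}\circ \underline{R_{X,U}} \cong R_{X',U'}\circ C_{U/\frakS,\upmu}.
$$

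\textbf{Step 1.} Take inverse images in \ref{commCXU}. For every module $\frakE$ of $\widetilde{\calE}(X'/\frakS)_{/\upmu'}$ this gives a canonical isomorphism
$$
\underline{R_{X,U}}^{-1}C_{X/\frakS,\upmu}^{-1}\frakE \xrightarrow{\sim} C_{U/\frakS,\upmu}^{-1}R_{X',U'}^{-1}\frakE.
$$
All these inverse image functors are given by composition with the underlying functors of sites: $\rho_{\upmu}$ and the functors inducing the localizations \eqref{eqRlocEST}. The two composites of site functors agree on the nose, because $\rho$ commutes with $\underline{\rho_{X,U}}$ and $\rho_{X',U'}$. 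The isomorphism is therefore even an equality and is functorial in $\frakE$.

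\textbf{Step 2.} Check that this isomorphism respects the extra structure.
\begin{enumerate}
\item It preserves crystals. The functors $\underline{R}^{-1}$ and $R'^{-1}$ preserve crystals by \ref{paragEST1}, and the $C^{-1}$'s do by \ref{CESTeq}.
\item It preserves indexed module structures. The $R$'s are localization morphisms, so \ref{Locprop1} applies. For the $C^{-1}$'s, $\rho_{\upmu}$ is compatible with the addition and projection maps of $\upmu'$ and $\underline{\upmu}$ (these come from $u_X'$ and $\underline{u}_X$). Hence $C^{-1}$ commutes with $\boxtimes$ and $\sigma^*$.
\item It matches the scalar rings. Apply Step 1 with $\frakE=\frakB_X$ and compose with the isomorphism $R_{X',U'}^{-1}\frakB_X \xrightarrow{\sim} \frakB_U$ of \ref{AXUESTiso}. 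This gives $\underline{R}^{-1}C_{X/\frakS,\upmu}^{-1}\frakB_X \xrightarrow{\sim} C_{U/\frakS,\upmu}^{-1}\frakB_U$, which is the same identification already used in \ref{lemEST3}.
\end{enumerate}
With these three points, a $\frakB_X$-module structure on $\frakE$ is sent to the same $C_{U/\frakS,\upmu}^{-1}\frakB_U$-module structure along either path around the upper square.

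\textbf{Step 3.} Paste the two squares. The ring identification chosen for the upper square and the one used in the proof of \ref{lemEST3} are the same isomorphism, so the two 2-cells compose. This gives the commutativity of the outer diagram.

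The main obstacle is point 3 of Step 2. One has to make sure that the isomorphism $\underline{R}^{-1}C_{X/\frakS,\upmu}^{-1}\frakB_X\cong C_{U/\frakS,\upmu}^{-1}\frakB_U$ is compatible with the indexed algebra structures, and that it is the same map on both squares. I would check this on descent data, $s$ by $s$, using \ref{ommek4} and \ref{lemShiho209}. At that level everything reduces to restricting the sheaves $\calB$ along $U\ra X$, where \ref{lemBres} identifies $\calB_{X/S|U}$ with $\calB_{U/S}$ as indexed algebras.
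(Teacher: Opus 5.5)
Your proposal is correct and follows the paper's own proof: the paper also splits the diagram into the upper square, which it gets from \ref{commCXU}, and the lower square, which is exactly \ref{lemEST3}. Your Step 2 spells out what the paper leaves implicit, namely preservation of crystals, preservation of indexed module structures, and the ring identification $\underline{R_{X,U}}^{-1}C_{X/\frakS,\upmu}^{-1}\frakB_X\cong C_{U/\frakS,\upmu}^{-1}\frakB_U$ via \ref{AXUESTiso}, which is the same identification used in the proof of \ref{lemEST3}.
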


\begin{proof}
The commutativity of the upper square follows from \ref{commCXU} and the commutativity of the lower square follows from \ref{lemEST3}.
\end{proof}

\begin{theorem}\label{THMXXYY}
The functor \eqref{ffC} 
$$
C^{-1}:
\begin{Bmatrix}\text{Crystals\ of\ }\upmu'\text{-indexed}\\ \frakB_X\text{-modules\ of\ }\widetilde{\calE}(X'/\frakS)_{/\upmu'} \end{Bmatrix}  \ra  \begin{Bmatrix}\text{Crystals\ of\ }\underline{\upmu}\text{-indexed}\\ \frakA_X\text{-modules\ of\ }\widetilde{\underline{\calE}}(X/\frakS)_{/\underline{\upmu}} \end{Bmatrix}
$$
is an equivalence of categories.
\end{theorem}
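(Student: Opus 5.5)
The plan is to reduce to the local situation of Corollary \ref{THMXX} and glue, exploiting that crystals on the (indexed) Oyama topoi form stacks for the étale topology on $X$. First, choose an étale covering $(U_i\ra X)_{i\in I}$ (refining the coverings of \ref{paragBglue}) such that each $U_i\ra S$ satisfies $(H_2)$: locally on $\frakS$ one has charts, $X$ lifts étale locally to a log smooth $\frakX$ with a frame, and by (\cite{SF1} 9) the exact relative Frobenius lifts locally to an $(\frakS,P)$-morphism $F:(\frakX,Q)\ra(\frakX',Q')$ with $\frakX'$ log smooth. By Corollary \ref{THMXX}, for each $i$ the functor $C^{-1}_{U_i}$ (the functor \eqref{ffC} for $U_i$) is an equivalence. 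The same applies to each $U_{ij}=U_i\times_XU_j$ and $U_{ijk}$, after possibly refining, since $(H_2)$ is stable under étale strict localization (take $\frakU\ra\frakX$ the étale lift and restrict $F$).

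Next, I would establish that the source and target categories are stacks over $X_{\text{ét}}$ via the restriction functors $R_{X',U'}^{-1}$ and $\underline{R_{X,U}}^{-1}$ of \eqref{eqREST}. By \ref{paragEST1} these are localization morphisms (\eqref{eqRlocEST}), so the fibered categories $U\mapsto$ modules of $\widetilde{\calE}(U'/\frakS)_{/\upmu'_U}$ (resp. $\widetilde{\underline{\calE}}(U/\frakS)_{/\underline{\upmu}_U}$) are stacks by \ref{paragEST2} and (\cite{Giraud} II 3.4.4), with $u'^{-1}_X(U'^a)$, $\underline{u}_X^{-1}(U^a)$ forming coverings of the final object. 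Being a crystal is checked on the descent data $\F_s$ with $s$ local sections of $\upmu_X$ over étale $X$-schemes, hence is étale local; indexed algebra/module structures glue since they are morphisms of sheaves ($\frakA_X\boxtimes\frakE\ra\sigma^*\frakE$) compatible with restriction (\ref{Locprop1}). Using the canonical isomorphisms $R_{X',U'}^{-1}\frakB_X\simeq\frakB_U$ and $\underline{R_{X,U}}^{-1}\frakA_X\simeq\frakA_U$ of \ref{AXUESTiso}, crystals of indexed $\frakB_X$-modules (resp. $\frakA_X$-modules) thus form stacks.

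Then Lemma \ref{lemEST2} provides the compatibility $\underline{R_{X,U}}^{-1}\circ C^{-1}_X\simeq C^{-1}_U\circ R^{-1}_{X',U'}$, with these isomorphisms canonical and compatible with composition of restrictions (they come from \ref{commCXU}, \ref{ren3}, \ref{AXUESTiso}). So $C^{-1}$ is a morphism of stacks which is an equivalence on each $U_i$, $U_{ij}$, $U_{ijk}$; a morphism of stacks that is locally an equivalence is an equivalence: full faithfulness follows from the sheaf property of $\op{Hom}$ and equivalence on $U_i$, $U_{ij}$; essential surjectivity by taking local preimages on $U_i$, transporting gluing isomorphisms through full faithfulness on $U_{ij}$, checking cocycles on $U_{ijk}$, and gluing.

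The main obstacle is the stack/compatibility step: verifying that the isomorphisms of Lemma \ref{lemEST2} satisfy the cocycle (pseudo-functoriality) conditions for composites $W\ra V\ra U$, and that the local equivalences do not depend on the choice of lifting $F$ (this independence is not needed for the gluing, since $C^{-1}$ itself is defined globally without liftings, which is exactly why one argues with $C^{-1}$ and only uses liftings to know local equivalence).
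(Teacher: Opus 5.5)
Your proposal is correct and follows essentially the same route as the paper: work étale locally on $\frakS$ and $X$ so that $(H_2)$ holds on each $U_i$ and Corollary \ref{THMXX} gives local equivalences, use Lemma \ref{lemEST2} to intertwine the global $C^{-1}$ with the local functors, and glue objects and morphisms via the stack $\mathbbm{E}'$. You are slightly more explicit than the paper on two points it leaves implicit: that $(H_2)$, and hence faithfulness, is also needed on triple overlaps to check the cocycle condition, and that the comparison isomorphisms must be compatible with composition of restrictions.
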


\begin{proof}
We can work étale locally on $\frakS$ and hence suppose that $\frakS$ has a chart $\frakS \ra B\langle P\rangle.$
Let $\left ( U_i \ra X \right )_{i\in I}$ be an étale covering such that, for every $i\in I,$ there exists a log smooth logarithmic formal scheme $\frakU_i$ over $\frakS_i$ fitting into a cartesian diagram
$$
\begin{tikzcd}
U_i \ar{r} \ar{d} & \frakU_i \ar{d} \\
S \ar{r} & \frakS.
\end{tikzcd}
$$
We can suppose that $\frakU_i$ is affine and $\frakU_i \ra \frakS$ has a chart $P \ra Q_i$ such that $P^{gp} \ra Q_i^{gp}$ is injective, its cokernel has a finite order coprime woth $p$ and the morphism $\frakU_i \ra \frakS\times_{B\langle P \rangle}B\langle Q_i \rangle,$ induced by $\frakU_i \ra \frakS$ and the chart $\frakU_i \ra B\langle Q_i \rangle,$ is strict and étale. Consider the monoid $Q_i'$ defined from $P\ra Q_i$ as in (\cite{SF2} 3.1). We have a chart $P \ra Q_i'$ of $U_i' \ra S$ and, by \ref{lemShiho208}, the morphism $U_i' \ra S\times_{A_1[P]}A_1[Q_i'],$ induced by $U_i' \ra S$ and the chart $U_i' \ra A_1[Q_i'],$ is strict and étale.
Then, there exists, for every $i\in I,$ a unique strict étale logarithmic formal scheme $\frakU'_i$ over $\frakS \times_{B\langle P\rangle} B\langle Q_i' \rangle,$ fitting into a cartesian square
$$
\begin{tikzcd}
U_i' \ar{r} \ar{d} & \frakU_i' \ar{d} \\
\frakS \times_{A_1[ P]} A_1[ Q_i'] \ar{r} & \frakS \times_{B\langle P\rangle} B\langle Q_i' \rangle.
\end{tikzcd}
$$
For every $i\in I,$ there exists a lift $\frakU_i \ra \frakU_i'$ fitting into the commutative diagram
$$
\begin{tikzcd}
U_i' \ar{rrr} & & & \frakU_i' \ar{d} \\
U_i \ar{u} \ar{r} & \frakU_i \ar{urr} \ar{r} & \frakS \times_{B\langle P\rangle} B\langle Q_i \rangle \ar{r} & \frakS \times_{B\langle P\rangle} B\langle Q_i' \rangle.
\end{tikzcd}
$$
By \ref{lemEST2}, we have, for every $i\in I,$ a commutative diagram
$$
\begin{tikzcd}
\begin{Bmatrix}\text{Crystals\ of\ }\upmu'\text{-indexed}\\ \frakB_X\text{-modules\ of\ }\widetilde{\calE}(X'/\frakS)_{/\upmu'} \end{Bmatrix} \ar{r}{R_i'^{-1}} \ar[swap]{dd}{C^{-1}} & \begin{Bmatrix}\text{Crystals\ of\ }\upmu'_{U_i}\text{-indexed}\\ \frakB_{U_i}\text{-modules\ of\ }\widetilde{\calE}(U_i'/\frakS)_{/\upmu'_{U_i}} \end{Bmatrix} \ar{dd}{C_i^{-1}} \\
 & \\
\begin{Bmatrix}\text{Crystals\ of\ }\underline{\upmu}\text{-indexed}\\ \frakA_X\text{-modules\ of\ }\widetilde{\underline{\calE}}(X/\frakS)_{/\underline{\upmu}} \end{Bmatrix} \ar[swap]{r}{\underline{R}_i^{-1}} & \begin{Bmatrix}\text{Crystals\ of\ }\underline{\upmu}_{U_i}\text{-indexed}\\ \frakA_{U_i}\text{-modules\ of\ }\widetilde{\underline{\calE}}(U_i/\frakS)_{/\underline{\upmu}_{U_i}} \end{Bmatrix},
\end{tikzcd}
$$
where $R_i'=R_{X',U_i'}$ and $\underline{R}_i=\underline{R_{X,U_i}}.$
By \ref{THMXX}, the functor $C_i^{-1}$ is an equivalence of categories.
For $i,j\in I,$ we have a commutative diagram
\begin{equation}\label{eqdiagESTn}
\begin{tikzcd}
\begin{Bmatrix}\text{Crystals\ of\ }\upmu'_{U_i}\text{-indexed}\\ \frakB_{U_i}\text{-modules\ of\ }\widetilde{\calE}(U_i'/\frakS)_{/\upmu'_{U_i}} \end{Bmatrix} \ar{r}{R_{ij}'^{-1}} \ar[swap]{dd}{C_i^{-1}} & \begin{Bmatrix}\text{Crystals\ of\ }\upmu'_{U_i\times_XU_j}\text{-indexed}\\ \frakB_{U_i\times_XU_j}\text{-modules\ of\ }\widetilde{\calE}(U_i'\times_{X'}U_j'/\frakS)_{/\upmu'_{U_i\times_XU_j}} \end{Bmatrix} \ar{dd}{C_{ij}^{-1}=C_{ji}^{-1}} \\
 & \\
\begin{Bmatrix}\text{Crystals\ of\ }\underline{\upmu}\text{-indexed}\\ \frakA_{U_i}\text{-modules\ of\ }\widetilde{\underline{\calE}}(U_i/\frakS)_{/\underline{\upmu}_{U_i}} \end{Bmatrix} \ar[swap]{r}{\underline{R}_{ij}^{-1}} & \begin{Bmatrix}\text{Crystals\ of\ }\underline{\upmu}_{U_i\times_XU_j}\text{-indexed}\\ \frakA_{U_i\times_XU_j}\text{-modules\ of\ }\widetilde{\underline{\calE}}(U_i\times_XU_j/\frakS)_{/\underline{\upmu}_{U_i\times_XU_j}} \end{Bmatrix},
\end{tikzcd}
\end{equation}
where $R_{ij}'=R_{U_i',U_i'\times_{X'}U_j'}$ and $\underline{R}_{ij}=\underline{R_{U_i,U_i\times_XU_j}}.$

We start by proving that $C^{-1}$ is essentially surjective. Let $\frakE$ be a crystal of $\underline{\upmu}$-indexed $\frakA_X$-modules. Since $C_i^{-1}$ is essentially surjective, there exists a crystal $\frakF_i$ of $\upmu'_{U_i}$-indexed $\frakB_{U_i}$-modules such that
$$
C_i^{-1}\frakF_i=\underline{R}_i^{-1}\frakE.
$$
There exists isomorphisms
$$
\underline{R}_{ij}^{-1}C_i^{-1}\frakF_i=\underline{R}_{ij}^{-1} \underline{R}_i^{-1} \frakE \xrightarrow{\sim} \underline{R}_{ji}^{-1}\underline{R}_j^{-1}\frakE=\underline{R}_{ji}^{-1}C_j^{-1}\frakF_j.
$$
satisfying the cocycle conditions.
We deduce, by \eqref{eqdiagESTn}, isomorphisms
$$
C_{ij}^{-1}R_{ij}'^{-1}\frakF_i \xrightarrow{\sim} C_{ij}^{-1}R_{ji}'^{-1}\frakF_j
$$
satisfying the cocycle conditions. Since $C_{ij}^{-1}$ is an equivalence of categories, we obtain isomorphisms
$$
R_{ij}'^{-1}\frakF_i \xrightarrow{\sim} R_{ji}'^{-1}\frakF_j
$$
satisfying the cocyle conditions. By the stack $\mathbbm{E}'$ \eqref{paragEST2}, the crystals $\frakF_i$ glue together into a crystal $\frakF$ of $\upmu'$-indexed $\frakB_X$-modules such that $R_i'^{-1}\frakF$ is canonically isomorphic to $\frakF_i.$
We then have
$$
\underline{R}_i^{-1}C^{-1}\frakF=C_i^{-1}R_i'^{-1}\frakF\xrightarrow{\sim} C_i^{-1}\F_i=\underline{R}_i^{-1}\frakE.
$$
We deduce an isomorphism
$$
C^{-1}\frakF \xrightarrow{\sim} \frakE,
$$
hence the essential surjectivity of $C^{-1}.$ The fullness is proved in a similar way by descending morphisms.

We now prove that $C^{-1}$ is faithful. Let $f,g:\frakF_1 \ra \frakF_2$ be a morphism of crystals of $\upmu'$-indexed $\frakB_X$-modules such that $C^{-1}(f)=C^{-1}(g).$ We get
$$
\underline{R}_i^{-1}C^{-1}(f)=\underline{R}_i^{-1}C^{-1}g.
$$
This is equivalent to
$$
C_i^{-1}R_i'^{-1}(f)=C_i^{-1}R'^{-1}_i(g).
$$
Since $C_i^{-1}$ is faithful, we get
$$
R_i'^{-1}(f)=R_i'^{-1}(g).
$$
This is true for all $i\in I.$ By the stack $\mathbbm{E}'$ \eqref{paragEST2}, we deduce that $f=g.$
\end{proof}

\end{document}